\numberwithin{equation}{section}
\theoremstyle{plain}
\newtheorem{theorem}{Theorem}[section]
\newtheorem{proposition}[theorem]{Proposition}
\newtheorem{corollary}[theorem]{Corollary}
\theoremstyle{definition}
\newtheorem{definition}[theorem]{Definition}
\newtheorem{example}[theorem]{Example}
\newtheorem{remark}[theorem]{Remark}
\newtheorem{convention}[theorem]{Convention}
\DeclareMathAlphabet{\mathfrak}{U}{euf}{m}{n}
\SetMathAlphabet{\mathfrak}{bold}{U}{euf}{b}{n}
\begin{document}
	
	\title{Topics in Probability, Parametric Estimation and Stochastic Calculus}%
	
	\author{Levi Lopes de Lima}
	\address{Universidade Federal do Cear\'a,
		Departamento de Matem\'atica, Campus do Pici, R. Humberto Monte, s/n, 60455-760,
		Fortaleza/CE, Brazil ({\tt levi@mat.ufc.br}).}

\begin{abstract}
We begin our journey by recalling the fundamentals of Probability Theory that underlie one of its most significant applications to real-world problems: Parametric Estimation. Throughout the text, we systematically develop this theme by presenting and discussing the main tools it encompasses—concentration inequalities, limit theorems, confidence intervals, maximum likelihood, least squares, and hypothesis testing—always with an eye toward both their theoretical underpinnings and practical relevance. While our approach follows the broad contours of conventional expositions, we depart from tradition by consistently exploring the geometric aspects of probability, particularly the invariance properties of normally distributed random vectors. This geometric perspective is taken further in an extended appendix, where we introduce the rudiments of Brownian motion and the corresponding stochastic calculus, culminating in It\^o’s celebrated change-of-variables formula. To highlight its scope and elegance, we present some of its most striking applications: the sharp Gaussian concentration inequality (a central example of the “concentration of measure phenomenon”), the Feynman–Kac formula (used to derive a path integral representation for the Laplacian heat kernel), and, as a concluding delicacy, the Black–Scholes strategy in Finance.
	\end{abstract}

	\maketitle
	
	\tableofcontents

	\section{Introduction}\label{int:sec}
	
	Probability Theory is a multifaceted intellectual enterprise, at once rigorous in its mathematical formulation and remarkably flexible in its applications. It has become indispensable across pure mathematics, applied sciences, and the ever-expanding universe of data-driven disciplines. Among its many usages in real-world problems, there is a particular body of knowledge that stands out: Parametric Estimation. Rooted in the classical contributions of K.F. Gauss and P.-S. Laplace—especially in their formulation of the method of least squares—and later transformed into a systematic research program by R.A. Fisher, J. Neyman, E.S. Pearson and others, this framework of ideas has grown into one of the central pillars of modern statistics. Its methods permeate fields as diverse as Biology, Medicine, and Evolutionary theory, Psychology, Sociology, and Economics, not to mention their longstanding role in Physics, Chemistry, and Engineering. More recently, suitable refinements of this classical theory have proven crucial in assessing the efficiency of statistical procedures in modern Data Science and Machine Learning, as exemplified in the study of regression, classification, and sparsity-driven approaches within Statistical Learning \cite{james2013introduction}. 
	
The purpose of these notes is to provide a modest yet comprehensive introduction to this circle of ideas, written for those with adequate preparation in the necessary prerequisites—essentially Linear Algebra, Multivariate Calculus, and Measure Theory, with a touch of Fourier Analysis. Such background ensures the technical fluency required to follow the arguments and the mathematical maturity, at roughly the graduate level, to connect conceptual and computational aspects of the theory. The guiding principle is to move from the foundational elements of Probability Theory to a systematic development of estimation methods, always showing how abstract reasoning translates into applications to statistical problems.

With this in mind, the exposition opens in Sections \ref{sec:bas:prob} and \ref{cond:poss} with the fundamental notions of Probability Theory, including the key concepts of independence and conditioning, thereby establishing the groundwork for the developments that follow. 
Section \ref{normaldist} builds on this foundation by introducing the essentials of the theory of normally distributed (Gaussian) random vectors, whose geometric properties reappear throughout the text as a unifying theme. 
From this point, Section \ref{conc:ineq:appl} ventures into the domain of concentration inequalities, which provide
non-asymptotic bounds for tail probabilities, with the discussion centering on inequalities derived from the elementary yet powerful Cramér–Chernoff method. 
In addition to showing how this approach leads to the Johnson–Lindenstrauss lemma and to phase-transition behavior in the Erdős–Rényi random graph model, we also discuss the Gaussian concentration inequality, emphasizing its connection to Poincaré’s limit theorem and to the broader ``concentration of measure phenomenon''.

The exposition then transitions naturally to the asymptotic framework in Section \ref{fund:lim}, where the law of large numbers and the central limit theorem are derived within the Fourier-analytic setting of characteristic functions. These classical results form the theoretical foundation for constructing large-sample confidence intervals for the expectation of virtually any distribution of practical relevance and serve as a cornerstone for many developments in the subsequent sections. Section \ref{est:theo} turns to estimation proper, focusing on the role of statistical models (as formalized by Fisher) and the performance of estimators as measured by the mean squared error, alongside related notions such as consistency, bias-variance trade-off, and asymptotic normality. Building on this, Section \ref{sec:MLE} develops the method of maximum likelihood, analyzing its asymptotic behavior in the light of Fisher information and the celebrated Cramér–Rao lower bound, before establishing the optimal asymptotic normality of maximum
likelihood estimators. 
Section~\ref{mls:sub} is devoted to the method of least squares. It begins with a careful presentation of the statistical model underlying ordinary least squares and proceeds to its inferential ramifications, including confidence intervals, hypothesis tests, and measures of goodness-of-fit. The discussion emphasizes both interpretability and predictive accuracy in the classical low-dimensional regime ($p \ll n$). The section then moves to the challenges posed by higher-dimensional settings, introducing regularization techniques and sparsity via the LASSO as a natural gateway to the methods that dominate modern Data Science; see, for instance, \cite[Chapter~6]{james2013introduction}.
Section~\ref{exp:glms} offers an introduction to the exponential family and the theory of generalized linear models, thereby unifying under a single framework many of the models encountered in practice (logistic, Poisson, gamma, among others), and extending some of the regression tools of Section \ref{mls:sub} beyond the normal case.
Section \ref{suff:sub} discusses sufficiency, while Section \ref{sec:hyp:test} completes the standard estimation package with hypothesis testing, emphasizing likelihood ratio tests and illustrating them with canonical examples. 
Further perspectives appear in Section \ref{br:over}, which traces the transition from Fisher’s parametric paradigm to modern Statistical Learning Theory, and in Section \ref{bay:way}, which presents the Bayesian pathway as an alternative and increasingly influential approach.

A substantial Appendix, requiring only familiarity with the material up to Subsection \ref{normaldist:sub}, is devoted to Brownian motion and Itô’s calculus. It begins with the construction of Brownian paths and culminates in Itô’s celebrated change-of-variables formula. This framework allows for a complete proof of the sharp Gaussian concentration inequality, which provides the exact constant for the associated standard deviation, and demonstrates the strength of stochastic calculus through further applications such as the Feynman–Kac formula, yielding a path-integral representation of the Laplacian heat kernel, and the Black–Scholes model in Finance, a landmark contribution that transformed financial mathematics and earned a Nobel Prize in 1997.

The presentation draws on both classical references and recent monographs, many of which are cited throughout\footnote{I am also grateful for the many insightful conversations with colleagues, which have helped shape and refine not only these notes but also the companion computational labs. Special thanks go to C.~Barroso and J.~F.~Montenegro (UFC), T.~Alencar and J.~Silva (UFCA), T.~Alencar and J.~Silva (UFCA), Mykael Cardoso, J.~X.~da~Cruz~Neto, Rondinelle Marcolino, Ray Serra and Sandoel Vieira  (UFPI) and J.~Stoyanov.}.
At certain points, however, the exposition deliberately departs from the conventional treatment by emphasizing geometric perspectives in the theory. These include Fisher’s elegant method for deriving the distributions of ubiquitous statistics such as the chi-square, Student’s $\mathfrak t$, and correlation coefficients, the profound link between Scheffé-type simultaneous confidence bands for the mean response in the linear model and Weyl’s tube formula, and the illuminating interplay between concentration inequalities and the framework of high-dimensional probability, a theme that has become pivotal in contemporary Data Science. A further example is provided by the well-known dichotomy between model interpretability and prediction accuracy in linear models, where the geometry migrates from projections in sample space (as in the realm of ordinary least squares) to constraints in parameter space through regularization methods (such as ridge regression and the LASSO), illustrating how classical approaches adapt to remain effective in modern contexts. Taken together, these excursions highlight the central geometric role of normally distributed random vectors across key aspects of the theory. 

In addition to the formal development, we have incorporated throughout the text a number of remarks, examples, and short subsections designed to offer a complementary perspective. In most cases, these elements aim to illuminate how probabilistic ideas interact with contemporary themes in Data Science and Statistical Learning, often going beyond what is strictly required for the mathematical exposition. For ease of reference, such passages are consistently marked with the symbol $\bigstar$.

Further complementing the theoretical development, the text incorporates numerous illustrative applications and contextual remarks that accentuate real-world connections, helping place abstract concepts within the broader landscape of probability, statistics, and stochastic analysis.
In the same spirit of connecting abstract theory with concrete practice, the notes are accompanied by a series of computational labs available at

\begin{center}
	\href{https://github.com/levilopesdelima/stat-inference-labs}{\texttt{https://github.com/levilopesdelima/stat-inference-labs}}
\end{center}

\noindent
together with updates, corrections, and a continuously revised book-style version of the present manuscript\footnote{The topics currently covered in the labs include linear regression, the central limit theorem, the Johnson--Lindenstrauss lemma, maximum likelihood estimation, and James--Stein estimators, among others.}. These labs are not intended as a mere repository of R code snippets. Rather, they combine concise theoretical recaps with step-by-step simulations, numerical experiments, and visualizations designed to reinforce the underlying mathematical ideas. By systematically aligning formal arguments with computational exploration, they provide a structured environment in which probabilistic concepts may be both analyzed theoretically and explored empirically.

\noindent
{\bf Declaration of Generative AI in the writing process.}
As has become increasingly common in contemporary academic writing, generative AI tools (GPT-5.5, OpenAI) were used during the final stages of preparation of this manuscript, primarily for language refinement, structural editing, and bibliographic assistance.
All generated content was carefully reviewed and edited by the author, who assumes full responsibility for the mathematical correctness and final form of the work.

\section{The fundamentals of Probability Theory}\label{sec:bas:prob}
	
In this section, we present a concise overview of Probability Theory (or, more precisely, of those aspects most relevant to the applications that follow).
Since this material is covered in detail in many standard references, and our primary goal here is merely to establish notation and recall essential facts, proofs are only outlined or omitted altogether.

\subsection{The probabilistic setup: random variables and their distributions}\label{prob:basic}	
	Let $\Omega\neq\varnothing$ be a set and consider $\mathcal F$ a collection of subsets of $\Omega$. 
	
	\begin{definition}\label{sigma:alg}
	We say that $\mathcal F$ is a $\sigma$-{\em algebra} if
	\begin{itemize}
		\item $\Omega \in \mathcal F$;
		\item $A\in \mathcal F \Rightarrow A^c\in\mathcal F$;
		\item $\{A_i\}_{i=1}^{+\infty}\subset\mathcal F\Rightarrow \cup_{i=1}^{+\infty} A_i\in\mathcal F$.
	\end{itemize}
	\end{definition}

Trivial examples of $\sigma$-algebras are $\mathcal F=\{\varnothing,\Omega\}$ and $\mathcal F=2^\Omega$, the set of all subsets of $\Omega$. More generally, if $\mathcal U=\{U_\lambda\}_{\lambda\in\Lambda}$ is any collection of subsets of $\Omega$, we denote by $\mathcal F_{\mathcal U}=\mathcal F(U_\lambda)$ the $\sigma$-algebra generated by $\mathcal U$. By definition, this is the smallest $\sigma$-algebra contained all elements of $\mathcal U$. For example, if $\mathcal O^n$ is the set of open subsets in $\mathbb R^n$ then $\mathcal B^n:=\mathcal F_{\mathcal O_n}$ is the $\sigma$-algebra of Borel subsets. 
	
	\begin{definition}\label{meas:def}
	A {\em measure} on $(\Omega,\mathcal F)$ is a real valued function $P$ on $\mathcal F$ so that:
	\begin{itemize}
		\item $P(\varnothing)=0$;
		\item $P(A)\geq 0$, for any $A\in \mathcal F$;
		\item if $\{A_i\}_{i=1}^{+\infty}\subset{\mathcal F}$ satisfies $A_i\cap A_j=\varnothing$, $i\neq j$, then
		\[
		P(\cup_i A_i)=\sum_iP(A_i).
		\]
	\end{itemize}
	\end{definition}
	
	We say that a triple $(\Omega,\mathcal F,P)$ is a {\em measure space}. A classical example is $(\mathbb R^n, \mathcal L^n,\lambda^n)$, where $\mathcal L^n$ is the standard completion of $\mathcal B^n$ and $\lambda^n$ is Lebesgue measure.  
	If $P(\Omega)=1$ then we say that $(\Omega,\mathcal F,P)$ is a {\em probability space}, 
	a basic notion in Probability Theory. 
	In this setting, and when no confuson arises, we will represent the corresponding Lebesgue spaces simply by $L^p(\Omega)$, $1\leq p<+\infty$, with no further reference to the additional data defining the associated probability space. Also,
	each set $A\in\mathcal F$ is called an {\em event}. 
	
	Another key notion is that of {\em random vector}. If $(\Omega,\mathcal F,P)$ is a probability space, this is  a function $X:(\Omega,\mathcal F)\to (\mathbb R^n,\mathcal B^n)$ which is measurable in the sense that $X^{-1}(B)\in \mathcal F$ for any $B\in \mathcal B^n$. When $n=1$ we say that $X$ is a {\em random variable}.
	If $X$ is a random vector then we denote by $\mathcal F_X$ the $\sigma$-algebra generated by $X$, i.e. the $\sigma$-algebra generated by $\{X^{-1}(B);B\in\mathcal B^n\}$. Alternatively, $\mathcal F_X$ is the smallest $\sigma$-algebra of $\mathcal F$ with respect to which $X$ remains measurable.
%	A similar definition holds for any collection $\{X_\lambda\}_{\lambda\in\Lambda}$ of random vectors and the corresponding $\sigma$-algebra is represented by $\mathcal F_{X_\lambda}$.
	
	A central notion in Probability is that of {\em independence}\footnote{As already noted in \cite[Secton I.5]{kolmogorov2018foundations}: ``Historically, the independence of experiments and random variables represents the very mathematical concept that has given the theory of probabilities its peculiar stamp''.}. Here we define it at several levels, shifting the emphasis from events to random variables.
	
	\begin{definition}\label{def:ind:levels}
		In the setting above, we adopt the following notions of independence.
		\begin{enumerate}
			\item A finite collection of events $A_1,\ldots,A_k \in \mathcal F$, $k\geq 2$, is said to be \emph{independent} if
			\begin{equation}\label{def:ind:ev}
			P(A_1 \cap \cdots \cap A_k) = P(A_1) \cdots P(A_k).
			\end{equation}
			
			\item Let $\{\mathcal F_\lambda\}_{\lambda \in \Lambda}$ be an arbitrary family of $\sigma$-subalgebras of $\mathcal F$.  
			We say that $\{\mathcal F_\lambda\}_{\lambda\in\Lambda}$ is \emph{independent} if for every finite subcollection 
			$\{\mathcal F_{\lambda_\ell}\}_{\ell=1}^k$ and events $A_{\lambda_\ell} \in \mathcal F_{\lambda_\ell}$, 
			the events $\{A_{\lambda_\ell}\}_{\ell=1}^k$ are independent.
			
			\item A family $\{X_\lambda\}_{\lambda \in \Lambda}$ of random variables defined on the same probability space 
			is said to be \emph{independent} if $\{\mathcal F_{X_\lambda}\}_{\lambda \in \Lambda}$ is independent. 
			(Notation: for a pair of independent random variables, we write $X \perp\!\!\perp Y$.)
		\end{enumerate}
	\end{definition}

\begin{remark}\label{indep:various}
A finite collection of events $A_1,\ldots,A_k \in \mathcal F$, as in~(1) above, 
is said to be \emph{mutually independent} if, for any $2 \le l \le k$ and any 
subcollection of indices $1 \le i_1 < \cdots < i_l \le k$, one has
\[
P(A_{i_1} \cap \cdots \cap A_{i_l})
= P(A_{i_1}) \cdots P(A_{i_l}).
\]
This requirement is, of course, much stronger than~\eqref{def:ind:ev}, since it 
entails the verification of $2^k - k - 1$ conditions rather than a single one; 
see~\cite[Section~3]{stoyanov2013counterexamples} for a thorough discussion of this well-known distinction.
Instead of including this stronger notion in item~(1) of 
Definition~\ref{def:ind:levels}, we deliberately depart from tradition and adopt 
the weaker notion for at least two reasons. First, it fits naturally into the 
hierarchy of levels presented there, which progresses from events to random 
variables (cf.\ the comments immediately following Corollary~\ref{induncorr}). 
Second, the notion of independence of events is rarely (if ever!) used directly 
in the text, since the emphasis here is always on exploring 
independence at the level of random variables, which is properly defined in 
item~(3) and later reinterpreted via the product rule for the corresponding joint 
distribution (Proposition~\ref{inddens}).
\qed
	\end{remark}
	
We now consider the {\em expectaction} (or {\em expected value}) of a random vector $X:\Omega\to \mathbb R^n$, which is given by 
	\[
	\mathbb E(X):=\int_{\Omega}X\,dP\in\mathbb R^n.
	\]
	Usually we assume that this is finite (that is, $X$ is integrable: $X\in L^1(\Omega)$). 	Another key notion is that of {\em covariance} of two random variables: if $X,Y:\Omega\to\mathbb R$ then this  is given by
	\begin{eqnarray*}
		{\mathbb C}(X,Y) & = &\mathbb E\left((X-\mathbb E(X))(Y-\mathbb E(Y))\right)\\
		& = & \mathbb  E(XY)-\mathbb E(X)\mathbb E(Y).
	\end{eqnarray*}
	Here, we require that $X,Y\in L^2(\Omega)$ as this implies that $XY\in L^1(\Omega)$ by Cauchy-Schwarz.
	
	\begin{definition}
		\label{uncor}
		We say that $X$ and $Y$ are {\em uncorrelated} if ${\mathbb C}(X,Y)=0$. 
	\end{definition}
	
	That uncorrelatedness pertains to independence is a consequence of the next fundamental result.
	
	\begin{proposition}\label{indexp}
		If $X,Y:\Omega\to \mathbb R$ are independent random variables  then $\mathbb E(XY)=\mathbb E(X)\mathbb E(Y)$.
	\end{proposition}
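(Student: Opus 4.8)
The plan is to establish the identity by the familiar ``standard machine'' of measure theory, building it up from indicators through simple functions, then to non-negative measurable functions via monotone convergence, and finally to arbitrary integrable pairs by splitting into positive and negative parts. Throughout, the only probabilistic input is the independence of the generated $\sigma$-algebras $\mathcal F_X$ and $\mathcal F_Y$ furnished by Definition \ref{def:ind:levels}; everything else is integration theory.

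First I would treat the case of indicators. If $A\in\mathcal F_X$ and $B\in\mathcal F_Y$, then writing $X=\mathbf 1_A$ and $Y=\mathbf 1_B$ we have $XY=\mathbf 1_{A\cap B}$, so that $\mathbb E(XY)=P(A\cap B)$. By definition, independence of the random variables means independence of the generated $\sigma$-algebras, so the events $A$ and $B$ are independent, whence $P(A\cap B)=P(A)P(B)=\mathbb E(X)\mathbb E(Y)$. By bilinearity of the product and linearity of the expectation, this extends at once to simple functions: writing $X=\sum_i a_i\mathbf 1_{A_i}$ and $Y=\sum_j b_j\mathbf 1_{B_j}$ with $A_i\in\mathcal F_X$ and $B_j\in\mathcal F_Y$, one has $XY=\sum_{i,j}a_ib_j\mathbf 1_{A_i\cap B_j}$ and each cross term factors as above.

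Next I would pass to non-negative $X,Y$. The canonical dyadic approximation $X_n=\sum_k 2^{-n}k\,\mathbf 1_{\{k2^{-n}\le X<(k+1)2^{-n}\}}$ produces simple functions increasing to $X$; crucially, the level sets are preimages under $X$ of Borel sets and hence lie in $\mathcal F_X$, so each $X_n$ is $\mathcal F_X$-measurable, and similarly for the approximants $Y_n$ of $Y$. Applying the simple-function case to each pair $(X_n,Y_n)$ and letting $n\to\infty$, the monotone convergence theorem (applied to $X_nY_n\uparrow XY$, to $X_n\uparrow X$, and to $Y_n\uparrow Y$) yields $\mathbb E(XY)=\mathbb E(X)\mathbb E(Y)$ in $[0,+\infty]$. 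For general integrable $X,Y$ I would decompose $X=X^+-X^-$ and $Y=Y^+-Y^-$; each of the four products $X^\pm Y^\pm$ involves non-negative variables that remain independent, being Borel functions of $X$ and $Y$ respectively, and expanding $XY$ accordingly and recombining gives the claim.

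The point that warrants the most care — and the closest thing to an obstacle — is the bookkeeping around integrability, since the statement tacitly assumes $X,Y\in L^1(\Omega)$ and one must know $XY\in L^1(\Omega)$ for $\mathbb E(XY)$ to be meaningful. This is not a hypothesis to be taken for granted but a consequence: applying the non-negative version to $|X|$ and $|Y|$ (again Borel functions of $X$ and $Y$, hence independent) gives $\mathbb E(|XY|)=\mathbb E(|X|)\mathbb E(|Y|)<+\infty$, so integrability of the product is automatic. The remaining subtlety — that the approximating simple functions be measurable with respect to the \emph{correct} sub-$\sigma$-algebras, so that independence can be invoked term by term — is precisely what the dyadic construction guarantees.
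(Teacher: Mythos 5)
Your proof is correct and follows essentially the same route as the paper's: reduce to simple functions measurable with respect to $\mathcal F_X$ and $\mathcal F_Y$, expand the product, and invoke independence of the events $A_i\cap B_j$. The paper compresses the approximation step into a single sentence (truncating to bounded simple functions), whereas you carry out the standard machine in full, including the monotone-convergence passage and the integrability of $XY$ via $\mathbb E(|XY|)=\mathbb E(|X|)\mathbb E(|Y|)$ — a welcome detail the paper leaves implicit.
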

	
	\begin{proof} (sketch)
		By a simple approximation, it suffices to assume that $X$ and $Y$ are simple functions with  $|X|,|Y|\leq M< +\infty$. Hence\footnote{Here and in the following, if $A\in\mathcal F$ we shall denote by ${\bf 1}_A$ the corresponding {\em indicator function}
			\[
			{\bf 1}_A(x)=
			\left\{
			\begin{array}{rl}
				1 & {\rm if} \, x\in A;\\
				0 & {\rm otherwise}
			\end{array}
			\right.
			\]},
		\[
		X=\sum_i a_i{\bf 1}_{F_i}, \quad Y=\sum_jb_j{\bf 1}_{G_j}, 
		\]
with $F_i=X^{-1}(a_i)\in\mathcal F_X$ and similarly for $G_j$.		
Hence,
		\[
		XY=\sum_{ij}a_ib_j{\bf 1}_{F_i\cap G_j},
		\]
so that
\[
\mathbb E(XY)  =  \sum_{ij}a_ib_jP(F_i\cap G_j).
\]
Using that $\{\mathcal F_X,\mathcal F_Y\}$ is independent together with (\ref{def:ind:ev}) we thus see that 
		\begin{eqnarray*}
			\mathbb E(XY)
			& = & 
			\sum_{ij}a_ib_jP(F_i)P(G_j)\\
			& = & \sum_ia_iP(F_i)\cdot \sum_jb_jP(G_j) \\
			& = &
			\mathbb E(X)\mathbb E(Y),  
		\end{eqnarray*}
		as claimed.
	\end{proof}

	\begin{corollary}
		\label{induncorr}
		If $X$ and $Y$ are independent then they are uncorrelated.
	\end{corollary}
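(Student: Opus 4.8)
The plan is to obtain the conclusion as a one-line consequence of Proposition \ref{indexp}, using the alternative expression for the covariance recorded just before Definition \ref{uncor}. The starting point is that ${\rm cov}(X,Y)$ is, by definition, well-defined precisely under the standing hypothesis $X,Y\in L^2(\Omega)$, which by Cauchy--Schwarz guarantees $XY\in L^1(\Omega)$ so that $\mathbb E(XY)$ exists; I would therefore take this integrability for granted throughout, as it is exactly the setting in which the notion of uncorrelatedness was introduced.

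The first step is to recall the second of the two equivalent formulas for covariance, namely
\[
{\rm cov}(X,Y)=\mathbb E(XY)-\mathbb E(X)\mathbb E(Y).
\]
The second step is to invoke the independence hypothesis $X\perp Y$: by Proposition \ref{indexp}, this yields the multiplicativity of the expectation, $\mathbb E(XY)=\mathbb E(X)\mathbb E(Y)$. Substituting this identity into the covariance formula, the two terms cancel and we conclude ${\rm cov}(X,Y)=0$, which is exactly the definition (Definition \ref{uncor}) of $X$ and $Y$ being uncorrelated.

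I expect there to be no genuine obstacle here: all the substantive work — the approximation by simple functions and the use of $P(F_i\cap G_j)=P(F_i)P(G_j)$ at the level of generating events — has already been carried out in the proof of Proposition \ref{indexp}. The only point deserving a brief remark is the tacit integrability bookkeeping, i.e. that $\mathbb E(X)$, $\mathbb E(Y)$, and $\mathbb E(XY)$ are all finite; but this is automatic from $X,Y\in L^2(\Omega)$ together with the Cauchy--Schwarz inequality, so the corollary is essentially immediate once Proposition \ref{indexp} is in hand.
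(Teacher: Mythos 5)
Your proof is correct and is exactly the argument the paper intends: the corollary follows immediately by substituting $\mathbb E(XY)=\mathbb E(X)\mathbb E(Y)$ from Proposition~\ref{indexp} into the identity ${\rm cov}(X,Y)=\mathbb E(XY)-\mathbb E(X)\mathbb E(Y)$, with the integrability guaranteed by the standing assumption $X,Y\in L^2(\Omega)$ and Cauchy--Schwarz. Nothing further is needed.
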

	
	In a sense, the argument sketched in the proof of Proposition~\ref{indexp} respects the shift in levels adopted in Definition~\ref{def:ind:levels}: we first work at level~(1), that of events, which amounts to establishing the result for simple functions, and then use a suitable approximation (essentially the definition of integrals in Lebesgue theory) to move to level~(3), where the concept of independence is formulated in terms of random variables. 
	A straightforward proof of Proposition~\ref{indexp}, which operates directly at the highest level, is provided in Remark~\ref{one:line} below. 
	Note also that the converse of Corollary~\ref{induncorr} does not hold in general. 
	It does, however, hold in the important case where $X$ and $Y$ are the components of a normally distributed random vector; 
	see Proposition~\ref{unc:ind:n}.

	\begin{definition}
		\label{distr}
		If $X:\Omega\to\mathbb R^n$ is a random vector then its {\em distribution} (or {\em law}) is the probability measure $X_\sharp P$ on $\mathbb R^n$ given by
		\[
		X_\sharp P(B)=P(X^{-1}(B)),\quad B\in\mathcal B^n.
		\]
		We also represent $X_\sharp P$ by $P_X$ and set $P(X\geq a):=P_X([a,+\infty))$, etc.  Also, an element ${\bf x}\in\mathbb R^n$ in the image of a random vector $X:\Omega\to \mathbb R^n$ (or equivalently, in ${\rm supp}(X)$) is called a {\em realization}
		(or {\em observed value}) of $X$. 
	\end{definition} 
	
\begin{remark}\label{dogma} (The central role of distributions and the extension dogma) The moral here is that \emph{any} random variable $X$ is doomed to mediate between the (rather abstract) probability measure $P$ and its distribution $P_X$, a more tangible probability measure on $\mathbb{R}$. In this way, $X$ links two complementary levels of description of randomness. While $P$ is often difficult to visualize or describe, since it is defined on the sample space $\Omega$, a purely mathematical construct whose internal structure is rarely made explicit, its distribution $P_X$ lives on $\mathbb{R}$ and can therefore be analyzed through familiar descriptive tools such as densities, cumulative distribution functions, tail probabilities, and quantiles. Furthermore, because realizations of independent copies of $X$ (the random samples in Remark \ref{many:ind}) provide direct access to the features of $P_X$, it is this distribution that becomes the natural object of study in statistics, physics, and the applied sciences. This perspective is closely related to what probabilists sometimes call the {\em extension dogma}: 
only those notions that remain invariant under pullback by probability-preserving transformations of the sample space should be regarded as intrinsically probabilistic.
In particular, if $T:\Omega_1\to\Omega_2$ is such a transformation and $X$ is a random variable on $\Omega_2$ then $X$ and its pullback $X\circ T$ have the same distribution and must therefore be indistinguishable from the probabilistic point of view. Consequently, the essential content of probability theory resides not in the particular representation of randomness provided by the sample space, but in the distributions of the random variables defined upon it\footnote{Examples of legitimate probabilistic notions appearing in the sequel include expectation, variance, moments, independence and conditional expectation, among others that can be explicitly described in terms of the underlying distributions. By contrast, any structural property of $\Omega$ that is not reflected in these laws (such as the cardinality of an event), fails to qualify as intrinsically probabilistic.}; see \cite[Subsection 1.1.1]{tao2012topics}\qed
\end{remark}

	\begin{definition}
		If $X:\Omega\to\mathbb R$ is a random variable then its {\em cumulative distribution function (cdf)} is the function $F_X:\mathbb R\to [0,1]$ given by 
		$F_X(x)=X_\sharp P((-\infty,x])$. 
	\end{definition}
	
	Notice that $F_X$ completely determines $X_\sharp P=P_X$. Moreover, 
	\begin{equation}\label{dist:F}
		F_X(x)=\int_{-\infty}^x dP_X, \quad x\in\mathbb R.
	\end{equation} 
	
	\begin{definition}
		We say that randon variables $X:\Omega\to \mathbb R^n$ and $Y:\Omega'\to\mathbb R^n$ are {\em identically distributed (i.d.)} if $P_X=P_Y$. 
	\end{definition}

	\begin{proposition}
		\label{expformdist}We have
		\[
		\mathbb E(X)=\int_{\mathbb  R^n}{\bf x}\,dP_X,
		\]
		where ${\bf x}=(x_1,\cdots,x_n)$ is the position vector. More generally, if $f:\mathbb R^n\to\mathbb R^p$ is measurable, so that $f(X)=f\circ X:\Omega\to\mathbb R^p$ is a random vector, then 
		\begin{equation}\label{exp:f:comp}
			\mathbb E(f(X))=\int_{\mathbb  R^n}f({\bf x})\,dP_X,
		\end{equation}
		where $f({\bf x})=f\circ {\bf x}$.
	\end{proposition}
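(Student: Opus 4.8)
The plan is to establish the general formula \eqref{exp:f:comp} first and then recover the identity $\mathbb E(X)=\int_{\mathbb R^n}{\bf x}\,dP_X$ as the special case $f=\mathrm{id}_{\mathbb R^n}$. Since both sides of \eqref{exp:f:comp} take values in $\mathbb R^p$ and both the expectation and the integral act componentwise, I would first reduce to the scalar case $p=1$ by writing $f=(f_1,\ldots,f_p)$ and proving the identity for each coordinate $f_k:\mathbb R^n\to\mathbb R$ separately.

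With $p=1$ in hand, I would deploy the standard approximation machine of Lebesgue theory, proceeding through the usual four stages. First, the base case of indicator functions: taking $f={\bf 1}_B$ for $B\in\mathcal B^n$, the composition $f(X)={\bf 1}_B\circ X$ equals ${\bf 1}_{X^{-1}(B)}$, so that
\[
\mathbb E(f(X))=\int_\Omega {\bf 1}_{X^{-1}(B)}\,dP=P(X^{-1}(B))=P_X(B)=\int_{\mathbb R^n}{\bf 1}_B\,dP_X,
\]
where the middle equality is precisely the definition of the pushforward $P_X=X_\sharp P$ from Definition~\ref{distr}. This identity is the genuine crux of the argument; everything that follows is bootstrapping. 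Second, by linearity of both integrals the formula extends to nonnegative simple functions $f=\sum_i c_i{\bf 1}_{B_i}$. Third, for an arbitrary nonnegative measurable $f$ I would choose an increasing sequence of simple functions $f_m\uparrow f$; then $f_m(X)\uparrow f(X)$ pointwise on $\Omega$, and two applications of the monotone convergence theorem (one on $(\Omega,\mathcal F,P)$ and one on $(\mathbb R^n,\mathcal B^n,P_X)$) pass the established identity to the limit. Finally, decomposing a general integrable $f$ as $f=f^+-f^-$ and subtracting yields the result, the integrability of $f(X)$ against $P$ being equivalent to that of $f$ against $P_X$.

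I do not anticipate a genuine obstacle here, since the statement is just the abstract change-of-variables formula for image measures; the only point requiring care is ensuring that each intermediate object is well defined and measurable. Measurability of $f(X)=f\circ X$ is automatic because $f$ is Borel measurable and $X$ is measurable, so that preimages land in $\mathcal F$; this is exactly what guarantees that $\mathbb E(f(X))$ makes sense to begin with. To close, I would specialize to $f({\bf x})={\bf x}$, whose $k$-th component is the coordinate function $x_k$, and read off $\mathbb E(X)=\int_{\mathbb R^n}{\bf x}\,dP_X$ componentwise.
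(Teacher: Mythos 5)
Your proof is correct and is the standard change-of-variables argument for image measures (indicators, simple functions, monotone convergence, then $f=f^+-f^-$), with the crux correctly identified as the identity $\mathbb E({\bf 1}_{X^{-1}(B)})=P_X(B)$ coming straight from Definition~\ref{distr}. The paper omits the proof of this proposition entirely (consistent with its stated policy in Section~\ref{sec:bas:prob}), so there is nothing to compare against; your argument is the expected one and fills the gap correctly.
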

	
		The notion of distribution may be used to single out two important classes of 
	random vectors. 
	
		\begin{definition}
		\label{desfunc} Let $X:\Omega\to\mathbb R^n$ be a random vector. 
		We say that
		\begin{itemize}
			\item $X$ is {\em discrete} 
			if its range ${\rm Ran}(X):=X(\Omega)\subset \mathbb R^n$ is countable:
			\[
			{\rm Ran}(X)=\left\{{\bf x}_1,{\bf x}_2,\cdots,{\bf x}_k,\cdots\right\}. 
			\]
			In this case, the map
			\begin{equation}\label{mdf:dic:dist}
				{\bf x}_j\in 	{\rm Ran}(X)\mapsto p_j:=P_X(\{{\bf x}_j\})\in\mathbb R, \quad j=1,2,\cdots,
			\end{equation}
			is called the {\em mass distribution function (mdf)} and satisfies 
			\[
			\sum_jp_j=1,
			\]
			with (\ref{exp:f:comp}) meaning that 
			\begin{equation}\label{exp:form:dis}
				\mathbb E(f(X))=\sum_j p_j f({\bf x}_j).
			\end{equation}
			If needed, in (\ref{mdf:dic:dist}) we may replace ${\rm Ran}(X)$ by ${\rm supp}(P_X)$, in which case each $p_j>0$. 
			\item  
			$X$ is {\em continuous} if $P_X$ is absolutely continuous with respect to the Lebesgue measure $d{\bf x}$. In this case, the Radon-Nykodim derivative
			\begin{equation}\label{radon:nyk}
				\psi_X:=\frac{d P_X}{d{\bf x}}:\mathbb R^n\to \mathbb R,
			\end{equation}
			is called the {\em probability density function (pdf)} of $X$, 
			with (\ref{exp:f:comp}) meaning that 
			\begin{equation}\label{exp:form}
				\mathbb E(f(X))=\int_{\mathbb R^n}f({\bf x})\psi_X({\bf x})d{\bf x}.
			\end{equation}
		\end{itemize}
	\end{definition}

	In case $X$ is real, we recall that the {\em support} of $P_X$ is given by
	\[
	{\rm supp}(P_X)=\{x\in \mathbb R; F_{X}(x+\varepsilon)-F_{X}(x-\varepsilon)>0\, {\rm for}\,{\rm all}\,\varepsilon>0\}.
	\]
	Also
	notice that, at least for a distribution whose support ${\rm supp}(P_X)$ is contained in some closed, bounded interval, the absolute continuity in the second item above means that the corresponding cdf $F_X$ is absolutely continuous, or equivalently, the following assertions hold:
	\begin{enumerate}
		\item $F_X'$ exists a.s. and is integrable (both with respect to Lebesgue measure);
		\item there holds
		\[
		F_X(b)-F_X(a)=\int_a^b F'_X(t)dt, 
		\]
		where $[a,b]\subset {\rm supp}(P_X)$\footnote{For proofs of these claims we refer to \cite[Sections 6.4, 6.5, 18.4 and 20.3]{royden2010real}.}.
	\end{enumerate} 
	In particular, 
	\begin{equation}\label{minus:inf}
	\int_{-\infty}^xdP_X\stackrel{(\ref{dist:F})}{=}F_X(x)=\int_{-\infty}^xF_X'(t)dt,\quad x\in {\rm supp}(P_X),
	\end{equation}
	so that from (\ref{radon:nyk}) we see that 
	\begin{equation}\label{dens:radon}
		\psi_X=F_X'\quad  {\rm a.s.}
	\end{equation} 
	Finally, note that from (\ref{exp:form}) with $f\equiv 1$ we get
	\begin{equation}\label{exp:form:=1}
		\int_{\mathbb R^n} \psi_X({\bf x})d{\bf x}=1,
		\end{equation}
as expected.		
Clearly, for $n=1$ this also follows from (\ref{minus:inf}) and (\ref{dens:radon}) with $x=+\infty$.
	
	\begin{convention}\label{conv:cont}
		Given a random vector $X:\Omega\to\mathbb R^n$, we will, unless otherwise specified, assume that it is continuous in the sense introduced above, so that it possesses a probability density function $\psi_X$. To ensure that the standard tools of calculus (including the fundamental theorem) may be applied, we further assume that $\psi_X$ is piecewise smooth with at most finitely many singularities.  
			It should be stressed, however, that virtually every statement involving integration in the continuous case can, when properly interpreted, be reformulated for the discrete case, and conversely. For example, the right-hand side of (\ref{exp:form:dis}) may be written as
		\[
		\int_{\mathbb R^n} f({\bf x})\, dP_X,
		\]
		the abstract Lebesgue integral of $f$ with respect to the discrete measure $dP_X$. Taking into account (\ref{radon:nyk}), this expression becomes formally indistinguishable from the ``continuous'' integral on the right-hand side of (\ref{exp:form}). This highlights the advantage of adopting Lebesgue integration from the outset in modern probability theory. 
	\end{convention}
	
	We now consider random vectors $X_j:(\Omega,\mathcal F)\to (\mathbb R^{p_j},\mathcal B^{p_j})$, $j=1,\cdots,n$ with distributions $P_{X_j}$. We may form the random vector 
	\[
	(X_1,\cdots,X_n):(\Omega,\mathcal F)\to (\mathbb R^{p_1}\times\cdots\times\mathbb R^{p_n}, \mathcal B^{p_1}\otimes\cdots\otimes\mathcal B^{p_n})
	\] given by $(X_1,\cdots,X_n)(\omega)=(X_1(\omega),\cdots,X_n(\omega))$, $\omega\in\Omega$, so that the {\em joint distribution} $P_{(X_1,\cdots,X_n)}$ on $\mathbb R^{p_1}\times\cdots\times\mathbb R^{p_n}$ is well defined. Moreover, each choice of $k$ distinct indexes, say  $I=\{i_1,\cdots,i_k\}\subset\{1,.\cdots,n\}$, determines a {\em marginal distribution} $P_{(X_{i_1},\cdots,X_{i_k})}$ induced by
	\[
	X_{(I)}=(X_{i_1},\cdots,X_{i_k}):(\Omega,\mathcal F)\to (\mathbb R^{p_{i_1}}\times\cdots\times\mathbb R^{p_{i_k}}, \mathcal B^{p_{i_1}}\otimes\cdots\otimes\mathcal B^{p_{i_k}}).
	\]
	In other words, we have the commutative diagram 
\[
\begin{tikzcd}[column sep=1.5cm, row sep=1.5cm]
	\Omega \arrow[r, "X"] \arrow[dr, "X_{(I)}"] & \mathbb R^{p_1}\times\cdots\times\mathbb R^{p_n} \arrow[d, "\pi_{(I)}"] \\
	& \mathbb R^{p_{i_1}}\times\cdots\times\mathbb R^{p_{i_k}}
\end{tikzcd}
\]
where $\pi_{(I)}$ is the associated projection. From this, the next result follows immediately.

	\begin{proposition}\label{pdf:marg}
		(Joint distribution and pdf of a marginal) With the notation above,
		\[
		P_{(X_{i_1},\cdots,X_{i_k})}(B)=P_{(X_1,\cdots,X_n)}(B\times\mathbb R^{p_{j_1}}\times\cdots\times\mathbb R^{p_{j_k}}), 
		\]
		where $B\in \mathcal B^{p_{i_1}}\otimes\cdots\otimes\mathcal B^{p_{i_k}}$ and $\{1,\cdots, n\}=\{i_1,\cdots,i_k\}\cup \{j_1,\cdots,j_{n-k}\}$, a disjoint union.
		In particular, in the continuos case, 
		\[
		\psi_{(X_{i_1},\cdots,X_{i_k})}({\bf x}_{i_1},\cdots,{\bf x}_{i_k})=\int_{\mathbb R^{p_{j_1}}\times\cdots\times\mathbb R^{p_{j_k}}}\psi_{(X_1,\cdots,X_n)}({\bf x}_1,\cdots,{\bf x}_n)d{\bf x}_{j_1}\cdots d{\bf x}_{j_{n-k}}.
		\]
	\end{proposition}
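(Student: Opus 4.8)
The plan is to treat the two assertions separately, establishing the measure-theoretic identity first and then specializing to densities via Tonelli's theorem. Throughout, write $W=(X_1,\ldots,X_n)$ for the joint random vector and $Z=(X_{i_1},\ldots,X_{i_k})$ for the marginal one, and let $\pi:\mathbb R^{p_1}\times\cdots\times\mathbb R^{p_n}\to\mathbb R^{p_{i_1}}\times\cdots\times\mathbb R^{p_{i_k}}$ be the coordinate projection onto the blocks indexed by $i_1,\ldots,i_k$. The key structural observation is simply that $Z=\pi\circ W$.

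For the first assertion, I would unwind Definition~\ref{distr}. For $B\in\mathcal B^{p_{i_1}}\otimes\cdots\otimes\mathcal B^{p_{i_k}}$,
\[
P_Z(B)=P(Z^{-1}(B))=P\bigl(W^{-1}(\pi^{-1}(B))\bigr)=P_W(\pi^{-1}(B)),
\]
using $Z^{-1}=W^{-1}\circ\pi^{-1}$. The only point needing care is identifying $\pi^{-1}(B)$: a point of the full product belongs to $\pi^{-1}(B)$ precisely when its selected blocks lie in $B$ and its complementary blocks are unconstrained. Up to the fixed permutation of coordinate blocks that lists $\{i_1,\ldots,i_k\}$ before $\{j_1,\ldots,j_{n-k}\}$, this preimage is exactly the cylinder $B\times\mathbb R^{p_{j_1}+\cdots+p_{j_{n-k}}}$, which yields the stated formula.

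For the second assertion, I would feed the joint density $\psi_W$ from (\ref{radon:nyk}) into the identity just obtained. For any $B$ as above,
\[
P_Z(B)=P_W\bigl(B\times\mathbb R^{p_{j_1}+\cdots+p_{j_{n-k}}}\bigr)=\int_{B\times\mathbb R^{p_{j_1}+\cdots+p_{j_{n-k}}}}\psi_W({\bf x}_1,\ldots,{\bf x}_n)\,d{\bf x}_1\cdots d{\bf x}_n.
\]
Since $\psi_W\geq 0$ and integrates to $1$ by (\ref{exp:form:=1}), Tonelli's theorem permits iterating the integral, first over the complementary variables and then over the retained ones, so that
\[
P_Z(B)=\int_B\Bigl(\int_{\mathbb R^{p_{j_1}+\cdots+p_{j_{n-k}}}}\psi_W\,d{\bf x}_{j_1}\cdots d{\bf x}_{j_{n-k}}\Bigr)\,d{\bf x}_{i_1}\cdots d{\bf x}_{i_k}.
\]
Writing $g({\bf x}_{i_1},\ldots,{\bf x}_{i_k})$ for the inner integral---measurable by the same theorem---the relation $P_Z(B)=\int_B g\,d{\bf x}_{i_1}\cdots d{\bf x}_{i_k}$ now holds for every $B$ in the product $\sigma$-algebra, exhibiting $g$ as a density for $P_Z$ against Lebesgue measure. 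By the uniqueness of the Radon--Nikodym derivative, $g=\psi_Z$ almost everywhere, which is precisely the claimed identity.

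I expect the main obstacle to be conceptual bookkeeping rather than analysis: keeping the coordinate permutation consistent when passing from the abstract projection $\pi$ to the concrete cylinder set, and ensuring the order of integration in the second display respects that same permutation. The analytic content is minimal, since the nonnegativity and integrability of $\psi_W$ supply exactly the hypotheses that Tonelli's theorem requires.
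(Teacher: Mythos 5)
Your argument is correct and is precisely the standard one that the paper implicitly relies on (the proposition is stated without proof, in keeping with the paper's policy of omitting proofs of foundational facts): the identity $Z=\pi\circ W$ reduces the first claim to recognizing $\pi^{-1}(B)$ as a cylinder set, and Tonelli plus uniqueness of the Radon--Nikodym derivative gives the density formula. Both steps are handled properly, including the bookkeeping of the block permutation, so there is nothing to add.
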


	The next result provides a way of handling independence which is quite satisfactory from an operational viewpoint. 
	
	\begin{proposition}
		\label{inddens}
		$\{X_j\}_{j=1}^n$ is independent if and only if
		\[
		P_{(X_1,\cdots,X_n)}=P_{X_1}\otimes\cdots\otimes P_{X_n},
		\]
		the product measure. Equivalently, in terms of the corresponding pdfs,  
		\[
		\psi_{(X_1,\cdots,X_n)}({\bf x}_1,\cdots,{\bf x}_n)=\psi_{X_1}({\bf x}_1)\cdots\psi_{X_n}({\bf x}_n), \quad ({\bf x}_1,\cdots,{\bf x}_n)\in \mathbb R^{p_1}\times\cdots\times\mathbb R^{p_n}.
		\]
	\end{proposition}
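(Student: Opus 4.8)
The plan is to peel the definition of independence (Definition~\ref{def:ind:levels}) down to a statement about the joint distribution evaluated on measurable rectangles, and then invoke a standard uniqueness principle for measures. First I would record the elementary fact that, since taking preimages commutes with complementation and countable unions, the $\sigma$-algebra $\mathcal F_{X_j}$ consists \emph{exactly} of the sets $X_j^{-1}(B_j)$ with $B_j\in\mathcal B^{p_j}$. Consequently, by part~(3) of Definition~\ref{def:ind:levels}, independence of $\{X_j\}_{j=1}^n$ is equivalent to the factorization
\[
P\!\left(\bigcap_{j=1}^n X_j^{-1}(B_j)\right)=\prod_{j=1}^n P\!\left(X_j^{-1}(B_j)\right)
\]
holding for all choices of $B_j\in\mathcal B^{p_j}$; independence of every finite subcollection is automatically included here, since one recovers it by setting the remaining $B_j$ equal to $\mathbb R^{p_j}$. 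By Definition~\ref{distr}, the left-hand side is precisely $P_{(X_1,\cdots,X_n)}(B_1\times\cdots\times B_n)$, while the right-hand side equals $(P_{X_1}\otimes\cdots\otimes P_{X_n})(B_1\times\cdots\times B_n)$ by the defining property of the product measure. Thus independence amounts to saying that $P_{(X_1,\cdots,X_n)}$ and $P_{X_1}\otimes\cdots\otimes P_{X_n}$ agree on all measurable rectangles.

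The crux is then to upgrade this agreement on rectangles to equality as measures on the full product $\sigma$-algebra $\mathcal B^{p_1}\otimes\cdots\otimes\mathcal B^{p_n}$, and I expect this to be the main obstacle, since it is where the genuine measure-theoretic content resides. The measurable rectangles form a $\pi$-system (they are closed under finite intersections, because $(B_1\times\cdots\times B_n)\cap(B_1'\times\cdots\times B_n')=(B_1\cap B_1')\times\cdots\times(B_n\cap B_n')$) that generates the product $\sigma$-algebra. Since both $P_{(X_1,\cdots,X_n)}$ and the product measure are probability measures, and in particular assign the same mass to the whole space, the Dynkin $\pi$-$\lambda$ theorem (equivalently, the uniqueness clause in the Carathéodory/Hahn extension theorem for finite measures) guarantees that two such measures coinciding on a generating $\pi$-system coincide everywhere. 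The reverse implication being trivial, this yields the first asserted equivalence.

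Finally I would translate the measure identity into the density statement in the continuous case. Assuming each $P_{X_j}$ has a pdf $\psi_{X_j}$, an application of Tonelli's theorem shows that $\prod_j\psi_{X_j}(\mathbf x_j)$ is a density for the product measure $P_{X_1}\otimes\cdots\otimes P_{X_n}$ with respect to Lebesgue measure on $\mathbb R^{p_1}\times\cdots\times\mathbb R^{p_n}$. Granting the identity of measures just established, the uniqueness up to a.e.\ equality of the Radon--Nikodym derivative appearing in (\ref{radon:nyk}) forces $\psi_{(X_1,\cdots,X_n)}=\prod_j\psi_{X_j}$ almost everywhere, giving the ``only if'' direction. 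For the converse, if the joint density factors in this way, then integrating over an arbitrary rectangle $B_1\times\cdots\times B_n$ and applying Fubini recovers the rectangle factorization above, which we have already seen is equivalent to independence. Under Convention~\ref{conv:cont} the densities are piecewise smooth, so the a.e.\ identity may be read as a genuine pointwise equality away from the finitely many singularities.
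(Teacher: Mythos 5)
Your proof is correct. The paper in fact gives no proof of Proposition~\ref{inddens} at all (consistent with its announced policy in Section~\ref{sec:bas:prob} of omitting proofs of standard facts), so there is nothing to compare against; your argument is the canonical one. All the key steps are in place: the observation that $\mathcal F_{X_j}=\{X_j^{-1}(B_j):B_j\in\mathcal B^{p_j}\}$ because preimages already form a $\sigma$-algebra, the reduction of Definition~\ref{def:ind:levels} to the factorization on measurable rectangles (with the remark that setting the unused $B_j=\mathbb R^{p_j}$ recovers all finite subcollections, which is needed since item~(1) of that definition only constrains the full intersection), the upgrade from rectangles to the product $\sigma$-algebra via the $\pi$--$\lambda$ theorem, and the passage to densities through Tonelli and the a.e.\ uniqueness of the Radon--Nikodym derivative in (\ref{radon:nyk}). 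Your closing caveat that the density identity is a priori only almost everywhere, sharpened to pointwise equality off a negligible set under Convention~\ref{conv:cont}, is exactly the right level of care.
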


	If $\{X_j\}_{j=1}^n$ is independent, then, with the notation above, a straightforward
	combination of Propositions~\ref{inddens} and~\ref{pdf:marg} shows that for any
	$2\le k\le n$ and any collection of indices
	$\{i_1,\ldots,i_k\}\subset\{1,\ldots,n\}$, the subcollection
	$\{X_{i_l}\}_{l=1}^k$ is independent as well.
	Thus, independence at the level of the full collection $\{X_j\}_{j=1}^n$
	automatically propagates to all finite subcollections.
	This observation shows that the notion of independence for random variables
	in Definition~\ref{def:ind:levels}, item~(3), is fully consistent with (and may be
	recovered from) the distribution-based formulation given in
	Proposition~\ref{inddens}.

	\begin{remark}\label{one:line}
		We may now give a direct proof of Proposition  \ref{indexp}: since
		\[
		\mathbb E(XY)=\iint_{\mathbb R^2}xy\,dP_{(X,Y)}(x,y),
		\]
		it follows from Proposition \ref{inddens} and Fubini that 
		\begin{eqnarray*}
			\mathbb E(XY) 
			& = & \iint_{\mathbb R^2}xy\,dP_X(x)\otimes dP_Y(y)\\
			& = & \int_{\mathbb R}x\,dP_X(x)\int_{\mathbb R}y\,dP_Y(y)\\
			& = & \mathbb E(X)\mathbb E(Y),
		\end{eqnarray*}
		as claimed. \qed			
	\end{remark}

	\begin{remark}\label{comp:dist}
		If $X:\Omega\to\mathbb R$ is a random variable and $Y=X^2$ we may compute $\psi_Y$ in terms of $\psi_X$ as follows. Since $F_Y\leq y$ if and only if $-\sqrt{y}\leq F_X\leq\sqrt{y}$, it follows that 
		\[
		F_{X^2}(y)=\left(F_X(\sqrt{y})-F_X(-\sqrt{y})\right){\bf 1}_{[0,+\infty)]}(y),
		\]
		so that from (\ref{dens:radon}) we get
		\[
		\psi_{X^2}(y)=F_{X^2}'(y)=\frac{1}{2\sqrt{y}}\left(\psi_X(\sqrt{y})+\psi_X(\sqrt{-y})\right){\bf 1}_{(0,+\infty)]}(y).
		\]
		A similar computation shows that 
		\begin{equation}\label{regra:01}
			\psi_{\sqrt{X}}(x)=2x\psi_X(x^2){\bf 1}_{[0,+\infty)]}(x)
		\end{equation}
		if $X\geq 0$. Also, if $Z$ and $V$ are given with $V>0$ and $Z\perp\!\!\perp V$ then, in terms of the joint distribution $P_{(V,Z)}$,  
		\[
		F_{Z/ V}(x)=
		P(Z\leq x V)=\iint_{\{z\leq xv\}} dP_{(V,Z)}(v,z).
		\]
		By the independence and Proposition \ref{inddens} we may write this as an iterated integral, 
		\begin{eqnarray*}
			F_{Z/ V}(x)
			& = & 
			\int_0^{+\infty}\left(\int_{\{z\leq xv\}}dP_Z(z)\right) dP_V(v)\\
			& = &
			\int_0^{+\infty}F_Z(xv)\psi_V(v)dv,
		\end{eqnarray*}	
		and upon derivation we find that
		\begin{equation}\label{regra:02}
			\psi_{Z/V}(x)=\int_0^{+\infty}\psi_Z(xv)v\psi_V(v)dv.
		\end{equation}
		Under the same conditions we also have that
		\begin{equation}\label{regra:03}
			F_{Z V}(x)=\int_0^{+\infty}F_Z(xv^{-1})\psi_V(v)dv,
		\end{equation}
		and hence we obtain
		\[
		\psi_{ZV}(x)=\int_0^{+\infty}\psi_Z(xv^{-1})v^{-1}\psi_V(v)dv,
		\]
		again upon derivation \qed
	\end{remark}
	
	\begin{remark}\label{many:ind}(Drawing a random sample from a population)
		If $X_j:(\Omega_j,\mathcal F_j,P^{(j)})\to (\mathbb R,\mathcal B)$, $j=1,\cdots,n$,  are random variables, form the product probability space
		\[
		(\Omega^{\sharp},\mathcal F^{\sharp},P^{\sharp})=\otimes_j(\Omega_j,\mathcal F_j,P^{(j)}),
		\] 
		and define the random variables $Y_j:(\Omega^{\sharp},\mathcal F^{\sharp},P^{\sharp})\to (\mathbb R,\mathcal B)$, $Y_j=X_j\circ \pi_j$, where  
		$\pi_j:\mathbb R^n\to\mathbb R$ is the canonical projection onto the $j^{\rm th}$ factor. Now, if $B_j\in\mathcal B$ we have
		\begin{eqnarray*}
			P^{(j)}_{X_j}(B_j) 
			& = & P^{(j)}(X_j^{-1}(B_j))\\
			& = & P^\sharp(\Omega_1\times\cdots \times\Omega_{j-1}\times X_j^{-1}(B_j)\times\Omega_{j+1}\times\cdots\times \Omega_n)\\
			& = & P^\sharp (\pi_j^{-1}(X_j^{-1}(B_j)))\\
			& = & P^\sharp(Y_j^{-1}(B_j)),
		\end{eqnarray*}
		so that $P^{(j)}_{X_j}=P^\sharp_{Y_j}$ for any $j$. On the other hand,
		if $P^\sharp_Y$ is the joint distribution of the random vector $Y=(Y_1,\cdots,Y_n):(\Omega^{\sharp},\mathcal F^{\sharp},P^{\sharp})\to (\mathbb R^n,\mathcal B^n)$,
		\begin{eqnarray*}
			P_Y^\sharp(B_1\times \cdots\times B_n)
			& = & P^\sharp(Y^{-1}(B_1\times\cdots\times B_n))\\
			& = & P^\sharp(X_1^{-1}(B_1)\times\cdots\times X_n^{-1}(B_n))\\
			& = & \Pi_j P^{(j)}(X_j^{-1}(B_j))\\
			& = & \Pi_j P^{(j)}_{X_j}(B_j)\\
			& = & \Pi_j P^{\sharp}_{Y_j}(B_j)\\
			& = & (\otimes_jP^\sharp_{Y_j})(B_1\times \cdots\times B_n),
		\end{eqnarray*}
		so that  $P^\sharp_Y= \otimes_jP^\sharp_{Y_j}$. Thus, 	by Proposition \ref{inddens},  $\{Y_j\}_{j=1}^n$ is independent.   
		Note that if $\{X_j\}$ is identically distributed then $\{Y_j\}$ is identically distributed as well, so we conclude: {\em given any random variable $X$ there exist $\{Y_j\}_{j=1}^n$ which is independent and identically distributed to $X$}\footnote{The passage from $\Omega_j$ to $\Omega^\sharp$, a key ingredient in the argument above, is a simple instance of the extension dogma mentioned in Remark \ref{dogma}.}. 
		It turns out to be a bit more involved to extend this construction to {\em countably} many random variables, as this involves the consideration of a suitable {\em infinite} product of sample spaces \cite[Section~9.6]{fristedt2013modern}. In any case, this stronger assertion, which is a rather special case of Kolmogorov's extension (Theorem~\ref{extkolm}), may indeed be regarded as the {\em Fundamental Theorem of Statistical Inference}, since it provides the very mathematical foundation for the notion of an {\em infinite random sample} drawn from a given population, a terminology we shall use below without further notice. The existence of such a sequence $\{Y_j\}_{j\ge1}$ of i.i.d.\!\footnote{Here and in the sequel, i.i.d.\! is a shorthand for independent and identically distributed.} random variables is an indispensable assumption underlying virtually all limit results in Statistics---most notably, the {\em Law of Large Numbers} (Theorem \ref{lln}) and the {\em Central Limit Theorem} (Theorem \ref{clt})---and, more broadly, the very formulation of a {\em statistical model} in the sense introduced by Fisher (Definition \ref{stat:mod:def}), whereby one postulates a parametric family of probability laws governing the distribution of these random samples. In this way, the abstract (possibly infinite) product construction above bridges the axiomatic framework of Probability with the empirical foundations of Statistical Inference.
		\qed
	\end{remark}

\begin{definition}
	\label{covmat}
	If $X_1,\cdots,X_n:\Omega\to\mathbb R$ are random variables (equivalently, $X=(X_1,\cdots,X_n):\Omega\to\mathbb R^n$ is a random vector) we define its {\em covariance matrix} by
	\[
	{\mathbb C}(X)_{ij}={\mathbb C}(X_i,X_j),
	\]
	where 
	\[
	{\mathbb C}(X_i,X_j)=\mathbb E(X_iX_j)-\mathbb E(X_i)\mathbb E(X_j), 
	\]
	with $\mathbb C(X)$ thus being a symmetric matrix. 
\end{definition}

\begin{convention}\label{conv:tensor}
	Unless otherwise stated, we will always identify an $n$-vector with the corresponding $n\times 1$ matrix; in particular, any such vector will be viewed as a {\em column} vector. Thus, if ${\bf a}, {\bf b}\in\mathbb R^n$ and the superscript $\top$ represents transposition then ${\bf a}{\bf b}^\top$ is the $n\times n$ matrix defining the Kronecker (or outer) product of ${\bf a}$ and ${\bf b}$ (also denoted by ${\bf a}\otimes{\bf b}$), whereas ${\bf a}^\top{\bf b}=\langle {\bf a}, {\bf b}\rangle$ is their standard inner product. It then follows that a covariance matrix may be written as
	\[
	{\mathbb C}(X)
	=\mathbb E(XX^\top)-\mathbb E(X)\mathbb E(X^\top)=
	\mathbb E(X\otimes X)-\mathbb E(X)\otimes \mathbb E(X),
	\]
	where the expectation is extended componentwise to (possibly random) vectors and matrices.
	\end{convention}

We now discuss two simple variants of these definitions:
\begin{itemize}
	\item 
In case $n=1$, $X=X_1$, Definition \ref{covmat} gives rise to the {\em variance} of of the random variable $X$:
\begin{equation}\label{var:deff}
	{\mathbb V}(X)={\mathbb C}(X)=\mathbb E(X^2)-(\mathbb E(X))^2.
\end{equation}
Since ${\mathbb V}(X)\geq 0$ we usually set $\sigma^2:={\mathbb V}(X)$ and $\sigma:=\sqrt{{\mathbb V}(X)}$, the {\em standard deviation}, which we also denote by ${\rm sd}(X)$.
Note also  that 
\begin{equation}\label{pol:var:f}
{\mathbb V}\left(\sum_iX_i\right)=\sum_i{\mathbb V}(X_i)+\sum_{i\neq j}{\mathbb C}(X_i,X_j). 
\end{equation}
Thus, if the $X_i$'s
are pairwise uncorrelated (in particular, if they are independent) then
\begin{equation}\label{uncorr:var}
	{\mathbb V}\left(\sum_iX_i\right)=\sum_i{\mathbb V}(X_i).
\end{equation} 
To simplify the notation we sometimes denote
 ${\mathbb C}(X)$ by ${\mathbb V}(X)$ in case $X$ is vector valued. 
\item If $X,Y:\Omega\to \mathbb R^n$are random vectors then we define their {\em covariance matrix} by
\[
{\mathbb C}(X,Y)
=\mathbb E(XY^\top)-\mathbb E(X)\mathbb E(Y^\top)=
\mathbb E(X\otimes Y)-\mathbb E(X)\otimes \mathbb E(Y).
\]
Note that ${\mathbb C}(X,X)={\mathbb C}(X)$. Moreover, the exact analogue of (\ref{pol:var:f}) holds:
\[
{\mathbb C}\left(\sum_iX_i\right)=\sum_i{\mathbb C}(X_i)+\sum_{i\neq j}{\mathbb C}(X_i,X_j),
\]
where now each $X_i:\Omega\to\mathbb R^n$ is a random vector. 
\end{itemize}

In order to properly compare distinct random variables it is sometimes convenient to pass to a suitable normalization. In most cases, this is accomplished as follows.

\begin{definition}\label{stand:rv}
	If $X:\Omega\to\mathbb R$ is a random variable with $\mathbb E(X)=\mu$ and ${\mathbb V}(X)=\sigma^2$ then its {\em standardization} is
	\[
	Z=\frac{X-\mu}{\sigma}.
	\]
\end{definition}

Note that $\mathbb E(Z)=0$ and ${\mathbb V}(Z)=1$, hence the terminology.

Im many applications it is useful to estimate from above the tail probabilities of a random variable whose expectation/variance is known. We now present a couple of elementary results in this direction, which can be regarded as examples of (quite conservative) concentration inequalities.

\begin{proposition}\label{markov:ineq}(Markov's inequality)
	If $X:\Omega\to \mathbb R$ is a non-negative random variable and $a>0$ then 
	\begin{equation}\label{markov:ineq:2}
		P(X\geq a)\leq \frac{\mathbb E(X)}{a}.
	\end{equation}
\end{proposition}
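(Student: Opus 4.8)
The plan is to prove Markov's inequality by exploiting the indicator function of the event $\{X \geq a\}$ together with monotonicity of the integral. The key observation is a pointwise inequality: since $X$ is non-negative and $a > 0$, we always have $X \geq a\,\mathbf{1}_{\{X \geq a\}}$. Indeed, on the event $\{X \geq a\}$ the right-hand side equals $a$ while $X \geq a$, and on the complement the right-hand side is $0$ while $X \geq 0$ by hypothesis. This single inequality is the crux of the whole argument.

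From there the proof is essentially a matter of integrating both sides against $P$ and invoking the linearity and monotonicity of expectation. Taking expectations of $X \geq a\,\mathbf{1}_{\{X \geq a\}}$ yields
\[
\mathbb E(X) \geq a\,\mathbb E\!\left(\mathbf{1}_{\{X \geq a\}}\right) = a\,P(X \geq a),
\]
where the final equality uses that the expectation of an indicator is the probability of the corresponding event, i.e. $\mathbb E(\mathbf 1_A) = \int_\Omega \mathbf 1_A\, dP = P(A)$. Dividing through by $a > 0$ gives exactly \eqref{markov:ineq:2}.

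Alternatively, one can argue directly at the level of the distribution $P_X$ using Proposition~\ref{expformdist}: writing $\mathbb E(X) = \int_{\mathbb R} x\, dP_X(x)$ and restricting the domain of integration to $[a, +\infty)$, one has $\mathbb E(X) \geq \int_{[a,+\infty)} x\, dP_X(x) \geq a \int_{[a,+\infty)} dP_X(x) = a\, P_X([a,+\infty)) = a\,P(X \geq a)$, where the first inequality discards the non-negative contribution from $[0,a)$ and the second bounds $x \geq a$ on the remaining region. Either route is entirely elementary.

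I do not anticipate any genuine obstacle here; the result is foundational and its proof is short. The only point deserving care is the non-negativity hypothesis, which is used precisely to guarantee that discarding the integral over $[0,a)$ (or equivalently, bounding $X \geq 0 \geq a\,\mathbf 1_{\{X\geq a\}}$ off the event) can only decrease the right-hand side. Without $X \geq 0$ the pointwise inequality fails and the conclusion is false in general, so I would flag that this assumption is essential rather than cosmetic.
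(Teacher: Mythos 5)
Your proof is correct. Your primary argument—the pointwise bound $X \geq a\,\mathbf{1}_{\{X\geq a\}}$ followed by taking expectations—is a genuinely different route from the paper's, which instead works at the level of the distribution: the paper writes $\mathbb E(X)=\int_0^{+\infty}x\,\psi_X(x)\,dx$ using the pdf $\psi_X$ (available under Convention~\ref{conv:cont}), truncates the integral to $[a,+\infty)$, and bounds $x\geq a$ there. Your second, "alternative" argument is essentially the paper's proof restated with $dP_X$ in place of $\psi_X\,dx$, and is in fact slightly more general since it does not assume $X$ is continuous. The indicator-function route buys something worth noting: it operates directly on the sample space $\Omega$, requires no appeal to Proposition~\ref{expformdist} or to the existence of a density, and therefore covers discrete, continuous, and mixed distributions uniformly in one line. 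Your closing remark that non-negativity is essential (it is what justifies discarding the contribution off the event $\{X\geq a\}$) is accurate and a useful observation.
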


\begin{proof}
	Using (\ref{exp:form}) with $f(x)=x$ we compute 
	\begin{eqnarray*}
		\mathbb E(X) & = & \int_0^{+\infty}x\psi_X(x)dx\\
		& \geq &  \int_a^{+\infty}x\psi_X(x)dx\\
		& = & a\int_a^{+\infty}\psi_X(x)dx\\
		& = & aP(X\geq a),
	\end{eqnarray*}
	as claimed.
\end{proof}

\begin{corollary}\label{cheby:ineq}(Chebyshev's inequality)
	Let $X:\Omega\to\mathbb R$ be a random variable with $0<\sigma^2:={\mathbb V}(X)<+\infty$. Then
	\begin{equation}\label{cheby:ineq:00}
		P(|X-\mathbb E(X|\geq a)\leq \frac{\sigma^2}{a^2}.
	\end{equation}
	Equivalently,  
	\begin{equation}\label{cheby:ineq:2}
		P(|X-\mathbb E(X|\geq c\sigma)\leq c^{-2}, \quad c>0. 
	\end{equation}
\end{corollary}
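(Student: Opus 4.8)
The plan is to obtain Chebyshev's inequality as a direct consequence of Markov's inequality (Proposition~\ref{markov:ineq}) applied to the squared deviation of $X$ from its mean. The guiding observation is that the event whose probability we wish to bound, namely $\{|X-\mathbb E(X)|\geq a\}$, can be rewritten in terms of a non-negative random variable to which Markov's inequality applies directly.

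Concretely, I would set $\mu:=\mathbb E(X)$ and introduce the auxiliary random variable $Y:=(X-\mu)^2$, which is non-negative, so that Proposition~\ref{markov:ineq} is applicable to it. The first key step is to note that, since squaring is strictly increasing on $[0,+\infty)$, we have the equality of events $\{|X-\mu|\geq a\}=\{Y\geq a^2\}$ for any $a>0$, whence $P(|X-\mu|\geq a)=P(Y\geq a^2)$. The second key step is to identify $\mathbb E(Y)$ with the variance: by the definition of covariance and the formula (\ref{var:deff}), one has
\[
\mathbb E(Y)=\mathbb E\bigl((X-\mu)^2\bigr)={\rm var}(X)=\sigma^2.
\]

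With these two observations in hand, applying Markov's inequality (\ref{markov:ineq:2}) to $Y$ with threshold $a^2$ yields
\[
P(|X-\mu|\geq a)=P(Y\geq a^2)\leq\frac{\mathbb E(Y)}{a^2}=\frac{\sigma^2}{a^2},
\]
which is precisely (\ref{cheby:ineq:00}). The equivalent form (\ref{cheby:ineq:2}) then follows immediately by the substitution $a=c\sigma$ with $c>0$, since $\sigma^2/(c\sigma)^2=c^{-2}$.

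In truth this result admits no serious obstacle: the entire content lies in the reduction of the two-sided deviation event to a one-sided tail event for the non-negative variable $Y$, after which Markov's inequality does all the work. The only point requiring a word of care is the standing assumption $0<\sigma^2<+\infty$, which guarantees both that $\mathbb E(Y)$ is finite (so the right-hand side is meaningful) and that the bound is non-trivial; the finiteness of $\sigma^2$ is exactly the hypothesis $X\in L^2(\Omega)$ under which the variance was defined in the first place.
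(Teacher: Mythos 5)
Your proof is correct and follows exactly the paper's own route: rewrite the event $\{|X-\mathbb E(X)|\geq a\}$ as $\{(X-\mathbb E(X))^2\geq a^2\}$ and apply Markov's inequality (Proposition~\ref{markov:ineq}) to the non-negative variable $(X-\mathbb E(X))^2$, whose expectation is $\sigma^2$. Nothing is missing.
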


\begin{proof}
	Note that 
	\[
	P(|X-\mathbb E(X)|\geq a)=P(|X-\mathbb E(X)|^2\geq a^2)
	\]
	and use (\ref{markov:ineq:2}) with $X$ replaced by $|X-\mathbb E(X)|^2$ and $a$ replaced by $a^2$.
\end{proof}

We now discuss the various modes of convergence of random variables. 

\begin{definition}\label{conv:dist:def}
	Let $\{X_j\}_{j=1}^{+\infty}$ a sequence of random variables and let $X$ be another random variable (all defined on the same sample space $(\Omega,\mathcal F,P)$).
	We say that 
	\begin{itemize}
		\item $X_j$ converges to $X$ {\em almost surely} (notation: $X_j\stackrel{a.s.}{\to}X$) if
		\[
		P\left(\lim_{j\to+\infty}X_j=X\right)=1.
		\]
		\item $X_j$ converges to $X$ {\em in probability} (notation: $X_j\stackrel{p}{\to}X$) if, for any $\varepsilon>0$,
		\[
		\lim_{j\to+\infty}P\left(|X_j-X|<\epsilon\right)=1.
		\]
		\item $X_j$ converges to $X$ {\em in distribution} (notation: $X_j\stackrel{d}{\to}X$) if 
		\[
		\lim_{j\to+\infty}F_{X_j}(x)= F_X(x),
		\]
		for any $x\in\mathbb R$ where $F_X$ is continuous. Equivalently, $\mathbb E(\xi(X_j))\to\mathbb E(\xi(X))$ for all $\xi:\mathbb R\to\mathbb R$ uniformly bounded and continuous. 
		\item  $X_j$ converges to $X$ {\em in the mean} (notation: $X_j\stackrel{m}{\to}X$) if
		\[
		\lim_{j\to+\infty}\mathbb E(|X_j-X|^2)=0.
		\] 
	\end{itemize}
\end{definition}

Since, as memtioned in Definition \ref{distr}, $F_X$ as also known as the law of $X$, convergence in distribution is also referred to as convergence in law (notation: $X_j\stackrel{l}{\to} X$).
Also, the equivalence between the two ways above of defining convergence in distribution is part of the Portmanteau theorem \cite[Lemma 2.2]{van2000asymptotic}.

\begin{proposition}\label{modes}
	One has $(\stackrel{a.s.}{\to})\Rightarrow(\stackrel{p}{\to})\Rightarrow(\stackrel{d}{\to})$. Also, $(\stackrel{m}{\to})\Rightarrow (\stackrel{p}{\to})$ and $(\stackrel{d}{\to})\Rightarrow (\stackrel{p}{\to})$ if the limiting variable is constant.
\end{proposition}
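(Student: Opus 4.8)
The plan is to verify the four implications one at a time, in each case reducing the statement either to the decay of the tail probabilities $P(|X_j - X| \ge \varepsilon)$ or to a direct comparison of cumulative distribution functions. First, for $(\stackrel{a.s.}{\to}) \Rightarrow (\stackrel{p}{\to})$, I would fix $\varepsilon > 0$ and observe that on the almost-sure event $\{\lim_j X_j = X\}$ (of probability one) the indicator functions $\mathbf 1_{\{|X_j - X| \ge \varepsilon\}}$ converge pointwise to $0$, since for each such $\omega$ one eventually has $|X_j(\omega) - X(\omega)| < \varepsilon$. As these indicators are uniformly bounded by $1$, the bounded convergence theorem yields $P(|X_j - X| \ge \varepsilon) = \mathbb E(\mathbf 1_{\{|X_j - X| \ge \varepsilon\}}) \to 0$, which is precisely convergence in probability.

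The implication $(\stackrel{m}{\to}) \Rightarrow (\stackrel{p}{\to})$ is then immediate from Markov's inequality (Proposition \ref{markov:ineq}) applied to the non-negative random variable $|X_j - X|^2$: for any $\varepsilon > 0$,
\[
P(|X_j - X| \ge \varepsilon) = P(|X_j - X|^2 \ge \varepsilon^2) \le \frac{\mathbb E(|X_j - X|^2)}{\varepsilon^2} \longrightarrow 0.
\]

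The main obstacle is the implication $(\stackrel{p}{\to}) \Rightarrow (\stackrel{d}{\to})$, which requires a careful two-sided comparison. Fixing a continuity point $x$ of $F_X$ and $\varepsilon > 0$, I would split $\{X_j \le x\}$ according to whether $|X_j - X| < \varepsilon$ or not: on the first event $X \le x + \varepsilon$, so that $F_{X_j}(x) \le F_X(x + \varepsilon) + P(|X_j - X| \ge \varepsilon)$, whence $\limsup_j F_{X_j}(x) \le F_X(x + \varepsilon)$. Symmetrically, splitting the event $\{X \le x - \varepsilon\}$ yields $F_X(x - \varepsilon) \le F_{X_j}(x) + P(|X_j - X| \ge \varepsilon)$, and therefore $F_X(x - \varepsilon) \le \liminf_j F_{X_j}(x)$. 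Letting $\varepsilon \to 0$ and invoking the continuity of $F_X$ at $x$ squeezes both the liminf and the limsup to the common value $F_X(x)$, giving the claim.

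Finally, for $(\stackrel{d}{\to}) \Rightarrow (\stackrel{p}{\to})$ under the hypothesis that the limiting variable is a constant $c$, I would exploit that the corresponding cdf $F_c = \mathbf 1_{[c, +\infty)}$ is continuous at every point other than $c$. For fixed $\varepsilon > 0$ one has the bound
\[
P(|X_j - c| \ge \varepsilon) \le F_{X_j}(c - \varepsilon) + \bigl(1 - F_{X_j}(c + \tfrac{\varepsilon}{2})\bigr),
\]
and since both $c - \varepsilon$ and $c + \tfrac{\varepsilon}{2}$ are continuity points of $F_c$, convergence in distribution forces $F_{X_j}(c - \varepsilon) \to 0$ and $F_{X_j}(c + \tfrac{\varepsilon}{2}) \to 1$, so the right-hand side tends to $0$, as required.
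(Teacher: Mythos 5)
Your proof is correct and complete; the paper itself omits the proof of this proposition (it is one of the "recalled facts" whose proofs are deferred to standard references), and the four arguments you give — bounded convergence for the first implication, Markov/Chebyshev for the mean-square case, the two-sided $\varepsilon$-sandwich of the cdfs for $(\stackrel{p}{\to})\Rightarrow(\stackrel{d}{\to})$, and the evaluation of $F_{X_j}$ at continuity points of $\mathbf 1_{[c,+\infty)}$ for the constant-limit converse — are exactly the standard ones. No gaps.
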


The following quite useful result is worth mentioning here\footnote{We refer \cite[Chapter 5]{gut2006probability} for much more on the convergence properties of random variables.}.

\begin{theorem}\label{slutsky}(Slutsky)
	If $X_j\stackrel{d}{\to}X$ and $Y_j\stackrel{p}{\to}c$, $c\in \mathbb R$, then 
	$X_j+Y_j\stackrel{d}{\to}X+c$ and $X_jY_j\stackrel{d}{\to}cX$. Also, if $Y_j\neq 0$ and $c\neq 0$ then $X_j/Y_j\stackrel{d}{\to}X_n/c$. Finally, these assertions hold true if $(\stackrel{d}{\to})$ gets replaced by $(\stackrel{p}{\to})$ everywhere.
\end{theorem}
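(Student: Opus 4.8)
The plan is to reduce all three distributional assertions to a single joint convergence lemma and then invoke the continuous mapping principle. Concretely, I would first establish that the random vectors converge jointly,
\[
(X_j, Y_j) \stackrel{d}{\to} (X, c),
\]
in $\mathbb{R}^2$, where $(X,c)$ is the random vector whose second coordinate is almost surely the constant $c$. Granting this, each conclusion follows by composing with a suitable continuous map: $g_+(x,y)=x+y$ and $g_\times(x,y)=xy$ are continuous on all of $\mathbb{R}^2$, while $g_\div(x,y)=x/y$ is continuous off the line $\{y=0\}$, and since the limit law is concentrated on $\{y=c\}$ with $c\neq 0$, its discontinuity set is null under that law. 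The required continuous mapping statement is itself a one-line consequence of the Portmanteau characterization in Definition \ref{conv:dist:def}: for any bounded continuous test function $\xi$ and continuous $g$, the composite $\xi\circ g$ is again bounded and continuous, so $\mathbb{E}(\xi(g(X_j,Y_j)))\to\mathbb{E}(\xi(g(X,c)))$ (with the usual refinement allowing discontinuities of $g$ on a set that is null for the limit law, needed for $g_\div$).

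To prove the joint convergence lemma I would again use the bounded-continuous formulation: it suffices to show $\mathbb{E}(h(X_j,Y_j))\to\mathbb{E}(h(X,c))$ for every bounded continuous $h:\mathbb{R}^2\to\mathbb{R}$. I would split
\[
\mathbb{E}(h(X_j,Y_j)) = \left(\mathbb{E}(h(X_j,Y_j)) - \mathbb{E}(h(X_j,c))\right) + \mathbb{E}(h(X_j,c)).
\]
The second summand converges to $\mathbb{E}(h(X,c))$ because $x\mapsto h(x,c)$ is bounded and continuous and $X_j\stackrel{d}{\to}X$. The first summand I would drive to zero using $Y_j\stackrel{p}{\to}c$ together with the tightness of $\{X_j\}$, which is automatic from $X_j\stackrel{d}{\to}X$: on the event $\{|X_j|\le M,\ |Y_j-c|<\delta\}$ the points $(X_j,Y_j)$ and $(X_j,c)$ lie in a fixed compact rectangle on which $h$ is uniformly continuous, so the integrand is uniformly small, while the complementary event has probability at most $P(|X_j|>M)+P(|Y_j-c|\ge\delta)$, which can be made uniformly small by first choosing $M$ large (tightness) and then letting $\delta\to0$.

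The main obstacle is exactly this control of the cross term $\mathbb{E}(h(X_j,Y_j))-\mathbb{E}(h(X_j,c))$: since $h$ need not be uniformly continuous on the whole plane, one cannot invoke a global modulus of continuity, and the estimate only closes after confining $X_j$ to a compact interval—so the tightness supplied by $X_j\stackrel{d}{\to}X$ is indispensable. A more pedestrian alternative, bypassing joint convergence, handles the sum directly at the level of cumulative distribution functions: one decomposes $P(X_j+Y_j\le x)$ according to whether $|Y_j-c|<\varepsilon$, sandwiches the result between the limits $F_X(x-c+\varepsilon)$ and $F_X(x-c-\varepsilon)$, and lets $\varepsilon\to0$ through continuity points of $F_X$. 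This works cleanly for $g_+$ but becomes awkward for the product and quotient, which is why I favor the continuous-mapping route.

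For the final clause, where $\stackrel{d}{\to}$ is replaced by $\stackrel{p}{\to}$ throughout, the argument is elementary and needs none of the Portmanteau machinery. If $X_j\stackrel{p}{\to}X$ and $Y_j\stackrel{p}{\to}c$, then a direct $\varepsilon$-$\delta$ estimate gives $X_j+Y_j\stackrel{p}{\to}X+c$; for the product one writes $X_jY_j-cX=(X_j-X)Y_j+c(X_j... )$ type decompositions controlling each piece in probability (using that $Y_j$ stays bounded in probability); and the quotient follows once one observes that $Y_j^{-1}\stackrel{p}{\to}c^{-1}$ whenever $c\neq 0$, since then $\{|Y_j-c|<|c|/2\}$ has probability tending to one and on it $|Y_j^{-1}-c^{-1}|\le 2|c|^{-2}|Y_j-c|$.
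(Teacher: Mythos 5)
The paper does not actually prove this theorem: it is stated without proof and the reader is referred to \cite[Chapter 5]{gut2006probability}, so there is no in-text argument to compare against. Judged on its own, your proposal is correct and is the standard modern proof (joint convergence $(X_j,Y_j)\stackrel{d}{\to}(X,c)$ followed by the continuous mapping theorem, as in van der Vaart's treatment). The one genuinely delicate point — that the modulus of continuity of $h$ on the compact rectangle depends on $M$, so one must fix $M$ large via tightness \emph{before} sending $\delta\to 0$ — is handled in the right order in your sketch. You also correctly flag that the quotient requires the extended continuous mapping theorem (discontinuity set of $g_\div$ contained in $\{y=0\}$, which is null under the limit law since $c\neq 0$), which goes slightly beyond the bounded-continuous test-function characterization quoted in Definition \ref{conv:dist:def}; if you write this up in full you should either prove that extension or cite it. Two small cosmetic remarks: the product decomposition in your last paragraph is garbled as written — the clean identity is $X_jY_j-cX=(X_j-X)Y_j+X(Y_j-c)$, with $Y_j$ bounded in probability and $X$ tight — and the statement itself contains a typo ($X_n/c$ should read $X/c$), which your proof correctly supplies.
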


\subsection{The analytical setup: characteristic functions}\label{char:fct:b}
We now introduce an important notion which will allow us to make use of analytical techniques in the theory\footnote{Recall from Convention \ref{conv:tensor} that we denote the inner product of vectors ${\bf a},{\bf b}\in\mathbb R^n$ either by $\langle{\bf a},\bf{b}\rangle$ or by ${\bf a}^\top{\bf b}={\bf b}^\top{\bf a}$, where the superscript $\top$ indicates transpose. Also, we set $\|{\bf a}\|^2={\bf a}^\top{\bf a}$ for the corresponding squared norm.}.

\begin{definition}
	\label{funcchar}If $X:\Omega\to\mathbb R^n$ is a random vector
	then its {\em characteristic function} 
	$\phi_X:\mathbb R^n\to\mathbb C$ is given by
	\[
	\phi_X({\bf u})=\mathbb E(e^{{\bf i}\langle X,{\bf u}\rangle})=\int_{\mathbb R^n}e^{{\bf i} \langle{\bf x},{\bf u}\rangle}dP_X({\bf x}).
	\]
	If $X$ carries a pdf $\psi_X$ then 
	\[
	\phi_X({\bf u})=\int_{\mathbb R^n}e^{{\bf i}\langle {\bf x},{\bf u}\rangle}\psi_X({\bf x})d{\bf x}.
	\]
\end{definition}

\begin{remark}\label{dist:point:dist}
	It is immediate from Definition \ref{conv:dist:def} that $X_j\stackrel{d}{\to} X$ implies $\phi_{X_j}\to\phi_X$ pointwise. \qed
\end{remark}

Since $\phi_X$ is the (inverse) Fourier transform of $P_X$, we expect that it completely determines the corresponding cdf $F_X$. A proof of this general statement, at least in case $X$ is real, may be found in \cite[Section 39]{gnedenko2018theory}, where an explicit formula for $F_X$ in terms of $\phi_X$ is indicated; see also the discussion in \cite[Section 3.2]{lukacs1970charac}. We present here two instances where this expectation is confirmed (with explicit formulas).

\begin{proposition}\label{four:inv}
	The following hold:
	\begin{enumerate}
		\item If $X$ is $\mathbb Z$-valued and $p_k:=P(X=k)$, $k\in\mathbb Z$, then  
		\begin{equation}\label{four:inv:1}
			p_k=\frac{1}{2\pi}\int_{-\pi}^\pi e^{-{\bf i}ku}\phi_X(u)du,\quad k\in\mathbb Z.
		\end{equation}
		\item If $X$ is real and has a characteristic function $\phi_X:\mathbb R\to\mathbb C$ such that $|\phi_X|$ is integrable 
		then its distribution is absolutely continuous with respect to Lebesgue measure with the corresponding pdf being continuous and given by   
		\begin{equation}\label{four:inv:2}
			\psi_X(x)=\frac{1}{2\pi}\int_{-\infty}^{+\infty} e^{-{\bf i}xu}\phi_X(u)du, \quad x\in\mathbb R. 
		\end{equation}
	\end{enumerate}
\end{proposition}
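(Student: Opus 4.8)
The plan is to treat the two parts separately, since part (1) is really a statement about Fourier series while part (2) is genuine Fourier inversion on the line. For part (1), I would start from the observation that a $\mathbb Z$-valued $X$ has $\phi_X(u)=\mathbb E(e^{{\bf i}uX})=\sum_{k\in\mathbb Z}p_ke^{{\bf i}ku}$, a Fourier series whose terms are dominated in modulus by the summable sequence $p_k$ with $\sum_k p_k=1$; by the Weierstrass $M$-test the series converges uniformly on $[-\pi,\pi]$. I would then multiply by $e^{-{\bf i}mu}$, integrate over $[-\pi,\pi]$, and interchange sum and integral, the interchange being justified by the uniform convergence. The only surviving term is the one with $k=m$, because $\frac{1}{2\pi}\int_{-\pi}^\pi e^{{\bf i}(k-m)u}\,du=\delta_{km}$, and this yields $p_m$, exactly as claimed in (\ref{four:inv:1}).

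For part (2), I would first check that the right-hand side of (\ref{four:inv:2}) defines a bona fide function: since $|e^{-{\bf i}xu}\phi_X(u)|=|\phi_X(u)|$ and $|\phi_X|$ is integrable by hypothesis, the integral converges absolutely, so $g(x):=\frac{1}{2\pi}\int_{\mathbb R}e^{-{\bf i}xu}\phi_X(u)\,du$ is well defined; continuity of $g$ follows from dominated convergence, as the integrand is dominated by the integrable $|\phi_X|$ uniformly in $x$. The substance of the proof is then to show that $g$ is the density of $X$, i.e.\ that $dP_X=g\,dx$. The naive approach---insert $\phi_X(u)=\int_{\mathbb R}e^{{\bf i}uy}\,dP_X(y)$ and apply Fubini---fails, because the resulting double integral is not absolutely convergent (integrating the constant $1$ over $u\in\mathbb R$ diverges).

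To circumvent this, I would introduce a Gaussian damping factor and set $g_\sigma(x)=\frac{1}{2\pi}\int_{\mathbb R}e^{-{\bf i}xu}e^{-\sigma^2u^2/2}\phi_X(u)\,du$ for $\sigma>0$. Now inserting $\phi_X(u)=\int_{\mathbb R}e^{{\bf i}uy}\,dP_X(y)$ is legitimate: the integrand $e^{-{\bf i}xu}e^{-\sigma^2u^2/2}e^{{\bf i}uy}$ has modulus $e^{-\sigma^2u^2/2}$, which is integrable over $\mathbb R\times\mathbb R$ with respect to $du\otimes dP_X(y)$, so Fubini applies. Performing the Gaussian integral in $u$ first gives $g_\sigma=\rho_\sigma * P_X$, the convolution of $P_X$ with the heat kernel $\rho_\sigma(x)=(2\pi\sigma^2)^{-1/2}e^{-x^2/(2\sigma^2)}$; equivalently, $g_\sigma$ is the density of $X+\sigma N$, where $N$ is standard normal with $N\perp X$. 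Two limits then finish the argument. On one hand, dominated convergence (with dominating function $|\phi_X|$) gives $g_\sigma(x)\to g(x)$ pointwise as $\sigma\to0$, together with the uniform bound $|g_\sigma|\le\frac{1}{2\pi}\int_{\mathbb R}|\phi_X|=:M$. On the other hand, $X+\sigma N\stackrel{d}{\to}X$, so for any continuous compactly supported test function $h$ one has $\int_{\mathbb R}h\,g_\sigma\,dx=\mathbb E(h(X+\sigma N))\to\mathbb E(h(X))=\int_{\mathbb R}h\,dP_X$. Since $|h\,g_\sigma|\le M|h|$ is integrable on the compact support of $h$, dominated convergence lets me pass the limit on the left to obtain $\int_{\mathbb R}h\,g\,dx=\int_{\mathbb R}h\,dP_X$. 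As this holds for all such $h$, it follows that $dP_X=g\,dx$, which identifies $g$ as the continuous pdf $\psi_X$ and establishes absolute continuity.

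The main obstacle is precisely the non-absolute convergence that blocks a direct application of Fubini in part (2); the Gaussian regularization resolves it cleanly, but the price is having to manage the double limit $\sigma\to0$---keeping the pointwise convergence $g_\sigma\to g$ compatible with the weak convergence $X+\sigma N\stackrel{d}{\to}X$---which is where the uniform bound $M$ supplied by the integrability of $|\phi_X|$ does the decisive work.
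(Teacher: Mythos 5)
Your proof of part (1) is correct and is essentially the paper's own argument: expand $\phi_X$ as the Fourier series $\sum_k p_k e^{{\bf i}ku}$, note the uniform convergence, and extract $p_k$ via the orthogonality relations on $[-\pi,\pi]$. For part (2), however, you have gone beyond the paper, which does not prove the inversion formula at all---it only remarks in a footnote that a direct proof can be found in the cited references. Your Gaussian-regularization argument filling that gap is correct: the damping factor $e^{-\sigma^2u^2/2}$ restores absolute integrability so that Fubini applies, the inner Gaussian integral identifies $g_\sigma$ as the density of $X+\sigma N$, and the two limits (pointwise $g_\sigma\to g$ via dominated convergence with dominating function $|\phi_X|$, and $X+\sigma N\stackrel{d}{\to}X$ tested against $C_c$ functions) combine, thanks to the uniform bound $|g_\sigma|\le\frac{1}{2\pi}\int|\phi_X|$, to give $dP_X=g\,dx$. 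One small point you could make explicit at the end: nonnegativity of $g$ (needed to call it a pdf) follows from the identity $\int h\,g\,dx=\int h\,dP_X\ge0$ for all nonnegative $h\in C_c$ together with the continuity of $g$. This is a minor polish, not a gap.
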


\begin{proof}
	We only prove (\ref{four:inv:1}) here\footnote{A direct proof of the inversion formula (\ref{four:inv:2}) may be found in \cite[Chapter 13]{fristedt2013modern}; see also \cite[Theorem 3.2.2]{lukacs1970charac}.}. If ${\rm supp}\,P_X\subset\mathbb Z$ then it is immediate to check that $\phi_X$ is $2\pi$-periodic. Also,  
	\begin{equation}\label{four:inv:3}
		\phi_X(u)=
		\sum_{l\in\mathbb Z} e^{{\bf i}lu}p_l,
	\end{equation}
	where the convergence is uniform. In particular, $\phi_X$ is continuous. Now integrate over $[-\pi,\pi]$ the product of this series by $e^{-{\bf i}ku}$ and use the well-known orthogonality relations for the basis $\{e^{{\bf i}mx}\}_{m=-\infty}^{+\infty}$ in order to obtain (\ref{four:inv:1}). 
\end{proof} 

\begin{remark}\!\!$\bigstar$\label{four:t:dens}
	The inversion formula (\ref{four:inv:2}) means that $\psi_X=\widehat{\phi_X}$, where the hat means Fourier transform. On the other hand, (\ref{four:inv:3}) provides the Fourier series expansion of $\phi_X$ with Fourier coefficients given by (\ref{four:inv:1}). 
	\qed
\end{remark}

We now describe a simple condition on a random variable ensuring that its characteristic function is sufficiently regular.

\begin{proposition}\label{reg:char}
	If a random variable $X$ satisfies $\mathbb E(|X|^r)<+\infty$ for some $r\geq 1$ then $\phi_X\in C^r(\mathbb R)$ and 
	\[
	\phi_X^{(j)}(u)={\bf i}^j\mathbb E(X^je^{{\bf i}Xu}), \quad u\in\mathbb R,\quad j=1,\dots,r.
	\] 
	In particular, as $u\to 0$,
	\[
	\phi_X(u)=\sum_{j=0}^r\frac{{\bf i}^j}{j!}\mathbb E(X^j)u^j+o(|u|^r).
	\]
\end{proposition}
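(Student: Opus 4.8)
The plan is to justify differentiating under the expectation sign and then to invoke Taylor's theorem. First I would record that finiteness of $\mathbb E(|X|^r)$ propagates to all lower orders: since $|x|^j\le 1+|x|^r$ for every $1\le j\le r$, we get $\mathbb E(|X|^j)\le 1+\mathbb E(|X|^r)<+\infty$, so all moments up to order $r$ are available to serve as dominating functions in what follows.

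Next I would establish the differentiation formula by induction on $j$. For the base case I would write the difference quotient
\[
\frac{\phi_X(u+h)-\phi_X(u)}{h}=\mathbb E\!\left(e^{{\bf i}Xu}\,\frac{e^{{\bf i}Xh}-1}{h}\right),
\]
observe that the integrand converges pointwise to ${\bf i}Xe^{{\bf i}Xu}$ as $h\to 0$, and control it uniformly in $h$ by the elementary bound $|e^{{\bf i}\theta}-1|\le|\theta|$, which yields $\bigl|\tfrac{e^{{\bf i}Xh}-1}{h}\bigr|\le|X|\in L^1(\Omega)$. Dominated convergence then gives $\phi_X'(u)={\bf i}\,\mathbb E(Xe^{{\bf i}Xu})$. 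The inductive step repeats this verbatim: differentiating $\phi_X^{(j)}(u)={\bf i}^j\mathbb E(X^je^{{\bf i}Xu})$ produces, inside the expectation, the increment $X^je^{{\bf i}Xu}\tfrac{e^{{\bf i}Xh}-1}{h}$, now dominated by $|X|^{j+1}\in L^1(\Omega)$ whenever $j+1\le r$, so the formula advances to order $j+1$.

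To conclude $\phi_X\in C^r(\mathbb R)$ I would show that each map $u\mapsto\mathbb E(X^je^{{\bf i}Xu})$ is continuous, again by dominated convergence with dominating function $|X|^j$; hence every derivative up to order $r$ exists and is continuous. Finally, since $\phi_X$ is $r$-times continuously differentiable, Taylor's theorem with the Peano remainder gives $\phi_X(u)=\sum_{j=0}^r\frac{\phi_X^{(j)}(0)}{j!}u^j+o(|u|^r)$ as $u\to 0$, and substituting $\phi_X^{(j)}(0)={\bf i}^j\mathbb E(X^j)$ (with the $j=0$ term $\phi_X(0)=1$) yields the stated asymptotic.

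I expect the only genuine obstacle to be the justification of passing the derivative inside the expectation; everything hinges on exhibiting the integrable dominating function $|X|^j$, which is precisely where the hypothesis $\mathbb E(|X|^r)<+\infty$ enters. The inequality $|e^{{\bf i}\theta}-1|\le|\theta|$ is the device that converts the oscillatory increment into the clean, integrable bound $|X|$, making dominated convergence applicable at each stage of the induction.
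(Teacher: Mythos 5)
Your proof is correct and follows essentially the same route as the paper's: dominated convergence applied to the difference quotient, induction on the order of the derivative, and Taylor's theorem for the expansion at the origin. The only cosmetic difference is that you bound the lower moments via $|x|^j\le 1+|x|^r$ where the paper invokes H\"older's inequality, and you are somewhat more explicit about the dominating function $|X|^{j+1}$ obtained from $|e^{{\bf i}\theta}-1|\le|\theta|$.
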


\begin{proof}
	If $r=1$ we have $\mathbb E(|X|)<+\infty$ and since
	\[
	\frac{e^{{\bf i}X(u+h)}-e^{{\bf i}Xu}}{h}={\bf i}Xe^{{\bf i}Xu}+o(h)
	\]
	we may use dominated convergence to see that
	\[
	\phi_X'(u)=\lim_{h\to 0}\mathbb E\left(\frac{e^{{\bf i}X(u+h)}-e^{{\bf i}Xu}}{h}\right)=
	{\bf i}\,\mathbb E\left(Xe^{{\bf i}Xu}\right),
	\]
	which proves this case. The general assertion for $r\geq 2$ follows by induction taking into account that 
	\[
	\frac{({\bf i}X)^je^{{\bf i}X(u+h)}-({\bf i}X)^je^{{\bf i}Xu}}{h}=({\bf i}X)^je^{{\bf i}Xu}+o(h)
	\]
	and that $\mathbb E(|X|^j)\leq \mathbb E(|X|^r)^{j/r}$ by H\"older inequality. 
\end{proof}

\begin{proposition}\label{charac:p:fol}
	A real random variable satisfies:
	\begin{enumerate}
		\item $\phi_{\alpha X}(u)=\phi_X(\alpha u)$, $\alpha\in \mathbb R$. In particular, $\phi_{-X}=\overline{\phi_X}$.
		\item If $\mu=\mathbb E(X)$ is finite then $\phi_X(u)=1+u\mu{\bf i} +o(|u|)$ as $u\to 0$. 
		Moreover, if $\mu=0$ and $\sigma^2=\mathbb E(X^2)$ is finite then 
		\[
		\phi_X(u)=1-\frac{1}{2}\sigma^2u^2+o(|u|^2)
		\]
	\end{enumerate}
\end{proposition}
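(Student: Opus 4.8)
The plan is to derive both statements directly from the definition of the characteristic function together with Proposition~\ref{reg:char}, so that essentially no new machinery is required.

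For part~(1), I would start from $\phi_{\alpha X}(u)=\mathbb E(e^{{\bf i}\langle \alpha X,u\rangle})=\mathbb E(e^{{\bf i}X(\alpha u)})$, which is exactly $\phi_X(\alpha u)$; this is nothing more than a relabeling of the argument and requires no integrability beyond what is already implicit in the existence of $\phi_X$ (which always holds, since $|e^{{\bf i}Xu}|=1$ is integrable over a probability space, whence $|\phi_X|\le 1$). Specializing to $\alpha=-1$ gives $\phi_{-X}(u)=\phi_X(-u)=\mathbb E(e^{-{\bf i}Xu})$. Here I would use that $X$ and $u$ are real, so that $\overline{e^{{\bf i}Xu}}=e^{-{\bf i}Xu}$ pointwise, and that complex conjugation commutes with the (absolutely convergent) integral defining the expectation; this yields $\phi_{-X}(u)=\overline{\mathbb E(e^{{\bf i}Xu})}=\overline{\phi_X(u)}$, as claimed.

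For part~(2), I would simply invoke the asymptotic expansion recorded in Proposition~\ref{reg:char}. Under the hypothesis $\mathbb E(|X|)<+\infty$, applying that proposition with $r=1$ gives $\phi_X(u)=\mathbb E(X^0)+{\bf i}\,\mathbb E(X)u+o(|u|)$ as $u\to0$; since $P$ is a probability measure we have $\mathbb E(X^0)=\mathbb E(1)=1$, and with $\mu=\mathbb E(X)$ this is precisely $\phi_X(u)=1+u\mu{\bf i}+o(|u|)$. For the second assertion, under $\mathbb E(X^2)<+\infty$ I would apply Proposition~\ref{reg:char} with $r=2$, obtaining $\phi_X(u)=1+{\bf i}\mu u+\tfrac{{\bf i}^2}{2}\mathbb E(X^2)u^2+o(|u|^2)$; imposing $\mu=0$ and using ${\bf i}^2=-1$ together with $\sigma^2=\mathbb E(X^2)$ collapses this to $\phi_X(u)=1-\tfrac12\sigma^2u^2+o(|u|^2)$.

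There is no genuine obstacle here: both parts follow immediately once the definition and Proposition~\ref{reg:char} are in hand. The only points demanding a moment of care are to match the hypotheses correctly—finiteness of the first absolute moment for the linear expansion and of the second moment for the quadratic one—so that the regularity (and hence the Taylor expansion) furnished by Proposition~\ref{reg:char} is actually available, and to recall that the constant term is $\mathbb E(X^0)=1$ rather than $0$, which is what encodes the normalization $\phi_X(0)=1$.
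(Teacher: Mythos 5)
Your proof is correct and follows essentially the same route as the paper, which disposes of part (1) as an immediate consequence of the definition and derives part (2) by Taylor expansion via Proposition~\ref{reg:char}; you have merely written out the details the paper leaves implicit.
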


\begin{proof}
	(1) is obvious and (2) is an immediate consequence of Proposition \ref{reg:char} (after Taylor expanding $\phi_X$ around $u=0$).
\end{proof}

We now examine how the characteristic functions of independent random variables contribute to the characteristic and density functions of their sum or difference.  

\begin{proposition}\label{charac:p}
	The following properties hold for independent real random variables $X$ and $Y$:
	\begin{enumerate}
		\item $\phi_{X+Y}=\phi_X\phi_Y$.
		\item $\psi_{X+Y}=\psi_X\star \psi_Y$, where $\star$ means convolution.
		\item moreover, if $X$ and $Y$ are identically distributed then $\phi_{X-Y}=|\phi_X|^2$. 
	\end{enumerate}
\end{proposition}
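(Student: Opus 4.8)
The plan is to treat the three assertions in turn, reducing each to the factorization of expectations for independent random variables together with the definition of the characteristic function in Definition~\ref{funcchar}. For (1), I would start directly from the definition, writing $\phi_{X+Y}(u)=\mathbb E\bigl(e^{{\bf i}u(X+Y)}\bigr)=\mathbb E\bigl(e^{{\bf i}uX}e^{{\bf i}uY}\bigr)$. The key observation is that $e^{{\bf i}uX}$ and $e^{{\bf i}uY}$ are independent complex random variables: since they are measurable functions of $X$ and $Y$ respectively, one has $\mathcal F_{e^{{\bf i}uX}}\subset\mathcal F_X$ and $\mathcal F_{e^{{\bf i}uY}}\subset\mathcal F_Y$, so their independence is inherited from $X\perp Y$ directly through Definition~\ref{def:ind:levels}. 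I would then factor the expectation as $\mathbb E(e^{{\bf i}uX})\mathbb E(e^{{\bf i}uY})=\phi_X(u)\phi_Y(u)$.

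The one technical subtlety, and the main obstacle, is that Proposition~\ref{indexp} is stated for real random variables, whereas $e^{{\bf i}uX}$ is complex. I would dispose of this by expanding $e^{{\bf i}uX}=\cos(uX)+{\bf i}\sin(uX)$ and likewise for $Y$, and applying Proposition~\ref{indexp} to each of the four products of real factors (each factor being a bounded measurable function of $X$ or of $Y$, hence the relevant pairs remain independent and integrable); recombining the four terms by bilinearity reproduces $\mathbb E(e^{{\bf i}uX}e^{{\bf i}uY})=\mathbb E(e^{{\bf i}uX})\mathbb E(e^{{\bf i}uY})$. This is routine once the independence of the real and imaginary parts is noted.

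For (2), I would invoke Proposition~\ref{inddens}, which gives $\psi_{(X,Y)}(x,y)=\psi_X(x)\psi_Y(y)$, and compute the cdf of the sum: $F_{X+Y}(z)=\iint_{\{x+y\le z\}}\psi_X(x)\psi_Y(y)\,dx\,dy=\int_{\mathbb R}\psi_X(x)F_Y(z-x)\,dx$. Differentiating in $z$ (justified by Convention~\ref{conv:cont} and relation (\ref{dens:radon})) yields $\psi_{X+Y}(z)=\int_{\mathbb R}\psi_X(x)\psi_Y(z-x)\,dx=(\psi_X\star\psi_Y)(z)$, which is (2). Alternatively, (2) follows from (1) together with the fact that $\phi$ is, up to normalization, the Fourier transform of the density and the convolution theorem, invoking the inversion formula of Proposition~\ref{four:inv} for injectivity; but the direct computation seems cleaner and avoids integrability hypotheses on $|\phi_X|$.

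Finally, for (3) I would apply (1) to the independent pair $X$ and $-Y$ (note that $X\perp -Y$ since $-Y$ is a function of $Y$), obtaining $\phi_{X-Y}=\phi_X\,\phi_{-Y}$. By Proposition~\ref{charac:p:fol}(1) we have $\phi_{-Y}=\overline{\phi_Y}$, and since $X$ and $Y$ are identically distributed, $\phi_Y=\phi_X$. Hence $\phi_{X-Y}(u)=\phi_X(u)\overline{\phi_X(u)}=|\phi_X(u)|^2$, as claimed.
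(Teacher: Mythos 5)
Your proposal is correct. Parts (1) and (3) follow essentially the same route as the paper: the paper proves (1) by writing $\phi_{X+Y}(u)=\iint e^{{\bf i}(x+y)u}\,dP_{(X,Y)}$, invoking Proposition~\ref{inddens} to split the joint distribution into a product measure, and applying Fubini, which is the same factorization you carry out at the level of expectations (your careful reduction of the complex case to Proposition~\ref{indexp} via real and imaginary parts is a welcome extra precision, since the paper leaves that implicit); and (3) is handled identically in both. The genuine divergence is in (2). The paper deduces (2) from (1) by passing through the Fourier inversion formula: $\psi_{X+Y}=\widehat{\phi_{X+Y}}=\widehat{\phi_X\phi_Y}=\widehat{\phi_X}\star\widehat{\phi_Y}$, quoting the convolution theorem for the Fourier transform. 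You instead compute $F_{X+Y}(z)=\int_{\mathbb R}\psi_X(x)F_Y(z-x)\,dx$ directly from the product form of the joint density and differentiate. Your route is more elementary and self-contained: it sidesteps the integrability hypotheses needed to apply the inversion formula (\ref{four:inv:2}) to the product $\phi_X\phi_Y$, which the paper does not verify, and it stays entirely within the real-variable manipulations already used in Remark~\ref{comp:dist}. What the paper's route buys in exchange is a clean illustration of the duality between multiplication of characteristic functions and convolution of densities, which is precisely the point emphasized in Remark~\ref{cont:prods} immediately afterwards. Both arguments are valid under the paper's standing regularity conventions.
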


\begin{proof}
	For (1) note that, in terms of the joint distribution $P_{(X,Y)}$,
	\begin{eqnarray*}
		\phi_{X+Y}(u) 
		& = & \iint_{\mathbb R^2}e^{{\bf i}(x+y)u} dP_{(X,Y)}(x,y)\\
		& \stackrel{(*)}{=} & \iint_{\mathbb R^2}e^{{\bf i}xu}e^{{\bf i}yu}dP_X(x)\otimes dP_Y(y)\\
		& = & \phi_X(u)\phi_Y(u),
	\end{eqnarray*} 
	where we used Proposition \ref{inddens} in $(*)$ and Fubini in the last step. Also, by Remark \ref{four:t:dens},
	\begin{eqnarray*}
		\psi_{X+Y} & = & \widehat{\phi_{X+Y}}\\
		& \stackrel{(3)}{=} & \widehat{\phi_X\phi_Y} \\
		& \stackrel{(**)}{=} & \widehat{\phi_X}\star \widehat{\phi_X}\\
		& = & \psi_X\star\psi_Y,
	\end{eqnarray*}
	where we used a well-known property of the Fourier transform in $(**)$. Finally, (3) follows from (1) and Proposition \ref{charac:p:fol} (1).
\end{proof}

\begin{remark}\label{cont:prods}
The clear contrast between the two types of products that appear on the right-hand sides of items (1) and (2) above already indicates why it is often preferable to work with characteristic functions rather than with the pdfs themselves; see Remark \ref{rem:dir:ind:n} for an illustrative example.	
	\end{remark}

\begin{definition}\label{symm:def:new}
	A random variable $X$ is {\em symmetric} (about $0$) if $X$ and $-X$ are identically distributed.
\end{definition}

\begin{proposition}\label{symm:prop:real}
	$X$ is symmetric if and only if $\phi_X$ is $\mathbb R$-valued, in which case
	there holds 
	\begin{equation}\label{symm:prop:real:2}
		\phi_X(u)=\mathbb E(\cos(Xu)).
	\end{equation}
\end{proposition}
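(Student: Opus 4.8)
The plan is to reduce the whole statement to the elementary fact that a complex number equals its conjugate precisely when it is real, combined with the conjugation identity $\phi_{-X}=\overline{\phi_X}$ already recorded in Proposition~\ref{charac:p:fol}(1).

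First I would record, straight from the definition, that for any real random variable $X$ and any $u$,
\[
\overline{\phi_X(u)}=\mathbb E\!\left(\overline{e^{{\bf i}Xu}}\right)=\mathbb E\!\left(e^{-{\bf i}Xu}\right)=\phi_X(-u)=\phi_{-X}(u),
\]
where the last two equalities are exactly Proposition~\ref{charac:p:fol}(1). Consequently $\phi_X$ is $\mathbb R$-valued if and only if $\phi_X=\overline{\phi_X}=\phi_{-X}$, which recasts the ``realness'' condition as an identity between the characteristic functions of $X$ and of $-X$.

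With this in hand I would handle the two implications. For the forward direction, if $X$ is symmetric then by Definition~\ref{symm:def:new} we have $P_X=P_{-X}$, hence $\phi_X=\phi_{-X}=\overline{\phi_X}$ and $\phi_X$ is real. For the converse, if $\phi_X$ is real then $\phi_X=\phi_{-X}$, and here I invoke the uniqueness property of characteristic functions—the fact, discussed just before Proposition~\ref{four:inv}, that $\phi_X$ completely determines $P_X$—to conclude $P_X=P_{-X}$, i.e.\ that $X$ and $-X$ are identically distributed, so $X$ is symmetric. This appeal to uniqueness is the only non-elementary ingredient, and I expect the reverse implication to be the main obstacle: the forward direction is purely formal, whereas passing from equal characteristic functions back to equal laws genuinely requires the inversion/uniqueness theorem.

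Finally, for the displayed formula \eqref{symm:prop:real:2}, I would expand via Euler's formula, writing $\phi_X(u)=\mathbb E(\cos(Xu))+{\bf i}\,\mathbb E(\sin(Xu))$; when $\phi_X$ is $\mathbb R$-valued the imaginary part $\mathbb E(\sin(Xu))$ vanishes identically, leaving $\phi_X(u)=\mathbb E(\cos(Xu))$, as claimed.
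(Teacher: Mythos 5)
Your proof is correct and is essentially the argument the paper has in mind: its proof reads only ``Immediate from the previous results,'' and the results it points to are exactly the identity $\phi_{-X}=\overline{\phi_X}$ from Proposition~\ref{charac:p:fol}(1) and the uniqueness of the law given the characteristic function discussed before Proposition~\ref{four:inv}, which you combine in the same way. Your observation that the converse is the only place where a non-formal ingredient (uniqueness/inversion) is needed is accurate.
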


\begin{proof}
	Immediate from the previous results.
\end{proof}

\begin{definition}\label{radem:def}
	We say that ${\bm\epsilon}$ is a {\em Rademacher variable} if ${\rm supp}\,P_{\bm\epsilon}=\{-1,1\}$ with $P({\bm\epsilon}=-1)=P({\bm\epsilon}=1)=1/2$.  
\end{definition}

\begin{proposition}\label{radem:sym:eq}
	If $\{\bm\epsilon, X\}$ is independent with $X$ symmetric then $X$ and $\bm\epsilon X$ are identically distributed. 
\end{proposition}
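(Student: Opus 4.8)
The plan is to prove the identity of distributions at the analytic level, by showing that $\bm\epsilon X$ and $X$ share the same characteristic function and then appealing to the fact—recorded in Subsection~\ref{char:fct:b}—that $\phi$ determines the law. First I would compute $\phi_{\bm\epsilon X}(u)=\mathbb E(e^{{\bf i}u\bm\epsilon X})$ by conditioning on the two possible values of $\bm\epsilon$. Since $\{\bm\epsilon,X\}$ is independent, the product structure of $P_{(\bm\epsilon,X)}$ (Proposition~\ref{inddens}) together with Fubini lets me integrate out $\bm\epsilon$ first, and its two-point law turns that integration into a plain average over $\{-1,1\}$, yielding
\[
\phi_{\bm\epsilon X}(u)=\tfrac12\,\mathbb E(e^{{\bf i}uX})+\tfrac12\,\mathbb E(e^{-{\bf i}uX})=\tfrac12\bigl(\phi_X(u)+\phi_X(-u)\bigr).
\]

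Next I would identify the right-hand side. Because $\phi_X(-u)=\overline{\phi_X(u)}$ (a general fact; see Proposition~\ref{charac:p:fol}(1)), the displayed average is exactly ${\rm Re}\,\phi_X(u)$. Here the hypothesis that $X$ is symmetric enters decisively: by Proposition~\ref{symm:prop:real}, symmetry is equivalent to $\phi_X$ being $\mathbb R$-valued, whence ${\rm Re}\,\phi_X(u)=\phi_X(u)$. Combining, $\phi_{\bm\epsilon X}=\phi_X$, and since the characteristic function pins down the distribution, $\bm\epsilon X$ and $X$ are identically distributed, as claimed.

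The main obstacle is one of bookkeeping rather than depth: I must justify the factorization used in the conditioning step, namely that independence of the \emph{variables} $\bm\epsilon$ and $X$ licenses integrating against the product measure $P_{\bm\epsilon}\otimes P_X$. This is precisely the content of Proposition~\ref{inddens}, and the two-point nature of $\bm\epsilon$ makes the $\bm\epsilon$-integration elementary. One could bypass characteristic functions altogether with a direct distributional argument—conditioning on $\bm\epsilon=1$ gives $\bm\epsilon X\sim X$, and on $\bm\epsilon=-1$ gives $\bm\epsilon X=-X\sim X$ by symmetry—but the analytic route is cleaner, since it sidesteps a case analysis on events and reduces everything to the single identity $\phi_{\bm\epsilon X}=\phi_X$.
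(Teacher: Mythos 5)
Your proof is correct, but it takes a genuinely different route from the paper. The paper argues directly at the level of cumulative distribution functions: it decomposes the event $\{\bm\epsilon X\leq x\}$ according to the two values of $\bm\epsilon$, uses independence to factor each piece, and obtains $F_{\bm\epsilon X}(x)=\tfrac12\bigl(F_X(x)+F_{-X}(x)\bigr)=F_X(x)$, where the last step is exactly the definition of symmetry ($F_X=F_{-X}$). This is the ``direct distributional argument'' you mention and set aside at the end. Your analytic route via $\phi_{\bm\epsilon X}(u)=\tfrac12\bigl(\phi_X(u)+\phi_X(-u)\bigr)$ is equally valid and arguably tidier, but it purchases that tidiness at the cost of invoking the uniqueness theorem for characteristic functions, which for a general real random variable the paper only cites (Proposition~\ref{four:inv} proves inversion only in the $\mathbb Z$-valued and absolutely-integrable cases); the cdf argument needs no such input. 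One small simplification of your own step: rather than passing through $\overline{\phi_X(u)}$ and Proposition~\ref{symm:prop:real}, you can finish immediately from $\phi_X(-u)=\phi_{-X}(u)=\phi_X(u)$, since symmetry means $X$ and $-X$ are identically distributed and hence have the same characteristic function.
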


\begin{proof}
	The cdf of ${\bm\epsilon X}$ is 
	\begin{eqnarray*}
		F_{\bm\epsilon X}(x) 
		& = &
		P\left({\bm\epsilon} X\leq x\right)\\
		& = & P\left(\{X\leq x\}\cap\{\bm\epsilon=1\}\right) +
		P\left(\{-X\leq x\}\cap\{\bm\epsilon=-1\}\right),
	\end{eqnarray*}
	so independence gives 
	\begin{eqnarray*}
		F_{\bm\epsilon X}(x)
		& = & P\left(X\leq x\right)P\left({\bm\epsilon}=1\right))+
		P\left(-X\leq x\right)P\left({\bm\epsilon}=-1\right))	\\ 
		& = &\frac{1}{2}\left(F_X(x)+F_{-X}(x)\right)\\
		&  = & F_X(x),
	\end{eqnarray*}
	where in the last step we used that $F_X=F_{-X}$. 
\end{proof}

We now introduce another important notion which is closely related to characteristic functions. 

\begin{definition}\label{mgf:def}
	The {\em moment generating function} (mgf) of a random vector $X:\Omega\to\mathbb R^n$ is given by \begin{equation}\label{moment:2}
		\varphi_X({\bf u})=\mathbb E(e^{\langle X,{\bf u}\rangle}), \quad {\bf u}\in\mathbb R^n,
	\end{equation}
whenever the right-hand side is finite.	
\end{definition}

\begin{remark}\label{mgf:e:v}
	We will always assume that $\varphi_X$ is defined at least in a neighborhood $V\subset \mathbb R^n$ of the origin,  which happens if $\phi_X$ is analytic there \cite[Section 7.2]{lukacs1970charac}. In this case we have $\varphi_X({\bf u})=\phi_X(-{\bf i}{\bf u})$, ${\bf u}\in V$, a replacement we shall use in the sequel without further notice. Under these conditions, if $X\in\mathbb R$ then all the {\em moments} of $X$,
	\[
	\alpha_k(X):=\int_{-\infty}^{+\infty}x^kdP_X(s), \quad k=0,1,2,\dots,
	\]  
	are finite 
	with 
	\[
	\beta^{-1}:=\limsup_k\left(\frac{\alpha_k(X)}{k!}\right)^{1/k}<+\infty,
	\]
	so
	there holds
	\begin{equation}\label{mom:gen:exp}
		\varphi_X(u)=\sum_k\frac{\alpha_k(X)}{k!}u^k, \quad u\in (-\beta,\beta).
	\end{equation}
	Thus, $\alpha_k(X)=\varphi_X^{(k)}(0)$, which justifies the mgf terminology. 
	Note that the expectation and variance of $X$ are given by 
	\begin{equation}\label{mgf:e:v:2}
		\mathbb E(X)=\varphi_X'(0), \quad {\mathbb V}(X)=\varphi_X''(0)-(\varphi_X'(0))^2,
	\end{equation}
	with similar formulae holding for higher order centered moments.\qed
\end{remark}

\begin{example}\label{bern:trial}
	(Binomial trials as the sum of independent Bernoulli trials) 
	Set  $\mathbb N^{(n)}:=\{0,1,\cdots,n\}$, $n\geq 1$, and consider a discrete random variable $X$ whose probability distribution is supported on $\mathbb N^{(n)}$ and satisfies, for some $0<p<1$,
	\begin{equation}\label{bef:sum}
		P(X=k)=
		\binom{n}{k}
		p^k(1-p)^{n-k}, \quad k\in \mathbb N^{(n)}.
	\end{equation}
	We then say that $X\sim \mathsf{Bin}(p;n)$, the {\em binomial distribution}  determined by the pair $(p,n)$. 
	Using that the characteristic function of $X$ is 
	\begin{eqnarray*}
		\phi_{X}(u)
		& = & \sum_{k=0}^ne^{{\bf i}ku} 
		\binom{n}{k}
		p^k(1-p)^{n-k}\\
		& = & \sum_{k=0}^n 
		\binom{n}{k}
		(p e^{{\bf i}u})^k(1-p)^{n-k},  
	\end{eqnarray*}
	we obtain
	\begin{equation}\label{form:ch:sumb}
		\phi_{X}(u)=(1-p+pe^{{\bf i}u})^n. 
	\end{equation}
	Together with Proposition \ref{inddens}, Proposition \ref{charac:p} (1) and Proposition \ref{four:inv} (1), this shows  that, by possibly changing the underlying sample space, we may assume that $X= X_1+\cdots+X_n$, where $\{X_j\}_{j=1}^n$ is independent and each $X_j\sim \mathsf{Bin}(p;1)=:\mathsf{ Ber}(p)$, the {\em Bernoulli distribution}\footnote{This kind of distribution models {any} binary random experiment (such as  coin toss, for instance) with $k=1$ corresponding to success and $k=0$ corresponding to failure.}, so that $\mathbb E(X_j)=p$ and ${\mathbb V}(X_j)=p(1-p)$. Finally, 
	\begin{equation}\label{form:ch:sumb:2}
		\varphi_{X}(u)=(1-p+pe^{u})^n
	\end{equation}
	follows immediately from (\ref{form:ch:sumb}).
	\qed
\end{example}

\begin{example}\label{poisson:trials}
	(Poisson trials). 
	For each $n\geq 1$ consider a discrete random variable $Y$ whose probability distribution is
	supported on $\mathbb N_0=\{0\}\cup\mathbb N$ with 
	\[
	P(Y=k)=\frac{\lambda^ke^{-\lambda}}{k!}. 
	\]
	We represent this as $Y\sim \mathsf{Pois}(\lambda)$, the {\em Poisson distribution} with parameter $\lambda$.
	We compute: 
	\begin{eqnarray*}
		\phi_{Y}(u)
		& = & \sum_{k\geq 0} e^{{\bf i}ku}\frac{\lambda^ke^{-\lambda}}{k!} \\
		& = & e^{-\lambda}\sum_{k\geq 0}\frac{(\lambda e^{{\bf i}u})^k}{k!}\\
		& = & e^{-\lambda}e^{\lambda e^{{\bf i}u}},
	\end{eqnarray*}
	which gives 
	\begin{equation}\label{char:pois}
		\phi_{Y}(u)=e^{\lambda(e^{{\bf i}u}-1)},
	\end{equation}
	and hence
	\begin{equation}\label{char:pois:2}
		\varphi_{Y}(u)=		e^{\lambda(e^{u}-1)}.
	\end{equation}
	In particular, $\mathbb E(Y)={\mathbb V}(Y)=\lambda$. Also, if $Y\sim \mathsf{Pois}(n)$, $n\in\mathbb N$, then it follows from (\ref{char:pois}) with $\lambda=n$ that 
	we may decompose $Y=Y_1+\cdots+Y_n$ with $\{Y_j\}_{j=1}^n$ independent and each $Y_j\sim \mathsf{Pois}(1)$, so that $\mathbb E(Y_j)={\mathbb V}(Y_j)=1$. \qed
\end{example}

\begin{example}\label{geom:dist}(The geometric distribution)
Let $Z$ be a discrete random variable whose probability distribution is supported on $\mathbb N$ with
\[
	P(Z=k)=(1-p)^{k-1}p, \quad k\geq 1,
	\]
	which gives the probability that the first occurrence of success in a sequence of independent Bernoulli trials as in Example \ref{bern:trial} requires exactly $k$ steps. This is called the {\em geometric distribution} for an obvious reason:
	\[
	\sum_{k\geq 1}	P(Z=k)=\frac{p}{1-p}\sum_{k\geq 1}(1-p)^k=\frac{p}{1-p}\frac{1-p}{1-(1-p)}=1.
	\]
	Similarly,
	\begin{eqnarray*}
		\phi_Z(u)
		& = & \sum_{k\geq 1} (1-p)^{k-1}pe^{{\bf i}u}\\
		& = & \frac{p}{1-p}\sum_{k\geq 1}\left((1-p)e^{{\bf i}u}\right)^k\\
		& = & \frac{pe^{{\bf i}u}}{1-(1-p)e^{{\bf i}u}},
	\end{eqnarray*}
	so that 
	\[
	\varphi_Z(u)=\frac{p}{p-1+e^{-u}}, \quad u<-\ln (1-p).
	\]
	From this and (\ref{mgf:e:v:2}) we easily see that
	$\mathbb E(Z)={1}/{p}$ and ${\mathbb V}(Z)=({1-p})/{p^2}$.
	\qed
	\end{example}

\section{Conditioning}\label{cond:poss}

We now discuss the various ways of conditioning a given random variable. 

\subsection{Conditional probability}\label{cond:prob}
Let $X:\Omega\to\mathbb R^m$ and $Y:\Omega\to\mathbb R^p$ be random vectors with distributions $P_X$ and $P_Y$, respectively, defined on a probability space $(\Omega,\mathcal F,P)$. 
We denote by $P_{(X,Y)}$ the joint distribution of $(X,Y):\Omega\to\mathbb R^m\times\mathbb R^p$. 
As usual, we assume that all these distributions are absolutely continuous with respect to the Lebesgue measure and therefore admit probability density functions (pdfs). 
Given $B\in\mathcal B^p$, our aim is to define the {\em conditional probability} that $Y\in B$ given a realization ${\bf x}\in\mathbb R^m$ of $X$. 

\begin{definition}\label{trans:fc}
	A {\em Markov kernel} is a map $\kappa:\mathbb R^m\times\mathcal B^p\to [0,1]$ such that:
	\begin{itemize}
		\item for each $B\in\mathcal B^p$, the map ${\bf x}\mapsto \kappa({\bf x},B)$ is $\mathcal B^m$-measurable;
		\item for each ${\bf x}\in\mathbb R^m$, the map $B\mapsto \kappa({\bf x},B)$ is a probability measure on $(\mathbb R^p,\mathcal B^p)$.
	\end{itemize}
\end{definition}

Given a Markov kernel $\kappa$ and a probability measure $\mu$ on $(\mathbb R^m,\mathcal B^m)$, the rule  
\[
(A,B)\longmapsto (\mu\star\kappa)(A,B):=\int_A\kappa({\bf x},B)\,d\mu({\bf x}), 
\qquad (A,B)\in \mathcal B^m\times\mathcal B^p,
\]
defines a probability measure on $(\mathbb R^m\times\mathbb R^p,\mathcal B^m\times\mathcal B^p)$. The following result, known as the {\em disintegration theorem}, asserts the existence of a unique Markov kernel that plays the role of a ``conditional quotient'' of $P_{(X,Y)}$ by $P_X$ under the convolution operation $\star$.

\begin{proposition}\label{exist:trans}
	There exists a unique Markov kernel $P_{Y|X}$ such that 
	\[
	P_{(X,Y)}=P_X\star P_{Y|X}.
	\]
	Equivalently,
	\begin{equation}\label{exist:trans:02}
		P_{(X,Y)}(A,B)=\int_A P_{Y|X}({\bf x},B)\,dP_X({\bf x}), 
		\qquad (A,B)\in \mathcal B^m\times\mathcal B^p. 
	\end{equation}
\end{proposition}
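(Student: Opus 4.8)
The plan is to exploit the standing assumption of this subsection that $P_X$, $P_Y$ and $P_{(X,Y)}$ are absolutely continuous, so that the whole argument can be carried out at the level of densities, bypassing the measurable-selection machinery required for the general disintegration theorem. The natural candidate for the kernel is built from the \emph{conditional density}
\[
\psi_{Y\mid X}({\bf y}\mid{\bf x}):=\frac{\psi_{(X,Y)}({\bf x},{\bf y})}{\psi_X({\bf x})}
\]
defined on the set $\{\psi_X>0\}$ and extended by any fixed reference pdf $\rho$ on $\{\psi_X=0\}$; one then sets $P_{Y\mid X}({\bf x},B):=\int_B\psi_{Y\mid X}({\bf y}\mid{\bf x})\,d{\bf y}$. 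First I would check that this is a Markov kernel in the sense of Definition \ref{trans:fc}: measurability of ${\bf x}\mapsto P_{Y\mid X}({\bf x},B)$ follows from Tonelli's theorem (the densities being nonnegative), while $B\mapsto P_{Y\mid X}({\bf x},B)$ is a probability measure because $\int_{\mathbb R^p}\psi_{(X,Y)}({\bf x},{\bf y})\,d{\bf y}=\psi_X({\bf x})$ by Proposition \ref{pdf:marg}, so the total mass is $\psi_X({\bf x})/\psi_X({\bf x})=1$ on $\{\psi_X>0\}$ and equals $1$ on $\{\psi_X=0\}$ since $\rho$ is a density.

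Next I would verify the disintegration identity (\ref{exist:trans:02}). On a measurable rectangle $A\times B$ one computes directly, using $dP_X=\psi_X\,d{\bf x}$,
\[
\int_A P_{Y\mid X}({\bf x},B)\,dP_X({\bf x})=\int_A\!\int_B\frac{\psi_{(X,Y)}({\bf x},{\bf y})}{\psi_X({\bf x})}\,\psi_X({\bf x})\,d{\bf y}\,d{\bf x}=\int_A\!\int_B\psi_{(X,Y)}({\bf x},{\bf y})\,d{\bf y}\,d{\bf x},
\]
the cancellation of $\psi_X$ being legitimate since $\{\psi_X=0\}$ is $P_X$-null; by Tonelli this last integral equals $P_{(X,Y)}(A\times B)$, which is the quantity denoted $P_{(X,Y)}(A,B)$ in the statement. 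Because the measurable rectangles form a $\pi$-system generating $\mathcal B^m\otimes\mathcal B^p$, the agreement of the two sides extends to the whole product $\sigma$-algebra by the $\pi$--$\lambda$ theorem, which establishes existence.

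For uniqueness, suppose $\kappa$ and $\kappa'$ both satisfy the identity. Fixing $B$ and letting $A$ range over $\mathcal B^m$ gives $\int_A\bigl(\kappa({\bf x},B)-\kappa'({\bf x},B)\bigr)\,dP_X({\bf x})=0$ for every $A$, whence $\kappa(\cdot,B)=\kappa'(\cdot,B)$ holds $P_X$-almost everywhere. The main obstacle is the upgrade from this ``$P_X$-a.e.\ for each fixed $B$'' statement to the genuinely measure-theoretic conclusion ``$\kappa({\bf x},\cdot)=\kappa'({\bf x},\cdot)$ as measures for $P_X$-a.e.\ ${\bf x}$'': here I would let $B$ range over a \emph{countable} $\pi$-system generating $\mathcal B^p$ (for instance rectangles with rational endpoints), collect the countably many exceptional null sets into a single $P_X$-null set, and then invoke uniqueness of measures agreeing on a generating $\pi$-system. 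This also pins down the precise sense of ``unique'', namely $P_X$-almost everywhere: on $\{\psi_X=0\}$ the choice of $\rho$ is entirely free, so pointwise uniqueness cannot hold and is not claimed.
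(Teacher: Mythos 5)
Your proof is correct, but it takes a genuinely different route from the paper: the paper does not prove the proposition at all, it simply cites the general disintegration theorem from Klenke (Chapter 8), which holds for arbitrary random vectors without any density assumption. You instead exploit the standing absolute-continuity hypothesis of the subsection and build the kernel explicitly from the conditional density $\psi_{(X,Y)}({\bf x},{\bf y})/\psi_X({\bf x})$, checking the kernel axioms via Proposition~\ref{pdf:marg}, verifying \eqref{exist:trans:02} on rectangles and extending by the $\pi$--$\lambda$ theorem, and upgrading ``a.e.\ for each fixed $B$'' to ``a.e.\ as measures'' through a countable generating $\pi$-system. This is essentially the construction the paper itself gestures at in Remark~\ref{dispens} and formula \eqref{p:y:x=x}, now carried out rigorously. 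What the citation buys is generality (no densities needed, and pointwise regularity of the kernel on standard Borel spaces); what your argument buys is a short, self-contained proof that needs no measurable-selection machinery and makes transparent why the conditional pdf in \eqref{p:y:x=x} is the ``right'' object. Your observation that uniqueness can only hold $P_X$-almost everywhere is accurate and is in fact the honest reading of the word ``unique'' in the statement; the one cosmetic point worth flagging is that the proposition asserts the identity for pairs $(A,B)$, i.e.\ on rectangles, so your $\pi$--$\lambda$ extension to the full product $\sigma$-algebra is a (welcome) strengthening rather than a required step for existence, though it is exactly what you need for the uniqueness argument to have teeth.
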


\begin{proof}
	See \cite[Chapter~8]{klenke2013probability}.
\end{proof}

The kernel $P_{Y|X}$ thus provides the precise object that realizes, in measure-theoretic terms, the intuitive idea of conditioning on a given value of $X$.

\begin{definition}\label{prob:trans:x}
	If ${\bf x}\in \mathbb R^m$ we define the {\em conditional probability} by
	\[
	P_{Y|_{X={\bf x}}}=P_{Y|X}({\bf x},\cdot),
	\]
	which is a probability measure in $(\mathbb R^p,\mathcal B^p)$. 
\end{definition}

Thus, 
\begin{equation}\label{cond:prob:22}
	P(Y\in B|_{X={\bf x}}):=	P_{Y|_{X={\bf x}}}(B)=P_{Y|X}({\bf x},B), \quad B\in \mathcal B^p,
\end{equation}
should be interpreted as the {\em conditional probability} that $Y\in B$ given that $X={\bf x}$.
It is immediate from (\ref{exist:trans:02}) that the corresponding pdf's satisfy 
\begin{equation}\label{p:y:x=x}
	\psi_{Y|_{X={\bf x}}}({\bf y})=\frac{\psi_{(X,Y)}({\bf x},{\bf y})}{\psi_X({\bf x})}, \quad {\bf y}\in \mathbb R^p,
\end{equation}
whenever $\psi_X({\bf x})>0$, 
so that the corresponding {\em conditional expectation function} and {\em conditional covariance function} are 
\begin{equation}\label{exp:x}
	\mathbb E(Y|_{X={\bf x}})=\int_{\mathbb R^p}{\bf y}\psi_{Y|_{X={\bf x}}}({\bf y})d{\bf y} 
\end{equation}
and 
\begin{equation}\label{exp:v}
	{\mathbb C}(Y|_{X={\bf x}})=\int_{\mathbb R^p}({\bf y}-\mathbb E(Y|_{X={\bf x}})^2\psi_{Y|_{X={\bf x}}}({\bf y})d{\bf y}, 
\end{equation}
respectively.

\begin{remark}\label{dispens}
	Whenever possible, we may simply dispense with the existence theory sketched above and adopt (\ref{p:y:x=x}) as the definition of the {\em conditional pdf} of $Y$ given $X={\bf x}$.
	\qed
\end{remark}

We now turn to a few elementary yet useful consequences of the preceding theory. 
For simplicity, throughout the remainder of this subsection we assume that all random variables are real-valued, continuous, and possess probability density functions that are strictly positive on their domains. 
In particular, with a slight abuse of notation, (\ref{p:y:x=x}) may be interpreted as the pdf of the \emph{conditioned random variable} $Y|_{X={\bf x}}$, thus 
allowing the natural extension of the concepts introduced so far for random variables to this broader setting.

\begin{proposition}\label{mean:cond}
	The following hold:
	\begin{enumerate}
		\item If $\{X|_{Z={\bf z}},Y|_{Z={\bf z}}\}$ is independent for a given ${\bf z}$ then 
		\[
		\mathbb E(XY|_{Z={\bf z}})=\mathbb E(X|_{Z={\bf z}})\mathbb E(Y|_{Z={\bf z}});
		\]
		\item If $\{X,Y\}$ is independent then  $\mathbb E(Y)=\mathbb E(Y|_{X={\bf x}})$ for any ${\bf x}$; 
		\item If $\{X,Y,Z\}$ is independent then $\{X|_{Z={\bf z}},Y|_{Z={\bf z}}\}$ is independent for any ${\bf z}$.
	\end{enumerate}
\end{proposition}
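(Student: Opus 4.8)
The plan is to reduce both assertions to the factorization criterion for independence (Proposition \ref{inddens}) together with the formula (\ref{p:y:x=x}) for the conditional pdf, so that each part becomes a short computation with densities. Throughout I rely on the standing assumption, recorded just before the proposition, that all densities are strictly positive, so that the divisions in (\ref{p:y:x=x}) are unambiguous.

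For part (1), I would begin by invoking Proposition \ref{inddens} to write $\psi_{(X,Y)}({\bf x},{\bf y})=\psi_X({\bf x})\psi_Y({\bf y})$. Substituting this into (\ref{p:y:x=x}) and cancelling the common factor $\psi_X({\bf x})$ yields $\psi_{Y|_{X={\bf x}}}({\bf y})=\psi_Y({\bf y})$; that is, conditioning on $X$ leaves the law of $Y$ unaffected. Inserting this into the definition (\ref{exp:x}) of the conditional expectation then gives $\mathbb E(Y|_{X={\bf x}})=\int_{\mathbb R^p}{\bf y}\,\psi_Y({\bf y})\,d{\bf y}=\mathbb E(Y)$, independently of ${\bf x}$, as required.

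For part (2), I would first record the full factorization $\psi_{(X,Y,Z)}=\psi_X\psi_Y\psi_Z$, again by Proposition \ref{inddens}. The key computation is the conditional joint pdf of $(X,Y)$ given $Z={\bf z}$, which by (\ref{p:y:x=x}) equals $\psi_{(X,Y,Z)}({\bf x},{\bf y},{\bf z})/\psi_Z({\bf z})=\psi_X({\bf x})\psi_Y({\bf y})$. Separately I would marginalize: by Proposition \ref{pdf:marg} and $\int\psi_Y=1$ one gets $\psi_{(X,Z)}=\psi_X\psi_Z$, hence $\psi_{X|_{Z={\bf z}}}({\bf x})=\psi_X({\bf x})$, and symmetrically $\psi_{Y|_{Z={\bf z}}}({\bf y})=\psi_Y({\bf y})$. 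Comparing the two expressions shows $\psi_{(X,Y)|_{Z={\bf z}}}({\bf x},{\bf y})=\psi_{X|_{Z={\bf z}}}({\bf x})\,\psi_{Y|_{Z={\bf z}}}({\bf y})$, which is precisely the factorization criterion of Proposition \ref{inddens} applied to the conditioned pair.

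The main obstacle is conceptual rather than computational: one must treat the conditioned variables $X|_{Z={\bf z}}$ and $Y|_{Z={\bf z}}$ as genuine random variables and make sense of their joint law before it is even meaningful to speak of their independence. The natural reading—licensed by the abuse of notation flagged immediately before the proposition—is to declare the joint pdf of the pair to be the conditional joint density $\psi_{(X,Y)|_{Z={\bf z}}}$ obtained by conditioning the triple on $Z={\bf z}$. With this convention fixed, independence of the conditioned pair is exactly the factorization of that joint conditional density, and the density computation above closes the argument.
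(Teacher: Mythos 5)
Your proposal is correct and follows essentially the same route as the paper's own proof: factorize the joint density via Proposition \ref{inddens}, cancel the conditioning density in (\ref{p:y:x=x}), and in part (2) identify the resulting product $\psi_X\psi_Y$ with $\psi_{X|_{Z={\bf z}}}\psi_{Y|_{Z={\bf z}}}$ using the identity from part (1). Your extra remarks on marginalization and on interpreting the conditioned pair as a random vector only make explicit what the paper leaves implicit.
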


\begin{proof}
	(1) is the ``conditional'' version of Proposition \ref{indexp}, so its proof follows as in Remark \ref{one:line} by noticing that, under the conditional independence assumption, there holds 
	\[
	\psi_{(X|_{Z={\bf z}},Y|_{Z={\bf z}})}({\bf x},{\bf y})=\psi_{X|_{Z={\bf z}}}({\bf x})
	\psi_{Y|_{Z={\bf z}}}({\bf y}),
	\]
	the ``conditional'' analogue of Proposition \ref{inddens}.
	For (2) note that by (\ref{p:y:x=x}) and Proposition \ref{inddens},
	\begin{equation}\label{ind:con:st}
		\psi_{Y|_{X={\bf x}}}({\bf y})=\frac{\psi_{(X,Y)}({\bf x},{\bf y})}{\psi_X({\bf x})}=\frac{\psi_X({\bf x})\psi_Y({\bf y})}{\psi_X({\bf x})}=
		\psi_Y({\bf y}).
	\end{equation}	
	As for (3), again by Proposition \ref{inddens},
	\begin{eqnarray*}
		\psi_{\left({X|_{Z={\bf z}}},{Y|_{Z={\bf z}}}\right)}({\bf x},{\bf y})
		& = & \frac{\psi_{(X,Y,Z)}({\bf x},{\bf y},{\bf z})}{\psi_Z({\bf z})}\\
		& = & 
		\frac{\psi_X({\bf x})\psi_Y({\bf y})\psi_Z({\bf z})}{\psi_Z({\bf z})}\\
		& \stackrel{(\ref{ind:con:st})}{=} &
		\psi_{X|_{Z={\bf z}}}({\bf x}) \psi_{Y|_{Z={\bf z}}}({\bf y}),
	\end{eqnarray*}
	and the result follows from the ``conditional'' version of Proposition \ref{inddens}.
\end{proof}		

\begin{remark}\label{rem:id:as}
	In general the converses to both items (2) and (3) in Proposition \ref{mean:cond} fail to hold true if the independence assumptions are removed. \qed
\end{remark}

Finally, we present another consequence of (\ref{p:y:x=x}) with notable applications to the so-called Bayesian approach to Statistical Inference; see Subsection \ref{bay:way}.	

\begin{theorem}\label{bayes}(Bayes rule)
	If both $\psi_X$ and $\psi_Y$ are everywhere positive then 
	\[
	\psi_{X|_{Y={\bf y}}}({\bf x})=\frac{\psi_{Y|_{X={\bf x}}}({\bf y})\psi_X({\bf x})}{\psi_Y({\bf y})}, \quad ({\bf x},{\bf y})\in\mathbb R^m\times\mathbb R^q,
	\]
	with 
	\[
	\psi_Y({\bf y})=\int_{\mathbb R^m}\psi_{Y|_{X={\bf x}}}({\bf y})\psi_X({\bf x})d{\bf x}.
	\]
\end{theorem}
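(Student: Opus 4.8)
The plan is to exploit the fundamental symmetry hidden in the conditional density formula (\ref{p:y:x=x}). That formula was stated for conditioning $Y$ on $X={\bf x}$, but its derivation is entirely symmetric in the two random vectors, so an identical argument (swapping the roles of $X$ and $Y$) yields the companion identity
\[
\psi_{X|_{Y={\bf y}}}({\bf x})=\frac{\psi_{(X,Y)}({\bf x},{\bf y})}{\psi_Y({\bf y})},
\]
valid wherever $\psi_Y({\bf y})>0$, which the hypothesis of everywhere-positive densities guarantees. This is the single conceptual step; everything else is algebraic bookkeeping.

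First I would write down both factorizations of the joint density that (\ref{p:y:x=x}) encodes, namely
\[
\psi_{(X,Y)}({\bf x},{\bf y})=\psi_{Y|_{X={\bf x}}}({\bf y})\,\psi_X({\bf x})
\qquad\text{and}\qquad
\psi_{(X,Y)}({\bf x},{\bf y})=\psi_{X|_{Y={\bf y}}}({\bf x})\,\psi_Y({\bf y}).
\]
Substituting the first expression for $\psi_{(X,Y)}$ into the symmetric identity above immediately delivers the claimed formula
\[
\psi_{X|_{Y={\bf y}}}({\bf x})=\frac{\psi_{Y|_{X={\bf x}}}({\bf y})\,\psi_X({\bf x})}{\psi_Y({\bf y})}.
\]
The strict positivity of $\psi_X$ and $\psi_Y$ is exactly what is needed to ensure that both conditional densities are legitimately defined and that we never divide by zero.

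To obtain the integral representation of the normalizing denominator $\psi_Y({\bf y})$, I would invoke the marginalization identity from Proposition~\ref{pdf:marg}, which expresses the marginal density as an integral of the joint density over the complementary variables. Combining this with the first factorization of $\psi_{(X,Y)}$ gives
\[
\psi_Y({\bf y})=\int_{\mathbb R^m}\psi_{(X,Y)}({\bf x},{\bf y})\,d{\bf x}
=\int_{\mathbb R^m}\psi_{Y|_{X={\bf x}}}({\bf y})\,\psi_X({\bf x})\,d{\bf x},
\]
as desired. I do not anticipate any genuine obstacle here: the proof is essentially a two-line computation once one recognizes that (\ref{p:y:x=x}) applies symmetrically. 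The only point requiring a moment of care is confirming that the positivity hypothesis is strong enough to legitimize conditioning on both variables simultaneously, but this is precisely why it was imposed in the statement.
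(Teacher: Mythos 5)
Your proposal is correct and follows essentially the same route as the paper: exchange the roles of $X$ and $Y$ in (\ref{p:y:x=x}), equate the two factorizations of the joint density $\psi_{(X,Y)}({\bf x},{\bf y})$, and obtain the normalizing integral by marginalizing the joint density via Proposition~\ref{pdf:marg}. No gaps.
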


\begin{proof}
	It follows from (\ref{p:y:x=x}) that 
	\[
	{\psi_{(X,Y)}({\bf x},{\bf y})}=
	\psi_{Y|_{X={\bf x}}}({\bf y})\psi_X({\bf x})
	=
	\psi_{X|_{Y={\bf y}}}({\bf x})\psi_Y({\bf y}),
	\]
	and the result follows.
\end{proof}

\subsection{Conditional expectation}\label{cond:expc}

Here we discuss how to condition a random variable with respect to a $\sigma$-subalgebra and then relate this to the discussion in the previous subsection (via Proposition \ref{int:prob}).

\begin{proposition}
	\label{condexp} Let $(\Omega,\mathcal F,P)$ be a probability space and let $\mathcal G\subset\mathcal F$ a $\sigma$-subalgebra. Given a random vector $X:\Omega\to\mathbb R^n$ there exists a unique random vector $Y:\Omega\to\mathbb R^n$ which is $\mathcal G$-measurable and satisfies 
	\[
	\int_{G}YdP=\int_G XdP,\quad G\in\mathcal G.
	\]
\end{proposition}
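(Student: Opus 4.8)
The plan is to reduce to the scalar case and then invoke the Radon--Nikodym theorem, which is the natural instrument whenever one must produce a density against a prescribed measure. Since a random vector $X=(X_1,\dots,X_n)$ can be treated componentwise---the defining identity $\int_G Y\,dP=\int_G X\,dP$ holds for the vector if and only if it holds for each coordinate---I would first assume $n=1$, so that $X\colon\Omega\to\mathbb R$ is an integrable random variable, i.e. $X\in L^1(\Omega)$.

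For existence I would first treat the case $X\geq 0$. Define the set function $\nu(G)=\int_G X\,dP$ for $G\in\mathcal G$. One checks readily that $\nu$ is a finite measure on $(\Omega,\mathcal G)$ (finiteness because $X\in L^1(\Omega)$, countable additivity by monotone convergence) and that $\nu\ll P|_{\mathcal G}$, since $P(G)=0$ forces $\int_G X\,dP=0$. The Radon--Nikodym theorem then furnishes a $\mathcal G$-measurable density $Y:=d\nu/d(P|_{\mathcal G})$, which is exactly the desired random variable: by construction $\int_G Y\,dP=\nu(G)=\int_G X\,dP$ for all $G\in\mathcal G$. For a general integrable $X$ I would split $X=X^+-X^-$ into its positive and negative parts, apply the nonnegative case to each, and set $Y$ equal to the difference of the two resulting densities.

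For uniqueness, suppose $Y$ and $Y'$ are two $\mathcal G$-measurable candidates. Then $\int_G(Y-Y')\,dP=0$ for every $G\in\mathcal G$. Because $Y-Y'$ is itself $\mathcal G$-measurable, the sets $\{Y-Y'>0\}$ and $\{Y-Y'<0\}$ lie in $\mathcal G$; choosing $G$ to be each of these in turn forces $P(Y-Y'>0)=P(Y-Y'<0)=0$, so that $Y=Y'$ almost surely. (As usual, uniqueness is understood up to $P$-null sets.)

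The main obstacle is conceptual rather than computational: one must recognize that the defining property is precisely the assertion that $Y$ is a Radon--Nikodym derivative of $\nu$ with respect to $P$, with \emph{both} viewed as measures on the smaller $\sigma$-algebra $\mathcal G$ rather than on $\mathcal F$. Arranging for the absolute continuity and, crucially, the measurability of $Y$ to be formulated relative to $\mathcal G$ is the delicate point; once the restriction to $\mathcal G$ is correctly in place, existence is immediate from Radon--Nikodym and uniqueness follows from the standard argument of testing against the sign sets of $Y-Y'$.
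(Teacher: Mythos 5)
Your argument is correct and follows essentially the same route as the paper: define $Q(G)=\int_G X\,dP$ on $\mathcal G$, note $Q\ll P|_{\mathcal G}$, and take $Y$ to be the Radon--Nikodym derivative. You merely flesh out details the paper leaves implicit (componentwise reduction, the positive/negative splitting for signed integrands, and the uniqueness argument via the sign sets of $Y-Y'$), all of which are sound.
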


\begin{proof}
	Define a measure $Q$ on $\mathcal P$ by
	\[
	Q(G)=\int_GXdP,\quad G\in \mathcal G. 
	\] 
	Clearly, $Q$ is absolutely continuous with respect to $P|_{\mathcal G}$. Now take 
	$Y=dQ/dP|_{\mathcal G}$. 	
\end{proof}

\begin{remark}\label{l2}
	Given $X\in L^2(\Omega,\mathcal F,P)$ consider the closed subspace $L^2(\Omega,\mathcal G,P|_{\mathcal G})\subset L^2(\Omega,\mathcal F,P)$ and let $\pi:L^2(\Omega,\mathcal F,P)\to L^2(\Omega,\mathcal G,P|_{\mathcal G})$ be the standard orthogonal projection. It then follows that $Y=\pi X$. \qed
\end{remark}

\begin{definition}\label{cond:exp:def}
	We call $Y=\mathbb E(X|\mathcal G)$ the {\em conditional expectation} of $X$ given $\mathcal G$. If $\mathcal G=\mathcal F_Z$ for some other $Z$ then we set
	$\mathbb E(X|Z):=\mathbb E(X|\mathcal F_Z)$.
\end{definition}

Note that $\mathbb E(X|\mathcal G)$ is characterized by
\begin{equation}\label{charac:cd}
	\int_G\langle Z,\mathbb E(X|\mathcal G)\rangle dP=\int_G\langle Z,X\rangle dP,\quad G\in\mathcal G,
\end{equation}
for any $Z:\Omega\to\mathbb R^n$ $\mathcal G$-measurable. 

\begin{proposition}\label{ceprop}
	Conditional expectation satisfies the following properties:
	\begin{enumerate}
		\item $\mathbb E(aX+bX'|\mathcal G)=a\mathbb E(X|\mathcal G)+b\mathbb E(X'|\mathcal G)$;
		\item $\mathbb E(\mathbb E(X|\mathcal G))=\mathbb E(X)$;
		\item if $X$ is $\mathcal G$-measurable then $\mathbb E(X|\mathcal G)=X$;
		\item if $X\perp\!\!\perp\mathcal G$ then $\mathbb E(X|\mathcal G)=\mathbb E(X)$;
		\item if $\mathcal G\subset\mathcal H$ then $\mathbb E(X|\mathcal G)=\mathbb E(\mathbb E(X|\mathcal H)|\mathcal G)$; 
		\item if $X\perp\!\!\perp\mathcal G$ 
		then $\mathbb E(XY|\mathcal G)=\mathbb E(X)\mathbb E(Y|\mathcal G)$.
		In particular, if $Y$ is $\mathcal G$-measurable then 
		$\mathbb E(XY|\mathcal G)=Y\mathbb E(X)$;
		\item if $X$ is $\mathcal G$-measurable then $\mathbb E(XY|\mathcal G)=XE(Y|\mathcal G)$.
	\end{enumerate}
\end{proposition}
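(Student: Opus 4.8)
The plan is to run all seven items off the single characterization supplied by Proposition~\ref{condexp}: $\mathbb E(X|\mathcal G)$ is the \emph{unique} $\mathcal G$-measurable random vector $W$ with $\int_G W\,dP=\int_G X\,dP$ for every $G\in\mathcal G$. (Remark~\ref{l2} adds the orthogonal-projection picture, which settles the $L^2$ case and guides the computations.) For each identity the recipe is identical: propose a candidate $W$, confirm it is $\mathcal G$-measurable, check the integral balance against an arbitrary $G\in\mathcal G$, and conclude by uniqueness.

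Items (1)--(5) I would settle directly. For (1), $a\mathbb E(X|\mathcal G)+b\mathbb E(X'|\mathcal G)$ is $\mathcal G$-measurable and balances $aX+bX'$ by linearity of the integral. For (2) take $G=\Omega$ in the defining relation. For (3), when $X$ is already $\mathcal G$-measurable it is itself an admissible candidate. For (4), the constant $\mathbb E(X)$ is $\mathcal G$-measurable, and $X\bot\mathcal G$ forces $X\bot{\bf 1}_G$, so Proposition~\ref{indexp} gives $\int_G X\,dP=\mathbb E(X{\bf 1}_G)=\mathbb E(X)P(G)=\int_G\mathbb E(X)\,dP$. For the tower property~(5), the right-hand side is $\mathcal G$-measurable by construction, and for $G\in\mathcal G\subset\mathcal H$ one removes the outer conditioning (legitimate since $G\in\mathcal G$) and then the inner one (legitimate since $G\in\mathcal H$), arriving at $\int_G X\,dP$.

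The real content sits in the pull-out identities, where I take the multiplier to be real-valued. For (7) the candidate is $X\mathbb E(Y|\mathcal G)$, plainly $\mathcal G$-measurable; the needed balance $\int_G X\mathbb E(Y|\mathcal G)\,dP=\int_G XY\,dP$ is exactly (\ref{charac:cd}) tested against the $\mathcal G$-measurable function $Z=X{\bf 1}_G$, immediate for bounded $X$. To reach general $X$ I would argue by the usual ladder: first $X={\bf 1}_A$ with $A\in\mathcal G$ (trivial, as $G\cap A\in\mathcal G$), then simple $\mathcal G$-measurable $X$ by linearity, then $X\ge 0$ by monotone convergence, and finally arbitrary $X=X^+-X^-$. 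For (6), under the hypothesis that $X$ is independent of the $\sigma$-algebra generated by $\mathcal G$ and $Y$ together, the candidate is $\mathbb E(X)\,\mathbb E(Y|\mathcal G)$; here $\int_G XY\,dP=\mathbb E\big(X\,(Y{\bf 1}_G)\big)=\mathbb E(X)\int_G Y\,dP$ by Proposition~\ref{indexp}, since $Y{\bf 1}_G$ lies in the $\sigma$-algebra from which $X$ is independent, and this matches $\int_G\mathbb E(X)\mathbb E(Y|\mathcal G)\,dP$. The ``in particular'' clause is the specialization in which $Y$ is $\mathcal G$-measurable, so that $\mathbb E(Y|\mathcal G)=Y$ by~(3); there $Y{\bf 1}_G$ is $\mathcal G$-measurable and the bare hypothesis $X\bot\mathcal G$ already suffices.

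I expect the main obstacle to be the integrability bookkeeping in~(7): extending from bounded to general multipliers requires $XY$ and $X\mathbb E(Y|\mathcal G)$ to stay in $L^1$, so the monotone-convergence step must be carried out on nonnegative pieces before signs are recombined. A secondary point is fixing the correct hypothesis in~(6): the factorization needs $X$ independent of $\mathcal G$ \emph{and} of $Y$ jointly (it fails under $X\bot\mathcal G$ alone, e.g.\ for $Y=X$ with positive variance), collapsing to the stated $X\bot\mathcal G$ precisely in the $\mathcal G$-measurable case of the ``in particular'' clause.
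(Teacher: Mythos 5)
Your proposal is correct and follows the same strategy as the paper: every item is verified by exhibiting a $\mathcal G$-measurable candidate, checking the integral balance $\int_G(\cdot)\,dP=\int_G X\,dP$ against arbitrary $G\in\mathcal G$, and invoking the uniqueness clause of Proposition~\ref{condexp}. Items (1)--(5) and (7) match the paper's argument essentially line for line; for (7) the paper likewise tests (\ref{charac:cd}) against $\mathcal G$-measurable multipliers, and your explicit ladder (indicators, simple functions, monotone convergence, $X=X^+-X^-$) merely makes precise the integrability bookkeeping that the paper leaves implicit.

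The one place where you genuinely depart from the paper, and rightly so, is item (6). You observe that the hypothesis $X\bot\mathcal G$ alone does not yield $\mathbb E(XY|\mathcal G)=\mathbb E(X)\,\mathbb E(Y|\mathcal G)$ for arbitrary $Y$, and your counterexample $Y=X$ with $\mathrm{var}(X)>0$ is decisive: there $\mathbb E(XY|\mathcal G)=\mathbb E(X^2)$ while $\mathbb E(X)\,\mathbb E(Y|\mathcal G)=\mathbb E(X)^2$. The paper's own proof of (6) quietly assumes $X\bot Y{\bf 1}_G$ for all $G\in\mathcal G$, which is a strictly stronger hypothesis than the one stated in the proposition; it holds automatically when $Y$ is $\mathcal G$-measurable (the ``in particular'' clause) or when $X$ is independent of $\sigma(\mathcal G\cup\mathcal F_Y)$, which is the corrected hypothesis you propose. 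Your formulation is the one under which the general identity is actually true, so this is not a gap in your argument but a repair of the statement; the only caution is that if the proposition is to be cited later in the general form of (6), the strengthened independence assumption must travel with it.
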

\begin{proof}
	(1) and (2) are obvious. For (3), just think of $X:(\Omega,\mathcal G)\to\mathbb R^n$ as a random vector. For (4),
	\begin{eqnarray*}
		\int_GXdP & = & \int_{\Omega} X{\bf 1}_G dP\\
		& \stackrel{(*)}{=} & \int_{\Omega} XdP\int_{\Omega}\chi_GdP\\
		& = & \mathbb E(X)P(G)\\
		& = & \int_G\mathbb E(X)dP,
	\end{eqnarray*}
	where the assumption was used in $(*)$. The result then follows by uniqueness. For (5), note that $G\in\mathcal G$ implies $G\in\mathcal H$ and hence
	\[
	\int_G\mathbb E(X|\mathcal H)dP=\int_GXdP=\int_G\mathbb E(X|\mathcal G)dP.
	\]
	Also, (6) is the obvious generalization of (4): using that 
	$X\perp\!\!\perp Y{\bf 1}_G$, $G\in\mathcal G$, we see that 
	\[\mathbb E(XY|\mathcal G)=
	\mathbb E(X|\mathcal G)\mathbb E(Y|\mathcal G)=
	\mathbb E(X)\mathbb E(Y|\mathcal G).
	\]
	 Finally, if $G\in\mathcal G$,
	\begin{eqnarray*}
		\int_G X\mathbb E(Y|\mathcal G) dP
		& \stackrel{(3)}{=} & 
		\int_G 
		\mathbb E(X|\mathcal G)
		\mathbb E(Y|\mathcal G) dP \\
		& \stackrel{(\ref{charac:cd})}{=} & 
		\int_G 
		\mathbb E(X|\mathcal G)
		Y dP\\
		& \stackrel{(3)}{=} &
		\int_G 
		X
		Y dP,
	\end{eqnarray*}
	which proves (7).
\end{proof}

\begin{example}\!\!$\bigstar$\label{birk} (Birkhoff ergodic theorem)
	If $(\Omega,\mathcal F,P)$ is a probability space then $T:\Omega\to\Omega$ is {\em measure preserving} if $T$ is $\mathcal F$-measurable and satisfies  $P(T^{-1}(A))=P(A)$ for any event $A\in\mathcal F$. Given a random variable $X:\Omega\to\mathbb R$ we then define, for $n\in\mathbb N$, $X^{(T)}_n:\Omega\to\mathbb R$ by
	\[
	X^{(T)}_n(\omega)=\frac{1}{n}\sum_{j=0}^{n-1}X(T^j\omega). 
	\]
	A version of {\em Birkhoff's ergodic theorem} \cite[Section 1.2]{krengel2011ergodic} says that there exists a random variable $X^{(T)}$ such that:
	\begin{equation}\label{birk:1}
		P\left(\omega\in\Omega;X^{(T)}_n(\omega)\to_{n\to+\infty} X^{(T)} (\omega)\right)=1,
	\end{equation}
	from which it follows that  $X^{(T)}\circ T=X^{(T)}$ and
	$\mathbb E(X^{(T)})=\mathbb E(X)$.
	In order to identify $X^{(T)}$ let us consider 
	\[
	\mathcal G_T=\left\{A\in\mathcal F;T^{-1}(A)=A\right\},
	\]
	the $\sigma$-subalgebra of $T$-{\em invariant} events. If $G\in \mathcal G_T$ define $X_G:={\bf 1}_GX$. Thus, 
	\[
	X_G(T^j\omega)={\bf 1}_G(T^j\omega)X(T^j\omega)={\bf 1}_G(\omega)X(T^j\omega),
	\] 
	so if we use (\ref{birk:1}) with $X_G$ replacing $X$ we see that $X_G^{(T)}={\bf 1}_GX^{(T)}$ and hence $\mathbb E({\bf 1}_GX^{(T)})=\mathbb E(X_G)=\mathbb E({\bf 1}_GX)$, which means that $X^{(T)}=\mathbb E(X|\mathcal G_T)$.  
	Also, if $T$ is {\em ergodic} in the sense that 
	\[
	\mathcal G_T=\{A\in\mathcal F; P(A)=0\,{\rm or}\,P(A)=1\}
	\] 
	then it is immediate to check that $X\perp\!\!\perp\mathcal G_T$ and it follows from Proposition \ref{ceprop} (4) that 
	\[
	P\left(\omega\in\Omega;X^{(T)}_n(\omega)\to_{n\to+\infty}\mathbb E(X)\right)=1,
	\]
	which is an improvement of (\ref{birk:1}).
	\qed
\end{example}

\begin{example}\!\!$\bigstar$\label{von} (von Neumann ergodic theorem)
	The $L^2$ version of Example \ref{birk} goes as follows. For any measure preserving $T$ as above let us consider
	\[
	L^2\left(\Omega,\mathcal G_T,P|_{\mathcal G_T}\right)=\left\{\widetilde X\in L^2(\Omega,\mathcal F,P); \widetilde X\circ T=\widetilde X\right\}. 
	\]
	Then {\em von Neumann's mean ergodic theorem} \cite[Section 1.1]{krengel2011ergodic} assures that for any random variable $X\in L^2(\Omega,\mathcal F,P)$ there exists a unique $X^{[T]}\in L^2\left(\Omega,\mathcal G_T,P|_{\mathcal G_T}\right)$ such that
	\begin{equation}\label{von:1}
		\lim_{n\to +\infty}\mathbb E(|X_n^{(T)}-X^{[T]}|^2)=0. 
	\end{equation}
	Using that 
	\[
	\iota_T:L^2(\Omega,\mathcal F,P)\to L^2(\Omega,\mathcal F,P), \quad \iota_T(X)=X\circ T,
	\]
	is an isometry 
	we compute, for any $\widetilde X\in L^2(X,\mathcal G_T,P|_{\mathcal G_T})$, 
	\begin{eqnarray*}
		\mathbb E(X^{[T]}\widetilde X) 
		& = & \lim_{n\to+\infty}\frac{1}{n}\sum_{j=0}^{n-1}\mathbb E\left((X\circ T^j)\widetilde X\right)\\
		& = & \lim_{n\to+\infty}\frac{1}{n}\sum_{j=0}^{n-1}\mathbb E\left((X\circ T^j)(\widetilde X\circ T^j)\right)\\
		& = & \lim_{n\to+\infty}\frac{1}{n}\sum_{j=0}^{n-1}\mathbb E\left(X\widetilde X\right)\\
		& = & \mathbb E(X\widetilde X),
	\end{eqnarray*}
	which means that $X^{[T]}$ is the $L^2$ projection of $X$ over $L^2\left(\Omega,\mathcal G_T,P|_{\mathcal G_T}\right)$. It follows from Remark \ref{l2} that $\overline X^{[T]}=\mathbb E(X|\mathcal G_T)$, so that (\ref{von:1}) is the ``mean squared'' version of (\ref{birk:1}). \qed
\end{example}

We now give an useful rewording of the random variable induced by the conditional expectation function (\ref{exp:x}) in terms of the notion of conditional expectation appearing in Definition \ref{cond:exp:def}.

\begin{proposition}\label{int:prob}
	If, as in (\ref{exp:x}), we set 
	\begin{equation}\label{int:prob:0}
		g({\bf x})=\mathbb E(Y|_{X={\bf x}}), \quad {\bf x}\in\mathbb R^m,
	\end{equation}
	then
	\begin{equation}\label{int:prob:1}
		g(X)=\mathbb E(Y|X).
	\end{equation}
	In particular, 
	\begin{equation}\label{int:prob:2}
		\mathbb E(g(X))=\mathbb E(Y).
	\end{equation}
\end{proposition}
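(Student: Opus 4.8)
The plan is to verify the defining property of conditional expectation from Proposition~\ref{condexp}: namely, that $g(X)$ is $\mathcal F_X$-measurable and satisfies $\int_G g(X)\,dP = \int_G Y\,dP$ for every $G\in\mathcal F_X$. The measurability is immediate, since $g(X)=g\circ X$ is a composition of the measurable map $X$ with the (measurable) function $g$, hence is $\mathcal F_X$-measurable by construction. So the real content is the integral identity, and by (\ref{charac:cd}) it suffices to check it against indicators $G=X^{-1}(A)$ with $A\in\mathcal B^m$, which generate $\mathcal F_X$.

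First I would unwind the left-hand side using the change-of-variables formula (\ref{exp:f:comp}) to push the integral down to $\mathbb R^m$ against the law $P_X$. Writing ${\bf 1}_G={\bf 1}_A(X)$, we get
\[
\int_G g(X)\,dP=\int_\Omega {\bf 1}_A(X)\,g(X)\,dP=\int_{\mathbb R^m}{\bf 1}_A({\bf x})\,g({\bf x})\,dP_X({\bf x})
=\int_A g({\bf x})\,\psi_X({\bf x})\,d{\bf x}.
\]
Now I substitute the definition (\ref{int:prob:0}) of $g$ via (\ref{exp:x}), so that
\[
\int_A g({\bf x})\,\psi_X({\bf x})\,d{\bf x}
=\int_A\left(\int_{\mathbb R^p}{\bf y}\,\psi_{Y|_{X={\bf x}}}({\bf y})\,d{\bf y}\right)\psi_X({\bf x})\,d{\bf x}.
\]
The key algebraic step is to invoke the disintegration relation (\ref{p:y:x=x}), namely $\psi_{Y|_{X={\bf x}}}({\bf y})\,\psi_X({\bf x})=\psi_{(X,Y)}({\bf x},{\bf y})$, which collapses the product of densities into the joint density and lets me apply Fubini's theorem to obtain the joint integral $\int_A\int_{\mathbb R^p}{\bf y}\,\psi_{(X,Y)}({\bf x},{\bf y})\,d{\bf y}\,d{\bf x}$.

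The final step is to recognize this joint integral as $\int_{A\times\mathbb R^p}{\bf y}\,dP_{(X,Y)}({\bf x},{\bf y})$, which by (\ref{exp:f:comp}) applied to the coordinate projection $({\bf x},{\bf y})\mapsto{\bf y}$ on the region $G=X^{-1}(A)$ equals $\int_G Y\,dP$. This establishes the defining identity, so $g(X)=\mathbb E(Y|X)$ by the uniqueness in Proposition~\ref{condexp}, proving (\ref{int:prob:1}). The consequence (\ref{int:prob:2}) is then immediate from Proposition~\ref{ceprop}(2), since $\mathbb E(g(X))=\mathbb E(\mathbb E(Y|X))=\mathbb E(Y)$. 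The main obstacle—really a bookkeeping matter rather than a conceptual one—is handling the reduction to generating sets $G=X^{-1}(A)$ cleanly and justifying Fubini, which requires either integrability of $Y$ or, in the present convention of strictly positive densities, a standard truncation argument; I would remark on this but not belabor it.
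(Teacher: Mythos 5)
Your proof is correct and follows essentially the same route as the paper: reduce to sets of the form $G=X^{-1}(A)$, push the integral down to $\mathbb{R}^m$ via the change-of-variables formula, substitute the definition of $g$, collapse the product of densities into the joint density using (\ref{p:y:x=x}) together with Fubini, and recognize the result as $\int_G Y\,dP$, with (\ref{int:prob:2}) following from Proposition~\ref{ceprop}(2). The only additions are your explicit remarks on measurability and integrability, which the paper leaves implicit.
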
 

\begin{proof}
	We need to check that 
	\[
	\int_Cg(X)dP=\int_CYdP, \quad C\in \mathcal F_X,
	\]
	so we write	
	$C=X^{-1}(A)$, $A\in\mathcal B^m$,  
	in order to have
	\begin{equation}\label{rel:I}
		{\bf 1}_C(\omega)={\bf 1}_{A}(X(\omega)), \quad \omega\in\Omega.
	\end{equation}
	We first note that 
	\begin{eqnarray*}
		\int_Cg(X)dP & = & \int_\Omega{\bf 1}_Cg(X)dP\\
		& \stackrel{(\ref{rel:I})}{=} & \int_\Omega	{\bf 1}_A(X)g(X)dP\\
		& = & \int_{\mathbb R^m}{\bf 1}_A({\bf x})g({\bf x})\psi_X({\bf x})d{\bf x},
	\end{eqnarray*}
	and using both (\ref{int:prob:0}) and (\ref{exp:x}),
	\[
	\int_Cg(X)dP 
	= \int_{\mathbb R^m}{\bf 1}_A({\bf x})\left(\int_{\mathbb R^p}{\bf y}\psi_{Y|X={\bf x}}({\bf y})d{\bf y}\right)\psi_X({\bf x})d{\bf x}.
	\]
	From Fubini and (\ref{p:y:x=x}) we thus get	
	\begin{eqnarray*}
		\int_Cg(X)dP
		& {=} & \int\int_{\mathbb R^m\times\mathbb R^p}{\bf 1}_A({\bf x}){\bf y}\psi_{(X,Y)}({\bf x},{\bf y})d{\bf x}d{\bf y}\\
		& = & \int\int_{\mathbb R^m\times\mathbb R^p}{\bf 1}_A({\bf x}){\bf y}dP_{(X,Y)}\\
		& = & \mathbb E({\bf 1}_A(X)Y)\\
		& \stackrel{(\ref{rel:I})}{=} & \mathbb E({\bf 1}_CY)\\
		& = & \int_CYdP,
	\end{eqnarray*}	
	which proves (\ref{int:prob:1}). Finally, (\ref{int:prob:2}) follows from Proposition \ref{ceprop}, (2). 
\end{proof}

\begin{corollary}\label{total:law}(Laws of total expectation and variance)
	There hold
	\begin{equation}\label{total:exp}
		\mathbb E(X)=\mathbb E(\mathbb E(X|Y))
	\end{equation}
	and 
	\begin{equation}\label{total:var}
		{\mathbb V}(X)=\mathbb E({\mathbb V}(X|Y))+{\mathbb V}(\mathbb E(X|Y)). 
	\end{equation}
\end{corollary}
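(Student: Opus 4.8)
The plan is to derive both identities from the tower property of conditional expectation, namely Proposition~\ref{ceprop}~(2), combined with the rewording of the conditional expectation (and variance) \emph{functions} as genuine random variables supplied by Proposition~\ref{int:prob}. No new analytic machinery is needed; everything is bookkeeping with the properties already in hand.

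The first identity (\ref{total:exp}) is essentially immediate: applying Proposition~\ref{ceprop}~(2) with $\mathcal G=\mathcal F_Y$ gives $\mathbb E(\mathbb E(X|Y))=\mathbb E(X)$ at once. (Equivalently, one reads it off from (\ref{int:prob:2}) after interchanging the roles of $X$ and $Y$ in Proposition~\ref{int:prob}, since $\mathbb E(X|Y)=h(Y)$ for $h({\bf y})=\mathbb E(X|_{Y={\bf y}})$.)

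For the law of total variance (\ref{total:var}), I would abbreviate $M:=\mathbb E(X|Y)$, which is $\mathcal F_Y$-measurable, and first express the conditional variance as a random variable,
\[
{\rm var}(X|Y)=\mathbb E(X^2|Y)-M^2.
\]
Taking expectations and invoking Proposition~\ref{ceprop}~(2) twice then yields
\[
\mathbb E({\rm var}(X|Y))=\mathbb E(X^2)-\mathbb E(M^2).
\]
On the other hand, by the definition of variance (\ref{var:deff}) together with the already-established identity (\ref{total:exp}) (which gives $\mathbb E(M)=\mathbb E(X)$),
\[
{\rm var}(\mathbb E(X|Y))={\rm var}(M)=\mathbb E(M^2)-(\mathbb E(X))^2.
\]
Adding the last two displays makes the $\mathbb E(M^2)$ terms cancel, leaving $\mathbb E(X^2)-(\mathbb E(X))^2={\rm var}(X)$, as required.

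The only delicate point is the preliminary step that lifts the pointwise conditional variance function (\ref{exp:v}) to the random variable ${\rm var}(X|Y)=\mathbb E(X^2|Y)-M^2$. I expect this to be the main obstacle, and I would settle it by applying Proposition~\ref{int:prob}, with the roles of $X$ and $Y$ interchanged, not only to the first moment but also to the second: setting $h({\bf y})=\mathbb E(X^2|_{Y={\bf y}})$ gives $h(Y)=\mathbb E(X^2|Y)$, while the pointwise expansion ${\rm var}(X|_{Y={\bf y}})=\mathbb E(X^2|_{Y={\bf y}})-(\mathbb E(X|_{Y={\bf y}}))^2$ is immediate from (\ref{exp:v}) and (\ref{exp:x}). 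Substituting $Y$ and using (\ref{int:prob:1}) (with roles swapped) then produces the claimed random-variable identity. Once this identification is in place, the computation reduces entirely to the linearity and tower properties recorded in Proposition~\ref{ceprop}.
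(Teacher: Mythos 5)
Your proof is correct and uses exactly the tools the paper intends: the paper itself gives no explicit proof of this corollary beyond the remark that (\ref{total:exp}) is Proposition~\ref{ceprop}~(2) with $\mathcal G=\mathcal F_Y$, and your completion of the variance identity via $\mathbb E(\mathrm{var}(X|Y))=\mathbb E(X^2)-\mathbb E(M^2)$ and $\mathrm{var}(M)=\mathbb E(M^2)-(\mathbb E(X))^2$ is the standard argument consistent with Propositions~\ref{ceprop} and~\ref{int:prob}. Your care in lifting the pointwise conditional variance function to the random variable $\mathbb E(X^2|Y)-M^2$ via Proposition~\ref{int:prob} applied to $X^2$ is exactly the right way to bridge the two notions of conditioning the paper sets up.
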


Note that (\ref{total:exp}) corresponds to  item (2) in Proposition \ref{ceprop} with $\mathcal G=\mathcal F_Y$.

\begin{example}\label{de:finetti}\!\!$\bigstar$
	(Exchangeability and de Finetti's theorem)
	A sequence of random variables $\{X_n\}_{n=1}^{+\infty}$ is {\em exchangeable}
	if for every $n\in\mathbb N$ and every permutation $\tau$ of $\{1,\dots,n\}$ the random vectors
	\[
	(X_1,\dots,X_n)
	\quad\text{and}\quad
	(X_{\tau(1)},\dots,X_{\tau(n)})
	\]
	are identically distributed.
	If, in addition, the sequence is {\em binary}, i.e.\ $X_n\in\{0,1\}$ for all $n$, then 
	a celebrated result due to de Finetti ensures 
	that there exists a random variable 
	$Y$ taking values in $[0,1]$ such that 
	the joint distribution of $(Y,X_1,\cdots,X_n)$ is given by
	\[
	P(Y\in B,\,X_1=x_1,\dots,X_n=x_n)
	=
	\int_B
	y^{\sum_{j=1}^n x_j}
	(1-y)^{\,n-\sum_{j=1}^n x_j}
	\,dP_Y(y),
	\]
	for every Borel set $B\subset[0,1]$ and every $(x_1,\dots,x_n)\in\{0,1\}^n$; see  \cite[Theorem 1.47]{schervish2012theory} and \cite{kirsch2019elementary}. In particular, 
		\begin{equation}\label{de:fine}
		P(X_1=x_1,\dots,X_n=x_n)
		=
		\int_0^1
		y^{\sum_{j=1}^n x_j}
		(1-y)^{\,n-\sum_{j=1}^n x_j}
		\,dP_Y(y).
	\end{equation}	
	On the other hand, by the general disintegration formula in (\ref{exist:trans:02}),
	\[
	P(Y\in B,\,X_1=x_1,\dots,X_n=x_n)
	=
	\int_B P(X_1=x_1,\dots,X_n=x_n\,|_{Y=y})\,dP_Y(y),
	\]
	and since $B$ is arbitrary we conclude that
	\[
	P(X_1=x_1,\dots,X_n=x_n\,|_{Y=y})
	=
	\prod_{j=1}^n y^{x_j}(1-y)^{1-x_j},
	\quad P_Y\text{-a.s.},
	\]
	which means that, conditionally on $Y=y$, the variables $\{X_n\}$ are independent and each 
	has distribution $\mathsf{Ber}(y)$: 
	\begin{equation}\label{cond:bern}
		X_n|_{Y=y}\sim_{\rm i.i.d.}\mathsf{Ber}(y). 
	\end{equation}	
Thus, an exchangeable binary sequence may be viewed as a {\em mixture of i.i.d.\ Bernoulli sequences}, 
	with mixing measure $P_Y$. Conversely, Theorem \ref{extkolm} (Kolmogorov's extension) ensures that any family of 
	finite-dimensional distributions of the form \eqref{de:fine} defines a unique probability measure 
	on $\{0,1\}^{\mathbb N}$, hence an exchangeable stochastic process; cf.\ Remark \ref{stocproc:d}\footnote{For instance, if $Y\sim \mathsf{Beta}(k,l)$ as in Definition \ref{beta:def}, with $k,l\in\mathbb N$, then $\{X_n\}$ corresponds to P\'olya's urn model; cf. \cite[Example 12.29]{klenke2013probability}.}. 
	To see that this model genuinely departs from (unconditional) independence, first note that (\ref{cond:bern}) gives
	\[
	\mathbb E({\bf 1}_{\{X_n=x_n\}}|_{Y=y})=P(X_n=x_n|_{Y=y})=y^{x_n}(1-y)^{1-x_n},
	\]
	so that Proposition \ref{int:prob} applies to yield 
	\[
	\mathbb E({\bf 1}_{\{X_n=x_n\}}|Y)=Y^{x_n}(1-Y)^{1-x_n}.
	\]
	Taking $x_n=1$ this says that, for each $E$ in the $\sigma$-algebra generated by $Y$,
	\[
	\int_E YdP=\int_E {\bf 1}_{\{X_n=1\}}dP=\int_EX_ndP,
	\]
	which means that
	\[
		\mathbb E(X_n\,|\,Y)= Y.
	\]
From this we obtain
		\[
		\mathbb E(X_n)\stackrel{(\ref{total:exp})}=\mathbb E(\mathbb E(X_n|Y))=\mathbb E(Y),
		\]
		and, since for $m\neq n$, $X_m$ and $X_n$ are conditionally independent given $Y$, we may also combine it with Proposition \ref{mean:cond} (1) to get 
		\[
		\mathbb E(X_mX_n|Y)=\mathbb E(X_m|Y)\mathbb E(X_n|Y)=Y^2,
		\]
		so that Proposition \ref{ceprop} (2) finally gives $\mathbb E(X_mX_n)=\mathbb E(Y^2)$. It follows that
		\[
		\mathbb C(X_m,X_n)=\mathbb V(Y),
		\]
		which shows that the variability of $Y$ creates correlation, and hence dependence, between observations. 
		At a more fundamental level, this departure from independence may be detected as follows. 
		For
		any $(x_1,\dots,x_n)\in\{0,1\}^n$ with
		\[
		P(X_1=x_1,\dots,X_n=x_n)>0,
		\]
		we have
		\begin{eqnarray*}
			P(X_{n+1}=1\,|_{X_1=x_1,\dots,X_n=x_n})
			& = & 
			\frac{
				P(X_1=x_1,\dots,X_n=x_n,X_{n+1}=1)
			}{
				P(X_1=x_1,\dots,X_n=x_n)
			}\\
			& = & 
			\frac{
				\int_0^1
				y\,y^{\sum_{j=1}^n x_j}
				(1-y)^{\,n-\sum_{j=1}^n x_j}
				\,dP_Y(y)
			}{
				\int_0^1
				y^{\sum_{j=1}^n x_j}
				(1-y)^{\,n-\sum_{j=1}^n x_j}
				\,dP_Y(y)
			}, 
		\end{eqnarray*}
		and using (\ref{exp:x}),
		\[
		P(X_{n+1}=1\,|_{X_1=x_1,\dots,X_n=x_n})
		=
		\mathbb E(Y\,|_{X_1=x_1,\dots,X_n=x_n}).
		\]
		Thus, after observing $X_1,\dots,X_n$, the predictive probability of success in the next trial is not fixed a priori as in the independent case, 
		where we would find that
		\[
			P(X_{n+1}=1\,|_{X_1=x_1,\dots,X_n=x_n})=	P(X_{n+1}=1)
		\]
		by Proposition \ref{mean:cond} (2), 
		but is updated through the ``posterior'' distribution of the latent parameter $Y$.
		In this sense, de Finetti's theorem gives a probabilistic foundation for the Bayesian viewpoint explored in Section \ref{bay:way}: exchangeability replaces independence, with the latter actually emerging from the former upon conditioning, and the hidden random variable $Y$ playing the role of the unknown Bernoulli parameter. This perspective stands in marked contrast to 
		the frequentist interpretation, in which empirical frequencies, encoded in the sample means $n^{-1}\sum_{j=1}^n X_j$ of an i.i.d.\ Bernoulli sample, converge to (and hence define) the fixed probability $P(X_1=1)$ by the classical Law of Large Numbers (Theorem~\ref{lln}). Instead, de Finetti’s theorem implies that, under the more primitive assumption of exchangeability, a simple symmetry condition on the sampling mechanism, these frequencies converge to a random variable $Y$. The emergence of this random limit reflects latent uncertainty about the underlying data-generating process, so that frequencies are no longer the foundation of probability, but rather an observable manifestation of this uncertainty.
		For extensions of de Finetti's representation formula in (\ref{de:fine}) to more general exchangeable processes we refer to \cite[Theorem 1.49]{schervish2012theory} and \cite[Theorem 12.26]{klenke2013probability}.\qed
\end{example}

\section{Normally distributed random variables and their friends}\label{normaldist}

Here we single out and study some important families of random variables that are closely related to the normal distribution. A comprehensive treatment of this subject can be found in \cite{tong1990multivariate}.

\subsection{Normally distributed random variables}\label{normaldist:sub}

We begin with the family of random variables that is, without doubt, the most pervasive in Probability Theory and its applications.

\begin{definition}
	\label{normdistrv}
	We say that  a random vector $X:\Omega\to\mathbb R^n$ is {\em normally distributed} (or a {\em Gaussian}, or simply that $X$ is a {\em normal}) if its pdf $\psi_X:\mathbb R^n\to\mathbb R$ is given by 
	\begin{equation}\label{exp:dens:jn}
		\psi_X({\bf x})=\frac{\sqrt{\det A}}{(2\pi)^{n/2}}e^{-\frac{1}{2}\langle A({\bf x}-{\bm \mu}),{\bf x}-{\bm \mu}\rangle},
	\end{equation}
	where $A$ is a positive definite, symmetric matrix and ${\bm \mu}\in\mathbb R^n$. We then write $X\sim\mathcal N({\bf x};{\bm\mu},{\bm \Sigma})$, or simply $X\sim\mathcal N({\bm \mu},{\bm \Sigma})$, where ${\bm \Sigma}=A^{-1}$. If $n=1$, in which case
	\begin{equation}\label{exp:dens:jn:n1}
		\psi_X(x)=\frac{1}{\sqrt{2\pi\sigma^2}}e^{-\frac{(x-\mu)^2}{2\sigma^2}},\quad x\in\mathbb R,
		\end{equation}
where $\mu\in\mathbb R$ and $\sigma^2>0$,		
	we represent this as $X\sim\mathcal N(\mu,\sigma^2)$.	
\end{definition}

The next proposition shows that this is well defined.

\begin{proposition}
	\label{welldef} One has  $\int_{\mathbb R^n}\psi_X({\bf x}) d{\bf x}=1$.
\end{proposition}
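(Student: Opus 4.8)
The plan is to reduce the multivariate integral to a product of one-dimensional Gaussian integrals via a linear change of variables that diagonalizes the quadratic form, and then to evaluate the resulting scalar integral by the classical polar-coordinates trick. Since $A$ is symmetric and positive definite, the spectral theorem provides an orthogonal matrix $O$ and a diagonal matrix $D={\rm diag}(\lambda_1,\dots,\lambda_n)$ with all $\lambda_i>0$ such that $A=O^\top D O$. First I would translate by setting ${\bf y}={\bf x}-{\bm\mu}$ (a measure-preserving shift, $d{\bf y}=d{\bf x}$), so that the exponent becomes $-\tfrac12\langle A{\bf y},{\bf y}\rangle$.

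Next I would apply the orthogonal substitution ${\bf z}=O{\bf y}$. Because $O$ is orthogonal we have $|\det O|=1$, hence $d{\bf z}=d{\bf y}$, and the quadratic form decouples:
\[
\langle A{\bf y},{\bf y}\rangle=\langle D{\bf z},{\bf z}\rangle=\sum_{i=1}^n\lambda_i z_i^2.
\]
The integral then factors as
\[
\int_{\mathbb R^n}e^{-\frac12\langle A({\bf x}-{\bm\mu}),{\bf x}-{\bm\mu}\rangle}\,d{\bf x}
=\prod_{i=1}^n\int_{-\infty}^{+\infty}e^{-\frac12\lambda_i z_i^2}\,dz_i
=\prod_{i=1}^n\sqrt{\frac{2\pi}{\lambda_i}},
\]
where the scalar evaluation $\int_{\mathbb R}e^{-\frac12\lambda z^2}\,dz=\sqrt{2\pi/\lambda}$ is obtained from the standard identity $\int_{\mathbb R}e^{-t^2/2}\,dt=\sqrt{2\pi}$ after rescaling.

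Finally I would assemble the pieces. Since $\det A=\prod_i\lambda_i$, the product above equals $(2\pi)^{n/2}/\sqrt{\det A}$, so multiplying by the normalizing prefactor $\sqrt{\det A}/(2\pi)^{n/2}$ yields exactly $1$, as claimed. The one genuinely non-routine ingredient is the evaluation of the one-dimensional Gaussian integral $\int_{\mathbb R}e^{-t^2/2}\,dt=\sqrt{2\pi}$; I would establish it by squaring, passing to the plane, and switching to polar coordinates:
\[
\left(\int_{\mathbb R}e^{-t^2/2}\,dt\right)^2
=\int_{\mathbb R^2}e^{-(s^2+t^2)/2}\,ds\,dt
=\int_0^{2\pi}\!\!\int_0^{+\infty}e^{-r^2/2}\,r\,dr\,d\theta=2\pi.
\]
I expect this polar-coordinates evaluation to be the main obstacle in the sense that it is the only step not reducible to linear algebra or a routine substitution; everything else is bookkeeping with the change-of-variables formula and the orthogonal diagonalization of $A$.
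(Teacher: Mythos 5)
Your proof is correct and follows essentially the same route as the paper: translate by $\bm\mu$, orthogonally diagonalize $A$, factor the integral into one-dimensional Gaussians, and conclude via $\det A=\prod_i\lambda_i$. The only difference is that you also supply the polar-coordinates evaluation of $\int_{\mathbb R}e^{-t^2/2}\,dt=\sqrt{2\pi}$, which the paper takes as known.
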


\begin{proof} Take $O$ an orthogonal matrix so that 
	\[
	OAO^{-1}=\Lambda={\rm diag}(\lambda_1,\cdots,\lambda_n),
	\]
	and define ${\bf y}=O({\bf x}-{\bm \mu})$. It follows that 
	\[
	\psi_X({\bf x})=\frac{\sqrt{\det A}}{(2\pi)^{n/2}}e^{-\frac{1}{2}\langle \Lambda {\bf y},{\bf y}\rangle},
	\]
	so that
	\[
	\int_{\mathbb R^n}\psi_X({\bf x})d{\bf x}=\frac{\sqrt{\det A}}{(2\pi)^{n/2}}\Pi_i\int_{\mathbb R}e^{-\frac{1}{2}\lambda_iy_i^2}dy_i.
	\]
	Thus, if $z_i=\sqrt{\lambda_i/2}y_i$ then 
	\[
	\int_{\mathbb R}e^{-\frac{1}{2}\lambda_ix_i^2}dy_i  =  
	\sqrt{\frac{2}{\lambda_i}}\int_{\mathbb R}e^{-z_i^2}dz_i =\sqrt{\frac{2\pi}{\lambda_i}}, 
	\]
	so that 
	\[
	\int_{\mathbb R^n}\psi_X({\bf x})d{\bf x}=\frac{\sqrt{\det A}}{(2\pi)^{n/2}}\frac{(2\pi)^{n/2}}{\Pi_i\sqrt{\lambda_i}}=1,
	\]
	where we used that  $\det A=\Pi_i\lambda_i$ in the last step.  
\end{proof} 

In general, if $X:\Omega\to\mathbb R^n$ is a random vector we have defined its {\em expectation vector}
\[
{\bm \mu}(X)=\mathbb E(X),
\]
and 
its 
{\em covariance matrix} 
\[
{\mathbb C}(X)_{ij}={\mathbb C}(X_i,X_j).
\]
%Sometimes we write this simply as ${\mathbb C}(X)$.
We now compute these invariants assuming that $X$ is Gaussian random vector.

\begin{proposition}
	\label{invnor}If $X\sim\mathcal N({\bm \mu},{\bm \Sigma})$, ${\bm\Sigma}=A^{-1}$, then ${\bm \mu}(X)={\bm \mu}$ and ${\mathbb C}(X)={\bf\Sigma}$.
\end{proposition}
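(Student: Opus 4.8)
The plan is to reuse the orthogonal diagonalization introduced in the proof of Proposition~\ref{welldef}. Choose an orthogonal matrix $O$ with $OAO^{-1}=\Lambda=\mathrm{diag}(\lambda_1,\dots,\lambda_n)$ and set $\mathbf{y}=O(\mathbf{x}-\bm\mu)$, a measure-preserving change of variables (Jacobian $1$, since $O$ is orthogonal). Under this substitution the density $\psi_X$ factors as a product $\prod_i \tfrac{\sqrt{\lambda_i}}{\sqrt{2\pi}}\,e^{-\lambda_i y_i^2/2}$ of one-dimensional centered Gaussian densities, each an even function of $y_i$ with variance $1/\lambda_i$. This factorization is the engine driving both computations.

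For the expectation I would write $\mathbb E(X)=\int_{\mathbb R^n}\mathbf{x}\,\psi_X(\mathbf{x})\,d\mathbf{x}$ and substitute $\mathbf{x}=O^\top\mathbf{y}+\bm\mu$ (using $O^{-1}=O^\top$). The $\bm\mu$ term integrates against the total mass $1$ furnished by Proposition~\ref{welldef}, while the $O^\top\mathbf{y}$ term integrates to zero coordinate by coordinate: each factor $\int_{\mathbb R} y_i\cdot(\text{even density})\,dy_i=0$ by symmetry. Hence $\mathbb E(X)=\bm\mu$.

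For the covariance, note that $X-\bm\mu=O^\top\mathbf{y}$, so that $(X-\bm\mu)(X-\bm\mu)^\top=O^\top\mathbf{y}\mathbf{y}^\top O$, and therefore $\mathrm{cov}(X)=O^\top\,\mathbb E(\mathbf{y}\mathbf{y}^\top)\,O$, the expectation being taken against the product density. Since the coordinates $y_i$ are independent with mean zero, the off-diagonal entries of $\mathbb E(\mathbf{y}\mathbf{y}^\top)$ vanish while the diagonal entries equal $\mathbb E(y_i^2)=1/\lambda_i$, giving $\mathbb E(\mathbf{y}\mathbf{y}^\top)=\Lambda^{-1}$. Substituting back yields $\mathrm{cov}(X)=O^\top\Lambda^{-1}O$.

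The decisive step—and the one I expect to require the most care—is the linear-algebra bookkeeping that closes the argument: from $OAO^{-1}=\Lambda$ and orthogonality I obtain $A=O^\top\Lambda O$, hence $A^{-1}=O^\top\Lambda^{-1}O$, which is exactly the expression just derived for $\mathrm{cov}(X)$. Thus $\mathrm{cov}(X)=A^{-1}=\bm\Sigma$. The only genuinely computational ingredient is the one-dimensional variance $\int_{\mathbb R}y^2\,\tfrac{\sqrt{\lambda}}{\sqrt{2\pi}}\,e^{-\lambda y^2/2}\,dy=1/\lambda$, which follows from the same Gaussian integral (or one integration by parts) already employed in Proposition~\ref{welldef}.
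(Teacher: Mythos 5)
Your proposal is correct and follows essentially the same route as the paper's proof: diagonalize $A$ by an orthogonal matrix, change variables to ${\bf y}=O({\bf x}-{\bm\mu})$, kill the mean term by the oddness of $y_i\mapsto y_i e^{-\lambda_i y_i^2/2}$, and reduce the covariance to the one-dimensional Gaussian second moment $1/\lambda_i$ together with the identity $A^{-1}=O^\top\Lambda^{-1}O$. The only difference is cosmetic: you carry out the covariance computation in matrix form via $\mathbb E({\bf y}{\bf y}^\top)=\Lambda^{-1}$, whereas the paper expands $\mathbb E(X_iX_j)$ entrywise through the coefficients $Q_{ik}Q_{jl}$; both land on ${\rm cov}(X)=Q\Lambda^{-1}Q^{-1}={\bm\Sigma}$.
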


\begin{proof}
	In terms of the substitution above we have ${\bf x}={\bm \mu}+Q{\bf y}$, $Q=O^{-1}$. Thus, 
	\begin{eqnarray*}
		\mathbb E(X_i) & = & \int_{\mathbb R^n}x_i\psi_X({\bf x})d{\bf x} \\
		& = & \frac{\sqrt{\det A}}{(2\pi)^{n/2}}\int_{\mathbb R^n}\left({\bm\mu}_i+\sum_jQ_{ij}y_j\right)\Pi_ke^{-\frac{1}{2}\lambda_ky_k^2}dy_1\cdots dy_n. 
	\end{eqnarray*}
	But 
	\begin{equation}\label{eqesp}
		\int_{\mathbb R}y_je^{-\frac{1}{2}\lambda_jy_j^2}dy_j=0,
	\end{equation} 
	so we get ${\bm \mu}(X)_i={\bm \mu}_i$.
	Also, 
	\begin{eqnarray*}
		\mathbb E(X_iX_j) & = & \int_{\mathbb R^n}x_ix_j\psi_X({\bf x})d{\bf x} \\
		& = & \int_{\mathbb R^n}\left({\bm \mu}_i+\sum_kQ_{ik}y_k\right)
		\left({\bm \mu}_j+\sum_lQ_{jl}y_l\right)\psi_X({\bf x})d{\bf x},
	\end{eqnarray*} 
	and using (\ref{eqesp}) again we get 
	\begin{eqnarray*}
		\mathbb E(X_iX_j) 
		& = & \int_{\mathbb R^n}{\bm \mu}_i{\bm \mu}_j\psi_X({\bf x})d{\bf x} +\frac{\sqrt{\det A}}{(2\pi)^{n/2}}
		\sum_{kl}Q_{ik}Q_{jl}\int_{\mathbb R^n}y_ky_l\Pi_p(e^{-\frac{1}{2}\lambda_py_p^2})dy_p\\
		& = & {\bm \mu}_i{\bm \mu}_j+\frac{\sqrt{\det A}}{(2\pi)^{n/2}}\sum_kQ_{ik}Q_{jk}\int_{\mathbb R^n}y_k^2\Pi_pe^{-\frac{1}{2}\lambda_py_p^2}dy_p\\
		& = & {\bm \mu}_i{\bm \mu}_j+\frac{\sqrt{\det A}}{(2\pi)^{n/2}}\sum_kQ_{ik}Q_{jk}\int_{\mathbb R}y_k^2e^{-\frac{1}{2}y_k^2}dy_k\times\Pi_{p\neq k}\int_{\mathbb R}e^{-\frac{1}{2}y_p^2}dy_p\\
		& = &  {\bm \mu}_i{\bm \mu}_j+\frac{\sqrt{\det A}}{(2\pi)^{n/2}}(2\pi)^{\frac{n-1}{2}}\sum_kQ_{ik}Q_{jk}\int_{\mathbb R}y_k^2e^{-\frac{1}{2}y_k^2}dy_k\times\Pi_{p\neq k}\frac{1}{\lambda_p^{1/2}}.
	\end{eqnarray*} 
	But
	\begin{eqnarray*}
		\int_{\mathbb R}y_k^2e^{-\frac{1}{2}\lambda_ky_k^2}dy_k & = & -\frac{y_k}{\lambda_k}e^{-\frac{1}{2}\lambda_ky_k^2}|_{-\infty}^{+\infty}+\frac{1}{\lambda_k}\int_{\mathbb R^n}e^{-\frac{1}{2}\lambda_ky_k^2}dy_k\\
		& = & \frac{1}{\lambda_k}\frac{(2\pi)^{1/2}}{\lambda_k^{1/2}},
	\end{eqnarray*}
	so that 
	\[
	\mathbb E(X_iX_j)={\bm \mu}_i{\bm \mu}_j+\sum_k\frac{Q_{ik}Q_{jk}}{\lambda_k}={\bm \mu}_i{\bm \mu}_j+
	{\bm  \Sigma}_{ij},
	\]
	where we used  that ${\bm  \Sigma}=A^{-1}=Q\Lambda^{-1}Q^{-1}$. 
	This completes the proof.  
\end{proof}

We now compute the characteristic  function of a normally distributed random vector.

\begin{proposition}
	\label{funchnor} If $X\sim\mathcal N({\bm \mu},{\bm\Sigma})$ then 
	\begin{equation}\label{nordef}
		\phi_X({\bf u})=e^{\langle {\bm \mu},{\bf u}\rangle{\bf i}-\frac{1}{2}\langle {\bm\Sigma}{\bf u},{\bf u}\rangle}.
	\end{equation}
\end{proposition}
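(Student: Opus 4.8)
The plan is to mirror the orthogonal-diagonalization change of variables already deployed in the proofs of Propositions~\ref{welldef} and~\ref{invnor}, thereby reducing the $n$-dimensional integral defining $\phi_X$ to a product of one-dimensional Gaussian Fourier transforms. Concretely, with $A={\bm\Sigma}^{-1}$, I would pick an orthogonal $O$ with $OAO^{-1}=\Lambda={\rm diag}(\lambda_1,\dots,\lambda_n)$ and substitute ${\bf y}=O({\bf x}-{\bm\mu})$, i.e. ${\bf x}={\bm\mu}+Q{\bf y}$ with $Q=O^{-1}=O^\top$ and unit Jacobian. The phase in the oscillatory factor then splits as $\langle{\bf x},{\bf u}\rangle=\langle{\bm\mu},{\bf u}\rangle+\langle{\bf y},O{\bf u}\rangle$, so that, setting ${\bf v}=O{\bf u}$, the constant factor $e^{{\bf i}\langle{\bm\mu},{\bf u}\rangle}$ pulls out and the remaining integral separates:
\[
\phi_X({\bf u})=e^{{\bf i}\langle{\bm\mu},{\bf u}\rangle}\frac{\sqrt{\det A}}{(2\pi)^{n/2}}\prod_{k=1}^n\int_{\mathbb R}e^{{\bf i}v_ky_k}e^{-\frac{1}{2}\lambda_ky_k^2}\,dy_k.
\]

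The heart of the computation---and the step I expect to be the main obstacle---is the scalar Gaussian Fourier transform
\[
\int_{\mathbb R}e^{{\bf i}ty}e^{-\frac12\lambda y^2}\,dy=\sqrt{\tfrac{2\pi}{\lambda}}\,e^{-t^2/(2\lambda)},\qquad \lambda>0.
\]
I would avoid contour shifting by the cleaner ODE route: writing $g(t)$ for the left-hand side, differentiating under the integral (justified by dominated convergence, as in Proposition~\ref{reg:char}), and integrating by parts via $y e^{-\frac12\lambda y^2}=-\lambda^{-1}(d/dy)e^{-\frac12\lambda y^2}$, one obtains $g'(t)=-(t/\lambda)g(t)$; together with the value $g(0)=\sqrt{2\pi/\lambda}$ (the real Gaussian integral from Proposition~\ref{welldef}) this yields the stated formula.

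It then remains only to reassemble and return to the original variables. Multiplying the $n$ factors gives $\prod_k\sqrt{2\pi/\lambda_k}\,e^{-v_k^2/(2\lambda_k)}=(2\pi)^{n/2}(\det A)^{-1/2}e^{-\frac12\langle\Lambda^{-1}{\bf v},{\bf v}\rangle}$, using $\det A=\prod_k\lambda_k$, which exactly cancels the prefactor. Since ${\bf v}=O{\bf u}$ and $\Lambda^{-1}=OA^{-1}O^{-1}=O{\bm\Sigma}O^\top$, orthogonality gives $\langle\Lambda^{-1}{\bf v},{\bf v}\rangle=\langle{\bm\Sigma}{\bf u},{\bf u}\rangle$, and hence $\phi_X({\bf u})=e^{{\bf i}\langle{\bm\mu},{\bf u}\rangle-\frac12\langle{\bm\Sigma}{\bf u},{\bf u}\rangle}$, as asserted. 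An alternative, more structural route would first record $\phi_Z(u)=e^{-u^2/2}$ for a standard scalar normal and then realize $X={\bm\mu}+B{\bf Z}$ as an affine image of a standard Gaussian with $BB^\top={\bm\Sigma}$, invoking independence to get $\phi_{\bf Z}(B^\top{\bf u})=e^{-\frac12\|B^\top{\bf u}\|^2}=e^{-\frac12\langle{\bm\Sigma}{\bf u},{\bf u}\rangle}$; but the diagonalization approach is preferable here since it stays self-contained within the results already proved.
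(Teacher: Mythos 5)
Your argument is correct. It shares the paper's skeleton---diagonalize $A={\bm\Sigma}^{-1}$ via an orthogonal $O$, substitute ${\bf x}={\bm\mu}+Q{\bf y}$, pull out the phase $e^{{\bf i}\langle{\bm\mu},{\bf u}\rangle}$, and reassemble using $\det A=\prod_k\lambda_k$ and orthogonality---but it diverges at the one step that actually carries the analytic content. The paper completes the square inside a sesquilinear form, writing the exponent as $-\tfrac12\bigl(\Lambda^{1/2}{\bf y}-{\bf i}\Lambda^{-1/2}{\bf v},\,\Lambda^{1/2}{\bf y}-{\bf i}\Lambda^{-1/2}{\bf v}\bigr)$ plus the desired quadratic, and then shifts the contour of integration, invoking a multi-dimensional version of Cauchy's theorem. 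You instead factor the integral into $n$ scalar Gaussian Fourier transforms and evaluate each by the ODE $g'(t)=-(t/\lambda)g(t)$ obtained from differentiation under the integral and integration by parts, with initial value $g(0)=\sqrt{2\pi/\lambda}$ from Proposition~\ref{welldef}. Your route is the more elementary and self-contained of the two: it stays entirely within real analysis and the tools already justified in Proposition~\ref{reg:char}, whereas the paper's contour shift is slicker but quietly imports complex-analytic machinery that is never set up elsewhere in the text. The trade-off is that the paper's completion of the square works directly in $n$ dimensions without first separating variables, which generalizes more readily to settings where the quadratic form is not diagonalized. Your closing remark about the affine-image route $X={\bm\mu}+B{\bf Z}$ is also sound, though as you note it is less well-matched to the results available at this point in the exposition.
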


\begin{proof}
	Recalling that $Q^{-1}AQ=\Lambda$ and ${\bf x}={\bm \mu}+Q{\bf y}$, we have
	\begin{eqnarray*}
		\phi_X({\bf u}) & = & \int_{\mathbb R^n} e^{\langle {\bf x},{\bf u}\rangle{\bf i}}\frac{\sqrt{\det A}}{(2\pi)^{n/2}}e^{-\frac{1}{2}\langle A({\bf x}-{\bm \mu}),{\bf x}-{\bm \mu}\rangle}dx\\
		& = & \frac{\sqrt{\det A}}{(2\pi)^{n/2}}\int_{\mathbb R^n}
		e^{\langle {\bm \mu},{\bf u}\rangle{\bf i}+\langle {\bf y},{\bf v}\rangle{\bf i}-\frac{1}{2}\langle \Lambda {\bf y},{\bf y}\rangle}d{\bf y},
	\end{eqnarray*}	
	where ${\bf v}=Q^\top{\bf u}$. Now observe that if $(\,,)$ is the sesquilinear product in $\mathbb C^n$ then
	\begin{eqnarray*}
		-\frac{1}{2}\left(\Lambda^{1/2}{\bf y}-{\bf i}\Lambda^{-1/2}{\bf v},\Lambda^{1/2}{\bf y}-{\bf i}\Lambda^{-1/2}{\bf v}\right) & = & -\frac{1}{2}\langle \Lambda {\bf y},{\bf y}\rangle\\
		& & \quad +\frac{1}{2}\langle\Lambda^{-1}{\bf v},{\bf v}\rangle +{\bf i}\langle {\bf v},{\bf y}\rangle,
	\end{eqnarray*}
	which gives
	\begin{eqnarray*}
		\phi_X({\bf u}) & = & \frac{\sqrt{\det A}}{(2\pi)^{n/2}}e^{\langle {\bm \mu},{\bf u}\rangle{\bf i}-\frac{1}{2}\langle\Lambda^{-1}{\bf v},{\bf v}\rangle}\int_{\mathbb R^n}e^{-\frac{1}{2}\left(\Lambda^{1/2}{\bf y}-{\bf i}\Lambda^{-1/2}{\bf v},\Lambda^{1/2}{\bf y}-{\bf i}\Lambda^{-1/2}{\bf v}\right)}d{\bf y}\\
		& \stackrel{(*)}{=} & \frac{\sqrt{\det A}}{(2\pi)^{n/2}}e^{\langle {\bm \mu},{\bf u}\rangle{\bf i}-\frac{1}{2}\langle Q\Lambda^{-1}Q^{-1}{\bf u},{\bf u}\rangle}\int_{\mathbb R^n}e^{-\frac{1}{2}\left(\Lambda^{1/2}{\bf y},\Lambda^{1/2}{\bf y}\right)}d{\bf y}\\
		& = & e^{\langle {\bm \mu},{\bf u}\rangle{\bf i}-\frac{1}{2}\langle {\bm\Sigma}{\bf u},{\bf u}\rangle},
	\end{eqnarray*}
	where in $(*)$ we changed the contour of integration (and used the appropriate multi-dimensional version of Cauchy's theorem in Complex Variables). 
\end{proof}

\begin{corollary}\label{prov}
	If $	X\sim \mathcal N({\bm \mu},{\bm\Sigma})$ then its mgf is
	\begin{equation}\label{nordef:2}
		\varphi_X({\bf u})=e^{\langle {\bm \mu},{\bf u}\rangle+\frac{1}{2}\langle {\bm\Sigma}{\bf u},{\bf u}\rangle}.
	\end{equation} 
	In particular, if  $n=1$ and $X\sim \mathcal N(\mu,\sigma^2)$ then 
	\begin{equation}\label{mgf:normal:n}
		\varphi_X(u)=e^{u\mu+\frac{1}{2}\sigma^2u^2}.
	\end{equation}
\end{corollary}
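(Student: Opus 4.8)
The plan is to obtain the mgf directly from the characteristic function already computed in Proposition \ref{funchnor}, exploiting the substitution relation $\varphi_X({\bf u})=\phi_X(-{\bf i}{\bf u})$ recorded in Remark \ref{mgf:e:v}. Since the expression $\phi_X({\bf u})=e^{\langle {\bm \mu},{\bf u}\rangle{\bf i}-\frac{1}{2}\langle {\bm\Sigma}{\bf u},{\bf u}\rangle}$ is manifestly an entire function of ${\bf u}$, the analyticity hypothesis underlying Remark \ref{mgf:e:v} is satisfied, so this substitution is legitimate for all ${\bf u}\in\mathbb R^n$.

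First I would perform the replacement ${\bf u}\mapsto -{\bf i}{\bf u}$ in the two terms of the exponent. The linear term gives $\langle {\bm\mu},-{\bf i}{\bf u}\rangle{\bf i}=-{\bf i}^2\langle{\bm\mu},{\bf u}\rangle=\langle{\bm\mu},{\bf u}\rangle$, while the quadratic term gives $-\tfrac12\langle{\bm\Sigma}(-{\bf i}{\bf u}),-{\bf i}{\bf u}\rangle=-\tfrac12(-{\bf i})^2\langle{\bm\Sigma}{\bf u},{\bf u}\rangle=+\tfrac12\langle{\bm\Sigma}{\bf u},{\bf u}\rangle$, using $(-{\bf i})^2=-1$. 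Combining the two reproduces exactly the claimed formula (\ref{nordef:2}). The one-dimensional statement then follows by specialization: setting $n=1$, ${\bm\mu}=\mu$ and ${\bm\Sigma}=\sigma^2$ collapses the two pairings to $u\mu$ and $\sigma^2u^2$, yielding (\ref{mgf:normal:n}).

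There is essentially no obstacle here beyond the bookkeeping of the factors of ${\bf i}$; the only conceptual point worth flagging is the justification of the analytic substitution, which is why I would first note that $\phi_X$ extends to an entire function (equivalently, that the Gaussian density decays super-exponentially, so $\varphi_X$ is finite on all of $\mathbb R^n$). An alternative, self-contained route---should one prefer to avoid invoking analytic continuation---would be to repeat the completing-the-square computation of Proposition \ref{funchnor} directly for the real integral $\int_{\mathbb R^n}e^{\langle{\bf x},{\bf u}\rangle}\psi_X({\bf x})\,d{\bf x}$; this is in fact slightly easier than the characteristic-function case, since no contour shift in the complex plane is required, and it makes the finiteness of $\varphi_X$ transparent.
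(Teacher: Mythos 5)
Your proof is correct and follows exactly the route the paper intends: the corollary is derived from Proposition \ref{funchnor} via the substitution $\varphi_X({\bf u})=\phi_X(-{\bf i}{\bf u})$ of Remark \ref{mgf:e:v}, and your sign bookkeeping and the remark on analyticity are both accurate. Nothing further is needed.
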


\begin{corollary}\label{det:norm:dist}
	If $X$ is a normally distributed random vector, $X\sim \mathcal N({\bm\mu},{\bm\Sigma})$, then its distribution $P_X$ is completely determined by its characteristic function.
\end{corollary}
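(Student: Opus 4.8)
The plan is to exploit the fact that, by the density formula (\ref{exp:dens:jn}), the law $P_X$ of a Gaussian is encoded entirely in the pair $({\bm\mu},{\bm\Sigma})$: two normal vectors sharing the same mean vector and covariance matrix have identical densities, hence identical laws. It therefore suffices to show that the explicit characteristic function (\ref{nordef}) determines $({\bm\mu},{\bm\Sigma})$ uniquely, and then to close the loop through (\ref{exp:dens:jn}).

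First I would observe that $\phi_X$ is the exponential of the polynomial $g({\bf u})={\bf i}\langle{\bm\mu},{\bf u}\rangle-\frac{1}{2}\langle{\bm\Sigma}{\bf u},{\bf u}\rangle$, and in particular never vanishes. Suppose another normal $X'\sim\mathcal N({\bm\mu}',{\bm\Sigma}')$ had the same characteristic function. Writing $h({\bf u})$ for the difference of the two exponents, the identity $\phi_X=\phi_{X'}$ becomes $e^{h({\bf u})}=1$ for every ${\bf u}\in\mathbb R^n$, so $h$ takes values in the discrete set $2\pi{\bf i}\mathbb Z$. Since $h$ is continuous, $h(0)=0$, and $\mathbb R^n$ is connected, this forces $h\equiv 0$.

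Next I would separate real and imaginary parts of the relation $h\equiv 0$. The imaginary part reads $\langle{\bm\mu}-{\bm\mu}',{\bf u}\rangle=0$ for all ${\bf u}$, whence ${\bm\mu}={\bm\mu}'$; the real part reads $\langle({\bm\Sigma}-{\bm\Sigma}'){\bf u},{\bf u}\rangle=0$ for all ${\bf u}$, and since both covariance matrices are symmetric, polarization gives ${\bm\Sigma}={\bm\Sigma}'$. Feeding the recovered parameters back into (\ref{exp:dens:jn}) yields $\psi_X=\psi_{X'}$, hence $P_X=P_{X'}$, which is exactly the asserted uniqueness.

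I expect the only genuinely delicate point to be the passage from $e^{h}\equiv 1$ to $h\equiv 0$, where one must invoke continuity and connectedness to rule out locally constant jumps into $2\pi{\bf i}\mathbb Z$; this is precisely the step that circumvents the need for a branch of the complex logarithm. An alternative route would bypass parameter recovery altogether: since ${\bm\Sigma}$ is positive definite, $|\phi_X({\bf u})|=e^{-\frac{1}{2}\langle{\bm\Sigma}{\bf u},{\bf u}\rangle}$ is integrable on $\mathbb R^n$, so the multivariate analogue of the inversion formula (\ref{four:inv:2}) applies and recovers $\psi_X$ directly from $\phi_X$; there the only obstacle is that Proposition \ref{four:inv}(2) is stated in dimension one and would require its $n$-dimensional counterpart.
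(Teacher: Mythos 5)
Your main argument is correct as a piece of mathematics, but be aware that it proves something strictly weaker than what the paper needs, and that your closing aside is in fact the paper's entire proof. The paper simply observes that $\int_{\mathbb R^n}|\phi_X({\bf u})|\,d{\bf u}=\int_{\mathbb R^n}e^{-\frac12\langle{\bm\Sigma}{\bf u},{\bf u}\rangle}\,d{\bf u}<+\infty$ because ${\bm\Sigma}$ is positive definite, and then invokes the multivariate version of the inversion formula (\ref{four:inv:2}) to recover $\psi_X$ directly from $\phi_X$. The "obstacle" you mention (that Proposition \ref{four:inv}(2) is stated in dimension one) is exactly what the paper waves away with the phrase ``the appropriate multi-variate version,'' so that route is not only viable but is the intended one.

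The gap in your primary argument is one of scope rather than of logic. The passage from $e^{h}\equiv 1$ to $h\equiv 0$ via continuity, connectedness and $h(0)=0$ is sound, and the separation into real and imaginary parts correctly recovers $({\bm\mu},{\bm\Sigma})$. But this only shows that the map $({\bm\mu},{\bm\Sigma})\mapsto\phi_{\mathcal N({\bm\mu},{\bm\Sigma})}$ is injective, i.e.\ that the characteristic function separates points \emph{within the Gaussian family}: you assume from the outset that the competitor $X'$ is also normal. Look at how the corollary is actually deployed later: in Proposition \ref{norm:spce}(2)--(3), in Corollary \ref{unc:ind:n:c}, and in Proposition \ref{charbm}(2), one computes the characteristic function of a random vector $Y$ that is \emph{not} known in advance to be Gaussian, finds that it equals $e^{{\bf i}\langle{\bm\mu},{\bf u}\rangle-\frac12\langle{\bm\Sigma}{\bf u},{\bf u}\rangle}$, and concludes that $Y\sim\mathcal N({\bm\mu},{\bm\Sigma})$. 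Your parameter-recovery argument cannot justify those steps, whereas the inversion argument does, since integrability of $\phi_Y=\phi_X$ forces $P_Y$ to be absolutely continuous with density given by the inversion integral, with no Gaussianity assumed a priori. The fix is simply to promote your final sentence to the proof and drop, or demote to a remark, the exponent-comparison argument.
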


\begin{proof}
	Note that 
	\[
	\int_{\mathbb R^n} |\phi_X({\bf u})|d{\bf u}=	\int_{\mathbb R^n}
	e^{ -\frac{1}{2}\langle {\bm\Sigma}{\bf u},{\bf u}\rangle}
	d{\bf u}<+\infty
	\]
	and apply (the appropriate multi-variate version of) Proposition \ref{four:inv} (2). 
\end{proof}

\begin{corollary}\label{ortho:norm}
	For a random vector  $X\in\mathbb R^n$ the following hold:
	\begin{itemize}
			\item 
		$X\sim\mathcal N(\bm\mu,{\bm\Sigma})$ if and only if  $\langle {\bf u}, X\rangle\sim\mathcal N(\langle{\bf u},{\bm\mu}\rangle, \langle{\bm\Sigma}{\bf u},{\bf u}\rangle)$
		for any ${\bf u}\in\mathbb R^n$.
		\item
	If 	$X\sim\mathcal N(\bm\mu,{\bm\Sigma})$ then 
	\[
	CX\sim\mathcal N(C{\bm\mu},C{\bm\Sigma}C^\top\}),
	\]
	where $C$ is an invertible $n\times n$ matrix.
	\end{itemize}
\end{corollary}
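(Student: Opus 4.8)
The plan is to work entirely with characteristic functions, exploiting the explicit formula (\ref{nordef}) together with the uniqueness afforded by Corollary \ref{det:norm:dist}. The single observation driving everything is that a linear functional of $X$ has a characteristic function obtained by restricting $\phi_X$ to a ray: for a scalar $t$ and ${\bf u}\in\mathbb R^n$,
\[
\phi_{\langle {\bf u},X\rangle}(t)=\mathbb E(e^{{\bf i}t\langle {\bf u},X\rangle})=\mathbb E(e^{{\bf i}\langle t{\bf u},X\rangle})=\phi_X(t{\bf u}).
\]

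For the first item, I would assume first that $X\sim\mathcal N({\bm\mu},{\bm\Sigma})$ and fix ${\bf u}\neq 0$. Substituting into (\ref{nordef}) gives
\[
\phi_{\langle {\bf u},X\rangle}(t)=\phi_X(t{\bf u})=e^{t\langle{\bm\mu},{\bf u}\rangle{\bf i}-\frac{1}{2}t^2\langle{\bm\Sigma}{\bf u},{\bf u}\rangle},
\]
which is exactly the one-dimensional Gaussian characteristic function---compare (\ref{nordef}) with $n=1$---of a $\mathcal N(\langle{\bf u},{\bm\mu}\rangle,\langle{\bm\Sigma}{\bf u},{\bf u}\rangle)$ variable; note that $\langle{\bm\Sigma}{\bf u},{\bf u}\rangle>0$ because ${\bm\Sigma}$ is positive definite, so this is a bona fide normal. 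Corollary \ref{det:norm:dist} in dimension one then identifies the law. For the converse I would instead read the $n$-dimensional characteristic function off its one-dimensional restrictions: evaluating the displayed identity at $t=1$ yields $\phi_X({\bf u})=\phi_{\langle{\bf u},X\rangle}(1)$, and inserting the assumed one-dimensional Gaussian form at $t=1$ reproduces (\ref{nordef}) verbatim, so Corollary \ref{det:norm:dist} finishes the job.

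For the second item I would again compute a characteristic function, this time using the adjoint to move $C$ across the pairing:
\[
\phi_{CX}({\bf u})=\mathbb E(e^{{\bf i}\langle CX,{\bf u}\rangle})=\mathbb E(e^{{\bf i}\langle X,C^\top{\bf u}\rangle})=\phi_X(C^\top{\bf u}).
\]
Substituting (\ref{nordef}) and using $\langle{\bm\mu},C^\top{\bf u}\rangle=\langle C{\bm\mu},{\bf u}\rangle$ together with $\langle{\bm\Sigma}C^\top{\bf u},C^\top{\bf u}\rangle=\langle C{\bm\Sigma}C^\top{\bf u},{\bf u}\rangle$ turns this into the characteristic function of $\mathcal N(C{\bm\mu},C{\bm\Sigma}C^\top)$. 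Before invoking uniqueness I would verify that $C{\bm\Sigma}C^\top$ is a legitimate covariance, i.e. symmetric and positive definite: symmetry is immediate, and for ${\bf v}\neq 0$ one has $\langle C{\bm\Sigma}C^\top{\bf v},{\bf v}\rangle=\langle{\bm\Sigma}(C^\top{\bf v}),C^\top{\bf v}\rangle>0$, since $C^\top{\bf v}\neq 0$ ($C$ being invertible) and ${\bm\Sigma}$ is positive definite. Corollary \ref{det:norm:dist} then delivers $CX\sim\mathcal N(C{\bm\mu},C{\bm\Sigma}C^\top)$.

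The computations are essentially mechanical, so the only real care is bookkeeping: making sure the linear and quadratic exponents land precisely in the form (\ref{nordef}), and checking positive definiteness so that the target laws are admissible normals in the sense of Definition \ref{normdistrv}. The lone honest subtlety is the degenerate direction ${\bf u}=0$ in the first item, where $\langle{\bf u},X\rangle\equiv 0$ is a point mass rather than a nondegenerate Gaussian; this is harmless and can be dealt with either by restricting to ${\bf u}\neq 0$ or by adopting the usual convention that $\mathcal N(\cdot,0)$ denotes a Dirac mass.
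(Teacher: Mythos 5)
Your proposal is correct and follows essentially the same route as the paper, which also reduces both items to the identity $\phi_{\langle{\bf u},X\rangle}(t)=\phi_X(t{\bf u})$ (resp.\ $\phi_{CX}({\bf u})=\phi_X(C^\top{\bf u})$) combined with the explicit Gaussian characteristic function (\ref{nordef}) and the uniqueness statement of Corollary \ref{det:norm:dist}. The extra checks you include (positive definiteness of $C{\bm\Sigma}C^\top$ and the degenerate direction ${\bf u}=0$) are sensible details that the paper leaves implicit.
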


\begin{proof}	
	The first assertion is an immediate consequence of the identity
	\[
	\phi_{X}({u\bf u})=
	\phi_{\langle{\bf u},  X\rangle}(u), \quad u\in\mathbb R, \quad {\bf u}\in\mathbb R^n.
	\]
	As for the second, use that $\langle{\bf u},CX\rangle=\langle C^\top {\bf u},X\rangle$. 
\end{proof}	

Next we explore further consequences of the theory above.

\begin{proposition}\label{norm:spce}
	If $X:\Omega\to\mathbb R$ is normally distributed, $X\sim \mathcal N(\mu,\sigma^2)$, then 
	\begin{enumerate}
		\item $\phi_X(u)=e^{\mu u{\bf i}-\frac{1}{2}\sigma^2u^2}$;
		\item if $(r,s)\in\mathbb R^2$, $r\neq 0$, then $rX+s\sim\mathcal N(r\mu+s,r^2\sigma^2)$. 
		\item If $Y\sim\mathcal N(\overline\mu,\overline\sigma^2)$ and $Y\perp\!\!\perp X$ then 
		$X+Y\sim\mathcal N(\mu+\overline\mu,\sigma^2+\overline\sigma^2)$. As a consequence, if $\{X_j\}_{j=1}^n$ is independent with $X_j\sim \mathcal N(\mu_j,\sigma_j^2)$ then
		\begin{equation}\label{norm:space:3}
			\sum_ja_jX_j\sim\mathcal N\left(\sum_ja_j\mu_j,\sum_ja_j^2\sigma_j^2\right),\quad a_j\in\mathbb R. 
		\end{equation}
		In particular, if $\mu_j=0$ and  $\sigma_j=\sigma$ then  $X=(X_1,\cdots,X_n)$ satisfies
		\begin{equation}\label{norm:space:4}
			\langle X,\vec{a}\rangle\sim \mathcal N(0,\|\vec{a}\|^2\sigma^2), 
		\end{equation}
		where $\vec{a}=(a_1,\cdots,a_n)$. 
	\end{enumerate}
\end{proposition}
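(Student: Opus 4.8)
The plan is to treat all three parts uniformly through the characteristic function, using the uniqueness statement of Corollary~\ref{det:norm:dist} to pass from an identity of characteristic functions to an identity of distributions. Part (1) requires no work beyond specialization: it is exactly Proposition~\ref{funchnor} in the case $n=1$, where $\langle\bm\mu,{\bf u}\rangle$ collapses to $\mu u$ and $\langle\bm\Sigma{\bf u},{\bf u}\rangle$ to $\sigma^2u^2$.

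For part (2), I would compute the characteristic function of $rX+s$ straight from the definition, factoring out the deterministic shift:
\[
\phi_{rX+s}(u)=\mathbb E\!\left(e^{{\bf i}(rX+s)u}\right)=e^{{\bf i}su}\,\phi_X(ru),
\]
and then insert the formula from part (1) evaluated at $ru$. Rearranging the exponent yields $e^{(r\mu+s)u{\bf i}-\frac12(r^2\sigma^2)u^2}$, which is precisely the characteristic function of $\mathcal N(r\mu+s,r^2\sigma^2)$. The hypothesis $r\neq 0$ ensures $r^2\sigma^2>0$, so the target is a bona fide non-degenerate normal law, and Corollary~\ref{det:norm:dist} delivers the distributional identity.

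For part (3), the first assertion follows from Proposition~\ref{charac:p}~(1), giving $\phi_{X+Y}=\phi_X\phi_Y$ under independence; multiplying the two formulas from part (1) and collecting terms in the exponent immediately produces the characteristic function of $\mathcal N(\mu+\overline\mu,\sigma^2+\overline\sigma^2)$. For the general linear combination I would iterate, combining Proposition~\ref{charac:p}~(1) with the scaling rule $\phi_{a_jX_j}(u)=\phi_{X_j}(a_ju)$ of Proposition~\ref{charac:p:fol}~(1); independence then gives
\[
\phi_{\sum_j a_jX_j}(u)=\prod_j\phi_{X_j}(a_ju)=\exp\!\Big(\big(\textstyle\sum_j a_j\mu_j\big)u{\bf i}-\tfrac12\big(\textstyle\sum_j a_j^2\sigma_j^2\big)u^2\Big),
\]
identifying the law as $\mathcal N(\sum_j a_j\mu_j,\sum_j a_j^2\sigma_j^2)$. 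The closing ``in particular'' statement is the specialization $\mu_j=0$, $\sigma_j=\sigma$, where $\langle X,\vec a\rangle=\sum_j a_jX_j$ and $\sum_j a_j^2\sigma^2=\|\vec a\|^2\sigma^2$.

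The computations are entirely routine, and no part presents a genuine obstacle. The only point deserving attention is the appeal to uniqueness: it is Corollary~\ref{det:norm:dist} that licenses the inference from equal characteristic functions to equal distributions. A pleasant feature of the characteristic-function route is that it gracefully absorbs any vanishing coefficients $a_j$ in part (3)---such a term merely contributes the factor $\phi_{X_j}(0)=1$---so no separate handling of the degenerate case is required.
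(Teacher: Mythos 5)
Your proposal is correct and follows essentially the same route as the paper: part (1) is the $n=1$ specialization of Proposition~\ref{funchnor}, parts (2) and (3) are obtained by computing characteristic functions via Propositions~\ref{charac:p:fol}~(1) and~\ref{charac:p}~(1), and the distributional identities are concluded from Corollary~\ref{det:norm:dist}. The only cosmetic difference is that you factor out the deterministic shift $e^{{\bf i}su}$ directly from the definition, whereas the paper treats $s$ as a constant random variable independent of $rX$ and applies the product rule; the two are interchangeable.
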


\begin{proof}
	(1) is a special case of (\ref{nordef}). As for (2), note that $\phi_s(u)=e^{su{\bf i}}$, $s\perp\!\!\perp rX$ and use Proposition \ref{charac:p} (1) and Proposition \ref{charac:p:fol} (1)
	to check that 
	\[
	\phi_{rX+s}(u)=e^{(r\mu+s)u{\bf i}-\frac{1}{2}r^2\sigma^2u^2},
	\]
	and finally use Corollary \ref{det:norm:dist}. 
	Clearly, (3) is proved with the same kind of argument. For instance,
	\begin{eqnarray*}
	\phi_{X+Y}(u) 
	& = &
	\phi_X(u)\phi_Y(u)\\
	& = & e^{{\bf i}u\mu-\frac{1}{2}u^2\sigma^2}
	e^{{\bf i}u\overline\mu-\frac{1}{2}u^2\overline\sigma^2} \\
	& = &
	e^{{\bf i}u(\mu+\overline\mu)-\frac{1}{2}u^2(\sigma^2+\overline\sigma^2)},
	\end{eqnarray*}
as claimed. 	
\end{proof}

\begin{example}\label{mom:normal}
	(Moments of a normal) If $X\sim\mathcal N(0,\sigma^2)$ then (\ref{mgf:normal:n}) gives
	\[
	\varphi_X(u)=e^{\frac{1}{2}\sigma^2u^2}=\sum_{k\geq 0}\frac{\sigma^{2k}}{2^{k}k!}u^{2k},
	\]
	so if we compare with (\ref{mom:gen:exp}) we conclude that 
	\begin{equation}
		\alpha_l(X)=
		\left\{
		\begin{array}{ll}
			\frac{l!}{2^{l/2}(l/2)!}\sigma^{2k}, & l\,{\rm even} \\
			0,&  l\,{\rm odd}
		\end{array}
		\right.
	\end{equation}
	which provides explicit expressions for all the moments of $X$. 
	\qed
\end{example}

\begin{example}\label{moment} (The log-normal distribution )
	If $n=1$ and $X\sim \mathcal N(\mu,\sigma^2)$ then (\ref{mgf:normal:n}) implies that $Y=e^X$ satisfies 
	\begin{equation}\label{log:exp}
		\mathbb E(Y)=\mathbb E(e^X)=e^{\mu+\frac{1}{2}\sigma^2}
	\end{equation}
	and 
	\[
	\mathbb E(Y^2)=\mathbb E(e^{2X})=e^{2\mu+2\sigma^2},
	\]
	so that 
	\begin{equation}\label{log:var}
		{\mathbb V}(Y)=\mathbb E(Y^2)-\mathbb E(Y)^2=(e^{\sigma^2}-1)e^{2\mu+\sigma^2}. 
	\end{equation}
	Hence, we may summarize (\ref{log:exp}) and (\ref{log:var}) by writing   
	\begin{equation}\label{log:Y}
		Y=e^X\sim \mathcal L\mathcal N(e^{\mu+\frac{1}{2}\sigma^2},(e^{\sigma^2}-1)e^{2\mu+\sigma^2}),
	\end{equation}
	where $\mathcal L\mathcal N$ stands for ``log-normal'' (which means that $X=\ln Y$ follows a normal). 
	Alternatively, we may write 
	\begin{equation}\label{log:Y2}
		Y\sim \Lambda(\mu,\sigma^2),
	\end{equation}
	if emphasis on the parameters of the underlying normal distribution is needed \cite{aitchison1969lognormal}.   In this notation, it is immediate from Proposition \ref{norm:spce}-(2) that (\ref{log:Y2}) implies
	\begin{equation}\label{log:transl}
		e^aY\sim \Lambda(\mu+a,\sigma^2), \quad a\in\mathbb R.
	\end{equation}
	Now, an (obvious) generalization of (\ref{log:Y}) is 
	\[
	Y^u\sim \mathcal L\mathcal N(e^{\mu u+\frac{1}{2}\sigma^2u^2},(e^{\sigma^2u^2}-1)e^{2\mu u+\sigma^2u^2}), \quad u\in\mathbb R,
	\]
	so that  the corresponding {\em coefficient of variation},   
	\begin{equation}\label{cv:logn:pop}
		{\rm cv}(Y^u):=\frac{{\rm sd}(Y^u)}{\mathbb E(Y^u)},
	\end{equation}
	is given by
	\[
	{\rm cv}(Y^u)=\sqrt{e^{\sigma^2u^2}-1}.
	\]
	In particular, it does not depend on $\mu=\mathbb E(\ln Y)$ and satisfies the ``scaling-plus-inversion invariance property'' 
	\begin{equation}\label{log:n:inv}
		{\rm cv}(\alpha Y^u)={\rm cv}(Y^u)={\rm cv}(Y^{-u}), \quad \alpha>0.
	\end{equation}
	For later reference, we also note that the pdf $\psi_{Y}$  of $Y=e^X$ is 
	\begin{equation}\label{dist:ln}
		\psi_{Y}(x)=\frac{1}{\sigma x\sqrt{2\pi}}e^{-\frac{1}{2}\left(\frac{\ln x-\mu}{\sigma}\right)^2},
	\end{equation}
	so that the corresponding cdf
	is 
	\begin{equation}\label{cum:fct}
		F_{Y}(x)=\Phi\left(\frac{\ln x-\mu}{\sigma}\right),
	\end{equation}
	where 
	\begin{equation}\label{cdf:norm}
		\Phi(x):=\frac{1}{\sqrt{2\pi}}\int_{-\infty}^xe^{-\frac{1}{2}y^2}dy
	\end{equation}
	is the cdf of $\mathcal N(0,1)$.\qed
\end{example}

We now turn to another elegant application of the formalism of characteristic functions to Gaussian random variables. Recall that if $X$ and $Y$ are independent random variables, then they are necessarily uncorrelated (Corollary \ref{induncorr}). We shall now verify that the converse also holds in the case of the components of a normally distributed random vector.

\begin{proposition}\label{unc:ind:n}
	\label{nice} If $X=(X_1,\cdots,X_k):\Omega\to\mathbb R^k$ is a normally distributed random vector, say $X\sim\mathcal N({\bm\mu},{\bm\Sigma})$, with ${\bm\Sigma}$ diagonal  then:
	\begin{itemize}
		\item
		$\{X_j\}_{j=1}^k$ is independent;
		\item $X_j\sim \mathcal N({\bm\mu}_j,\sigma_{X_j}^2)$, \quad $\sigma_{X_j}^2={\mathbb V}(X_j)$.
	\end{itemize}
\end{proposition}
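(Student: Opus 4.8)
The plan is to exploit the explicit Gaussian characteristic function of Proposition \ref{funchnor} together with the observation that a \emph{diagonal} covariance matrix makes the quadratic form in its exponent split as a sum over coordinates. This forces $\phi_X$ to factor as a product of one-variable characteristic functions, which---once the factors are recognized as the marginal characteristic functions---is exactly the signature of independence. This route keeps the argument squarely within the characteristic-function formalism emphasized in this subsection.

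First I would pin down the marginals and then factor $\phi_X$. By Proposition \ref{invnor} we have $\mathbb E(X_j)={\bm\mu}_j$ and, since ${\bm\Sigma}$ is diagonal, $\sigma_{X_j}^2={\rm var}(X_j)={\bm\Sigma}_{jj}$; thus ${\bm\Sigma}={\rm diag}(\sigma_{X_1}^2,\cdots,\sigma_{X_k}^2)$. Taking ${\bf u}$ to be the $j$-th standard basis vector ${\bf e}_j$ in the first item of Corollary \ref{ortho:norm} gives
\[
X_j=\langle{\bf e}_j,X\rangle\sim\mathcal N\bigl(\langle{\bf e}_j,{\bm\mu}\rangle,\langle{\bm\Sigma}{\bf e}_j,{\bf e}_j\rangle\bigr)=\mathcal N({\bm\mu}_j,\sigma_{X_j}^2),
\]
which is the second assertion. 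Feeding $\langle{\bm\mu},{\bf u}\rangle=\sum_{j=1}^k{\bm\mu}_ju_j$ and $\langle{\bm\Sigma}{\bf u},{\bf u}\rangle=\sum_{j=1}^k\sigma_{X_j}^2u_j^2$ into Proposition \ref{funchnor} then yields
\[
\phi_X({\bf u})=\prod_{j=1}^k e^{{\bm\mu}_ju_j{\bf i}-\frac12\sigma_{X_j}^2u_j^2}=\prod_{j=1}^k\phi_{X_j}(u_j),
\]
where the last step invokes Proposition \ref{norm:spce} (1) to identify each factor as the characteristic function of the marginal just computed.

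Finally I would promote this factorization to an identity of laws and read off independence. Let $\mu:=P_{X_1}\otimes\cdots\otimes P_{X_k}$; by Fubini its characteristic function at ${\bf u}$ is $\prod_{j}\phi_{X_j}(u_j)$, which coincides with $\phi_X$, and $\mu$ is itself the normal $\mathcal N({\bm\mu},{\bm\Sigma})$ because the product of the one-dimensional Gaussian densities is precisely the density (\ref{exp:dens:jn}) for the present diagonal ${\bm\Sigma}$. The one delicate point---the passage from equal characteristic functions to equal measures---I would settle not through the bare inversion formula of Proposition \ref{four:inv} (which would demand an integrability check) but through Corollary \ref{det:norm:dist}, which already certifies that a normal law is determined by its characteristic function. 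This gives $P_X=\mu=P_{X_1}\otimes\cdots\otimes P_{X_k}$, whereupon Proposition \ref{inddens} delivers the independence of $\{X_j\}_{j=1}^k$. As a shortcut worth noting, one could instead factor the joint density (\ref{exp:dens:jn}) directly---diagonality of $A={\bm\Sigma}^{-1}$ splits both its determinant and the exponent coordinatewise---and apply Proposition \ref{inddens} at once, though this bypasses the characteristic functions the subsection is meant to showcase.
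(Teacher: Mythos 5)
Your proof is correct and follows essentially the same route as the paper: both factor the explicit Gaussian characteristic function of Proposition \ref{funchnor} using the diagonality of ${\bm\Sigma}$ and then pass back to distributions to invoke Proposition \ref{inddens}. The only cosmetic difference is that the paper applies the multivariate inversion formula (\ref{four:inv:2}) to recover the joint density as a product of marginal densities, whereas you compare $P_X$ with the product measure via Corollary \ref{det:norm:dist} and obtain the marginals up front from Corollary \ref{ortho:norm}; both variants are sound.
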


\begin{proof}
	By assumption, ${\bm\Sigma}={\rm diag}(\sigma_{X_1}^2,\cdots,\sigma_{X_j}^2)$. If ${\bm \mu}=\mathbb E(X)$ we have from (\ref{nordef}) that  
	\begin{eqnarray*}
		\phi_X({\bf u}) & = & e^{\langle {\bm \mu},{\bf u}\rangle{\bf i}-\frac{1}{2}\langle {\bm\Sigma}{\bf u},{\bf u}\rangle}\\
		& = & e^{\left(\sum_j{\bm\mu}_ju_j\right){\bf i}-\frac{1}{2}\sum_j{\bm\Sigma}_{jj}u_j^2}\\
		& = & \Pi_j e^{{\bm\mu}_ju_j{\bf i}-\frac{1}{2}{\bm\Sigma}_{jj}u_j^2}\\
		& = & \Pi_j\phi_{Y_j}(u_j),		
	\end{eqnarray*}
	where $Y_j\sim\mathcal N({\bm\mu}_j,\sigma_{X_j}^2)$ by (\ref{mgf:normal:n}) and Proposition \ref{four:inv}, (2).  
	Using (the multi-variate version of) (\ref{four:inv:2}) we then have
	\begin{eqnarray*}
		\psi_{X}({\bf x}) 
		& = & 
		\frac{1}{(2\pi)^n}\int_{\mathbb R^n}e^{-{\bf i}\langle {\bf x},{\bf u}\rangle}\phi_X({\bf u})d{\bf u}\\
		& = & \Pi_j\frac{1}{2\pi}\int_{\mathbb R}e^{-{\bf i}x_ju_j}\phi_{Y_j}(u_j)du_j\\
		& = & \Pi_j\psi_{Y_j}(x_j),
	\end{eqnarray*} 
	which not only proves that $\{X_j\}_{j=1}^k$ is independent (by Proposition \ref{inddens}) but also that $\psi_{X_j}=\psi_{Y_j}$ (by Proposition \ref{pdf:marg}), which concludes the proof.
\end{proof}

\begin{corollary}\label{assert:eq:ind}
	The following assertions are equivalent:
	\begin{itemize}
		\item  $X=(X_1,\cdots,X_k)\sim \mathcal N(\vec{0},\sigma^2{\rm Id}_k)$.
		\item $\{X_j\}_{j=1}^k$ is independent and $X_j\sim\mathcal N(0,\sigma^2)$.
	\end{itemize}
\end{corollary}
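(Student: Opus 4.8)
The plan is to establish the two implications separately, in each case reducing to results already in hand.

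For the implication that the first assertion implies the second, I would invoke Proposition~\ref{unc:ind:n} directly. If $X\sim\mathcal N(\vec 0,\sigma^2{\rm Id}_k)$, then the covariance matrix ${\bm\Sigma}=\sigma^2{\rm Id}_k$ is diagonal, so the hypotheses of that proposition are met. It yields at once both the independence of $\{X_j\}_{j=1}^k$ and the marginal laws $X_j\sim\mathcal N({\bm\mu}_j,{\bm\Sigma}_{jj})$; since here ${\bm\mu}=\vec 0$ and ${\bm\Sigma}_{jj}=\sigma^2$, each component is distributed as $\mathcal N(0,\sigma^2)$, exactly as claimed.

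For the converse, I would argue at the level of characteristic functions. Starting from $\phi_X({\bf u})=\mathbb E(e^{{\bf i}\langle X,{\bf u}\rangle})=\mathbb E(\prod_je^{{\bf i}u_jX_j})$, I would use independence (Proposition~\ref{inddens}) to factor the expectation of the product, obtaining $\phi_X({\bf u})=\prod_j\phi_{X_j}(u_j)$. By Proposition~\ref{norm:spce}~(1) with $\mu=0$, each factor equals $e^{-\frac12\sigma^2u_j^2}$, so the product collapses to
\[
\phi_X({\bf u})=e^{-\frac12\sigma^2\sum_ju_j^2}=e^{-\frac12\langle\sigma^2{\rm Id}_k{\bf u},{\bf u}\rangle},
\]
which is precisely the characteristic function~(\ref{nordef}) of $\mathcal N(\vec 0,\sigma^2{\rm Id}_k)$.

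It then remains to identify the law of $X$ from its characteristic function. The natural reflex is to cite Corollary~\ref{det:norm:dist}, but that result presupposes that $X$ is already known to be normal---which is exactly the conclusion sought---so I would sidestep the apparent circularity by appealing instead to the multivariate inversion formula of Proposition~\ref{four:inv}~(2). Since $|\phi_X|$ is manifestly integrable, that formula recovers $\psi_X$ as the Fourier transform of $\phi_X$, namely the Gaussian density of $\mathcal N(\vec 0,\sigma^2{\rm Id}_k)$; this pins down the distribution and completes the proof. I expect this final identification---invoking uniqueness of the distribution without circularly assuming normality of $X$---to be the only step requiring genuine care; the remainder is routine bookkeeping with characteristic functions.
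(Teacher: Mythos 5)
Your proof is correct. The forward implication is handled exactly as the paper intends: Proposition~\ref{unc:ind:n} applied to the diagonal covariance $\sigma^2{\rm Id}_k$ gives both independence and the marginal laws at once. For the converse you take a genuinely different (though still valid) route. The paper leaves the corollary without proof because the converse is immediate from Proposition~\ref{inddens}: independence is \emph{equivalent} to the factorization of the joint pdf, so $\psi_X({\bf x})=\prod_j(2\pi\sigma^2)^{-1/2}e^{-x_j^2/2\sigma^2}=(2\pi\sigma^2)^{-k/2}e^{-\|{\bf x}\|^2/2\sigma^2}$, which is literally the density~(\ref{exp:dens:jn}) with ${\bm\mu}=\vec 0$ and $A=\sigma^{-2}{\rm Id}_k$; no Fourier analysis is needed. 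Your alternative passes through characteristic functions, factors $\phi_X$ by independence, and then—correctly spotting that Corollary~\ref{det:norm:dist} would be circular here—identifies the law via the multivariate inversion formula of Proposition~\ref{four:inv}~(2). That detour is sound (it mirrors the inversion step inside the proof of Proposition~\ref{unc:ind:n} itself), and your attention to the circularity issue is exactly the right instinct; the trade-off is that you pay for an extra Gaussian Fourier-transform computation to recover a density that the product of marginal densities hands you for free.
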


\begin{corollary}\label{unc:ind:n:c}(Rotational invariance)
	Let $\{X_j\}_{j=1}^k$ be independent with $X_j\sim\mathcal N(0,\sigma^2)$ and consider $Y_{l}=\sum_{j=1}^kC_{lj}X_{j}$, where $C=\{C_{lj}\}$ is orthogonal. Then $\{Y_l\}_{l=1}^k$ is independent with $Y_l\sim\mathcal N(0,\sigma^2)$.
\end{corollary}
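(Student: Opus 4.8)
The plan is to recognize this as a direct consequence of the two preceding results on affine images of Gaussians, with orthogonality supplying the one essential algebraic input. First I would repackage the hypothesis: by Corollary \ref{assert:eq:ind}, the assumption that $\{X_j\}_{j=1}^k$ is independent with each $X_j\sim\mathcal N(0,\sigma^2)$ is \emph{equivalent} to the single statement that the random vector $X=(X_1,\cdots,X_k)$ is jointly normal with $X\sim\mathcal N(\vec 0,\sigma^2{\rm Id}_k)$. This reformulation is the decisive step, since it lets me treat the collection of linear combinations $Y_l=\sum_j C_{lj}X_j$ simultaneously as a single vector $Y=CX$, rather than reasoning about the $Y_l$ one at a time.

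Next I would apply the second item of Corollary \ref{ortho:norm} to the matrix $C$, which is eligible because an orthogonal matrix is invertible (with $C^{-1}=C^\top$). This yields $Y=CX\sim\mathcal N\bigl(C\vec 0,\,C(\sigma^2{\rm Id}_k)C^\top\bigr)=\mathcal N(\vec 0,\sigma^2 CC^\top)$. The whole weight of the argument now rests on the defining relation $CC^\top={\rm Id}_k$ of an orthogonal matrix, which collapses the covariance back to the scalar form $\sigma^2{\rm Id}_k$; hence $Y\sim\mathcal N(\vec 0,\sigma^2{\rm Id}_k)$. It is worth noting that orthogonality does more than merely keep the covariance diagonal (which would already suffice to invoke Proposition \ref{unc:ind:n} for independence): it preserves the \emph{common} variance $\sigma^2$, so that the $Y_l$ are not only independent but identically distributed as $\mathcal N(0,\sigma^2)$.

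Finally I would invoke Corollary \ref{assert:eq:ind} once more, now in the reverse direction, to read off from $Y\sim\mathcal N(\vec 0,\sigma^2{\rm Id}_k)$ that $\{Y_l\}_{l=1}^k$ is independent with each $Y_l\sim\mathcal N(0,\sigma^2)$, which is precisely the claim. I do not anticipate any genuine obstacle here beyond bookkeeping; the only point meriting a moment's care is the verification that $C$ satisfies the hypotheses of Corollary \ref{ortho:norm} and that the identity $CC^\top={\rm Id}_k$ restores the scalar covariance exactly, both of which are immediate from the definition of orthogonality.
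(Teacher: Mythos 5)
Your proof is correct, and the ordering of results in the paper makes it non-circular: both Corollary \ref{assert:eq:ind} and Corollary \ref{ortho:norm} precede the statement, so you are entitled to use them. The route you take is a slightly more modular packaging of what the paper does by hand: the paper's own proof recomputes the characteristic function of $Y=CX$ explicitly, using $\phi_Y({\bf u})=\phi_X(C{\bf u})$ together with Proposition \ref{funchnor} and the identity $\|C{\bf u}\|=\|{\bf u}\|$, and then concludes via Corollary \ref{det:norm:dist} and Proposition \ref{unc:ind:n}. You instead cite Corollary \ref{ortho:norm} (second item) to get $Y\sim\mathcal N(\vec 0,\sigma^2CC^\top)$ in one stroke and let $CC^\top={\rm Id}_k$ do the work, with Corollary \ref{assert:eq:ind} handling the translation between joint normality with scalar covariance and componentwise independence in both directions. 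Since Corollary \ref{ortho:norm} is itself proved by the very characteristic-function manipulation the paper repeats here, the two arguments rest on identical facts; yours buys brevity and avoids duplication, while the paper's is self-contained at the level of characteristic functions. Your observation that orthogonality preserves the common variance $\sigma^2$ (rather than merely keeping the covariance diagonal) is a worthwhile point that the paper leaves implicit.
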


\begin{proof}
	Write $Y=CX$ with $X\sim\mathcal N(\vec{0},\sigma^2 I)$. It follows that 
	\[
		\phi_Y({\bf u})
	 =  \phi_{CX}({\bf u})
	 {=}  \phi_X(C{\bf u}),
	\]
where we used Proposition \ref{charac:p:fol} (1) and Proposition \ref{charac:p} (1) in the last step.  It follows from Proposition \ref{funchnor} that 
	\begin{eqnarray*}
		\phi_Y({\bf u})
		& = & e^{-\frac{1}{2}\langle\sigma^2 C{\bf u},C{\bf u}\rangle}\\
		& = & e^{-\frac{1}{2}\sigma^2\|{\bf u}\|^2},
	\end{eqnarray*}
	so that
	$Y\sim\mathcal N(\vec{0},\sigma^2I)$ as well (by Corollary \ref{det:norm:dist}) and  the independence of $\{Y_j\}$ now follows from the proposition. 
\end{proof}

\begin{definition}\label{assert:eq:def}
	If $X=(X_1,\cdots,X_k)$ satisfies any of the conditions in Corollary \ref{assert:eq:ind} with $\sigma=1$ (so that $X\sim\mathcal N(\vec{0},{\rm Id}_k)$) then we say that $X$ is a {\em standard} normal random vector.
\end{definition}

\begin{remark}\label{proj:prop}
	The projection property in (\ref{norm:space:4}) is an easy consequence of rotational invariance. Indeed, let $Y=OX$, where $X\sim\mathcal N(\vec{0},\sigma^2{\rm Id})$ and $O$ is an orthogonal matrix whose first line is $\|\vec{a}\|^{-1}\vec{a}$. Then
	\[
	|\vec{a}\|^{-1}\langle X,\vec{a}\rangle=(OX)_1=Y_1\sim\mathcal N(0,\sigma^2), 
	\]
	where Corollary \ref{unc:ind:n:c} has been used in the last step. It follows that
	$\langle X,\vec{a}\rangle\sim N(0,\|\vec{a}\|^2\sigma^2)$, as claimed. 
	\qed
\end{remark}

\begin{remark}\label{corr:v:ind:qq}
	In Proposition \ref{unc:ind:n} it is essential to assume that the normal random variables $X_j$, $j=1,\cdots,n$, are {\em jointly} normally distributed in the sense that $X=(X_1,\cdots,X_n)$ is normally distributed. In fact, there exist  random variables $X_1$ and $X_2$ with ${\mathbb C}(X_1,X_2)=0$, $X_1,X_2\sim \mathcal N(0,1)$ but with
	$X=(X_1,X_2)$ not being normally distributed and hence with 
	$\{X_1,X_2\}$ {not} being independent. The classical example is obtained by taking $X_1\sim\mathcal N(0,1)$, ${\bm \epsilon}$ a Rademacher random variable as in Definition \ref{radem:def} which is independent from $X_1$ and $X_2={\bm \epsilon} X_1$. To check the claims above,
	we first compute
	\begin{eqnarray*}
		{\mathbb C}(X_1,X_2)
		& = & \mathbb E(X_1X_2)-\mathbb E(X_1)\mathbb E(X_2)\\
		& = & \mathbb E(X_1X_2)\\
		& = & \mathbb E(X_1^2{\bm \epsilon})\\
		& = & \mathbb E(X_1^2)\mathbb E({\bm \epsilon}) \\
		& = & 0, 
	\end{eqnarray*}
	where we used that $\mathbb E({\bm \epsilon})=0$ in the last step. Also, 
	the fact that $X_2\sim\mathcal N(0,1)$ follows from Proposition \ref{radem:sym:eq}. To check that $X$ is not normally distributed just note that $X_1+X_2$ vanishes on $\epsilon^{-1}(-1)$ and hence fails to follow a normal, so the claim follows from Corollary \ref{ortho:norm}.
	Finally, if $\{X_1,X_2\}$ were independent then  $\{|X_1|,|X_2|\}$ would be independent as well, which is a contradiction because $|X_1|=|X_2|$.  
	\qed
\end{remark}

\begin{remark}\label{rem:dir:ind:n} (The effectiveness of the characteristic function)
	The simplest case $n=2$ already illustrates the difficulty in trying to prove Proposition \ref{unc:ind:n}  by means of pdfs (thus directly relying on Proposition \ref{inddens}). Let us assume that 
	\begin{equation}\label{normal:biv}
		(X_1,X_2)\sim\mathcal N({\bm\mu},{\bm\Sigma}),
	\end{equation}
	where, with self-explanatory notation, 
	\[
	{\bm\mu}=
	\left(
	\begin{array}{c}
		\mu_{X_1}\\
		\mu_{X_2}
	\end{array}
	\right)
	\]
	and 
	\[
	{\bm\Sigma}=
	\left(
	\begin{array}{cc}
		\sigma_{X_1}^2 & \sigma_{X_1X_2}\\
		\sigma_{X_1X_2} & 	\sigma_{X_1}^2
	\end{array}
	\right)=
	\left(
	\begin{array}{cc}
		\sigma_{X_1}^2 & \rho\sigma_{X_1}\sigma_{X_2}\\
		\rho\sigma_{X_1}\sigma_{X_2} & 	\sigma_{X_1}^2
	\end{array}
	\right),
	\]
	where 
	\[
	\rho=\frac{\sigma_{X_1X_2}}{\sigma_{X_1}\sigma_{X_2}}
	\]
	is the {\em correlation coefficient}. Hence, one must check that $\rho=0$ implies that $\{X_1,X_2\}$ is independent, with each marginal following the appropriate normal distribution. To proceed, note that 
	\[
	\det {\bm\Sigma}=(1-\rho^2)\sigma_{X_1}^2\sigma_{X_2}^2,
	\]
	so that 
	\begin{eqnarray*}
		{\bm\Sigma}^{-1}
		& = &
		\frac{1}{(1-\rho^2)\sigma_{X_1}^2\sigma_{X_2}^2}
		\left(
		\begin{array}{cc}
			\sigma_{X_2}^2 & -\rho\sigma_{X_1}\sigma_{X_2}\\
			-\rho\sigma_{X_1}\sigma_{X_2} & 	\sigma_{X_1}^2
		\end{array}
		\right)\\
		& = & 
		\frac{1}{1-\rho^2}
		\left(
		\begin{array}{cc}
			1/\sigma_{X_1}^2 & -\rho/\sigma_{X_1}\sigma_{X_2}\\
			-\rho/\sigma_{X_1}\sigma_{X_2} & 	1/\sigma_{X_2}^2
		\end{array}
		\right),
	\end{eqnarray*}
	and leading this to (\ref{exp:dens:jn}), with $A={\bm\Sigma}^{-1}$, we see that the joint density of $(X_1,X_2)$ is
	\begin{eqnarray}
		\psi_{(X_1,X_2)}(x_1,x_2)
		& = & 
		\frac{1}{2\pi\sigma_{X_1}\sigma_{X_2}\sqrt{1-\rho^2}} \times \nonumber\\
		& & \quad \times\,
		e^{-\frac{1}{2(1-\rho^2)}
			\left(\frac{(x_1-\mu_{X_1})^2}{\sigma_{X_1}^2}-
			\frac{2\rho(x_1-\mu_{X_1})(x_2-\mu_{X_2})}{\sigma_{X_1}\sigma_{X_2}}
			+\frac{(x_2-\mu_{X_2})^2}{\sigma_{X_2}^2}\right)}. \label{rem:dir:ind:n:1}
	\end{eqnarray}
	Thus, if $\rho=0$ this decomposes
	as
	\[
	\psi_{(X_1,X_2)}(x_1,x_2)=
	\frac{1}{\sqrt{2\pi}\sigma_{X_1}}
	e^{-\frac{(x_1-\mu_{X_1})^2}{2\sigma_{X_1}^2}}\times
	\frac{1}{\sqrt{2\pi}\sigma_{X_2}}
	e^{-\frac{(x_2-\mu_{X_2})^2}{2\sigma_{X_2}^2}},	
	\]
	from which the claim follows immediately. However, it is not clear how this  argument, which involves explicitly inverting the covariance matrix ${\bm\Sigma}$, carries over as $n$ gets indefinitely large. This should be compared with the general proof displayed above, which relies on the inversion formula (\ref{four:inv:2}) combined with the fact that ${\bm\Sigma}$ appears {\em linearly} in the exponent of (\ref{nordef}). As yet another nice application of characteristic functions, let us note that, in general, if $X=(X_1,X_2)$ is given  then the characteristic function of the marginal $X_1$ is 
	\[
	\phi_{X_1}(u_1)
	= \mathbb E(e^{{\bf i}u_1X_1})
	=
	\mathbb E(e^{{\bf i}(u_1X_1+0X_2}),
	\]      
	that is, 
	\[
	\phi_{X_1}(u_1)=\phi_{(X_1,X_1)}(u_1,0),
	\]
	which tells us how to calculate the characteristic function of a marginal in terms of the characteristic function of the joint distribution. In particular, when applied to a bi-variate normal as in (\ref{normal:biv}), and {\em not} necessarily assuming that $\{X_1,X_2\}$ is independent, this clearly implies that the marginals are normally distributed in the expected way: $X_j\sim\mathcal N(\mu_{X_j},\sigma_{X_j}^2)$. Needless to say, a similar result holds for the marginals of a multivariate, normally distributed random vector, with essentially the same proof. \qed
\end{remark}	

\begin{remark}\!\!$\bigstar$\label{reg:to:m:1}
	(Regression to the mean) If $X=(X_1,X_2)$ is a bi-variate normal as in (\ref{normal:biv}) then we know from Remark \ref{rem:dir:ind:n} that $X_j\sim\mathcal N(\mu_{X_j},\sigma_{X_j}^2)$, $j=1,2$. Using this, (\ref{rem:dir:ind:n:1}), (\ref{p:y:x=x}) and a little algebra we get
	\begin{eqnarray*}
		\psi_{X_2|_{X_1=x_1})}(x_2)
		& = & 
		\frac{1}{\sqrt{2\pi}\sqrt{1-\rho^2}\sigma_{X_2}} \times \nonumber\\
		& & \quad \times\,
		e^{-\frac{1}{2(1-\rho^2)\sigma_{X_2}^2}
			\left(x_2-\mu_{X_2}-\rho\frac{\sigma_{X_2}}{\sigma_{X_1}}
			(x_1-\mu_{X_1})
			\right)^2}, 
	\end{eqnarray*}
	so that 
	\[
	{X_2|_{X_1=x_1}}\sim \mathcal N\left(\mu_{X_2}+\rho\frac{\sigma_{X_2}}{\sigma_{X_1}}
	(x_1-\mu_{X_1}),(1-\rho^2)\sigma_{X_2}^2\right)
	\]
	or equivalently, 
	\[
	\frac{{X_2|_{X_1=x_1}}-\mu_{X_2}}{\sigma_{X_2}}
	\sim\mathcal N\left(\rho\frac{x_1-\mu_{X_1}}{\sigma_{X_1}},1-\rho^2\right).
	\]
	In particular, 
	\begin{equation}\label{aver:reg}
		\frac{\mathbb E\left({X_2|_{X_1=x_1}}\right)-\mu_{X_2}}{\sigma_{X_2}}
		=\rho\frac{x_1-\mu_{X_1}}{\sigma_{X_1}},
	\end{equation}
	which says that, on average, the proper standardization of $X_2|_{X_1=x_1}$ is proportional to the observed standardization of $X_1$ by a factor which is strictly less than $1$ (in absolute value) unless $X_1$ and $X_2$ are perfectly correlated ($|\rho|=1$). More specifically, let us suppose that the variables model random measurements of some hereditary trait  (stature, for instance) that passes from parents ($X_1$) to offspring ($X_2$) and happens to be ``stable'' in the sense that both variables follow the same normal $\mathcal N(\mu,\sigma^2)$ (and of course are jointly normally distributed as well). We then obtain from  (\ref{aver:reg}) that 
	\[
	\mathbb E\left({X_2|_{X_1=x_1}}\right)-x_1=-(1-\rho)(x_1-\mu),
	\]
	which means that, on average, $X_2|_{X_1=x_1}$ lies somewhere between $x_1$ and $\mu$.
	This ``regression to the mean'', first (empirically) discovered by F. Galton, has played a fundamental role in the conceptual development of Multivariate Analysis \cite{stigler1990history,stigler1997regression,gorroochurn2016classic}.
	In order to relate this to the simple linear regression model as discussed in  Remark \ref{reg:to:m:2}, 
	note from (\ref{rem:dir:ind:n:1}) that the ellipses of ``equal frequency'' for the joint distribution are given by 
	\[
	(x_1-\mu)^2-2\rho(x_1-\mu)(x_2-\mu)+(x_2-\mu)^2={\rm const.}, 
	\]
	so the contact points of the corresponding vertical tangent lines satisfy
	\[
	x_2-\mu=\rho(x_1-\mu), 
	\]
	which, upon comparison with (\ref{reg:to:m:3}) and (\ref{reg:to:m:4}), identifies $\rho$ to the slope of the associated regression line\footnote{For assessments of the social and intellectual contexts of his time and the nasty ideology behind Galton's pursuit of this statistical result, we refer to \cite{cowan1972francis,hilts1973statistics,mackenzie1981statistics,bulmer2003francis}.}. \qed
\end{remark}

\subsection{Random variables related to the normal}\label{normaldist:rel}

We now present a few distributions closely related to the normal.

\begin{definition}\label{gamma:dist}
	A randon variable $Y:\Omega\to\mathbb R$ is ${\mathsf {Gamma}}({\alpha,\lambda})$-{\em distributed}, where $\alpha,\lambda>0$, if its pdf is
	\begin{equation}\label{gamma:dist:1}
	\Gamma_{\alpha,\lambda}(x)=\frac{\alpha^\lambda}{\Gamma(\lambda)}x^{\lambda-1}e^{-\alpha x}{\bf 1}_{(0,+\infty)}(x),
	\end{equation}
	where 
	\[
	\Gamma(\lambda)=\int_0^{+\infty}y^{\lambda-1}e^{-y}dy,
	\]
	is the Gamma function. We then say that $\alpha$ and $\lambda$ are the {\em inverse scale} and {\em shape} parameters of $X$, respectively. In particular, $Y$ is {\em chi-square distributed} with $k\geq 1$ degrees of freedom if its pdf is $\chi^2_k:=\Gamma_{1/2,k/2}$. Explicitly,
	\begin{equation}\label{chi:exp}
		\chi^2_k(x)=\frac{1}{2^{k/2}
			\Gamma(k/2)}x^{\frac{k}{2}-1}e^{-x/2}{\bf 1}_{(0,+\infty)}(x).
	\end{equation}
\end{definition}

\begin{proposition}\label{gamma:mgf}
	If $Y\sim {\mathsf{Gamma}}({\alpha,\lambda})$ then its mgf is $\varphi_Y(u)=(1-\alpha^{-1}u)^{-\lambda}$, $|u|<\alpha$. In particular, $\mathbb E(Y)=\lambda/\alpha$ and ${\mathbb V}(Y)=\lambda/\alpha^2$. 
\end{proposition}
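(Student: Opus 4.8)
The plan is to compute the moment generating function directly from its definition \eqref{moment:2}, which for the real-valued $Y$ reads $\varphi_Y(u)=\mathbb E(e^{uY})$, and then to read off the mean and variance by differentiating at the origin via \eqref{mgf:e:v:2}. Using the pdf \eqref{gamma:dist:1} together with the expectation formula \eqref{exp:form}, I would first write
\[
\varphi_Y(u)=\int_0^{+\infty}e^{ux}\,\frac{\alpha^\lambda}{\Gamma(\lambda)}\,x^{\lambda-1}e^{-\alpha x}\,dx
=\frac{\alpha^\lambda}{\Gamma(\lambda)}\int_0^{+\infty}x^{\lambda-1}e^{-(\alpha-u)x}\,dx.
\]
The integral converges precisely when $\alpha-u>0$; since the mgf is required only on an open neighborhood of the origin (cf.\ Remark \ref{mgf:e:v}), it suffices to restrict to $|u|<\alpha$, which is the stated domain.

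The key step is to recognize the remaining integral as a rescaled copy of the integral defining $\Gamma(\lambda)$ in Definition \ref{gamma:dist}. Substituting $y=(\alpha-u)x$ (legitimate because $\alpha-u>0$) turns $\int_0^{+\infty}x^{\lambda-1}e^{-(\alpha-u)x}\,dx$ into $(\alpha-u)^{-\lambda}\int_0^{+\infty}y^{\lambda-1}e^{-y}\,dy=(\alpha-u)^{-\lambda}\Gamma(\lambda)$. The factor $\Gamma(\lambda)$ then cancels the normalizing constant, leaving
\[
\varphi_Y(u)=\frac{\alpha^\lambda}{(\alpha-u)^\lambda}=\left(1-\alpha^{-1}u\right)^{-\lambda},\qquad |u|<\alpha,
\]
which is the asserted closed form.

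For the mean and variance I would simply differentiate this explicit expression and evaluate at $u=0$, invoking \eqref{mgf:e:v:2}. A direct computation gives $\varphi_Y'(u)=\lambda\alpha^{-1}(1-\alpha^{-1}u)^{-\lambda-1}$ and $\varphi_Y''(u)=\lambda(\lambda+1)\alpha^{-2}(1-\alpha^{-1}u)^{-\lambda-2}$, so that $\varphi_Y'(0)=\lambda/\alpha$ and $\varphi_Y''(0)=\lambda(\lambda+1)/\alpha^2$. Hence $\mathbb E(Y)=\lambda/\alpha$ and ${\rm var}(Y)=\varphi_Y''(0)-(\varphi_Y'(0))^2=\lambda(\lambda+1)/\alpha^2-\lambda^2/\alpha^2=\lambda/\alpha^2$, as claimed.

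I do not anticipate any genuine obstacle: the computation is entirely routine once the scaling substitution is spotted. The only point requiring a modicum of care is the convergence bookkeeping---ensuring $\alpha-u>0$ so that both the integral and the substitution are valid---together with the implicit justification, already guaranteed by Remark \ref{mgf:e:v} (since the Gamma mgf is finite on a neighborhood of the origin), that differentiating $\varphi_Y$ at $0$ is legitimate and indeed recovers the moments through \eqref{mgf:e:v:2}.
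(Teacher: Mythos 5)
Your proof is correct and follows essentially the same route as the paper: the substitution $y=(\alpha-u)x$ reduces the integral to $\Gamma(\lambda)$, yielding the closed form, and the moments are then read off via the formulae in Remark \ref{mgf:e:v}. You simply carry out explicitly the differentiation at $u=0$ that the paper delegates to that remark, and you are somewhat more careful about the domain $|u|<\alpha$.
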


\begin{proof}
	We have 
	\begin{eqnarray*}
		\varphi_Y(u) & = & 
		\frac{\alpha^\gamma}{\Gamma(\lambda)}\int_0^{+\infty}x^{\lambda-1}e^{-(\alpha-u)x}dx\\
		& \stackrel{y=(\alpha-u)x}{=} &
		\frac{\alpha^\gamma}{\Gamma(\lambda)}(\alpha-u)^{-\lambda}\int_0^{+\infty}y^{\lambda-1}e^{-y}dy\\
		&= & {\alpha^\gamma}(\alpha-u)^{-\lambda}.
	\end{eqnarray*}
	The last assertion  follows from Remark \ref{mgf:e:v}.  
\end{proof}

\begin{corollary}\label{chi:sq:ms}
	If $Y\sim\chi^2_k$ then 
	$\varphi_Y(u)=(1-2u)^{-k/2}$, $|u|<1/2$. In particular,
	$\mathbb E(Y)=k$ and ${\mathbb V}(Y)=2k$.
\end{corollary}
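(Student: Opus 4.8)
The plan is to recognize this corollary as an immediate specialization of Proposition \ref{gamma:mgf}. Recall from Definition \ref{gamma:dist} that the chi-squared density with $k$ degrees of freedom is defined precisely as $\chi^2_k = \Gamma_{1/2,k/2}$; that is, $Y \sim \chi^2_k$ means $Y \sim \mathsf{Gamma}(\alpha,\lambda)$ with inverse scale parameter $\alpha = 1/2$ and shape parameter $\lambda = k/2$. Once this identification is made, the entire statement follows by substitution, with no new analysis required.

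Concretely, I would first invoke Proposition \ref{gamma:mgf}, which gives $\varphi_Y(u) = (1 - \alpha^{-1}u)^{-\lambda}$ valid for $|u| < \alpha$. Substituting $\alpha = 1/2$ yields $\alpha^{-1} = 2$ and the range of validity $|u| < 1/2$, so the formula becomes
\[
\varphi_Y(u) = (1 - 2u)^{-k/2}, \qquad |u| < 1/2,
\]
as claimed. The only point to verify carefully is the bookkeeping of the two reciprocals: the inverse scale $\alpha = 1/2$ contributes the factor $2$ inside the parenthesis, while the shape $\lambda = k/2$ supplies the exponent $-k/2$.

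For the moments I would likewise read off the closed forms from the same proposition. With $\mathbb{E}(Y) = \lambda/\alpha$ we get $\mathbb{E}(Y) = (k/2)/(1/2) = k$, and with ${\rm var}(Y) = \lambda/\alpha^2$ we get ${\rm var}(Y) = (k/2)/(1/2)^2 = (k/2)\cdot 4 = 2k$. Alternatively, one could recompute these by differentiating the explicit mgf $(1-2u)^{-k/2}$ at $u = 0$ via the formulas \eqref{mgf:e:v:2}, which serves as a consistency check but is strictly redundant.

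Since every ingredient is already in hand, there is no genuine obstacle here; the statement is a direct corollary. The sole place where a slip is likely is the arithmetic of inverting $\alpha = 1/2$, so the \emph{only} care needed is to track that $\alpha^{-1} = 2$ and $\alpha^{-2} = 4$ correctly when specializing the general Gamma formulas.
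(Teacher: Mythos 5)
Your proof is correct and is exactly the intended argument: the paper states this as an immediate corollary of Proposition \ref{gamma:mgf} via the identification $\chi^2_k=\mathsf{Gamma}(1/2,k/2)$, with no separate proof given. The substitutions $\alpha=1/2$, $\lambda=k/2$ into $\varphi_Y(u)=(1-\alpha^{-1}u)^{-\lambda}$, $\mathbb E(Y)=\lambda/\alpha$ and ${\rm var}(Y)=\lambda/\alpha^2$ are all carried out correctly.
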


\begin{corollary}\label{gamma:cf}
	If $Y\sim {\mathsf{Gamma}}({\alpha,\lambda})$ then its characteristic function  is given by  $\phi_Y(u)=(1-\alpha^{-1}u{\bf i})^{-\lambda}$. In particular, if $Y\sim\chi^2_k$ then $\phi_Y(u)=(1-2u{\bf i})^{-k/2}$. 
\end{corollary}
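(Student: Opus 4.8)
The plan is to derive $\phi_Y$ directly from the moment generating function established in Proposition \ref{gamma:mgf}, exploiting the formal relation $\phi_Y(u) = \varphi_Y({\bf i}u)$ recorded in Remark \ref{mgf:e:v}, and then to make this substitution rigorous by analytic continuation rather than by a naive replacement of a real argument by a complex one.

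First I would write the characteristic function as the explicit integral
\[
\phi_Y(u) = \frac{\alpha^\lambda}{\Gamma(\lambda)}\int_0^{+\infty} x^{\lambda-1} e^{-(\alpha - {\bf i}u)x}\,dx, \quad u\in\mathbb R,
\]
which is legitimate because for real $u$ the integrand is dominated in absolute value by $x^{\lambda-1}e^{-\alpha x}$, so the integral converges absolutely. This reduces the problem to evaluating the function $F(z) := \int_0^{+\infty} x^{\lambda-1} e^{-zx}\,dx$ at the complex point $z = \alpha - {\bf i}u$, which lies in the right half-plane $\{\mathrm{Re}(z) > 0\}$ for every real $u$ since $\mathrm{Re}(z) = \alpha > 0$.

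Second, I would show that $F(z) = \Gamma(\lambda)\,z^{-\lambda}$ throughout $\{\mathrm{Re}(z) > 0\}$, with $z^{-\lambda}$ the principal branch. The function $F$ is holomorphic on this half-plane (by differentiation under the integral sign, the dominating bounds being uniform on compact subsets, or via Morera's theorem). Along the positive real axis the substitution $y = zx$ carried out in the proof of Proposition \ref{gamma:mgf} already gives $F(z) = \Gamma(\lambda) z^{-\lambda}$; since both $F(z)$ and $\Gamma(\lambda)z^{-\lambda}$ are holomorphic on the half-plane and agree on a segment of the real axis (a set with a limit point), the identity theorem forces agreement everywhere. Setting $z = \alpha - {\bf i}u$ then yields
\[
\phi_Y(u) = \frac{\alpha^\lambda}{\Gamma(\lambda)}\,\Gamma(\lambda)\,(\alpha - {\bf i}u)^{-\lambda} = (1 - \alpha^{-1}{\bf i}u)^{-\lambda},
\]
and the chi-squared case follows immediately on taking $\alpha = 1/2$ and $\lambda = k/2$, giving $\phi_Y(u) = (1 - 2u{\bf i})^{-k/2}$.

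The step I expect to require the most care is the branch bookkeeping for $z^{-\lambda}$ when $\lambda$ is not an integer: one must ensure that the real-axis identity inherited from Proposition \ref{gamma:mgf} is read with the principal branch, so that the two functions being compared are genuinely the same holomorphic object before the identity theorem is invoked. With that alignment in place no contour deformation is needed and the argument closes cleanly; the convergence and holomorphy claims for $F$ are routine.
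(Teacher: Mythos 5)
Your proof is correct and follows the route the paper intends: the corollary is stated without proof because the paper invokes the formal replacement $\varphi_X(u)=\phi_X(-{\bf i}u)$ from Remark~\ref{mgf:e:v} "without further notice," and your argument simply makes that substitution rigorous by checking holomorphy of $F(z)=\int_0^{+\infty}x^{\lambda-1}e^{-zx}\,dx$ on the right half-plane and extending the real-axis identity $F(z)=\Gamma(\lambda)z^{-\lambda}$ by the identity theorem. Your attention to the principal branch of $z^{-\lambda}$ for non-integer $\lambda$ is exactly the point that the paper's shortcut glosses over, so the write-up is a welcome tightening rather than a different approach.
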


\begin{corollary}\label{a:times:chi}
	If $a>0$ and $Y\sim{\mathsf{Gamma}}({\alpha,\lambda})$ then $aY\sim {\mathsf{Gamma}}({\alpha/a,\lambda})$. In particular,
	if  $Y\sim\chi^2_k$ then $aY\sim\mathsf{Gamma}({1/2a,k/2})$.
\end{corollary}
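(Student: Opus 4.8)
The plan is to reduce everything to the scaling behavior of the moment generating function, so that the result becomes a one-line algebraic identity applied to the formula already in hand from Proposition~\ref{gamma:mgf}. First I would observe that for any scalar $a>0$ the mgf of $aY$ is obtained simply by rescaling the argument:
\[
\varphi_{aY}(u)=\mathbb E\bigl(e^{u(aY)}\bigr)=\mathbb E\bigl(e^{(au)Y}\bigr)=\varphi_Y(au),
\]
valid on the correspondingly rescaled neighborhood of the origin. (Equivalently, one may work with the characteristic function and invoke the scaling rule $\phi_{aY}(u)=\phi_Y(au)$ of Proposition~\ref{charac:p:fol}~(1).)

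Next I would substitute the explicit Gamma mgf $\varphi_Y(u)=(1-\alpha^{-1}u)^{-\lambda}$ and simplify the constant via $\alpha^{-1}a=(\alpha/a)^{-1}$, obtaining
\[
\varphi_{aY}(u)=\bigl(1-\alpha^{-1}au\bigr)^{-\lambda}=\bigl(1-(\alpha/a)^{-1}u\bigr)^{-\lambda},
\]
which is exactly the mgf of a $\mathsf{Gamma}(\alpha/a,\lambda)$ variable, now on the range $|u|<\alpha/a$. Since the moment generating function (equivalently, the characteristic function) determines the law --- the general uniqueness statement recalled just after Definition~\ref{funcchar} --- this forces $aY\sim\mathsf{Gamma}(\alpha/a,\lambda)$. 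The ``in particular'' claim is then immediate: taking $\alpha=1/2$ and $\lambda=k/2$ in the definition $\chi^2_k=\Gamma_{1/2,k/2}$ gives $aY\sim\mathsf{Gamma}(1/(2a),k/2)$.

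There is essentially no real obstacle here; the only point requiring care is the invocation of uniqueness, and I would lean on the general theorem (valid for every $\lambda>0$) rather than on the inversion formula of Proposition~\ref{four:inv}~(2), since $|\phi_Y(u)|=(1+\alpha^{-2}u^2)^{-\lambda/2}$ is integrable only when $\lambda>1$. As a fully self-contained alternative that avoids uniqueness altogether, I could instead compute the density directly: from $F_{aY}(w)=F_Y(w/a)$ one gets $\psi_{aY}(w)=a^{-1}\psi_Y(w/a)$, and plugging in $\Gamma_{\alpha,\lambda}$ and collecting the powers of $a$ recovers $\Gamma_{\alpha/a,\lambda}(w)$ after the routine cancellation $\alpha^{\lambda}a^{-\lambda}=(\alpha/a)^{\lambda}$.
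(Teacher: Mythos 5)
Your proposal is correct and follows essentially the same route as the paper, which applies the scaling rule of Proposition~\ref{charac:p:fol}~(1) to the characteristic function $\phi_Y(u)=(1-\alpha^{-1}u{\bf i})^{-\lambda}$ of Corollary~\ref{gamma:cf} to read off $\phi_{aY}(u)=(1-(\alpha/a)^{-1}u{\bf i})^{-\lambda}$. Your extra care about which uniqueness statement to invoke (and the density-based fallback) is sound but not needed beyond what the paper already takes for granted.
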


\begin{proof}
	Recall from Proposition \ref{charac:p:fol} (1) that 
	\[
	\phi_{aY}(u)=\phi_Y(au)=(1-\alpha^{-1}au{\bf i})^{-k/2}.
	\] 
\end{proof}

Note that this justifies the adopted terminology for $\alpha$. 

\begin{corollary}\label{gamma:sum}
	If $\{Y_j\}_{j=1}^k$ is independent and $Y_j\sim {\mathsf{ Gamma}}(\alpha,\lambda_j)$ then 
	\[
	\sum_j Y_j\sim \mathsf{Gamma}\left({\alpha,\sum_j\lambda_j}\right). 
	\]
	In particular, if $Y_j\sim \chi^2_{k_j}$
	then 
	\[
	\sum_j Y_j\sim \chi^2_{\sum_jk_j}. 
	\]
\end{corollary}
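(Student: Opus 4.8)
The plan is to exploit the multiplicativity of characteristic functions under addition of independent random variables. First I would recall from Corollary~\ref{gamma:cf} that each summand has characteristic function $\phi_{Y_j}(u)=(1-\alpha^{-1}u{\bf i})^{-\lambda_j}$, emphasizing that all of them share the \emph{same} inverse scale parameter $\alpha$ (this is exactly the hypothesis that makes the argument close). Applying Proposition~\ref{charac:p}~(1) repeatedly (or by a trivial induction on $k$), the characteristic function of the sum factors as
\[
\phi_{\sum_j Y_j}(u)=\prod_{j=1}^k\phi_{Y_j}(u)=\prod_{j=1}^k(1-\alpha^{-1}u{\bf i})^{-\lambda_j}=(1-\alpha^{-1}u{\bf i})^{-\sum_j\lambda_j}.
\]
This is precisely the characteristic function of a $\mathsf{Gamma}(\alpha,\sum_j\lambda_j)$ variable, again by Corollary~\ref{gamma:cf}.

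The second step is to conclude that the sum is indeed $\mathsf{Gamma}(\alpha,\sum_j\lambda_j)$-distributed, and this is where the only genuine subtlety lies: I must invoke that a distribution is uniquely determined by its characteristic function. I would appeal to the general uniqueness statement recorded in the discussion following Remark~\ref{dist:point:dist}, rather than to the inversion formula of Proposition~\ref{four:inv}~(2), since $|\phi_Y(u)|=(1+\alpha^{-2}u^2)^{-\lambda/2}$ is integrable only when the shape parameter exceeds one. Alternatively, and perhaps more cleanly, I could run the identical computation with the moment generating function of Proposition~\ref{gamma:mgf}, namely $\varphi_Y(u)=(1-\alpha^{-1}u)^{-\lambda}$ for $|u|<\alpha$; since this is analytic in a neighborhood of the origin (Remark~\ref{mgf:e:v}), it too pins down the law, and its multiplicativity under independent sums follows either by the same Fubini argument as in Proposition~\ref{charac:p}~(1) or through the relation $\varphi_X({\bf u})=\phi_X(-{\bf i}{\bf u})$.

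Finally, the chi-squared statement is an immediate specialization: setting $\alpha=1/2$ and $\lambda_j=k_j/2$ turns each $Y_j\sim\mathsf{Gamma}(1/2,k_j/2)$ into a $\chi^2_{k_j}$ variable (per Definition~\ref{gamma:dist}), and the general result yields $\sum_j Y_j\sim\mathsf{Gamma}(1/2,\sum_j k_j/2)=\chi^2_{\sum_j k_j}$. I expect the bookkeeping to be entirely routine; the main obstacle is purely the uniqueness-of-distribution input, which the paper supplies by reference, so no new estimate or construction is required.
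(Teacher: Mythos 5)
Your proposal is correct and follows the route the paper clearly intends (the corollary is stated immediately after Corollary~\ref{gamma:cf} precisely so that multiplicativity of characteristic functions plus uniqueness of the law does the work). Your aside about why Proposition~\ref{four:inv}~(2) cannot be invoked directly when some $\lambda_j\le 1$, and the fallback to the general uniqueness theorem or to the moment generating function, is a careful touch but does not change the argument.
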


\begin{corollary}\label{sum:norm:sq}
	If $\{Z_j\}_{j=1}^k$ is independent with $Z_j\sim\mathcal N(0,1)$ then 
	\[
	Z:=\sum_j Z_j^2\sim \chi^2_k.
	\]
	In particular, $\mathbb E(Z)=k$ and ${\mathbb V}(Z)=2k$.
\end{corollary}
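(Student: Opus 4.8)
The plan is to reduce the general statement to the case $k=1$ and then invoke the additivity of the chi-squared family. First I would treat the base case, showing that $Z\sim\mathcal N(0,1)$ implies $Z^2\sim\chi^2_1$. Since the standard normal density $\psi_Z(x)=(2\pi)^{-1/2}e^{-x^2/2}$ is even, the formula for $\psi_{X^2}$ recorded in Remark \ref{comp:dist} gives
\[
\psi_{Z^2}(y)=\frac{1}{2\sqrt{y}}\bigl(\psi_Z(\sqrt{y})+\psi_Z(-\sqrt{y})\bigr){\bf 1}_{(0,+\infty)}(y)=\frac{1}{\sqrt{2\pi}}\,y^{-1/2}e^{-y/2}{\bf 1}_{(0,+\infty)}(y).
\]
Recalling that $\Gamma(1/2)=\sqrt{\pi}$, so that $2^{1/2}\Gamma(1/2)=\sqrt{2\pi}$, this matches the explicit density $\chi^2_1$ in (\ref{chi:exp}); hence $Z^2\sim\chi^2_1$.

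Next I would check that $\{Z_j^2\}_{j=1}^k$ is independent. Writing $Z_j^2=g\circ Z_j$ with $g(x)=x^2$ measurable, one has $\mathcal F_{Z_j^2}\subset\mathcal F_{Z_j}$; since the family $\{\mathcal F_{Z_j}\}_{j=1}^k$ is independent by hypothesis, the same holds for the subalgebras $\{\mathcal F_{Z_j^2}\}_{j=1}^k$, i.e.\ $\{Z_j^2\}$ is independent in the sense of Definition \ref{def:ind:levels}. With each $Z_j^2\sim\chi^2_1$, the chi-squared case of Corollary \ref{gamma:sum} then yields
\[
Z=\sum_{j=1}^k Z_j^2\sim\chi^2_{\sum_{j=1}^k 1}=\chi^2_k.
\]

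The moment assertions require no extra work: by Corollary \ref{chi:sq:ms}, any variable distributed as $\chi^2_k$ has expectation $k$ and variance $2k$, so $\mathbb E(Z)=k$ and ${\rm var}(Z)=2k$.

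I expect the only delicate point to be the constant-matching in the base case---specifically remembering $\Gamma(1/2)=\sqrt{\pi}$. A route that avoids the density bookkeeping altogether is to compute the moment generating function directly, $\varphi_{Z^2}(u)=\int_{\mathbb R}(2\pi)^{-1/2}e^{-(1-2u)x^2/2}\,dx=(1-2u)^{-1/2}$ for $u<1/2$, which identifies $Z^2\sim\chi^2_1$ via Corollary \ref{chi:sq:ms}; multiplicativity of the mgf over the independent sum (Proposition \ref{charac:p}, extended to $k$ factors) then gives $(1-2u)^{-k/2}$, again the mgf of $\chi^2_k$.
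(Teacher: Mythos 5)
Your proof is correct and follows essentially the same route as the paper: the paper also derives $\psi_{Z_j^2}$ from Remark \ref{comp:dist} together with the standard normal density, identifies it with $\chi^2_1$ via $\Gamma(1/2)=\sqrt{\pi}$, and then concludes with Corollary \ref{gamma:sum}. Your explicit verification that $\{Z_j^2\}$ inherits independence from $\{Z_j\}$ (via $\mathcal F_{Z_j^2}\subset\mathcal F_{Z_j}$) and the alternative mgf computation are welcome additions, but they do not change the underlying argument.
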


\begin{proof}
	By Remark \ref{comp:dist} and Proposition \ref{norm:spce} (1),  
	\[
	\psi_{Z_j^2}(x)=\frac{1}{\sqrt{2\pi}}x^{-1/2}e^{-x/2}{\bf 1}_{[0,+\infty)]}(x), 
	\] 
	so that $Z_j^2\sim\chi^2_{1}$ (recall that $\Gamma(1/2)=\sqrt{\pi}$). The result now follows from Corollary \ref{gamma:sum}.
\end{proof}

\begin{remark}\!\!$\bigstar$\label{fisher:comp:1}(The geometric way to $\chi^2_k$)
	Corollary \ref{sum:norm:sq} can be elegantly retrieved as an application of the ``$n$-space computations'' introduced by R. Fisher \cite{fisher1915frequency,fisher1921probable,fisher1925applic}. Indeed, from Proposition \ref{inddens} we know that the amount of probability density spanned by a standard  normal vector $Z=(Z_1,\cdots,Z_k)\in\mathbb R^k$ in an infinitesimal region of volume $d{z}=dz_1\cdots dz_k$ is 
	\begin{eqnarray*}
		\frac{1}{(2\pi)^{k/2}}e^{-\|z\|^2/2}dz
		& = & 	\frac{1}{(2\pi)^{k/2}}e^{-\|z\|^2/2}\|z\|^{k-1}d\|z\|d\theta\\
		& = &  \frac{1}{2}\frac{1}{(2\pi)^{k/2}}e^{-\|z\|^2/2}(\|z\|^2)^{\frac{k}{2}-1}
		d\|z\|^2d\theta,
	\end{eqnarray*}
	where $z=(\|z\|,\theta)\in (0,+\infty)\times\mathbb S^{k-1}$ is the polar decomposition of $Z$\footnote{In this and similar computations, as in Remark \ref{fis:comp:new} and Example \ref{corr:dist}, we represent a realization of a random variable, say $Z_j$ or $\Theta$, by the corresponding lower-case symbol (in this case, $z_j$ or $\theta$).}.
	Again by Proposition \ref{inddens}, if we view this latter expression as the joint distribution of $(\|Z\|^2,\Theta)$ then $\{\|Z\|^2,\Theta\}$ is independent with $\Theta=X/\|X\|$ being {\em uniformly} distributed over $\mathbb S^{n-1}$. Hence, by Proposition \ref{pdf:marg} the infinitesimal density of $\|Z\|^2$ is 
	\[
	\psi_{\|Z\|^2}(\|z\|^2)d\|z\|^2=\frac{\omega_{k-1}}{2}\frac{1}{(2\pi)^{k/2}}e^{-\|z\|^2/2}(\|z\|^2)^{\frac{k}{2}-1}d\|z\|^2,
	\]
	where $\omega_{k-1}$ is the volume of $\mathbb S^{k-1}$. Since
	\begin{equation}\label{vol:form:sph}
		\omega_{k-1}=\frac{2\pi^{k/2}}{\Gamma(k/2)}
	\end{equation}
	it suffices to set 
	$x=\|z\|^2$ in order to recover (\ref{chi:exp}). The same computation gives that  $Y=(Y_1,\cdots,Y_k)\sim \mathcal N(\vec{0},\sigma^{2}{\rm Id}_k)$ implies $\|Y\|^2\sim \mathsf{Gamma}({1/2\sigma^2,k/2})$; cf. Corollary \ref{gamma:sum}. For $k=3$ and $\sigma^2=\kappa T/2$, where $\kappa$ is the Boltzmann constant and $T$ is the temperature, this gives
	\[
	\psi_E(\epsilon)d\epsilon=\frac{2\sqrt{\epsilon}}{\sqrt{\pi}(\kappa T)^{3/2}}e^{-\frac{\epsilon}{\kappa T}}d\epsilon,
	\] 
	the {\em energy distribution} of a Maxwellian gas \cite[Section 13]{kittel2004elementary}. In particular, by Proposition \ref{gamma:mgf}, $\mathbb E(E)=3\kappa T/2$, which confirms the {\em principle of equipartition of energy}. \qed   
\end{remark}

By Corollary \ref{assert:eq:ind} we may rephrase Corollary \ref{sum:norm:sq} as saying that $Z\sim\mathcal N(\vec{0},{\rm Id}_k)$ implies $\|Z\|^2\sim\chi^2_k$. It turns out that this is just a special case of a more general result which makes it clear the geometric meaning of the notion of degree of freedom for a chi-square distribution. 

\begin{proposition}\label{u:v:gen}
	If $Z\sim\mathcal N(\vec{0},{\rm Id}_k)$ and $W=\langle Y,QY\rangle$, where $Q$ is a $n\times n$ symmetric and idempotent matrix with ${\rm rank}\, Q=r\leq k$, then $W\sim \chi^2_r$. 
\end{proposition}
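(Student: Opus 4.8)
The plan is to exploit the spectral structure of a symmetric idempotent matrix together with the rotational invariance of the standard normal distribution established in Corollary \ref{unc:ind:n:c}. Since $Q$ is symmetric, it admits an orthogonal diagonalization $Q = ODO^\top$ for some orthogonal matrix $O$ and diagonal matrix $D$. The idempotency $Q^2 = Q$ forces each eigenvalue $\lambda$ to satisfy $\lambda^2 = \lambda$, hence $\lambda \in \{0,1\}$; and because $\mathrm{rank}\, Q = r$, precisely $r$ of these eigenvalues equal $1$. After reordering the orthonormal eigenbasis if necessary, we may therefore assume
\[
D = \mathrm{diag}(\underbrace{1,\ldots,1}_{r},\underbrace{0,\ldots,0}_{k-r}).
\]

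Next, I would set $U = O^\top Z$. Since $Z \sim \mathcal N(\vec 0, \mathrm{Id}_k)$ and $O^\top$ is orthogonal, Corollary \ref{unc:ind:n:c} (with $\sigma = 1$) yields that $U = (U_1,\ldots,U_k)$ is again standard normal, i.e.\ $\{U_j\}_{j=1}^k$ is independent with each $U_j \sim \mathcal N(0,1)$. Writing $Z = OU$ and using $O^\top Q O = D$, a direct computation gives
\[
W = \langle Z, QZ\rangle = \langle OU, Q\,OU\rangle = \langle U, O^\top Q O\, U\rangle = \langle U, DU\rangle = \sum_{j=1}^r U_j^2.
\]

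Finally, the right-hand side is a sum of $r$ independent squared standard normal variables, so Corollary \ref{sum:norm:sq} immediately gives $W \sim \chi^2_r$, completing the proof. There is no genuine obstacle here: the entire argument hinges on recognizing that a symmetric idempotent matrix is exactly an orthogonal projection onto an $r$-dimensional subspace, whose associated quadratic form, once expressed in an adapted orthonormal frame, collapses to a sum of $r$ squared coordinates. The only point requiring mild care is the bookkeeping of the spectral decomposition (checking that the eigenvalues really are $0$ and $1$ with the correct multiplicities $k-r$ and $r$), but this is routine linear algebra, and the probabilistic content is carried entirely by the two corollaries invoked.
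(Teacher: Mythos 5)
Your proof is correct and follows essentially the same route as the paper: both arguments rest on the observation that a symmetric idempotent matrix of rank $r$ is an orthogonal projection, so that by rotational invariance $W$ reduces to a sum of $r$ independent squared standard normals, whence $\chi^2_r$ by Corollary \ref{sum:norm:sq}. The only cosmetic difference is that you make the spectral decomposition $Q=ODO^\top$ explicit and rotate coordinates via $U=O^\top Z$, whereas the paper writes $W=\|QZ\|^2$ and invokes the projection property (\ref{norm:space:4}) along an orthonormal basis of ${\rm Im}\,Q$ to conclude $QZ\sim\mathcal N(\vec{0},{\rm Id}_r)$ directly.
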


\begin{proof}
	Since $Q:\mathbb R^n\to\mathbb R^n$ defines a projection onto its range ${\rm Im}\, Q$, a linear subspace of dimension $r$, we may use
	the projection property in (\ref{norm:space:4}), with $\sigma=1$ and $\vec{a}$ running over an orthonormal basis of ${\rm Im}\, Q$,  to conclude that $QZ\sim\mathcal N(\vec{0},{\rm Id}_r)$. Thus, $W=\langle Z, QZ\rangle=\|QZ\|^2\sim\chi^2_r$ by Corollary \ref{sum:norm:sq}. 
\end{proof}

This kind of geometric argument has  many useful applications, including the next one, whose proof we omit. 

\begin{proposition}\label{lin:quad:n}
	Let $Z\sim\mathcal N(\vec 0,{\rm Id}_{n})$, $c\in\mathbb R^n$ and $A$ a symmetric $n\times n$ matrix. Then $\langle c,Z\rangle$ and $\langle Z,AZ\rangle$ are independent if and only if $Ac=\vec 0$. 
\end{proposition}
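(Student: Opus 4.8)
The plan is to diagonalize the quadratic form and exploit the rotational invariance of the standard Gaussian, thereby reducing the statement to a transparent computation in independent coordinates. Since $A$ is symmetric, the spectral theorem furnishes an orthogonal matrix $O$ with $OAO^\top=\Lambda={\rm diag}(\lambda_1,\dots,\lambda_n)$. Setting $W=OZ$, rotational invariance (Corollary \ref{unc:ind:n:c}) gives $W\sim\mathcal N(\vec 0,{\rm Id}_n)$, so that $W_1,\dots,W_n$ are i.i.d.\ $\mathcal N(0,1)$. Writing $b=Oc$, I would rewrite the two statistics as $\langle c,Z\rangle=\langle b,W\rangle=\sum_i b_iW_i$ and $\langle Z,AZ\rangle=\langle W,\Lambda W\rangle=\sum_i\lambda_iW_i^2$. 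The condition $Ac=\vec0$ then translates into $\Lambda b=\vec0$, i.e.\ $b_i\lambda_i=0$ for every $i$, equivalently $b_i=0$ whenever $\lambda_i\neq0$.

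The easy direction ($\Leftarrow$) is essentially bookkeeping: if $b_i=0$ for all $i$ with $\lambda_i\neq0$, then $\sum_i b_iW_i$ involves only the coordinates with $\lambda_i=0$, while $\sum_i\lambda_iW_i^2$ involves only the coordinates with $\lambda_i\neq0$. These index sets are disjoint, so the two statistics are functions of disjoint subfamilies of the independent variables $\{W_i\}$ and are therefore independent (cf.\ Proposition \ref{inddens}).

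For the converse I would work with characteristic functions, which (unlike the low-order moments) detect the obstruction exactly. Using the i.i.d.\ structure of $W$ together with the one-dimensional Gaussian integral $\mathbb E(e^{{\bf i}(aW_i+\mu W_i^2)})=(1-2{\bf i}\mu)^{-1/2}\exp\!\big(-\tfrac{a^2}{2(1-2{\bf i}\mu)}\big)$, the joint characteristic function factors as
\[
\mathbb E\!\left(e^{{\bf i}(s\langle c,Z\rangle+t\langle Z,AZ\rangle)}\right)=\left(\prod_i(1-2{\bf i}t\lambda_i)^{-1/2}\right)\exp\!\left(-\frac{s^2}{2}\sum_i\frac{b_i^2}{1-2{\bf i}t\lambda_i}\right),
\]
whereas the product of the two marginal characteristic functions equals the same prefactor times $\exp\!\big(-\tfrac{s^2}{2}\sum_i b_i^2\big)$. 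Since the characteristic function determines the joint law, independence is equivalent to the equality of these expressions for all $s,t$; comparing the $s$-dependence reduces this to $\sum_i b_i^2/(1-2{\bf i}t\lambda_i)=\sum_i b_i^2$ for all $t$, i.e.\ $\sum_i b_i^2\lambda_i/(1-2{\bf i}t\lambda_i)=0$ for all $t$.

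Finally, I would extract $Ac=\vec0$ from this identity by differentiating in $t$ at $t=0$, obtaining $\sum_i b_i^2\lambda_i^2=0$. Here lies the one genuinely substantive point, and the main obstacle I anticipate: a single low moment (such as $\mathbb E(\langle c,Z\rangle\langle Z,AZ\rangle)$) produces only $\sum_i b_i^2\lambda_i=0$, which can hold through sign cancellation among eigenvalues and is therefore insufficient. The virtue of the $t$-linear term is that every summand $b_i^2\lambda_i^2$ is nonnegative, so $\sum_i b_i^2\lambda_i^2=0$ forces $b_i\lambda_i=0$ for each $i$, that is $\Lambda b=\vec0$ and hence $Ac=\vec0$. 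Thus the crux is recognizing that uncorrelatedness is automatic and inconclusive, and that one must pass to the characteristic function (or an equivalent higher-order, manifestly nonnegative, quantity) to rule out the cancellation.
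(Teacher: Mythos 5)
Your proof is correct. Note that the paper itself supplies no argument here --- it states Proposition \ref{lin:quad:n} with the remark ``whose proof we omit'' --- so there is nothing to compare against; what you have written is the standard characteristic-function proof (essentially Craig's theorem for linear versus quadratic forms), and it fits naturally into the paper's toolkit: the reduction $W=OZ$, $b=Oc$ is exactly the rotational invariance of Corollary \ref{unc:ind:n:c}, the easy direction rests on Proposition \ref{inddens} applied to disjoint subfamilies of the $W_i$, and the converse correctly passes to the joint characteristic function. Your key observation is also the right one: uncorrelatedness only yields $\sum_i b_i^2\lambda_i=0$, which can vanish by cancellation, whereas the $t$-derivative of the identity $\sum_i b_i^2\lambda_i/(1-2{\bf i}t\lambda_i)=0$ at $t=0$ produces the manifestly nonnegative sum $\sum_i b_i^2\lambda_i^2=0$, forcing $\Lambda b=\vec 0$ and hence $Ac=\vec 0$. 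The only step worth stating a touch more carefully is the passage from equality of the exponentials to equality of the exponents in $s$: since both sides have the form $C(t)\exp(-s^2Q/2)$ with the same nonvanishing prefactor $C(t)$, comparing second derivatives in $s$ at $s=0$ gives $Q(t)=\sum_i b_i^2$ directly, with no branch or $2\pi{\bf i}$ ambiguity.
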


We now discuss some more  random variables related to the normal distribution.

\begin{definition}\label{tstu:def}
	A random variable $X$ is $\mathfrak t${\em -Student distributed} with $k\geq 1$ degrees of freedom if
	\begin{equation}\label{tsts:def:2}
		\psi_X(x)=\mathfrak t_{k}(x):=\frac{\Gamma(\frac{k+1}{2})}{\sqrt{k\pi}\Gamma(k/2)}\left(1+k^{-1}x^2\right)^{-(k+1)/2}.  
	\end{equation}
\end{definition}

\begin{proposition}\label{norm:chi:stu}
	If $Z\sim\mathcal N(0,1)$ and $W\sim \chi^2_k$ with $Z\perp\!\!\perp W$ then $Z/\sqrt{W/k}\sim \mathfrak t_k$. 
\end{proposition}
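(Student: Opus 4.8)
The plan is to realize $T := Z/\sqrt{W/k}$ as a quotient $Z/V$ with $V := \sqrt{W/k}$ and then invoke the quotient density formula (\ref{regra:02}) from Remark~\ref{comp:dist}. Two of its hypotheses are immediate: since $W\sim\chi^2_k$ is supported on $(0,+\infty)$ we have $V>0$, and since $Z\perp W$ the variable $Z$ is independent of any Borel function of $W$, in particular $Z\perp V$. Thus the formula applies and reduces the whole problem to computing $\psi_V$ and carrying out one integral.

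The preparatory step is to determine $\psi_V$ by composing two elementary transformation rules. First, by the scaling rule (Corollary~\ref{a:times:chi}, applied with $a=1/k$ to $W\sim\chi^2_k=\mathsf{Gamma}(1/2,k/2)$), the variable $U:=W/k$ satisfies $U\sim\mathsf{Gamma}(k/2,k/2)$. Then, since $V=\sqrt{U}$, the square-root rule (\ref{regra:01}) gives
\[
\psi_V(v)=2v\,\Gamma_{k/2,k/2}(v^2)=\frac{2(k/2)^{k/2}}{\Gamma(k/2)}\,v^{k-1}e^{-\frac{k}{2}v^2}\mathbf{1}_{(0,+\infty)}(v).
\]

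Next I would substitute this expression together with $\psi_Z(t)=(2\pi)^{-1/2}e^{-t^2/2}$ into (\ref{regra:02}), obtaining
\[
\psi_T(x)=\frac{1}{\sqrt{2\pi}}\,\frac{2(k/2)^{k/2}}{\Gamma(k/2)}\int_0^{+\infty}v^{k}\,e^{-\frac{1}{2}(x^2+k)v^2}\,dv.
\]
The integral is evaluated by the change of variables $s=v^2$, which turns it into a Gamma integral $\int_0^{+\infty}s^{(k+1)/2-1}e^{-\beta s}\,ds=\Gamma\!\left(\tfrac{k+1}{2}\right)\beta^{-(k+1)/2}$ with $\beta=\tfrac12(x^2+k)$. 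Writing $x^2+k=k\left(1+k^{-1}x^2\right)$ and collecting the powers of $2$, of $\pi$, of $k$, and the Gamma factors, everything telescopes to
\[
\psi_T(x)=\frac{\Gamma\!\left(\frac{k+1}{2}\right)}{\sqrt{k\pi}\,\Gamma(k/2)}\left(1+k^{-1}x^2\right)^{-(k+1)/2},
\]
which is precisely the density $\mathfrak t_k$ of (\ref{tsts:def:2}).

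The only real obstacle is bookkeeping rather than conceptual: one must correctly compose the scaling and square-root transformations to land on $\psi_V$, and then track the constants faithfully through the final Gamma integral—in particular the various powers of $2$ must cancel completely, which is the natural checkpoint for spotting an arithmetic slip. Once the quotient formula is in place, licensed by the independence $Z\perp V$, the remainder is a disciplined computation with no genuine difficulty.
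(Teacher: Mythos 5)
Your proof is correct and follows essentially the same route as the paper's: both arguments reduce the statement to the square-root rule (\ref{regra:01}) and the quotient density formula (\ref{regra:02}), followed by a Gamma integral; the only difference is that you absorb the factor $1/k$ into the chi-square (via Corollary~\ref{a:times:chi}) before taking the square root, while the paper instead writes $Z/\sqrt{W/k}=\sqrt{k}Z/\sqrt{W}$ and absorbs $\sqrt{k}$ into the numerator. The constants work out correctly in your version.
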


\begin{proof}
	Note that $Z/\sqrt{W/k}=\sqrt{k}Z/V$, where $\sqrt{k}Z\sim\mathcal N(0,k)$ and 
	$V:=\sqrt{W}$ so that $\psi_{V}(v)= 2v \chi^2_k(v^2)$ by (\ref{regra:01}). It follows from (\ref{regra:02}) that  
	\[
	\psi_{Z/\sqrt{W/k}}(x)=\frac{1}{\sqrt{k\pi}2^{(k-1)/2}\Gamma(k/2)}\int_0^{+\infty}e^{-\frac{1}{2}(1+k^{-1}x^2)v^2}v^kdv.
	\]
	The substitution $w=\frac{1}{2}(1+k^{-1}x^2)v^2$ then finishes the job.
\end{proof}

\begin{remark}\label{fisher:comp:2}
 Proposition \ref{norm:chi:stu} can also be derived using Fisher’s geometric method, as illustrated in Remark \ref{fisher:comp:1}. This line of reasoning was first presented in \cite{fisher1925applic} and is reproduced here in Remark \ref{fis:comp:new}, where the method is applied to obtain the pdf of Student’s sampling distribution defined in (\ref{stu:est}) below.
	\qed
\end{remark}

\begin{definition}\label{F:def}
	Given $k_1, k_2\in\mathbb N$ we say that a random variable  $X$ is ${\bm{\textsf F}}_{k_1,k_2}$-{\em distributed} if 
	\[
	\psi_X(x)={\bm{\textsf F}}_{k_1,k_2}(x):=c_{k_1,k_2}x^{k_1/2-1}
	\left(1+\frac{k_1}{k_2}x\right)^{-\frac{k_1+k_2}{2}}{\bf 1}_{(0,+\infty)}(x),
	\]
	where 
	\[
	c_{k_1,k_2}=\frac{\Gamma\left(\frac{k_1+k_2}{2}\right)}{\Gamma(k_1/2)\Gamma(k_2/2)}
	\left(\frac{k_1}{k_2}\right)^{k_1/2}.
	\]
\end{definition}

\begin{proposition}\label{chi:to:F}
	If $W_1\sim\chi^2_{k_1}$ and $W_2\sim\chi^2_{k_2}$ with $W_1\perp\!\!\perp W_2$ then 
	\[
	\frac{W_1/k_1}{W_2/k_2}\sim {{\mathsf F}}_{k_1,k_2}.
	\] 
\end{proposition}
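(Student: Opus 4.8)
The plan is to realize the target statistic as a quotient of two independent positive variables and then invoke the quotient-density formula (\ref{regra:02}). The first move is to absorb the scaling factors: setting $Z = W_1/k_1$ and $V = W_2/k_2$, Corollary \ref{a:times:chi} gives $Z \sim \mathsf{Gamma}(k_1/2, k_1/2)$ and $V \sim \mathsf{Gamma}(k_2/2, k_2/2)$, while $Z \perp V$ since they are measurable functions of the independent variables $W_1$ and $W_2$. In this way the statistic in question becomes exactly $X = Z/V$ with $V > 0$, so that (\ref{regra:02}) applies verbatim and yields
\[
\psi_X(x) = \int_0^{+\infty} \psi_Z(xv)\, v\, \psi_V(v)\, dv, \qquad x > 0.
\]

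Next I would substitute the two Gamma densities from (\ref{gamma:dist:1}). Writing $\psi_Z(z) = \frac{(k_1/2)^{k_1/2}}{\Gamma(k_1/2)} z^{k_1/2 - 1} e^{-(k_1/2)z}$ and similarly for $\psi_V$, the $x$- and $v$-dependence separates cleanly: all the $v$-dependence collects into a single power $v^{(k_1+k_2)/2 - 1}$ times an exponential $e^{-\frac{1}{2}(k_1 x + k_2) v}$, while the prefactor carries $x^{k_1/2 - 1}$ together with the two normalizing constants. The remaining integral over $v$ is then the standard Gamma integral $\int_0^{+\infty} v^{s-1} e^{-\beta v}\, dv = \Gamma(s)\beta^{-s}$, evaluated at $s = (k_1+k_2)/2$ and $\beta = \frac{1}{2}(k_1 x + k_2)$.

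Evaluating this integral produces a factor $\Gamma\!\left(\frac{k_1+k_2}{2}\right)\left(\frac{k_1 x + k_2}{2}\right)^{-(k_1+k_2)/2}$, and the last step is purely algebraic: I would cancel the powers of $2$ against those hidden in $(k_1/2)^{k_1/2}(k_2/2)^{k_2/2}$, then factor $k_2$ out of $k_1 x + k_2$ to expose the term $\left(1 + \frac{k_1}{k_2}x\right)^{-(k_1+k_2)/2}$, and regroup the surviving powers of $k_1$ and $k_2$ into $(k_1/k_2)^{k_1/2}$. This recovers precisely the constant $c_{k_1,k_2}$ and the functional form of Definition \ref{F:def}. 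The only real obstacle is bookkeeping: the powers of $2$ and of $k_1, k_2$ must be tracked carefully through the cancellations, but nothing beyond elementary algebra is involved. One could instead keep $W_1, W_2$ unscaled, compute the density of $W_1/W_2$ directly, and apply a final change of variables $x \mapsto (k_1/k_2)x$ as in Remark \ref{comp:dist}; rescaling first, as above, avoids that extra step.
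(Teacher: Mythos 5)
Your proof is correct and follows essentially the same route as the paper: both rest on the quotient-density formula (\ref{regra:02}) followed by the standard Gamma integral, the only difference being that you absorb the factors $k_1,k_2$ at the outset via Corollary \ref{a:times:chi}, whereas the paper first computes $\psi_{W_1/W_2}$ and rescales by $x\mapsto (k_1/k_2)x$ at the very end (the alternative you yourself note in your closing sentence). The bookkeeping of the powers of $2$ and of $k_1,k_2$ works out exactly as you describe and recovers $c_{k_1,k_2}$.
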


\begin{proof}
	From (\ref{regra:02}) and (\ref{chi:exp}) we find that
	\[
	\psi_{W_1/W_2}(x)=\frac{x^{\frac{k_1}{2}-1}}{2^{\frac{k_1+k_2}{2}}\Gamma(k_1/2)\Gamma(k_2/2)}
	\int_0^{+\infty}v^{\frac{k_1+k_2}{2}-1}e^{-(1+x)v/2}dv,
	\]
	so that the substitution $w=(1+x)v/2$ transforms this into
	\begin{equation}\label{chi:radio:pdf}
		\psi_{W_1/W_2}(x)=\frac{\Gamma\left(\frac{k_1+k_2}{2}\right)}{\Gamma(k_1/2)\Gamma(k_2/2)}x^{\frac{k_1}{2}-1}(1+x)^{-\frac{k_1+k_2}{2}}. 
	\end{equation}
	The result now follows because 
	
	\[
	\psi_Y(x)=\frac{k_1}{k_2}\psi_{W_1/W_2}\left(\frac{k_1}{k_2}x\right).
	\]
\end{proof}

\begin{corollary}\label{f-dis:cons}
	If $Y\sim {{\mathsf F}}_{k_1,k_2}$ then $Y^{-1}\sim \mathsf F_{k_2,k_1}$. Also, if $T\sim \mathfrak t_k$ then  $T^2\sim {{\mathsf F}}_{1,k}$.
\end{corollary}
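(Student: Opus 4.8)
The plan is to avoid manipulating the explicit densities and instead leverage the stochastic representations of the $\mathsf{F}$ and $\mathfrak{t}$ distributions supplied by Propositions~\ref{chi:to:F} and~\ref{norm:chi:stu}; this keeps the bookkeeping minimal and is in the geometric spirit of the surrounding material.

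For the first assertion, I would first use the construction in Remark~\ref{many:ind} to produce \emph{independent} random variables $W_1\sim\chi^2_{k_1}$ and $W_2\sim\chi^2_{k_2}$. By Proposition~\ref{chi:to:F} the ratio $\frac{W_1/k_1}{W_2/k_2}$ is then $\mathsf{F}_{k_1,k_2}$-distributed, hence identically distributed to $Y$, so that $Y^{-1}$ is identically distributed to
\[
\left(\frac{W_1/k_1}{W_2/k_2}\right)^{-1}=\frac{W_2/k_2}{W_1/k_1}.
\]
Since independence is symmetric, $W_2\perp W_1$, and a second application of Proposition~\ref{chi:to:F}---with the pairs $(W_1,k_1)$ and $(W_2,k_2)$ interchanged---identifies this last ratio as $\mathsf{F}_{k_2,k_1}$, which is the claim.

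For the second assertion, Proposition~\ref{norm:chi:stu} allows me to realize $T$ as identically distributed to $Z/\sqrt{W/k}$, where $Z\sim\mathcal N(0,1)$, $W\sim\chi^2_k$ and $Z\perp W$. Squaring gives that $T^2$ is identically distributed to
\[
\frac{Z^2}{W/k}=\frac{Z^2/1}{W/k}.
\]
The two facts I need here are that $Z^2\sim\chi^2_1$---immediate from Corollary~\ref{sum:norm:sq} with $k=1$, or from the density computation in Remark~\ref{comp:dist}---and that $Z^2\perp W$, inherited from $Z\perp W$. Setting $W_1:=Z^2\sim\chi^2_1$ and $W_2:=W\sim\chi^2_k$, Proposition~\ref{chi:to:F} with $k_1=1$ and $k_2=k$ yields $T^2\sim\mathsf{F}_{1,k}$.

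I do not expect either step to present a genuine difficulty; the only point demanding a little care is checking that the independence hypothesis of Proposition~\ref{chi:to:F} survives the passage to the reciprocal in the first part and to the square $Z^2$ in the second. For a fully self-contained verification one could instead compute directly: for the reciprocal via the change of variables $\psi_{Y^{-1}}(z)=z^{-2}\psi_Y(1/z)$, simplifying the factor $(1+\tfrac{k_1}{k_2}z^{-1})^{-(k_1+k_2)/2}$, and for the square via the even symmetry of $\mathfrak{t}_k$ together with the formula for $\psi_{X^2}$ in Remark~\ref{comp:dist}, after which matching the normalizing constant against $c_{1,k}$ closes the argument.
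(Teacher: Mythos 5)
Your argument is correct and coincides with the route the paper intends: the corollary is stated without proof precisely because it follows immediately from the stochastic representations in Propositions~\ref{chi:to:F} and~\ref{norm:chi:stu}, which is exactly what you use. The two points you flag for care (preservation of independence under $W\mapsto W$, $Z\mapsto Z^2$, and of equality in distribution under measurable maps) are handled correctly, so no further verification via densities is needed.
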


\begin{definition}\label{beta:def}
	A random variable $X$ is 
	${\rm Beta}$-distributed with shape parameters $\alpha,\beta>0$ if
	\begin{equation}\label{beta:def:2}
		\psi_X(x)=\mathsf{Beta}(\alpha,\beta)(x):= \frac{\Gamma(\alpha+\beta)}{\Gamma(\alpha)\Gamma(\beta)}x^{\alpha-1}(1-x)^{\beta-1}{\bf 1}_{(0,1)}(x), \quad x\in\mathbb R.
	\end{equation}
\end{definition}

\begin{proposition}\label{beta:chi}
	If $W_1\sim\chi^2_{k_1}$ and $W_2\sim\chi^2_{k_2}$ with $W_1\perp\!\!\perp W_2$ then 
	\[
	\widehat W:=\frac{W_1}{W_1+W_2}\sim \mathsf{ Beta}\left(\frac{k_1}{2},\frac{k_2}{2}\right).
	\] 
\end{proposition}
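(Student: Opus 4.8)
The plan is to obtain $\psi_{\widehat W}$ by a change of variables applied to the joint density of $(W_1,W_2)$. By independence and Proposition \ref{inddens}, this joint density factors as $\psi_{(W_1,W_2)}(w_1,w_2)=\chi^2_{k_1}(w_1)\,\chi^2_{k_2}(w_2)$, supported on $(0,+\infty)^2$. The natural new coordinates are $\widehat w = w_1/(w_1+w_2)$ together with the nuisance variable $s=w_1+w_2$, so that $w_1=\widehat w\, s$ and $w_2=(1-\widehat w)s$, giving a bijection from $(0,+\infty)^2$ onto $(0,1)\times(0,+\infty)$.

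First I would compute the Jacobian of this transformation, which works out to $\partial(w_1,w_2)/\partial(\widehat w,s)=s$, so that $dw_1\,dw_2=s\,d\widehat w\,ds$. Substituting the explicit chi-squared densities from (\ref{chi:exp}) and using the key cancellation $e^{-w_1/2}e^{-w_2/2}=e^{-s/2}$, the joint density of $(\widehat W,S)$ separates into a product of a $\widehat w$-factor and an $s$-factor:
\[
\psi_{(\widehat W,S)}(\widehat w,s)=\frac{\widehat w^{\,k_1/2-1}(1-\widehat w)^{k_2/2-1}}{2^{(k_1+k_2)/2}\Gamma(k_1/2)\Gamma(k_2/2)}\,s^{(k_1+k_2)/2-1}e^{-s/2}.
\]

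Next I would integrate out $s$ over $(0,+\infty)$. This is a Gamma integral, $\int_0^{+\infty}s^{(k_1+k_2)/2-1}e^{-s/2}\,ds=2^{(k_1+k_2)/2}\,\Gamma((k_1+k_2)/2)$; the powers of $2$ then cancel, leaving exactly the density $\mathsf{Beta}(k_1/2,k_2/2)$ from (\ref{beta:def:2}). As a bonus, the product form above immediately shows that $\widehat W$ and $S=W_1+W_2$ are independent, with $S\sim\chi^2_{k_1+k_2}$ in agreement with Corollary \ref{gamma:sum}.

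There is no serious obstacle here: once the substitution is set up, the computation is routine, and the only care needed is in bookkeeping the exponents of $\widehat w$, $1-\widehat w$, and $s$ and in recognizing the Gamma integral that eliminates the nuisance variable. A shorter route, more in the spirit of the preceding propositions, would be to reuse the already-established pdf of $W_1/W_2$ from (\ref{chi:radio:pdf}) and apply the monotone substitution $\widehat w=r/(1+r)$, for which $1+r=(1-\widehat w)^{-1}$; the resulting one-dimensional change-of-variables calculation yields the same Beta density.
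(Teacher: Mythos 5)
Your proposal is correct, but your main route differs from the paper's. The paper simply writes $\widehat W=\tfrac{W_1/W_2}{1+W_1/W_2}$, observes that for $x\in(0,1)$ one has $F_{\widehat W}(x)=F_{W_1/W_2}\bigl(\tfrac{x}{1-x}\bigr)$, differentiates to get $\psi_{\widehat W}(x)=(1-x)^{-2}\psi_{W_1/W_2}\bigl(\tfrac{x}{1-x}\bigr)$, and then plugs in the already-established density (\ref{chi:radio:pdf}) of $W_1/W_2$ — this is precisely the ``shorter route'' you sketch at the end, so you have in effect rediscovered the paper's argument as your fallback. Your primary argument, the two-dimensional substitution $(w_1,w_2)\mapsto(\widehat w,s)$ with $s=w_1+w_2$, Jacobian $s$, and integration of the resulting Gamma integral in $s$, is a clean and complete alternative; I checked the exponent bookkeeping and the normalization and they are right. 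What your route buys that the paper's does not: it is self-contained (no reliance on (\ref{chi:radio:pdf})), and the factorization of $\psi_{(\widehat W,S)}$ gives for free both the independence of $\widehat W$ and $W_1+W_2$ and the fact that $W_1+W_2\sim\chi^2_{k_1+k_2}$ — facts the paper obtains separately (Corollary \ref{gamma:sum}) and actually needs later, e.g.\ in Remark \ref{poin:impr}, where the independence of $R_l^2/R_k^2$ from other quantities is invoked. What the paper's route buys is brevity and economy: it reuses the ratio density already computed for Proposition \ref{chi:to:F} and requires only a one-dimensional monotone change of variables.
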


\begin{proof}
	We have
	\[
	\widehat W=\frac{W_1/W_2}{1+W_1/W_2}, 
	\]
	so that, for $x\in (0,1)$, 
	\begin{eqnarray*}
		F_{\widehat W}(x)
		& = & 
		P\left(\widehat W\leq x\right)\\
		& = & 
		P\left(\frac{W_1}{W_2}\leq \frac{x}{1-x}\right)\\
		& = & 
		F_{W_1/W_2}\left(\frac{x}{1-x}\right).
	\end{eqnarray*}
	By taking derivative with respect to $x$,
	\[
	\psi_{\widehat W}(x)=(1-x)^{-2}\psi_{W_1/W_2}\left(\frac{x}{1-x}\right),
	\]
	and the result follows from  (\ref{chi:radio:pdf}). 
\end{proof}

\section{Concentration inequalities}\label{conc:ineq:appl}

We now elaborate on the idea that an exponential bound on the moment generating function (mgf) of a random variable naturally yields {\em non-asymptotic} estimates for its tail probabilities, usually referred to as \emph{concentration inequalities}. As will be seen through concrete examples, the \emph{Cram\'er--Chernoff method} offers a systematic approach to deriving such inequalities and has numerous applications in both pure and applied mathematics. For comprehensive treatments of concentration phenomena and their significance in modern Data Science, including their role in the analysis of high-dimensional regression models; see \cite{vershynin2018high,wainwright2019high}. As introductory illustrations, we present below their connections with the Johnson--Lindenstrauss Lemma and with the Erd\"{o}s--R\'enyi random graph model.

\subsection{Sub-exponential random variables and the Johnson-Lindenstrauss Lemma}\label{sub:exp:jl}
If  $X\sim \mathcal N(\mu,\sigma^2)$ and $t>0$ then the tail probability 
\[
P(|X-\mu|\geq t)=\frac{2}{\sqrt{2\pi}\sigma}\int_t^{+\infty}e^{-x^2/2\sigma^2}dx
\]
may be easily estimated by observing that $x/t\geq 1$ implies 
\[
P(|X-\mu|\geq t)\leq \frac{2}{\sqrt{2\pi}\sigma}\int_t^{+\infty}\frac{x}{t}e^{-x^2/2\sigma^2}dx,
\]
thus yielding the exponential tail bound
\begin{equation}\label{exp:bound:1}
	P(|X-\mu|\geq t)\leq 	\sqrt{\frac{2}{\pi}}\frac{\sigma}{t}e^{-t^2/2\sigma^2},
\end{equation}
which happens to blow up as $t\to 0$. We may remedy this by considering the function 
\begin{eqnarray*}
	\xi(t)
	&= & 
	P(X-\mu\geq t)-\frac{1}{2}e^{-t^2/2\sigma^2}\\
	& = & 
	1-F_{X-\mu}(t)-\frac{1}{2}e^{-t^2/2\sigma^2}.
\end{eqnarray*}
Since 
\[
\xi'(t)=\left(\frac{t}{2\sigma^2}-\frac{1}{\sqrt{2\pi}\sigma}\right)e^{-t^2/2\sigma^2},
\]
we see that $x$ decreases in the interval $(0,\sqrt{2/\pi}\sigma)$ and increases in the interval $(\sqrt{2/\pi}\sigma),+\infty)$. Since $\xi(0)=0$ and $\xi(t)\to 0$ as $t\to +\infty$ we conclude that 
\begin{equation}\label{exp:bound:1:2}
	P(|X-\mu|\geq t)\leq e^{-t^2/2\sigma^2}, 	
\end{equation}
which provides a sharp bound as $t\to 0$. 
As yet another way to obtain an exponential tail probability bound, observe that 
\begin{eqnarray*}
	P(|X-\mu|\geq t)
	& = & 2 	P(X-\mu\geq t)\\
	& \stackrel{u\geq 0}{=} & 2 P(e^{u(X-\mu)}\geq e^{ut})\\
	& \stackrel{(\ref{markov:ineq:2})}{\leq} & 2\frac{\mathbb E(e^{u(X-\mu)})}{e^{ut}}\quad {\rm (Markov's\,\,inequality)}\\	
	& \stackrel{(\ref{mgf:normal:n})}{=} & 2e^{\frac{1}{2}\sigma^2u^2-ut},
\end{eqnarray*}
and since the right-hand side reaches its minimal value at $u=t/\sigma^2$ we conclude that
\begin{equation}\label{exp:bound:2}
	P(|X-\mu|\geq t)\leq 2e^{-t^2/2\sigma^2}. 
\end{equation}
Although in general (\ref{exp:bound:1}) and (\ref{exp:bound:1:2}) give sharper bounds than (\ref{exp:bound:2}), this latter argument seems to be more promising as it suggests that a suitable exponential control on the mgf of a random variable might yield corresponding bounds for its tail probabilities, a trick usually referred to as the {\em Cram\'er-Chernoff method}. Its flexibility is illustrated by considering the following class of random variables.

\begin{definition}\label{sub:g:def:0}
	We say that $X$ is {\em Sub-Gaussian} if 
	\begin{equation}\label{sub:g:def}
		\mathbb E\left(e^{(X-\mu)u}\right)\leq e^{\frac{\sigma^2u^2}{2}},\quad u\in\mathbb R,
	\end{equation}
	which we represent as $X\in {\bf{\mathsf{SubG}}}(\sigma)$. 
\end{definition}

Clearly, any $X\in {\bf{\mathsf{SubG}}}(\sigma)$ satisfies (\ref{exp:bound:2}).
As a simple example distinct from a normal to which this applies, note that the Rademacher variable ${\bm\epsilon}$ in Definition \ref{radem:def}
satisfies 
\begin{equation}\label{radem:ccin}
	\mathbb E_{\bm\epsilon}(e^{{\bm\epsilon} u})=\cosh u\leq e^{\frac{u^2}{2}}, \quad u\in\mathbb R,
\end{equation}
so that ${\bm\epsilon}\in {\bf{\mathsf{SubG}}}(1)$. 
Also, if $\{X_j\}_{j=1}^N$ is independent with $X_j\in {\bf{\mathsf{SubG}}}(\sigma_j)$ then 
\begin{equation}\label{rade:00}
	\sum_jX_j\in {\bf{\mathsf{SubG}}}\left(\sqrt{\sum_j\sigma_j^2}{}\right).
\end{equation}
Thus, if we apply this to $X^{(N)}=\epsilon_1+\cdots+\epsilon_N$, where 
$\{\epsilon_j\}_{j=1}^N$ is a collection of independent Rademacher variables, we see that $X^{(N)}\in {\bf{\mathsf{SubG}}}(\sqrt{N})$ and hence
\begin{equation}\label{rade:1}
	P(|X^{(N)}|\geq t)\leq 2e^{-t^2/2 N},
\end{equation}
or equivalently,	
\begin{equation}\label{rade:2}
	P\left(\frac{|X^{(N)}|}{N}\geq t\right)\leq 2e^{-Nt^2/2}.
\end{equation}
More generally, this same argument shows that if $X_j\in {\bf{\mathsf{SubG}}}(\sigma)$ is i.i.d.\! and $\overline X_N=X^{(N)}/N$ is the associated sample mean then 
\begin{equation}\label{rade:g}
P\left(\left|\overline X_N-\mathbb E(X_j)\right|\geq t\right)\leq 2e^{-\frac{nt^2}{2\sigma^2}}.
\end{equation}

The next result substantially enriches the class of sub-Gaussian  random variables.

\begin{proposition}\label{subg:bounded}
	If $X$ is bounded, say $a\leq X\leq b$, then $X\in {\bf{\mathsf{SubG}}}(b-a)$. 
\end{proposition}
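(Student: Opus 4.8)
The plan is to prove the classical Hoeffding lemma, which actually yields the sharper conclusion $X\in{\bf{\mathsf{SubG}}}((b-a)/2)$; since $(b-a)^2/8\le (b-a)^2/2$, the stated membership $X\in{\bf{\mathsf{SubG}}}(b-a)$ follows \emph{a fortiori}. Writing $\mu=\mathbb E(X)$ and centering via $Y=X-\mu$, it suffices to show $\mathbb E(e^{uY})\le e^{u^2(b-a)^2/8}$ for all $u\in\mathbb R$, where $Y$ takes values in an interval of length $b-a$ and $\mathbb E(Y)=0$.

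First I would introduce the cumulant generating function $\psi(u)=\log\mathbb E(e^{uY})$. Since $Y$ is bounded, the mgf $M(u)=\mathbb E(e^{uY})$ is finite (indeed entire) and strictly positive, and differentiation under the expectation sign is justified by dominated convergence exactly as in Proposition~\ref{reg:char}; thus $\psi$ is smooth on $\mathbb R$. One computes at once that $\psi(0)=0$ and $\psi'(0)=M'(0)/M(0)=\mathbb E(Y)=0$.

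The crux is to control $\psi''$. Writing $M'(u)=\mathbb E(Ye^{uY})$ and $M''(u)=\mathbb E(Y^2e^{uY})$, a direct computation gives
\[
\psi''(u)=\frac{\mathbb E(Y^2e^{uY})}{\mathbb E(e^{uY})}-\left(\frac{\mathbb E(Ye^{uY})}{\mathbb E(e^{uY})}\right)^2,
\]
which is precisely the variance of $Y$ with respect to the exponentially tilted law $d\widetilde P_u=e^{uY}/M(u)\,dP$. Under $\widetilde P_u$ the variable $Y$ is still supported in an interval of length $b-a$, and the variance of any random variable supported in such an interval is at most $(b-a)^2/4$: indeed, with $c$ the midpoint of that interval, ${\rm var}(Y)\le\widetilde{\mathbb E}_u((Y-c)^2)\le (b-a)^2/4$ since $|Y-c|\le (b-a)/2$ (Popoviciu's inequality). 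Hence $\psi''(u)\le (b-a)^2/4$ for every $u$.

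Finally, Taylor's theorem with Lagrange remainder gives, for some $\xi$ between $0$ and $u$,
\[
\psi(u)=\psi(0)+u\psi'(0)+\tfrac{1}{2}u^2\psi''(\xi)\le \frac{u^2}{2}\cdot\frac{(b-a)^2}{4}=\frac{u^2(b-a)^2}{8},
\]
and exponentiating yields $\mathbb E(e^{uY})\le e^{u^2(b-a)^2/8}$, as required. The main obstacle is the identification of $\psi''(u)$ with the tilted variance together with the accompanying uniform variance bound; once these are secured, the Taylor estimate and the reduction to the stated constant are routine.
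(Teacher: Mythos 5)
Your proof is correct, but it takes a genuinely different route from the paper's. The paper argues by symmetrization: it introduces an independent copy $Y$ of $X$, notes that $X-Y$ is symmetric so that $\bm\epsilon(X-Y)$ has the same law for a Rademacher variable $\bm\epsilon$, applies the elementary bound $\mathbb E(e^{\bm\epsilon u})=\cosh u\le e^{u^2/2}$ conditionally, uses $|X-Y|\le b-a$, and then removes the copy $Y$ by Jensen's inequality. That argument is short and soft but only delivers the constant $b-a$ stated in the proposition. You instead prove the full Hoeffding lemma via the cumulant generating function: the identification of $\psi''(u)$ with the variance of $Y$ under the exponentially tilted law, the Popoviciu bound $(b-a)^2/4$ on that variance, and Taylor with Lagrange remainder give the sharp constant $(b-a)/2$, from which the stated claim follows \emph{a fortiori} since $e^{u^2(b-a)^2/8}\le e^{u^2(b-a)^2/2}$. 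The paper is aware of this sharper statement --- it records it without proof in Remark~\ref{hoeff:lemma} --- so your argument in effect proves both the proposition and that remark at once. The trade-off is that your route needs slightly more analytic bookkeeping (differentiability of $\psi$, which you correctly justify by dominated convergence as in Proposition~\ref{reg:char}, and the tilted-measure interpretation), whereas the paper's symmetrization trick is almost computation-free; in exchange you obtain the optimal sub-Gaussian parameter. All the steps you give are sound, so there is nothing to fix.
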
	

\begin{proof}
	We may assume that $\mathbb E(X)=0$. 
	Let $Y$ be an independent copy of $X$, so that $X-Y$ is symmetric by Proposition \ref{charac:p} (3). Hence, if ${\bm\epsilon}$ is a Rademacher variable independent from both $X$ and $Y$ then ${\bm\epsilon}(X-Y)$ and $X-Y$ are identically distributed by Proposition \ref{radem:sym:eq}. It follows that
	\begin{eqnarray*}
		\mathbb E_{(X,Y)}(e^{(X-Y)u}) 
		& = & 
		\mathbb E_{(X,Y)}\left(\mathbb E_{\bm\epsilon}\left(e^{(X-Y)u}\right)\right) \\
		& = & 
		\mathbb E_{(X,Y)}\left(\mathbb E_{\bm\epsilon}\left(e^{{\bm\epsilon}(X-Y)u}\right)\right)\\
		& \stackrel{(\ref{radem:ccin})}{\leq} &
		\mathbb E_{(X,Y)}\left(e^{\frac{(X-Y)^2u^2}{2}}\right),
	\end{eqnarray*}	
	and since $|X-Y|\leq b-a$, we see  that
	\[
	\mathbb E_{(X,Y)}(e^{(X-Y)u}) \leq e^{\frac{(b-a)^2u^2}{2}}. 
	\]
	On the other hand,
	\begin{eqnarray*}
		\mathbb E_{(X,Y)}(e^{(X-Y)u})
		& = & 
		\mathbb E_X\left(e^{Xu}\mathbb E_Y\left(e^{-Yu}\right)\right)\\
		& \geq  & 
		\mathbb E_X\left(e^{Xu}e^{-{\mathbb E_Y(Y)}u}\right)\\
		& = & \mathbb E_X\left(e^{Xu}\right),
	\end{eqnarray*}
	where we used Jensen inequality and $\mathbb E_Y(Y)=0$. 
	Putting all the pieces of this computation together we conclude that
	\[
	\mathbb E_X\left(e^{Xu}\right)\leq e^{\frac{(b-a)^2u^2}{2}},
	\]
	as claimed.
\end{proof}

\begin{remark}\label{hoeff:lemma}
	Under the conditions of Proposition \ref{subg:bounded},
	it is possible to show that $X\in {\bf{\mathsf{SubG}}}((b-a)/2)$, which is known as the {\em Hoeffding lemma}. \qed
\end{remark}

\begin{remark}\!\!$\bigstar$\label{rem:symmetrization}
	(Empirical processes, symmetrization and Rademacher complexity)
	Perhaps more relevant than the result itself in Proposition \ref{subg:bounded}, which is classical and can be obtained by other means, is the method of
	proof, since it provides a first glimpse of the symmetrization methods that pervade much of modern Statistical Learning Theory.
 Roughly speaking, one replaces the original (non-symmetric) quantity by a difference of independent copies, thereby producing a symmetric object that can be analyzed via random sign (Rademacher) arguments. 
	We illustrate the underlying argument as follows. Let $Z$ be a random variable taking values in $\mathcal Z\subset\mathbb R$ and $\mathcal F$ be a class of measurable functions on $\mathcal Z$. For each $f\in\mathcal F$ we consider
	\[
		Pf:=\mathbb E\left(f(Z)\right),
	\]
	and its empirical counterpart
	\[
		P_n f:=\frac1n\sum_{i=1}^n f(Z_i),
	\]
where $\{Z_1,\cdots,Z_n\}$ is an i.i.d. sample drawn from $P_Z$. As explained in \cite[Chapter 4]{wainwright2019high}, a central problem in the theory of empirical processes is to establish a high probability bound for the uniform deviation
	\[
\sup_{f\in\mathcal F}|P_n f-Pf|.
\]
Ideally, one would like to find conditions on $\mathcal F$ ensuring that this quantity is $o(1)$ in probability as $n\to +\infty$, which amounts to a {\em uniform} law of large numbers; compare with the "pointwise"  version of this foundational result in Proposition \ref{lln}.
To this effect, 	
	let $\{Z'_1,\cdots,Z'_n\}$ be an independent copy of the sample.
If 
\[
	P_n'f:=\frac1n\sum_{i=1}^n f(Z_i')
\]
and $\mathcal G$ is the $\sigma$-algebra generated by $\{Z_1,\cdots,Z_n\}$,
we have
\[
\mathbb E\left(P_n'f|\mathcal G\right)
= \frac{1}{n}\sum_{i=1}^n \mathbb E\left(f(Z_i')|\mathcal G\right)
{=}
 \frac{1}{n}\sum_{i=1}^n \mathbb E\left(f(Z_i')\right)=Pf,
\]
where we used Proposition \ref{ceprop} (4) 
in the next to last step. Also, $\mathbb E(P_nf|\mathcal G)=P_nf$ by Proposition \ref{ceprop} (3). 
Hence, by conditional Jensen,
	\begin{eqnarray*}
	\sup_{f\in\mathcal F}|P_n f-Pf|
	 & = &
	\sup_{f\in\mathcal F}\left|\mathbb E\left(P_n f-P_n'f|\mathcal G\right)\right|\\
	& \leq &
	\mathbb E\left(\sup_{f\in\mathcal F}|P_n f-P_n'f|\,|\mathcal G\right),
	\end{eqnarray*}
so if we take  expectations and use Proposition \ref{ceprop} (2), 
	\[
	\mathbb E\left(\sup_{f\in\mathcal F}|P_n f-Pf|\right)
	\leq
	\mathbb E\left(\sup_{f\in\mathcal F}|P_n f-P_n'f|\right).
	\]
	Now, if $\epsilon_1,\dots,\epsilon_n$  are i.i.d.\ Rademacher random variables, independent of everything else, it follows from Propositions \ref{inddens} and \ref{radem:sym:eq}
	that the random vectors 
	\[
	\left(f(Z_1)-f(Z_1'),\cdots,f(Z_n)-f(Z_n')\right)
	\]
	and 
		\[
	\left(\epsilon_1(f(Z_1)-f(Z_1')),\cdots,\epsilon_n(f(Z_n)-f(Z_n'))\right)
	\]
	are identically distributed. 
Therefore,
	\[
	\mathbb E\left(\sup_{f\in\mathcal F}|P_n f-P_n'f|\right)
	=
	\mathbb E\left(\sup_{f\in\mathcal F}\left|
	\frac1n\sum_{i=1}^n \epsilon_i\big(f(Z_i)-f(Z_i')\big)
	\right|\right),
	\]
	and we end up with
	\[
	\mathbb E\left(\sup_{f\in\mathcal F}|P_n f-P_n'f|\right)
	\le
	2\,	\mathfrak R_n(\mathcal F),
	\]
	where 
	\[
	\mathfrak R_n(\mathcal F)
	:=
\mathbb E\left(\sup_{f\in\mathcal F}\left|\frac1n\sum_{i=1}^n \epsilon_i f(Z_i)\right|\right)
	\]
	denotes the \emph{Rademacher complexity} of $\mathcal F$. If we further assume that the elements of $\mathcal F$ are uniformly bounded, this may be combined
	with a 
	bounded difference concentration inequality (such as McDiarmid’s) to prove that
	\begin{equation}\label{non:as:rad}
	P\left(\sup_{f\in\mathcal F}|P_n f-Pf|\lesssim \mathfrak R_n(\mathcal F)+\sqrt{\frac{\ln (1/\delta)}{n}}\right)\geq 1-\delta, \quad 0<\delta<1.
	\end{equation}
In particular, a uniform law of large numbers holds for $\mathcal F$ as long as $\mathfrak R_n(\mathcal F)=o(1)$.
For a proof of the celebrated Vapnik–Chervonenkis (VC) uniform deviation bound (\ref{vc:bound}) in supervised binary classification along these lines, see 
	\cite[Section 4.3]{wainwright2019high} and \cite[Chapter  3]{mohri2018foundations}.
	\qed
\end{remark}

Although, as already noted, the Hoeffding-type concentration inequalities obtained from 
Proposition~\ref{subg:bounded} (or Remark~\ref{hoeff:lemma}) have many 
interesting applications, they ignore the dispersion of a 
random variable as measured by its standard deviation.
For example, if $X\sim\mathsf{Bin}(p;n)$, then by Example~\ref{bern:trial} we may 
represent it as a sum of independent Bernoulli variables, so that 
(\ref{rade:00}) applies:
\begin{equation}\label{hoeff:bin}
	P\left(X-np>t\right)\le e^{-\tfrac{t^2}{2n}}, 
	\qquad t>0,
\end{equation}
an estimate that does not involve the population probability $p\in(0,1)$; 
see also Subsection~\ref{chern:erdos}.
This observation motivates us to move beyond the sub-Gaussian framework and 
consider other classes of random variables for which variance-sensitive 
concentration inequalities can be established.

\begin{definition}\label{sub:exp}
	A random variable $Y$ with $\mathbb E(Y)=\mu$ is {\em sub-exponential} if there exist positive parameters $(\nu,\beta)$ such that
	\begin{equation}\label{sub:exp:2}
		\mathbb E\left(e^{u(Y-\mu)}\right)\leq e^{\frac{\nu^2u^2}{2}}, \quad |u|<\frac{1}{\beta}. 
	\end{equation}
	We represent this as $Y\in{\bf{\mathsf{SubE}}}(\nu,\beta)$.
\end{definition}

\begin{remark}\label{symm:sub:exp}
	If $\mu=0$ then $Y\in{\bf{\mathsf{SubE}}}(\nu,\beta)$ implies $-Y\in{\bf{\mathsf{SubE}}}(\nu,\beta)$. Also, if $\{Y_j\}_{j=1}^n$ is independent and $Y_j\in {\bf{\mathsf{SubE}}}(\nu_j,\beta_j)$ with 
	\[
	\sum_j Y_j\in {\bf{\mathsf{SubE}}}\left(\sqrt{\sum_j\nu_j^2},\max_j\,\{\beta_j\}\right),
	\]
	where we assume that $\mu_j=\mathbb E(Y_j)=0$.
	\qed
\end{remark}

\begin{example}\label{sub:exp:ex}
	From Corollary \ref{chi:sq:ms} we have that $Y\sim\chi^2_k$ implies 
	\[
	\mathbb E\left(e^{u(Y-k)}\right)=e^{-ku}\left(1-2u\right)^{-k/2}, \quad |u|<\frac{1}{2}.
	\]
	Note that $\mathbb E(e^{\frac{1}{2}(Y-k)})=+\infty$ and hence $Y$ is not sub-Gaussian.
	However,  
	by Taylor expanding around $u=0$ we find that
	\[
	f(u):=\left(1-2u\right)^{-k/2}=1+ku+k\left(\frac{k}{2}+1\right)u^2+\cdots
	\]
	and
	\[
	g(u):=e^{k(2u^2+u)}=1+ku+{k}\left(\frac{k}{2}+2\right)u^2+\cdots,
	\]
	from which we easily see that $g(u)\geq f(u)$ for $|u|<1/4$ since both functions remain convex in this interval. In other words,   
	\[
	\mathbb E\left(e^{u(Y-k)}\right)\leq e^{2ku^2}, \quad |u|<\frac{1}{4},
	\] 
	and we conclude that $Y\in {\bf{\mathsf{SubE}}}(2\sqrt{k},4)$. \qed
\end{example}

It turns out that sub-exponential random variables obey a concentration inequality that reveals a sharp threshold separating the sub-Gaussian regime from the purely sub-exponential one.

\begin{proposition}\label{tail:sub:exp}
	If $Y\in {\bf{\mathsf{SubE}}}(\nu,\beta)$ with $\mathbb E(Y)=\mu$  then 
	\[
	P(|Y-\mu|\geq t)\leq 
	\left\{
	\begin{array}{ll}
		2e^{-\frac{t^2}{2\nu^2}}, &  0\leq t< \frac{\nu^2}{\beta}\\
		2e^{-\frac{t}{2\beta}}, &   t\geq \frac{\nu^2}{\beta}
	\end{array}
	\right.
	\]
\end{proposition}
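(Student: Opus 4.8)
The plan is to run the Cram\'er--Chernoff method, the same device used to derive (\ref{exp:bound:2}), but now keeping careful track of the restricted range $|u|<1/\beta$ on which the sub-exponential bound (\ref{sub:exp:2}) is available; it is precisely this restriction that produces the two regimes in the statement.

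First I would bound the upper tail. For $u\in[0,1/\beta)$, Markov's inequality (\ref{markov:ineq:2}) applied to $e^{u(Y-\mu)}$ together with (\ref{sub:exp:2}) gives
\[
P(Y-\mu\geq t)=P\!\left(e^{u(Y-\mu)}\geq e^{ut}\right)\leq e^{-ut}\,\mathbb E\!\left(e^{u(Y-\mu)}\right)\leq \exp\!\left(\tfrac{\nu^2u^2}{2}-ut\right).
\]
I am thus reduced to minimizing the upward parabola $g(u)=\tfrac{\nu^2u^2}{2}-ut$ over the half-open interval $[0,1/\beta)$. Its unconstrained minimizer is $u^\star=t/\nu^2$, with $g(u^\star)=-t^2/(2\nu^2)$.

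Next comes the case split. If $0\leq t<\nu^2/\beta$ then $u^\star=t/\nu^2<1/\beta$ is admissible, and plugging in $u=u^\star$ yields $P(Y-\mu\geq t)\leq e^{-t^2/(2\nu^2)}$. If instead $t\geq \nu^2/\beta$, then $u^\star\geq 1/\beta$, so $g$ is decreasing throughout $[0,1/\beta)$ and the best available choice is to let $u\uparrow 1/\beta$. Since the left-hand side is independent of $u$ and the right-hand side is continuous, passing to this limit gives
\[
P(Y-\mu\geq t)\leq \exp\!\left(\tfrac{\nu^2}{2\beta^2}-\tfrac{t}{\beta}\right).
\]
Here I would invoke the hypothesis $t\geq \nu^2/\beta$, which forces $\nu^2/(2\beta^2)\leq t/(2\beta)$ and hence $\tfrac{\nu^2}{2\beta^2}-\tfrac{t}{\beta}\leq -\tfrac{t}{2\beta}$, producing the sub-exponential tail $e^{-t/(2\beta)}$.

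Finally I would handle the lower tail and assemble the result. Replacing $u$ by $-u$ in (\ref{sub:exp:2}) shows that $-(Y-\mu)$ obeys the identical bound with the same parameters (compare Remark \ref{symm:sub:exp}), so the computation above applies unchanged to $P(Y-\mu\leq -t)=P(-(Y-\mu)\geq t)$ and delivers the same estimate. A union bound $P(|Y-\mu|\geq t)\leq P(Y-\mu\geq t)+P(Y-\mu\leq -t)$ then supplies the factor $2$ in each regime. The one genuinely delicate point is the boundary behaviour at $u=1/\beta$: because (\ref{sub:exp:2}) is assumed only on the open interval, one must argue through the limit $u\uparrow 1/\beta$ rather than by direct substitution—and it is exactly this step that locks in the transition between the Gaussian-type tail $e^{-t^2/(2\nu^2)}$ for small $t$ and the exponential-type tail $e^{-t/(2\beta)}$ for large $t$.
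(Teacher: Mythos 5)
Your proposal is correct and follows essentially the same route as the paper: Markov's inequality combined with the sub-exponential mgf bound, minimization of the parabola $-ut+\tfrac{\nu^2u^2}{2}$ over $[0,1/\beta)$ with the case split governed by whether the vertex $t/\nu^2$ lies in that interval, the inequality $\nu^2/(2\beta^2)\leq t/(2\beta)$ in the large-$t$ regime, and symmetry plus a union bound for the factor $2$. Your explicit limiting argument $u\uparrow 1/\beta$ is a small refinement over the paper, which substitutes $u=1/\beta$ directly even though the hypothesis is stated on the open interval, but this does not change the substance of the argument.
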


\begin{proof}
	Clearly, we may assume that $\mu=0$, so Markov inequality (\ref{markov:ineq:2}) gives 
	\[
	P(Y\geq t)\leq e^{h_t(u)}, \quad 0\leq u<\frac{1}{\beta},
	\]
	with the graph of $h_t(u)=-ut+{\nu^2u^2}/{2}$ being an upward pointing parabola passing through $(0,0)$ and with vertex located at $(t/\nu^2,-t^2/2\nu^2)$.
	Thus, in the first case, when $t/\nu^2 <1/\beta$, the tail probability is bounded by 
	\[
	e^{h_t(t/\nu^2)}=e^{-\frac{t^2}{2\nu^2}},
	\]
	whereas in the second case, when $t/\nu^2\geq 1/\beta$, it is bounded by
	\[
	e^{h_t(1/\beta)}\leq e^{-\frac{t}{\beta}+\frac{\nu^2}{2\beta^2}}\leq e^{-\frac{t}{2\beta}}.
	\]
	By Remark \ref{symm:sub:exp}, an identical estimate holds for  $P(Y\leq -t)$, which concludes the proof.
\end{proof}

\begin{corollary}\label{tail:sub:exp:2}
	If $Y\sim \chi^2_k$ then 
	\begin{equation}\label{tail:sub:exp:3}
		P(|k^{-1}Y-1|\geq t)\leq 
		\left\{
		\begin{array}{ll}
			2e^{-\frac{kt^2}{8}}, &  0<t< 1\\
			2e^{-\frac{kt}{8}}, & t\geq 1
		\end{array}
		\right.
	\end{equation}
\end{corollary}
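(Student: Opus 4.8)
The plan is to recognize that this corollary is simply the specialization of the general sub-exponential tail bound in Proposition~\ref{tail:sub:exp} to the chi-squared case, after an appropriate rescaling. The essential input is the computation carried out in Example~\ref{sub:exp:ex}, which established that $Y\sim\chi^2_k$ belongs to ${\bf{\mathsf{SubE}}}(2\sqrt{k},4)$ with $\mu=\mathbb E(Y)=k$. Thus I would begin by recalling these sub-exponential parameters, namely $\nu=2\sqrt{k}$ and $\beta=4$.

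Next I would reduce the stated event to one involving $Y-k$ directly. Since
\[
P\bigl(|k^{-1}Y-1|\geq t\bigr)=P\bigl(|Y-k|\geq kt\bigr),
\]
the claim becomes a statement about the tail of $|Y-\mu|$ evaluated at the threshold $s:=kt$, precisely the quantity controlled by Proposition~\ref{tail:sub:exp}.

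I would then apply Proposition~\ref{tail:sub:exp} with $s=kt$. The regime boundary there is $s=\nu^2/\beta=4k/4=k$, which corresponds exactly to $t=1$; this alignment is the one point worth checking explicitly, and it is what makes the two branches of the corollary match the two branches of the proposition. In the sub-Gaussian regime $0<t<1$ (equivalently $s<k$), the bound $2e^{-s^2/2\nu^2}$ becomes $2e^{-(kt)^2/8k}=2e^{-kt^2/8}$, while in the regime $t\geq 1$ (equivalently $s\geq k$), the bound $2e^{-s/2\beta}$ becomes $2e^{-kt/8}$. Assembling these two cases yields (\ref{tail:sub:exp:3}).

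The argument is essentially a bookkeeping exercise, so I do not anticipate a genuine obstacle; the only step requiring care is confirming that the substitution $s=kt$ sends the proposition's crossover point $\nu^2/\beta$ to $t=1$, so that the piecewise structure is preserved rather than shifted. Everything else reduces to substituting the values $\nu=2\sqrt{k}$ and $\beta=4$ into the two exponents.
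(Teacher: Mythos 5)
Your proposal is correct and follows essentially the same route as the paper: it invokes Example~\ref{sub:exp:ex} to get $Y\in{\bf{\mathsf{SubE}}}(2\sqrt{k},4)$, applies Proposition~\ref{tail:sub:exp} at the threshold $\tau=kt$, and checks that the crossover $\nu^2/\beta=k$ lands at $t=1$. Nothing further is needed.
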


\begin{proof}
	From Example \ref{sub:exp:ex} and the proposition (with $\mu=k$ and $(\nu,\beta)=(2\sqrt{k},4)$) we know that
	\begin{equation}\label{tail:sub:exp:4}
		P(|Y-k|\geq \tau)\leq  
		\left\{
		\begin{array}{ll}
			2e^{-\frac{\tau^2}{8k}}, &  0<\tau<k \\
			2e^{-\frac{\tau}{8}}, & \tau\geq k
		\end{array}
		\right.
	\end{equation}
	Since 
	\[
	P(|k^{-1}Y-1|\geq k^{-1}\tau)=P(|Y-k|\geq \tau),
	\] 
	the substitution $\tau=k t$ finishes the proof.
\end{proof}

We now use the concentration inequality (\ref{tail:sub:exp:3}) to establish a celebrated result with a number of applications both in pure and applied mathematics.

\begin{theorem}\label{jl:lemma}(Johnson-Lindenstrauss). If  $\mathcal C=\{x_1,\cdots,x_n\}\subset\mathbb R^p$ is a collection of $n$ points then, given $\epsilon, \delta\in(0,1)$, there exist $m= O(\epsilon^{-2}\ln (n/\delta))$ and a map $F:\mathbb R^p\to\mathbb R^m$ such that
	\begin{equation}\label{jl:lemma:2}
		1-\epsilon\leq \frac{\|F(x_i)-F(x_j)\|^2}{\|x_i-x_j\|^2}\leq 1+\epsilon,  		
	\end{equation}
	for all $x_i\neq x_j$ in $\mathcal C$.
\end{theorem}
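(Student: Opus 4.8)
The plan is to use the probabilistic method: I will exhibit a suitable $F$ as a realization of a \emph{random} linear map that works simultaneously for all pairs with positive probability. Concretely, let $R=(R_{lj})$ be an $m\times p$ matrix whose entries are independent standard normals, $R_{lj}\sim\mathcal N(0,1)$, and set $F=m^{-1/2}R$, a \emph{linear} map so that $F(x_i)-F(x_j)=F(x_i-x_j)$. The value of $m$ will be fixed at the end; the whole point is to show that the probability that (\ref{jl:lemma:2}) fails for \emph{some} pair is strictly less than $1$, which forces the existence of at least one good realization of $R$.

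First I would reduce the problem to a single chi-squared concentration estimate. Fix a nonzero $v\in\mathbb R^p$ (later $v=x_i-x_j$) and write $r_1,\dots,r_m$ for the rows of $R$. Each coordinate of $Rv$ is $\langle r_l,v\rangle$, and by the projection property (\ref{norm:space:4}) (with $\sigma=1$ and $\vec a=v$) we have $\langle r_l,v\rangle\sim\mathcal N(0,\|v\|^2)$; since the rows are independent, the normalized coordinates $\langle r_l,v\rangle/\|v\|$ are independent $\mathcal N(0,1)$ variables. Corollary \ref{sum:norm:sq} then gives
\[
W:=\frac{\|Rv\|^2}{\|v\|^2}=\sum_{l=1}^m\left(\frac{\langle r_l,v\rangle}{\|v\|}\right)^2\sim\chi^2_m.
\]
Because $\|F(v)\|^2/\|v\|^2=m^{-1}W$, the two-sided bound (\ref{jl:lemma:2}) for this pair is exactly the event $|m^{-1}W-1|<\epsilon$, whose complement is controlled by the sub-exponential tail bound (\ref{tail:sub:exp:3}) from Corollary \ref{tail:sub:exp:2}: since $\epsilon\in(0,1)$,
\[
P\!\left(\left|\tfrac{1}{m}W-1\right|\geq \epsilon\right)\leq 2e^{-m\epsilon^2/8}.
\]

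Next I would apply this to every difference vector and take a union bound. There are $\binom{n}{2}<n^2/2$ pairs $x_i\neq x_j$, so the probability that (\ref{jl:lemma:2}) fails for at least one pair is at most
\[
\binom{n}{2}\cdot 2e^{-m\epsilon^2/8}<n^2e^{-m\epsilon^2/8}.
\]
Choosing $m=\lceil 8\epsilon^{-2}\ln(n^2/\delta)\rceil$ makes this quantity strictly less than $\delta<1$; note that $\ln(n^2/\delta)=2\ln n-\ln\delta=O(\ln(n/\delta))$, so indeed $m=O(\epsilon^{-2}\ln(n/\delta))$. Since the failure probability is below $1$, with positive probability \emph{all} pairs satisfy (\ref{jl:lemma:2}) at once, and any such realization of $R$ yields the desired $F$.

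The genuinely substantive input is the identification $\|Rv\|^2/\|v\|^2\sim\chi^2_m$ together with the \emph{exponential} (rather than polynomial) chi-squared concentration of Corollary \ref{tail:sub:exp:2}: it is precisely the $e^{-m\epsilon^2/8}$ decay that lets a logarithmic-in-$n$ choice of $m$ beat the quadratic-in-$n$ union bound. A Chebyshev-type estimate would only give variance-order decay and would force $m$ to grow polynomially in $n$, defeating the purpose. Thus the main obstacle is not the geometry but securing the sharp exponential tail, which the Cram\'er--Chernoff machinery of this section supplies; the remaining steps are bookkeeping with the union bound and the choice of $m$.
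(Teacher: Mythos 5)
Your proposal is correct and follows essentially the same route as the paper's proof: a random Gaussian matrix $F=m^{-1/2}R$, the identification $\|Rv\|^2/\|v\|^2\sim\chi^2_m$ via the projection property and Corollary \ref{sum:norm:sq}, the sub-exponential tail bound of Corollary \ref{tail:sub:exp:2}, and a union bound over the $\binom{n}{2}$ pairs. The only (immaterial) difference is that you fix $m$ explicitly so the failure probability is strictly below $\delta$, whereas the paper sets the union bound equal to $\delta$ to solve for $m$.
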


\begin{proof}
	Use Remark \ref{many:ind} to construct a $m\times p$ random matrix $\bf A$ whose entries are independent and $\mathcal N(0,1)$-distributed random variables and define the linear map
	\begin{equation}\label{jl:lemma:3}
		F:\mathbb R^p\to\mathbb R^m, \quad F(x)=\frac{{\bf A}x}{\sqrt{m}},
	\end{equation}
	with $m$ to be chosen later on.
	If ${\bf a}_i$ is the $i^{\rm th}$ line of $\bf A$ 	and $x\neq \vec{0}$ then 
	$\langle {\bf a}_i,x/\|x\|\rangle\sim\mathcal N(0,1)$ by Proposition \ref{norm:spce} (3), so
	Corollary \ref{sum:norm:sq} applies to ensure that  
	\[
	\frac{\|{\bf A}x\|^2}{\|x\|^2}=\sum_{i=1}^m\left\langle {\bf a}_i,\frac{x}{\|x\|}\right\rangle^2\sim\chi^2_m.
	\]
	Thus, by Corollary \ref{tail:sub:exp:2}, 
	\[
	P\left(\left|m^{-1}\frac{\|{\bf A}x\|^2}{\|x\|^2}-1\right|\geq \epsilon\right)\leq 2e^{-m\epsilon^2/8}, \quad 0<\epsilon<1,
	\] 
	or equivalently, 
	\[
	P\left(\frac{\|F(x)\|^2}{\|x\|^2}\notin(1-\epsilon,1+\epsilon)\right)\leq 2e^{-m\epsilon^2/8}, \quad 0<\epsilon<1. 
	\]
	From this we 
	easily deduce that 
	\[
	P\left( \frac{\|F(x_i)-F(x_j)\|^2}{\|x_i-x_j\|^2}\notin (1-\epsilon,1+\epsilon)\,{\rm for}\,{\rm some}\,x_i\neq x_j\right)\leq 2\left(\begin{array}{c}
		n\\
		2\end{array}\right)e^{-m\epsilon^2/8},  
	\]
	and the result follows if we impose that the right-hand side equals $\delta$ (which determines $m$ as in the statement of the theorem), since this means that the probability that (\ref{jl:lemma:2}) holds true is $\geq 1-\delta>0$. 
\end{proof}

\begin{remark}\!\!$\bigstar$\label{prob:meth}
	The argument above provides a clear illustration of the celebrated \emph{probabilistic method}. In broad terms, this technique consists of reformulating the deterministic assertion to be proved as the occurrence of a random event. In the present case, the deterministic statement in (\ref{jl:lemma:2}) is recast via the introduction of the random matrix $\mathbf{A}$ in (\ref{jl:lemma:3}). Once it is shown that this event takes place with \emph{positive} probability, the validity of the original statement follows immediately. For many further examples of this powerful method, particularly within Combinatorics, see \cite{alon2016probabilistic}.
	\qed
\end{remark}

\begin{remark}\!\!$\bigstar$\label{data:sci:ap}
	In applications to Data Science \cite{vershynin2018high,blum2020foundations}, it is common for the number $n$ of \emph{samples} to be much smaller than the number $p$ of \emph{features}. The striking aspect of Theorem \ref{jl:lemma} is that, once a controlled distortion $\epsilon > 0$ is allowed in the ``approximate projection'' $F$, the set $\mathcal{C}$ acquires a sort of intrinsic dimension $m$\footnote{Concretely, there exists a subset $\mathcal{I}\subset \{1,\dots,p\}$ of indices with $\sharp \mathcal{I}=p-m$ such that $y_j=0$ for all $y\in F(\mathcal{C})$ and $j\in \mathcal{I}$. In other words, up to a global distortion measured by $\epsilon$, $m$ corresponds to the number of relevant (nonzero) features of the elements of $\mathcal{C}$, which explains the significance of the result for data reduction.}. Remarkably, this intrinsic dimension scales like $\ln n$ and is entirely independent of the ambient dimension $p$, which may even be infinite.
	\qed
\end{remark}

\begin{remark}\label{sub:related}
	If $Y\in{\bf{\mathsf{SubE}}}(\nu,\beta)$ then Proposition \ref{tail:sub:exp} says that 
	\[
	P(|Y-\mathbb E(Y)|\geq t)\leq 2e^{-\min\left\{\frac{t^2}{2\nu^2},\frac{t}{2\beta}\right\}},
	\]
	thus confirming that as $t\to+\infty$ this tail bound is much heavier than the one for $X\in{\bf{\mathsf{SubG}}}(\sigma)$ in (\ref{exp:bound:2})  since in this regime the upper bound here is $2e^{-t/2\beta}$. In a sense this reflects the fact that $X\in{\bf{\mathsf{SubG}}}(\sigma)$ with $\mathbb E(X)=0$ implies $Y=X^2\in{\bf{\mathsf{SubE}}}(\nu,\beta)$ for some $(\nu,\beta)$ to be determined below. To check this claim, take $v\in (0,1)$, multiply both sides of the defining condition for $X$ in (\ref{sub:g:def}) (with $\mu=0$) by $e^{-\sigma^2 u^2/2v}$ and integrate to obtain
	\begin{eqnarray*}
		\int_{-\infty}^{+\infty}e^{\frac{\sigma^2u^2(v-1)}{2v}}du
		& \geq & \int_{-\infty}^{+\infty}\mathbb E\left(e^{uX-\frac{\sigma^2u^2}{2v}}\right)du\\
		& = & \int_{-\infty}^{+\infty}\left(
		\int_{-\infty}^{+\infty}
		e^{ux-\frac{\sigma^2u^2}{2v}}dP_X(x)
		\right)du\\
		& = & \int_{-\infty}^{+\infty}\left(
		\int_{-\infty}^{+\infty}
		e^{ux-\frac{\sigma^2u^2}{2v}}du
		\right)dP_X(x).
	\end{eqnarray*} 
	Using that 
	\[
	\int_{-\infty}^{+\infty}e^{-au^2+bu+c}du=e^{\frac{b^2+4ac}{4a}}\sqrt{\frac{\pi}{a}}, \quad a>0,\quad b,c\in\mathbb R,
	\]
	we may compute the Gaussian integrals above to conclude that
	\[
	\mathbb E\left(e^{\frac{X^2v}{2\sigma^2}}\right)\leq \frac{1}{\sqrt{1-v}},\quad 0\leq v<1,
	\]
	or equivalently, 
	\[
	\mathbb E\left(e^{X^2u}\right)\leq \frac{1}{\sqrt{1-2\sigma^2u}}, \quad 0\leq u <\frac{1}{2\sigma^2}.
	\]
	In particular, there holds 
	\[
	\mathbb E\left(e^{(X^2-\mathbb E(X^2))u}\right)<+\infty
	\]
	for $u\in[0,\varepsilon)$, $\varepsilon>0$. By Remark \ref{mgf:e:v}, $\mathbb E(|X|^4)<+\infty$ and we may Taylor expand up to second order:
	\[
	\mathbb E\left(e^{(X^2-\mathbb E(X^2))u}\right)=1+
	\frac{\mathbb E((X^2-\mathbb E(X^2))^2)}{2}u^2+o(u^2).
	\] 
	Comparing this with 
	\[
	e^{\frac{\nu^2u^2}{2}}=1+\frac{\nu^2u^2}{2}+o(u^2),
	\]
	we easily see that $Y=X^2\in{\mathsf{SubE}(\nu,\beta)}$ if we take 
	\[
	\nu^2>\mathbb E((X^2-\mathbb E(X^2))^2)
	\] 
	and $\beta>0$ large enough. Finally, note that if $\sigma=1$ then we can take $\beta=1$. \qed
\end{remark}

\begin{remark}\label{sum:sub:exp}
	If $\{Y_j\}_{j=1}^k$ is independent with $Y_j\in {\bf{\mathsf{SubE}}}(\nu_j,\beta_j)$ and $\mathbb E(Y_j)=0$ then we easily see that $Y=\sum_jY_j\in {\bf{\mathsf{SubE}}}(\nu,\beta)$, where
	\[
	\nu^2={\sum_j\nu_j^2}, \quad \beta=\max_j\beta_j. 
	\]
	You may apply this to $Y_j=X_j^2$, $X_j\sim\mathcal N(0,1)$. Using the well-known fact that, for a standard normal, $\mathbb E((X_j^2-\mathbb E(X_j^2))^2)=3$, and the computation in Example \ref{sub:related} above, we conclude that
	$\sum_jX_j^2\in {\bf{\mathsf{SubE}}}(\sqrt{2}k,\beta)$, for some $\beta>0$,
	which provides essentially the same result as in Example \ref{sub:exp:ex}. Clearly, this suffices for the applications culminating in Theorem \ref{jl:lemma} above.  Notice
	that in this derivation no appeal to the explicit expression for the pdf of a chi-square distribution has been needed. \qed  
\end{remark} 

\subsection{The Gaussian concentration inequality}\label{vers:g:ineq}
A suitable rewording of the inequalities in Corollary \ref{tail:sub:exp:2} provides valuable insights on the ``high dimensional'' behavior of standard normal random vectors, including the precise formulation of a remarkable dimension-free concentration inequality for Lipschitz functions of such vectors; see (\ref{gauss:conc:ineq}) below. 
Indeed, 	let $X=(X_1,\cdots,X_k)$ be such a vector, which means by definition that  $\{X_j\}_{j=1}^k$ is independent with $X_j\sim\mathcal N(0,1)$. Recalling that 
\[
|a-1|\geq \delta \Longrightarrow |a^2-1|\geq\max\left\{\delta,\delta^2\right\}, \quad a\geq 0, \quad \delta>0,
\]
we have
\begin{eqnarray*}
	P\left(\left|\frac{\|X\|}{\sqrt{k}}-1\right|\geq \frac{\tau}{k}\right)
	& \leq & 
	P\left(\left|\frac{\|X\|^2}{{k}}-1\right|\geq \max\left\{
	\frac{\tau}{k},\frac{\tau^2}{k^2}
	\right\}\right)\\
	& = & 
	P\left(\left|{\|X\|^2}-k\right|\geq \max\left\{
	{\tau},\frac{\tau^2}{k}
	\right\}\right)	\\
	& = &
	P\left(\left|{\|X\|^2}-k\right|\geq 
	\left\{
	\begin{array}{ll}
		\tau, & \tau<k\\
		\frac{\tau^2}{k} & \tau\geq k
	\end{array}
	\right.
	\right)\\
	& \leq & 2e^{-\frac{\tau^2}{8k}}, \quad \tau>0,		
\end{eqnarray*}
where we used (\ref{tail:sub:exp:4}) with $Y=\|X\|^2$ in the last step. By setting $t=\tau/k$ we then obtain
\begin{equation}\label{bd:conc:1}
	P\left(\left|\frac{\|X\|}{\sqrt{k}}-1\right|\geq t\right)\leq 2e^{-\frac{kt^2}{8}}, \quad t>0,
\end{equation}
or equivalently, 
\begin{equation}\label{bd:conc:2}
	P\left(\left|{\|X\|}-{\sqrt{k}}\right|\geq t\right)\leq 2e^{-\frac{t^2}{8}}, \quad t>0,
\end{equation}
These concentration inequalities say that, with a {very high}\footnote{That is, as close to $1$ as we wish!} probability, $X/\sqrt{k}$ remains at an {\em arbitrarily small} distance from the unit sphere $\mathbb S^{k-1}\subset\mathbb R^k$ or, equivalently, $X$ remains at a {\em bounded} distance from the round sphere $\mathbb S^{k-1}_{\sqrt{k}}\subset\mathbb R^k$ of radius $\sqrt{k}$ as $k\to +\infty$. We note the striking similarity between (\ref{bd:conc:1}) and (\ref{rade:2}): in both cases the relevant random variable, which turns out to be a function of a large  collection of independent variables, becomes {\em almost constant} when properly re-scaled; for more on this perspective, which in a sense underlies the modern applications of the ``concentration of measure phenomenon'' to (high dimensional) probability, see \cite{talagrand1996new}.   

\begin{remark}\!\!$\bigstar$\label{poin:conv}
	(Poincar\'e's limit theorem) From Remark \ref{fisher:comp:1} we know that a random vector $Z^{[k]}$ uniformly distributed over $\mathbb S^{k-1}_{\sqrt{k}}$ may be expressed as 
	\begin{equation}\label{rand:unif}
		Z^{[k]}=\sqrt{k}\Theta^{[k]},
	\end{equation}
	where $\Theta^{[k]}= X^{[k]}/\|X^{[k]}\|$, $X^{[k]}\sim\mathcal N(\vec{0},{\rm Id}_k)$. 
	Now, it follows from
	(\ref{bd:conc:1}) that $\|X^{[k]}\|/\sqrt{k}\stackrel{p}{\to}1$ as $k\to+\infty$\footnote{This assertion also follows from the Law of Large Numbers (Theorem \ref{lln} below). 
		Indeed, if $X \sim\mathcal N(\vec{0},{\rm Id}_k)$ then
		\[
		\|X\|^2=\sum_{j=1}^k X_j^2, 
		\]
		where $X_j\sim\mathcal N({0},1)$ and hence $X_j^2\sim \chi^2_1$ with $\mathbb E(X_j^2)=1$ by Corollary \ref{sum:norm:sq}. Thus, by making $X=X^{[k]}$, Theorem \ref{lln} applies to ensure that $\|X^{[k]}\|^2/k\stackrel{p}{\to}1$, as claimed.  
	}. On the other hand, if $x\in\mathbb R=\mathbb R^1\subset\mathbb R^2\subset\cdots\subset\mathbb R^k\subset\cdots$, 
	$\|x\|=1$, 
	we know that $\langle X^{[k]},x\rangle\sim\mathcal N(0,1)$ by Proposition \ref{norm:spce} (3). Thus,  
	from the identity
	\[
	\langle Z^{[k]},x\rangle=\frac{\sqrt{k}}{\|X^{[k]}\|}\langle X^{[k]},x\rangle
	\] 
	and Theorem \ref{slutsky} we conclude that $\langle Z^{[k]},x\rangle\stackrel{d}{\to}\mathcal N(0,1)$.
	Put in another way, as $k\to+\infty$ the marginals of $Z^{[k]}$ corresponding to a given set of $l\geq 1$ coordinates converge in distribution to a standard normal vector $Z^{[\infty]}\sim\mathcal N(\vec{0},{\rm Id}_l)$, a statement usually referred to as ``Poincar\'e's limit theorem'' \cite{hida1964gaussian,mckean1973geometry,diaconis1987dozen}. \qed
\end{remark}

As yet another instance of an insight coming from the concentration inequalities above,  
if we compare the bounds in (\ref{bd:conc:1}) and (\ref{bd:conc:2}) with the normal bound in (\ref{exp:bound:2}), we see that the corresponding fluctuations, as measured by the standard deviation, are $O(1/\sqrt{k})$ and $O(1)$, respectively. Noticing that 
\[
\frac{1}{\sqrt{k}}={\rm Lip}\left(x\mapsto \frac{\|x\|}{\sqrt{k}}\right)
\]  
and 
\[
{1}={\rm Lip}\left(x\mapsto {\|x\|}\right), 
\] 
where ${\rm Lip}$ denotes the Lispschitz constant of a function on $\mathbb R^k$ (with respect to the euclidean norm),
we are thus led to suspect that the dimension-free inequality
\begin{equation}\label{gauss:conc:ineq}
	P\left(\left|F(X)-\mathbb E(F(X))>t\right|\right)\leq 2e^{-\frac{Ct^2}{{\rm Lip}(F)^2}}, \quad t>0,
\end{equation}
should hold true for some universal constant $C>0$ not depending on $k$,
where $F:\mathbb R^k\to\mathbb R$ is assumed to be Lipschitz. Of course, the optimal possibility is $C=1/2$, in which case 
(\ref{gauss:conc:ineq}) says that
if ${\rm Lip}(F)=1$ then  $F(X)$ is at least as much concentrated around its mean as each $X_j$, regardless of the size $k$ of the sample. 
For discussions on this {\em Gaussian Concentration Inequality} which explore the connection with several other mathematical topics, including  the geometric notion of {\em isoperimetry} and the analytical concept of {\em hypercontractivity}, we refer to \cite{ledoux2001concentration,ledoux2006isoperimetry,ledoux2013probability,boucheron2013conc}; see also Remark \ref{poin:impr} below. An elegant approach to the sharpest version of (\ref{gauss:conc:ineq}), due to 
Maurey and Pisier \cite[Chapter 2]{pisier2006probabilistic} and 
relying heavily on It\^o calculus, is presented in Subsection \ref{gauss:conc:sec} below. We provide here a more pedestrian (but no less elegant!) argument, also available in \cite[Chapter 2]{pisier2006probabilistic}, which delivers a constant slightly smaller than the  optimal one ($1/2$ gets replaced by $2/\pi^2$). 

Without loss of generality, we may assume that $F$ is smooth (so that $\|\nabla F\|\leq {\rm Lip}(f)$ a.s.) and $\mathbb E(F(X))=0$. This latter assumption implies, via Jensen's inequality, that $\mathbb E(e^{vF(X)})\geq e^{v\mathbb E(F(X))}=1$ for any $v\in\mathbb R$, which gives
\begin{equation}\label{doubling}
	\mathbb E\left(e^{uF(X)}\right)\leq\mathbb E_{(X,X ')}\left(e^{u\left(F(X)-F(X')\right)}\right),\quad u\geq 0,
\end{equation} 
where $X'$ is an independent copy of $X$ (so that $(X,X')\sim\mathcal N(\vec{0},{\rm Id}_{2k})$). For each $\theta\in[0,\pi/2]$ define 
\[
X_\theta=\cos\theta\, X+\sin\theta\,X'
\]
and 
\[
X'_\theta=\frac{dX_\theta}{d\theta}=-\sin\theta\, X+\cos\theta\,X'.   
\]
Since 
\[
\left(
\begin{array}{c}
	X_\theta\\
	X'_\theta
\end{array}
\right)
=
\left(
\begin{array}{cc}
	\cos\theta \,{\rm Id}_{k} & \sin\theta \,{\rm Id}_{k}\\
	-\sin\theta\,{\rm Id}_{k} & \cos\theta\,{\rm Id}_{k}
\end{array}
\right)
\left(
\begin{array}{c}
	X\\
	X'
\end{array}
\right),	
\]
it follows from the rotational invariance in Corollary \ref{unc:ind:n:c} that 
$(X_\theta,X'_\theta)$ is identically distributed to $(X,X')$ with $X'_\theta$ being an independent  copy of $X_\theta$ as well. 
Now, 
\[
F(X)-F(X')=\int_{0}^{\pi/2}\langle(\nabla F)(X_\theta),X'_\theta\rangle d\theta,
\]
so that Jensen's inequality gives
\[
e^{u\left(F(X)-F(X')\right)}\leq \frac{2}{\pi}\int_0^{\pi/2}e^{\frac{\pi}{2}u\langle(\nabla F)(X_\theta),X'_\theta\rangle }d\theta, 
\]
and hence, 
\begin{eqnarray*}
	\mathbb E_{(X,X')}\left(e^{u\left(F(X)-F(X')\right)}\right)
	& \leq & 
	\frac{2}{\pi}\int_0^{\pi/2}
	\mathbb E_{(X,X')}\left(e^{\frac{\pi}{2}u\langle(\nabla F)(X_\theta),X_\theta'\rangle}\right)d\theta\\
	& = & 
	\mathbb E_{(X,X')}\left(e^{\frac{\pi}{2}u\langle(\nabla F)(X),X'\rangle}\right)\\
	& = & 
	\int_{\mathbb R^k}\left(
	\mathbb E_{X'}\left(e^{\frac{\pi}{2}u\langle(\nabla F)(x),X'\rangle}\right)
	\right)dP_X(x),
\end{eqnarray*}
where in the second step we used the consequences of the rotational invariance mentioned above to ensure that the expectation integrand does not depend on $\theta$
and in the last step we used the independence of $\{X,X'\}$. 
Now, for each $x$ such that $\nabla F(x)\neq\vec{0}$ we known from  Proposition \ref{norm:spce} (3) that 
\[
\frac{\pi}{2}\left\langle (\nabla F)(x),X'\right\rangle\sim\mathcal N\left(0,\frac{\pi^2}{4}\|\nabla F(x)\|^2\right),
\] 
so that (\ref{mgf:normal:n}) 
gives
\[
\mathbb E_{X'}\left(e^{\frac{\pi}{2}u\langle(\nabla F)(x),X'\rangle}\right)\leq e^{{\frac{\pi^2}{8}{\rm L}(f)^2}{u^2}}.
\]
Note that the right-hand side does not depend on $x$ and the estimate remains true if $(\nabla F)(x)=\vec{0}$.
Thus, if we put all the pieces of our calculation together we obtain 
\[
\mathbb E\left(e^{uF(X)}\right)\leq e^{{\frac{\pi^2}{8}{\rm L}(f)^2}{u^2}},
\] 
which amounts to saying that $F(X)\in {\bf{\mathsf{SubG}}}(\pi{\rm Lip}(f)/2)$, a quite good but not entirely satisfactory estimate due to the $\pi/2$ factor appearing in the sub-Gaussian parameter\footnote{This should be compared to the sharp estimate in (\ref{sharp:gauss}) obtained by means of the full machinery of the Stochastic Calculus.}. In any case, 
we may now appeal to the Cram\'er-Chernoff method introduced above: for each $t>0$,
\[
P\left(\left|F(X)\right|>t\right)\leq 2 e^{{\frac{\pi^2}{8}{\rm L}(f)^2}{u^2}-ut}, \quad u\geq 0,
\]
and minimizing the right-hand side we finally get
\[
P\left(\left|F(X)\right|>t\right)\leq 2e^{-\frac{2t^2}{\pi^2{\rm L}(f)^2}},
\]
as claimed.

\begin{remark}\!\!$\bigstar$\label{poin:impr} (Poncar\'e's limit revisited)
	Let $\Pi_{k,l}:\mathbb R^k\to\mathbb R^l$ be the orthogonal projection associated to the natural embedding $\mathbb R^l\hookrightarrow\mathbb R^k$ and let $P_{k}$ be the uniform probability measure on
	$\mathbb S^{k-1}_{\sqrt{k}}$. With this terminology, the ``Poncar\'e's  limit theorem'' in Remark \ref{poin:conv} 
	says that the random vectors $\widetilde{\Pi}_{k,l}=\Pi_{k,l}|_{\mathbb S^{k-1}_{\sqrt{k}}}:(\mathbb S^{k-1}_{\sqrt{k}},P_k)\to\mathbb R^l$ converge in distribution to $Z^{[\infty]}\sim\mathcal N(\vec{0},{\rm Id}_l)$,
	which means that $\mathbb E(\xi(\widetilde{\Pi}_{k,l}))\to \mathbb E(\xi(Z^{[\infty]})))$ for any $\xi:\mathbb R^l\to \mathbb R$ uniformly bounded and continuous; cf. Definition \ref{conv:dist:def}. 
	It turns out that with a bit more of effort it may be checked that 
	this statement actually holds true with $\xi={\bf 1}_A$, the indicator function 
	of an arbitrary Borel set $A\in\mathcal B^l$. Precisely, 
	\begin{equation}\label{poin:impr:2}
		\lim_{k\to +\infty}
		P_{\widetilde\Pi_{k,l}}(A)
		=\frac{1}{(2\pi)^{l/2}}\int_Ae^{-\|y^2\|/2}dy. 
	\end{equation}
	To prove this claim, let us first observe that, with the notation of Remark \ref{poin:conv},  
	\begin{eqnarray*}
		P_{\widetilde\Pi_{k,l}}(A)
		& = & 
		P_k(\widetilde\Pi^{-1}_{k,l}(A))\\
		& = & 
		P_{k}\left(\Pi^{-1}_{k,l}(A)\cap \mathbb S^{k-1}_{\sqrt{k}}\right)\\
		& = & 
		P\left(\Pi_{k,l}(Z^{[k]})\in A\right),
	\end{eqnarray*}
	so if 
	$R_m^2:=X_1^2+\cdots+ X_m^2$, $1\leq m\leq k$, (\ref{rand:unif}) gives 
	\begin{eqnarray*}
		P_{\widetilde\Pi_{k,l}}(A)
		& = & 
		P\left(\frac{\sqrt{k}}{R_k}\left(X_1,\cdots,X_l\right)\in A\right)\\
		& = &
		P\left(\left(k\frac{R_l^2}{R_k^2}\right)^{1/2}
		\frac{1}{R_l}\left(X_1,\cdots,X_l\right)\in A\right). 	
	\end{eqnarray*}
	Now, by Remark \ref{fisher:comp:1}, 
	\[
	\left\{R^2_l,R^2_k-R^2_l,\frac{1}{R_l}\left(X_1,\cdots,X_l\right)\right\}
	\]	
	is independent and therefore
	\[
	\left\{\frac{R_l^2}{R_k^2},\frac{1}{R_l}\left(X_1,\cdots,X_l\right)\right\}
	\]
	is independent as well. On the other hand, 
	by Corollary \ref{sum:norm:sq} and Proposition \ref{beta:chi},
	\[
	\frac{R_l^2}{R_k^2}=\frac{R_l^2}{R_l^2+(R_k^2-R_l^2)}\sim \mathsf{ Beta}\left(\frac{l}{2},\frac{k-l}{2}\right),
	\]
	so if we put together these facts we get,  by Proposition \ref{inddens}, 
	\begin{eqnarray*}
		P_{\widetilde\Pi_{k,l}}(A)
		& = & 
		\frac{\Gamma(k/2)}{\Gamma(l/2)\Gamma((k-l)/2)}\times \\
		&  & \quad \omega_{l-1}^{-1}
		\int_{\mathbb S^{l-1}_1}\int_0^1
		{\bf 1}_A(\sqrt{kx}\theta)x^{\frac{l}{2}-1}(1-x)^{\frac{k-l}{2}-1}d\mathbb S^{l-1}_1(\theta)dx,
	\end{eqnarray*}
	where $d\mathbb S^{l-1}_1(\theta)$ is the (unormalized) volume element of the unit sphere $\mathbb S^{l-1}_1$ (induced by the embedding $\mathbb S^{l-1}_1\hookrightarrow \mathbb R^l$) and $\omega_{l-1}={\rm vol}_{l-1}(\mathbb S_1^{l-1})$. Thus, if we set  
	$u=\sqrt{kx}$ and use (\ref{vol:form:sph}) we find that 
	\begin{eqnarray*}
		P_{\widetilde\Pi_{k,l}}(A)
		& = & 
		\frac{\Gamma(k/2)}{\Gamma((k-l)/2)}\frac{1}{\pi^{l/2}k^{l/2}}\times \\
		&  & \quad 
		\int_{\mathbb S^{l-1}_1}\int_0^{\sqrt{k}}{\bf 1}_A(u\theta)u^{l-1}\left(1-\frac{u^2}{k}\right)^{\frac{k-l}{2}-1}
		d\mathbb S^{l-1}_1(\theta)du. 
	\end{eqnarray*}
	Since $l$ is held fixed, the Stirling approximation in (\ref{stir:gamma}) below gives
	\[
	\frac{\Gamma(k/2)}{\Gamma((k-l)/2)}\approx_{k\to+\infty}
	2^{-l/2}k^{l/2}\left(\frac{k-l}{k}\right)^{-l/2},
	\]	 		
	so we end up with
	\[
	\lim_{k\to+\infty}P_{\widetilde\Pi_{k,l}}(A)
	=
	\frac{1}{(2\pi)^{l/2}}
	\int_{\mathbb S^{l-1}_1}\int_0^{+\infty}{\bf 1}_A(u\theta)u^{l-1}e^{-u^2/2}d\mathbb S^{l-1}_1(\theta)du,
	\]
	which proves the claim because this double integral clearly equals the right-hand side of (\ref{poin:impr:2}) under the substitution $y=u\theta$. The limit theorem in (\ref{poin:impr:2}) is the key ingredient in explicitly solving the isoperimetric problem for the {\em Gaussian space} $(\mathbb R^l,\delta,(2\pi)^{-l/2}e^{-|y|^2/2}dy)$ by essentially viewing it as the limit of the corresponding problem for large, high-dimensional spheres $\mathbb S^{k-1}_{\sqrt{k}}$ as $k\to +\infty$
	\cite{borell1975brunn,sudakov1978extremal}, a celebrated result which by its turn may be used to establish a version of (\ref{gauss:conc:ineq})  with the optimal constant $C=1/2$, but this time with the mean replaced by the median \cite[Chapter 2]{ledoux2006isoperimetry}.  Needless to say, this is a prominent instance of the ``concentration of measure phenomenon'' extensively studied elsewhere \cite{gromov1999metric,ledoux2001concentration,boucheron2013conc,shioya2016metric}. \qed   
\end{remark}

\subsection{Chernoff-type bounds for binomial trials and the Erd\"os-R\'enyi model}\label{chern:erdos}
As illustrated in (\ref{hoeff:bin}), the sub-Gaussian version of the Cram\'er-Chernoff method yields an estimate which fails to account for the  dispersion of a binomial trial (as measured by its standard deviation).
We may remedy this by directly applying the method to $X\sim \mathsf{Bin}(p;n)$ as in Example \ref{bern:trial} in order to get, for $t>0$, 
\begin{eqnarray*}
	P\left(X\geq t\right)
	& = & 
	P\left(e^{Xu}\geq e^{tu}\right)\\
	& = & e^{-tu}\mathbb E\left(e^{Xu}\right) \quad ({\rm Markov})\\
	&\stackrel{(\ref{form:ch:sumb:2})}{=} & 
	e^{-tu}\left(1-p+pe^{u}\right)^n.
\end{eqnarray*} 
Using
that $1+x\leq e^x$, $x\geq 0$, we obtain
\begin{equation}\label{cr:chern:bin:1}
	P\left(X\geq t\right)\leq 	e^{-tu+np(e^u-1)},
\end{equation}
and since  	
the function on the exponent is minimized at $u=\ln(t/\lambda)$, where $\lambda=np=\mathbb E(X)$ is the expectation, we end up with the {\em Chernoff-type} inequality
\begin{equation}\label{cr:chern:bin:2}
	P\left(X\geq t\right)\leq e^{-\lambda}\left(\frac{e\lambda}{t}\right)^t, \quad t>\lambda.
\end{equation}
Regarding this analysis, the following comments are worth mentioning:
\begin{itemize}
	\item From (\ref{cr:chern:bin:2}) we have 
	\begin{equation}\label{cr:chern:bin:27}
		P\left(X\geq t\right)\leq C_1e^{C_2t-t\ln t},
	\end{equation}
	where $C_1=e^{-\lambda}$ and $C_2=1+\ln\lambda$, which for $t$ large gives a tail behavior somehow interpolating between the sub-Gaussian and sub-exponent\-ial regimes. 
	\item Past experience with the sub-exponent\-ial case in Proposition \ref{tail:sub:exp}
	suggests that we should be able to recover a sub-Gaussian tail for {\em small} deviations around the mean $\lambda$ (which is the only critical point of the exponential function in the right-hand side of (\ref{cr:chern:bin:27})). This is the case indeed: if we insert $t=(1+\varepsilon)\lambda$, $|\varepsilon|<1$,  in (\ref{cr:chern:bin:2}) we see that 
	\begin{eqnarray*}
		P\left(X\geq (1+\varepsilon)\lambda\right)	
		& \leq & \left(e^{\varepsilon-(1+\varepsilon)\ln(1+\varepsilon)}\right)^\lambda\\
		& = & \left(e^{-\frac{\varepsilon^2}{2}+\frac{\varepsilon^3}{6}+o(|\varepsilon|^4)}\right)^\lambda,
	\end{eqnarray*}
	which easily leads to the estimate  
	\begin{equation}\label{two:t:chern}
		P\left(|X-\lambda|\geq \varepsilon\lambda\right)		\leq  2e^{-\frac{\varepsilon^2}{3}\lambda},
		\quad 0<\varepsilon<1.
	\end{equation}
	\item 	From (\ref{cr:chern:bin:1}) and (\ref{char:pois:2}) with $\lambda=np$ we see that the Cram\'er-Chernoff method delivers the same estimate as in (\ref{cr:chern:bin:2}) had we started with $X\sim\mathsf {Pois}(\lambda)$, 
	the Poisson variable with the same expectation as our original binomial variable; see Example \ref{poisson:trials}. This suggests that, at least in the asymptotic regime $n\to+\infty$, the classes $\mathsf{Bin}(\lambda/n;n)$ and $\mathsf{Pois}(\lambda)$ are closely related, a claim substantiated by the Law of Rare Events (Theorem \ref{law:rare} below). This deep relationship between binomial and Poisson distributions finds many applications in the theory of random graphs, notably in connection with the Erd\"os-R\'enyi model studied in the sequel; see  the proof of Proposition \ref{tot:edge:lim} below for a simple manifestation of this connection and \cite{van2024random} for the general theory. 
\end{itemize}

We now illustrate the use of the Chernoff-type bounds for binomial trials developed so far in the art of precisely determining the exact threshold for the emergence of certain "phase transitions" in the most commonly studied class of random graphs.

For each $N\geq 2$ let us define $[N]=\{1,\cdots,N\}$, which we  call a {\em set of vertices}. We represent by $[ij]$ the {\em unordered} pair derived from $\{i,j\}\in [N]\times [N]$ with $i\neq j$ (so that $[ij]=[ji]$). The union of all such objects is the {\em set of potential edges}, denoted $E$. Note that $\sharp(E)=N(N-1)/2$.
We now inject a probabilistic ingredient in the construction of graphs starting with $E$. The most obvious possibility, which we adopt here, is simply to flip a (possibly biased) coin for each potential edge in order to decide whether it effectively occurs as a link between two vertices, with the provision that the flips should comprise independent events. In formal terms,   
for each ${e}\in E$ we consider a random variable $X_e\sim\mathsf{Ber}(p)$ so that ${\rm supp}\,P_{X_e}=\{0,1\}$ with $P(X_e=1)=1-P(X_e=0)=p$. Now define
$
\Omega=\{0,1\}^{\binom{N}{2}}
$,
the cartesian product of $N(N-1)/2$ copies of $\{0,1\}$, one for each $e\in E$, and $P=\otimes_{e\in E} P_{X_e}$, the product probability on $\Omega$. 
Note that each element $\omega\in\Omega$ may be viewed as a function $\omega:\Omega\to\{0,1\}$ and hence defines a graph whose {\em edge set} is 
\[
E_\omega=\{e\in E;\omega(e)=1\}. 
\] 
For this reason, the sample space $(\Omega,2^{\Omega},P)$ is called the {\em Erd\"os-R\'enyi model} for a random graph, usually denoted by $\mathbb G(N;p)$. 

\begin{proposition}\label{ind:proj}
	Let $\pi_e:\Omega\to\{0,1\}$ be the canonical projection onto the factor corresponding to $e$. Then each $\pi_e$ is identically distributed to $X_e$ (in particular, $\pi_e\sim\mathsf{Ber}(p)$) with $\{\pi_e\}_{e\in E}$ being independent. 
\end{proposition}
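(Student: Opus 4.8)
The plan is to recognize this as the finite-product instance of the general construction in Remark~\ref{many:ind}, specialized to the case where each coordinate sample space is the two-point set $\{0,1\}$ carrying the Bernoulli measure $P_{X_e}$. Since $\Omega$ is finite, the product $\sigma$-algebra $2^\Omega$ is the full power set and no measure-theoretic subtleties intervene; the entire argument reduces to the defining factorization property of the product measure $P=\otimes_{e\in E}P_{X_e}$ applied to cylinder sets. So the whole proof splits into two bookkeeping computations — one for marginals, one for the joint law — followed by an appeal to Proposition~\ref{inddens}.

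First I would establish the marginals. Fixing $e\in E$ and $B\subseteq\{0,1\}$, I would observe that the preimage $\pi_e^{-1}(B)$ is the cylinder set $\prod_{e'\in E}A_{e'}$ with $A_e=B$ and $A_{e'}=\{0,1\}$ for $e'\neq e$. The defining property of the product measure then gives
\[
P_{\pi_e}(B)=P\bigl(\pi_e^{-1}(B)\bigr)=\prod_{e'\in E}P_{X_{e'}}(A_{e'})=P_{X_e}(B)\prod_{e'\neq e}P_{X_{e'}}(\{0,1\})=P_{X_e}(B),
\]
since each factor with $e'\neq e$ contributes $P_{X_{e'}}(\{0,1\})=1$. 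Hence $P_{\pi_e}=P_{X_e}$, so $\pi_e$ is identically distributed to $X_e$ and in particular $\pi_e\sim\mathsf{Ber}(p)$.

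Next I would verify independence by showing that the joint distribution of any finite subcollection factors as the product of its marginals. For distinct edges $e_1,\dots,e_k\in E$ and Borel sets $B_1,\dots,B_k\subseteq\{0,1\}$, the intersection $\bigcap_{i=1}^k\pi_{e_i}^{-1}(B_i)$ is again a cylinder $\prod_{e\in E}A_e$ with $A_{e_i}=B_i$ and $A_e=\{0,1\}$ otherwise, so the same factorization yields
\[
P_{(\pi_{e_1},\dots,\pi_{e_k})}(B_1\times\cdots\times B_k)=\prod_{e\in E}P_{X_e}(A_e)=\prod_{i=1}^k P_{X_{e_i}}(B_i)=\prod_{i=1}^k P_{\pi_{e_i}}(B_i).
\]
This identifies $P_{(\pi_{e_1},\dots,\pi_{e_k})}$ with the product measure $\otimes_i P_{\pi_{e_i}}$, whence Proposition~\ref{inddens} delivers the independence of $\{\pi_e\}_{e\in E}$.

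I do not expect a genuine obstacle here: the only step deserving care is the correct identification of the preimages as cylinder (rectangle) sets and the bookkeeping that all factors indexed by the untouched coordinates contribute a trivial factor of $1$. The finiteness of $\Omega$ collapses the countable-product machinery of Remark~\ref{many:ind} to elementary combinatorics, so the argument is essentially a verification rather than a construction.
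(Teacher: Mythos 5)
Your proof is correct and follows the same route as the paper, which simply cites Remark~\ref{many:ind}; you have unpacked exactly the cylinder-set computation that remark carries out (marginals via trivial factors of $1$, joint law factoring into the product of marginals, then Proposition~\ref{inddens}).
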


\begin{proof}
	This is a special case of the general procedure in Remark \ref{many:ind}.
\end{proof}

By construction of $\mathbb G(N;p)$, any event (a subset of $\Omega$) defines  a specific collection of graphs. For instance, for each $e\in E$ we may consider
\[
\Omega_e=\{\omega\in\Omega;\omega(e)=1\},
\]
the set of all graphs having $e$ as a vertex.
The next result confirms that a random graph in the Erd\"os-R\'enyi model is obtained by 
flipping
a coin for each potential vertex with the flippings being independent moves.  

\begin{proposition}\label{ind:graph}
	$\{\Omega_e\}_{e\in E}$ is a set of independent events. 
\end{proposition}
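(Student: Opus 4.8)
The plan is to recognize each event $\Omega_e$ as a level set of the canonical projection $\pi_e$ and then to transfer the independence of the random variables $\{\pi_e\}_{e\in E}$, already granted by Proposition \ref{ind:proj}, down to the level of events as prescribed by Definition \ref{def:ind:levels}.

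First I would note the elementary identification
\[
\Omega_e=\{\omega\in\Omega;\omega(e)=1\}=\pi_e^{-1}(\{1\}),
\]
so that $\Omega_e$ is precisely the event $\{\pi_e=1\}$ and, in particular, $\Omega_e\in\mathcal F_{\pi_e}$, the $\sigma$-algebra generated by $\pi_e$. Since Proposition \ref{ind:proj} asserts that $\pi_e\sim\mathsf{Ber}(p)$, this already yields $P(\Omega_e)=P_{\pi_e}(\{1\})=p$ for every $e\in E$.

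Second, I would invoke the chain of implications built into Definition \ref{def:ind:levels}. By Proposition \ref{ind:proj} the family $\{\pi_e\}_{e\in E}$ is independent, which by item (3) of that definition means that the family of generated $\sigma$-algebras $\{\mathcal F_{\pi_e}\}_{e\in E}$ is independent. Applying item (2) to the particular events $\Omega_e\in\mathcal F_{\pi_e}$ then shows that for every finite subcollection $e_1,\dots,e_k$ the events $\Omega_{e_1},\dots,\Omega_{e_k}$ obey the product rule of item (1); since $E$ is itself finite, this is exactly the assertion that $\{\Omega_e\}_{e\in E}$ is a set of independent events.

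There is no genuine obstacle here: the entire content is the bookkeeping that moves from the ``random variable'' level (3) to the ``event'' level (1) of Definition \ref{def:ind:levels}, and this passage is already packaged into Proposition \ref{ind:proj}. If one preferred to bypass that citation and argue directly, the only point requiring a line of care would be the evaluation of the intersection $\bigcap_{\ell=1}^{k}\Omega_{e_\ell}$ as the cylinder set fixing the coordinates $e_1,\dots,e_k$ to $1$ and leaving the remaining $N(N-1)/2-k$ coordinates free; the defining property of the product measure $P=\otimes_{e\in E}P_{X_e}$ would then give $P\!\left(\bigcap_{\ell}\Omega_{e_\ell}\right)=p^{k}=\prod_{\ell}P(\Omega_{e_\ell})$ at once.
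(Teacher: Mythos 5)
Your proof is correct and follows essentially the same route as the paper, which likewise identifies $\Omega_e=\pi_e^{-1}(1)$ and then appeals to Proposition \ref{ind:proj}; you simply spell out the bookkeeping through Definition \ref{def:ind:levels} that the paper leaves implicit. The alternative cylinder-set computation you sketch at the end is also fine but unnecessary given the cited proposition.
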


\begin{proof}
	Note that $\omega(e)=\pi_e(\omega)$ so that $\Omega_e=\pi_e^{-1}(1)$ and then apply Proposition \ref{ind:proj}. 
\end{proof}

Since $\pi_e={\bf 1}_{\Omega_e}$ for each $e$, the total number of edges in a random graph $\omega$ is given by $\mathscr E_N(\omega)$, where 
\[
\mathscr E_N=\sum_{e\in E}\pi_e,
\] 
so that, from Proposition \ref{ind:proj} and Example \ref{bern:trial},
\[
\mathscr E_N\sim 
\mathsf{Bin}
\left(p;
\binom{N}{2}
\right)
\Longrightarrow
\mathbb E(\mathscr E_N)= \binom{N}{2}p.
\]
It follows from (\ref{two:t:chern})
that
\[
P\left(\left|\mathscr E_N-\mathbb E(\mathscr E_N)\right|< \varepsilon\mathbb E(\mathscr E_N)\right)		\geq 1- 2e^{-\frac{\varepsilon^2}{3}
	\mathbb E(\mathscr E_N)},
\quad 0<\varepsilon<1.
\]
Thus, as $N\to +\infty$,
\[
\frac{\mathscr E_N}{\mathbb E(\mathscr E_N)}
{\to} 1\quad \textrm{in probability}
\]
so that 
the total number of edges {\em asymptotically} approaches its expected value. 
More generally, if we assume that $p=p_N$ (that is, the biased coin possibly changes with $N$) then the same conclusion holds as long as 
\begin{equation}\label{reg:exp:inf}
	\mathbb E(\mathscr E_N)=
	\binom{N}{2}
	p_N\to+\infty,
\end{equation}
with the expectation now being computed with respect to 
$\mathsf{Bin}
\left(p_N;
\binom{N}{2}
\right)$.
At this point, a slightly more ambitious task would be to make sure that, with very high probability, a minimal amount of edges emerges in the regime determined by (\ref{reg:exp:inf}).  
That this is the case indeed follows from the next result, which actually shows that the {\em asymptotic} emergence of a fixed number of edges in the Erd\"os-R\'enyi
model is explicitly determined by the limiting value of $\mathbb E(\mathscr E_N)$.

\begin{proposition}\label{tot:edge:lim}
	Under the conditions above, if $m\in\mathbb N$,
	\[	
	\lim_{N\to+\infty}P\left(\mathscr E_N>m\right)=
	\left\{
	\begin{array}{ll}
		0 & \mathbb E(\mathscr E_N) \to 0\\
		1-	e^{-\lambda}\sum_{k=0}^m\frac{\lambda^k}{k!} &  \mathbb E(\mathscr E_N) \to \lambda\in\mathbb R_+\\
		1 & \mathbb E(\mathscr E_N) \to +\infty
	\end{array}
	\right.
	\]	
\end{proposition}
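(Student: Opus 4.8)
The plan is to split into the three regimes and exploit that $\mathscr E_N\sim\mathsf{Bin}\bigl(p_N;n_N\bigr)$ with $n_N:=\binom{N}{2}\to+\infty$, writing $\lambda_N:=\mathbb E(\mathscr E_N)=n_Np_N$ throughout. When $\lambda_N\to 0$ I would simply invoke Markov's inequality (\ref{markov:ineq:2}): since $\mathscr E_N$ is integer-valued and nonnegative,
\[
P(\mathscr E_N>m)=P(\mathscr E_N\geq m+1)\leq \frac{\lambda_N}{m+1}\longrightarrow 0 .
\]
When $\lambda_N\to+\infty$ I would recycle the concentration already recorded above, namely $\mathscr E_N/\lambda_N\to 1$ in probability under (\ref{reg:exp:inf}); quantitatively, taking $\varepsilon=1/2$ in (\ref{two:t:chern}), once $\lambda_N>2m$ the inclusion $\{\mathscr E_N\leq m\}\subset\{|\mathscr E_N-\lambda_N|\geq \lambda_N/2\}$ yields $P(\mathscr E_N\leq m)\leq 2e^{-\lambda_N/12}\to 0$, hence $P(\mathscr E_N>m)\to 1$.

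The substance of the proposition lies in the middle regime $\lambda_N\to\lambda\in\mathbb R_+$, which I would handle by a direct Poisson-limit computation---precisely the ``simple manifestation'' of the binomial--Poisson connection promised earlier. For each fixed $k$ I would factor the binomial mass function as
\[
P(\mathscr E_N=k)=\underbrace{\frac{n_N(n_N-1)\cdots(n_N-k+1)}{n_N^{\,k}}}_{\to\,1}\cdot\frac{\lambda_N^{\,k}}{k!}\cdot\Bigl(1-\tfrac{\lambda_N}{n_N}\Bigr)^{n_N}\cdot\Bigl(1-\tfrac{\lambda_N}{n_N}\Bigr)^{-k},
\]
and read off, factor by factor, that $P(\mathscr E_N=k)\to e^{-\lambda}\lambda^k/k!$. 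Summing over the finitely many indices $k=0,\dots,m$ and passing to the complementary event then produces the middle line $1-e^{-\lambda}\sum_{k=0}^m\lambda^k/k!$, which is exactly $P(Y>m)$ for $Y\sim\mathsf{Pois}(\lambda)$; see Example \ref{poisson:trials}.

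The one point requiring genuine care---the main obstacle---is the third factor $\bigl(1-\lambda_N/n_N\bigr)^{n_N}$, where $\lambda_N$ drifts with $N$ rather than being held constant. I would verify $\bigl(1-\lambda_N/n_N\bigr)^{n_N}\to e^{-\lambda}$ by taking logarithms and using $\ln\bigl(1-\lambda_N/n_N\bigr)=-\lambda_N/n_N+O(n_N^{-2})$, so that $n_N\ln\bigl(1-\lambda_N/n_N\bigr)=-\lambda_N+O(n_N^{-1})\to-\lambda$. The remaining factors are elementary limits of a finite product, and interchanging the limit with the finite sum over $k$ is unproblematic, so no further subtlety arises.
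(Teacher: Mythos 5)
Your proof is correct, and it takes a genuinely different route from the paper's in the central regime while being more complete in the extreme ones. For $\mathbb E(\mathscr E_N)\to\lambda\in\mathbb R_+$, the paper simply invokes the Law of Rare Events (Theorem \ref{law:rare}), whose proof there runs through characteristic functions and L\'evy's convergence theorem (Theorem \ref{levy:conv}); you instead establish the Poisson limit by the classical elementary factorization of the binomial mass function and pointwise convergence of $P(\mathscr E_N=k)$, with the only delicate factor $(1-\lambda_N/n_N)^{n_N}\to e^{-\lambda}$ handled correctly by the logarithmic expansion (the drift of $\lambda_N$ with $N$ is indeed the point to watch, and your $n_N\ln(1-\lambda_N/n_N)=-\lambda_N+O(n_N^{-1})$ settles it). The paper's route is shorter given that Theorem \ref{law:rare} is already available and yields convergence in distribution wholesale; yours is self-contained, avoids Fourier analysis entirely, and makes transparent exactly where $\lambda_N\to\lambda$ is used. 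For the two extreme regimes the paper only remarks that they ``at least formally, follow from'' the middle case, whereas you give genuine arguments: Markov's inequality (\ref{markov:ineq:2}) for $\lambda_N\to 0$ and the Chernoff bound (\ref{two:t:chern}) with $\varepsilon=1/2$ together with the inclusion $\{\mathscr E_N\leq m\}\subset\{|\mathscr E_N-\lambda_N|\geq\lambda_N/2\}$ for $\lambda_N\to+\infty$; both are correct and arguably what the paper's ``formally'' was gesturing at without supplying. Your version is the more rigorous of the two.
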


\begin{proof}
	We only prove the convergence in the middle since the remaining items, at least formally, follow from this case. By the Law of Rare Events (Theorem \ref{law:rare} below) there exists a Poisson variable $Z\sim\mathsf {Pois}(\lambda)$ such that $\mathscr E_N\stackrel{d}{\to} Z$ as $n\to+\infty$. Hence, 
	\begin{eqnarray*}
		\lim_{N\to+\infty}P\left(\mathscr E_N>m\right)
		& = & P\left(Z>m\right)\\
		& = & 1- P\left(Z\leq m\right)\\
		& = & 1-e^{-\lambda}
		\sum_{k=0}^m\frac{\lambda^k}{k!},
	\end{eqnarray*} 
	as claimed. 
\end{proof}

We now turn to the incidence properties of $\mathbb G(N;p)$. For each vertex $i\in [N]$ consider the random variable 
\[
d_{i}=\sum_{j;j\neq i}\pi_{[ij]}. 
\]
Clearly, for each graph $\omega\in\Omega$, $d_{i}(\omega)$ measures the number of edges of $\omega$ having $i$ as a vertex. We call $d_{i}$ the {\em degree}.  

\begin{proposition}\label{degree:erdos}
	For each $i$, $d_{i}\sim \mathsf{Bin}(p;N-1)$. In particular, $d:=\mathbb E(d_{i})=(N-1)p$. 
\end{proposition}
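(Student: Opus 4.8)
The plan is to exhibit $d_i$ as a sum of exactly $N-1$ independent Bernoulli variables and then invoke the computation already carried out in Example \ref{bern:trial}. First I would observe that, as $j$ ranges over $[N]\setminus\{i\}$, the unordered pairs $[ij]$ run through precisely the $N-1$ distinct potential edges incident to the vertex $i$; in particular each such edge occurs exactly once in the defining sum $d_i=\sum_{j\neq i}\pi_{[ij]}$, so that $d_i$ is a sum of $N-1$ terms drawn from the projection family $\{\pi_e\}_{e\in E}$.

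Next I would appeal to Proposition \ref{ind:proj}, which guarantees that each $\pi_{[ij]}\sim\mathsf{Ber}(p)$ and that the whole family $\{\pi_e\}_{e\in E}$ is independent. Since any subfamily of an independent family is again independent (immediate from Definition \ref{def:ind:levels}, as the product rule for a finite subcollection is a special case of the product rule for the larger family), the finite collection $\{\pi_{[ij]}\}_{j\neq i}$ is independent as well. Thus $d_i$ is a sum of $N-1$ independent $\mathsf{Ber}(p)$ variables.

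To conclude the distributional claim, I would apply the identification established in Example \ref{bern:trial}: there it is shown, via the characteristic function $\phi_X(u)=(1-p+pe^{{\bf i}u})^n$ together with Proposition \ref{charac:p} (1) and the inversion formula in Proposition \ref{four:inv} (1), that a sum of $n$ independent $\mathsf{Ber}(p)$ variables follows $\mathsf{Bin}(p;n)$. Taking $n=N-1$ yields $d_i\sim\mathsf{Bin}(p;N-1)$. The expectation then follows either from the known mean of a binomial or, more directly, from linearity of expectation: $\mathbb E(d_i)=\sum_{j\neq i}\mathbb E(\pi_{[ij]})=(N-1)p$.

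There is essentially no serious obstacle here; the only points requiring (minimal) care are the bookkeeping that the $N-1$ edges $[ij]$, $j\neq i$, are pairwise distinct, and the elementary observation that independence passes to subfamilies. Everything substantive has already been packaged into Proposition \ref{ind:proj} and Example \ref{bern:trial}, so the argument is a matter of assembling these two ingredients.
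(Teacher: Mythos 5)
Your argument is correct and follows exactly the paper's route: the paper's proof is the one-line observation that the claim is immediate from Proposition \ref{ind:proj} and Example \ref{bern:trial}, which is precisely the assembly you carry out. Your added bookkeeping (distinctness of the $N-1$ incident edges, independence passing to subfamilies) is a harmless elaboration of the same idea.
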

\begin{proof}
	Immediate from Proposition \ref{ind:proj} and Example \ref{bern:trial}.
\end{proof}

Recall that a random graph is {\em almost regular} if the degree of each vertex equals its expected value with very high probability. The next result identifies the threshold on the degree function beyond which almost regularity holds in the Erd\"os-R\'enyi model.   

\begin{proposition}\label{erdos:regul}
	For any $\varepsilon,\delta\in (0,1)$ there exists $C=C_{\varepsilon,\delta}>0$ such that $d\geq C\ln N$ 
	implies
	\[
	P\left( |d_i-d|\leq \varepsilon d \,{\rm for}\,{\rm all}\, i\right)		\geq  1-\delta.
	\]
\end{proposition}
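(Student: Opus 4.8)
The plan is to apply the two-sided Chernoff bound (\ref{two:t:chern}) to each degree $d_i$ and then control the probability of a simultaneous deviation over all $N$ vertices by a union bound. By Proposition \ref{degree:erdos} each $d_i\sim\mathsf{Bin}(p;N-1)$ with $\mathbb E(d_i)=d=(N-1)p$, so (\ref{two:t:chern}) applies directly with $\lambda=d$ and gives, for each fixed $i\in[N]$,
\[
P\left(|d_i-d|\geq \varepsilon d\right)\leq 2e^{-\frac{\varepsilon^2}{3}d}, \quad 0<\varepsilon<1.
\]

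First I would pass from this individual estimate to the uniform one. The complementary event $\{|d_i-d|>\varepsilon d \text{ for some } i\}$ is the union over $i$ of the individual bad events, so subadditivity of $P$ together with the bound above yields
\[
P\left(|d_i-d|> \varepsilon d \text{ for some } i\right)\leq \sum_{i=1}^{N}P\left(|d_i-d|\geq \varepsilon d\right)\leq 2N e^{-\frac{\varepsilon^2}{3}d}.
\]
Passing to complements, the event in the statement has probability at least $1-2N e^{-\frac{\varepsilon^2}{3}d}$.

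Next I would choose $C$ so that the error term is at most $\delta$. Requiring $2N e^{-\frac{\varepsilon^2}{3}d}\leq \delta$ is, after taking logarithms, equivalent to
\[
\frac{\varepsilon^2}{3}\,d\geq \ln N+\ln(2/\delta).
\]
Substituting the hypothesis $d\geq C\ln N$, it suffices to pick $C$ so large that $C\ln N\geq \frac{3}{\varepsilon^2}\bigl(\ln N+\ln(2/\delta)\bigr)$ holds for every $N\geq 2$; since $\ln N\geq \ln 2$ there, the right-hand side of the resulting inequality for $C$ is largest at $N=2$, so the choice
\[
C=C_{\varepsilon,\delta}:=\frac{3}{\varepsilon^2}\left(1+\frac{\ln(2/\delta)}{\ln 2}\right)
\]
does the job and completes the argument.

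The argument is essentially mechanical, and there is no genuine analytic obstacle; the only point requiring a little care is that $C$ must be taken independent of $N$, which forces one to absorb the constant $\ln(2/\delta)$ produced by the union bound into the $\ln N$ term. This absorption is exactly what pins down the $\delta$-dependence of $C$ and explains why the near-regularity threshold sits at $d\asymp \ln N$ rather than at a constant.
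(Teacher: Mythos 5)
Your proof is correct and follows essentially the same route as the paper: apply the Chernoff bound (\ref{two:t:chern}) to each $d_i\sim\mathsf{Bin}(p;N-1)$, take a union bound over the $N$ vertices, and absorb the $\ln(2/\delta)$ term into $\ln N$ using $\ln N\geq\ln 2$. Your constant $C_{\varepsilon,\delta}=\frac{3}{\varepsilon^2}\bigl(1+\frac{\ln(2/\delta)}{\ln 2}\bigr)=\frac{3}{\varepsilon^2}\frac{\ln(4/\delta)}{\ln 2}$ is in fact identical to the paper's $3M_\delta/\varepsilon^2$.
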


\begin{proof}
	For each $i\in[N]$ we have from Proposition \ref{degree:erdos} and (\ref{two:t:chern}) that
	\[
	P\left(|d_i-d|>\varepsilon d\right)		\leq  2e^{-\frac{\varepsilon^2}{3}d},
	\]
	so that
	\[
	P\left(|d_i-d|>\varepsilon d \,{\rm for}\,{\rm some}\, i\right)		\leq  2Ne^{-\frac{\varepsilon^2}{3}d},
	\]
	and hence
	\[
	P\left(|d_i-d|\leq \varepsilon d \,{\rm for}\,{\rm all}\, i\right)		\geq  1-2Ne^{-\frac{\varepsilon^2}{3}d}.
	\]
	Thus, we must find $C$ such that
	\[
	2Ne^{-\frac{\varepsilon^2}{3}C\ln N}\leq\delta,
	\]
	or equivalently, 
	\[
	C\geq \frac{3}{\varepsilon^2}h_\delta(N), \quad h_\delta(N)=\frac{\ln(2/\delta)+\ln N}{\ln N}. 
	\]
	Now, as $N$ varies $h_\delta(N)$ is uniformly bounded by  $M_\delta=\ln(4/\delta)/\ln 2$, so it suffices to take 
	$C\geq 3M_\delta/\varepsilon^2$.
\end{proof}

\begin{remark}\!\!$\bigstar$\label{er:persp}
The almost regularity in Proposition \ref{erdos:regul} implies a sort of homogeneous behavior of the random graph around each of its vertices\footnote{Incidentally, this homogeneity confirms that the Erd\"os-R\'enyi random graph fails to reliably model  real-world complex networks, where a sizable amount of variability of the incidence pattern of the vertices is observed.}. In particular, the event that no vertex is isolated occurs with high probability. Now, it turns out that in the regime where $p_N\approx \ln N/N$ with $N\to +\infty$, this event is essentially equiprobable to the event defining connectedness of a random graph, which suggests that  
$\ln N/N$ should be a sharp threshold for the {\em asymptotic} occurrence of this topological property. Indeed, arguing along these lines it may be shown that if $p_N=c_N\ln N/N$ 
and 
\[
K:=\lim_{N\to+\infty}(c_{N}-1)\ln N=\lim_{N\to+\infty}(Np_N-\ln N)
\]
exists as an extended real number then
\[
\lim_{N\to+\infty}P\left(\{\omega\in \mathbb G(N;p_N):\omega\,{\rm is}\,{\rm connected}\}\right)=e^{-e^{-K}}.
\]
In particular,  
\[
\lim_{N\to+\infty}P\left(\{\omega\in \mathbb G(N;p_N):\omega\,{\rm is}\,{\rm connected}\}\right)=
\left\{
\begin{array}{ll}
	0& c_N\to c<1\\
	1 & c_N\to c>1
\end{array}
\right.
\] 
For full discussions on this and similar ``phase transition'' phenomena exhibiting a sharp threshold in the Erd\"os-R\'enyi model, see \cite{janson2011random,frieze2016introduction,van2024random}.\qed
\end{remark}

\section{The fundamental limit theorems}\label{fund:lim}

We now present two asymptotic results that play a central role in the theory. It should be emphasized, however, that in contrast with the concentration estimates of Section \ref{conc:ineq:appl}, whose strength lies precisely in their \emph{non-asymptotic} nature, the applicability of limit theorems becomes reliable only in the asymptotic regime, that is, when the number of random variables in the random sample grows without bound; see Remark \ref{miscon}. The proofs outlined below rely on a particular instance of a profound convergence theorem due to L\'evy \cite[Theorem 18.1]{williams1991probability}. The version adopted here provides the appropriate converse to Remark \ref{dist:point:dist} and may be treated by means of elementary Fourier Analysis.

\begin{theorem}\label{levy:conv} (L\'evy's convergence)
	Let $\{Z_j\}_{j=1}^\infty$  be a sequence of random variables such that $\phi_{Z_j}$ converges pointwise to $\phi_Z$, where $Z$ is another random variable. Then $Z_j\to Z$ in distribution. 
\end{theorem}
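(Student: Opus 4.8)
The plan is to recover convergence in distribution from pointwise convergence of characteristic functions by combining a \emph{tightness} argument with the uniqueness of characteristic functions. The decisive feature of the hypothesis is that $\phi_Z$ is the characteristic function of a genuine random variable $Z$, hence is continuous at the origin with $\phi_Z(0)=1$; this continuity is exactly what prevents probability mass from leaking to infinity along the sequence. Once tightness is secured, the identification of every subsequential limit with $P_Z$ is immediate from the material already developed (Remark \ref{dist:point:dist} together with the fact that $\phi$ determines the law).

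First I would establish the elementary truncation estimate: for any random variable $W$ and any $\delta>0$,
\[
P\left(|W|\geq \tfrac{2}{\delta}\right)\leq \frac{1}{\delta}\int_{-\delta}^{\delta}\bigl(1-\phi_W(u)\bigr)\,du.
\]
This comes from the identity (justified by Fubini, the integral being real by the symmetry $\phi_W(-u)=\overline{\phi_W(u)}$)
\[
\frac{1}{2\delta}\int_{-\delta}^{\delta}\bigl(1-\phi_W(u)\bigr)\,du=\int_{\mathbb R}\Bigl(1-\tfrac{\sin(\delta x)}{\delta x}\Bigr)\,dP_W(x),
\]
combined with the bound $1-\frac{\sin t}{t}\geq \frac12$ for $|t|\geq 2$. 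Applying this with $W=Z_j$, I would fix $\varepsilon>0$, use the continuity of $\phi_Z$ at $0$ to pick $\delta$ with $\frac1\delta\int_{-\delta}^{\delta}(1-\phi_Z(u))\,du<\varepsilon$, and then invoke bounded convergence on the finite interval $[-\delta,\delta]$ (the integrands satisfy $|1-\phi_{Z_j}|\leq 2$) together with $\phi_{Z_j}\to\phi_Z$ to control all but finitely many $j$. The finitely many remaining indices are handled individually, since each single $Z_j$ has vanishing tails. This produces a radius $R_\varepsilon$ with $\sup_j P(|Z_j|\geq R_\varepsilon)<\varepsilon$, i.e. the family $\{P_{Z_j}\}$ is tight.

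With tightness in hand, Helly's selection theorem guarantees that every subsequence of $\{P_{Z_j}\}$ has a further subsequence $\{P_{Z_{j_k}}\}$ converging in distribution to the law of some random variable $Z'$. By Remark \ref{dist:point:dist} we then have $\phi_{Z_{j_k}}\to\phi_{Z'}$ pointwise, while the hypothesis gives $\phi_{Z_{j_k}}\to\phi_Z$; hence $\phi_{Z'}=\phi_Z$. Since a characteristic function determines its distribution (the uniqueness recalled after Definition \ref{funcchar} and realized by the inversion formulas of Proposition \ref{four:inv}), it follows that $P_{Z'}=P_Z$. Thus every subsequence admits a further subsequence converging in distribution to $Z$, and the standard subsequence principle forces $Z_j\stackrel{d}{\to}Z$ for the whole sequence.

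The main obstacle is the tightness step. Pointwise convergence of characteristic functions alone does not imply convergence in distribution, and it is precisely here that the assumption "$\phi_Z$ is the characteristic function of an honest random variable" — equivalently, its continuity at $0$ with value $1$ — becomes indispensable. The truncation inequality itself is elementary, but its uniform-in-$j$ deployment, splitting the estimate into a tail contribution governed by the continuity of $\phi_Z$ and a passage to the limit under the integral sign, is the genuine technical heart of the proof.
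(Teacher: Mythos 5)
Your proof is correct, but it follows a genuinely different route from the one in the paper. The paper reduces to Schwartz test functions $\xi$, writes $\mathbb E(\xi(Z_j))=\int\widehat\xi(u)\phi_{Z_j}(u)\,du$ via Fourier inversion and Fubini, and passes to the limit by dominated convergence (using $|\phi_{Z_j}|\le 1$ and the integrability of $\widehat\xi$); it never mentions tightness. You instead prove tightness directly from the truncation inequality $P(|W|\ge 2/\delta)\le\frac1\delta\int_{-\delta}^{\delta}(1-\phi_W(u))\,du$, then combine Helly selection with the uniqueness theorem for characteristic functions and the subsequence principle. Both arguments are sound, and your computation of the truncation bound (the identity via $\frac{1}{2\delta}\int_{-\delta}^\delta e^{\mathbf iux}du=\frac{\sin(\delta x)}{\delta x}$ and the estimate $1-\frac{\sin t}{t}\ge\frac12$ for $|t|\ge 2$) is exactly right. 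What your route buys is that it isolates precisely where the hypothesis "$\phi_Z$ is the characteristic function of an honest random variable" enters — namely through continuity at the origin, which forbids mass from escaping to infinity — and it is the argument that generalizes to the full Lévy continuity theorem, where one assumes only that the pointwise limit is continuous at $0$. Indeed, the paper's reduction "by a simple approximation we may assume $\xi$ is Schwartz" implicitly requires exactly the kind of uniform tail control you establish, so your proof is arguably the more complete one on that point; the price is that you must invoke Helly's selection theorem and the general uniqueness of characteristic functions, which the paper only states with a reference rather than proves in full.
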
 

\begin{proof}
	By a simple approximation we may assume that $\xi$ in Definition \ref{conv:dist:def} is an arbitrary Schwartz function. 
	Hence, 
	\begin{eqnarray*}
		\mathbb E(\xi(Z_j))
		& = & \int_{-\infty}^{+\infty}\xi(z_j)dP_{Z_j}(z_j)\\
		& = & \int_{-\infty}^{+\infty}\left(\int_{-\infty}^{+\infty}\widehat\xi(u)e^{{\bf i}z_ju}du\right)dP_{Z_j}(z_j),
	\end{eqnarray*}
	where we used Fourier inversion in order to recover $\xi$ from its Fourier transform 
	$\widehat\xi$, which is Schwarz as well and hence uniformly bounded.
	Using Fubini and dominated convergence we get   
	\begin{eqnarray*}
		\mathbb E(\xi(Z_j))
		& = & \int_{-\infty}^{+\infty}\widehat\xi(u)\left(\int_{-\infty}^{+\infty} e^{{\bf i}z_ju}dP_{Z_j}(z_j)\right) du\\
		& = &  \int_{-\infty}^{+\infty}\widehat\xi(u)\phi_{Z_j}(u)du\\
		& \stackrel{j\to+\infty}{\longrightarrow}&  \int_{-\infty}^{+\infty}\widehat\xi(u)\phi_{Z}(u)du\\
		& \vdots & (\textrm{the same computation as above in reverse order})\\
		& = &  \mathbb E(\xi(Z)),
	\end{eqnarray*}
	and the result follows. 
\end{proof}

We may now present the first fundamental limit theorem. 

\begin{theorem}\label{lln}
	(Law of large numbers, LLN) If $\{X_j\}_{j\geq 1}$ is a sequence of i.i.d. (that is, independent and identically distributed) real random variables with $\mathbb E(X_j)=\mu$ then the sequence of random variables 
	\[
	\overline X_n:=\frac{1}{n}(X_1+\cdots+X_n)
	\]
	converges in probability to $\mu$ as $n\to +\infty$.
\end{theorem}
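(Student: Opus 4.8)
The plan is to prove the weak law via characteristic functions, exploiting the machinery assembled in Subsection~\ref{char:fct:b} together with L\'evy's convergence theorem (Theorem~\ref{levy:conv}). The strategy is to show that the characteristic functions $\phi_{\overline X_n}$ converge pointwise to the characteristic function of the \emph{constant} random variable $\mu$, deduce convergence in distribution, and then upgrade this to convergence in probability using the fact that the limit is degenerate.

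First I would compute $\phi_{\overline X_n}$. Writing $\overline X_n = \tfrac{1}{n}(X_1+\cdots+X_n)$, the scaling property in Proposition~\ref{charac:p:fol}~(1) gives $\phi_{\overline X_n}(u) = \phi_{X_1+\cdots+X_n}(u/n)$, and the independence together with the iterated form of Proposition~\ref{charac:p}~(1) yields $\phi_{X_1+\cdots+X_n} = \prod_{j=1}^n \phi_{X_j}$. Since the $X_j$ are identically distributed with common characteristic function $\phi_X$, this collapses to
\[
\phi_{\overline X_n}(u) = \left(\phi_X\!\left(\tfrac{u}{n}\right)\right)^{\!n}, \qquad u \in \mathbb R.
\]
Next, because $\mu = \mathbb E(X)$ is finite, Proposition~\ref{charac:p:fol}~(2) furnishes the first-order expansion $\phi_X(u) = 1 + {\bf i}\mu u + o(|u|)$ as $u\to 0$. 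Substituting $u/n$ for $u$ and holding $u$ fixed, I obtain $\phi_X(u/n) = 1 + {\bf i}\mu u/n + \varepsilon_n$ with $n\varepsilon_n \to 0$ as $n\to+\infty$.

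The core analytic step is then to pass to the limit in $\left(1 + {\bf i}\mu u/n + \varepsilon_n\right)^n$. Setting $a_n = {\bf i}\mu u + n\varepsilon_n$, so that $a_n \to {\bf i}\mu u$, this is $\left(1 + a_n/n\right)^n$, and the standard fact that $a_n \to a$ implies $(1+a_n/n)^n \to e^{a}$ (verified cleanly by taking the principal logarithm and using $\log(1+z) = z + O(z^2)$) gives $\phi_{\overline X_n}(u) \to e^{{\bf i}\mu u}$ for every $u$. This is precisely the characteristic function of the constant $\mu$, regarded as a degenerate random variable. This limit computation is the step I expect to require the most care, since one must confirm that the $o(|u|)$ remainder, after rescaling and raising to the $n$-th power, does not contribute to the limit; the control $n\varepsilon_n \to 0$ is exactly what makes this work.

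Finally, L\'evy's convergence theorem (Theorem~\ref{levy:conv}) converts pointwise convergence of characteristic functions into convergence in distribution, so $\overline X_n \stackrel{d}{\to} \mu$. Because the limiting variable is constant, Proposition~\ref{modes} (the implication $(\stackrel{d}{\to}) \Rightarrow (\stackrel{p}{\to})$ when the limit is constant) immediately upgrades this to $\overline X_n \stackrel{p}{\to} \mu$, which is the assertion of the theorem. I note that this Fourier-analytic route has the advantage of requiring only the finiteness of the first moment, with no hypothesis on the variance, in contrast to the elementary Chebyshev-based argument.
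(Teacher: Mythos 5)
Your proposal is correct and follows essentially the same route as the paper's proof: compute $\phi_{\overline X_n}(u)=\bigl(\phi_X(u/n)\bigr)^n$ via Propositions~\ref{charac:p:fol} and~\ref{charac:p}, expand to first order using finiteness of $\mu$, pass to the limit $e^{{\bf i}\mu u}$, invoke Theorem~\ref{levy:conv}, and upgrade to convergence in probability via Proposition~\ref{modes} since the limit is constant. Your explicit justification of the step $(1+a_n/n)^n\to e^a$ with $a_n\to a$ is a welcome refinement of the paper's more terse handling of the $o(|u|/n)$ remainder.
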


\begin{proof}
	By Proposition \ref{modes}, it suffices to prove that $\overline X_n\to \mu$ in distribution. 
	By Propositions \ref{charac:p:fol} and \ref{charac:p}, if $|u|/n$ is small,   
	\[
	\phi_{\overline X_n}(u)  =  \Pi_{j=1}^n\phi_{X_j}(u/n)
	=  \left[1+\mu\frac{u}{n}{\bf i}+o\left(\frac{|u|}{n}\right)\right]^n, \quad n\to +\infty,
	\]
	so that, for {\em any} $u\in\mathbb R$,
	\[
	\lim_{n\to+\infty}\phi_{\overline X_n}(u)=  e^{u\mu{\bf i}}
	=  \phi_{\mu}(u),
	\]
	where $\phi_\mu$ is the characteristic function of the random variable identically constant to $\mu$.
	The result now follows from Theorem \ref{levy:conv}.
\end{proof}

\begin{remark}\label{lln:append}
	We append three complements to this result:
	\begin{enumerate}
		\item 
		If we further assume that $\mathbb E(|X_j|^2)<+\infty$ then 
		it also follows from the
		argument based on (\ref{lln:new}) below, which relies on Chebyshev's inequality and hence provides a quite effective (i.e. {\em non}-asymptotic) estimate;
		\item 
		For obvious reasons, Theorem \ref{lln} is usually referred as the {\em weak} LLN. With some more effort we may show that the convergence holds in a rather {strong} sense: $\overline X_n\stackrel{a.s}{\to}\mu$.  This latter result is usually known as {\em Kolmogorov's LLN}; see  \cite[Section 1.4]{krengel2011ergodic}, where it is shown how it follows from Birkhoff's ergodic theorem discussed in Example \ref{birk}); 
	\item 
	(Almost sure convergence in LNN via concentration and Borel-Cantelli)
	Let $\{X_j\}_{j\ge1}$ be as in Theorem \ref{lln}, and assume that they are {\em uniformly bounded}, say $a\le X_j\le b$ almost surely. 
	It follows from Proposition \ref{subg:bounded} and the corresponding Hoeffding-type concentration inequality in (\ref{rade:g}) that there exists $C=C_{a,b}>0$ such that, 
	for every $\varepsilon>0$,
	\[
	P\big(|\overline X_n-\mu|\ge \varepsilon\big)
	\le 2
	e^{-Cn\varepsilon^2}.
	\]
	In particular, the right-hand side is summable in $n$, so that
	\[
	\sum_{n=1}^\infty P\big(|\overline X_n-\mu|\ge \varepsilon\big)<\infty.
	\]
	By the Borel--Cantelli lemma \cite[Theorem 2.3.1]{durrett2019probability}, we see that,
	with full probability, only finitely many of the events $\{|\overline X_n-\mu|\ge \varepsilon\}$ occur. Equivalently, for each $\varepsilon>0$ we eventually have $|\overline X_n-\mu|<\varepsilon$.
	Applying this argument to the sequence $\varepsilon_m=1/m$, $m\in\mathbb N$, and taking a countable intersection, we conclude that
	$
	\overline X_n\stackrel{a.s.}{\longrightarrow} \mu
	$,
	thus establishing this version of LLN
in the bounded setting.
 \qed
\end{enumerate}
\end{remark}

\begin{remark}\label{tk:to:norm}
	The limiting behavior of the Student's $\mathfrak t$-distribution $\mathfrak t_k$ in Definition \ref{tstu:def} as the number of degrees of freedom $k$ grows indefinitely may be determined if one makes use of the Stirling asymptotics for the gamma function: 
	\begin{equation}\label{stir:gamma}
		\Gamma(k)\approx_{k\to+\infty} \sqrt{2\pi}k^{k-\frac{1}{2}}{e}^{-k};
	\end{equation}
	see Remark \ref{dem:lap} below for a probabilistic proof of this result.
	Using this, a little computation starting with (\ref{tsts:def:2}) then shows that
	\[
	\lim_{k\to +\infty}\mathfrak t_k(x)=\frac{1}{\sqrt{2\pi}} e^{-x^2/2}, \quad x\in\mathbb R,
	\]
	so that 
	\begin{equation}\label{tk:norm}
		\mathfrak t_k\stackrel{d}{\to}\mathcal N(0,1)
	\end{equation}
	by Scheff\'e's lemma \cite{scheffe1947useful}.
	We point out that this may also be justified with a simple application of Theorem \ref{lln}. Indeed, 
	by Remark \ref{many:ind} we may pick an independent sample $Z, X_1,\cdots,X_k\sim\mathcal N(0,1)$, so that $X_j^2\sim \chi^2_1$ by Proposition \ref{sum:norm:sq}. By LLN, as $k\to +\infty$ we have that $W_k:=\sum_{j=1}^k X_j^2$ satisfies 
	\[
	\sqrt{\frac{W_k}{k}}\to \sqrt{\mathbb E(\chi^2_1)}=1
	\]    
	in probability. Hence, $Z/\sqrt{W_k/k}\to \mathcal N(0,1)$ in distribution so that (\ref{tk:norm}) may be verified using that $Z/\sqrt{W_k/k}$ is $\mathfrak t_k$-distributed by Proposition \ref{norm:chi:stu}. As another instance of this kind of argument, let us check that if $X\sim{\bm{\textsf F}}_{k_1,k_2}$ then 
	\[
	k_1X\stackrel{d}{\to} \chi^2_{k_1} \quad {\rm as}\,\,k_2\to +\infty.
	\]  
	Indeed, from Proposition \ref{chi:to:F} we may write
	\[
	X=\frac{W_1/k_1}{W_2/k_2},
	\]
	where $W_1\perp\!\!\perp W_2$ and $W_j\sim\chi^2_{k_j}$, $j=1,2$. Thus,
	\[
	k_1X=\frac{W_1}{W_2/k_2}
	\]
	and since $W_2/k_2\stackrel{p}{\to}1$ the claim follows. 
	\qed
\end{remark}

The next result provides an accurate asymptotic description of the distribution of a rescaled version of $\overline X_n$ and highlights the pervasive role of the normal distribution in Probability Theory. The underlying rationale can be sketched as follows. From Theorem \ref{lln}, one suspects the existence of a (possibly monotone) function $\nu:\mathbb N\to\mathbb R$ with $\nu(n)\to+\infty$ as $n\to\infty$, such that, informally,
\[
\overline X_n \approx \mu + O(\nu(n)^{-1}). 
\]
In this case,
\[
\nu(n)\left(\overline X_n-\mu\right) \approx O(1),
\]
which suggests that the rescaled deviation might converge to a finite distribution. To identify $\nu$, assume temporarily that the sample is normally distributed, $X_j\sim \mathcal N(\mu,\sigma^2)$. Proposition \ref{norm:spce} then shows that $\sqrt{n}(\overline X_n-\mu)\sim \mathcal N(0,\sigma^2)$, indicating that $\nu(n)=\sqrt{n}$.
The remarkable feature of the forthcoming result is that this relation, which holds exactly for normal samples, in fact extends asymptotically to arbitrary distributions (with finite variance). In line with Theorem \ref{lln}, one obtains convergence in distribution to a normal law, regardless of the original distribution of the sample.

\begin{theorem}\label{clt}
	(Central Limit Theorem, CLT) Let $\{X_j\}_{j\geq 1}$ be a sequence of i.i.d. real random variables with $\mathbb E(X_j)=\mu$ and  ${\mathbb V}(X_j)=\sigma^2>0$. Then 
	the sequence formed by the standardization of the sample mean,
	\begin{equation}\label{stand:samp}
		Z_n:=\frac{\sum_{j=1}^nX_j-n\mu}{\sqrt{n}\sigma}=\frac{\overline X_n-\mu}{\sigma/\sqrt{n}}, 
	\end{equation}
	converges in distribution to a random variable whose pdf is the standard normal distribution $\mathcal N(0,1)$. Equivalently, $\sqrt{n}(\overline X_n-\mu)\stackrel{d}{\to} \mathcal N(0,\sigma^2)$. 
\end{theorem}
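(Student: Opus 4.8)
The plan is to follow the Fourier-analytic route already used for the law of large numbers, exploiting L\'evy's convergence theorem (Theorem \ref{levy:conv}) to reduce the claim to the pointwise convergence of characteristic functions. First I would normalize: setting $Y_j=(X_j-\mu)/\sigma$, one has $\mathbb E(Y_j)=0$ and $\mathbb E(Y_j^2)={\rm var}(Y_j)=1$, and the $Y_j$ remain i.i.d. In this notation the standardized sum becomes $Z_n=n^{-1/2}\sum_{j=1}^n Y_j$, so the whole problem is recast in terms of a single common characteristic function $\phi_Y:=\phi_{Y_1}$.

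Next I would compute $\phi_{Z_n}$. Writing $Z_n=\sum_{j=1}^n (Y_j/\sqrt n)$, independence (Proposition \ref{charac:p}(1)) lets the characteristic function of the sum factorize, while the scaling rule (Proposition \ref{charac:p:fol}(1)) gives $\phi_{Y_j/\sqrt n}(u)=\phi_Y(u/\sqrt n)$; since the $Y_j$ are identically distributed this yields
\[
\phi_{Z_n}(u)=\left[\phi_Y\!\left(\frac{u}{\sqrt n}\right)\right]^n.
\]
The key local information is the second-order expansion in Proposition \ref{charac:p:fol}(2): because $\mathbb E(Y)=0$ and $\mathbb E(Y^2)=1$, one has $\phi_Y(t)=1-\tfrac12 t^2+o(t^2)$ as $t\to0$. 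Substituting $t=u/\sqrt n$ yields, for each fixed $u$,
\[
\phi_{Z_n}(u)=\left[1-\frac{u^2}{2n}+o\!\left(\frac{1}{n}\right)\right]^n.
\]

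Then I would pass to the limit $n\to+\infty$ to obtain $\phi_{Z_n}(u)\to e^{-u^2/2}$, which by Proposition \ref{norm:spce}(1) (with $\mu=0$, $\sigma=1$) is exactly the characteristic function of $\mathcal N(0,1)$; L\'evy's theorem then delivers $Z_n\stackrel{d}{\to}\mathcal N(0,1)$, and the equivalent statement $\sqrt n(\overline X_n-\mu)\stackrel{d}{\to}\mathcal N(0,\sigma^2)$ follows from the scaling property of normal laws in Proposition \ref{norm:spce}(2). The main obstacle is precisely this last limit: the informal identity $(1+c/n)^n\to e^c$ must be upgraded to accommodate the $n$-dependent error term, i.e. one needs the elementary complex-analytic lemma that $w_n\to w$ in $\mathbb C$ implies $(1+w_n/n)^n\to e^w$, applied here with $w_n=-u^2/2+o(1)$. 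Making the $o(1/n)$ control rigorous — bounding $|\phi_Y(u/\sqrt n)-(1-u^2/2n)|$ and comparing $n$-th powers through the telescoping estimate $|a^n-b^n|\le n\max(|a|,|b|)^{n-1}|a-b|$ for complex $a,b$ — is where the genuine care lies, everything else being a direct assembly of results already proved.
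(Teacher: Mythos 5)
Your proposal is correct and follows essentially the same route as the paper: standardize to $Y_j=(X_j-\mu)/\sigma$, factor $\phi_{Z_n}(u)=[\phi_Y(u/\sqrt n)]^n$ via Propositions \ref{charac:p:fol} and \ref{charac:p}, invoke the second-order expansion to get $[1-u^2/2n+o(1/n)]^n\to e^{-u^2/2}$, and conclude by Theorem \ref{levy:conv} together with Corollary \ref{det:norm:dist}. Your closing remark about rigorously justifying the limit of $n$-th powers with an $n$-dependent error term is a point the paper's proof passes over silently, so flagging it is a welcome (if minor) refinement rather than a divergence.
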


\begin{proof}
	Define $Y_j=(X_j-\mu)/\sigma$ so that $\mathbb E(Y_j)=0$ and ${\mathbb V}(Y_j)=1$. Since $Z_n=\sum_jY_j/\sqrt{n}$, by Propositions \ref{charac:p:fol} and \ref{charac:p} we get, for $|u|/\sqrt{n}$ small,  
	\[
	\phi_{Z_n}(u)  =  \Pi_{j=1}^n\phi_{Y_j}\left(\frac{u}{\sqrt{n}}\right)
	=  \left[1-\frac{u^2}{2n}+o\left(\frac{|u|^2}{n}\right)\right]^n, 
	\]
	so that
	\begin{equation}\label{rep:eq:clt}
		\lim_{n\to+\infty}\phi_{Z_n}(u)=  e^{-\frac{1}{2}u^2}, \quad u\in\mathbb R.
	\end{equation}
	By Corollary \ref{det:norm:dist} and Proposition \ref{norm:spce} (1),  the right-hand side 
	is the characteristic function of a random variable $Z\sim\mathcal N(0,1)$,
	so we may apply Theorem \ref{levy:conv} to conclude the proof.
\end{proof}	

\begin{remark}\label{tao}
	An enlightening discussion of several proofs of Theorem \ref{clt}, including the one above, may be found in \cite[Chapter 2]{tao2012topics}. \qed
\end{remark}

\begin{remark}\label{miscon}
A common misconception in the practical use of Theorem \ref{clt} is to assume that the sample mean $\overline X_n$ itself converges in distribution to a normal law. An even more misleading belief, often repeated in applications, is that there exists a fixed threshold for the sample size (frequently taken as $n=30$) beyond which $\overline X_n$ becomes \emph{exactly} normally distributed. Neither of these statements is supported by the theorem. On the contrary, both contradict Theorem \ref{lln}, which establishes that $\overline X_n$ converges (almost surely, by Remark \ref{lln:append} (2)) to the constant population mean $\mu$.  
What Theorem \ref{clt} does guarantee is the \emph{approximation} of $\overline X_n$ in distribution by $\mathcal N(\mu,\sigma^2/n)$ as $n\to\infty$. Thus, for any $-\infty\leq a<b\leq+\infty$ we may, for practical purposes, write
\begin{equation}\label{miscon:p}
	P(a\leq \overline X_n\leq b)\approx_{n\to +\infty} 
	\frac{\sqrt{n}}{\sqrt{2\pi}\sigma}
	\int_a^be^{-\frac{n(x-\mu)^2}{2\sigma^2}}dx,
\end{equation}
where $\approx_{n\to +\infty}$ indicates that the equality holds only asymptotically, in the regime of very large samples. In this notation, the CLT may be expressed as
\begin{equation}\label{clt:v1}
	\overline X_n\approx_{n\to+\infty}\mathcal N(\mu,\sigma^2/n),
\end{equation}
or, equivalently,
\begin{equation}\label{clt:v2}
	X^{(n)}:=X_1+\cdots+X_n\approx_{n\to+\infty} \mathcal N(n\mu,n\sigma^2).
\end{equation}
It is important, especially in applications, to note that the rate of convergence in these approximations can be made explicit. For instance, under the assumptions of Theorem \ref{clt}, with $\mathbb E(X_j)=0$, ${\mathbb V}(X_j)=\sigma^2$ and with the additional requirement that $\rho:=\mathbb E(|X-\mu|^3)<+\infty$, the classical Berry--Esseen theorem guarantees that
\[
\sup_{x\in\mathbb R}\left|F_{\sqrt{n}\overline X_n/\sigma}(x)-\Phi(x)\right|\leq \frac{C\rho}{\sqrt{n}\sigma^3}, \quad C>0.
\]
Hence, apart from the universal constant $C$ and the dependence on sample size through $1/\sqrt{n}$, the convergence rate is governed by the \emph{shape factor} $\varepsilon:=\rho/\sigma^3$, which measures the skewness of the parent distribution.  
As for the convergence behavior under sample size, although in particular cases the convergence may be faster than $O(n^{-1/2})$, there are distributions for which this worst-case bound is sharp, even when higher-order moments are finite. A simple example is provided by the Rademacher variable discussed in Remark \ref{corr:v:ind:qq} \cite[Chapter 1]{sazonov1981normal}. \qed
\end{remark}

\begin{remark}\label{dem:lap}
	We insist that the proof presented above {\em does} cover the case in which the initial i.i.d. sequence $\{X_j\}$ is {\em discrete}, as it operates at the level of characteristic functions. In fact, this is how the CLT first appeared, incarnated in the famous {De Moivre-Laplace} formulas (\ref{dem:lap:form})-(\ref{dem:lap:form:2}) below \cite{fischer2011history}. 
	Let $\{X_j\}_{j=1}^n$ be independent with $X_j\sim \mathsf{Ber}(p)$, the {Bernoulli distribution}. 
	From Example \ref{bern:trial}, 
	we know that $X^{(n)}= X_1+\cdots+X_n\sim \mathsf{Bin}(p;n)$, the {binomial distribution}. 
	Since  $\mathbb E( X_j)=p$ and ${\mathbb V}(X_j)=p(1-p)$, CLT applies\footnote{See Remark \ref{just:bern:clt} below for a direct justification of this step along the lines of the proof of Theorem \ref{clt}.} to give
	\begin{equation}\label{bin:clt}
		Z_n=	\sqrt{n}\frac{n^{-1}X^{(n)}-p}{\sqrt{p(1-p)}}\stackrel{d}{\to}
		\mathcal N(0,1), \quad {n\to+\infty},
	\end{equation}  
	or equivalently, if we combine (\ref{clt:v2}) and (\ref{bef:sum}), 
	\begin{equation}\label{dem:lap:form}
		\sum_{a\leq k \leq b}	\left(
		\begin{array}{c}
			n\\
			k
		\end{array}
		\right) p^k(1-p)^{n-k}\approx_{n\to+\infty}\frac{1}{\sqrt{2\pi np(1-p)}}\int_a^b e^{-\frac{(x-np)^2}{2np(1-p)}}dx,\quad a< b. 
	\end{equation}
	It is not hard to check that this is the same as having  
	\begin{equation}\label{dem:lap:form:2}
		\left(
		\begin{array}{c}
			n\\
			k
		\end{array}
		\right) p^k(1-p)^{n-k}\approx_{n\to+\infty}\frac{1}{\sqrt{2\pi np(1-p)}}e^{-\frac{(k-np)^2}{2np(1-p)}}
	\end{equation}
	uniformly in $k$ satisfying 
	\begin{equation}\label{unif:k:n}
		k=np+\sqrt{np(1-p)}O(1),
	\end{equation}
	which may be proved by using Stirling's formula in (\ref{stirlings}) below and the fact that (\ref{unif:k:n}) implies that $k/n\to p$ as $n\to+\infty$; see \cite[Section 7.3]{chung2006elementary}. 
	As yet another application of CLT in the discrete setting, let us assume that $\{Y_j\}_{j=1}^n$ is independent with $Y_j\sim \mathsf{Pois}(1)$, the Poison distribution as in  Example \ref{poisson:trials}. Thus, $Y^{(n)}=Y_1+\cdots+Y_n\sim \mathsf{Pois}(n)$, 
	and  CLT applies to yield
	\begin{equation}\label{pois:clt}
		\sqrt{n}(n^{-1}{Y^{(n)}-1})\stackrel{d}{\to}\mathcal N(0,1), \quad {n\to+\infty},
	\end{equation}
	that is, 
	\[
	\sum_{a\leq k\leq b}\frac{n^ke^{-n}}{k!}\approx_{n\to +\infty}\frac{1}{\sqrt{2\pi n}}\int_a^be^{-\frac{(x-n)^2}{2n}}dx,
	\]
	which is the same as having
	\[
	\frac{n^ke^{-n}}{k!}\approx_{n\to +\infty}\frac{1}{\sqrt{2\pi n}}e^{-\frac{(k-n)^2}{2n}}
	\]
	uniformly in $k$ such that 
	\[
	k=n+\sqrt{n}O(1).
	\]
	Taking $k=n$ gives
	\begin{equation}\label{stirlings}
		n!\approx_{n\to +\infty}\sqrt{2\pi}n^{n+\frac{1}{2}}e^{-n},
	\end{equation}
	which is {\em Stirling's asymptotic formula}. If we take into account that $\Gamma(n)=(n-1)!$, this clearly implies (\ref{stir:gamma}). \qed
\end{remark}

\begin{remark}\label{just:bern:clt}
	We may directly justify (\ref{bin:clt}), the CLT for a Bernoulli population, as follows. Propositions \ref{charac:p:fol} and \ref{charac:p} applied to (\ref{bin:clt}) give 
	\[
	\phi_{Z_n}(u)=\phi_{X^{(n)}}(u)e^{-{\bf i}\frac{np}{\sqrt{npq}}u}, 
	\]
	so that (\ref{form:ch:sumb}) leads to
	\begin{eqnarray*}
		\phi_{Z_n}(u)
		& = & \left(q+pe^{{{\bf i}\frac{u}{\sqrt{npq}}}}\right)^n e^{-{\bf i}\frac{np}{\sqrt{npq}}u}\\
		& = & \left(qe^{-{\bf i}\sqrt{\frac{p}{nq}}u}+pe^{{\bf i}\sqrt{\frac{q}{np}}u}\right)^n.
	\end{eqnarray*}
	Expanding the exponential terms in parentheses  and performing some cancellations we find that 
	\begin{equation}\label{exp:clt}
		\phi_{Z_n}(u)=\left(1-\frac{u^2}{2n}+o\left(\frac{u^2}{n}\right)\right)^n \stackrel{n\to+\infty}{\longrightarrow} e^{-u^2/2},
	\end{equation}
	which reproduces (\ref{rep:eq:clt}) in this case. We may also obtain a proof of (\ref{pois:clt}), the CLT for a Poisson population, along the same lines. Indeed, this time the left-hand side of (\ref{pois:clt}) is 
	\[
	Z_n=\frac{Y^{(n)}}{\sqrt{n}}-\sqrt{n},
	\]
	so that 
	\begin{eqnarray*}
		\phi_{Z_n}(u)
		& = & \phi_{Y^{(n)}}\left(\frac{u}{\sqrt{n}}\right)e^{-{\bf i}\sqrt{n}u}\\
		& \stackrel{(\ref{char:pois})}{=} & e^{n\left(e^{{\bf i}\frac{u}{\sqrt{n}}}-1\right)} e^{-{\bf i}\sqrt{n}u}\\
		& = & \left(e^{e^{{\bf i}\frac{u}{\sqrt{n}}}-1-{\bf i}\frac{u}{\sqrt{n}}}\right)^n\\
		& = & \left(e^{-u^2/2n+o(u^2/n)}\right)^n\\
		& = & \left(1-\frac{u^2}{2n}+o\left(\frac{u^2}{n}\right)\right)^n,
	\end{eqnarray*}
	so we may proceed as in (\ref{exp:clt}), 
	as claimed. \qed
\end{remark}

The following immediate consequence of CLT, which uses the notation of Example \ref{moment}, is also worth mentioning here.

\begin{theorem}\label{mult:CLT}(Multiplicative CLT)
	If $\{Y_j\}_{j\geq 1}$ is a i.i.d.\!\! sequence of {\em positive} random variables satisfying $\mathbb E(\ln Y_j)=\mu$ and ${\mathbb V}(\ln Y_j)=\sigma^2$ then 
	\[
	\sqrt[n]{\Pi_{j=1}^nY_j}\approx_{n\to +\infty}\Lambda(\mu,\sigma^2/n)=\mathcal L\mathcal N(e^{\mu+\frac{\sigma^2}{2n}},(e^{\frac{\sigma^2}{n}}-1)e^{2\mu+\frac{\sigma^2}{n}}). 
	\]
\end{theorem}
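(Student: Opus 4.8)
The plan is to linearize the problem by passing to logarithms, thereby reducing the multiplicative statement to the ordinary (additive) Central Limit Theorem already established in Theorem~\ref{clt}. Set $X_j = \ln Y_j$. Since $\ln$ is a fixed measurable map and the $Y_j$ are positive and i.i.d., the $X_j$ form an i.i.d. sequence of real random variables with $\mathbb E(X_j) = \mu$ and ${\rm var}(X_j) = \sigma^2$ by hypothesis. The crucial algebraic observation is that the geometric mean is the exponential of the arithmetic mean of the logarithms:
\[
\sqrt[n]{\Pi_{j=1}^n Y_j} = \exp\left(\frac{1}{n}\sum_{j=1}^n \ln Y_j\right) = e^{\overline X_n}, \qquad \overline X_n := \frac{1}{n}\sum_{j=1}^n X_j.
\]

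First I would invoke Theorem~\ref{clt} in the approximate form recorded in (\ref{clt:v1}), which gives $\overline X_n \approx_{n\to+\infty} \mathcal N(\mu, \sigma^2/n)$. Next I would transfer this approximation through the exponential map using the log-normal transformation rule established in Example~\ref{moment}: taking a normal variable with mean $\mu$ and variance $\sigma^2/n$ in place of the $\mathcal N(\mu,\sigma^2)$ of that example, formulas (\ref{log:Y}) and (\ref{log:Y2}) yield $e^{\mathcal N(\mu,\sigma^2/n)} \sim \Lambda(\mu,\sigma^2/n) = \mathcal L\mathcal N(e^{\mu + \sigma^2/2n}, (e^{\sigma^2/n}-1)e^{2\mu + \sigma^2/n})$. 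Composing the two steps gives $e^{\overline X_n} \approx_{n\to+\infty} \Lambda(\mu, \sigma^2/n)$, which is precisely the assertion.

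The main point requiring care is the meaning of the approximation $\approx_{n\to+\infty}$ and its transfer under the continuous map $x \mapsto e^x$. One must resist reading the conclusion as convergence in distribution to a fixed nondegenerate law: by Theorem~\ref{lln} applied to $\{X_j\}$ together with continuity of $\exp$, the geometric mean in fact converges in probability to the constant $e^\mu$, exactly the degeneracy phenomenon flagged in Remark~\ref{miscon} for $\overline X_n$ itself. The nondegenerate content of the statement is therefore entirely encoded in the shrinking parameter $\sigma^2/n$ of the approximating log-normal, and the theorem should be understood in the same asymptotic, finite-$n$ approximation sense as the CLT display (\ref{clt:v1})---not as a genuine limit. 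With that interpretation fixed, no further estimates are needed, since the transfer is immediate from the exact transformation law of Example~\ref{moment}.
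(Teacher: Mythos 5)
Your argument is correct and is precisely the one the paper intends: the theorem is stated as an ``immediate consequence of CLT'' with no written proof, the implicit route being exactly your reduction $\sqrt[n]{\Pi_j Y_j}=e^{\overline X_n}$ with $X_j=\ln Y_j$, followed by (\ref{clt:v1}) and the log-normal transformation law of Example~\ref{moment}. Your closing remark on the correct reading of $\approx_{n\to+\infty}$ (approximation for large finite $n$, not convergence to a fixed nondegenerate law) is also consistent with Remark~\ref{miscon}.
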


\begin{example}\!\!$\bigstar$\label{gibrat}
	We say that a sequence of positive random variables $\{X_j\}_{j=0}^{+\infty}$ satisfy {\em Gibrat's law of proportionate effect} if there exist random variables $\{Y_j\}_{j=1}^{+\infty}$ such that $Y_j\perp\!\!\perp X_{j-1}$ and the corresponding cdfs satisfy
	\[
	F_{X_j}(z)=\int_{0}^{+\infty}F_{Y_j}(xu^{-1})dF_{X_{j-1}}(u), \quad j\geq 1.
	\] 
	It then follows from (\ref{regra:03}) that $X_j=Y_jX_{j-1}$ and hence
	\[
	X_0^{-1}X_n=\Pi_{j=1}^nY_j, \quad n\geq 1.
	\]
	Thus, if $\{Y_j\}$ is as in Theorem \ref{mult:CLT} we see that 
	\[
	X_0^{-1}X_n\approx_{n\to +\infty}\Lambda(n\mu,n\sigma^2)=\mathcal L\mathcal N(e^{n\mu+n\frac{\sigma^2}{2}},(e^{n{\sigma^2}}-1)e^{2n\mu+n{\sigma^2}}). 
	\]
	Variations of this simple argument go a long way toward explaining the occurrence of lognormal distributions in a large class of natural and social phenomena \cite{aitchison1969lognormal}. \qed  
\end{example}

As a final illustration of the usefulness of Theorem \ref{levy:conv}, 
we now present a result describing the limiting distribution of a sequence of binominal distributions $\mathsf{Bin}(p_n;n)$, with $np_n$ approaching a positive constant, as a Poisson distribution.

\begin{theorem}\label{law:rare}(Law of Rare Events)
	If $X_n\sim \mathsf{Bin}(p_n;n)$ and  
	$np_n\to \lambda>0$  as $n\to+\infty$ then there exists $Z\sim {\mathsf{Pois}}(\lambda)$ such that
	\[
	X_n\stackrel{d}{\to}Z. 
	\]
\end{theorem}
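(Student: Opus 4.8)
The plan is to deduce convergence in distribution from L\'evy's convergence theorem (Theorem~\ref{levy:conv}), thereby reducing the statement to the pointwise convergence $\phi_{X_n}\to\phi_Z$ of characteristic functions. This is the natural route, since both laws in question have characteristic functions already computed in the text: by (\ref{form:ch:sumb}) one has $\phi_{X_n}(u)=(1-p_n+p_ne^{{\bf i}u})^n$, while by (\ref{char:pois}) a variable $Z\sim\mathsf{Pois}(\lambda)$ satisfies $\phi_Z(u)=e^{\lambda(e^{{\bf i}u}-1)}$. Since convergence in distribution only involves the laws (equivalently, the cdfs) of the variables, it suffices to fix any such $Z$ and verify $\phi_{X_n}(u)\to\phi_Z(u)$ for every $u\in\mathbb R$.

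First I would rewrite the binomial characteristic function in the suggestive form
\[
\phi_{X_n}(u)=\bigl(1+p_n(e^{{\bf i}u}-1)\bigr)^n,
\]
and abbreviate $c:=\lambda(e^{{\bf i}u}-1)$ and $a_n:=p_n(e^{{\bf i}u}-1)$. The hypothesis $np_n\to\lambda$ then immediately gives $na_n=(np_n)(e^{{\bf i}u}-1)\to c$. The whole matter thus comes down to the classical limit $(1+a_n)^n\to e^{c}$, which would yield $\phi_{X_n}(u)\to e^{\lambda(e^{{\bf i}u}-1)}=\phi_Z(u)$ for each fixed $u$, and an application of Theorem~\ref{levy:conv} would complete the proof.

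The only genuinely delicate point — and what I expect to be the main obstacle — is justifying $(1+a_n)^n\to e^{c}$ for a \emph{complex} sequence $a_n$ with $na_n\to c$, where the naive real-variable reasoning does not directly apply. I would settle this through the complex logarithm: since $|a_n|=p_n\,|e^{{\bf i}u}-1|=O(p_n)=O(1/n)\to 0$, for all large $n$ the principal branch $\mathrm{Log}$ is defined at $1+a_n$, and the expansion $\mathrm{Log}(1+z)=z+O(|z|^2)$ near $z=0$ gives
\[
n\,\mathrm{Log}(1+a_n)=na_n+n\,O(|a_n|^2)=na_n+O(1/n)\longrightarrow c,
\]
because $n|a_n|^2=O(1/n)$. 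Exponentiating and invoking continuity of $\exp$ then yields $(1+a_n)^n=e^{n\,\mathrm{Log}(1+a_n)}\to e^{c}$, as required. Alternatively, one could bypass characteristic functions entirely and argue directly at the level of mass density functions, showing $\binom{n}{k}p_n^k(1-p_n)^{n-k}\to e^{-\lambda}\lambda^k/k!$ for each fixed $k$; the characteristic-function route is preferable here precisely because it isolates the single analytic limit above.
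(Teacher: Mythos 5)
Your proposal is correct and follows essentially the same route as the paper: compute $\phi_{X_n}(u)=(1+p_n(e^{{\bf i}u}-1))^n$ via (\ref{form:ch:sumb}), show pointwise convergence to $e^{\lambda(e^{{\bf i}u}-1)}=\phi_Z(u)$, and conclude with Theorem~\ref{levy:conv}. The only difference is that you carefully justify the complex limit $(1+a_n)^n\to e^{c}$ through the principal branch of the logarithm, a step the paper's proof leaves implicit; this is a welcome addition rather than a deviation.
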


\begin{proof}
	We compute for any $u\in\mathbb R$:
	\begin{eqnarray*}
		\phi_{X_n}(u)
		& \stackrel{(\ref{form:ch:sumb})}{=} & 
		\left(1-p_n+p_ne^{{\bf i}u}\right)^n\\
		& = & 
		\left(1-\frac{\lambda}{n}+\frac{\lambda}{n}e^{{\bf i}u}+o(n^{-1})\right)^n\\
		& = & \left(1+\frac{\lambda}{n}\left(e^{{\bf i}u}-1\right)+o(n^{-1})\right)^n\\
		& \to & e^{\lambda\left(e^{{\bf i}u}-1\right)} \\
		& \stackrel{(\ref{char:pois})}{=} & \phi_{Z}(u).
	\end{eqnarray*}
	Now apply Theorem \ref{levy:conv}.
\end{proof}

\section{Estimation}\label{est:theo}

Here we shall use the theory developed so far to provide an introduction to Estimation Theory, an important topic in Statistics with countless applications. 

\subsection{Parametric estimation and the mean squared error}\label{param:est}
With the preliminary ``large sample'' results of Section \ref{fund:lim} at hand, we now turn our attention to a {\em non-asymptotic} problem that appears very often in
real world applications, where we only have access to {\em finitely} many measurements.

\begin{definition}\label{rand:sample}
	A {\em random sample} is a {finite} family $\{X_j\}_{j=1}^n$ of 
	i.i.d. random variables. 
\end{definition}

We usually represent a random sample by 
\[
X_1,\cdots,X_n\sim\psi,
\]
or simply by $X_j\sim\psi$,
where $\psi$ is the common pdf.
Also, in the following we set $\mathbb E(X_j)=\mu$ and ${\mathbb V}(X_j)=\sigma^2$, $j=1,\cdots,n$.

\begin{definition}\label{stat:mod:def}
	A ({\em parametric}) {\em statistical model} is a random sample  
	\[
	X_1,\cdots,X_n\sim\psi_\theta,
	\]
	where the associated pdf is allowed to depend on the unknown parameter $\theta$ running in a given subset $\Theta\subset \mathbb R^q$. 
\end{definition}

\begin{remark}\label{underlying}
	Implicit in this definition is the existence of an underlying family of probability spaces, say $(\Omega,\mathcal F,\{\mathcal P_\theta\}_{\theta\in\Theta})$, so that $\{X_j\}$ is i.i.d. with respect to each element in this family. 
	Also, by Proposition \ref{inddens} the joint pdf of 
	$(X_1,\cdots,X_n):\Omega\to\mathbb R^n$ is 
	\begin{equation}\label{underlying:2}
		{\bf x} =(x_1,\cdots,x_n)\mapsto \psi_\theta({\bf x}):=\Pi_{j=1}^n\psi(x_j;\theta), 
	\end{equation}
	where $\psi(x_j;\theta)=\psi_\theta(x_j).$.
	\qed
\end{remark} 

\begin{remark}\label{stat:mod:def:2}
	Sometimes it is convenient to enlarge the scope of Definition \ref{stat:mod:def} above in order to include samples $X_j\sim\psi_\theta$ for which the ``identically distributed'' assumption no long holds, so that only independence is retained. In the following, whenever we make use of this extended version of a statistical model, we will make explicit reference to this remark. \qed
\end{remark}

Given the {statistical model} $X_j\sim \psi_\theta$ as above, we will always assume that it is {\em identifiable} in the sense that the map $\theta\mapsto\psi_\theta$ is injective. In any case, the corresponding {\em point estimator problem} consists of finding an {\em estimator} \begin{equation}\label{estim:def}
	\widehat\theta=h(X_1,\cdots,X_n)
\end{equation}
for some {\em statistic}\footnote{To be precise, a statistic is any measurable function $h=h(X_1,\ldots,X_n)$. An estimator for $\theta$, denoted by $\widehat\theta$, is a statistic that does not depend on the unknown parameter $\theta$.
} $h:\mathbb R^n\to\Theta\subset\mathbb R^q$, which is supposed to yield an ``efficient'' guess of the true (and unknown) parameter $\theta\in\Theta$. 
The evaluation $\widehat\theta({\bf x})$ of an estimator at a realization ${\bf x}\in\mathbb R^n$ of a given random sample $X=(X_1,\cdots,X_n)$ is called an {\em estimate}.

The statistical analysis of point estimators naturally divides into two complementary tasks:
\begin{itemize}
	\item One must first evaluate the performance of a given estimator against competing alternatives in order to identify the most suitable choice for the problem under consideration. Typically, the candidates are restricted to a predetermined family of estimators, though there is no guarantee that the ``best'' option within this family will remain optimal in a broader sense. An illustration of this point appears in the performance analysis, under mean squared error, of the variance estimators $\widehat\sigma^2_c$, $c>0$, in Subsection \ref{meas:comp}.
	\item Once an estimator has been selected, it is important to recognize that the true parameter value will almost never coincide exactly with its point estimate. Accordingly, one needs methods to quantify the variability of the random estimate around the true value, rather than reporting the point estimate alone. This leads naturally to the notion of confidence intervals, discussed in Subsection \ref{conf:int:sub}.
\end{itemize}

We thus start here by considering the first issue. 

\begin{definition}\label{unbias}
	The {\em bias} of an estimator $\widehat\theta$ is given by
	\[
	{\rm bias}(\widehat\theta)=\mathbb E(\widehat\theta-\theta).
	\]
	An estimator $\widehat\theta$ is said to be {\em unbiased} if ${\rm bias}(\widehat\theta)=0$ (equivalently, $\mathbb E(\widehat\theta)=\theta$ for any $\theta$). Also, the {\em mean squared error} (mse) of $\widehat\theta$ is  
	\begin{equation}\label{mse:def}
		{\rm mse}\,(\widehat\theta)=\mathbb E(\|\widehat\theta-\theta\|^2).
	\end{equation}
\end{definition}

\begin{remark}\label{stric:sp}
	Strictly speaking, the dependence of the invariants above on $\theta$ should be emphasized. For instance, the unbiasedness condition actually means that $\mathbb E_{\mathcal P_\theta}(\widehat\theta)=\theta$, where 
	\[
	\mathbb E_{\mathcal P_\theta}(X)=
	\int_{\mathbb R^n}{\bf x}\,dP_\theta({\bf x})
	\stackrel{(\ref{underlying:2})}{=}
	\int_{\mathbb R^n}{\bf x}\psi_\theta({\bf x})d{\bf x}, 
	\]
	where $P_\theta=X_\sharp \mathcal P_\theta$ is the distribution of $X$ coming from $\mathcal P_\theta$ (see Remark \ref{underlying}).
	However, in order to keep the notation light, we usually refrain from doing so. Notice also that our notation ignores the dependence of $\widehat\theta$ on the size $n$ of the random sample. Whenever emphasizing this is needed, we write $\widehat\theta=\widehat\theta_n$. \qed
\end{remark}

\begin{proposition}\label{trade:off}(bias-variance trade-off)
	There holds 
	\begin{equation}\label{mse:2}
		{\rm mse}\,(\widehat\theta)={\rm tr}\,{\mathbb C}(\widehat\theta)+\|{\rm bias}(\widehat\theta)\|^2.
	\end{equation}
\end{proposition}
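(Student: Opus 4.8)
The plan is to prove the identity by the classical ``add and subtract the mean'' device, applied to the vector-valued estimator $\widehat\theta:\Omega\to\mathbb R^q$. Writing ${\bm m}:=\mathbb E(\widehat\theta)$ for the (deterministic) expectation vector, I would first record the orthogonal-type decomposition
\[
\widehat\theta-\theta=\bigl(\widehat\theta-{\bm m}\bigr)+\bigl({\bm m}-\theta\bigr),
\]
in which the first summand is the random fluctuation of the estimator about its mean and the second is precisely the deterministic bias, ${\bm m}-\theta={\rm bias}(\widehat\theta)$. Expanding the squared Euclidean norm of the left-hand side then gives
\[
\|\widehat\theta-\theta\|^2=\|\widehat\theta-{\bm m}\|^2+2\bigl\langle \widehat\theta-{\bm m},\,{\bm m}-\theta\bigr\rangle+\|{\bm m}-\theta\|^2.
\]

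The next step is to take expectations term by term, using linearity of $\mathbb E$. The cross term vanishes: since ${\bm m}-\theta$ is a constant vector, it may be pulled out of the expectation, leaving the factor $\mathbb E(\widehat\theta-{\bm m})=\mathbb E(\widehat\theta)-{\bm m}=\vec 0$. For the first term, I would expand the squared norm into coordinates and invoke the definition of the covariance matrix from Definition~\ref{covmat}, namely
\[
\mathbb E\bigl(\|\widehat\theta-{\bm m}\|^2\bigr)=\sum_{i=1}^{q}\mathbb E\bigl((\widehat\theta_i-{\bm m}_i)^2\bigr)=\sum_{i=1}^{q}{\rm var}(\widehat\theta_i)=\sum_{i=1}^{q}{\rm cov}(\widehat\theta)_{ii}={\rm tr}\,{\rm cov}(\widehat\theta),
\]
which is exactly the variance contribution. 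The last term is already deterministic and equals $\|{\rm bias}(\widehat\theta)\|^2$, so assembling the three pieces yields \eqref{mse:2}.

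There is no genuine obstacle here; the argument is a one-line computation once the mean is inserted. The only point that requires a little care, as opposed to the familiar scalar ($q=1$) case, is the passage from the expected squared norm of the centered vector to the \emph{trace} of the covariance matrix: one must remember that $\|\cdot\|^2$ is the sum of squared components and that the diagonal entries ${\rm cov}(\widehat\theta)_{ii}$ are precisely the variances ${\rm var}(\widehat\theta_i)$, so that their sum is ${\rm tr}\,{\rm cov}(\widehat\theta)$ rather than the full covariance matrix. All expectations are implicitly taken with respect to $\mathcal P_\theta$ in the sense of Remark~\ref{stric:sp}, and finiteness of the mse guarantees that each of the manipulations above is justified.
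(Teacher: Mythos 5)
Your proof is correct and follows essentially the same route as the paper's: the add-and-subtract-the-mean decomposition with the cross term vanishing because the bias is deterministic and the centered estimator has zero mean. The only cosmetic difference is that you carry out the expansion directly on the Euclidean norm of the vector, whereas the paper first proves the scalar identity \eqref{mse} and then sums over coordinates; both reduce to the same identification of $\mathbb E\bigl(\|\widehat\theta-\mathbb E(\widehat\theta)\|^2\bigr)$ with ${\rm tr}\,{\rm cov}(\widehat\theta)$.
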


\begin{proof}
	If $\theta\in\mathbb R$ expand
	\[
	{\rm mse}\,(\widehat\theta)=\mathbb E((\widehat\theta-\mathbb E(\widehat\theta)+\mathbb E(\widehat\theta)-\theta)^2)
	\]		
	and check that the crossed terms cancel, thus yielding (\ref{mse}) below. The vector case then follows because 
	\begin{eqnarray*}
		{\rm mse}\,(\widehat\theta)
		& = & \sum_j\mathbb E((\widehat\theta_j-\theta_j)^2)\\
		& \stackrel{(\ref{mse})}{=} & \sum_j {\mathbb V}(\widehat\theta_j)+\sum_j|{\rm bias}(\widehat\theta_j)|^2\\
		& = & {\rm tr}\,{\mathbb C}(\widehat\theta)+\|{\rm bias}(\widehat\theta)\|^2.
	\end{eqnarray*}
\end{proof}

\begin{convention}\label{conv:uni}
	Unless otherwise explicitly stated, we always assume in the sequel that $\theta\in\Theta\subset\mathbb R$, the {\em uni-dimensional} case, so that (\ref{mse:2}) reduces to
	\begin{equation}\label{mse}
		{\rm mse}\,(\widehat\theta)={\mathbb V}(\widehat\theta)+|{\rm bias}(\widehat\theta)|^2.
	\end{equation}
\end{convention}

Here we adopt the viewpoint that the measure of the ``performance'' of an estimator is encoded in the ``smallness'' of the corresponding mse. 
In particular, a bound of the type ${\rm mse}(\widehat\theta)\leq Cn^{-\alpha}$, $\alpha>0$, immediately provides an $O(n^{-\alpha/2})$ convergence rate estimate (in the mean) on how $\widehat\theta$ approaches $\theta$ as $n\to+\infty$. Another kind of convergence of estimators appears in the next definition. 

\begin{definition}\label{cons:deff}
	We say that an estimator $\widehat\theta_n=\widehat\theta$
	as above is {\em consistent} if $\widehat\theta_n\to\theta$ in probability (with respect to $\theta$). 
\end{definition}

\begin{proposition}\label{cons:deff:con}
	If $\widehat\theta_n$ is consistent with a uniformly bounded variance then ${\rm bias}(\widehat\theta_n)\to 0$ as $n\to +\infty$ (thus, $\widehat\theta_n$ is {\em asymptotically unbiased}).  
\end{proposition}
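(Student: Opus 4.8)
The plan is to exploit the decomposition of the deviation $\widehat\theta_n-\theta$ into its centered part and the bias. Writing $b_n={\rm bias}(\widehat\theta_n)=\mathbb E(\widehat\theta_n)-\theta$, $D_n=\widehat\theta_n-\theta$ and $W_n=\widehat\theta_n-\mathbb E(\widehat\theta_n)$, one has the elementary identity $D_n=W_n+b_n$, with $\mathbb E(W_n)=0$ and $\mathbb E(W_n^2)={\rm var}(\widehat\theta_n)\leq C$ for some constant $C$ independent of $n$ (this is the uniformly bounded variance hypothesis). Consistency means precisely $D_n\stackrel{p}{\to}0$ (Definition \ref{cons:deff}), and the goal is to upgrade this to $b_n\to 0$. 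The conceptual obstacle is that convergence in probability alone never forces convergence of expectations; the role of the variance bound is exactly to supply the missing integrability control, carried by the centered sequence $\{W_n\}$.

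First I would fix $\delta>0$ and split the expectation $b_n=\mathbb E(D_n)$ according to whether $|D_n|\leq\delta$ or not. On the complementary event I substitute $D_n=W_n+b_n$, which yields, after isolating the terms containing $b_n$, the identity
\[
b_n\,P(|D_n|\leq\delta)=\mathbb E\!\left(D_n{\bf 1}_{\{|D_n|\leq\delta\}}\right)+\mathbb E\!\left(W_n{\bf 1}_{\{|D_n|>\delta\}}\right).
\]
The first term on the right is bounded in absolute value by $\delta$, directly from $|D_n|\leq\delta$ on the relevant event. The second term I would control by Cauchy--Schwarz (as in the integrability remark preceding Definition \ref{uncor}), obtaining $|\mathbb E(W_n{\bf 1}_{\{|D_n|>\delta\}})|\leq\sqrt{\mathbb E(W_n^2)}\,\sqrt{P(|D_n|>\delta)}\leq\sqrt{C}\,\sqrt{P(|D_n|>\delta)}$.

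Combining these estimates gives $|b_n|\,P(|D_n|\leq\delta)\leq\delta+\sqrt{C}\,\sqrt{P(|D_n|>\delta)}$. Since $D_n\stackrel{p}{\to}0$, for every fixed $\delta$ we have $P(|D_n|\leq\delta)\to 1$ (so $P(|D_n|\leq\delta)\geq 1/2$ for $n$ large) while $P(|D_n|>\delta)\to 0$. Hence $\limsup_{n\to+\infty}|b_n|\leq 2\delta$, and letting $\delta\to 0$ forces $b_n\to 0$, which is the desired asymptotic unbiasedness.

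The one point deserving care---and the reason a naive argument fails---is that I deliberately do not attempt to prove uniform integrability of $\{D_n\}$ itself: that would require an a priori bound on $\mathbb E(D_n^2)={\rm var}(\widehat\theta_n)+b_n^2$, which is circular, since $b_n$ is precisely the quantity being controlled. The key manoeuvre is to transfer the integrability to the \emph{centered} sequence $\{W_n\}$, whose second moments are bounded outright by hypothesis, while the stray term $b_n\,P(|D_n|>\delta)$ is absorbed back into the left-hand side through the factor $P(|D_n|\leq\delta)$. This absorption is the only genuinely delicate step; everything else is a routine split-and-estimate.
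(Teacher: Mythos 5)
Your proof is correct. It follows the same broad split-and-estimate template as the paper's (truncate on the event $\{|\widehat\theta_n-\theta|\le\delta\}$, apply Cauchy--Schwarz on the complement), but the decomposition is genuinely different, and in a way that matters. The paper centers everything at $\theta$: it first argues that $\mathbb E(\widehat\theta_n)$ is uniformly bounded (invoking $\widehat\theta_n\stackrel{d}{\to}\theta$ to claim $\mathbb E(\widehat\theta_n)\to\theta$), combines this with the variance bound to get $\mathbb E(|\widehat\theta_n|^2)\le M_\theta$, and then applies Cauchy--Schwarz to $|\widehat\theta_n|$ on the bad event. That preliminary step is the weak link: convergence in distribution does not by itself yield convergence (or even boundedness) of first moments, so the paper is implicitly leaning on control of the very quantity being estimated. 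You center at $\mathbb E(\widehat\theta_n)$ instead, so the only second moment you need is $\mathbb E(W_n^2)={\rm var}(\widehat\theta_n)\le C$, which is the hypothesis verbatim; the price is the self-referential term $b_n\,P(|D_n|>\delta)$, which you correctly absorb into the left-hand side through the factor $P(|D_n|\le\delta)\to 1$. The algebra checks out: $b_n\,P(|D_n|\le\delta)=\mathbb E(D_n{\bf 1}_{\{|D_n|\le\delta\}})+\mathbb E(W_n{\bf 1}_{\{|D_n|>\delta\}})$, the first term is at most $\delta$ in absolute value, the second is at most $\sqrt{C}\sqrt{P(|D_n|>\delta)}$, and consistency finishes the job. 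Your closing remark about why one cannot simply prove uniform integrability of $D_n$ directly is exactly the right diagnosis of where a naive argument (and, arguably, the paper's own opening step) runs into trouble. What your route buys is a proof that uses the stated hypotheses and nothing more; what the paper's route buys, if one grants its first step, is a slightly shorter computation.
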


\begin{proof}
	By Proposition \ref{modes},  $\widehat\theta_n\to\theta$ in distribution so that $\mathbb E(\widehat\theta_n)\to \mathbb E(\theta)=\theta$ and hence $\mathbb E(\widehat\theta_n)$ is uniformly bounded (for each $\theta$). Combining this with the bound on the variance we see that $\mathbb E(|\widehat\theta_n|^2)\leq M_\theta$ for some $M_\theta>0$. 
	Now, for any $\varepsilon>0$ we have  
	\begin{eqnarray*}
		|\mathbb E(\widehat\theta_n-\theta)|
		&\leq & 
		|\mathbb E((\widehat\theta_n-\theta){\bf 1}_{|\widehat\theta_n-\theta|<\varepsilon})|+
		|\mathbb E((\widehat\theta_n-\theta){\bf 1}_{|\widehat\theta_n-\theta|\geq \varepsilon})|\\
		& <& \varepsilon + \mathbb E(|\widehat\theta_n|{\bf 1}_{|\widehat\theta_n-\theta|\geq\varepsilon}) + \mathbb E(|\theta|{\bf 1}_{|\widehat\theta_n-\theta|\geq \varepsilon})\\
		& \leq & \varepsilon +
		\sqrt{\mathbb E(|\widehat\theta_n|^2)}\sqrt{\mathbb E({\bf 1}_{|\widehat\theta_n-\theta|\geq \varepsilon})}+|\theta|\mathbb E({\bf 1}_{|\widehat\theta_n-\theta|\geq \varepsilon}),
	\end{eqnarray*}
	where we used Cauchy-Schwarz in the last step.  
	Thus, 
	\[
	|{\rm bias}(\widehat\theta_n)|
	<  \varepsilon +\sqrt{M_\theta}\sqrt{P(|\widehat\theta_n-\theta|\geq \varepsilon)}+|\theta| P(|\widehat\theta_n-\theta|\geq \varepsilon)
	\]
	and since $ P(|\widehat\theta_n-\theta|\geq \varepsilon)\to 0$ 
	the result follows.
\end{proof}

\begin{proposition}\label{mse:consist}
	If ${\rm mse}(\widehat\theta_n)\to 0$ as $n\to +\infty$ then $\widehat\theta_n$ is consistent.
\end{proposition}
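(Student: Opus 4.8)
The plan is to deduce convergence in probability directly from the definition of the mean squared error by invoking Markov's inequality. Recall that by Convention~\ref{conv:uni} we are in the uni-dimensional case $\theta\in\Theta\subset\mathbb R$, so that $\|\widehat\theta_n-\theta\|=|\widehat\theta_n-\theta|$ and ${\rm mse}(\widehat\theta_n)=\mathbb E(|\widehat\theta_n-\theta|^2)$; the argument is identical in the vector-valued case upon keeping the euclidean norm throughout.

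First I would fix $\varepsilon>0$ and rewrite the relevant tail event by squaring, exactly as in the proof of Chebyshev's inequality (Corollary~\ref{cheby:ineq}):
\[
P\left(|\widehat\theta_n-\theta|\geq \varepsilon\right)=P\left(|\widehat\theta_n-\theta|^2\geq \varepsilon^2\right).
\]
Next I would apply Markov's inequality (\ref{markov:ineq:2}) to the non-negative random variable $|\widehat\theta_n-\theta|^2$ with threshold $a=\varepsilon^2$, which yields
\[
P\left(|\widehat\theta_n-\theta|\geq \varepsilon\right)\leq \frac{\mathbb E\left(|\widehat\theta_n-\theta|^2\right)}{\varepsilon^2}=\frac{{\rm mse}(\widehat\theta_n)}{\varepsilon^2}.
\]
Since by hypothesis ${\rm mse}(\widehat\theta_n)\to 0$ as $n\to+\infty$ and $\varepsilon>0$ is fixed, the right-hand side tends to $0$, so that $P(|\widehat\theta_n-\theta|\geq\varepsilon)\to 0$ for every $\varepsilon>0$. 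This is precisely the statement that $\widehat\theta_n\to\theta$ in probability (Definition~\ref{conv:dist:def}), hence that $\widehat\theta_n$ is consistent in the sense of Definition~\ref{cons:deff}.

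There is no genuine obstacle here: the result is simply the explicit, non-asymptotic incarnation of the implication $(\stackrel{m}{\to})\Rightarrow(\stackrel{p}{\to})$ recorded in Proposition~\ref{modes}, specialized to the sequence $\widehat\theta_n$ with limiting variable $\theta$. The only point worth a word of care is notational consistency with Remark~\ref{stric:sp}: the expectation and the tail probability are computed with respect to $\mathcal P_\theta$ for the given true parameter $\theta$, and the conclusion holds for each such $\theta$, which is exactly what consistency (convergence in probability with respect to $\theta$) requires.
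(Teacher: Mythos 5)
Your proof is correct, and it takes a genuinely more direct route than the paper's. You apply Markov's inequality to the non-negative variable $|\widehat\theta_n-\theta|^2$ with threshold $\varepsilon^2$, which bounds the tail probability by ${\rm mse}(\widehat\theta_n)/\varepsilon^2$ in a single step; this is exactly the generalized Chebyshev bound and it exploits the fact that the mean squared error \emph{is} the second moment of the deviation. The paper instead splits the deviation through the bias--variance trade-off (\ref{mse:2}): it applies Chebyshev's inequality (\ref{cheby:ineq:2}) to the centered variable $\widehat\theta_n-\theta-{\rm bias}(\widehat\theta_n)$, obtaining a bound by ${\rm var}(\widehat\theta_n)/\varepsilon^2$, concludes that the bias-corrected estimator converges to $\theta$ in probability, and then invokes Slutsky's theorem (Theorem \ref{slutsky}) together with ${\rm bias}(\widehat\theta_n)\to 0$ to remove the correction. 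Your argument buys brevity and avoids the detour through Slutsky entirely; the paper's version has the pedagogical merit of making the roles of variance and bias visible separately, in keeping with the emphasis on the decomposition (\ref{mse}) elsewhere in the text, but it is strictly longer for the same conclusion. Your closing observation that the result is the quantitative form of $(\stackrel{m}{\to})\Rightarrow(\stackrel{p}{\to})$ from Proposition \ref{modes}, and your remark on computing everything under $\mathcal P_\theta$ for each fixed $\theta$, are both accurate.
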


\begin{proof}
	By Chebychev's inequality (\ref{cheby:ineq:2}), for any $\varepsilon>0$,
	\[
	P_\theta\left(|\widehat\theta_n-\theta-{\rm bias}(\widehat\theta_n)|\geq\varepsilon\right)\leq\frac{{\mathbb V}(\widehat\theta_n-\theta)}{\varepsilon^2}\to 0,
	\]	
	which means that $\widehat\theta_n-{\rm bias}(\widehat\theta_n){\to}\theta$ in probability (with respect to $\theta$). Since ${\rm bias}(\widehat\theta_n)\to 0$ as well, Theorem \ref{slutsky} applies to ensure that $\widehat\theta_n{\to}\theta$ in probability.
\end{proof}

\begin{definition}\label{an:defin}
	An estimator $\widehat\theta_n$
	as above is {\em asymptotically normal} with {\em asymptotic variance} $\sigma_\theta^2>0$, $\theta\in\Theta$, if there exists $Z_\theta\sim\mathcal N(0,\sigma_\theta^2)$ such that $\sqrt{n}(\widehat\theta_n-\theta)\to Z_\theta$ in distribution (with respect to $\theta$). 
\end{definition}

\begin{proposition}\label{an:imp:cons}
	If $\widehat\theta_n$ is asymptotically normal then it is consistent.
\end{proposition}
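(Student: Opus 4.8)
The plan is to exploit the deterministic rescaling implicit in the hypothesis. By Definition \ref{an:defin}, asymptotic normality supplies, for each fixed $\theta\in\Theta$, a random variable $Z_\theta\sim\mathcal N(0,\sigma_\theta^2)$ with $\sqrt{n}(\widehat\theta_n-\theta)\stackrel{d}{\to}Z_\theta$, whereas consistency (Definition \ref{cons:deff}) only asks for $\widehat\theta_n-\theta\stackrel{p}{\to}0$. The key observation is the algebraic identity
\[
\widehat\theta_n-\theta=\frac{1}{\sqrt{n}}\,\sqrt{n}(\widehat\theta_n-\theta),
\]
which writes the quantity of interest as the product of a vanishing deterministic factor and a sequence that is convergent in distribution.

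First I would record that the constant sequence $1/\sqrt{n}$, regarded as a degenerate random variable for each $n$, converges in probability to $0$, simply because it converges there deterministically. Then Slutsky's theorem (Theorem \ref{slutsky}), applied with $X_n=\sqrt{n}(\widehat\theta_n-\theta)\stackrel{d}{\to}Z_\theta$ and $Y_n=1/\sqrt{n}\stackrel{p}{\to}0$, yields
\[
\widehat\theta_n-\theta=X_nY_n\stackrel{d}{\to}0\cdot Z_\theta=0,
\]
so that $\widehat\theta_n-\theta$ converges in distribution to the constant $0$. To upgrade this to convergence in probability I would invoke the last clause of Proposition \ref{modes}, which guarantees $(\stackrel{d}{\to})\Rightarrow(\stackrel{p}{\to})$ precisely when the limiting variable is constant---exactly our situation. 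This gives $\widehat\theta_n-\theta\stackrel{p}{\to}0$, i.e. $\widehat\theta_n\stackrel{p}{\to}\theta$, which is consistency.

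The argument is short, and the only point demanding care is the constant-limit caveat in Proposition \ref{modes}: convergence in distribution does not in general imply convergence in probability, so it is essential that the limit here be the degenerate law at $0$. As an alternative that avoids Slutsky altogether, one could argue directly: for fixed $\varepsilon>0$ and any continuity point $M$ of the law of $|Z_\theta|$,
\[
P(|\widehat\theta_n-\theta|\geq\varepsilon)=P\bigl(|\sqrt{n}(\widehat\theta_n-\theta)|\geq\sqrt{n}\,\varepsilon\bigr)\leq P\bigl(|\sqrt{n}(\widehat\theta_n-\theta)|\geq M\bigr)
\]
once $\sqrt{n}\,\varepsilon\geq M$; the Portmanteau inequality for the closed set $\{|y|\geq M\}$ then bounds the $\limsup$ in $n$ by $P(|Z_\theta|\geq M)$, and letting $M\to\infty$ forces the tail probability to $0$. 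I expect the Slutsky route to be the cleaner write-up, with no genuine obstacle beyond bookkeeping.
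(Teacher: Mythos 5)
Your proof is correct and rests on the same mechanism as the paper's: the deterministic factor $1/\sqrt{n}$, Slutsky's theorem, and the fact that convergence in distribution to a constant upgrades to convergence in probability (Proposition \ref{modes}). If anything your write-up is tidier than the paper's, which takes a small detour by introducing the auxiliary variable $Z_{\theta,n}=Z_\theta/\sqrt{n}$ and disposing of it via Chebyshev's inequality before invoking Slutsky, a step your direct factorization $\widehat\theta_n-\theta=n^{-1/2}\cdot\sqrt{n}(\widehat\theta_n-\theta)$ renders unnecessary.
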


\begin{proof}
	If $Z_{\theta,n}=Z_\theta/\sqrt{n}\sim\mathcal N(0,\sigma^2_\theta/n)$ then 
	
	\[
	\widehat\theta_n-\theta-Z_{\theta,n}=
	\frac{1}{\sqrt{n}}\left(\sqrt{n}(\widehat\theta_n-
	\theta-Z_{\theta,n})\right) \stackrel{p}{\to}0.
	\]	
	But  Chebychev's inequality gives, for any $\varepsilon>0$, 
	\[
	P(|Z_{\theta,n}|\geq{\varepsilon})\leq\frac{\sigma_\theta^2}{n\varepsilon^2}\to 0,
	\]
	that is, $Z_{\theta,n}\stackrel{p}{\to}0$
	and the result follows by Theorem \ref{slutsky}. 
\end{proof}

\begin{remark}\label{not:use:var}
	The true nature of the asymptotic variance $\sigma^2_\theta$ has not been explored in the previous argument, which makes sense because consistency concerns position rather than dispersion. 
	It is worth noting, however, that asymptotic normality immediately entails that ${\mathbb V}(\widehat\theta_n)$ remains uniformly bounded (for each $\theta$). Consequently, Proposition \ref{cons:deff:con} ensures that $\widehat\theta_n$ is asymptotically unbiased. This conclusion may also be drawn directly from the definition, since
 $\sqrt{n}\,\mathbb E(\widehat\theta_n-\theta)\to \mathbb E(Z_\theta)=0$. \qed		
\end{remark}

We include here an useful consequence of asymptotic normality.

\begin{proposition}\label{delta:m} (the delta method) If $\widehat\theta_n$ is asymptotically normal (as in Definition \ref{an:defin}) and $g:\Theta\subset\mathbb R\to\mathbb R$ is a $C^1$ function whose derivative vanishes nowhere then $\sqrt{n}(g(\widehat\theta_n)-g(\theta))\to Z_{\theta,g}$ in distribution, where $Z_{\theta,g}\sim\mathcal N(0,|g'(\theta)|^2\sigma_\theta^2)$. 
\end{proposition}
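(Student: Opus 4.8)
The plan is to reduce the statement to Slutsky's theorem (Theorem \ref{slutsky}) via a first-order Taylor expansion of $g$ around $\theta$. Since $g$ is $C^1$, the mean value theorem lets me write, for each realization,
\[
g(\widehat\theta_n)-g(\theta)=g'(\xi_n)\,(\widehat\theta_n-\theta),
\]
where $\xi_n$ is an intermediate point lying between $\widehat\theta_n$ and $\theta$. Multiplying by $\sqrt{n}$ gives the key decomposition
\[
\sqrt{n}\bigl(g(\widehat\theta_n)-g(\theta)\bigr)=g'(\xi_n)\cdot\sqrt{n}(\widehat\theta_n-\theta),
\]
so that the problem becomes one of controlling the product of the random prefactor $g'(\xi_n)$ and the asymptotically normal quantity $\sqrt{n}(\widehat\theta_n-\theta)$.

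First I would establish that $g'(\xi_n)\stackrel{p}{\to}g'(\theta)$. Because $\widehat\theta_n$ is asymptotically normal, Proposition \ref{an:imp:cons} guarantees that it is consistent, i.e.\ $\widehat\theta_n\stackrel{p}{\to}\theta$. Since $\xi_n$ is trapped between $\theta$ and $\widehat\theta_n$, we have $|\xi_n-\theta|\leq|\widehat\theta_n-\theta|$, whence $\xi_n\stackrel{p}{\to}\theta$ as well. Invoking the continuity of $g'$ (from the $C^1$ hypothesis) together with the continuous mapping property of convergence in probability then yields $g'(\xi_n)\stackrel{p}{\to}g'(\theta)$, a convergence to the \emph{constant} $g'(\theta)$.

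At this stage I would assemble the two ingredients. We have $\sqrt{n}(\widehat\theta_n-\theta)\stackrel{d}{\to}Z_\theta\sim\mathcal N(0,\sigma_\theta^2)$ by Definition \ref{an:defin} and $g'(\xi_n)\stackrel{p}{\to}g'(\theta)$, so Slutsky's theorem (Theorem \ref{slutsky}, product form) gives
\[
\sqrt{n}\bigl(g(\widehat\theta_n)-g(\theta)\bigr)=g'(\xi_n)\cdot\sqrt{n}(\widehat\theta_n-\theta)\stackrel{d}{\to}g'(\theta)\,Z_\theta.
\]
Finally, since scaling a normal by the nonzero constant $g'(\theta)$ preserves normality, Proposition \ref{norm:spce} (2) identifies the limit law as $g'(\theta)Z_\theta\sim\mathcal N\bigl(0,|g'(\theta)|^2\sigma_\theta^2\bigr)=Z_{\theta,g}$, as claimed.

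The main obstacle to watch is the passage $g'(\xi_n)\stackrel{p}{\to}g'(\theta)$: the intermediate point $\xi_n$ is a sample-dependent random quantity produced by the mean value theorem, and one must justify both its measurability and the fact that it inherits consistency from $\widehat\theta_n$ before applying the continuous mapping step. The hypothesis that $g'$ never vanishes is not needed for convergence itself but ensures the limiting variance $|g'(\theta)|^2\sigma_\theta^2$ is strictly positive, so that $Z_{\theta,g}$ is a genuine (non-degenerate) normal and the conclusion is stated in the form of Definition \ref{an:defin}.
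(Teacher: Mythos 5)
Your proof is correct and follows essentially the same route as the paper: a mean value (first-order Taylor) expansion producing an intermediate point, consistency of $\widehat\theta_n$ via Proposition \ref{an:imp:cons} to get $g'(\xi_n)\stackrel{p}{\to}g'(\theta)$, and then Slutsky's theorem to conclude. Your additional remarks on the measurability of the intermediate point and on the role of the nonvanishing-derivative hypothesis are sensible refinements of the same argument.
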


\begin{proof}
	By Taylor,
	\[
	\sqrt{n}\left(g(\widehat\theta_n)-g(\theta)\right)=g'(\widetilde\theta_n)\sqrt{n}\left(\widehat\theta_n-\theta\right),  
	\]
	for some $\widetilde\theta_n$ between $\widehat\theta_n$ and $\theta$. Since $\widehat\theta_n\stackrel{p}{\to}\theta$ by Proposition \ref{an:imp:cons}, it is not hard to check that $g'(\widetilde\theta_n)\stackrel{p}{\to}g'(\theta)$, so the result follows from Theorem \ref{slutsky}. 
\end{proof}

\begin{example}\label{al:mean}
	For {\em any} random sample $\{X_j\}$ as above it is immediate to check that the {\em sample mean}
	\begin{equation}\label{weigh:st}
		\overline X_n:=\frac{1}{n}\left(X_1+\cdots+X_n\right)
	\end{equation}
	is an unbiased estimator for the expected value of the underlying distribution. In other words, $\mathbb E(\overline X_n)=\mu$, where $\mu=\mathbb E\mathbb (X_j)$ is the common expectation. Also, if $\sigma^2={\mathbb V}(X_j)$ is the common variance of the sample (the population variance) then it follows from (\ref{uncorr:var}) that
	\[
	{\mathbb V}(\overline X_n)=\frac{1}{n^2}\sum_{j=1}^n{\mathbb V}(X_j)=\frac{1}{n^2}\sum_{j=1}^n\sigma^2,
	\]
	that is,
	\begin{equation}\label{form:var:mean}
		{\mathbb V}(\overline X_n)=\frac{\sigma^2}{n}
	\end{equation}
	and hence
	$	{\rm mse}(\overline X_n)=\sigma^2/n$. This is the reason why we call
	\[
	Z_n=\frac{\overline X_n-\mu}{\sigma/\sqrt{n}}
	\]
	the {\em standardization} of the sample mean;
	compare with (\ref{stand:samp}). Note that $\mathbb E(Z_n)=0$ and ${\mathbb V}(Z_n)=1$. Finally, note that $\overline X_n$ is consistent (as an estimator for $\mu$) either by LLN or by Proposition \ref{mse:consist} and that for any $g$ as in Proposition \ref{delta:m}, CLT applies to ensure that $g(\overline X_n)$, as an estimator of $g(\mu)$, satisfies  
	\[
	\sqrt{n}\left(g(\overline X_n)-g(\mu)\right)\stackrel{d}{\to}\mathcal N(0,|g'(\mu)|^2\sigma^2), 
	\] 
	hence being asymptotically normal as well. 
	\qed
\end{example}

\begin{example}\label{weigh:est:mu}(Weighted estimators for the population mean) If $w=(w_1,\cdots,w_n)$ is a {\em weight vector} (which means that $\sum_jw_j=1$), then we may consider the corresponding {\em weighted estimator} for $\mu$ given by 
	\[
	\overline{X}_n^w=\sum_{j=1}^nw_jX_j,
	\]  
	which includes (\ref{weigh:st}) as a rather special case. One easily verifies that ${\rm bias}(\overline{X}_n^w)=0$ and  ${\rm mse}(\overline{X}_n^w)={\mathbb V}(\overline{X}_n^w)=\sigma^2\sum_j{w_j^2}$, so the  estimator with the least mse in this class is obtained by minimizing $w\mapsto |w|^2$ under the constraint $\sum_jw_j=1$, which gives $w=(1/n,\cdots,1/n)$, corresponding to the sample mean $\overline X_n$. \qed
\end{example}

\begin{example}\label{monte:carlo}(Monte Carlo estimator)
	Let $X:\Omega\to\mathbb R^m$ be a random vector with a pdf $\psi$ whose support is contained in the unit cube $[0,1]^m$ and let $f:[0,1]^m\to\mathbb R$ be such that $f\psi$ is Lebesgue integrable. If $X_j\sim \psi$ is a i.i.d. sample it is immediate from Example \ref{al:mean} that the {\em Monte Carlo estimator}
	\[
	\widehat\mu^f_{(n)}:=\frac{1}{n}\sum_{j=1}^nf(X_j),
	\]
	is an unbiased estimator for the unknown parameter
	\begin{equation}\label{int:mont}
		\mu^f:=\mathbb E(f(X_j))=\int_{\mathbb [0,1]^m}f(x)\psi(x)dx
	\end{equation}
	which is {consistent} because 
	\begin{equation}\label{lim:monte}
		\lim_{n\to+\infty}\widehat\mu^f_{(n)}= \mu^f
	\end{equation}
	in probability by LLN. As usual, this also follows from Proposition \ref{mse:consist}, given that 
	\begin{equation}\label{est:l2}
		{\rm mse}(\widehat\mu^f_{(n)})={\mathbb V}(\widehat\mu^f_{(n)})=\frac{\sigma_f^2}{{n}},
	\end{equation}
	where $\sigma_f^2$ is the (common) variance of $f(X_j)$. Notice that this conveys a $O(n^{-1/2})$ convergence rate for (\ref{lim:monte}) which can be made explicit if   
	we apply Chebyshev's inequality (\ref{cheby:ineq:2}) with $X=\widehat\mu^f_{(n)}$, $\sigma=\sigma_f/\sqrt{n}$ and $c=1/\sqrt{\delta}$, $\delta>0$, so that 
	\[
	P\left(\left|\widehat\mu^f_{(n)}-\mu^f\right|\leq
	\frac{1}{\sqrt{\delta}}\frac{\sigma_f}{\sqrt{n}}\right)\geq 1-\delta.
	\]
	Thus, one needs at least
	\begin{equation}\label{n:effec}
		n\approx \frac{1}{\delta}\frac{\sigma_f^2}{\varepsilon^2}
	\end{equation}
	samples 
	in order to obtain a dispersion of at most $\varepsilon$ of the estimator around the expected value $\mu^f$ with probability at least $1-\delta$. 						
	This may be substantially improved if we appeal to CLT (Theorem \ref{clt}) to obtain
	\[
	\lim_{n\to+\infty}P\left(\left|\widehat\mu^f_{(n)}-\mu^f\right|\leq \eta\frac{\sigma_f}{\sqrt{n}}\right)= \frac{1}{\sqrt{2\pi}}
	\int_{-\eta}^{\eta}e^{-x^2/2}dx
	\stackrel{(\ref{exp:bound:1:2})}{\approx} 1-e^{-\eta^2/2},\quad \eta\to 0,
	\]
	which allows us to replace the previous estimate by 
	\begin{equation}\label{n:asym}
		n\approx 2\ln(1/\delta)\frac{\sigma_f^2}{\varepsilon^2}. 
	\end{equation}	
	One should be aware, however, that whereas the estimate (\ref{n:effec}) holds non-asympt\-ot\-ically, as its comes from Chebyshev's inequality, the estimate (\ref{n:asym}) becomes reliable only in the asymptotic regime ($n\to+\infty$); see Remarks \ref{lln:append} and \ref{miscon}. 						
	Regardless of the shape of their dependence on $\delta$, the estimates above share 
	the nice property of not depending on $m$, so that the ``dimensionality curse'' is not present here. Of course this is one of the reasons why Monte Carlo methods, based on (\ref{lim:monte}), are quite versatile  in approximating multiple integrals as those in the right-hand side of (\ref{int:mont})\footnote{This method should be compared with the usual numerical approach which requires evaluation of $f\psi$ on a $\varepsilon$-net and hence has a complexity that grows like $\varepsilon^{-m}$.}. Besides its slow $O(n^{-1/2})$ convergence rate, an obvious drawback of this method is its explicit dependence on the standard deviation $\sigma_f$, which is at least as hard to compute as $\mu^f$ itself. The simplest choices avoiding this latter problem (by explicitly bounding the variance) corresponds to taking $\{X_j\}$ {\em uniformly} distributed in $[0,1]^m$ (so that $\psi={\bf 1}_{[0,1]^m}$), and $f={\bf 1}_{B}$, the indicator of a Borel set $B\subset [0,1]^m$, so that $\mu^f={\rm vol}_m(B)$, the $m$-volume of $M$, and $\sigma_f^2={\rm vol}_m(B)(1-{\rm vol}_m(B))\leq 1/4$. Thus, at least the volumes of (well-behaved) Borel subsets can be efficiently calculated if we are able to provide low cost simulations of independent, uniformly distributed random variables on $[0,1]^m$ \cite{robert2010introducing}.  \qed
\end{example}

\subsection{Computing the mean squared error of  $\widehat\sigma^2_c$}\label{meas:comp}
We further illustrate  the concepts introduced in Definition \ref{unbias} by    
determining the ``best'' estimator for the variance $\sigma^2$ in the family 
\begin{equation}\label{fam:est}
	\widehat\sigma^2_c(X)=h_c(X_1,\cdots,X_n),\quad c>0,
\end{equation}
where 
\[
h_c(X_1,\cdots,X_n)=c\sum_{j=1}^n(X_j-\overline X_n)^2
\]
and
\[
\overline X_n=\frac{1}{n}\left(X_1+\cdots+X_n\right) 
\]
is the {sample mean}; see Example \ref{al:mean}. Thus, $\theta=\sigma^2>0$ and $\Theta=\mathbb R_+$. 
This involves minimizing the corresponding {mean squared error} ${\rm mse}\,(\widehat\sigma^2_c)$ viewed as a function of $c$\footnote{Whenever no confusion arises, we will omit the dependence of $\widehat\sigma^2_c$ on $X=(X_1,\cdots,X_n)$.}.

\begin{proposition}\label{est:theor}
	One has 
	\begin{equation}\label{est:theor:2}
		{\rm bias}(\widehat\sigma^2_c)=\left(c(n-1)-1\right)\sigma^2.
	\end{equation}
\end{proposition}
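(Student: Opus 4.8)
The plan is to compute $\mathbb E(\widehat\sigma^2_c)$ directly and subtract $\sigma^2$, since by Definition \ref{unbias} the bias is $\mathbb E(\widehat\sigma^2_c - \theta) = \mathbb E(\widehat\sigma^2_c) - \sigma^2$. Because the constant $c$ factors out of the expectation, everything reduces to evaluating $\mathbb E\!\left(\sum_{j=1}^n (X_j - \overline X_n)^2\right)$, and I expect to show this equals $(n-1)\sigma^2$, so that $\mathbb E(\widehat\sigma^2_c) = c(n-1)\sigma^2$ and the claimed formula follows at once.

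First I would expand the sum of squares using the standard algebraic identity
\[
\sum_{j=1}^n (X_j - \overline X_n)^2 = \sum_{j=1}^n X_j^2 - n\,\overline X_n^2,
\]
which is obtained by writing $(X_j - \overline X_n)^2 = X_j^2 - 2 X_j \overline X_n + \overline X_n^2$ and summing, using $\sum_j X_j = n\overline X_n$. Taking expectations and exploiting linearity, the problem splits into two pieces. For the first, since the $X_j$ are identically distributed with $\mathbb E(X_j) = \mu$ and $\mathrm{var}(X_j) = \sigma^2$, the formula $\mathbb E(X_j^2) = \mathrm{var}(X_j) + (\mathbb E X_j)^2 = \sigma^2 + \mu^2$ (see (\ref{var:deff})) gives $\mathbb E\!\left(\sum_j X_j^2\right) = n(\sigma^2 + \mu^2)$.

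For the second piece I would invoke Example \ref{al:mean}: there $\mathbb E(\overline X_n) = \mu$ and, crucially, (\ref{form:var:mean}) gives $\mathrm{var}(\overline X_n) = \sigma^2/n$, this last fact being exactly where the independence (or at least pairwise uncorrelatedness) of the sample enters, via (\ref{uncorr:var}). Hence $\mathbb E(\overline X_n^2) = \mathrm{var}(\overline X_n) + (\mathbb E \overline X_n)^2 = \sigma^2/n + \mu^2$, so that $\mathbb E(n\,\overline X_n^2) = \sigma^2 + n\mu^2$. Subtracting, the $n\mu^2$ terms cancel and I obtain $\mathbb E\!\left(\sum_j (X_j - \overline X_n)^2\right) = n(\sigma^2 + \mu^2) - (\sigma^2 + n\mu^2) = (n-1)\sigma^2$, whence $\mathbb E(\widehat\sigma^2_c) = c(n-1)\sigma^2$ and therefore $\mathrm{bias}(\widehat\sigma^2_c) = c(n-1)\sigma^2 - \sigma^2 = (c(n-1)-1)\sigma^2$.

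There is no genuine obstacle here; the argument is a routine second-moment computation. The only point requiring slight care is that the cancellation of the $\mu^2$ terms, and the appearance of the factor $n-1$ rather than $n$, both hinge on the variance reduction $\mathrm{var}(\overline X_n) = \sigma^2/n$, which is itself a consequence of independence. Thus the single substantive input is (\ref{form:var:mean}) from Example \ref{al:mean}; everything else is bookkeeping. As a sanity check, the formula shows that the choice $c = 1/(n-1)$ yields zero bias, recovering the usual unbiased sample variance.
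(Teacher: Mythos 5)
Your proof is correct and is essentially the same argument as the paper's: both reduce to showing $\mathbb E\bigl(\sum_j(X_j-\overline X_n)^2\bigr)=(n-1)\sigma^2$ via a second-moment computation whose only substantive input is uncorrelatedness of the sample, i.e.\ (\ref{uncorr:var}). The only cosmetic difference is the bookkeeping: you use the identity $\sum_j(X_j-\overline X_n)^2=\sum_jX_j^2-n\overline X_n^2$ together with ${\rm var}(\overline X_n)=\sigma^2/n$, whereas the paper expands each deviation as $X_j-\overline X_n=\frac{n-1}{n}X_j-\frac{1}{n}\sum_{k\neq j}X_k$ and computes its variance termwise; both routes land on the same cancellation.
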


\begin{proof}
	Using that
	\begin{equation}\label{expans:xj}
		X_j-\overline X_n=\frac{n-1}{n}X_j-\frac{1}{n}\sum_{k\neq j}X_k
	\end{equation}
	we first note that $\mathbb E(X_j-\overline X_n)=0$ 
	and hence
	\[
	\mathbb E(\widehat\sigma^2_c)=c\sum_{i=1}^n\mathbb E((X_j-\overline X_n)^2)=c\sum_{j=1}^n{\mathbb V}(X_j-\overline X_n). 
	\] 
	From 
	the independence assumption and (\ref{uncorr:var}) we get, again using (\ref{expans:xj}), 
	\[
	{\mathbb V}(X_j-\overline X_n)=\frac{(n-1)^2}{n^2}{\mathbb V}(X_j)+\frac{1}{n^2}\sum_{k\neq j}{\mathbb V}(X_k)=\frac{n-1}{n}\sigma^2, 
	\]
	so that 
	\begin{equation}\label{psi:theta}
		\mathbb E(\widehat\sigma^2_c)=cn\frac{n-1}{n}\sigma^2=c(n-1)\sigma^2,
	\end{equation}
	and the result follows.
\end{proof}

\begin{corollary}\label{unb:sigmac}
	$\widehat\sigma^2_c$ is {unbiased} only if $c=(n-1)^{-1}$. 
\end{corollary}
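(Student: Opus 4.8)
The plan is to read off the result directly from Proposition \ref{est:theor}. That proposition gives the explicit bias formula
\[
{\rm bias}(\widehat\sigma^2_c)=\bigl(c(n-1)-1\bigr)\sigma^2,
\]
so the entire content of the corollary is an elementary algebraic consequence, with no further probabilistic input required.

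First I would recall that, by Definition \ref{unbias}, the estimator $\widehat\sigma^2_c$ is unbiased precisely when ${\rm bias}(\widehat\sigma^2_c)=0$ holds for every admissible value of the parameter, that is, for all $\sigma^2>0$ (see also Remark \ref{stric:sp} on the implicit dependence on $\theta=\sigma^2$). Substituting the bias formula, this amounts to requiring $\bigl(c(n-1)-1\bigr)\sigma^2=0$ for all $\sigma^2>0$. Since $\sigma^2$ is strictly positive, the factor $\sigma^2$ never vanishes, and we may cancel it to isolate the condition $c(n-1)-1=0$, i.e. $c=(n-1)^{-1}$.

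There is essentially no obstacle here: the only point worth emphasizing is that unbiasedness is imposed uniformly in $\theta=\sigma^2$, and it is exactly the strict positivity of $\sigma^2$ (recall $\Theta=\mathbb R_+$) that permits division and forces the value of $c$. In particular the argument establishes not merely the stated necessity but the full equivalence, so $c=(n-1)^{-1}$ is the unique member of the family (\ref{fam:est}) producing an unbiased estimator for $\sigma^2$.
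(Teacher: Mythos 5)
Your proof is correct and is exactly the argument the paper intends: the corollary follows immediately from the bias formula of Proposition \ref{est:theor} together with the strict positivity of $\sigma^2$, which is why the paper leaves it without a written proof. Nothing further is needed.
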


\begin{definition}\label{samp:cov}
	The {\em sample variance} of the sample $\{X_j\}$ as above is	
	\[
	S_n^2:=\widehat\sigma^2_{(n-1)^{-1}}=\frac{1}{n-1}\sum_{j=1}^n(X_j-\overline X_n)^2.
	\]
	Also, $S_n=\sqrt{S_n^2}$ is the {\em sample standard deviation}. 
\end{definition}

Recall that in general an unbiased estimator $\widehat\theta$ satisfies $\mathbb E(\widehat\theta)=\theta$, so that the target parameter $\theta$ is the expected value of the corresponding sample distribution. Intuitively, the unbiasedness property says that on average the estimator hits the right target. This is the main reason why unbiased estimators are often used in applications and we provide below two classical results (Theorems \ref{gauss:markov} and \ref{cr:rao:th}) ensuring that unbiased estimators minimize their variance (and hence their mse) within certain classes of competing unbiased estimators. However, we point out that the family $\widehat\sigma^2_c$ above may be used 
to illustrate that in general the best variance estimator might not be unbiased (that is, the function $c\mapsto {\rm mse}(\widehat\sigma^2_c)$ is minimized for some $c\neq (n-1)^{-1}$), which turns out to be a manifestation of the variance-bias trade-off in (\ref{mse}). For this we need to compute ${\mathbb V}(\widehat\sigma^2_c)$, which we do by assuming in the rest of the calculation  that each $X_j$ is normally distributed: $X_j\sim\mathcal N(\mu,\sigma^2)$.  
Let us set 
\[
U^2=\sum_{j=1}^n\left(\frac{X_j-\mu}{\sigma}\right)^2, \quad 	V^2=n\left(\frac{\overline X_n-\mu}{\sigma}\right)^2.
\]

\begin{proposition}\label{sum:sq:stnorm}
	If $\{X_j\}$ is independent with $X_j\sim\mathcal N(\mu,\sigma^2)$ then $U^2\sim \chi^2_n$ and $V^2\sim \chi^2_1$.
\end{proposition}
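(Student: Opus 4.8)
The plan is to reduce both claims to Corollary \ref{sum:norm:sq}, which represents $\chi^2_k$ as the distribution of a sum of squares of $k$ independent standard normal variables. The two assertions then differ only in how one manufactures the underlying standard normal(s), and in each case this is supplied by the affine-invariance of the Gaussian family recorded in Proposition \ref{norm:spce}.

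First I would handle $U^2$. Setting $Z_j := (X_j-\mu)/\sigma$, I apply Proposition \ref{norm:spce} (2) with $r=1/\sigma$ and $s=-\mu/\sigma$ to get $Z_j\sim\mathcal N(0,1)$ for every $j$. Since the $X_j$ are independent and each $Z_j$ is a Borel function of $X_j$ alone, the family $\{Z_j\}_{j=1}^n$ remains independent. Corollary \ref{sum:norm:sq} then gives $U^2=\sum_{j=1}^n Z_j^2\sim\chi^2_n$ at once.

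Next I would treat $V^2$. The key observation is that $\overline X_n=\sum_j (1/n)X_j$ is a linear combination of independent normals, so (\ref{norm:space:3}) in Proposition \ref{norm:spce} (3), taken with $a_j=1/n$, $\mu_j=\mu$ and $\sigma_j^2=\sigma^2$, yields $\overline X_n\sim\mathcal N(\mu,\sigma^2/n)$. Standardizing once more via Proposition \ref{norm:spce} (2),
\[
W:=\frac{\overline X_n-\mu}{\sigma/\sqrt{n}}\sim\mathcal N(0,1),
\]
and since $V^2=n\bigl(\tfrac{\overline X_n-\mu}{\sigma}\bigr)^2=W^2$, Corollary \ref{sum:norm:sq} with $k=1$ gives $V^2\sim\chi^2_1$.

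I do not expect any genuine obstacle here: the statement is a bookkeeping exercise combining the affine-invariance of the normal family (Proposition \ref{norm:spce} (2)--(3)) with the sum-of-squares description of the chi-squared distribution (Corollary \ref{sum:norm:sq}). The only point demanding a moment's care is the preservation of independence under the pointwise standardization $X_j\mapsto Z_j$, which is immediate since measurable functions of independent random variables are again independent.
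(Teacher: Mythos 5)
Your proof is correct and follows essentially the same route as the paper's: standardize each $X_j$ via Proposition \ref{norm:spce}, identify $V^2=W^2$ with $W=(\overline X_n-\mu)/(\sigma/\sqrt{n})\sim\mathcal N(0,1)$, and invoke Corollary \ref{sum:norm:sq}. You simply spell out the intermediate steps (the distribution of $\overline X_n$ and the preservation of independence under standardization) that the paper leaves implicit.
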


\begin{proof}
	Note that $\sigma^{-1}(X_j-\mu)\sim \mathcal N(0,1)$ by Proposition \ref{norm:spce}, which can also be used to check that $V^2=W^2$, where  
	\[
	W=\frac{\overline X_n-\mu}{\sigma/\sqrt{n}}\sim \mathcal N(0,1).
	\]
	The results then follow from Proposition \ref{sum:norm:sq}.
\end{proof}

\begin{proposition} \label{indep:ms}
	If $\{X_j\}$ is independent and  $X_j\sim \mathcal N(\mu,\sigma^2)$ then
	$\overline X_n$ and 
	\[
	\mathcal S^2:=\sum_j(X_j-\overline X_n)^2
	\]
	are independent. In particular, $\overline X_n\perp\!\!\perp \widehat\sigma^2_c$, $c>0$.
\end{proposition}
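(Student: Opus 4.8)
The plan is to reduce everything to the standard normal case and then exploit the rotational invariance established in Corollary \ref{unc:ind:n:c}. First I would standardize: setting $Z_j=(X_j-\mu)/\sigma$, Proposition \ref{norm:spce} gives $Z_j\sim\mathcal N(0,1)$, and independence is preserved under these deterministic transformations, so $Z=(Z_1,\cdots,Z_n)\sim\mathcal N(\vec 0,{\rm Id}_n)$ by Corollary \ref{assert:eq:ind}. Since $\overline X_n=\mu+\sigma\overline Z_n$ and $\mathcal S^2=\sigma^2\sum_j(Z_j-\overline Z_n)^2$, it suffices to prove that $\overline Z_n$ and $\sum_j(Z_j-\overline Z_n)^2$ are independent.

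Next I would introduce an orthogonal matrix $C$ whose first row is $n^{-1/2}(1,\cdots,1)$ --- such a $C$ exists by completing this unit vector to an orthonormal basis --- and set $Y=CZ$. By rotational invariance (Corollary \ref{unc:ind:n:c}), $\{Y_l\}_{l=1}^n$ is independent with $Y_l\sim\mathcal N(0,1)$. The choice of first row gives $Y_1=n^{-1/2}\sum_j Z_j=\sqrt n\,\overline Z_n$, so $\overline Z_n$ is a function of $Y_1$ alone. Because $C$ is orthogonal, $\sum_l Y_l^2=\|Y\|^2=\|Z\|^2=\sum_j Z_j^2$, and the elementary identity $\sum_j(Z_j-\overline Z_n)^2=\sum_j Z_j^2-n\overline Z_n^2$ then yields
\[
\sum_j(Z_j-\overline Z_n)^2=\sum_{l=1}^n Y_l^2-Y_1^2=\sum_{l=2}^n Y_l^2,
\]
so the statistic built from the centered data is a function of $(Y_2,\cdots,Y_n)$ only.

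Finally I would conclude by independence of disjoint blocks: since $\{Y_l\}_{l=1}^n$ is independent, the $\sigma$-algebras generated by $Y_1$ and by $(Y_2,\cdots,Y_n)$ are independent (immediate from Definition \ref{def:ind:levels}), and therefore the functions $\overline Z_n=n^{-1/2}Y_1$ and $\sum_{l=2}^n Y_l^2$ are independent as well. Undoing the standardization, $\overline X_n$ and $\mathcal S^2$ are independent, and since $\widehat\sigma^2_c=c\,\mathcal S^2$ is a fixed positive multiple of $\mathcal S^2$, the stated independence $\overline X_n\perp\widehat\sigma^2_c$ follows at once.

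The only genuinely delicate point is the passage from the independence of the coordinates $\{Y_l\}$ to the independence of the two nonlinear statistics built from disjoint coordinate blocks; I expect this to be the main step to phrase carefully, though it reduces to the standard fact that measurable functions of independent random vectors remain independent. Everything else is either an algebraic identity or a direct appeal to rotational invariance.
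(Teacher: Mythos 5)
Your proof is correct and follows essentially the same route as the paper's: an orthogonal matrix with first row $n^{-1/2}(1,\dots,1)$, the identity $\sum_j(Z_j-\overline Z_n)^2=\sum_{l=2}^nY_l^2$, and rotational invariance (Corollary \ref{unc:ind:n:c}). Your version is if anything slightly cleaner, since you invoke the corollary directly for the full independence of $\{Y_l\}_{l=1}^n$, whereas the paper separately checks ${\rm cov}(Y_1,Y_l)=0$ before appealing to Proposition \ref{unc:ind:n}.
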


\begin{proof}
	We may assume that $\mu=0$. Note that 
	\[
	\mathcal S^2=\sum_jX^2_j-Y_1^2,
	\]
	where 
	\begin{equation}\label{express:y1}
		Y_1=\sqrt{n}\overline X_n=\sum_j\frac{X_j}{\sqrt{n}}
	\end{equation}
	is normal.
	By Gramm-Schmidt there exists an orthogonal $n\times n$ matrix, say  $O$,  whose first line is the vector $(1/\sqrt{n},\dots,1/\sqrt{n})$ i.e. $O_{1j}=1/\sqrt{n}$. If $Y=OX$ then  $Y_1$ is indeed given by (\ref{express:y1}) so that
	\[
	\mathcal S^2=\|X\|^2-Y_1^2=\|Y\|^2-Y_1^2=\sum_{l=2}^{n}Y_l^2. 
	\]
	From Corollary \ref{unc:ind:n:c}, $Y'=(Y_2,\cdots,Y_{n})$ is normally distributed and $\{Y_l\}_{l=2}^n$ is independent as well so its covariance matrix is diagonal. Moreover, using again the independence of $\{X_j\}$, we compute for $l\geq 2$ that
	\begin{eqnarray*}
		{\mathbb C}(Y_1,Y_l)
		& = & {\mathbb C}\left(\sum_jO_{1j}X_j,\sum_k O_{lk}X_k\right)\\
		& = & \sigma^2\sum_j O_{1j}O_{lj}\\
		& = & 0,
	\end{eqnarray*}
	so that $\{Y_j\}_{j=1}^n$  is independent by Proposition \ref{unc:ind:n}. In particular, $\mathcal S^2=\|Y'\|^2\perp\!\!\perp Y_1/\sqrt{n}= \overline X_n$, as claimed.     
\end{proof}

\begin{proposition}\label{u:v}
	If 
	\begin{equation}\label{sigma:def}
		\Sigma^2:=\sigma^{-2}\sum_j(X_j-\overline X_n)^2
	\end{equation}
	then
	\begin{equation}\label{chi2:nm1}
		\Sigma^2\sim\chi^2_{n-1}.
	\end{equation}
	In particular,  
	\begin{equation}\label{var:c:cp}
		{\mathbb V}(\Sigma^2)=2(n-1).
	\end{equation}
\end{proposition}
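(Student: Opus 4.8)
The plan is to reduce $\Sigma^2$ to a sum of exactly $n-1$ squared independent standard normals and then read off both assertions from the distributional facts already in place. First I would remove the mean by exploiting the translation invariance of the summand: setting $\widetilde X_j = X_j - \mu$, one has $X_j - \overline X_n = \widetilde X_j - \overline{\widetilde X}_n$, so that $\sum_j (X_j - \overline X_n)^2 = \sum_j (\widetilde X_j - \overline{\widetilde X}_n)^2$, with $\widetilde X_j \sim \mathcal N(0,\sigma^2)$. Hence there is no loss of generality in assuming $\mu = 0$, i.e. $X_j \sim \mathcal N(0,\sigma^2)$, from the outset.

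The heart of the argument reuses the orthogonal change of variables already employed in the proof of Proposition \ref{indep:ms}. I would pick an orthogonal matrix $O$ whose first row is $(1/\sqrt n,\dots,1/\sqrt n)$ and put $Y = OX$, so that $Y_1 = \sqrt n\,\overline X_n$. Since $O$ preserves the Euclidean norm,
\[
\mathcal S^2 = \sum_{j=1}^n (X_j - \overline X_n)^2 = \|X\|^2 - Y_1^2 = \|Y\|^2 - Y_1^2 = \sum_{l=2}^n Y_l^2.
\]
By the rotational invariance of standard Gaussian vectors (Corollary \ref{unc:ind:n:c}), $\{Y_l\}_{l=1}^n$ is independent with each $Y_l \sim \mathcal N(0,\sigma^2)$; in particular the variables $Y_l/\sigma \sim \mathcal N(0,1)$, $l = 2,\dots,n$, are independent standard normals.

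It then follows that
\[
\Sigma^2 = \sigma^{-2}\mathcal S^2 = \sum_{l=2}^n \left(\frac{Y_l}{\sigma}\right)^2,
\]
a sum of precisely $n-1$ independent squared standard normals, so Corollary \ref{sum:norm:sq} yields $\Sigma^2 \sim \chi^2_{n-1}$, which is (\ref{chi2:nm1}). The variance identity (\ref{var:c:cp}) is then immediate from Corollary \ref{chi:sq:ms}, according to which a $\chi^2_k$ variable has variance $2k$, here with $k = n-1$.

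I do not anticipate a real obstacle, since the geometric decomposition is already packaged in Proposition \ref{indep:ms}. The only point demanding care is the bookkeeping of the single degree of freedom ``lost'' to the direction $(1/\sqrt n,\dots,1/\sqrt n)$: one must be sure that the sum genuinely runs over the $n-1$ indices $l = 2,\dots,n$ rather than over all $n$. This is exactly what the norm preservation of $O$ and the isolation of $Y_1 = \sqrt n\,\overline X_n$ accomplish, and it is the crux of why the degrees of freedom drop from $n$ to $n-1$.
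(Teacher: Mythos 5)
Your proof is correct, but it takes a different route from the paper's. The paper proves (\ref{chi2:nm1}) by combining the algebraic identity $U^2=\Sigma^2+V^2$ from (\ref{U:sigma:V}) with the independence of $\{\Sigma^2,V^2\}$ (Proposition \ref{indep:ms}), so that $\phi_{U^2}=\phi_{\Sigma^2}\phi_{V^2}$; since $U^2\sim\chi^2_n$ and $V^2\sim\chi^2_1$, dividing the characteristic functions gives $\phi_{\Sigma^2}(u)=(1-2u{\bf i})^{-(n-1)/2}$ and hence the claim. You instead finish off the orthogonal-coordinates computation that the paper only uses \emph{inside} the proof of Proposition \ref{indep:ms}: you isolate $Y_1=\sqrt{n}\,\overline X_n$, write $\mathcal S^2=\sum_{l=2}^n Y_l^2$ explicitly as a sum of $n-1$ independent squared $\mathcal N(0,\sigma^2)$ variables via rotational invariance, and apply Corollary \ref{sum:norm:sq} directly. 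Your route is more self-contained and makes the loss of one degree of freedom geometrically transparent, with no appeal to the uniqueness of characteristic functions; the paper's route is shorter given that Proposition \ref{indep:ms} is already in hand, and its ``division of characteristic functions'' trick is the one that generalizes cleanly to settings like Example \ref{two:samples} and Proposition \ref{u:v:gen}, where an explicit orthogonal diagonalization would be more cumbersome. Your explicit reduction to $\mu=0$ via translation invariance is a welcome clarification of a step the paper leaves implicit.
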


\begin{proof} Upon multiplication by $\sigma^{-2}$,
	the elementary algebraic identity
	\begin{equation}\label{U:sigma:V:0}
		\sum_{j=1}^n\left(X_j-\mu\right)^2=\sum_{j=1}^n\left(X_j-\overline X_n\right)^2+ n\left(\overline X_n-\mu\right)^2
	\end{equation}
	becomes 
	\begin{equation}\label{U:sigma:V}
		U^2=\Sigma^2+V^2.
	\end{equation}
	Since $\{\Sigma^2,V^2\}$ is independent (by 
	Proposition \ref{indep:ms})  we have $\phi_{U^2}=\phi_{\Sigma^2}\phi_{V^2}$ so that Corollary \ref{gamma:cf} applies to give
	\[
	\phi_{\Sigma^2}(u)=(1-2u{\bf i})^{-(n-1)/2},
	\]
	which yields (\ref{chi2:nm1}). 
	Finally, (\ref{var:c:cp}) follows from Corollary \ref{chi:sq:ms}.
\end{proof} 

\begin{remark}\label{U:sigma:V:r}
	The parameter $\mu$ plays no essential role in the validity of the identity (\ref{U:sigma:V:0}), 
	the only relevant point being that $\overline X_n$ is the arithmetic mean of $\{X_j\}_{j=1}^n$. Thus, (\ref{U:sigma:V:0}) 
	remains true if $\mu$ gets replaced by any real number:
	\begin{equation}\label{U:sigma:V:c}
		\sum_{j=1}^n\left(\mathfrak y_j-c\right)^2=\sum_{j=1}^n\left(\mathfrak y_j-\overline {\mathfrak y}_n\right)^2+ n\left(\overline{\mathfrak y}_n-c\right)^2, \quad c\in\mathbb R,
	\end{equation}
	where 
	\[
	\overline{\mathfrak y}_n=\frac{\mathfrak y_1+\cdots+\mathfrak y_n}{n}.
	\]
	In this more general form, this important identity resurfaces at many points below (see Remark \ref{fis:comp:new}, and Examples \ref{corr:dist} and \ref{ow:anova}).  \qed
\end{remark}

We may record the result of our computation as follows. 

\begin{proposition}\label{stat:normal}
	If $\{X_j\}$ is independent with $X_j\sim \mathcal N(\mu,\sigma^2)$ then 
	\begin{equation}\label{stat:norm:eq:0}
		{\mathbb V}(\widehat\sigma^2_c)=2(n-1)c^2\sigma^4.
	\end{equation}
	As a consequence, 
	\begin{equation}\label{stat:normal:eq}
		{\rm mse}\,(\widehat\sigma^2_c)=\left((n-1)(n+1)c^2-2(n-1)c+1\right)\sigma^4.
	\end{equation}
\end{proposition}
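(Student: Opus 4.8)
The plan is to leverage the distributional identification already secured in Proposition~\ref{u:v}. Observe that the estimator can be written as $\widehat\sigma^2_c = c\sigma^2\,\Sigma^2$, where $\Sigma^2 = \sigma^{-2}\sum_j(X_j-\overline X_n)^2$ is precisely the quantity defined in (\ref{sigma:def}). Since (\ref{chi2:nm1}) tells us that $\Sigma^2\sim\chi^2_{n-1}$, all the distributional heavy lifting is done; the variance of $\widehat\sigma^2_c$ is then just a rescaling of the variance of a chi-squared variable. Concretely, I would compute
\[
{\rm var}(\widehat\sigma^2_c) = c^2\sigma^4\,{\rm var}(\Sigma^2) = c^2\sigma^4\cdot 2(n-1),
\]
using (\ref{var:c:cp}) (itself a consequence of Corollary~\ref{chi:sq:ms}) in the last step. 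This immediately yields (\ref{stat:norm:eq:0}).

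For the mean squared error, the natural tool is the bias--variance decomposition (\ref{mse}) from Convention~\ref{conv:uni}, valid here since $\theta=\sigma^2\in\mathbb R$ is one-dimensional. I would combine the variance just obtained with the bias already computed in Proposition~\ref{est:theor}, namely ${\rm bias}(\widehat\sigma^2_c)=(c(n-1)-1)\sigma^2$ from (\ref{est:theor:2}). Substituting both into ${\rm mse}(\widehat\sigma^2_c)={\rm var}(\widehat\sigma^2_c)+|{\rm bias}(\widehat\sigma^2_c)|^2$ gives
\[
{\rm mse}(\widehat\sigma^2_c)=\Bigl(2(n-1)c^2 + \bigl(c(n-1)-1\bigr)^2\Bigr)\sigma^4.
\]

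The only remaining task is the elementary algebra of expanding $(c(n-1)-1)^2 = c^2(n-1)^2 - 2c(n-1) + 1$ and collecting the coefficient of $c^2$, which equals $2(n-1)+(n-1)^2=(n-1)(n+1)$, producing exactly (\ref{stat:normal:eq}). I do not anticipate any genuine obstacle in this proof: all the probabilistic content—the independence of $\overline X_n$ and $\mathcal S^2$ (Proposition~\ref{indep:ms}) feeding into $\Sigma^2\sim\chi^2_{n-1}$, and the moment formulas for chi-squared variables—has been established in the preceding propositions, so what remains is purely a matter of assembling these ingredients and carrying out a short computation. If anything warrants care, it is merely bookkeeping of the factors of $\sigma^2$ versus $\sigma^4$ and ensuring the bias term is squared before being added to the variance.
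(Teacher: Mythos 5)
Your proof is correct and follows exactly the paper's route: the paper's own proof is the one-line instruction to combine (\ref{mse}), (\ref{est:theor:2}) and (\ref{var:c:cp}), which is precisely the assembly you carry out via $\widehat\sigma^2_c=c\sigma^2\Sigma^2$ and the bias--variance decomposition. The algebra checks out, so nothing further is needed.
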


\begin{proof}
	Combine (\ref{mse}), (\ref{est:theor:2}) and (\ref{var:c:cp}).
\end{proof}

\begin{corollary}\label{stat:normal:c}
	Under the conditions above,  ${\rm mse}\,(\widehat\sigma^2_c)$
	is minimized for $c=(n+1)^{-1}$.
\end{corollary}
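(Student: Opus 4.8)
The plan is to treat the expression for the mean squared error furnished by Proposition \ref{stat:normal} purely as a function of the scalar $c$ and locate its minimum by elementary single-variable calculus. From (\ref{stat:normal:eq}) we have
\[
{\rm mse}\,(\widehat\sigma^2_c)=\left((n-1)(n+1)c^2-2(n-1)c+1\right)\sigma^4,
\]
which I would first recognize as a quadratic polynomial in $c$ with leading coefficient $(n-1)(n+1)\sigma^4$. Since $n\geq 2$ and $\sigma^2>0$, this leading coefficient is strictly positive, so the graph is an upward-opening parabola and therefore possesses a unique global minimizer.

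The key step is simply to differentiate with respect to $c$ and set the derivative to zero. Writing $\rho(c)={\rm mse}\,(\widehat\sigma^2_c)/\sigma^4$, one has
\[
\rho'(c)=2(n-1)(n+1)c-2(n-1),
\]
and solving $\rho'(c)=0$ yields
\[
c=\frac{2(n-1)}{2(n-1)(n+1)}=\frac{1}{n+1},
\]
as claimed. That this critical point is indeed a minimum (rather than a maximum or inflection) follows at once from $\rho''(c)=2(n-1)(n+1)>0$, confirming the convexity already noted above; alternatively, one may complete the square in the quadratic to exhibit $c=(n+1)^{-1}$ as the vertex directly.

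There is essentially no obstacle here: once (\ref{stat:normal:eq}) is in hand, the result is a one-line minimization of a convex quadratic. The only conceptual point worth emphasizing is that the optimal constant $c=(n+1)^{-1}$ differs from the unbiased choice $c=(n-1)^{-1}$ identified in Corollary \ref{unb:sigmac}, so that the minimum-mse estimator in this family is \emph{biased}. This is precisely the manifestation of the bias-variance trade-off (\ref{mse}) alluded to in the discussion preceding Definition \ref{samp:cov}, and it would be natural to remark on this comparison immediately after the computation.
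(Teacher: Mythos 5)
Your proof is correct and follows exactly the route the paper intends: the corollary is stated as an immediate consequence of the quadratic expression (\ref{stat:normal:eq}), and minimizing that convex quadratic in $c$ gives $c=(n+1)^{-1}$ precisely as you compute. Your closing observation about the resulting bias is also the content of Remark \ref{comp:mse}, so nothing is missing.
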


\begin{remark}\label{comp:mse}
	Since 
	\[
	{\rm bias}\widehat\sigma^2_{(n+1)^{-1}}=-2(n+1)^{-1}\sigma^2,
	\]
	which only vanishes in the asymptotic limit $n\to+\infty$, as already advertised Corollary \ref{stat:normal:c} illustrates that an unbiased estimator may fail to be the most efficient one (if the ``performance'' is measured by ${\rm mse}$); a quite similar phenomenon, involving the so-called James-Stein estimator for the mean of certain normal random vectors, appears in Example \ref{james:stein}.  \qed
\end{remark}

\begin{remark}\label{student} (Studentized mean)
	If $Z\sim \mathcal N(0,1)$ and $W\sim\chi^2_k$  then Proposition \ref{norm:chi:stu} says that 
	\[
	\frac{Z}{\sqrt{W/k}}\linebreak\sim \mathfrak t_k,
	\]  
	the {Student's $\mathfrak t$-distribution} with $k\geq 1$ degrees of freedom \cite{student1908probable,fisher1925applic}. In the setting of Proposition \ref{stat:normal} (that is, under sample normality) we may apply this to 
	$Z=\sqrt{n}(\overline X_n-\mu)/\sigma$ (which is $\mathcal N(0,1)$ by  Proposition \ref{norm:spce}) and $W=\widehat\sigma^2_{\sigma^{-2}}$ (just use (\ref{chi2:nm1})) to conclude that 
	\begin{equation}\label{stu:est}
		T_{n-1}:=\frac{\overline X_n-\mu}{S_n/\sqrt{n}}\sim \mathfrak t_{n-1}, 
	\end{equation}
	where $S_n$ is the sample standard deviation (Definition \ref{samp:cov}). Notice that $Z\perp\!\!\perp W$ here by Proposition \ref{indep:ms}\footnote{This independence between the sample mean $\overline X_n$ and the sample standard deviation $S_n$, which is crucial in precisely determining the shape of the sampling distribution of $T_{n-1}$, turns out to be a characteristic feature of normal samples; see \cite{lukacs1942characterization} for a proof which is a clever application of Proposition \ref{charac:p} (3).}.
	We say that $T_{n-1}$ is the {\em studentized mean} of the normal sample $\{X_j\}$. It plays a key role in finding ``small sample'' estimates for the population mean of a normally distributed sample with no prior knowledge of the population variance; see Subsection \ref{conf:int:sub} below. \qed
\end{remark}

\begin{remark}\label{non:norm:s:var}
	If the random sample $\{X_j\}_{j=1}^n$ is not necessarily normal then a somewhat tedious computation gives
	\begin{equation}\label{stat:norm:eq:2}
		{\mathbb V}(\widehat\sigma^2_c)=\frac{(n-1)^2}{n}c^2\sigma^4
		\left(\kappa(X_j)-\frac{n-3}{n-1}\right),			
	\end{equation}
	where 
	\[
	\kappa(X)=\frac{\mathbb E((X-\mathbb E(X))^4)}{{\mathbb V}(X)^2}
	\]
	is the {\em kurtosis}  of $X$, which is finite if we require that 
	$\mathbb E(|X|^4)<+\infty$; see \cite{oneill2014some} for this and many other moment computations.
	In the normal case we have  $\kappa(X_j)=3$ and (\ref{stat:norm:eq:2}) reduces to (\ref{stat:norm:eq:0}). Of course, this general computation suffices if we are merely interested in the conclusions of Proposition \ref{stat:normal}, but we stress that the elegant argument above based on sample normality has the added bonus of yielding an explicit expression for the sampling distribution of the studentized mean $T_{n-1}$ considered in Remark \ref{student}. \qed
\end{remark}

\begin{remark}\!\!$\bigstar$\label{fis:comp:new}
	(The geometric way to Student)
	The calculation leading to Proposition \ref{u:v}, in particular the independence between the sample mean $\overline X_n$ and the sample variance $S_n^2$ in Proposition \ref{indep:ms} which plays a central role in accessing Student's distribution in (\ref{stu:est}) above, may be retrieved by means of the ``$n$-space computations'' due to R. Fisher already mentioned in Remarks \ref{fisher:comp:1} and \ref{fisher:comp:2}\footnote{Recall that we always represent a realization of a random variable, say $\overline X_n$, by the corresponding lower-case symbol, in this case $\overline x_n$.}. Indeed, the probability density spanned by the independent normal random vector $X=(X_1,\cdots,X_n)$, $X_j\sim\mathcal N(\mu,\sigma^2)$, in an infinitesimal region of volume $dx=dx_1\cdots dx_n$ is 
	\begin{equation}\label{joint:x:s2:th}
		\frac{1}{(2\pi)^{n/2}\sigma^n}e^{-\frac{\sum_j(x_j-\mu)^2}{2\sigma^2}}dx 
		= 
		\frac{1}{(2\pi)^{n/2}\sigma^n} e^{-\frac{n(\overline x_n-\mu)^2}{2\sigma^2}}
		e^{-\frac{(n-1)s_n^2}{2\sigma^2}}dx,
	\end{equation}
	where Remark \ref{U:sigma:V:r} has been used.
	Following \cite{fisher1925applic} we now observe that $\overline x_n$ is proportional to the height of $x$ with respect to the hyperplane $H^{n-1}$ defined by $\Sigma_j x_j=0$, whereas $s_n$ is proportional to the distance of $x$ to the line $l^1$ given by $x_1=\cdots =x_n$, with ${\mathscr X_n}:=(\overline x_n,\cdots,\overline x_n)\in l^1$ realizing this distance. Since $H^{n-1}$ and $l^1$ are perpendicular to each other, we may use the corresponding ``cylindrical'' coordinate system to check that $dx$ is proportional to $s_n^{n-2} d\overline x_n ds_nd\theta$, where $d\theta$ is the volume element of the unit sphere $\mathbb S^{n-2}\subset H^{n-1}$ with center located at $H^{n-1}\cap l^1$, the origin of $H^{n-1}$. Leading this to (\ref{joint:x:s2:th}), integrating with respect to $\theta$ and using Proposition \ref{pdf:marg} shows that the infinitesimal joint probability density of the random vector $(\overline X_n,S_n^2)$ is    
	\begin{equation}\label{joint:x:s2}
		\psi_{(\overline X_n,S_n^2)}(\overline x_n,s_n^2)d\overline x_n ds_n^2\approx	e^{-\frac{n(\overline x_n-\mu)^2}{2\sigma^2}}d\overline x_n \times 	e^{-\frac{(n-1)s_n^2}{2\sigma^2}}(s_n^2)^{\frac{n-3}{2}}ds_n^2,
	\end{equation}
	where $\approx$ here means that we are neglecting certain normalizing constants which will take care of themselves. Incidentally, this geometric argument makes it obvious the connection to the previous computational proof of Proposition \ref{indep:ms}: the orthogonal map $Y=OX$ used there carries $H^{n-1}$ onto the coordinate hyperplane $Y_1=0$, which has the net effect of reducing the size of the sample data by one, thus allowing for an induction argument based on Corollary \ref{unc:ind:n:c}. Moreover, it 
	has a number of consequences which we now describe.
	\begin{itemize}
		\item Clearly, (\ref{joint:x:s2}) implies  
		that $\{\overline X_n,S_n^2\}$ is independent.  
		\item Also, it follows from (\ref{joint:x:s2}) that 
		\[
		\psi_{S^2_n}(s_n^2)ds_n^2\approx e^{-\frac{(n-1)s_n^2}{2\sigma^2}}(s_n^2)^{\frac{n-3}{2}}ds_n^2,
		\]
		so if we combine this with 
		(\ref{sigma:def}) we see that 
		\begin{equation}\label{chi:geom}
			\psi_{\Sigma^2}(s_\sigma^2)ds^2_{\sigma}\approx e^{-s_\sigma^2/2}(s_\sigma^2)^\frac{n-3}{2}ds_\sigma^2, \quad s^2_\sigma=\frac{(n-1)s_n^2}{\sigma^2},  
		\end{equation}
		from which we easily deduce (\ref{chi2:nm1}); compare with the computation in Remark \ref{fisher:comp:1}. 
		\item 
		Finally, the geometric argument also provides another way of explicitly computing the probability density of the studentized mean in (\ref{stu:est}). Indeed, (\ref{joint:x:s2}) implies  that  
		\begin{equation}\label{lead:back}
		\psi_{\overline X_n}(\overline x_n)d\overline x_n\approx e^{-\frac{n(\overline x_n-\mu)^2}{2\sigma^2}}d\overline x_n\approx (s_n^2)^{1/2}e^{-\frac{s_n^2t_{n-1}^2}{2\sigma^2}}dt_{n-1},
		\end{equation}
		where for $s_n^2$ fixed we set  
		\[
		t_{n-1}=\frac{\overline x_n-\mu}{s_n/\sqrt{n}},
		\]
		so that 
		\[
		d\overline x_n=\frac{s_n}{\sqrt{n}}	dt_{n-1}
		\]
		by the previously established independence of $\{\overline X_n,S_n\}$.
		Replacing (\ref{lead:back}) back into (\ref{joint:x:s2}) yields an explicit expression for the joint density $\psi_{(T_{n-1},S_n^2)}(t_{n-1},s_n^2)dt_{n-1}ds_n^2$, so that integration with respect to $s_n^2$ gives 
		\begin{eqnarray*}
			\psi_{T_{n-1}}(t_{n-1})dt_{n-1}
			& \approx &  \left(\int_0^{+\infty}(s_n^2)^{\frac{n-2}{2}}e^{-\frac{s_n^2(n-1+t_{n-1}^2)}{2\sigma^2}}ds_n^2\right)dt_{n-1} \\
			& \approx & \left(n-1+t_{n-1}^2\right)^{-n/2}dt_{n-1}\\
			& \approx & \mathfrak t_{n-1}(t_{n-1})dt_{n-1},
		\end{eqnarray*}
		as claimed. \qed 
	\end{itemize}
\end{remark}

\subsection{Confidence intervals}\label{conf:int:sub}
If $\widehat\theta$ is an {\em unbiased} estimator for the parameter $\theta$  whose standard deviation $\sigma_{\widehat\theta}$ is known then Chebyshev's inequality (\ref{cheby:ineq:2}) gives
\[
P(|\widehat\theta-\theta|\leq c\sigma_{\widehat\theta})\geq 1-c^{-2}, \quad c>1,
\] 
which translates into a ``confidence interval'' estimate for the unknown parameter: 
\begin{equation}\label{conf:int:th}
	\theta\in [\widehat\theta\mp c\sigma_{\widehat\theta}]\,{\rm with}\,{\rm prob.}\,{\rm at}\,{\rm least}\,1-c^{-2},
\end{equation}
where here and in the following we denote the interval $[a-b,a+b]$ simply by $[a\mp b]$ whenever convenient.
In particular, this applies to $\widehat\theta=\overline X_n$, the sample mean estimator, which is an unbiased estimator for the population mean $\mu$ (by Example \ref{al:mean}). 
Since $\sigma_{\overline X_n}=\sigma/\sqrt{n}$, if we take $c^{-2}\approx\delta$ and $c\sigma/\sqrt{n}\approx \epsilon$, where $\delta$ and $\epsilon$ are arbitrarily small positive real numbers, then we see that 
\begin{equation}\label{lln:new}
	n>\frac{\sigma^2}{\epsilon^2\delta} \Longrightarrow 
	\mu\in [\overline X_n\mp\epsilon]\,{\rm with}\,{\rm prob.}\,
	\approx
	\,1-\delta.
\end{equation} 
Notice that this retrieves the ``convergence in probability'' version of LLN (under the additional assumption that $\sigma$ is finite); see Remarks \ref{lln:append} and \ref{miscon}. 
We may also turn this into a confidence interval estimate as in (\ref{conf:int:th}):
\begin{equation}\label{conf:int:ls:0}
	\mu\in \left[\overline X_n\mp \frac{1}{\sqrt{\delta}}\frac{\sigma}{\sqrt{n}}\right]\,{\rm with}\,{\rm prob.}\,\approx\,1-\delta,
\end{equation}
If we further require that
the sample is normally distributed ($X_j\sim \mathcal N(\mu,\sigma^2)$) then we can employ
\begin{equation}\label{relax}
	Z_n:= \frac{\overline X_n-\mu}{\sigma/\sqrt{n}}\sim\mathcal N(0,1),
\end{equation} 
where we used Proposition \ref{norm:spce}, 
to find a ``small sample'' confidence interval for the unknown expected value:
\begin{equation}\label{conf:int:ls}
	\mu\in \left[\overline X_n\mp z_{1-\delta/2}\frac{\sigma}{\sqrt{n}}\right]\,{\rm with}\,{\rm prob.}\,\approx\,1-\delta,
\end{equation}
where, for a given $0<\beta<1$, the {\em normal quantile} $z_{\beta}>0$ is determined by  $P(Z\leq z_{\beta})=\beta$, where $Z\sim \mathcal N(0,1)$\footnote{In order to unify the notation for quantiles, we will always impose that their subscripts denote cumulative probabilities. For instance, if $\delta=0.05$ then $z_{1-\delta/2}\approx 1.96$, which shows that roughly two standard deviations around the normal mean suffice to ensure the customary $95\%$ confidence statement.}. 
Notice that if we (more realistically!) relax the normality assumption then (\ref{conf:int:ls}) becomes a ``large sample'' estimate since (\ref{relax}) holds asymptotically as $n\to+\infty$ due to CLT. Upon comparison with (\ref{conf:int:ls}) we see that this amounts to replacing $1/\sqrt{\delta}$ by $z_{1-\delta/2}$ in the estimate for the dispersion around the sample mean\footnote{Thus, if $\delta=0.05$ we are replacing $1/\sqrt{0.05}\approx 4.47$ by $1.96$, which shrinks the dispersion by a factor of $4.47/1.96\approx 2.28$ while still retaining the same confidence level. But recall that this reduction only becomes reliable in the asymptotic regime (Remarks \ref{lln:append} and \ref{miscon}).}.
In any case, the estimates (\ref{conf:int:ls:0}) and (\ref{conf:int:ls}) remain ineffective as long as $\sigma$ is unknown, 
in which case 
(and coming back to a normal sample $X_j\sim\mathcal N(\mu,\sigma^2)$), Remark \ref{student} suggests replacing (\ref{relax}) by (\ref{stu:est}) so as 
to obtain the ``small sample'' estimate
\begin{equation}\label{int:conf:tt}
	\mu\in \left[\overline X_n\mp \mathfrak t_{n-1,1-\delta/2}\frac{S_n}{\sqrt{n}}\right]
	\,{\rm with}\,{\rm prob.}\,\approx\,1-\delta,
\end{equation}
where $P(T_{n-1}\geq \mathfrak t_{n-1,\beta})=1-\beta$ is the ``tail'' probability associated to the $\mathfrak t$-distribution $\mathfrak t_{n-1}$ which defines the corresponding quantile $\mathfrak t_{n-1,\beta}$. The obvious advantage of (\ref{int:conf:tt}) over (\ref{conf:int:ls}) is that no previous knowledge of $\sigma$ is used. Finally, note that in Remark \ref{normal:ww} below it is shown that $\widehat\sigma^2_{n^{-1}}\stackrel{p}{\to}\sigma^2$ under this normality assumption. Since 
\[
\widehat\sigma^2_{(n-1)^{-1}}=\frac{n}{n-1}\widehat\sigma^2_{n^{-1}}, 
\]
we see from Theorem \ref{slutsky} that 
\begin{equation}\label{inf:pop}
	S_n=\sqrt{\widehat\sigma^2_{(n-1)^{-1}}}\stackrel{p}{\to}\sigma.
\end{equation} 
It then follows from  
\[
T_{n-1}=\frac{\sigma}{S_n} Z_n
\] 
and Theorem \ref{slutsky} that (\ref{conf:int:ls}) and (\ref{int:conf:tt}) provide essentially the same information in this asymptotic regime (in the sense that $T_{n-1}-Z_n\stackrel{p}{\to}0$). We stress, however, the usefulness of (\ref{int:conf:tt}) when dealing with small samples, which attests in favor of Student's fundamental contribution coming from Remark \ref{student}.

\begin{remark}\label{kurt:sigma}
	The convergence in (\ref{inf:pop}) holds more generally (that is, with no normality assumption) if we assume that the random sample satisfies $\mathbb E(|X_j|^4)<+\infty$. Indeed, we already know from Corollary \ref{unb:sigmac} that ${\rm bias}(S^2_n)=0$. Also, from (\ref{stat:norm:eq:2}) with $c=(n-1)^{-1}$ we see that ${\mathbb V}(S^2_n)\to 0$. Thus, ${\rm mse}(S^2_n)\to 0$ and Proposition \ref{mse:consist} applies to ensure that $S_n^2\stackrel{p}{\to}\sigma^2$, from which (\ref{inf:pop}) follows. \qed
\end{remark}

\begin{remark}\label{conf:inter}
	It is important to have in mind that if we evaluate the sample mean $\overline X_n$ through a measurement so as to obtain a numerical value, say $\mu_n$, then the corresponding realization of (\ref{conf:int:ls}), namely, 
	\begin{equation}\label{real:intconf}
		\mu\in \left[\mu_n\mp z_{1-\delta/2}\frac{\sigma}{\sqrt{n}}\right]
		\,{\rm with}\,{\rm prob.}\,\approx\,1-\delta,
	\end{equation}
	is completely devoid of sense. Indeed, since any trace of randomness has been removed from  the interval in (\ref{real:intconf}) (it has now become deterministic!) then either $\mu$ definitely belongs to it or not, with probability $0$ or $1$. Thus, the proper way to interpret (\ref{conf:int:ls}) is to regard the corresponding interval as stochastic in nature and to adopt the ``frequentist'' perspective according to which the ``relative frequency'' that 
	\[
	\mu\in \left[\mu_n-z_{1-\delta/2}\frac{\sigma}{\sqrt{n}},
	\mu_n+z_{1-\delta/2}\frac{\sigma}{\sqrt{n}}
	\right]
	\]
	approaches $1-\delta$ as the number of successive realizations $\mu_n$ of $\overline X_n$ becomes larger and larger. Needless to say, similar remarks hold for (\ref{int:conf:tt}) under realizations of $\overline X_n$ and $S_n$. 
	\qed 
\end{remark}

\begin{remark}\label{pivotal}
	Strictly speaking, $Z_n$ and $T_{n-1}$ do {not} qualify as estimators as they are statistics which depend on the underlying parameters. Instead, they are referred to as  {\em pivotal quantities}, 
	a terminology incorporating the appreciated property
	that their distributions do not depend on these parameters.  \qed
\end{remark}

\begin{example}\label{samp:ber}(Sampling from a Bernoulli population)
	If $X_j\sim \mathsf{Ber}(p)$ as in Remark \ref{dem:lap} 
	then the computation leading to (\ref{bin:clt}) also gives
	\begin{equation}\label{ber:pop:clt}
		\sqrt{n}\frac{\overline X_n-p}{\sqrt{p(1-p)}}\stackrel{d}{\to}\mathcal N(0,1), 
	\end{equation}
	where $\overline X_n=X^{(n)}/n$ is the corresponding sample mean. This translates into the ``large sample'' estimate
	\[ 
	p\in \left[\overline X_n\mp z_{1-\delta/2}\frac{\sqrt{p(1-p)}}{\sqrt{n}}\right]
	\,{\rm with}\,{\rm prob.}\,\approx\,1-\delta,
	\]
	which displays the usual drawback, namely, the size of the confidence interval depends on $p$, the parameter we want to estimate. The conservative way of remedying  this is to implement the rather crude estimate $p(1-p)\leq 1/4$ to eliminate the dependence on $p$. Another, certainly more effective, route consists of combining  Theorem \ref{slutsky} and LLN to replace (\ref{ber:pop:clt}) by 
	\[
	\sqrt{n}\frac{\overline X_n-p}{\sqrt{\overline X_n(1-\overline X_n)}}\stackrel{d}{\to}\mathcal N(0,1),
	\] 
	which gives 
	\[ 
	p\in \left[\overline X_n\mp z_{1-\delta/2}\frac{\sqrt{\overline X_n(1-\overline X_n)}}{\sqrt{n}}\right]
	\,{\rm with}\,{\rm prob.}\,\approx\,1-\delta.
	\]
Since $\sqrt{\overline X_n(1-\overline X_n)}\leq \tfrac{1}{2}$, an increase in sample size by a factor of $\gamma>0$ yields only a $\gamma^{-1/2}$-order reduction in the dispersion around $\overline X_n$, the center of the confidence interval. This simple observation has many applications, as Bernoulli trials provide a convenient model for a wide range of binary random experiments such as coin toss, two-candidate election polls, and male--female birth ratios, among others.
\qed
\end{example}

\begin{example}\label{two:samples} (The difference of means of two normal samples)
	Let $\{X_j\}_{j=1}^m$ and $\{Y_k\}_{k=1}^n$ be normally distributed random samples, say with $X_j\sim\mathcal N(\mu_X,\sigma_X^2)$ and $Y_k\sim\mathcal N(\mu_Y,\sigma_Y^2)$, which we assume to be independent to one another. In general, we also assume that the true parameter $\theta=(\mu_X,\mu_Y,\sigma_X^2,\sigma_Y^2)$ is unknown. In order to estimate the difference of means $\mu:=\mu_X-\mu_Y$ we first note that 
	\[
	\mathbb E\left(\overline X_m-\overline Y_n\right)=\mu, \quad 
	{\mathbb V}\left(\overline X_m-\overline Y_n\right)=\frac{\sigma_X^2}{m}+\frac{\sigma_Y^2}{n},
	\]
	and hence 
	\[
	Z_{X,Y}:=\frac{\overline D_{mn}-\mu}{\sqrt{\frac{\sigma_X^2}{m}+\frac{\sigma_Y^2}{n}}}\sim\mathcal N(0,1), 
	\]
	where $\overline D_{mn}=\overline X_m-\overline Y_n$ is an unbiased estimator for $\mu$.
	Thus, if both $\sigma_X$ and $\sigma_Y$ are known we get the ``small sample'' confidence interval 
	\[
	\mu\in \left[\overline D_{mn}\mp z_{1-\delta/2}{\sqrt{\frac{\sigma^2_X}{m}+\frac{\sigma^2_Y}{n}}}\right]
	\,{\rm with}\,{\rm prob.}\,\approx\,1-\delta.
	\]
	In the general case we may proceed as follows.
	If
	\[
	S_{X}^2:=\frac{1}{m-1}\sum_j\left(X_j-\overline X_m\right)^2, \quad 
	S_{Y}^2:=\frac{1}{n-1}\sum_k\left(Y_k-\overline Y_n\right)^2, 
	\]
	are the unbiased estimators for $\sigma_X^2$ and $\sigma_Y^2$ respectively (by Corollary \ref{unb:sigmac}) 
	then Corollary \ref{u:v} implies that 
	\begin{equation}\label{lin:comb:chi}
		\Sigma_X^2:=(m-1)\frac{S_{X}^2}{\sigma_X^2}\sim \chi^2_{m-1} \,\,{\rm and}\,\,
		\Sigma_Y^2:=(n-1)\frac{S_{Y}^2}{\sigma_Y^2}\sim \chi^2_{n-1}
	\end{equation}
	are independent 
	and hence, by Corollary \ref{gamma:sum}, 
	\[
	W_{X,Y}:=	\Sigma_X^2+	\Sigma_Y^2\sim \chi^2_{m+n-2}. 
	\]
	If we set 
	\[
	S_{X,Y}^2(\eta)=\frac{(m-1)S_X^2+(n-1)\eta S_Y^2}{m+n-2}, \quad \eta:=\frac{\sigma_X^2}{\sigma^2_Y}, 
	\]
	then it follows from Proposition \ref{norm:chi:stu} that 
	\begin{eqnarray*}
		T_{X,Y} 
		& := & 
		\frac{Z_{X,Y}}{\sqrt{W_{X,Y}/(m+n-2)}}\\
		& =  &	\frac{\overline D_{mn}-\mu}{\sqrt{c_{mn}(\eta)S_{X,Y}^2(\eta)}}, \quad c_{mn}(\eta)={\frac{1}{m}+\frac{1}{n\eta }},
	\end{eqnarray*}
	satisfies
	\begin{equation}\label{two:samp:stat}
		T_{X,Y}\sim \mathfrak t_{m+n-2}, 
	\end{equation}
	thus being a pivotal quantity with respect to the parameter 
	$\eta$; see Remark \ref{pivotal}.
	Hence, if the population variances, though unknown, are such that their ratio $\eta$ is {known}, 
	then there holds  
	\begin{equation}\label{int:conf:dif:m}
		\mu\in \left[\overline D_{mn}\mp t_{m+n-2,1-\delta/2}
		\sqrt{c_{mn}(\eta)S^2_{XY}(\eta)}
		\right]
		\,{\rm with}\,{\rm prob.}\,\approx\,1-\delta,
	\end{equation}
	a ``small sample'' confidence interval for $\mu$ quite similar in spirit to (\ref{int:conf:tt}), which handles the case of a single normal sample. In particular, if the population variances are assumed to be equal then (\ref{int:conf:dif:m}) holds with $\eta=1$, in which case one has 
	\begin{equation}\label{stud:var}
		T_{X,Y}=\frac{\overline D_{mn}-\mu}{\sqrt{c_{mn}(1)S^{2}_{XY}}},
	\end{equation}
	where
	\[
	S^{2}_{XY}=\frac{(m-1)S_X^2+(m-1)S_Y^2}{m+n-2},
	\]
	is the {\em pooled variance}, a weighted sum of the sample variances. Otherwise, (\ref{int:conf:dif:m}) has no practical usefulness as it provides a dispersion around $\overline D_{mn}$ depending on the unknown parameter $\eta$. Proceeding as before, we are thus led to consider the {\em Behrens-Fisher-Welch statistics}
	\[
	\mathcal Z_{X,Y}=
	\frac{\overline D_{mn}-\mu}{\sqrt{\frac{ S_X^2}{m}+\frac{S_Y^2}{n}}},
	\]
	the obvious counterpart of $Z_{X,Y}$. 
	Unfortunately, no simple, closed expression for the pdf of $\mathcal Z_{X,Y}$ does seem to exist. 
	To appreciate the difficulties involved,
	note that $\mathcal Z_{X,Y}=Z_{X,Y}/\sqrt{\mathcal W_{X,Y}}$, where if 
	\begin{equation}\label{nuis:1}
		\beta_X=
		\frac{1}{m-1}
		\left(1+\frac{m}{n}\eta^{-1}\right)^{-1}, \quad \beta_Y=
		\frac{1}{n-1}
		\left(1+\frac{n}{m}\eta\right)^{-1}
	\end{equation}
	then
	\[
	\mathcal W_{X,Y} 
	=  \beta_X\Sigma_X^2+\beta_Y\Sigma_Y^2	,
	\]
	so that 
	Corollary \ref{a:times:chi} and (\ref{lin:comb:chi}) may be used to ensure that 
	\begin{equation}\label{nuis:2} 
		\mathcal W_{X,Y} 	\sim 
		\mathsf{Gamma}\left(\frac{1}{2\beta_X},\frac{m-1}{2}\right)+
		\mathsf{Gamma}\left(\frac{1}{2\beta_Y},\frac{n-1}{2}\right),
	\end{equation}
	where $(m-1)\beta_X+(n-1)\beta_Y=1$ by (\ref{nuis:1}). 		
	Since $Z_{X,Y}\perp\!\!\perp \mathcal W_{X,Y}$ and $Z_{X,Y}\sim\mathcal N(0,1)$, it follows from Remark \ref{comp:dist} that computing $\psi_{\mathcal Z_{X,Y}}$ essentially reduces  to figuring out $\psi_{\mathcal W_{X,Y}}$, the pdf of a sum of independent $\mathsf{Gamma}$-distributed random variables
	whose inverse scale parameters are distinct except when 
	\[
	\eta=\frac{m(m-1)}{n(n-1)}.
	\]
Thus, in most cases this sum is not $\mathsf{Gamma}$-distributed; cf.\ Corollary~\ref{gamma:sum}. In fact, it is known that the exact expression for $\psi_{\mathcal W_{X,Y}}$ and, more generally, for the pdf of a linear combination of chi-square distributions, requires an infinite series expansion involving certain transcendental functions~\cite{ray1961exact,moschopoulos1985distribution,hong2022exact}. This, in turn, makes it necessary to rely on suitable approximations for $\psi_{\mathcal Z_{X,Y}}$, a situation that has prompted extensive research into the computational accuracy and efficiency of such methods\footnote{See~\cite{bausch2013efficient} for a critical review of recent developments on this topic.}. 
To complicate matters further, as is apparent from (\ref{nuis:1}) and (\ref{nuis:2}), $\psi_{\mathcal Z_{X,Y}}$ explicitly depends on the ``nuisance'' parameter $\eta$, implying that $\mathcal Z_{X,Y}$ cannot serve as a pivotal quantity; see Remark~\ref{pivotal}. 
Thus, despite significant progress in its practical treatment~\cite{kim1998behrens}, the broader challenge of obtaining the most efficient estimate of $\mu$ when $\eta$ is unknown, commonly known as the \emph{Behrens--Fisher problem}~\cite[Section~3.8]{welsh1996aspects}, remains largely unresolved.
 \qed
\end{example}

\begin{remark}\label{F:test:q:var}(${\bm{\textsf F}}$-test for the equality of variances) 
	The reliability of the assumption on the equality of the population variances which led to (\ref{stud:var})  may be statistically justified (or not!) by running an ${\bm{\textsf F}}$-test; for more on this see Section \ref{sec:hyp:test} below. We start by noticing that under the corresponding 
	{\em null hypothesis}
	\[
	H_0: \quad \sigma_X^2=\sigma_Y^2,
	\]
	(\ref{lin:comb:chi}) and Proposition \ref{chi:to:F} ensure that the appropriate test statistics
	\begin{equation}\label{F:stats:var}
		U:=\frac{S_X^2}{S_Y^2}=
		\frac{\Sigma^2_X/(m-1)}{\Sigma^2_Y/(n-1)}\sim {\bm{\textsf F}}_{m-1,n-1}.
	\end{equation}
	Now, a very rough analysis of the departure from $H_0$ goes as follows. If this hypothesis is not satisfied  (so that $\sigma_X^2=\eta\sigma_Y^2$, $\eta\neq 1$) then (\ref{F:stats:var}) becomes 
	\[
	U
	=\frac{\frac{\eta}{m-1}\Sigma_X^2}{\frac{1}{n-1}\Sigma_Y^2},
	\]
	and we see from Corollary \ref{a:times:chi} and Proposition \ref{gamma:mgf} that:
	\begin{itemize}
		\item The denominator  satisfies
		\[
		\frac{1}{n-1}\Sigma_Y^2\sim\mathsf{ Gamma}\left(\frac{n-1}{2},\frac{n-1}{2}\right),
		\]
		so its distribution remains the same regardless of the validity of $H_0$;
		\item On the other hand, the numerator satisfies 
		\[
		\frac{\eta}{m-1}\Sigma_X^2\sim\mathsf{ Gamma}\left(\frac{m-1}{2\eta},\frac{m-1}{2}\right),
		\]
		a Gamma distribution whose scale factor is proportional to 
		\[
		\eta=	\mathbb E\left(\frac{\eta}{m-1}\Sigma_X^2\right), 
		\]
		the parameter quantifying the departure from $H_0$. 
	\end{itemize}	
	Thus, at least on average, very small or very large realizations for $U$ (substantially departing from $u=1$) provide statistical evidence  
	for {\em rejecting} $H_0$.
	Precisely,  if we fix $0<\beta<1$ and consider the corresponding $\mathsf F$-{\em quantile}
	${\bm{\textsf f}}_{k_1,k_2,\beta}$ determined by  
	\begin{equation}\label{f:quantiles:0}
		F_{\bm{\textsf F}_{k_1,k_2}}
		({\bm{\textsf f}}_{k_1,k_2,\beta})=\beta,
	\end{equation}
	where $F_{\bm{\textsf F}_{k_1,k_2}}$ is the cdf of ${\bm{\textsf F}}_{k_1,k_2}$, then $H_0$ should be rejected ``at significance level $\alpha$'' if the realization $u$ of the statistics in (\ref{F:stats:var}) satisfies
	\begin{equation}\label{two:sided}
		u\in \left(0,{\bm{\textsf f}}_{m-1,n-1,\alpha/2}\right]\bigcup \left[{\bm{\textsf f}}_{m-1,n-1,1-\alpha/2},+\infty\right).
	\end{equation}
	A more convincing justification for this rather informal argument 
	may be found in Section \ref{sec:hyp:test} below. \qed
\end{remark}

\begin{remark}\label{rec:f:quant}(Reciprocity of the ${\bm{\textsf F}}$-quantiles)
	Regarding the ${\bm{\textsf F}}$-quantiles defined in (\ref{f:quantiles:0}), let us take $X\sim {\bm{\textsf F}}_{m,n}$ so that $X^{-1}\sim
	{\bm{\textsf F}}_{n,m}$ by Corollary \ref{f-dis:cons}. For any $\alpha>0$ we then have
	\[
	\frac{\alpha}{2} 
	= 
	P\left(X\leq {\bm{\textsf f}}_{m,n,\alpha/2}\right)\\
	= 	P\left(X^{-1}\geq \frac{1}{\bm{\textsf f}_{m,n,\alpha/2}}\right),
	\]
	so that
	\[
	P\left(X^{-1}\leq \frac{1}{\bm{\textsf f}_{m,n,\alpha/2}}\right)=1-\frac{\alpha}{2}=
	P\left(X^{-1}\leq {\bm{\textsf f}}_{n,m,1-\alpha/2}\right),
	\]			
	from which the identity
	\[
	{\bm{\textsf f}}_{m,n,\alpha/2}{\bm{\textsf f}}_{n,m,1-\alpha/2}=1
	\]
	follows.
	\qed
\end{remark}

\begin{example}\label{corr:dist}(The sampling distribution of the  correlation coefficient) Let us retain the notation of Example \ref{two:samples}, but this times with $m=n$ and 
	assuming that the {\em independent} random sample 
	\[
	(X,Y):=\{(X_1,Y_1),\cdots,(X_m,Y_m)\}
	\]
	has been drawn from a bi-variate normal population whose marginals are not necessarily independent. 
	Thus, 
	\begin{equation}\label{bi:var:norm}
		(X_j,Y_j)\sim\mathcal N\left(
		\left(
		\begin{array}{c}
			\mu_X\\
			\mu_Y
		\end{array}
		\right),
		\left(
		\begin{array}{cc}
			\sigma_X^ 2 & \sigma_{XY}\\
			\sigma_{XY} & \sigma^2_Y
		\end{array}
		\right)
		\right),\quad j=1,\cdots,n,
	\end{equation}
	where $\sigma_{XY}={\mathbb C}(X_j,Y_j)$ is the {\em population covariance} \footnote{By Proposition \ref{unc:ind:n}, $\{X_j,Y_j\}$ is independent if and only if $\sigma_{XY}=0$.}, so 
	by (\ref{rem:dir:ind:n:1}) the joint distribution of $(X,Y)$ is
	\begin{eqnarray}\label{bi:var:norm:like}
		\psi_{(X,Y)}(x,y)dxdy
		& = & 
		\frac{1}{(2\pi\sigma_X\sigma_Y\sqrt{1-\rho^2})^m} \times \nonumber\\
		& & \quad \times\,
		e^{-\frac{1}{2(1-\rho^2)}\sum_j
			\left(\frac{(x_j-\mu_X)^2}{\sigma_X^2}-
			\frac{2\rho(x_j-\mu_X)(y_j-\mu_Y)}{\sigma_X\sigma_Y}
			+\frac{(y_j-\mu_Y)^2}{\sigma_Y^2}\right)}dxdy, 
	\end{eqnarray}
	where 
	\[
	\rho=\frac{\sigma_{XY}}{\sigma_X\sigma_Y}
	\]
	is the {\em correlation coefficient}, 
	a population parameter whose estimation is a central theme in Multivariate Statistical Analysis \cite{anderson2003introduction}\footnote{As usual, we assume that $|\rho|<1$, thus avoiding the degenerate cases $\rho=\pm 1$.}. 
	It turns out that Fisher's geometric approach in Remark \ref{fis:comp:new} can be successfully employed to this end \cite{fisher1915frequency}. Indeed, using Remark \ref{U:sigma:V:r} we have the identities
	\[
	\sum_j\left(x_j-\mu_X\right)^2=m\left(
	\left(\overline x_m-\mu_X\right)^2+\widehat\sigma^2_{m^{-1}}(x)
	\right)
	\]
	and 
	\[
	\sum_j\left(y_j-\mu_y\right)^2=m\left(
	\left(\overline y_m-\mu_X\right)^2+\widehat\sigma^2_{m^{-1}}(y)
	\right), 
	\]
	where $\widehat\sigma^2_{m^{-1}}(x)$ and   $\widehat\sigma^2_{m^{-1}}(y)$ are the realizations of the variance estimators appearing in (\ref{fam:est}) with $c=m^{-1}$. Also, we will need their
	polarized version 
	\[
	\sum_j\left(x_j-\mu_X\right)\left(y_j-\mu_Y\right)=m\left(
	\left(\overline x_m-\mu_X\right)\left(\overline y_m-\mu_Y\right)+\widehat\sigma^2_{m^{-1}}(x,y)
	\right),
	\] 			
	where 
	\begin{equation}\label{emp:inner}
	\widehat\sigma^2_{m^{-1}}(X,Y):=\frac{1}{m}\sum_j\left(X_j-\overline X_m\right)\left(Y_j-\overline Y_m\right).
	\end{equation}
	Leading these identities to (\ref{bi:var:norm:like})  we get
	\begin{eqnarray*}
		\psi_{(X,Y)}dxdy
		& = & 
		\frac{1}{(2\pi\sigma_X\sigma_Y\sqrt{1-\rho^2})^m}\times \\
		& & \quad \times\,e^{-\frac{m}{2(1-\rho^2)} 
			\left(
			\frac{(\overline x_m-\mu_X)^2}{\sigma_X^2}
			-\frac{2\rho(\overline x_m-\mu_X)(\overline y_m-\mu_Y)}{\sigma_X\sigma_Y}
			+\frac{(\overline y_m-\mu_Y)^2}{\sigma_Y^2}
			\right)}\times\\
		& & \quad e^{-\frac{m}{2(1-\rho^2)}\left(\frac{\widehat\sigma^2_{m^{-1}}(x)}{\sigma_X^2}
			-\frac{2\rho\widehat\rho \widehat\sigma_{m^{-1}}(x)\widehat\sigma_{m^{-1}}(y)}{\sigma_X\sigma_Y}
			+\frac{\widehat\sigma^2_{m^{-1}}(y)}{\sigma_Y^2}
			\right)}dxdy,		
	\end{eqnarray*}
	where 
	\[
	\widehat\rho=\frac{\widehat\sigma^2_{m^{-1}}(X,Y)}{\widehat\sigma_{m^{-1}}(X)\widehat\sigma_{m^{-1}}(Y)}
	\] 
	is the {\em sample correlation coefficient}, the natural estimator for $\rho$; see Example \ref{corr:mle} for a justification of this latter claim. 
	If we could view realizations of the samples $X$ and $Y$ as  {\em independent} elements of $\mathbb R^m_X$ and $\mathbb R^m_Y$, respectively, then the geometric reasoning in Remark  \ref{fis:comp:new} would ensure that 	
	\begin{equation}\label{corr:corr}
		dxdy\approx \widehat\sigma^{m-2}_{m^{-1}}(x)d\widehat\sigma_{m^{-1}}(x)d\overline x_md\theta_X
		\times
		\widehat\sigma^{m-2}_{m^{-1}}(y)d\widehat\sigma_{m^{-1}}(y)d\overline y_md\theta_Y,
	\end{equation}
	from which we would compute $\psi_{(\widehat\sigma_{m^{-1}}(X),\widehat\sigma_{m^{-1}}(Y),
		\widehat\rho)}$ after integrating $\psi_{(X,Y)}dxdy$ above  against $d\overline x_md\theta_Xd\overline y_md\theta_Y$. 
	However, and this is the key point here, $x$ and $y$ are {\em not} allowed to vary freely 
	as 
	they are
	constrained to move in such a way that $x\in \mathbb S^{m-2}_{\sqrt{m}\widehat\sigma_{m^{-1}}(x)}({{\mathscr X_m}})$ and $y\in \mathbb S^{m-2}_{\sqrt{m}\widehat\sigma_{m^{-1}}(y)}({{\mathscr Y_m}})$ with 
	\[
	\widehat\rho =\cos\theta, \quad \theta=\measuredangle(\overline{{\mathscr X_m}x},\overline{{\mathscr Y_m}y}).
	\] 				
	Thus, 	
	\begin{eqnarray*}
		\psi_{(\widehat\sigma_{m^{-1}}(X),\widehat\sigma_{m^{-1}}(Y),
			\widehat\rho)}dv
		& \approx & 
		e^{-\frac{m}{2(1-\rho^2)}\left(\frac{\widehat\sigma^2_{m^{-1}}(s)}{\sigma_X^2}
			-\frac{2\rho\widehat\rho \widehat\sigma_{m^{-1}}(x)\widehat\sigma_{m^{-1}}(y)}{\sigma_X\sigma_Y}
			+\frac{\widehat\sigma^2_{m^{-1}}(y)}{\sigma_Y^2}
			\right)} \times\\
		& & \quad \times\,
		\widehat\sigma^{m-2}_{m^{-1}}(x)
		\widehat\sigma^{m-2}_{m^{-1}}(y)f(\widehat\rho)dv,
	\end{eqnarray*} 
	where $dv=d\widehat\sigma_{m^{-1}}(x)d\widehat\sigma_{m^{-1}}(y)d\widehat\rho$ and the extra factor $ f(\widehat\rho)$ comes from the 
	constraint referred to above. 
	Incidentally, this already shows that $\{\overline X_m,\overline Y_m\}$ is independent from $\{\widehat\sigma^2_{m^{-1}}(X),\widehat\sigma^2_{m^{-1}}(Y),\widehat\sigma_{m^{-1}}(XY)\}$, as in the uni-variate case; cf. Proposition \ref{indep:ms}.
	Now, in order to determine $f(\widehat\rho)$ note that 
	if $x$ is fixed then, corresponding to an infinitesimal displacement $d\theta$, the segment $\overline{{\mathscr Y_m}y}$ describes an infinitesimal spherical slab in $\mathbb S^{m-2}_{\sqrt{m}\widehat\sigma_{m^{-1}}(y)}({{\mathscr Y_m}})$ with radius 
	\[
	\sqrt{m}\widehat\sigma_{m^{-1}}(y)\sin\theta=\sqrt{m}\widehat\sigma_{m^{-1}}(y)\sqrt{1-\widehat\rho^2}
	\]
	and height 
	\[
	\sqrt{m}\widehat\sigma_{m^{-1}}(y)|d\theta|=\sqrt{m}\widehat\sigma_{m^{-1}}(y)\frac{d\widehat\rho}{\sqrt{1-\widehat\rho^2}},
	\]
	thus tracing a volume proportionate to 
	\[
	(\sqrt{m}\widehat\sigma_{m^{-1}}(y)\sin\theta)^{m-3}	\sqrt{m}\widehat\sigma_{m^{-1}}(y)|d\theta|
	=
	\widehat\sigma^{m-2}_{m^{-1}}(Y)\underbrace{(1-\widehat\rho^2)^{\frac{m-4}{2}}}_{=f(\widehat\rho)}d\widehat\rho,
	\]
	which finally gives
	\begin{eqnarray*}
		\psi_{(\widehat\sigma_{m^{-1}}(X),\widehat\sigma_{m^{-1}}(Y),\widehat\rho)}dv
		& \approx & 
		e^{-\frac{m}{2(1-\rho^2)}\left(\frac{\widehat\sigma^2_{m^{-1}}(x)}{\sigma_X^2}
			-\frac{2\rho\widehat\rho \widehat\sigma_{m^{-1}}(x)\widehat\sigma_{m^{-1}}(y)}{\sigma_X\sigma_Y}
			+\frac{\widehat\sigma^2_{m^{-1}}(y)}{\sigma_Y^2}
			\right)} \times\\
		& & \quad \times\,
		\widehat\sigma^{m-2}_{m^{-1}}(x)
		\widehat\sigma^{m-2}_{m^{-1}}(y)d\widehat\sigma_{m^{-1}}(x)d\widehat\sigma_{m^{-1}}(y) \times \\
		& & \quad\quad \times\, (1-\widehat\rho^2)^{\frac{m-4}{2}} d\widehat\rho.
	\end{eqnarray*}  
	As usual, explicit, albeit quite complicated, expressions for the desired pdf $\psi_{\widehat\rho}$, which may even be chosen so as to only involve elementary functions, are obtained by integrating this against the area element $d\widehat\sigma_{m^{-1}}(x)d\widehat\sigma_{m^{-1}}(y)$, with a further integration against $d\widehat\rho$ being needed to restore the normalizing constant. 
	Of course, the computational difficulty here comes from the mixed term in the exponential which prevents $\{\widehat\sigma^2_{m^{-1}}(X),\widehat\sigma^2_{m^{-1}}(Y),\widehat\rho\}$ from being independent (except when $\rho=0$). 
	In any case, the resulting expressions are found to depend  on the parameters of 
	the underlying normal bi-variate population only through $\rho$ (and not on any other combination of the entries of the variance matrix in (\ref{bi:var:norm})), and in fact they all reduce to 
	\[
	\psi_{\widehat\rho}(r)\approx (1-r^2)^{\frac{m-4}{2}}{\bf 1}_{(-1,1)}(r), \quad r\in\mathbb R,
	\]
	when $\rho=0$,
	which suffices to efficiently testing the mutual independence of $\{X_j,Y_j\}$ for any value of $m$ along the lines of the general theory developed in Section \ref{sec:hyp:test}. Otherwise, one has to appeal to asymptotic methods in order to construct ``large sample'' confidence intervals for $\rho$; see Example \ref{binormal:mle} below.   
	We refer to the original sources \cite{student1908probablecorr,fisher1915frequency}, as well as to \cite[Chapter 14]{kendall1946advanced} and \cite[Chapter 4]{anderson2003introduction}, for discussions of the basic properties of $\psi_{\widehat\rho}$ and their applications.  
	\qed
\end{example}

\begin{example}(One way ANOVA)\label{ow:anova}
	Fix $p\in\mathbb N$, $p\geq 3$, a finite sequence $\{n_j\}_{j=1}^{p}\subset\mathbb N$ and   for each $j$ consider a random sample $\{X_{jk}\}_{k=1}^{n_j}$ with $X_{jk}\sim\mathcal N(\mu_j,\sigma^2)$  such that all these $n:=\sum_jn_j$ samples $X_{ij}$ form an  independent set. In other words, we are dealing here with $p$ independent normal random samples with varied sizes and expectations but sharing the {\em same} variance, with the parameters $\{\mu_1,\cdots,\mu_p,\sigma^2\}$ being regarded as unknown. 
	Within each sample we have the decomposition coming from Remark \ref{U:sigma:V:r}, 
	\begin{equation}\label{ow:anova:2}
		\sum_{k=1}^{n_j}\left(X_{jk}-\overline X_{\bullet\bullet}\right)^2=
		\sum_{k=1}^{n_j}\left(X_{jk}-\overline X_{j\bullet}\right)^2+n_j\left(\overline X_{j\bullet}-\overline X_{\bullet\bullet}\right)^2, 
	\end{equation}
	where 
	\[
	\overline X_{j\bullet}=\frac{1}{n_j}\sum_{k=1}^{n_j}X_{jk}
	\]
	and 
	\[
	\overline X_{\bullet\bullet}=\frac{1}{n}\sum_{j=1}^p\sum_{k=1}^{n_j}X_{ij}=
	\frac{1}{n}\sum_{j=1}^pn_j	\overline X_{j\bullet}.
	\]
	Note that 
	\begin{equation}\label{var:parc}
		\mathbb E(\overline X_{j\bullet}^2)=\mu_j^2+\frac{\sigma^2}{n_j},	
	\end{equation}
	and 
	\begin{equation}\label{var:total}
		\mathbb E(\overline X_{\bullet\bullet}^2)=\frac{1}{n^2}\left(\sum_{j=1}^pn_j\mu_j\right)^2+
		\frac{\sigma^2}{n}. 	
	\end{equation}	
	Now, the same argument leading to the proof of Proposition \ref{u:v} implies 
	that 
	\begin{equation}\label{dist:within}
		\sigma^{-2}\sum_{k=1}^{n_j}\left(X_{jk}-\overline X_{j\bullet}\right)^2\sim \chi^2_{n_j-1}
	\end{equation}
	is independent of $\overline X_{j\bullet}$ and hence of $\left(\overline X_{j\bullet}-\overline X_{\bullet\bullet}\right)^2$. Thus, 
	if we set
	\[
	S^2_{\rm Total}=\sum_{j=1}^p\sum_{k=1}^{n_j}\left(X_{jk}-\overline X_{\bullet\bullet}\right)^2,
	\]
	the {\em total} sum of squares,  
	\[
	S^2_{\rm Within}=\sum_{j=1}^p\sum_{k=1}^{n_j}\left(X_{jk}-\overline X_{j\bullet}\right)^2,
	\]
	the sum of squares {\em within} the samples, 
	and 
	\[
	S^2_{\rm Between}=\sum_{j=1}^pn_j\left(\overline X_{j\bullet}-\overline X_{\bullet\bullet}\right)^2, 
	\]
	the sum of squares {\em between} the samples, 
	then 
	\begin{equation}\label{dec:wit:bet}
		S^2_{\rm Total}=	S^2_{\rm Within}+	S^2_{\rm Between},
	\end{equation}
	with 
	\[
	\sigma^{-2}S^2_{\rm Within}\sim\chi^2_{n-p}
	\]
	being independent of $S^2_{\rm Between}$. On the other hand, again by the argument leading to (\ref{dist:within}), but this time under the {\em null hypothesis}
	\begin{equation}\label{null:hyp}
		H_0:\quad \mu_1=\cdots=\mu_p,
	\end{equation}
	we have 
	\[
	\sigma^{-2}S^2_{\rm Total}\sim\chi^2_{n-1}
	\]
	and hence
	\[
	\sigma^{-2}S^2_{\rm Between}\sim\chi^2_{p-1},
	\]
	which gives
	\begin{equation}\label{exp:between}
		\mathbb E(S^2_{\rm Between})= (p-1)\sigma^2
	\end{equation}
	by 	Corollary \ref{chi:sq:ms}.
	It then follows from Proposition \ref{chi:to:F} that 
	\begin{equation}\label{dist:an}
		V:=\frac{S^2_{\rm Between}/(p-1)}{S^2_{\rm Within}/(n-p)}\sim 
		{\bm{\textsf F}}_{p-1,n-p}\,\, {\rm under} \,\, H_0.
	\end{equation}
	In order to proceed we now observe that:
	\begin{itemize}
		\item 
		The decomposition (\ref{dec:wit:bet}), which is an easy consequence of the fundamental algebraic identity in Remark \ref{U:sigma:V:r},  plays a central role in our analysis as it displays $S^2_{\rm Total}$, the total sum of squares, as resulting from the contribution of two terms of rather distinct types:
		$S^2_{\rm Within}$ collects together the variations {\em within} the various samples 
		whereas
		$S^2_{\rm Between}$ measures 
		the variation {\em between} the samples; 
		\item 
		In consonance with the previous item, the computation leading to 
		the statistics in (\ref{dist:an}) shows that 
		the distribution of its numerator is conditioned to the validity of $H_0$
		whereas the distribution of its denominator remains the same regardless of the validity of this hypothesis; 
		\item
		If $H_0$ is not necessarily satisfied then starting from the fact that
		\[
		S^2_{\rm Between}=\sum_{j=1}^pn_j\overline X_{j\bullet}^2-n\overline X_{\bullet\bullet}^2,
		\]
		we easily deduce by means of 
		(\ref{var:parc}) and (\ref{var:total}) that 
		\[
		\mathbb E(S^2_{\rm Between})=(p-1)\sigma^2+\sum_{j=1}^pn_j(\mu_j-\overline\mu)^2, \quad \overline\mu=\frac{1}{n}\sum_{j=1}^pn_j\mu_j,
		\]
		which assumes its minimal value, given by (\ref{exp:between}), exactly when $H_0$ holds true. 
	\end{itemize}
	Thus, at least on average, a sufficiently large value of $V$ provides statistical evidence for {\em rejecting} $H_0$.
	Precisely, if we fix $0<\alpha<1$
%	 and consider the corresponding quantile ${\bm{\textsf f}}_{p-1,n-p,\alpha}$ determined by  
%	\[
%	F_{\bm{\textsf F}_{p-1,n-p}}({\bm{\textsf f}}_{p-1,n-p,\alpha})=1-\alpha,
%	\]
%	where $F_{{\bm{\textsf F}_{p-1,n-p}}}$ is the cdf of ${\bm{\textsf F}_{p-1,n-p}}$, 
	then $H_0$ should be rejected ``at significance level $\alpha$'' if the realization $v$ of $V$ in (\ref{dist:an}) satisfies 
	\begin{equation}\label{one:sided}
		v\in \left[{\bm{\textsf f}}_{p-1,n-p,1-\alpha},+\infty\right).	 
	\end{equation}
	Here, we use the notation for $\mathsf F$-quantiles introduced in (\ref{f:quantiles:0}).
	Again, we refer to Section \ref{sec:hyp:test} for a more theoretically inclined  		
	justification of this procedure, in particular for the proper understanding of why the ``rejection subsets'' in the right-hand side of the ${\bm{\textsf F}}$-tests in (\ref{two:sided}) and (\ref{one:sided}) differ in their ``connectedness''. \qed
\end{example}

\begin{example}\label{estim:var:nomr} (Estimating the variance of a normal population) If $X_j\sim\mathcal N(\mu,\sigma^2)$ we now estimate $\sigma^2$, the population variance. We first assume that $\mu$ is known and consider the estimator 
	\[
	\mathscr S^2_n=\sum_{j=1}^n(X_j-\mu)^2,
	\]
so that 
\[
\frac{\mathscr S^2_n}{\sigma^2}=\sum_{j=1}^n\left(\frac{X_j-\mu}{\sigma}\right)^2\sim \chi^2_n
\]	
by Corollary \ref{sum:norm:sq}. If, as usual, we define the  $\chi^2_k$-{\em quantile} associated to $0<\beta<1$ by 
\begin{equation}\label{quantile:chi}
F_{\chi^2_k}(\chi^2_{k,\beta})=\beta, 
\end{equation}
then  
\begin{eqnarray*}
P\left(\frac{\mathscr S^2_n}{\chi^2_{n,1-\delta/2}} \leq \sigma^2\leq \frac{\mathscr S^2_n}{\chi^2_{n,\delta/2}}\right) 
& = & 
P\left( \chi^2_{n,\delta/2}\leq \frac{\mathscr S^2_n}{\sigma^2}\leq \chi^2_{n,1-\delta/2}\right)
\\
& = & 1-\delta,	
	\end{eqnarray*}
so we obtain the confidence interval 
\[
\sigma^2\in\left[
\frac{\mathscr S^2_n}{\chi^2_{n,1-\delta/2}},\frac{\mathscr S^2_n}{\chi^2_{n,\delta/2}}
\right]\,\,\,\, \textrm{with prob.}\,\,\,\, 1-\delta.
\]	
It is instructive to determine how the center $c_n$	and the length $l_n$ of this (clearly asymmetric) interval behaves as $n\to+\infty$. For this we note that:
\begin{itemize}
	\item From LLN (Theorem \ref{lln}), $\mathscr S^2_n/n\stackrel{p}{\to}\sigma^2$ so that $\mathscr S^2_n\approx n\sigma^2$ as $n\to+\infty$; see Remark \ref{normal:ww} for the details.
	\item Since $Y_j\sim \chi^2_1$ implies $Y^{(n)}=Y_1+\cdots+Y_n\sim \chi^2_n$ then CLT (Theorem \ref{clt}) and Corollary \ref{chi:sq:ms} give
	\[
	\frac{Y^{(n)}-n}{\sqrt{2n}}\stackrel{d}{\to}\mathcal N(0,1),
	\]
	so that
	\[
	\chi^2_{n,\beta}\approx n+\sqrt{2n}z_{\beta}+O(1),\quad 0<\beta <1,
	\]
	which yields
	\[
	\chi^2_{n,1-\delta/2}\approx n+\sqrt{2n}z+O(1)\,\,\,\,\textrm{and}\,\,\,\,
	\chi^2_{n,\delta/2}\approx n-\sqrt{2n}z+O(1), \quad z=z_{1-\delta/2}. 
	\]
\end{itemize} 
From these facts we easily deduce that the endpoints of the interval satisfy
\[
\frac{\mathscr S^2_n}{\chi^2_{n,1-\delta/2}}=\sigma^2\left(1-\sqrt{2}zn^{-1/2}+O(n^{-1})\right)
\]
and 
\[
\frac{\mathscr S^2_n}{\chi^2_{n,\delta/2}}=\sigma^2\left(1+\sqrt{2}zn^{-1/2}+O(n^{-1})
\right),
\]
so that
\[
c_n\approx \sigma^2 + O(n^{-1})\,\,\,\,
\textrm{and}
\,\,\,\,
l_n\approx \sigma^2\left(2\sqrt{2}zn^{-1/2}+O(n^{-1})\right). 
\]
As expected, the interval is asymptotically centered at $\sigma^2$ and has length decaying at the rate $O(n^{-1/2})$. If $\mu$ is unknown it is natural to use instead the estimator 
\[
\widetilde{\mathscr S}^2_n=\sum_{j=1}^n(X_j-\overline X_n)^2=(n-1)S_n^2,
\]
which corresponds to replacing $\mu$ by $\overline X_n$ in the definition of $\mathscr S^2_n$. Since $\widetilde{\mathscr S}^2_n/\sigma^2\sim\chi^2_{n-1}$ by Proposition \ref{u:v},
we end up with the confidence interval 
\[
\sigma^2\in\left[
\frac{\widetilde{\mathscr S}^2_n}{\chi^2_{n-1,1-\delta/2}},
\frac{\widetilde{\mathscr S}^2_n}{\chi^2_{n-1,\delta/2}}
\right]\,\,\,\, \textrm{with prob.}\,\,\,\, 1-\delta,
\]
whose center and length satisfy 
\[
\widetilde c_n\approx \sigma^2 +O((n-1)^{-1})\,\,\,\,\textrm{and}\,\,\,\,
\widetilde l_n\approx \sigma^2\left(2\sqrt{2}z(n-1)^{-1/2}+O((n-1)^{-1})\right)=O((n-1)^{-1/2}).
\]
Although $\widetilde l_n>l_n$ for each $n$, there holds $\widetilde l_n/l_n\to 1$ as $n\to+\infty$, so the intervals are essentially identical for large samples. We note, however, that both intervals are quite conservative for $n$ small.\qed
\end{example}

\section{Maximum likelihood}\label{sec:MLE}
We now present a remarkable class of estimators, introduced by R. Fisher, which displays, under suitable regularity assumptions, many desirable asymptotic properties, including asymptotic normality (Theorem \ref{asym:cr}). 

\subsection{Maximum likehood estimators}\label{subsec:MLE}
We start with an {\em independent} family $\{X_j\}_{j=1}^n$ of random variables with $X_j\sim \psi_j(x_j;\theta)>0$, $\theta\in\Theta$. 

\begin{definition}\label{likeli:def}
	The 
	{\em likelihood function} of the random vector $X=(X_1,\cdots,X_n):\Omega\to\mathbb R^n$ 
	is 
	\begin{equation}\label{like:exp}
		L({\bf x};\theta)=\Pi_{j=1}^n\psi_j(x_j;\theta),
	\end{equation}
	where  ${\bf x}=(x_1,\cdots,x_n)\in\mathbb R^n$ is viewed as a realization of $X$.
	In particular, if $\{X_j\}$ is i.i.d.  ($X_j\sim \psi(x_j;\theta$) then 
	\begin{equation}\label{like:exp:ind}
		L({\bf x};\theta)=\Pi_{j=1}^n\psi_\theta(x_j), \quad \psi_\theta(x_j)=\psi(x_j;\theta).
	\end{equation}
\end{definition}

We begin with a motivation showing that, in the i.i.d.\ setting (which covers most cases considered here), maximum likelihood estimators arise as approximate solutions to a natural variational problem.

\begin{definition}\label{kull:leib:div:d}
	If $\theta_0\in\Theta$ is the unknown parameter to estimate, we define the {\em Kullback-Leibler divergence} (centered at $\theta_0$) by 
	\begin{equation}\label{kl:dist}
		\theta\in\Theta\mapsto D^{KL}_{\theta_0}(\theta):=\int_{\mathbb R}\psi_{\theta_0}({x})
		\ln\left(\frac{\psi_{\theta_0}({x})}{\psi_{\theta}({x})}\right)d{x}.
	\end{equation}
\end{definition}	

Using Jensen's inequality and assuming, as throughout, that our statistic model $X_j\sim\psi_\theta$ is identifiable\footnote{Recall that identifiability means that the map $\theta\in\Theta\mapsto \psi_\theta$ is injective.}, we readily see that $D^{KL}_{\theta_0}(\theta)\geq 0$ for any $\theta$, with the equality holding only if $\theta=\theta_0$.
This observation naturally leads us to seek
an estimator $\theta^*$ satisfying
\[
\theta^*={\rm argmin}_{\theta\in\Theta}D^{KL}_{\theta_0}(\theta). 
\]  
Since
\begin{equation}\label{kl:arg:1}
D^{KL}_{\theta_0}(\theta)={\rm const}_{\theta_0}-\mathbb E_{\theta_0}(\ln(\psi_\theta)),
\end{equation}
this minimization problem is equivalent to
\[
\theta^* = {\rm argmax}_{\theta\in\Theta}\mathbb E_{\theta_0}(\ln(\psi_\theta)).
\]
By LLN we may approximate, for $n$ sufficiently large,
\begin{equation}\label{kl:arg:2}
	\mathbb E_{\theta_0}(\ln (\psi_\theta))\approx \frac{1}{n}\sum_{j=1}^n\ln\psi_\theta(X_j)=\mathbb E_{\widetilde\theta_X}\left(\psi_\theta(X_j)\right),
	\end{equation}
where 
\[
\widetilde\theta_X=\frac{1}{n}\sum_{j=1}^n\delta_{X_j}
\]	
is the {\em empirical distribution} associated with a sample $X$ drawn from the unknown distribution $\psi_{\theta_0}$; cf. Example \ref{gliv}.
Combining  (\ref{kl:arg:1}) and (\ref{kl:arg:2}), we obtain
\begin{equation}\label{kl:arg:3}
\frac{1}{n}l(X;\theta) \approx	 {\rm const}_{\theta_0}-D^{KL}_{\widetilde\theta_X}(\theta),
\end{equation}
	where 
\begin{equation}\label{like:exp:ind0}
	l({\bf x};\theta)=\ln L({\bf x};\theta)
\end{equation}
is the {\em log-likelihood function},
so that the choice
\begin{equation}\label{MLE:heur}
	\theta^*(X) \approx  \frac{1}{n}{\rm argmax}_{\theta\in\Theta}l(X;\theta) =
	{\rm argmax}_{\theta\in\Theta}L(X;\theta)
\end{equation}
emerges as a natural candidate for estimating the unknown parameter
 $\theta_0$ based on the observed value ${\bf x}$.
In this sense, the resulting estimator approximately minimizes the Kullback--Leibler divergence from the model to the empirical distribution. Taking into account that the empirical distribution $\widetilde\theta_X$ is consistent for $\theta_0$, we see that, at least in the i.i.d.\ case,  the ML estimator asymptotically minimizes the Kullback-Leibler ``distance'' to $\theta_0$ in (\ref{kl:dist}). 

This heuristic argument motivates the following fundamental construction due to R.~A.~Fisher, which remains the most widely used method for deriving estimators.

\begin{definition}
	\label{mle:def:post}
	(Maximum Likelihood Estimation, MLE)
	Under the conditions above, a maximum likelihood (ML) estimator $\widehat\theta=\widehat\theta(X)$ is defined by 
	\begin{equation}\label{est:mle:det}
		\widehat\theta({\bf x})={\rm argmax}_{\theta\in\Theta} L({\bf x};\theta)={\rm argmax}_{\theta\in\Theta} l({\bf x};\theta).,
	\end{equation}
	where ${\bf x}$ is an observed value of $X$.
\end{definition}

As usual, we assume that the log-likelihood $l$ is strictly concave in $\theta$ so the solution  to (\ref{est:mle:det}) is unique (whenever it exists). 

\begin{remark}\label{kl:int:lrt:0}
Regarding the heuristic discussion above, we note that the interpretation of maximum likelihood estimation as the minimization of Kullback–Leibler divergence goes back to S.~Kullback and R.~Leibler~\cite{kullback1951information} and was later made explicit by H.~Akaike~\cite{akaike1973information}.
Systematic treatments of this circle of ideas from a decision-theoretic perspective, leading to the consideration of a host of information criteria in model selection, may be found in~\cite{sakamoto1986akaike,konishi2008information,burnham2013model}; see also Remark~\ref{rem:aic}, where we discuss the Akaike Information Criterion (AIC).
\qed	
\end{remark}

\begin{example}\label{normal:w} (MLE from a normal population)
	If $X_j\sim\mathcal N(\mu,\sigma^2)$ then 
	\begin{equation}\label{normal:w:1}
		L({\bf x};\theta)=(2\pi\theta_2)^{-n/2}e^{-\frac{1}{2\theta_2}\sum_{j=1}^n(x_j-\theta_1)^2}, 
	\end{equation}
	where $\theta=(\theta_1,\theta_2)=(\mu,\sigma^2)\in \Theta=\mathbb R\times\mathbb R_+$,
	so that 
	\begin{equation}\label{log:like}
		l({\bf x};\theta)=\ln L({\bf x};\theta)=-\frac{n}{2}\ln (2\pi\theta_2)-\frac{1}{2\theta_2}\sum_j(x_j-\theta_1)^2.
	\end{equation}
	The usual first derivative test shows that the 
	ML estimator $\widehat\theta_1$  and $\widehat\theta_1$ corresponding to (\ref{log:like}) should satisfy 
	\[
	0=\frac{\partial l}{\partial \theta_1}(\widehat\theta)=\frac{1}{\theta_2}\sum_j(X_j-\widehat\theta_1), \quad 
	0=\frac{\partial l}{\partial \theta_2}(\widehat\theta)=-\frac{n}{2\theta_2}+\frac{1}{2\theta_2^2}\sum_j(X_j-\widehat\theta_1)^2,
	\]
	which gives
	\[
	\widehat\theta_1=\overline  X_n=\frac{1}{n}\sum_jX_j, \quad \widehat\theta_2=\widehat\sigma^2_{n^{-1}}=\frac{1}{n}\Sigma_j(X_j-\overline X_n)^2. 
	\]
	We thus see that the ML estimator $\widehat\theta_2=\widehat\sigma^2_{n^{-1}}$ for the variance coming from (\ref{log:like}) not only fails to be unbiased but also satisfies 
	\[
	{\rm mse}(\widehat\sigma^2_{(n-1)^{-1}})>{\rm mse}(\widehat\sigma^2_{n^{-1}})>{\rm mse}(\widehat\sigma^2_{(n+1)^{-1}}),
	\] 
	so its performance, as measured by mse, lies somewhere between those of the variance estimators considered so far. 
	We point out that the asymptotic performance of a (sufficiently regular and consistent)  ML estimator is examined in Theorem \ref{asym:cr} below. In particular, asymptotically normality is established there, which confirms that $\widehat\sigma^2_{n^{-1}}$ stands out as the most efficient estimator from this viewpoint. \qed 
\end{example}

\begin{example}\label{corr:mle} (MLE from a jointly normal population)
	Using the notation of Example \ref{corr:dist}, we see that the right-hand side of (\ref{bi:var:norm:like}) allows us to write down the likelihood function  of the jointly normal random sample $\{X,Y\}$ as 
	\begin{eqnarray*}
		L({\bf x},{\bf y};\theta)
		& = & 
		\frac{1}{(2\pi\sigma_X\sigma_Y\sqrt{1-\rho^2})^m} \times \\
		& & \quad \times
		e^{-\frac{1}{2(1-\rho^2)}\sum_j
			\left(\frac{A(x_j)}{\sigma_X^2}-
			\frac{2\rho B(x_j,y_j)}{\sigma_X\sigma_Y}
			+\frac{C(y_j)}{\sigma_Y^2}\right)},
	\end{eqnarray*}
	where $\theta=(\mu_X,\mu_Y,\sigma_X^2,\sigma_Y^2,\rho)$ and 
	\begin{equation}\label{coeff:exp}
		A(x_j)=(x_j-\mu_X)^2, \quad B(x_j,y_j)=(x_j-\mu_X)(y_j-\mu_Y), \quad C(x_j)=(y_j-\mu_Y)^2, 
	\end{equation} 
	so that 
	\begin{eqnarray}\label{exp:loglike:joint}
		l({\bf x},{\bf y};\theta)
		& = & 
		-m\ln (2\pi\sigma_X\sigma_Y) -\frac{m}{2}\ln  (1-\rho^2) - \nonumber\\
		& & \quad 
		{-\frac{1}{2(1-\rho^2)}\sum_j
			\left(\frac{A(x_j)}{\sigma_X^2}-
			\frac{2\rho B(x_j,y_j)}{\sigma_X\sigma_Y}
			+\frac{C(y_j)}{\sigma_Y^2}\right)},
	\end{eqnarray}
	Starting from this,
	a straightforward analysis involving the first derivative test $\nabla_\theta l=0$ confirms  that 
	$\widehat\theta:=(\overline X_m,\overline Y_m,\widehat\sigma^2_{m^{-1}}(X),\widehat\sigma^2_{m^{-1}}(Y),\widehat\rho)$
	is the ML estimator of $\theta$; see \cite[Section 14.11]{kendall1946advanced} or \cite[Corollary 3.2.2]{anderson2003introduction} for the details. \qed
\end{example}

\begin{example}\label{mle:exp} (MLE from an exponential population)
	If $X_j\sim{\rm Exp}(\lambda)$, the exponential distribution with parameter $\lambda>0$, where
	\[
	{\rm Exp}(\lambda)(x)=\lambda e^{-\lambda x}{\bf 1}_{[0,+\infty)},
	\]
	then 
	\[
	L({\bf x};\lambda)=\lambda^{n}e^{-\lambda\sum_jx_j}\Longrightarrow l({\bf x};\lambda)=n\ln \lambda-\lambda\sum_jx_j,
	\]
	so that 
	\[
	\widehat\lambda = \frac{n}{\sum_jX_j},
	\]
	which matches the fact that $\mathbb E(X_j)=1/\lambda$. \qed
\end{example}

\begin{example}\label{mle:discrete}
	If $X_j\sim \mathsf{Ber}(p)$, the Bernoulli distribution in Remark \ref{dem:lap}, then for $x\in\{0,1\}$ we have 
	$P(X_j=x)=p^x(1-p)^{1-x}$, where $p\in (0,1)$ is the unknown parameter. If we assume as always that $\{X_j\}$ is independent the associated likelihood function is
	\[
	L({\bf x};p)= p^{\sum_jx_j}(1-p)^{n-\sum_jx_j}, 
	\] 
	and hence,
	\begin{equation}\label{logl:ber}
		l({\bf x};p)=\left(\sum_jx_j\right)\ln p+\left(n-\sum_jx_j\right)\ln (1-p). 
	\end{equation}
	The usual first derivative test for a minimum at $p=\widehat p$ is 
	\[
	0=\frac{\partial l}{\partial p}(\widehat p)=\frac{\sum_jx_j}{\widehat p}-\frac{n-\sum_jx_j}{1-\widehat p},
	\]
	so that 
	\[
	\widehat p=\frac{1}{n}\sum_jX_j,
	\]
	the sample mean. Also, if $X_j\sim{\mathsf{Pois}}(\rho)$, the Poisson distribution with parameter $\rho>0$, so that $P(X_j=x)=\rho^xe^{-\rho}/x!$, $x\in\{0,1,2,\cdots,\}$, then a simple computation shows that
	\[
	\widehat\rho=\frac{1}{n}\sum_jX_j, 
	\]
	which confirms that  the corresponding ML estimator is also the sample mean.
	\qed
\end{example}

\begin{remark}\label{com:analy}
	The analysis in the examples above should be complemented with the usual second derivative test to check in each case that the ML estimator attains the (unique) global maximum of the corresponding likelihood function. \qed
\end{remark}

\subsection{Fisher information and Cram\'er-Rao lower bound}\label{cr:rao:fisher}
We now present a universal lower bound for the covariance matrix in each class of estimators with a prescribed expectation (in particular, for unbiased estimators) in terms of an invariant (Fisher information) depending on the likelihood function of the given statistical model. Instead of restricting ourselves to statistical models, let us assume for the moment only that $X_j:\Omega\to\mathbb R$ are independent random variables, $i=1,\cdots,n$, giving rise to a random vector $X=(X_1,\cdots,X_n)$\footnote{In other words, we consider here a statistical model in the  extended sense of Remark \ref{stat:mod:def:2}.}. Let 
$L=L({\bf x};\theta)>0$ be the corresponding likelihood function and 
$l({\bf x};\theta)=\ln L({\bf x};\theta)$ the log-likelihood function, where ${\bf x}=(x_1,\cdots,x_n)$ and $\theta\in\Theta\subset\mathbb R^q$, the space of parameters. Clearly, the notions of statistics and estimators can be easily adapted to this broader setting. In the sequel we assume that $l$ is regular enough so that all the differential/integral manipulations hold true.

\begin{definition}\label{rscr:fische}
	Under the conditions above, we define the {\em score vector} (of the given sample $X$) as 
	\[
	s(X;\theta)=\nabla_\theta l(X;\theta).
	\]
	Also, the corresponding {\em Fisher information matrix} is 
	\begin{equation}\label{fisher:mat:def}
		\mathscr F^X(\theta)={\mathbb C}(s(X;\theta)). 
	\end{equation}
\end{definition}

We henceforth assume that the symmetric matrix $\mathscr F=\mathscr F^X$ in (\ref{rscr:fische}) is positive definite, so the inverse matrix $\mathscr F^{-1}$ exists.
Since the random effects present in the sample have been averaged out after taking covariance of the score, $\mathscr F=\mathscr F(\theta)$ is an invariant of the given model, in particular not being attached to any potential estimator. 

\begin{remark}\label{ill:ind:fis}
	In order to illustrate the importance of requiring that $\{X_j\}$ is independent, let us assume that we are in the unidimensional case, $\Theta\subset\mathbb R$, so  we call the scalar $\mathscr F$ simply the {\em Fisher information}.
	One has
	\[
	s(X;\theta)=\sum_j\frac{\frac{d}{d\theta}\psi_j(X_j;\theta)}{\psi_j(X_j;\theta)},
	\]
	a sum of {\em independent} random variables, so that by (\ref{uncorr:var}),
	\[
	{\mathbb V}(s(X;\theta))=\sum_j {\mathbb V}\left(\frac{\frac{d}{d\theta}\psi_j(X_j;\theta)}{\psi_j(X_j;\theta)}\right),
	\]
	which means that 
	\[
	\mathscr F^{X}=\sum_j\mathscr F^{X_j}.
	\]
	Thus, independence leads to a simple additive formula describing how the Fisher information of the whole sample decomposes as a sum of contributions coming from its parts.  
	If we additionally require that $X_j\sim\psi_\theta$ is i.d.d., which is the only case treated in all examples below, then this becomes
	\[
	\mathscr F_{(n)}=n\mathscr F_{(1)},
	\]
	with $\mathscr F_{(n)}=\mathscr F^{X}$ refering as before to the whole sample whereas $\mathscr F_{(1)}=\mathscr F^{X_j}$ refers to {\em any} single observation. \qed
\end{remark}

\begin{example}\label{ber:fis:c} (Fisher information for a Bernoulli sample)
	If $X_j\sim \mathsf{Ber}(p)$ then (\ref{logl:ber}) gives
	\[
	s(X;p)=\frac{\partial}{\partial p} l(X;p)=\frac{\sum_jX_j}{p}-\frac{n-\sum_jX_j}{1-p}=\frac{n}{p(1-p)}\overline X-\frac{n}{1-p},
	\]
	and since ${\mathbb C}(\overline X)={\mathbb C}(X_j)/n=p(1-p)/n$, we conclude that
	\begin{equation}\label{fish:inf:ber}
		\mathscr F_{(n)}(p)=\frac{n}{p(1-p)}.
	\end{equation}
	Thus, 
	the Fisher information increases with the sample size according to a rate which is inversely proportional to the ``fluctuation'' (as measured by the population variance). In a sense, this simple example justifies the qualification of ``information'' for this concept; for more on this point see Remark \ref{decay:fluc} below. \qed
\end{example}

\begin{example}\label{fisher:normal} (Fisher information for a normal sample)
	If $X_j\sim\mathcal N(\theta_1,\theta_2)$ is drawn from a normal population, where $\theta_1=\mu$ and $\theta_2=\sigma^2$,  then (\ref{log:like}) leads to 
	\begin{equation}\label{score:norm}
		s(X;\theta)=\nabla_\theta l({ X};\theta)=
		\left(
		\begin{array}{c}
			\frac{1}{\theta_2}\sum_j(X_j-\theta_1)\\
			-\frac{n}{2\theta_2}+\frac{1}{2\theta_2^2}\sum_j(X_j-\theta_1)^2
		\end{array}
		\right),\quad \theta=
		\left(
		\begin{array}{c}
			\theta_1\\
			\theta_2
		\end{array}
		\right).
	\end{equation}
	Since $X_j-\theta_1\sim\mathcal N(0,\theta_2)$, independence implies that 
	\begin{equation}\label{fis:d1}
		{\mathbb V}\left(\frac{1}{\theta_2}\sum_j(X_j-\theta_1)\right)=\frac{n}{\theta_2}.
	\end{equation}
	On the other hand, 
	\[
	-\frac{n}{2\theta_2}+\frac{1}{2\theta_2^2}\sum_j(X_j-\theta_1)^2=	-\frac{n}{2\theta_2}+\frac{1}{2\theta_2}\sum_j\left(\frac{X_j-\theta_1}{\sqrt{\theta_2}}\right)^2,
	\]
	and since $(X_j-\theta_1)/\sqrt{\theta_2}\sim\mathcal N(0,1)$, we see that
	\[
	\sum_j\left(\frac{X_j-\theta_1}{\sqrt{\theta_2}}\right)^2\sim\chi^2_n
	\]
	by Corollary \ref{sum:norm:sq}, so that Corollary \ref{chi:sq:ms} applies to give  
	\begin{equation}\label{chi:chi}
		{\mathbb V}\left(\sum_j\left(\frac{X_j-\theta_1}{\sqrt{\theta_2}}\right)^2\right)=2n,
	\end{equation}
	and hence,
	\begin{equation}\label{fis:d2}
		{\mathbb V}\left(	-\frac{n}{2\theta_2}+\frac{1}{2\theta_2^2}\sum_j(X_j-\theta_1)^2\right)=\frac{n}{2\theta_2^2}.
	\end{equation}
	Note that (\ref{fis:d1}) and (\ref{fis:d2}) provide the diagonal terms of the corresponding Fisher information matrix. 
	In order to compute the off-diagonal terms we observe that 
	\[
	\mathbb E\left(\frac{1}{\theta_2}\sum_j(X_j-\theta_1)\right)=0
	\]
	and 
	\begin{eqnarray*}
		\mathbb E\left(-\frac{n}{2\theta_2}+\frac{1}{2\theta_2^2}\sum_j(X_j-\theta_1)^2\right)
		& = & -\frac{n}{2\theta_2}+\frac{1}{2\theta_2}\mathbb E\left(\sum_j\left(\frac{X_j-\theta_1}{\sqrt{\theta_2}}\right)^2\right)\\
		& = &  -\frac{n}{2\theta_2}+ \frac{n}{2\theta_2}\\
		& = & 0,
	\end{eqnarray*}
	where we used (\ref{chi:chi}) and Corollary \ref{chi:sq:ms} in the next to the last step; this should be compared with the general result in Corollary \ref{any:fct:2} below.
	It follows that    
	\[
	{\mathbb C}\left( \frac{1}{\theta_2}\sum_j(X_j-\theta_1),-\frac{n}{2\theta_2}+\frac{1}{2\theta_2^2}
	\sum_j(X_j-\theta_1)^2\right)=\frac{1}{2\theta_2^3}
	\sum_j\mathbb E\left((X_j-\theta_1)^3\right),
	\]
	which clearly vanishes. We thus conclude that
	\begin{equation}\label{fish:inf:nor:0}
		\mathscr F_{(n)}(\theta)=
		\left(
		\begin{array}{cc}
			{n}/{\theta_2} & 0\\
			0 & {n}/{2\theta_2^2}
		\end{array}
		\right).
	\end{equation}
	In particular, if we consider $\theta_2$ as known,
	\begin{equation}\label{fish:inf:nor}
		\mathscr F_{(n)}(\theta_1)=\frac{n}{\theta_2}.
	\end{equation}
	We will see in Remark \ref{fisher:normal:2} below a much simpler route to retrieve (\ref{fish:inf:nor:0}). \qed
\end{example}	

In both (\ref{fish:inf:nor}) and (\ref{fish:inf:ber}), we observe that the Fisher information $\mathscr F=\mathscr F_{(n)}$ equals the reciprocal of the variance of the corresponding estimator, in both cases the (unbiased) sample mean. 
This reflects the remarkable general fact that the information matrix provides a universal lower bound for the covariance of any class of estimators with a prescribed bias, including in particular the unbiased ones; see Theorem~\ref{cr:rao:th} below. 
The next result is the first step toward establishing this fundamental connection.

\begin{proposition}\label{any:fct}
	For any (sufficiently regular) vector $t=t({\bf x};\theta)$ there holds
	\[
	\mathbb E(s\otimes t)=\nabla_\theta \mathbb E(t)-\mathbb E(\nabla_\theta t). 
	\]
\end{proposition}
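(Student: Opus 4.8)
The plan is to exploit the elementary but crucial \emph{score identity} $\nabla_\theta L = L\,s$, which follows at once from $s=\nabla_\theta l=\nabla_\theta\ln L=(\nabla_\theta L)/L$, together with differentiation under the integral sign (whose legitimacy is precisely what the standing regularity hypothesis on $l$ is meant to secure). Since $\{X_j\}$ is independent, $L({\bf x};\theta)$ is the joint pdf of $X$ by Proposition \ref{inddens}, so the expectation is taken against $L$ and I would begin by writing it explicitly:
\[
\mathbb E(t)=\int_{\mathbb R^n} t({\bf x};\theta)\,L({\bf x};\theta)\,d{\bf x}.
\]

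First I would differentiate both sides with respect to $\theta$ and move $\nabla_\theta$ inside the integral. Working at the level of entries (with $t=(t_i)$ and $\theta=(\theta_a)$), the product rule gives
\[
\frac{\partial}{\partial\theta_a}\mathbb E(t_i)=\int_{\mathbb R^n}\frac{\partial t_i}{\partial\theta_a}\,L\,d{\bf x}+\int_{\mathbb R^n} t_i\,\frac{\partial L}{\partial\theta_a}\,d{\bf x}.
\]
The second step is to rewrite the last integral via the score identity $\partial L/\partial\theta_a=L\,s_a$, so that
\[
\int_{\mathbb R^n} t_i\,\frac{\partial L}{\partial\theta_a}\,d{\bf x}=\int_{\mathbb R^n} t_i\,s_a\,L\,d{\bf x}=\mathbb E(s_a t_i),
\]
which is exactly the $(a,i)$ entry of $\mathbb E(s\otimes t)$. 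Reading the first integral as the corresponding entry of $\mathbb E(\nabla_\theta t)$ and the left-hand side as that of $\nabla_\theta\mathbb E(t)$, I would then rearrange to obtain the claimed identity entrywise, hence in coordinate-free tensor form.

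The only point requiring genuine care is the interchange of $\nabla_\theta$ and $\int d{\bf x}$; this is where the regularity assumption on $l$ (and thus on $L$ and $t$) enters, through a dominated-convergence argument ensuring that the integrands and their $\theta$-derivatives are dominated by fixed integrable functions on a neighborhood of each $\theta$. I expect this to be the sole real obstacle—the remaining content is just the product rule together with the definition of the score—so that the bulk of a fully rigorous write-up would consist merely in recording the hypotheses under which differentiation under the integral sign is valid.
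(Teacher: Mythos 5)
Your proposal is correct and is essentially the paper's own argument read in the opposite direction: the paper starts from $\mathbb E(s\otimes t)=\int (\nabla_\theta L)\otimes t\,d{\bf x}$ and applies the product rule in reverse before pulling $\nabla_\theta$ outside the integral, whereas you start from $\nabla_\theta\mathbb E(t)$, push the derivative inside, and invoke the score identity $\nabla_\theta L=L\,s$ — the same three ingredients in the same roles. Your explicit remark that the only genuine issue is justifying differentiation under the integral sign matches the paper's standing regularity assumption on $l$.
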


\begin{proof}
	We compute:
	\begin{eqnarray*}
		\mathbb E(s \otimes t) & = & \int_{\mathbb R^n} L({\bf x};\theta)^{-1}\nabla_\theta L({\bf x};\theta)\otimes t({\bf x};\theta)L({\bf x};\theta)d{\bf x}\\
		& = & \int_{\mathbb R^n} \nabla_{\theta}(L({\bf x};\theta)t({\bf x};\theta))d{\bf x}
		-\int_{\mathbb R^n} L({\bf x};\theta)\nabla_\theta t({\bf x};\theta)d{\bf x}\\
		& = & \nabla_{\theta}\int_{\mathbb R^n} L({\bf x};\theta)t({\bf x};\theta)d{\bf x}
		-\int_{\mathbb R^n} L({\bf x};\theta)\nabla_\theta t({\bf x};\theta)d{\bf x},	
	\end{eqnarray*}
	as claimed.
\end{proof}

\begin{corollary}\label{any:fct:2}
	There holds 
	\begin{equation}\label{fischer:0}
			\mathbb E(s)=\vec 0.
	\end{equation}		
			 In particular, 
	\begin{equation}\label{fischer}
		\mathscr F=\mathbb E(s\otimes s)=-\mathbb E(\nabla^2_{\theta\theta}l).
	\end{equation}
\end{corollary}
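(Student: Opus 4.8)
The plan is to obtain both identities as direct specializations of Proposition \ref{any:fct}, which already carries out the differentiation under the integral sign. First I would establish (\ref{fischer:0}) by applying Proposition \ref{any:fct} with the constant choice $t\equiv 1$. Since $s\otimes 1 = s$, the left-hand side becomes $\mathbb E(s)$, while on the right $\nabla_\theta\mathbb E(1)=\nabla_\theta 1=\vec 0$ and $\mathbb E(\nabla_\theta 1)=\mathbb E(\vec 0)=\vec 0$. Hence $\mathbb E(s)=\vec 0$ follows immediately.

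Next, with the mean of the score pinned at zero, I would unwind the definition (\ref{fisher:mat:def}) of the Fisher matrix. By the definition of covariance,
\[
\mathscr F = {\rm cov}(s) = \mathbb E(s\otimes s) - \mathbb E(s)\otimes\mathbb E(s),
\]
and since $\mathbb E(s)=\vec 0$ by (\ref{fischer:0}), the second term vanishes, giving $\mathscr F = \mathbb E(s\otimes s)$, which is the first equality in (\ref{fischer}).

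For the second equality I would invoke Proposition \ref{any:fct} a second time, this time with $t = s = \nabla_\theta l$. This yields
\[
\mathbb E(s\otimes s) = \nabla_\theta\mathbb E(s) - \mathbb E(\nabla_\theta s).
\]
Using (\ref{fischer:0}) to kill the first term on the right, and observing that $\nabla_\theta s = \nabla_\theta\nabla_\theta l = \nabla^2_{\theta\theta} l$ is precisely the Hessian of the log-likelihood, I obtain $\mathbb E(s\otimes s) = -\mathbb E(\nabla^2_{\theta\theta} l)$, which completes the proof.

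There is no genuine obstacle here: the entire analytic content is packaged inside Proposition \ref{any:fct}, and the only point requiring care is that the two choices $t\equiv 1$ and $t=s$ fall under the standing regularity hypothesis permitting differentiation under the integral sign, which is exactly the blanket assumption made just before Definition \ref{rscr:fische}.
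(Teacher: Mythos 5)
Your proof is correct and follows essentially the same route as the paper, which simply says ``take $t=(1,\cdots,1)$'' and leaves the rest implicit: you spell out the two applications of Proposition \ref{any:fct} (with $t\equiv 1$ and with $t=s$) and the reduction ${\rm cov}(s)=\mathbb E(s\otimes s)$ once $\mathbb E(s)=\vec 0$, exactly as intended. No gaps.
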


\begin{proof}
	Take $t=(1,\cdots,1)$.
\end{proof}

\begin{corollary}\label{any:fct:3}
	If $t=t({\bf x})$ then $\mathbb E(s\otimes t)=\nabla_\theta \mathbb E(t)$. In particular, if  $t=\widehat\theta$ is an estimator with $g(\theta):=\mathbb E(\widehat\theta)$ then $\mathbb E(s\otimes\widehat\theta)=\nabla_\theta g$.
\end{corollary}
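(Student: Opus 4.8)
The plan is to read this off directly from Proposition \ref{any:fct}, the only subtlety being to identify which term drops out. Recall that proposition asserts, for any sufficiently regular $t = t({\bf x};\theta)$,
\[
\mathbb E(s\otimes t)=\nabla_\theta \mathbb E(t)-\mathbb E(\nabla_\theta t).
\]
First I would invoke the hypothesis that $t=t({\bf x})$ depends on the realization alone and not on the parameter. Consequently $\nabla_\theta t \equiv \vec 0$ pointwise, so the corrective term $\mathbb E(\nabla_\theta t)$ vanishes identically, leaving $\mathbb E(s\otimes t)=\nabla_\theta \mathbb E(t)$, which is the first assertion.

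For the second assertion, I would simply specialize to $t=\widehat\theta$. Since an estimator is by definition a statistic $\widehat\theta = h(X_1,\cdots,X_n)$, a measurable function of the sample that makes no reference to the unknown $\theta$ (cf. the footnote accompanying (\ref{estim:def})), it qualifies as a $t=t({\bf x})$ of the required form. Writing $g(\theta)=\mathbb E(\widehat\theta)$ then yields $\mathbb E(s\otimes\widehat\theta)=\nabla_\theta g$ at once.

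There is no genuine obstacle here beyond bookkeeping: the entire analytic content, namely differentiation under the integral sign together with the product-rule manipulation, has already been discharged in the proof of Proposition \ref{any:fct} under the standing regularity assumption on $l$. The one point worth flagging is the conceptual distinction between a general $\theta$-dependent $t$ and an estimator; it is precisely the $\theta$-independence of $\widehat\theta$ that kills the second term and produces the clean identity relating the score--estimator cross-covariance to the gradient of the bias profile $g$. This is exactly the form that will be fed into the Cram\'er--Rao bound of Theorem \ref{cr:rao:th}, which is presumably why the corollary is isolated at this stage.
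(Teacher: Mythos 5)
Your argument is correct and is essentially the paper's own: the paper likewise obtains the first assertion immediately from Proposition \ref{any:fct} (the term $\mathbb E(\nabla_\theta t)$ vanishing since $t$ does not depend on $\theta$) and derives the second from the fact that an estimator, by definition, makes no reference to $\theta$. Your additional remarks on the role of this identity in the Cram\'er--Rao bound are accurate but not part of the proof itself.
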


\begin{proof}
	The first assertion is immediate and the second one follows from 
	the fact that $\widehat\theta$, as an estimator,  does {\em not} depend on $\theta$.
\end{proof}

\begin{remark}\label{fisher:normal:2}
	As a checking we may use (\ref{fischer}) to recalculate the Fisher information matrix of a normal sample $X_j\sim \mathcal N(\theta_1,\theta_2)$ as in Example \ref{fisher:normal}. From (\ref{score:norm}) we have
	\[
	\nabla_{\theta\theta}l(X;\theta)=
	\left(
	\begin{array}{cc}
		-\frac{n}{\theta_1} & -\frac{1}{\theta_2^2}\sum_j(X_j-\theta_1)\\
		-\frac{1}{\theta_2^2}\sum_j(X_j-\theta_1) & \frac{n}{2\theta_2^2}-\frac{1}{\theta_2^3}\sum_j(X_j-\theta_1)^2
	\end{array}
	\right),
	\] 
	so that 
	\[
	\mathscr F_{(n)}(\theta)=
	\left(
	\begin{array}{cc}
		\frac{n}{\theta_1} & 0\\
		0 & -\frac{n}{2\theta_2^2}+\frac{1}{\theta_2^3}\sum_j\mathbb E\left((X_j-\theta_1)^2\right)
	\end{array}
	\right),
	\]
	and since $\mathbb E((X_j-\theta_1)^2)={\mathbb V}(X_j)=\theta_2$, we recover (\ref{fish:inf:nor:0}). Note that this computation is much simpler because it is based on computing expectations, at the cost of taking one more derivative of log-likelihood function but with no need to compute covariances, and hence bypasses any appeal to the connection between sums of squares of normals and chi-squares. \qed
\end{remark}

Let $\mathcal E$ be the set of all estimators (for $\theta$). Given $g:\Theta\to\mathbb R^q$ define
\[
\mathcal E_g=\left\{\widehat\theta\in\mathcal E;\mathbb E(\widehat\theta)=g(\theta)\right\}. 
\]
Equivalently, $\mathcal E_g$ is the set of all {\em unbiased} estimators for $g(\theta)$.
Note that each $\widehat\theta\in\mathcal E_g$ satisfies 
\begin{equation}\label{exp:g:bias}
{\rm bias}(\widehat\theta)=g(\theta)-\theta
\end{equation}
and hence
\[
{\rm mse}(\widehat\theta)=\|g(\theta)-\theta)\|^2+{\rm tr}\,{\mathbb C}(\widehat\theta). 
\]
The next result provides a uniform lower bound for the covariance (and hence for the ${\rm mse}$) of estimators in each class $\mathcal E_g$ (provided it is not empty). 

\begin{theorem}\label{cr:rao:th}(Cram\'er-Rao)
	There  holds 
	\begin{equation}\label{cr:rao:th:3:1}
		{\mathbb C}(\widehat\theta)\geq \nabla_\theta g\mathscr F(\theta)^{-1} 
		{\nabla_\theta g}^\top
	\end{equation}
	for any $\widehat\theta\in\mathcal E_g$. In particular, 
	\begin{equation}\label{cr:rao:th:3}
		{\mathbb C}(\widehat\theta)\geq \mathscr F(\theta)^{-1}
	\end{equation}
	if $\widehat\theta$ is unbiased ($g(\theta)=\theta$).
\end{theorem}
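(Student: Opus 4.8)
The plan is to obtain (\ref{cr:rao:th:3:1}) as a one-line consequence of the positive semidefiniteness of a single joint covariance matrix, once the ingredients supplied by Corollaries \ref{any:fct:2} and \ref{any:fct:3} are assembled. The three facts I need are: (i) $\mathbb E(s)=\vec 0$ from (\ref{fischer:0}), so that $\mathscr F={\rm cov}(s)$ is genuinely the covariance of the score; (ii) $\mathbb E(\widehat\theta)=g(\theta)$, the defining property of $\mathcal E_g$; and (iii) the identity $\mathbb E(s\otimes\widehat\theta)=\nabla_\theta g$ from Corollary \ref{any:fct:3}. The key observation is that, precisely because $\mathbb E(s)=\vec 0$, this last outer-product expectation coincides with the cross-covariance between the estimator and the score, ${\rm cov}(\widehat\theta,s)=\nabla_\theta g$.

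I would then form the centered $2q$-dimensional random vector $W=(\widehat\theta-g(\theta),\,s)$ and write down its covariance matrix
\[
\Sigma=\begin{pmatrix} {\rm cov}(\widehat\theta) & \nabla_\theta g\\[2pt] (\nabla_\theta g)^\top & \mathscr F\end{pmatrix}.
\]
This $\Sigma$ is positive semidefinite, since $\langle\Sigma\xi,\xi\rangle=\mathbb E(\langle W,\xi\rangle^2)\geq 0$ for every $\xi\in\mathbb R^{2q}$. Because the lower-right block $\mathscr F$ is positive definite by the standing assumption (so $\mathscr F^{-1}$ exists), the Schur complement criterion guarantees that the Schur complement of $\mathscr F$ in $\Sigma$ is itself positive semidefinite, which is exactly
\[
{\rm cov}(\widehat\theta)-\nabla_\theta g\,\mathscr F(\theta)^{-1}(\nabla_\theta g)^\top\geq 0,
\]
i.e. (\ref{cr:rao:th:3:1}). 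Specializing to the unbiased case $g(\theta)=\theta$, where $\nabla_\theta g={\rm Id}$, collapses this to ${\rm cov}(\widehat\theta)\geq\mathscr F(\theta)^{-1}$, establishing (\ref{cr:rao:th:3}).

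Equivalently, and perhaps more transparently for readers unfamiliar with Schur complements, I could argue by ``completing the square'': for an arbitrary $q\times q$ matrix $A$ the vector $\widehat\theta-g(\theta)-As$ has a positive-semidefinite covariance, and expanding it using facts (i)--(iii) gives ${\rm cov}(\widehat\theta)-A(\nabla_\theta g)^\top-\nabla_\theta g\,A^\top+A\mathscr F A^\top\geq 0$; the cross-term-killing choice $A=\nabla_\theta g\,\mathscr F^{-1}$ reduces this to the same inequality. Since the one delicate analytic step---differentiating under the integral sign---has already been absorbed into Proposition \ref{any:fct}, no serious obstacle remains: the argument is purely linear-algebraic. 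The only point genuinely demanding care is the bookkeeping, namely correctly identifying the off-diagonal block as the score--estimator cross-covariance (which hinges on $\mathbb E(s)=\vec 0$) and keeping the transposes and the direction of the matrix inequality consistent throughout.
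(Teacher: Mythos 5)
Your proof is correct and follows essentially the same route as the paper: the paper also takes $Z=\widehat\theta$, $W=s$, uses $\mathbb E(s\otimes\widehat\theta)=\nabla_\theta g$ and $\mathbb E(s)=\vec 0$, and invokes the covariance inequality ${\rm cov}(Z)\geq{\rm cov}(Z,W){\rm cov}(W)^{-1}{\rm cov}(W,Z)$, which it cites as a well-known algebraic fact. The only difference is that you supply the proof of that auxiliary inequality (via the Schur complement, or equivalently your completing-the-square argument), whereas the paper quotes it.
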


\begin{proof}
	We first consider the uni-dimensional case $\Theta\subset\mathbb R$. From Corollaries \ref{any:fct:3} and \ref{any:fct:2} we have
	\begin{eqnarray*}
		g'(\theta)
		& = & \int_{\mathbb R^n}s({\bf x};\theta)\widehat\theta({\bf x})L({\bf x};\theta)dx\\
		& = & \int_{\mathbb R^n}s({\bf x};\theta)\left(\widehat\theta({\bf x})-g(\theta)\right)L({\bf x};\theta)d{\bf x},
	\end{eqnarray*}
	so that Cauchy-Schwartz inequality gives 
	\[
	|g'(\theta)|^2\leq {\mathbb V}(s(X;\theta)){\mathbb V}(\widehat\theta(X)),
	\]
	as claimed. 
	The proof of the multi-dimensional  case is quite similar and makes 
	use of a well-known algebraic inequality: for any random vectors $Z,W\in\mathbb R^q$ with ${\mathbb C}(W)>0$ there holds
	\[
	{\mathbb C}(Z)\geq {\mathbb C}(Z,W){\mathbb C}(W)^{-1}{\mathbb C}(W,Z). 
	\] 
	Taking $Z=\widehat\theta$ and $W=s$ we get 
	\begin{eqnarray*}
		{\mathbb C}(\widehat\theta) & \geq & {\mathbb C}(\widehat\theta,s){\mathbb C}(s)^{-1}{\mathbb C}(s,\widehat\theta)\\
		& = & \mathbb E(\widehat\theta\otimes s)\mathbb E(s\otimes s)^{-1}\mathbb E(s\otimes\widehat\theta)\\
		& = & \nabla_\theta g\mathscr F(\theta)^{-1} {\nabla_\theta g}^\top,
	\end{eqnarray*}
	as claimed.
\end{proof}

\begin{corollary}\label{cr:rao:th:cor}
	The best estimator in $\mathcal E_g$ (if it exists) is the one whose covariance matrix attains the lower bound in (\ref{cr:rao:th:3:1}). In particular,
	an unbiased estimator whose covariance matrix attains the lower bound in (\ref{cr:rao:th:3}) has the best performance (as measured by the mse).
\end{corollary}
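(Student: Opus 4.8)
The plan is to reduce the optimality question to a scalar comparison through the bias--variance decomposition and then to exploit the monotonicity of the trace under the matrix (Loewner) order, with the Cram\'er--Rao inequality of Theorem \ref{cr:rao:th} supplying the universal lower bound.

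First I would record that every $\widehat\theta\in\mathcal E_g$ carries the \emph{same} bias: since $\mathbb E(\widehat\theta)=g(\theta)$ by the very definition of $\mathcal E_g$, formula (\ref{exp:g:bias}) gives ${\rm bias}(\widehat\theta)=g(\theta)-\theta$, a quantity attached to the class rather than to the individual estimator. Substituting this into the decomposition (\ref{mse:2}) of Proposition \ref{trade:off} yields
\[
{\rm mse}(\widehat\theta)={\rm tr}\,{\rm cov}(\widehat\theta)+\|g(\theta)-\theta\|^2,
\]
whose second summand is constant over $\mathcal E_g$. Hence minimizing ${\rm mse}$ within $\mathcal E_g$ is \emph{equivalent} to minimizing ${\rm tr}\,{\rm cov}(\widehat\theta)$.

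Next I would bring in the matrix bound. Writing $L:=\nabla_\theta g\,\mathscr F(\theta)^{-1}{\nabla_\theta g}^\top$ for the right-hand side of (\ref{cr:rao:th:3:1}), Theorem \ref{cr:rao:th} asserts ${\rm cov}(\widehat\theta)\geq L$ in the Loewner order for every $\widehat\theta\in\mathcal E_g$. Because such an inequality means that ${\rm cov}(\widehat\theta)-L$ is positive semidefinite and hence has nonnegative diagonal entries, taking traces gives the uniform scalar bound ${\rm tr}\,{\rm cov}(\widehat\theta)\geq{\rm tr}\,L$ across the whole class. Consequently, if some $\widehat\theta^*\in\mathcal E_g$ attains the bound as a matrix identity, ${\rm cov}(\widehat\theta^*)=L$, then ${\rm tr}\,{\rm cov}(\widehat\theta^*)={\rm tr}\,L$ is the least trace achievable in $\mathcal E_g$, and by the equivalence of the preceding paragraph $\widehat\theta^*$ minimizes ${\rm mse}$; this is the claimed optimality. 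The unbiased case is the specialization $g(\theta)=\theta$, where $\nabla_\theta g={\rm Id}$, the bias term vanishes, $L=\mathscr F(\theta)^{-1}$, and ${\rm mse}(\widehat\theta)={\rm tr}\,{\rm cov}(\widehat\theta)$ coincides outright with the quantity being minimized.

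The point demanding care is the reading of ``attains the lower bound'': the optimality obtained here rests on the \emph{matrix} equality ${\rm cov}(\widehat\theta^*)=L$, which is strictly stronger than mere trace-minimality and may simply be unachievable for a given model---exactly the contingency flagged by the parenthetical ``if it exists.'' I would therefore present the conclusion as the implication ``attaining the Loewner bound $\Rightarrow$ minimal ${\rm mse}$ in $\mathcal E_g$'' rather than as a converse characterization, since trace-minimality by itself does not determine the covariance matrix and need not force equality in (\ref{cr:rao:th:3:1}).
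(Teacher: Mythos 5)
Your argument is correct and follows exactly the route the paper intends: the displayed identity ${\rm mse}(\widehat\theta)=\|g(\theta)-\theta\|^2+{\rm tr}\,{\rm cov}(\widehat\theta)$ stated just before Theorem \ref{cr:rao:th} already reduces the problem to minimizing ${\rm tr}\,{\rm cov}(\widehat\theta)$ over $\mathcal E_g$, and taking traces in the Loewner inequality (\ref{cr:rao:th:3:1}) gives precisely your conclusion. Your closing caveat — that attaining the matrix bound is sufficient but not necessary for trace-minimality, which is why the corollary is phrased conditionally — is a correct and worthwhile clarification of the parenthetical ``if it exists.''
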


\begin{example}\label{ex:cr:discr} (The sample mean as the best unbiased estimator of the expected value of a Bernoulli or Poisson population)
	It follows from (\ref{fish:inf:ber}) that, for a Bernoulli population,
	\[
	\mathscr F_{(n)}(p)=\frac{n}{p(1-p)}=\frac{1}{{\mathbb V}(\widehat p)},
	\]
	so Corollary \ref{cr:rao:th:cor} applies and the sample mean $\widehat p$ is the best unbiased estimator for the expected value.
	A similar reasoning, based on the explicit computation of the corresponding Fisher information, confirms that the sample mean is the best unbiased estimator for the expected value of a Poisson random sample as in Example \ref{mle:discrete}.
	\end{example}

\begin{example}\label{mean:best}(The sample mean as the best estimator of the expected value of a normal population)
	As observed in~\cite{hodges1951some}, the Cramér–Rao inequality in Theorem~\ref{cr:rao:th} may be used to prove that the sample mean $\widehat\theta_1=\overline X$ is the {\em best} estimator for the mean $\theta_1=\mu$ of a normal population, in the sense that it attains the smallest possible ${\rm mse}$ among {\em all} such estimators.\footnote{In this case we say that $\widehat\theta_1$ is {\em admissible}, meaning that there exists no other estimator $\widehat\theta_\bullet$ such that ${\rm mse}(\widehat\theta_\bullet)<{\rm mse}(\widehat\theta_1)$.}; see also~\cite[Example~5.2.8]{lehmann2006theory} for an alternative proof of this fact. 
	Here we adopt the notation of Example~\ref{fisher:normal}, so that our sample satisfies $X_j\sim\mathcal N(\theta_1,\theta_2)$, $j=1,\dots,n$. Let $\widehat\theta_\bullet$ be an estimator of $\theta_1$ such that ${\rm mse}(\widehat\theta_\bullet)\leq {\rm mse}(\widehat\theta_1)=\theta_2/n$. Setting $b(\theta_1)={\rm bias}_{\theta_1}(\widehat\theta_\bullet)$ and noting that $\mathcal F(\theta_1)=n/\theta_2$ by~(\ref{fish:inf:nor:0}), we obtain from~(\ref{cr:rao:th:3:1}) and~(\ref{exp:g:bias}) that
	\begin{equation}\label{eq:theta:1}
		b(\theta_1)^2+\frac{\theta_2}{n}\,(1+b'(\theta_1))^2\leq \frac{\theta_2}{n}.
	\end{equation}
	It follows that $b$ is uniformly bounded, $|b|\leq \sqrt{\theta_2/n}$, and that $b'\leq 0$ everywhere. If there existed $\epsilon>0$ such that $b'\leq -\epsilon$ for $\theta_1\to\pm\infty$, then $b$ would not remain bounded, which is impossible. Hence $b'(\pm\infty)=0$, and from (\ref{eq:theta:1}) we deduce that $b(\pm\infty)\to0$. Since $b$ is nonincreasing, this forces $b\equiv0$. 
	Therefore $\widehat\theta_\bullet$ is unbiased and satisfies ${\rm mse}(\widehat\theta_\bullet)={\rm mse}(\widehat\theta_1)$, as claimed. As also noted in~\cite{hodges1951some}, this argument can be adapted to the setting of Bernoulli and Poisson populations in Example~\ref{ex:cr:discr}, thereby eliminating the need for an unbiasedness assumption on the competing estimators.
	 \qed
	\end{example}

\begin{example}\label{james:stein}
	(The James-Stein estimator \cite{james1961estimation})
Starting with a single sample $X\sim \mathcal N({\bm\mu},\sigma^2{\rm Id}_p)$, where $\sigma^2$ is known, the log-likehood function 
\begin{equation}\label{log:like:js}
l({\bf x};\bm\mu)=-\frac{p}{2}\ln 2\pi\sigma^2-\frac{1}{2\sigma^2}\|{\bf x}-{\bm\mu}\|^2
\end{equation}
tells us 
that the MLE for ${\bm\mu}$ is $\widehat{\bm\mu}=X$. Since $\sigma^{-2}\|X-{\bm\mu}\|^2\sim\chi^2_p$ we know that
\begin{equation}\label{mse:vec:mu}
{\rm mse}(\widehat{\bm\mu})=\mathbb E(\|X-{\bm\mu}\|^2)=p\sigma^2,
\end{equation}
with only the variance contributing (since $\widehat{\bm\mu}$ is unbiased).
Now let us compare $\widehat{\bm \mu}$ with the {\em James-Stein estimator}
\begin{equation}\label{j:s:est}
{\widehat{\bm\mu}}_{JS}=\left(1-\frac{(p-2)\sigma^2}{\|\widehat{\bm\mu}\|^2}\right)\widehat{\bm\mu}=
X-\left(p-2\right)\sigma^2\frac{X}{\|X\|^2},
\end{equation}
whose mean squared error is 
\begin{eqnarray*}
{\rm mse}({\widehat{\bm\mu}}_{JS})
	& = & \mathbb E\left(\|{\widehat{\bm\mu}}_{JS}-\bm\mu\|^2\right)\\
	& = & 
\mathbb E\left(
\left\|\widehat{\bm\mu}-{\bm\mu}-(p-2)\sigma^2\frac{\widehat{\bm\mu}}{\|\widehat{\bm\mu}\|^2}\right\|^2
\right)\\
	& = & 
	{\rm mse}(\widehat{\bm\mu}) -2(p-2)\sigma^2\mathbb E\left(\frac{\langle X,X-\bm\mu\rangle}{\|X\|^2}\right)+(p-2)^2\sigma^4\mathbb E\left(\|X\|^{-2}\right)\\
& = & 
	p\sigma^2 -2(p-2)\sigma^2\mathbb E\left(\frac{\langle X,X-\bm\mu\rangle}{\|X\|^2}\right)+(p-2)^2\sigma^4\mathbb E\left(\|X\|^{-2}\right),	
\end{eqnarray*}
where we used (\ref{mse:vec:mu}) in the last step.
In order to handle the mixed term in the right-hand side we first note from (\ref{log:like:js}) that the score vector is 
\[
s({\bf x};\bm\mu)=\nabla_{{\bm\mu}}{l}({\bf x};\bm\mu)=\sigma^{-2}\left({\bf x}-\bm\mu\right),
\]
so if we make $t=t(X)\in\mathbb R$ in Corollary \ref{any:fct:3} we obtain
\begin{equation}\label{eq:stein}
\mathbb E\left(\frac{\partial}{\partial x_j}t(X)\right)=\frac{\partial}{\partial{\mu}_j}\mathbb E\left(t(X)\right)=\sigma^{-2}\mathbb E\left(t(X)(X_j-{\bm\mu}_j)\right), \quad j=1,\cdots,p,
\end{equation}
a result usually known as {\em Stein's equation}\footnote{Remarkably enough, the validity of (\ref{eq:stein}) for all $t$ varying in a suitable class of test functions completely characterizes $\sigma^{-1}(X-\bm\mu)$ as a standard normal random vector, which turns out to be the starting point of Stein's approach to the Berry-Esseen theorem discussed in Remark \ref{miscon} \cite{chen2021stein}.}.
By taking $t(X)=X_j/\|X\|^2$ and summing over $j$ we realize that 
\begin{equation}\label{counter:stein}
\mathbb E\left(\frac{\langle X,X-\bm\mu\rangle}{\|X\|^2}\right)=(p-2)\sigma^2\mathbb E\left(\|X\|^{-2}\right),
\end{equation}
which gives 
\[
{\rm mse}({\widehat{\bm\mu}}_{JS})= p\sigma^2
-(p-2)^2\sigma^4 \mathbb E\left(\|X\|^{-2}\right).
\]
On the other hand, it follows from (\ref{exp:form}) that 
\[
\mathbb E\left(\|X\|^{-2}\right)=\frac{1}{(2\pi)^{p/2}}\int_{\mathbb R^p}\|{\bf x}\|^{-2}e^{-\frac{1}{2}\|{\bf x}-\bm\mu\|^2}d{\bf x},
\]
an integral which becomes finite if $\int r^{p-3}dr$ converges near $r=0$. Thus, we conclude that  
${\rm mse}_{\bm\mu}({\widehat{\bm\mu}}_{JS})< p\sigma^2$ if $p\geq 3$ for {\em any} $\bm\mu$, which confirms that in those cases the unbiased MLE estimator $\widehat{\bm\mu}$ fails to be the most efficient one (if the ``performance'' is measured by the mean squared error); cf. Remark \ref{comp:mse}. 
Regarding this remarkable estimator, we add the following comments.
\begin{itemize}
	\item Since $\widehat{\bm\mu}=X$ is simply the sample mean as we have just a single observation at our disposal, this is in sharp contrast with the result in Example \ref{mean:best}, which says that the sample mean is the best estimator for ${\bm \mu}$ is $p=1$. To reinforce this analogy, let us take a random sample  $X_j\sim \mathcal N({\bm\mu},\sigma^2{\rm Id}_p)$, $j=1,\cdots,n$, so that $n$ observations of the underlying multivariate normal population are available. Now, the log-likelihood function is 
	\[
		l({\bf x};\bm\mu)=-\frac{np}{2}\ln 2\pi\sigma^2-\frac{1}{2\sigma^2}\sum_j\|{\bf x}_j-{\bm\mu}\|^2,\quad {\bf x}=({\bf x}_1,\cdots,{\bf x}_n)\in\mathbb R^{np},
	\]
	so the MLE for ${\bm\mu}$ is $\widehat{\bm\mu}^{(n)}=\overline X$, where 
	\[
	\overline X=\frac{1}{n}\sum_jX_j\in\mathbb R^p.
	\]
	Since $\sigma^{-2}\|X_j-{\bm\mu}\|^2\sim\chi^2_p$ for each $j$ and $\{X_j-{\bm \mu}\}_{j=1}^n$ is independent, we  have
	\begin{eqnarray*}
		{\rm mse}(\widehat{\bm\mu}^{(n)})
		& = & 
		\mathbb E(\|\overline X-{\bm\mu}\|^2)\\
		& = & 
		\frac{1}{n^2}\mathbb E\left(\left\|\sum_j(X_j-\bm\mu)\right\|^2\right)\\
			& = & 
		\frac{1}{n^2}\sum_j\mathbb E\left(\left\|X_j-\bm\mu\right\|^2\right)\\
		& = & \frac{p}{n}\sigma^2,
	\end{eqnarray*}
	where again  only the covariance contributes (since $\widehat{\bm\mu}^{(n)}$ is unbiased). It turns out that essentially the same argument as above confirms that $\widehat{\bm\mu}^{(n)}$  fails to be admissible if $p\geq 3$, as the corresponding James-Stein estimator
	\[
	{\widehat{\bm\mu}}_{JS}^{(n)}=\left(1-\frac{(p-2)\sigma^2/n}{\|\widehat{\bm\mu}^{(n)}\|^2}\right)\widehat{\bm\mu}^{(n)}=
	\overline X-\left(p-2\right)\frac{\sigma^2}{n}\frac{\overline X}{\|\overline X\|^2}
	\]
	satisfies ${\rm mse}({\widehat{\bm\mu}}_{JS}^{(n)})<p\sigma^2/n$.
	\item In case $\sigma^2$ is unknown, and  restricting ourselves to the case $n=1$ for simplicity, let us replace (\ref{j:s:est}) by 
	\[
		{\widehat{\bm\mu}}_{JS_l}=\left(1-\frac{(p-2)c_l{\mathfrak s}}{\|\widehat{\bm\mu}\|^2}\right)\widehat{\bm\mu}=
		X-\left(p-2\right)c_l{\mathfrak s}\frac{X}{\|X\|^2},
	\]
	where $p\geq 3$, $c_l$ is a positive constant (depending on a positive integer $l$ given in advance) to be determined below and $\mathfrak s$ is the appropriate estimator of $\sigma^2$ in the sense that $\sigma^{-2}\mathfrak s\sim\chi^2_l$ and $\{\mathfrak s,X\}$ is independent. 
Setting ${\bm\mu}_\bullet=\sigma^{-1}{\bm\mu}$, $\widehat{\bm\mu}_\bullet=\sigma^{-1}\widehat{\bm\mu}$, $X_\bullet=\sigma^{-1}X$ and $\mathfrak s_\bullet=\sigma^{-2}\mathfrak s$, we compute  
\begin{eqnarray*}
	{\rm mse}({\widehat{\bm\mu}}_{JS_l})
	& = & \mathbb E\left(\|{\widehat{\bm\mu}}_{JS_l}-\bm\mu\|^2\right)\\
	& = & 
	\sigma^2
	\mathbb E\left(
	\left\|\widehat{\bm\mu}_\bullet-{\bm\mu}_\bullet-(p-2)c_l\mathfrak s_\bullet\frac{\widehat{\bm\mu}_\bullet}{\|\widehat{\bm\mu}_\bullet\|^2}\right\|^2
	\right)\\
	& = & 
	\sigma^2
	\left(
	{\rm mse}(\widehat{\bm\mu}_\bullet) -2(p-2)c_l l\mathbb E\left(\frac{\langle X_\bullet,X_\bullet-\bm\mu_\bullet\rangle}{\|X_\bullet\|^2}\right)+(p-2)^2c_l^2l(l+2)\mathbb E\left(\|X_\bullet\|^{-2}\right)
	\right),
\end{eqnarray*}
where we used the independence and that  $\mathbb E(\mathfrak s_\bullet)=l$ and $\mathbb E(\mathfrak s_\bullet^2)=l(l+2)$ in the last step. Combining this with the obvious counterpart of (\ref{counter:stein}) we end up with 
\[
{\rm mse}({\widehat{\bm\mu}}_{JS_l})=\sigma^2
\left(
p-
(p-2)^2l\left[
2c_l-c_l^2(l+2)
\right]\mathbb E(\|X_\bullet\|^{-2})
\right),
\]
from which we see that the best choice is $c_l=1/(l+2)$, in which case
\[
	{\widehat{\bm\mu}}_{JS_l}=\sigma^2\left(k-\frac{p-2}{l+2}\frac{{\mathfrak s}}{\|\widehat{\bm\mu}\|^2}\right)\widehat{\bm\mu}
\]
certainly satisfies ${\rm mse}({\widehat{\bm\mu}}_{JS_l})<p\sigma^2$. 
	\item 
As the formulas above make clear (see, for instance, (\ref{j:s:est})), the James--Stein estimator shrinks the sample mean toward the origin. This adjustment introduces a small amount of bias by pulling the estimate away from its observed value, yet when the number $p$ of components in the underlying normal mean vector is sufficiently large, the reduction in variance more than compensates for this bias. The outcome is an estimator with a smaller total error, as measured by the mean squared error\footnote{A similar phenomenon appears in Corollary \ref{stat:normal:c}, where $\widehat\sigma^2_{(n+1)^{-1}}$ may be interpreted as a shrinkage of both the unbiased estimator $\widehat\sigma^2_{(n-1)^{-1}}$ and the maximum likelihood estimator $\widehat\sigma^2_{n^{-1}}$.}. 
This principle of shrinkage represented more than a technical refinement; it marked a genuine paradigm shift in Statistics. It laid the foundation for regularization techniques such as Ridge regression and the Lasso, which have since become essential tools in Data Science and Machine Learning, particularly in high-dimensional contexts; see Subsection~\ref{ridge} for further discussion in the setting of linear regression.
	\item Instead of shrinking toward the origin, it is often convenient to choose some $\bm\nu\in\mathbb{R}^k$ and replace (\ref{j:s:est}) by  
	\[
	{\widehat{\bm\mu}}_{JS_{\bm\nu}}
	=\left(1-\frac{(p-2)\sigma^2}{\|\widehat{\bm\mu}-\bm\nu\|^2}\right)(\widehat{\bm\mu}-\bm\nu)+\bm\nu,
	\]
	thereby performing shrinkage toward $\bm\nu$. The resulting estimator always satisfies ${\rm mse}({\widehat{\bm\mu}}_{JS_{\bm\nu}})<\sigma^2k$.  
	Although the optimal choice of $\bm\nu$ is not generally known, this formulation introduces a useful degree of flexibility. A particularly natural, data-driven option is the \emph{grand mean vector} $\overline X_{\mathrm{gm}}{\bf 1}$, where $\overline X_{\mathrm{gm}}$ denotes the arithmetic mean of the components of the observed sample mean $X$. This choice was used in the classical analysis of the baseball batting averages data set by \cite{efron1977stein}.
	\qed
	\end{itemize}
	\end{example}

\begin{remark}\!\!$\bigstar$\label{uncertain:p}
	By rewriting (\ref{cr:rao:th:3}) as 
	\[
	{\mathbb C}(\widehat\theta)\mathscr F(\theta)\geq {\rm Id}_n,
	\]
	it is patent the resemblance of the Cram\'er-Rao lower bound to the uncertainty principle in Quantum Mechanics. \qed	
\end{remark}

\subsection{Asymptotic normality of ML estimators}\label{asym:mle:est}
We now check that under suitable regularity assumptions (which are too complicated to  reproduce here) the ML estimator achieves the Cram\'er-Rao lower bound as the sample size $n$ grows indefinitely, which follows from the fact that consistent ML estimators are asymptotically normal (in the sense of Definition \ref{an:defin}), with their asymptotic covariance $\sigma^2_\theta$ determined by the (inverse of the) Fisher information matrix. As usual we consider an infinite family $X_j\sim\psi_\theta$ of i.i.d. random variables, so that for each $n$ the log-likelihood of $X^{[n]}=(X_1,\cdots,X_n)$ is given by 
\begin{equation}\label{mle:ch}
	l^{(n)}({\bf x};\theta)=\sum_{j=1}^n \ln \psi_\theta(x_j),  
\end{equation}
where $\theta\in\Theta$ is the true (but unknown) parameter. For simplicity, let us assume that $\Theta\subset\mathbb R$ (the uni-dimensional case) so that $\mathscr F(\theta)>0$ is the {Fisher information}. For each $n$ let $\widehat\theta_n$ be the corresponding (and unique!) ML estimator so that 
\begin{equation}\label{mle:cons}
	\frac{d}{d\theta}l^{(n)}({\bf x};\widehat\theta_n)=0.
\end{equation}

\begin{theorem}\label{asym:cr}(Asymptotic normality)
	Under the conditions above, 
	if $\widehat\theta_n$ is {consistent} (in the sense of Definition \ref{cons:deff}) then 
	\begin{equation}\label{asym:stand}
		\sqrt{n}(\widehat\theta_n- \theta)\to\mathcal N(0,\mathscr F_{(1)}(\theta)^{-1})
	\end{equation}
	in distribution (with respect to $\theta$),
	where $\mathscr F_{(1)}$ is the Fisher information of a single observation (say, $X_1$). As a consequence,	
	\begin{equation}\label{asym:mle}
		\widehat\theta_n\approx_{n\to+\infty}\mathcal N(\theta,\mathscr F_{(n)}(\theta)^{-1}),
	\end{equation} 
	where $\mathscr F_{(n)}=n\mathscr F_{(1)}$ is the Fisher information of the whole sample $X^{[n]}$. 
\end{theorem}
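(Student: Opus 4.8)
The plan is to linearize the score equation (\ref{mle:cons}) about the true parameter $\theta$ and to recognize the two resulting pieces as governed, respectively, by the Central Limit Theorem and the Law of Large Numbers. Writing $\dot{l}^{(n)}$ and $\ddot{l}^{(n)}$ for the first and second derivatives of (\ref{mle:ch}) with respect to the parameter, a first-order Taylor expansion of $\dot{l}^{(n)}(\mathbf{x};\cdot)$ around $\theta$ gives
\[
0 = \dot{l}^{(n)}(\mathbf{x};\widehat\theta_n) = \dot{l}^{(n)}(\mathbf{x};\theta) + \ddot{l}^{(n)}(\mathbf{x};\widetilde\theta_n)\,(\widehat\theta_n - \theta),
\]
for some $\widetilde\theta_n$ lying between $\widehat\theta_n$ and $\theta$. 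Solving for the rescaled deviation yields
\[
\sqrt{n}\,(\widehat\theta_n - \theta) = \frac{n^{-1/2}\,\dot{l}^{(n)}(\mathbf{x};\theta)}{-\,n^{-1}\,\ddot{l}^{(n)}(\mathbf{x};\widetilde\theta_n)},
\]
so the argument reduces to analyzing numerator and denominator separately.

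For the numerator I would note that $\dot{l}^{(n)}(X;\theta)=\sum_{j=1}^n \tfrac{d}{d\theta}\ln\psi_\theta(X_j)$ is a sum of i.i.d.\ single-observation scores. By Corollary \ref{any:fct:2} each term has mean zero (this is (\ref{fischer:0}) applied to a single observation), while its variance is exactly the single-observation Fisher information $\mathscr F_{(1)}(\theta)$ by Definition \ref{rscr:fische}. The Central Limit Theorem (Theorem \ref{clt}) then gives $n^{-1/2}\dot{l}^{(n)}(X;\theta)\stackrel{d}{\to}\mathcal N(0,\mathscr F_{(1)}(\theta))$.

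For the denominator, evaluated first at $\theta$, I would observe that $-n^{-1}\ddot{l}^{(n)}(X;\theta)=-n^{-1}\sum_j \tfrac{d^2}{d\theta^2}\ln\psi_\theta(X_j)$ is a sample average of i.i.d.\ terms whose common mean is, by (\ref{fischer}), precisely $\mathscr F_{(1)}(\theta)$; the Law of Large Numbers (Theorem \ref{lln}) then gives $-n^{-1}\ddot{l}^{(n)}(X;\theta)\stackrel{p}{\to}\mathscr F_{(1)}(\theta)$. The delicate point—and the step I expect to be the main obstacle—is to upgrade this to convergence at the random argument $\widetilde\theta_n$ rather than at $\theta$. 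Since $\widehat\theta_n$ is consistent by hypothesis and $\widetilde\theta_n$ is squeezed between $\widehat\theta_n$ and $\theta$, we have $\widetilde\theta_n\stackrel{p}{\to}\theta$; converting this into $-n^{-1}\ddot{l}^{(n)}(X;\widetilde\theta_n)\stackrel{p}{\to}\mathscr F_{(1)}(\theta)$ requires a uniform-in-$\theta$ control on the third derivative of $\ln\psi_\theta$ (an integrable dominating function on a neighborhood of $\theta$), which is exactly the sort of regularity hypothesis alluded to above; granting it, a standard continuity argument closes the step.

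Finally, with the numerator converging in distribution to $\mathcal N(0,\mathscr F_{(1)}(\theta))$ and the denominator converging in probability to the positive constant $\mathscr F_{(1)}(\theta)$, Slutsky's theorem (Theorem \ref{slutsky}) delivers
\[
\sqrt{n}\,(\widehat\theta_n - \theta)\stackrel{d}{\to}\frac{\mathcal N(0,\mathscr F_{(1)}(\theta))}{\mathscr F_{(1)}(\theta)}=\mathcal N\!\left(0,\mathscr F_{(1)}(\theta)^{-1}\right),
\]
which is (\ref{asym:stand}); here I used that dividing a centered normal of variance $\mathscr F_{(1)}(\theta)$ by the constant $\mathscr F_{(1)}(\theta)$ rescales its variance to $\mathscr F_{(1)}(\theta)^{-1}$. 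The approximate statement (\ref{asym:mle}) then follows by rewriting (\ref{asym:stand}) in the large-sample notation of Remark \ref{miscon} and substituting $\mathscr F_{(n)}(\theta)=n\mathscr F_{(1)}(\theta)$ from Remark \ref{ill:ind:fis}.
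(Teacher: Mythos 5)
Your proposal is correct and follows essentially the same route as the paper's proof: Taylor-expand the score equation about $\theta$, apply the CLT to the normalized score (mean zero by Corollary \ref{any:fct:2}, variance $\mathscr F_{(1)}(\theta)$), apply the LLN plus consistency to the normalized second derivative, and finish with Slutsky. Your explicit remark that passing from convergence at $\theta$ to convergence at the intermediate point $\widetilde\theta_n$ requires a dominated third derivative is a welcome bit of extra care that the paper folds silently into its ``suitable regularity assumptions,'' but it does not change the argument.
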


\begin{remark}\label{decay:fluc}(Asymptotic efficiency)
	It follows from the asymptotic normality established in
	Theorem~\ref{asym:cr} that, as $n\to+\infty$,
	\[
	n\,\mathrm{var}(\widehat\theta_n)\;\longrightarrow\;
	\mathscr F_{(1)}(\theta)^{-1}.
	\]
	Since the limiting value coincides with the Cram\'er--Rao lower bound
	\eqref{cr:rao:th:3} for the variance of any sufficiently regular (though
	not necessarily asymptotically normal) unbiased estimator of $\theta$
	based on a single observation, this property is usually referred to as
	\emph{asymptotic efficiency}.
	Equivalently, the fluctuations of $\widehat\theta_n$ around $\theta$, as
	measured by its standard deviation, decay at the expected rate $n^{-1/2}$, with
	proportionality constant given by the reciprocal of the square root of
	the Fisher information for a single observation.
	More generally, if $g:\Theta\subset\mathbb R\to\mathbb R$ is a $C^1$
	function with nowhere-vanishing derivative, then (\ref{asym:stand}), together with the delta method (Proposition~\ref{delta:m}), yields
	\[
	\sqrt{n}\bigl(g(\widehat\theta_n)-g(\theta)\bigr)\stackrel{d}{\to}
	\mathcal N\!\left(0,\;|g'(\theta)|^2\,
	\mathscr F_{(1)}(\theta)^{-1}\right).
	\]
	This shows that the asymptotic variance
	of the transformed estimator $g(\widehat\theta_n)$ attains the
	Cram\'er--Rao lower bound~\eqref{cr:rao:th:3} within the class
	$\mathcal E_g$. Hence, $g(\widehat\theta_n)$ is asymptotically efficient
	for estimating $g(\theta)$.
	\qed
\end{remark}

\begin{remark}\label{conf:int:f}
	(Large sample confidence intervals for $\theta$ via the ``consistency trick'')
From (\ref{asym:mle}) we know that 
	\[
	P(a\leq \widehat\theta_n\leq b)\approx_{n\to+\infty}\sqrt{\frac{\mathcal F_{(n)}(\theta)}{2\pi}}\int_a^be^{-\frac{\mathcal F_{(n)}(\theta)(x-\theta)^2}{2}}dx,
	\]
	but we can rely on Theorem \ref{slutsky} to replace $\theta$ by $\widehat\theta_n$ in the right-hand side because $\widehat\theta_n\to\theta$ in probability (consistency), 
	so as to obtain 
	\[
	P(a\leq \widehat\theta_n\leq b)\approx_{n\to+\infty}\sqrt{\frac{\mathcal F_{(n)}(\widehat\theta_n)}{2\pi}}\int_a^be^{-\frac{\mathcal F_{(n)}(\widehat\theta_n)(x-\widehat\theta_n)^2}{2}}dx.
	\]
	The key point here is that the right-hand side depends solely on sample data, and only through the estimator $\widehat\theta$. In 
	the language of confidence intervals of Subsection \ref{conf:int:sub}, this translates into 
	\begin{equation}\label{conf:int:th:up}
		\theta\in \left[\widehat\theta_n\mp\frac{z_{1-\delta/2}}{\sqrt{\mathcal F_{(n)}(\widehat\theta_n)}}\right]\,{\rm with}\,{\rm prob.}\,\approx\,1-\delta,
	\end{equation}
	the ``large sample'' estimate for $\theta$.
	\qed 
\end{remark}

\begin{remark}\label{normal:ww}
	The consistency requirement in Theorem \ref{asym:cr} may be often justified under suitable regularity assumptions on the underlying pdf's, which in particular apply to the ML estimator $\widehat\sigma^2_{n^{-1}}$ in Example \ref{normal:w} \cite[Theorem 2.5 ]{newey1994large}. We may also directly retrieve the consistency of $\widehat\sigma^2_{n^{-1}}$ as follows. First note from (\ref{U:sigma:V:c}) that 
	\begin{equation}\label{cons:sigma}
		\widehat\sigma^2_{n^{-1}}=
		\frac{1}{n}\sum_{j=1}^n\sigma^2\left(\frac{X_j-\mu}{\sigma}\right)^2-
		(\overline X_n-\mu)^2.
	\end{equation}
	Also, recalling that $X_j$ is drawn from a normal population,  $\sigma^{-1}(X_j-\mu)\sim\mathcal N(0,1)$ implies that $\sigma^{-2}(X_j-\mu)^2\sim \chi^2_1$ by Corollary \ref{sum:norm:sq} and hence 
	\[
	\mathbb E\left(\sigma^2\left(\frac{X_j-\mu}{\sigma}\right)^2\right)=\sigma^2
	\]
	by Corollary \ref{chi:sq:ms}. Thus, LLN (Theorem \ref{lln})  applies to ensure that 
	\[
	\frac{1}{n}\sum_{j=1}^n\sigma^2\left(\frac{X_j-\overline X_n}{\sigma}\right)^2\stackrel{p}{\to} \sigma^2.
	\]
	On the other hand, it also follows from LLN
	that $(\overline X_n-\mu)^2\stackrel{p}{\to}0$. Thus, 		
	$\widehat\sigma^2_{n^{-1}}\stackrel{p}{\to} \sigma^2$ by (\ref{cons:sigma}). 
	Another approach to this same conclusion follows by taking $c=n^{-1}$ in (\ref{stat:normal:eq}) to check that ${\rm mse}(\widehat\sigma^2_{n^{-1}})\to 0$ as $n\to +\infty$, so that consistency follows by Proposition \ref{mse:consist}. 
	In fact, any of these methods may be adapted to check that the ML estimators in Examples \ref{mle:exp} and \ref{mle:discrete} above are consistent as well. We also note that, in general, Proposition \ref{an:imp:cons} applies to ensure that consistency is a necessary condition for asymptotic normality. \qed
\end{remark}

\begin{proof} (of Theorem \ref{asym:cr}) Set $\widetilde l^{(n)}=n^{-1}l^{(n)}$ and note that by (\ref{mle:cons}) and the Mean Value Theorem,
	\begin{equation}\label{mean:val}
		0=\frac{d}{d\theta}\widetilde l^{(n)}(\widehat\theta_n)=	\frac{d}{d\theta}\widetilde l^{(n)}(\theta)+	\frac{d^2}{d\theta^2}\widetilde l^{(n)}(\theta^\bullet_n)(\widehat\theta_n-\theta), 
	\end{equation}
	for some $\theta^\bullet_n$ lying between $\widehat\theta_n$ and $\theta$.  
	A computation shows that for any $\theta$ we have
	\[
	\frac{d^2}{d\theta^2}\widetilde l^{(n)}(\theta)=\frac{1}{n}\sum_{j=1}^n\left(\frac{d^2}{d\theta^2}\ln \psi_\theta(X_j)\right)\to \mathbb E\left(\frac{d^2}{d\theta^2}\ln \psi_\theta(X_1)\right),
	\]
	where the convergence is in probability by LLN. Since $\widehat\theta_n\to\theta$ in probability, we conclude that 
	\begin{equation}\label{conv:den}
		\frac{d^2}{d\theta^2}\widetilde l^{(n)}(\theta^\bullet_n)\to \mathbb E\left(\frac{d^2}{d\theta^2}\ln \psi_\theta(X_1)\right)=-\mathscr F_{(1)}(\theta),
	\end{equation}
	where the convergence is in probability and we used (\ref{fischer}) in the last step.
	On the other hand,
	\[
	\sqrt{n}\frac{d}{d\theta}\widetilde l^{(n)}(\theta)=\sqrt{n}\left(\frac{1}{n}\sum_{j=1}^n\frac{d}{d\theta}\ln \psi_\theta(X_j)\right),
	\]
	which may be rewritten 
	as
	\[
	\sqrt{n}\frac{d}{d\theta}\widetilde l^{(n)}(\theta)=\sqrt{n}\left(\frac{1}{n}\sum_{j=1}^n\frac{d}{d\theta}\ln \psi_\theta(X_j)-\mathbb E\left(\frac{d}{d\theta}\ln \psi_\theta(X_1)\right)\right),
	\]
	as the term within the expectation is a score (Corollary \ref{any:fct:2}). Thus we may apply CLT to see that
	\begin{equation}\label{conv:num}
		\sqrt{n}\frac{d}{d\theta}\widetilde l^{(n)}(\theta)\to \mathcal N\left(0,{\mathbb C}\left(\frac{d}{d\theta}\ln \psi_\theta(X_1)\right)\right)=\mathcal N(0,\mathscr F_{(1)}(\theta)), 
	\end{equation}
	where the convergence is in distribution and the last step follows from the definition of $\mathscr F_{(1)}$. 
	Since 
	(\ref{mean:val}) leads to  
	\[
	\sqrt{n}(\widehat\theta_n-\theta)=-\frac{\sqrt{n}\frac{d}{d\theta}\widetilde l^{(n)}(\theta)}{\frac{d^2}{d\theta^2}\widetilde l^{(n)}(\theta^\bullet_n)}, 
	\]
	we may use 
	Theorem \ref{slutsky},  
	(\ref{conv:den}) and (\ref{conv:num}) to complete the proof. 
\end{proof}

\begin{remark}\label{asym:cr:gen}
	Although Theorem \ref{asym:cr} is proved under i.i.d.\!\! sampling for simplicity, the same asymptotic normality holds 
	when the sample $\{X_1,\dots,X_n\}$ consists of independent, though not identically distributed, random variables satisfying the usual Lindeberg--Feller and regularity conditions \cite[Chapter 4]{amemiya1985advanced}. In this case, (\ref{asym:stand}) gets replaced by 
	\[
	\sqrt{n}\,(\widehat\theta_n-\theta)
	\xrightarrow{d}
	\mathcal N\left(0,\overline{\mathscr F}(\theta)^{-1}\right),
	\]
where 
\[
\overline{\mathscr F}(\theta)=\lim_{n\to+\infty}\frac{1}{n}\sum_{j=1}^n\mathscr F^{X_j}(\theta),
\]
so that (\ref{asym:mle}) becomes 
\[
\overline\theta_n\approx \mathcal N\left(\theta,\overline{\mathscr F}_{(n)}(\theta)^{-1}\right), \quad \overline{\mathscr F}_{(n)}(\theta)=n\overline{\mathscr F}(\theta).
\] 
	This more general setting covers, in particular, the regression models considered in Chapter \ref{mls:sub} (see Example \ref{mls:a:n:p}, for instance) and in the discussion of generalized linear models  in Section \ref{exp:glms}, where the responses are independent but their conditional distributions depend on covariates.  \qed
\end{remark}

We now discuss the implications of this theory for some of the statistical models discussed earlier.

\begin{example}\label{norm:pop:an}
	We start with 
	\[
	\widehat\theta=(\widehat\theta_1,\widehat\theta_2)=(\overline X_n,\widehat\sigma^2_{n^{-1}}),
	\]
	the ML estimator  for the bi-dimensional parameter $(\theta_1,\theta_2)=(\mu,\sigma^2)$
	coming from a normal population 
	as in Example \ref{normal:w} above. We first look only at $\widehat\theta_2$, which amounts to declaring that  $\mu$ is known. As observed in Remark \ref{normal:ww}, this estimator  is consistent and hence asymptotically normal by Theorem \ref{asym:cr}. 
	In order to determine the associated limiting normal distribution by means of Theorem \ref{asym:cr} we need to recall the corresponding Fisher information. From (\ref{fish:inf:nor:0}) with $n=1$,
	\begin{equation}\label{fisc:inf}
		\mathscr F_{(1)}(\theta_2)=
		\mathscr F_{(1)}(\theta)_{22}
		=\frac{1}{2\theta_2^2},  \Longrightarrow \mathscr F_{(n)}(\theta_2)=\frac{n}{2\theta_2^2},
	\end{equation}
	so (\ref{asym:stand}) and (\ref{asym:mle}) apply to give 
	\begin{equation}\label{asym:norm:sig}
		\sqrt{n}(\widehat\sigma^2_{n^{-1}}-\sigma^2)\stackrel{d}{\to}\mathcal N(0,2\sigma^4)
	\end{equation}
	and
	\[
	\widehat\sigma^2_{n^{-1}}\approx_{n\to+\infty}\mathcal N(\sigma^2,{2\sigma^4}/{n}).
	\]
	Thus, in view of (\ref{fisc:inf}), (\ref{conf:int:th:up}) translates into
	\begin{equation}\label{est:sig:2}
		\sigma^2\in \left[\left(1-\sqrt{\frac{2}{n}}z_{1-\delta/2}\right)\widehat\sigma^2_{n^{-1}},\left(1+\sqrt{\frac{2}{n}}z_{1-\delta/2}\right)\widehat\sigma^2_{n^{-1}}\right]\,{\rm with}\,{\rm prob.}\,{\rm at}\,{\rm least}\,1-\delta.
	\end{equation}
	We next consider the bi-dimensional case $\theta=(\theta_1,\theta_2)$. Again by (\ref{fish:inf:nor:0}),   
	\[
	{\mathscr F_{(1)}}(\theta_1,\theta_2)=	\left(
	\begin{array}{cc}
		\sigma^2 & 0\\
		0 & 2\sigma^4
	\end{array}
	\right),
	\] 
	which gives 
	\begin{equation}\label{comp:as:cov}
		\sqrt{n}\left(\left(
		\begin{array}{c}
			\widehat\theta_1\\
			\widehat\theta_2	
		\end{array}
		\right)-
		\left(
		\begin{array}{c}
			\mu\\
			\sigma^2	
		\end{array}
		\right)\right) \stackrel{d}{\to}\mathcal N\left(
		\left(
		\begin{array}{c}
			0\\
			0
		\end{array}
		\right),
		\left(
		\begin{array}{cc}
			\sigma^2 & 0\\
			0 & 2\sigma^4
		\end{array}
		\right)
		\right), 
	\end{equation}
	or equivalently, 
	\begin{equation}\label{clt:sigma}
		\left(
		\begin{array}{c}
			\widehat\theta_1\\
			\widehat\theta_2	
		\end{array}
		\right) 
		\approx_{n\to+\infty}
		\mathcal N\left(
		\left(
		\begin{array}{c}
			\mu\\
			\sigma^2
		\end{array}
		\right),
		\left(
		\begin{array}{cc}
			\sigma^2/n & 0\\
			0 & 2\sigma^4/n
		\end{array}
		\right)
		\right). 
	\end{equation}
	A key point here is that the asymptotic covariance matrix in (\ref{comp:as:cov}) only depends on $\theta_2=\sigma^2$, which allows us to proceed as in Remark \ref{conf:int:f}: consistency permits to replace $\sigma^2$ by $\widehat\theta_2$ in the covariance matrix of (\ref{clt:sigma}) to obtain 
	\begin{equation}\label{clt:sigma:cons}
	\left(
	\begin{array}{c}
		\widehat\theta_1\\
		\widehat\theta_2	
	\end{array}
	\right) \approx_{n\to+\infty}
	\mathcal N\left(
	\left(
	\begin{array}{c}
		\mu\\
		\sigma^2
	\end{array}
	\right),
	\left(
	\begin{array}{cc}
		\widehat\theta_2/n & 0\\
		0 & 2\widehat\theta_2^2/n
	\end{array}
	\right)
	\right),
	\end{equation}
	an asymptotic estimate in which the ``fluctuation'' around the center $(\widehat\theta_1,\widehat\theta_2)$ of the ``confidence region'' where the unknown parameter $(\mu,\sigma^2)$ is supposed to be (with probability at least $1-\delta$) depends on sample data, and only through the estimator $\widehat\theta_2=\widehat\sigma^2_{n^{-1}}$. \qed 
\end{example}

\begin{example}\label{binormal:mle}
	Let us refer to the notation and terminology of Examples \ref{corr:dist} and \ref{corr:mle}, with the (simplifying and justifiable) assumption that $\mu_X=\mu_Y=0$, so that $\theta=(\sigma_X^2,\sigma_Y^2,\rho)\in \mathbb R^+\times\mathbb R^+\times (-1,1)$. In order to determine the asymptotic behaviour of the ML estimator $\widehat\theta_m=(\widehat\sigma^2_{m^{-1}}(X),\widehat\sigma^2_{m^{-1}}(Y),\widehat\rho_m)$ for the unknown population parameter $\theta$ as $m\to +\infty$ (for a given jointly normal sample $\{X_j,Y_j\}$) we start with (\ref{exp:loglike:joint}) and, after a somewhat tedious computation, we end up with a complicated expression for the $3\times 3$ matrix $\nabla_{\theta\theta}l(X,Y;\theta)$ whose entries depend {\em linearly} on the symbols in (\ref{coeff:exp}) evaluated on the sample, with the corresponding coefficients being algebraic on the components of $\theta$.  Using that
	\[
	\mathbb E(A(X_j))=\sigma_X^2,\quad \mathbb E(B(X_j,Y_j))=\sigma_{XY}=\rho\sigma_X\sigma_Y, \quad \mathbb E(C(Y_j))=\sigma_Y^2,
	\]
	and 
	(\ref{fischer})
 we conclude that the corresponding Fisher information matrix is 
	\[
	\mathscr F=
	\frac{1}{1-\rho^2}
	\left(
	\begin{array}{ccc}
		\frac{2-\rho^2}{4\sigma_X^4} & -\frac{\rho^2}{4\sigma_X^2\sigma_Y^2}  &  -\frac{\rho}{2\sigma_X^2} \\
		-\frac{\rho^2}{4\sigma_X^2\sigma_Y^2}	&  	\frac{2-\rho^2}{4\sigma_Y^4}  &  -\frac{\rho}{2\sigma_Y^2} \\
		-\frac{\rho}{2\sigma_X^2}	& -\frac{\rho}{2\sigma_Y^2}   &  	\frac{1+\rho^2}{1-\rho^2} 
	\end{array}
	\right),
	\]
	so that 
	\[
	\mathscr F^{-1}=
	\left(
	\begin{array}{ccc}
		2\sigma_X^4	& 2\rho^2\sigma_X^2\sigma_Y^2 & \rho(1-\rho^2)\sigma_X^2 \\
		2\rho^2\sigma_X^2\sigma_Y^2	&  2\sigma_X^4 &  \rho(1-\rho^2)\sigma_Y^2\\
		\rho(1-\rho^2)\sigma_X^2	& \rho(1-\rho^2)\sigma_Y^2 & (1-\rho^2)^2
	\end{array} 
	\right).
	\]
	From this and Theorem \ref{asym:cr} we thus derive not only that
	\[
	\sqrt{m}\left(\widehat\sigma^2_{m^{-1}}(X)-\sigma_X^2\right)\stackrel{d}{\to}\mathcal N(0,2\sigma_X^2), \quad  \sqrt{m}\left(\widehat\sigma^2_{m^{-1}}(Y)-\sigma_Y^2\right)\stackrel{d}{\to}\mathcal N(0,2\sigma_Y^2), 
	\]
	which are fully compatible with (\ref{asym:norm:sig}), but also that 
	\begin{equation}\label{z-rho-fisher}
		\sqrt{m}\left(\widehat\rho_m-\rho\right)\stackrel{d}{\to}\mathcal N(0,(1-\rho^2)^2),
	\end{equation}
	which identifies the asymptotic variance of the sample correlation coefficient $\widehat\rho_m$ as being $(1-\rho^2)^2$; cf. Definition \ref{an:defin}. Moreover, Remark \ref{decay:fluc} guarantees that this asymptotic invariance equals the Cr\'amer-Rao lower bound (\ref{cr:rao:th:3}) for the variances of all {\em unbiased} estimators for $\rho$.
	We remark, however, that $\widehat\rho_m$ itself is {\em not} unbiased even though there holds  
	\[
	F(\widehat\rho_m)-\widehat\rho_m\stackrel{p}{\to} 0
	\]
	for any {\em unbiased} estimator of the form $F(\widehat\rho_m)$, where $F$ is assumed to be odd \cite{olkin1958unbiased}. In particular, 
	\[
	\sqrt{m}\left(F(\widehat\rho_m)-\rho\right)\stackrel{d}{\to}\mathcal N(0,(1-\rho^2)^2). 
	\]
	As in Example \ref{norm:pop:an} above, we may combine (\ref{z-rho-fisher}) with the consistency of $\widehat\rho_m$ to construct a large sample confidence interval for the unknown correlation coefficient $\rho$, 
	namely, 
	\[
	\rho\in\left[\widehat\rho_m\mp z_{1-\delta/2} \frac{1-\widehat\rho_m^2}{\sqrt{m}}\right]\,{\rm with}\,{\rm prob.}\,\approx\,1-\delta
	\]
	whose  
	``fluctuation'' around its center $\widehat\rho_m$ depends only on sample data, and through the estimator $\widehat\rho_m$. 
	An alternate route is to apply 
	the delta method 
	(Proposition 
	\ref{delta:m}) 
	with the {\em Fisher} $z$-{\em transformation}
	\[
	z=g(\rho)=\frac{1}{2}\ln\left(\frac{1+\rho}{1-\rho}\right)
	\]
	to (\ref{z-rho-fisher}) so as to get 
	\[
	\sqrt{m}\left(\widehat z_m-z\right)\stackrel{d}{\to}\mathcal N(0,1),
	\]
	which allows us to obtain 
	large sample estimates for $z$ in terms of the familiar normal quantiles $z_{1-\delta/2}$ and then transform them back to corresponding estimates for $\rho=\tanh z$; see \cite[Section 14.18]{kendall1946advanced} and \cite[Subsection 4.2.3]{anderson2003introduction} \qed
\end{example}

\begin{example}\label{est:coef:var:n} (The coefficient of variation of a normal population)
	Let $g:\Theta\to \mathbb R$ be any smooth function
	satisfying $\nabla g\neq \vec{0}$ everywhere, where $\Theta=\mathbb R\times\mathbb R^+$ is the parameter space of a normal population as above; cf. Example \ref{normal:w}. Recall that $\theta=(\theta_1,\theta_2)=(\mu,\sigma^2)$ in this case. It then follows from (\ref{comp:as:cov}) and the multi-dimensional 
	version of the delta method in Proposition \ref{delta:m} that 
	\begin{equation}\label{comp:as:cov:2}
		\sqrt{n}\left(g\left(
		\begin{array}{c}
			\widehat\theta_1\\
			\widehat\theta_2	
		\end{array}
		\right)-
		g\left(
		\begin{array}{c}
			\mu\\
			\sigma^2	
		\end{array}
		\right)\right) \stackrel{d}{\to}\mathcal N\left(
		0,
		\nabla g(\theta)^\top \left(
		\begin{array}{cc}
			\sigma^2 & 0\\
			0 & 2\sigma^4
		\end{array}
		\right)\nabla g(\theta)
		\right).
	\end{equation}
	Assuming that $\mu\neq 0$, we may apply this to $g(\theta)=\sqrt{\theta_2}/\theta_1=\sigma/\mu$, the {\em coefficient of variation}; cf. (\ref{cv:logn:pop}). Since
	\[
	\nabla g\left(\mu,\sigma^2\right)^\top=\left(-\frac{\sigma}{\mu^2},\frac{1}{2\mu\sigma}\right)
	\]
	we obtain that the corresponding estimator, $\widehat\sigma_{n^{-1}}/\overline X_n$, is asymptotically normal,
	\[
	\sqrt{n}\left(\frac{\widehat\sigma_{n^{-1}}}{\overline X_n}-\frac{\sigma}{\mu}\right)\stackrel{d}{\to}\mathcal N\left(0,\frac{\sigma^2}{\mu^2}\left(\frac{1}{2}+\frac{\sigma^2}{\mu^2}\right)\right),
	\]
	with the asymptotic variance depending on $\sigma/\mu$ itself. 
	As usual, we may use consistency to get
	\[
	\frac{\sigma}{\mu}\in \left[\frac{\widehat\sigma_{n^{-1}}}{\overline X_n}\mp
	z_{1-\delta/2} \frac{\widehat\sigma_{n^{-1}}}{\sqrt{n}\overline X_n}\left(\frac{1}{2}+\frac{\widehat\sigma^2_{n^{-1}}}{\overline X_n^2}\right)^{1/2}\right]\,{\rm with}\,{\rm prob.}\,\approx\,1-\delta,
	\]
	a large sample confidence interval estimate for the coefficient of variation.
	\qed
\end{example}

\begin{example}\label{ex:mle:gamma} (MLE for a Gamma population)
	It follows from (\ref{gamma:dist:1}) that the log-likelihood function of a Gamma distribution $\mathsf{Gamma}({\alpha,\lambda})$ is 
\begin{equation}\label{eq:loglike:gam}
l({\bf x};\theta)=n\left(\lambda\ln\alpha-\ln\Gamma(\lambda)+(\lambda-1)\overline{\ln x}-\alpha\overline{x}\right), \quad \theta=(\alpha,\lambda),
\end{equation}	
where $\overline{\ln x}$ is the arithmetic mean of $\{\ln x_1,\cdots,\ln x_n\}$; recall that $x_j>0$ for each $j$. Hence, the score vector is
\begin{equation}\label{eq:scor:gamma}
s({\bf x};\theta)=n
\left(
\begin{array}{c}
\frac{\lambda}{\alpha}-\overline x\\
\ln\alpha - \psi(\lambda)+\overline{\ln x}, 	
	\end{array}
\right),\quad \psi(\lambda)=\frac{d}{d\lambda}\ln\Gamma(\lambda), 
\end{equation}
so the ML estimator $\widehat\theta=(\widehat\alpha,\widehat\gamma)$ satisfies
\begin{equation}\label{eq:mle:gamma}
\left\{
\begin{array}{rcl}
\frac{\widehat\lambda}{\widehat\alpha}& = & \overline x\\
 \psi(\widehat\lambda)-\ln\widehat\alpha & = & \overline{\ln x}	
	\end{array}
\right.
\end{equation}
Note that trying to find a solution for this system in closed form is out of question so possible strategies here are:
\begin{itemize}
	\item to use our favorite optimization package to find
	\[
	\widehat\theta={\rm argmax}_\theta l({\bf x};\theta)
	\]
starting from (\ref{eq:loglike:gam});
\item  to use the first equation  in (\ref{eq:mle:gamma}) to eliminate $\widehat\alpha$ in the second equation,  {\em numerically} solve for $\widehat{\lambda}$ in the resulting equation, namely,
\begin{equation}\label{uniq:sol:eq}
\psi(\widehat\lambda)-\ln\widehat\lambda =\overline{\ln x}- \ln\overline x,
\end{equation}
and then replacing the result back  in the first equation in order to get $\widehat\alpha$\footnote{That a unique solution $\widehat\lambda$ to (\ref{uniq:sol:eq}) exists for any given ${x}$ is a consequence of the facts that i) $\overline{\ln x} < \ln \overline x$ if each $x_j>0$; ii)  the function $\lambda\mapsto \Psi(\lambda)=\psi(\lambda)-\ln\lambda $ is monotone continuous and satisfies 
	\[
	\lim_{\lambda\to 0}\Psi(\lambda)=-\infty \,\,{\rm and}\,\,
	\lim_{\lambda\to-\infty}\Psi(\lambda)=0.
	\]}. 	
\end{itemize}
In any case, 
with the ML estimator so determined, we may proceed to compute the associated Fisher information matrix by means of (\ref{eq:scor:gamma}) and (\ref{fischer}): 
\[
\mathscr F_{(n)}(\theta)=n
\left(
\begin{array}{cc}
\lambda/\alpha^2	& -1/\alpha\\
-1/\alpha & \psi_1(\lambda)
	\end{array}
\right)=n\mathscr F_{(1)}(\theta), \quad \psi_1=d\psi/d\lambda.
\]
It is not hard to check that
 $\det \mathscr F_{(1)}(\theta)=(\lambda\psi_1(\lambda)-1)/\alpha^2>0$, so  Theorem \ref{asym:cr} gives asymptotic normality for $\widehat\theta_n$:
\[
\sqrt{n}
\left(
\left(
\begin{array}{c}
	\widehat\alpha_n\\
	\widehat\lambda_n
	\end{array}
\right)
-
\left(
\begin{array}{c}
	\alpha\\
	\lambda
\end{array}	
\right)
\right)
\stackrel{d}{\to}
\mathcal N\left(\vec{0},\mathscr F_{(1)}^{-1}\right)
=
\mathcal N
\left(
\left(
\begin{array}{c}
	0\\
	0
\end{array}
\right),
\frac{1}{\lambda \psi_1(\lambda)-1}
\left(
\begin{array}{cc}
	\alpha^2\psi_1(\lambda) & \alpha\\
	\alpha & \lambda
	\end{array}
\right)
\right).
\]
As usual, we may combine this with consistency in order to obtain
\begin{equation}\label{as:norm:gamma}
\left(
\begin{array}{c}
	\widehat\alpha_n\\
	\widehat\lambda_n
\end{array}
\right)\approx_{n\to +\infty}
\mathcal N\left(
\left(
\begin{array}{c}
	\alpha\\
	\lambda
	\end{array}
\right)
,\mathscr F_{(n)}^{-1}\right)
=
\mathcal N
\left(
\left(
\begin{array}{c}
	\alpha\\
	\lambda
\end{array}
\right),
\frac{1}{n(\widehat\lambda_n \psi_1(\widehat\lambda_n)-1)}
\left(
\begin{array}{cc}
	\widehat\alpha_n^2\psi_1(\widehat\lambda_n) & \widehat\alpha_n\\
	\widehat\alpha_n & \widehat\lambda_n
\end{array}
\right)
\right),
\end{equation}
which may be used to construct not only large sample confidence intervals for $\alpha$ and $\lambda$ (separately) but also large sample confidence regions for the whole vector parameter $\theta$; see Remark \ref{rem:conf:r:ml} below for this latter kind of construction. 
	\end{example}
	
\begin{remark}\label{rem:conf:r:ml} (Confidence region for the unknown parameter $\theta$ via asymptotic normality of the ML estimator)
Starting with the (possibly multivariate) version of (\ref{asym:mle}), where $\theta\in\mathbb R^p$, $p\geq 1$, and $\mathscr F_{(n)}(\theta)$ is a $p\times p$ symmetric, positive definite matrix, consistency of the ML estimator $\widehat\theta_n$ leads to the asymptotic normality relation
\[
\widehat\theta_n\approx_{n\to+\infty}\mathcal N(\theta,\mathscr F_{(n)}(\widehat\theta_n)^{-1}), 
\]	
from which (\ref{as:norm:gamma}) is a rather special case (with $p=2$). 
In order to extract from this a confidence region for the unknown vector parameter $\theta$, let us write $\mathscr F_{(n)}(\widehat\theta_n)=A^\top A$ so that Corollary \ref{ortho:norm} gives
\[
A(\widehat\theta_n-\theta)\sim\mathcal N(0,{\rm Id}_p)
\]
and hence 
\[
(\widehat\theta_n-\theta)^\top\mathscr F_{(n)}(\widehat\theta_n)(\widehat\theta_n-\theta)
=\|A(\widehat\theta_n-\theta)\|^2\sim\chi^2_p.
\]
In other words, the quadratic form in the left-hand side is a pivotal quantity to which the standard method may be applied: if we recall the definition of the $\chi^2$-quantile in (\ref{quantile:chi})  then 
\[
P\left(
(\widehat\theta_n-\theta)^\top\mathscr F_{(n)}(\widehat\theta_n)(\widehat\theta_n-\theta)\leq \chi^2_{p,1-\alpha}\right)\approx 1-\alpha. 
\]
Since $\mathscr F_{(n)}(\widehat\theta_n)$ is positive definite, the random confidence region where $\theta$ is supposed to lie (within the given confidence level) is ellipsoidal in nature, with its size, shape, and orientation being completely determined by the totality of the elements of $\mathscr F_{(n)}(\widehat\theta_n)$. Moreover, since its construction takes into account the possible correlations among the various components of $\widehat\theta_n$, as encoded in the off-diagonal elements of the asymptotic covariance matrix $\mathscr F_{(n)}(\widehat\theta_n)^{-1}$, in such cases it certainly encloses a much tighter volume than the $p$-cube which is the product of the separate confidence intervals for the entries of $\theta$. 
	\end{remark}

\begin{remark}\label{rem:aic}
(Akaike Information Criterion)
The information-theoretic motivation behind maximum likelihood estimation discussed in 
Remark~\ref{kl:int:lrt:0} leads to an efficient way of measuring the goodness of fit of the 
model $\psi_{\widehat\theta}$, where $\widehat\theta=\widehat\theta(X)$ denotes the ML estimator 
associated with the statistical model
\[
X_1,\ldots,X_n \sim \psi_\theta, \qquad \theta\in\Theta\subset\mathbb R^p .
\]
More precisely, we wish to assess how effective $\psi(Z;\widehat\theta)$ is at predicting 
$\psi(Z;\theta_0)$, where $\theta_0$ is the unknown true parameter and 
$Z\sim\psi_{\theta_0}$ is independent of the sample $X$. The heuristic discussion leading to Definition~\ref{likeli:def}, which emphasizes the minimization of the Kullback-Leibler divergence,  suggests that the  {\em expected log-likelihood},
\[
m(\theta)
:= n\,\mathbb E_Z\!\left(\ln \psi(Z;\theta)\right)=c_n-nD^{KL}_{\theta_0}(\theta),
\]
is the  quantity to be maximized. This naturally leads to considering  $m(\widehat\theta)$ as a measure of goodness of fit. Since this quantity still depends on the sample $X$, we pass to
the {\em mean expected log-likelihood} $\mathbb E_X(m(\widehat\theta))$, 
which should be viewed as an unknown quantity to be estimated by 
the purely data-driven {\em maximized log-likelihood} $l(\widehat\theta)$, where $l(\theta):=l(X;\theta)$
denotes the log-likelihood evaluated at the sample.
It turns out, however, that this estimation carries a bias. 
A neat geometric interpretation for this bias is available (see Remark \ref{kl:fisher:aic}), but for present purposes it is convenient to make it explicit by writing
\begin{eqnarray*}
	\mathrm{bias}
	&:= & 
	\mathbb E_X\!\left(l(\widehat\theta)\right)-\mathbb E_X\left(m(\widehat\theta)\right)\\
	&= & \left(\mathbb E_X\!\left(l(\widehat\theta)\right)-m(\theta_0)\right)
	- \left(\mathbb E_X\!\left(m(\widehat\theta)\right)-m(\theta_0)\right),
\end{eqnarray*}
which shows that it arises from the fact that
both  $m(\widehat\theta)$ and $l(\widehat\theta)$  are biased as 
 estimators of $m(\theta_0)$. Although the exact biases are generally out of reach for 
fixed $n$, they may be determined in the asymptotic regime with relatively little effort. 
To this end, we first expand $m(\theta)$ around the true value $\theta=\theta_0$, 
obtaining
\[
\begin{aligned}
	m(\theta)
	&\approx m(\theta_0)
	+ n\,\mathbb E_Z\!\left(\nabla_\theta \ln\psi(Z;\theta_0)\right)^{\!\top}
	(\theta-\theta_0) \\
	&\quad + \frac{1}{2}n(\theta-\theta_0)^\top
	\mathbb E_Z\!\left(\nabla_{\theta\theta}\ln \psi(Z;\theta_0)\right)
	(\theta-\theta_0) \\
	&= m(\theta_0)
	-\frac{1}{2}(\theta-\theta_0)^\top
	\mathscr F_{(n)}(\theta_0)
	(\theta-\theta_0),
\end{aligned}
\]
where we used Corollary~\ref{any:fct:2} in the last step.
Since $\widehat\theta\stackrel{p}{\to}\theta_0$ by consistency, we may take 
$\theta=\widehat\theta$ for $n$ sufficiently large. Arguing as in 
Remark~\ref{rem:conf:r:ml}, we then have
\[
(\widehat\theta-\theta_0)^\top
\mathscr F_{(n)}(\theta_0)
(\widehat\theta-\theta_0)
\;\xrightarrow{d}\; \chi^2_p,
\]
so that, upon taking expectations,
\begin{equation}\label{eq:aic:1}
	\mathbb E_X\!\left(m(\widehat\theta)\right)
	\approx m(\theta_0)-\frac{p}{2},
	\end{equation}
which indicates a bias loss of $p/2$ when using $m(\widehat\theta)$	to estimate $m(\theta_0)$. 
On the other hand, expanding $l(\theta)=l(X;\theta)$ around $\theta=\widehat\theta$ yields
\[
\begin{aligned}
	l(\theta)
	&\approx l(\widehat\theta)
	+ \nabla_\theta l(\widehat\theta)^\top(\theta-\widehat\theta) \\
	&\quad + \frac{1}{2}(\theta-\widehat\theta)^\top
	\nabla_{\theta\theta}l(\widehat\theta)
	(\theta-\widehat\theta) \\
	&= l(\widehat\theta)
	+ \frac{1}{2}(\theta-\widehat\theta)^\top
	\nabla_{\theta\theta}l(\widehat\theta)
	(\theta-\widehat\theta),
\end{aligned}
\]
since the first-order term vanishes by the defining property of $\widehat\theta$ as a 
maximizer of $l$.
By LLN,
\[
\frac{1}{n}\nabla_{\theta\theta}l(\theta_0)
= \frac{1}{n}\sum_{j=1}^n \nabla_{\theta\theta}\ln \psi(X_j;\theta_0)
\;\stackrel{p}{\longrightarrow}\;
-\mathscr F_{(1)}(\theta_0),
\]
and consistency therefore implies
\[
\nabla_{\theta\theta}l(\widehat\theta)
\;\stackrel{p}{\longrightarrow}\;
-\mathscr F_{(n)}(\theta_0).
\]
Consequently,
\[
l(\theta)
\approx l(\widehat\theta)
-\frac{1}{2}(\theta-\widehat\theta)^\top
\mathscr F_{(n)}(\theta_0)
(\theta-\widehat\theta).
\]
Taking $\theta=\theta_0$ and arguing as above, we obtain
\begin{equation}\label{eq:aic:2}
\mathbb E_X\!\left(l(\widehat\theta)\right)	
	=
\mathbb E_X\!\left(l(\theta_0)\right) +\frac{p}{2}.
\end{equation}
However,
\[
\mathbb E_X\!\left(l(\theta_0)\right)
= \mathbb E_X\!\left(\sum_{j=1}^n\ln\psi(X_j;\theta_0)\right)
= n\,\mathbb E_Z\!\left(\ln\psi(Z;\theta_0)\right)
= m(\theta_0),
\]
and substituting this into~\eqref{eq:aic:2} yields
\begin{equation}\label{eq:aic:3}
	\mathbb E_X\!\left(l(\widehat\theta)\right)
	\approx m(\theta_0)+\frac{p}{2},
\end{equation}
which indicates a bias gain of $p/2$ when using $l(\widehat\theta)$ to estimate $m(\theta_0)$. 
Combining~\eqref{eq:aic:1} and~\eqref{eq:aic:3}, we conclude that
\[
\mathbb E_X\!\left(l(\widehat\theta)\right)
\approx \mathbb E_X\!\left(m(\widehat\theta)\right) + p,
\]
so that $l(\widehat\theta)$, as an estimator of $\mathbb E_X\!(m(\widehat\theta))$, has asymptotic bias equal to $p$, the number of free parameters in the model. 
After an appropriate normalization, this leads to the definition of the 
\emph{Akaike information criterion}:
\begin{equation}\label{aic:form:f}
{\rm AIC}(X) = -2\,l(X;\widehat\theta) + 2p.
\end{equation}
As an illustration, consider the normal model of Example~\ref{normal:w}. In this case,
\[
\begin{aligned}
	{\rm AIC}(X)
	&= -2\left(
	-\frac{n}{2}\ln(2\pi\widehat\sigma^2_{n^{-1}})
	-\frac{1}{2\widehat\sigma^2_{n^{-1}}}
	\sum_{j=1}^n (X_j-\overline X_n)^2
	\right) + 2\cdot 2 \\
	&= -2\left(
	-\frac{n}{2}\ln(2\pi\widehat\sigma^2_{n^{-1}})
	-\frac{n}{2}
	\right) + 4,
\end{aligned}
\]
so that
\[
{\rm AIC}(X)
= n\ln(2\pi\widehat\sigma^2_{n^{-1}}) + n + 4.
\]
Thus, according to this criterion, the goodness of fit of the model is entirely determined by $\widehat\sigma^2_{n^{-1}}$, the ML estimator of the variance, which is consistent with the computation of the asymptotic covariance matrix of the ML estimator of the model parameter $(\theta_1,\theta_2)=(\mu,\sigma^2)$ in  (\ref{clt:sigma:cons}). 
\qed
	\end{remark}

\begin{remark}\label{kl:fisher:aic}
	(Kullback--Leibler divergence, Fisher information, and AIC)
For any sufficiently regular parametric model with true parameter
	$\theta_0$, the Kullback--Leibler 
	divergence from Definition \ref{kull:leib:div:d} admits the local expansion, as $\theta\to\theta_0$,
	\[
	D^{KL}_{\theta_0}(\theta)
	=
	\frac12(\theta-\theta_0)^\top \mathscr F_{(1)}(\theta_0)(\theta-\theta_0)
	+
	o\big(\|\theta-\theta_0\|^2\big),
	\]
	where $\mathscr F_{(1)}(\theta_0)$ denotes the Fisher information matrix associated
	with a single observation $Y\sim\psi_\theta$. Indeed,  we have
	\[
	\left(\nabla_\theta D^{KL}_{\theta_0}\right)(\theta)=-\mathbb E_{\theta_0}\left(\nabla_\theta\ln\psi(Y;\theta)\right),\quad 
	\left(\nabla_{\theta\theta} D^{KL}_{\theta_0}\right)(\theta)=-\mathbb E_{\theta_0}\left(\nabla_{\theta\theta}\ln\psi(Y;\theta)\right),
	\]
	so that Corollary \ref{any:fct:2} gives
	\[
	\left(\nabla_\theta D^{KL}_{\theta_0}\right)(\theta_0)=0,\quad 
	\left(\nabla_{\theta\theta} D^{KL}_{\theta_0}\right)(\theta_0)=\mathscr F_{(1)}(\theta_0), 
	\]
	as claimed; for a full treatment of this local identification of the Fisher information matrix with the Hessian of the Kullback–Leibler divergence we refer to \cite[Chapter 4]{calin2014geometric}.
	On the other hand, if 
	$\widehat\theta$ is the corresponding  MLE based on an i.i.d.\
	sample of size $n$, we may argue as in Remark \ref{rem:conf:r:ml} to see that 
	\[
	n(\widehat\theta-\theta_0)^\top \mathscr F_{(1)}(\theta_0)(\widehat\theta-\theta_0) \stackrel{d}{\to} \chi^2_p,
	\]
	which implies
	\[
	\mathbb E\!\left(
	(\widehat\theta-\theta_0)^\top
	\mathscr F_{(1)}(\theta_0)
	(\widehat\theta-\theta_0)
	\right)
	=
	\frac{p}{n}
	+
	o\!\left(\frac1n\right)
	\]
	and
	therefore
	\[
	\mathbb E_{{\theta_0}}\!\left(2n
	D_{\theta_0}^{KL}(\widehat\theta)
	\right)
	=
	{p}
	+
	o\!\left(1\right).
	\]
	Now recall from Remark \ref{rem:aic} that Akaike's Information Criterion (AIC) is motivated by the problem of selecting,
	among competing parametric models, the one whose fitted distribution is closest
	to the true data--generating distribution in the sense that the  Kullback--Leibler divergence $D_{\theta_0}^{KL}(\widehat\theta)$ should be minimized.
	In this way, its final expression (\ref{aic:form:f})
	may be
	interpreted as an approximately unbiased estimator of twice the expected
	Kullback--Leibler divergence between the fitted model and the true distribution,
	with the computation above indicating that the penalty term reflects the local curvature of the KL divergence as
	governed by Fisher information.
	\qed
\end{remark}		

\begin{remark}\label{taken:tho}
	Comparing Theorem \ref{asym:cr} with Theorem \ref{clt}, one observes that the ``sample universality'' so valued in the latter classical result is irretrievably lost. In essence, Theorem \ref{asym:cr} states that for each choice of log-likelihood function as in (\ref{mle:ch}), itself fully determined by the underlying density $\psi(\cdot,\theta)$ through (\ref{like:exp:ind0}) and (\ref{like:exp:ind}), the maximum likelihood method produces an estimator of the form (\ref{est:mle:det}) to which a corresponding ``limit theorem'' applies, as in (\ref{asym:stand}). Unlike the Central Limit Theorem, therefore, Theorem \ref{asym:cr} is inherently model-dependent. \qed
\end{remark}

\section{The method of least squares}\label{mls:sub}
If $\theta_2=\sigma^2$ is known, maximizing $l$ in (\ref{log:like}) is equivalent to minimizing 
\[
\theta_1\mapsto\frac{1}{2}\sum_j(x_j-\theta_1)^2,
\]
which 
furnishes a variational characterization of the arithmetic mean $n^{-1}\sum_jx_j$. This is of course a manifestation of 
the Method of Least Squares, a celebrated procedure which provides a solution to  the following kind of problem. Let us arrange the outcome of $n$  measurements of $p$ features (regressors, independent/explanatory variables, predictors, covariates, etc.) of a population by means of the  $n\times (p+1)$-matrix
\[
{\mathfrak x}=
\left(	
\begin{array}{c:c}
	& {\bf x}_1 \\
	{\bf 1}\,\, & \vdots\\
	& {\bf x}_n 
\end{array}
\right)	
\]
where each 
\[
{\bf x}_j=(x_{j1},\cdots, x_{jp}), \quad j=1,\cdots,n,
\]
is a row $p$-vector (representing the outcome of the $j^{\rm th}$ measurement) and ${\bf 1}$ is the column $n$-vector whose entries all equal $1$. If we suspect that these features relate to a response (regressand, dependent/explained variable, etc.) which has also been measured, thus yielding an $n$-vector ${\bf y}$, we may try to ``predict'' the response at some unknown feature by ``best fitting'' 
a (possibly non-linear) functional dependence, say
${\bf y}=F({\mathfrak x})$,
to the {available data} $({\mathfrak x},{\bf y})$\footnote{In the modern language of Supervised Learning, the pair $({\mathfrak x},{\bf y})$ is referred to as the {\em training data set}, since it is used to instruct the learning algorithm on how to fit the model $F$ \cite{hastie2009elements}.}. The simplest choice is to postulate that $F$ is {\em linear}, so that 
${\bf y}={\mathfrak x}\widehat \beta$, where
$\widehat{\beta}=(\widehat\beta_0,\widehat\beta_1,\cdots,\widehat\beta_p)$ is determined by minimizing the corresponding quadratic objective function:  
\begin{equation}\label{min:sq}
	\widehat\beta={\rm argmin}_\beta f(\beta), \quad 	f(\beta)=\frac{1}{2}\|{\bf y}-{{\mathfrak x}}\beta\|^2, 
\end{equation}
hence the ``least squares'' terminology.
 
This is a purely geometric problem (best fitting a hyperplane to a cloud of $n$ points in $\mathbb R^p\times\mathbb R=\mathbb R^{p+1}$, where we usually assume that $p+1\ll n$), which can be solved by the methods of Calculus. 
Indeed, since
\[
(\nabla f)(\beta)=-{{\mathfrak x}}^\top\left({\bf y}-{{\mathfrak x}}\beta\right),
\]
where $\top$ means transpose, 			
we obtain
\begin{equation}\label{mls:form:mul}
	\widehat\beta = ({{\mathfrak x}}^\top{{\mathfrak x}})^{-1}{{\mathfrak x}}^\top{\bf y}, 
\end{equation}		
where we assume that ${\mathfrak x}$ has full column-rank (this not only implies that $p+1\leq n$ but also that the {\em Gram matrix} ${{\mathfrak x}}^\top{{\mathfrak x}}$ is symmetric and positive definite, hence invertible). Moreover, since
\[
\nabla^2f={{\mathfrak x}}^\top{{\mathfrak x}},
\] 
we conclude that $\widehat\beta$ is the unique global minimum. Under these conditions, we then say that 
\begin{equation}\label{fit:vec}
	\widehat{\bf y}=\mathfrak x\widehat\beta
\end{equation}	
is the {\em fitted vector} (that is, the vector of fitted values). 
Finally, we observe that $\widehat\beta$ is {\em linear} in ${\bf y}$ with coefficients depending on $\mathfrak x$.

\subsection{The statistical model behind the method of least squares}\label{subsec:stat:mls}
In the examples below, we examine the statistical rationale underlying the purely data-driven minimization problem in (\ref{min:sq}). In line with the general estimation framework presented in Subsection \ref{param:est}, this requires introducing suitable assumptions on how the data array $({\bf y},\mathfrak x)$ is drawn from an underlying population. These assumptions allow us to set up a statistical model in which $\beta$ is treated as an unknown population parameter and $\widehat\beta$ is validated as an efficient estimator of $\beta$\footnote{For a critical discussion of model building and interpretation in Regression Analysis, with emphasis on the distinction between the information inherent in the data and the inferential consequences of assumptions about the sampling process that generated them, a perspective that extends to virtually any statistical analysis, see \cite{berk2004regression}.}.

\begin{example}\label{mle:imp:lsm:0}(The general regression model)
	We start by viewing  $(\mathfrak x,\bf y)$ as the realization of a $\mathbb R^{n\times(p+1)+n}$-valued random vector $(\mathfrak X,{\bf Y})$ in $L^2(\Omega)$ and satisfying
	\begin{itemize}
		\item 
		$\mathfrak X=({\bf 1}\,\,{\bf X})$, where ${\bf X}$ is a random $\mathbb R^{n\times p}$-valued vector whose realization is ${\bf x}=({\bf x}_1,\cdots,{\bf x}_n)^\top$. In other words, $\mathfrak X$ is a random matrix whose first column is deterministic (non-random) and equals ${\bf 1}$. Also, we assume that $\mathfrak X$ has full column-rank a.s.
		\item 	$\{({\bf X}_{j\bullet},{\bf Y}_{j})\}_{j=1}^n$, where $\bullet\in\{1,\cdots,p\}$, are i.i.d.\!\! copies of the same $\mathbb R^{p}\times\mathbb R$-valued random vector, say $(\mathscr X,\mathscr Y)$. 
	\end{itemize}
	Under these conditions,
	we  may initially impose the (not necessarily linear) {\em regression model}
	\begin{equation}\label{reg:model:1}
		{\bf Y}=F(\mathfrak X)+{\bf e}, 
	\end{equation}
	with the {\em regression function} $F:\mathbb R^{n(p+1)}\to\mathbb R^n$ being  defined by 
	\begin{equation}\label{reg:model:2}
		F(\mathfrak x)=\mathbb E\left({\bf Y}|_{\mathfrak X=\mathfrak x}\right),
	\end{equation}
	where we use here the notation of Subsection \ref{cond:prob}; see Remark \ref{reg:func:choice} below 
	for the 
	justification of this choice of $F$, where it is shown that it minimizes the corresponding mean squared error:
	\[
	\mathbb E(\|{\bf Y}-F(\mathfrak X)\|^2)
	\leq \mathbb E(\|{\bf Y}-G(\mathfrak X)\|^2),
	\]
	for any $G:\mathbb R^{n(p+1)}\to\mathbb R^n$. 
	Thus, 
	we may view (\ref{reg:model:1}) as the definition of the random {\em error} ${\bf e}$, which  by Proposition \ref{int:prob}
	may also be expressed as 
	\[
	{\bf e}={\bf Y}-\mathbb E({\bf Y}|\mathfrak X),
	\]
	so that
	(\ref{total:exp}) easily implies {\em exogeneity},
	\begin{equation}\label{exo:p}
		\mathbb E({\bf e}|{\mathfrak X})={0},
	\end{equation}
	and hence	
	\begin{equation}\label{zero:mean:e}
		\mathbb E({\bf e})={0}.
	\end{equation}
	More generally, again by (\ref{total:exp}),
	\[
	\mathbb E(\mathfrak X^\top{\bf e})=\mathbb E(\mathbb E(\mathfrak X^\top{\bf e}|\mathfrak X))=\mathbb E(\mathfrak X^\top\mathbb E({\bf e}|\mathfrak X)),
	\]
	where we used 
	Proposition \ref{ceprop} (7) in the last step, so that  (\ref{exo:p}) applies to give
	\begin{equation}\label{uncorr:e}
		\mathbb E(\mathfrak X^\top{\bf e})=0,	
	\end{equation}
	so (\ref{reg:model:1}) decomposes ${\bf Y}$ as a sum of a term $F(\mathfrak X)$ which is ``explained'' by $\mathfrak X$ and an error which has zero mean (conditioned to $\mathfrak X$) and is uncorrelated to (any function of) $\mathfrak X$\footnote{The implications of this remarkable decomposition to Regression Theory (and to Econometrics, in particular) are discussed at length in \cite[Chapter 3]{angrist2009mostly}.}. Also, 
	if we define the {\em error covariance function} by
	\[
	{\bm\sigma}^2(\mathfrak x):={\mathbb C}({\bf e}|_{\mathfrak X=\mathfrak x})\stackrel{(\ref{exo:p})}{=}\mathbb E({\bf e}\otimes{\bf e}|_{\mathfrak X=\mathfrak x}),\quad \mathfrak r\in\mathbb R^{n(p+1)}, 
	\] 
	then
	\[
	{\mathbb C}({\bf e})\stackrel{(\ref{zero:mean:e})}{=}\mathbb E({\bf e}\otimes{\bf e})\stackrel{(\ref{total:exp})}{=}\mathbb E(\mathbb E({\bf e}\otimes{\bf e}|\mathfrak X)),
	\]
	and using Proposition \ref{int:prob},
	\[
	{\mathbb C}({\bf e})=\mathbb E({\bm\sigma}^2(\mathfrak X)).
	\] 
	In other words, the unconditioned error covariance equals the expected value of the conditioned error covariance. \qed
\end{example}

We now specialize the general setup above to the cases which appear more frequently in applications. 

\begin{example}\label{mle:imp:lsm}
	(The linear regression model)
	The simplest of all choices for the regression function above is $F({\mathfrak x})={\mathfrak x}\beta$, 
	which gives rise to 
	the {\em linear regression model}
	\begin{equation}\label{lin:reg:mod}
		{\bf Y}={\mathfrak X}\beta+{\bf e},
	\end{equation}
	where 		
	\begin{eqnarray}
		\beta
		& = & 
		{\rm argmin}_{\beta'\in\mathbb R^{p+1}}\frac{1}{n}\mathbb E(\|{\bf Y}-{\mathfrak X}\beta'\|^2)\nonumber\\
		& = & 
		{\rm argmin}_{\beta'\in\mathbb R^{p+1}}
		\mathbb E\left(({\mathscr Y}-\widetilde{\mathscr X}^\top\beta')^2\right),\quad  \widetilde{\mathscr X}=(1,{\mathscr X}),  \label{best:fit:beta}
	\end{eqnarray}
	provides the best linear fitting for ${\bf Y}$ (or $\mathscr Y$) in the $L^2$ sense. 		
	In this setting, (\ref{uncorr:e})
	should be interpreted as the ``projection condition'' that ${\bf e}={\bf Y}-\mathfrak X\beta$ should be ``orthogonal'' (again in the $L^2$ sense) to any potential linear fitting; 
	see  Remark \ref{empiric} for an elaboration of this viewpoint. Also, (\ref{exo:p}) is automatically satisfied due to Remark \ref{l2}.
	Now, if we apply the usual first
	order test from Calculus to (\ref{best:fit:beta}) we find that
	\[
	\beta=\mathbb E(|\widetilde{\mathscr X}|^2)^{-1}\mathbb E({\mathscr Y}\widetilde{\mathscr X}^\top),
	\] 
	but this does not say much about the true nature of $\beta$ because the joint distribution of $({\mathscr X},{\mathscr Y})$ remains unknown, which makes the  expectations intractable. Thus, this population  parameter  should somehow be estimated from data (a realization $({\bf x},{\bf y})$ of $({\bf X},{\bf Y})$) with $\widehat\beta$ in (\ref{mls:form:mul}) being the most obvious candidate for an estimator. 
	As it is always the case with any estimator, its efficiency only gets validated by the establishment of good inferential properties (say, by confirming that its ${\rm mse}$ is minimized within a given class of estimators and/or that it is consistent and asymptotically normal, etc.; see the general discussion in Subsection \ref{param:est}), so with this purpose in mind 
	it 
	is convenient to add to (\ref{exo:p}) the assumption of {\em spherical error},
	which means that there exists $\sigma>0$ such that
	\begin{equation}\label{homo:p}
		{\mathbb C}({{\bf e}}|_{\mathfrak X=\mathfrak x})=\sigma^2{\rm Id}_{n}, \quad {\rm independently}\,{\rm of}\,\mathfrak x.
	\end{equation}
	Thus,
	\begin{equation}\label{homo:p:2}
	\mathbb E({\bf Y}|_{\mathfrak X=\mathfrak x})=\mathfrak x\beta \quad 
	{\rm and} \quad
	{\mathbb C}({\bf Y}|_{\mathfrak X=\mathfrak x})=\sigma^2{\rm Id}_{n}
	\end{equation}
	summarize the assumptions of the linear regression model	
	\footnote{
		Although the stronger assumption of the 
		independence of $\{{\bf e}|_{\mathfrak X_j=\mathfrak x_j}\}_{j=1}^n$
		may eventually be useful (as in Examples \ref{bound:prob} and \ref{bound:prob:2}, for instance), we 
		stress that only uncorrelatedness, as expressed by (\ref{homo:p}), is imposed at this point, 
		as this already allows us to derive some nice inferential properties for $\widehat\beta$; cf. Propositions \ref{g:m:prep} and \ref{gauss:markov} and Remark \ref{large:lsm}.
		In any case, if ${\bf e}|_{\mathfrak X=\mathfrak x}$ is normally distributed, as in Example \ref{mle:imp:lsm:n}, then these assumptions (uncorrelatedness and independence) are equivalent indeed (by Corollary \ref{assert:eq:ind}).}. In the language of Example \ref{mle:imp:lsm:0}, (\ref{homo:p}) means that the random matrix ${\bm\sigma}^2(\mathfrak X)$ is actually {\em constant} and equals $\sigma^2{\rm Id}_{n}$, an artifact also known as {\em homoscedasticity}.
		\qed
\end{example}

\begin{example}\label{mle:imp:lsm:n} (The linear regression model with a normal error)
	In the setting of the linear regression model (\ref{lin:reg:mod}),
	the ``empirical'' quadratic minimization in (\ref{min:sq}) may be justified   
	via MLE under a normality assumption on the error\footnote{As it is well-known, this  connection between OLS and the normal distribution has been first observed by Gauss and Laplace \cite[Chapter 4]{stigler1990history}.}. Precisely, and in alignment with (\ref{exo:p}) and (\ref{homo:p}), let us further assume that the error ${\bf e}$ is such that $\{{\bf e}_j|_{\mathfrak X_j=\mathfrak x_j}\}_{j=1}^n$ is independent and distributed according to
	\begin{equation}\label{norm:as:mls:0}
		{\bf e}_j|_{\mathfrak X_j=\mathfrak x_j}\sim\mathcal N(0,\sigma^2),
	\end{equation} 
	or equivalently, 
	\begin{equation}\label{norm:as:mls}
		{\bf e}|_{\mathfrak X=\mathfrak x}\sim
		\mathcal N(\vec{0},\sigma^2{\rm Id}_{n}),
	\end{equation}
	by Proposition \ref{unc:ind:n}.
	It then follows from (\ref{lin:reg:mod}) that $\{{\bf Y}_j|_{\mathfrak X_j=\mathfrak x_j}\}_{j=1}^n$ is independent with
	\begin{equation}\label{model:mls}
		{\bf Y}_j|_{\mathfrak X_j=\mathfrak x_j}\sim\mathcal 
		N\left(\sum_{k=0}^p\mathfrak x_{jk}\beta_k,\sigma^2\right).
	\end{equation}
	In this way, we obtain an identifiable statistical model, conditionally on the observed value $\mathfrak x$ of $\mathfrak X$, in which $\beta$ appears as the unknown parameter. Actually, this construction fits  the extended notion of a statistical model in
	Remark
	\ref{stat:mod:def:2}, since the conditional distributions in (\ref{model:mls}) vary across observations as their expectations depend on covariates (the same observation affects the general linear model specified by (\ref{homo:p:2})). 
	In particular, we may apply MLE to (\ref{model:mls}), as in Definition \ref{mle:def:post}, to find the corresponding estimator.
	Indeed, (\ref{model:mls}) can be succinctly written as 
	\[
	{\bf Y}|_{\mathfrak X=\mathfrak x}
	\sim \mathcal N({\mathfrak x}\beta,\sigma^2{\rm Id}_{n}),
	\]	
	so the corresponding likelihood function is 
	\begin{equation}\label{like:lrmod}
		L({\bf y};\beta)=\left({2\pi\sigma^2}\right)^{-n/2}e^{-\frac{\|{\bf y}-{\mathfrak x}\beta\|^2}{2\sigma^2}}.		
	\end{equation}
	Since the corresponding log-likelihood function to be maximized is
	\begin{equation}\label{log:beta}
		l({\bf y};\beta)=-\frac{n}{2}\ln (2\pi\sigma^2)-\frac{\|{\bf y}-{\mathfrak x}\beta\|^2}{2\sigma^2},
	\end{equation}
	we 
	see  that, up to irrelevant constants (depending on $\sigma^2$, here assumed known), solving this maximization problem is equivalent to finding  $\widehat\beta$ as in (\ref{min:sq}),
	thus confirming that MLE implies OLS under the stated assumptions. 	\qed
	\end{example}

Although the terminology {\em ordinary least squares (OLS)} is sometimes reserved for the specialized linear model with normal errors in Example \ref{mle:imp:lsm:n}, we will use it more broadly to denote the framework defined by the empirical estimator $\widehat\beta$ in (\ref{min:sq}) and the assumptions in (\ref{homo:p:2}). In particular, we refer to $\widehat\beta$ as the OLS estimator.

\begin{remark}\label{reg:func:choice}
	The appearance of the regression function $F$ in (\ref{reg:model:2}) may be justified by the fact that 
	\[
	\mathbb E(\|{\bf Y}-F(\mathfrak X)\|^2)=\inf_G \mathbb E(\|{\bf Y}-G(\mathfrak X)\|^2),
	\]
	for any $G:\mathbb R^{n(p+1)}\to\mathbb R^n$ measurable. To check this, first note that
	\begin{eqnarray*}
		\mathbb E(|{\bf Y}-G(\mathfrak X)|^2)
		& = & \mathbb E(\|{\bf Y}-F(\mathfrak X)+F(\mathfrak X)-G(\mathfrak X)\|^2)\\
		& = & 	\mathbb E(\|{\bf Y}-F(\mathfrak X)\|^2)+ 	\mathbb E(\|F(\mathfrak X)-G(\mathfrak X)\|^2) \\
		&   & \quad + 2\mathbb E(\langle{\bf Y}-F(\mathfrak X),F(\mathfrak X)-G(\mathfrak X)\rangle).
	\end{eqnarray*}
	Also, 
	by Proposition \ref{ceprop} (2) and Proposition \ref{int:prob},
	\begin{eqnarray*}
		\mathbb E(\langle{\bf Y}-F(\mathfrak X),F(\mathfrak X)-G(\mathfrak X)\rangle)
		& = & \mathbb E(\mathbb E(\langle{\bf Y}-F(\mathfrak X),F(\mathfrak X)-G(\mathfrak X)\rangle|\mathfrak X))\\
		& = & 
		\mathbb E(\langle{\bf Y}-F(\mathfrak X),F(\mathfrak X)-G(\mathfrak X)\rangle|_{\mathfrak X=\mathfrak x})\\
		& = & \langle F(\mathfrak x)-G(\mathfrak x),\mathbb E({\bf Y}|_{\mathfrak X=\mathfrak x})-F(\mathfrak x)\rangle\\
		& = & 0.	
	\end{eqnarray*}
	Thus, 
	\[
	\mathbb E(\|{\bf Y}-G(\mathfrak X)\|^2)\geq \mathbb E(\|{\bf Y}-F(\mathfrak X)\|^2),
	\]
	with the equality holding if and only if $G(\mathfrak X)=F(\mathfrak X)$ a.s.\qed
\end{remark}

\begin{remark}\label{rem:r:vs:nr} (Fixed vs.\ random design)
	The preceding discussion shows that the design matrix $\mathfrak X$ may be treated in two distinct ways: one may either regard it as random, in which case it directly contributes to the randomness of ${\bf Y}$, or one may condition on a realization $\mathfrak X=\mathfrak x$, thereby fixing the design and allowing this randomness to arise solely from the error term.
	Conceptually, the distinction concerns the scope of the randomness.
	In the fixed-design approach, common in controlled experiments, $\mathfrak X$ is treated as non-random and inference is made conditionally on its observed values.
	In the random-design approach, typical of observational studies, $\mathfrak X$ is itself random, and the regression coefficients are interpreted as population features rather than merely as quantities attached to the observed design.
	Although the algebraic form of estimators such as $\widehat\beta=(\mathfrak x^\top\mathfrak x)^{-1}\mathfrak x^\top{\bf y}$ remains unchanged, the interpretation of their variance and the probabilistic framework underlying inference differ substantially.
	In experimental fields such as agriculture, where the investigator typically controls the values of ${\bf x}$ and subsequently records ${\bf y}$, the former perspective is more natural. By contrast, in disciplines such as Econometrics, where such control is usually not available, $\mathfrak X$ is more appropriately regarded as random. As already pointed out, the algebraic form of the relevant inferential statistics is, at the level of formulas, largely unaffected by the choice of viewpoint, particularly under a suitable normality assumption \cite[Chapter~10]{rencher2008linear}.
	Since normality will be assumed in most of what follows, it is convenient to omit explicit conditioning on $\mathfrak X=\mathfrak x$, with the understanding that $\mathfrak X=({\bf 1},{\bf X})$ should still be regarded as random whenever normal-theory distributional calculations are not an issue (for instance, as in Remark~\ref{coeff:det}, where the coefficient of determination is examined, or in Example~\ref{super:learn}, where regression theory is embedded in the broader framework of Statistical Learning). Hence, except in those cases, all probabilistic statements are to be interpreted conditionally on $\mathfrak X=\mathfrak x$, corresponding to the fixed-design viewpoint.
	\qed
\end{remark}

\subsection{Inference, goodness of fit and prediction for OLS}\label{subsec:inf:mle}
With a statistical model for OLS at hand, we now proceed to the pertinent inferential analysis. We start by observing that 
although the normality assumption for the error in (\ref{norm:as:mls}) is essential for interpreting OLS via MLE, good statistical properties of the associated estimator $\widehat\beta$
may be derived under the much less stringent assumptions of  the linear regression model in Example \ref{mle:imp:lsm}. 

\begin{proposition}\label{g:m:prep}
	Under the conditions of Example \ref{mle:imp:lsm} there hold
	\begin{equation}\label{g:m:prep:3}
		\mathbb E(\widehat\beta)=\beta, \quad {\mathbb C}(\widehat\beta)=\sigma^2 ({{\mathfrak x}}^\top{{\mathfrak x}})^{-1}.
	\end{equation}
	As a 
	consequence, $\widehat\beta$ is unbiased
	and ${\rm mse}(\widehat\beta)=\sigma^2{\rm tr} ({{\mathfrak x}}^\top{{\mathfrak x}})^{-1}$. 
\end{proposition}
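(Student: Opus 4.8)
The plan is to work conditionally on the realization $\mathfrak X=\mathfrak x$, treating $\mathfrak x$ as a fixed full-column-rank matrix (as permitted by Remark \ref{rem:r:vs:nr}), so that the only randomness comes from the error ${\bf e}$. The two identities in (\ref{g:m:prep:3}) then reduce to linear-algebraic manipulations of the closed-form estimator in (\ref{mls:form:mul}), using only the first- and second-moment assumptions recorded in (\ref{homo:p:2}). The unbiasedness and the $\rm mse$ formula follow immediately once the covariance has been computed, so essentially everything rests on a single covariance calculation.

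First I would establish unbiasedness. Substituting ${\bf Y}=\mathfrak x\beta+{\bf e}$ into (\ref{mls:form:mul}) and using exogeneity $\mathbb E({\bf e})=0$ from (\ref{zero:mean:e}), I obtain
\[
\mathbb E(\widehat\beta)=({\mathfrak x}^\top{\mathfrak x})^{-1}{\mathfrak x}^\top\,\mathbb E({\bf Y})
=({\mathfrak x}^\top{\mathfrak x})^{-1}{\mathfrak x}^\top{\mathfrak x}\beta=\beta,
\]
where the middle equality uses the first relation in (\ref{homo:p:2}). This already gives ${\rm bias}(\widehat\beta)=0$.

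Next I would compute the covariance. The key observation is the clean residual representation
\[
\widehat\beta-\beta=({\mathfrak x}^\top{\mathfrak x})^{-1}{\mathfrak x}^\top{\bf e},
\]
obtained by the same substitution as above. Writing $M:=({\mathfrak x}^\top{\mathfrak x})^{-1}{\mathfrak x}^\top$, and invoking the spherical-error (homoscedasticity) assumption (\ref{homo:p}) in the form ${\rm cov}({\bf e})=\mathbb E({\bf e}{\bf e}^\top)=\sigma^2{\rm Id}_n$, I find
\[
{\rm cov}(\widehat\beta)=\mathbb E\!\left((\widehat\beta-\beta)(\widehat\beta-\beta)^\top\right)
=M\,\mathbb E({\bf e}{\bf e}^\top)\,M^\top
=\sigma^2\,M\,M^\top
=\sigma^2\,({\mathfrak x}^\top{\mathfrak x})^{-1},
\]
where the final simplification uses $M\,M^\top=({\mathfrak x}^\top{\mathfrak x})^{-1}{\mathfrak x}^\top{\mathfrak x}({\mathfrak x}^\top{\mathfrak x})^{-1}=({\mathfrak x}^\top{\mathfrak x})^{-1}$ together with the symmetry of ${\mathfrak x}^\top{\mathfrak x}$.

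Finally, since $\widehat\beta$ is unbiased, the bias-variance decomposition (\ref{mse:2}) collapses to ${\rm mse}(\widehat\beta)={\rm tr}\,{\rm cov}(\widehat\beta)=\sigma^2\,{\rm tr}({\mathfrak x}^\top{\mathfrak x})^{-1}$, as claimed. I do not anticipate any genuine obstacle here; the only point requiring care is the bookkeeping of the conditioning, namely that (\ref{homo:p:2}) is stated conditionally on $\mathfrak X=\mathfrak x$, so all expectations above are conditional ones, and the identities hold verbatim for the conditional distribution of $\widehat\beta$ given $\mathfrak X=\mathfrak x$.
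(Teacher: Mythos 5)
Your proposal is correct and follows essentially the same route as the paper: substitute ${\bf Y}=\mathfrak x\beta+{\bf e}$ into (\ref{mls:form:mul}) to obtain $\widehat\beta=\beta+({\mathfrak x}^\top{\mathfrak x})^{-1}{\mathfrak x}^\top{\bf e}$, take expectations using $\mathbb E({\bf e})=0$, and compute the covariance by sandwiching ${\rm cov}({\bf e})=\sigma^2{\rm Id}_n$ between $({\mathfrak x}^\top{\mathfrak x})^{-1}{\mathfrak x}^\top$ and its transpose. Your remark about the bookkeeping of conditioning on $\mathfrak X=\mathfrak x$ is consistent with the convention the paper adopts in Remark \ref{rem:r:vs:nr}.
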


\begin{proof}
	With the simplifying notation suggested by Remark \ref{rem:r:vs:nr},
	we are assuming that 
	\begin{equation}\label{g:m:prep:2}
		\mathbb E({\bf e}_j)=0\quad  {\rm and} \quad {\mathbb C}({\bf e}_j,{\bf e}_k)=\sigma^2\delta_{jk}, \quad j,k=1,\cdots,n.
	\end{equation}
	Hence, from (\ref{mls:form:mul}),
	\[
	\widehat\beta=({{\mathfrak x}}^\top{{\mathfrak x}})^{-1}{\mathfrak x}^\top(\mathfrak x\beta+{\bf e}),	
	\]
	so that
	\begin{equation}\label{exp:hat:beta}
		\widehat\beta=\beta+({{\mathfrak x}}^\top{{\mathfrak x}})^{-1}{{\mathfrak x}}^\top{\bf e},
	\end{equation}
	which gives
	\[
	\mathbb E(\widehat\beta)=\mathbb E(\beta)+({{\mathfrak x}}^\top{{\mathfrak x}})^{-1}{{\mathfrak x}}^\top\mathbb E({\bf e})=\beta+({{\mathfrak x}}^\top{{\mathfrak x}})^{-1}{{\mathfrak x}}^\top(\vec{0} )=\beta.
	\]
	Also,
	\begin{eqnarray*}
		{\mathbb C}(\widehat\beta)
		& = & \mathbb E((\widehat\beta-\beta)\otimes(\widehat\beta-\beta))\\
		& = & \mathbb E((({{\mathfrak x}}^\top{{\mathfrak x}})^{-1}{{\mathfrak x}}^\top{\bf e})\otimes (({{\mathfrak x}}^\top{{\mathfrak x}})^{-1}{{\mathfrak x}}^\top{\bf e}))\\
		& = & ({{\mathfrak x}}^\top{{\mathfrak x}})^{-1}{\mathfrak x}^\top\mathbb E({\bf e}\otimes{\bf e}){\mathfrak x}({{\mathfrak x}}^\top{{\mathfrak x}})^{-1}\\
		& = & ({{\mathfrak x}}^\top{{\mathfrak x}})^{-1}{\mathfrak x}^\top{\mathbb C}({\bf e}){\mathfrak x}({{\mathfrak x}}^\top{{\mathfrak x}})^{-1},
	\end{eqnarray*}
	and using that ${\mathbb C}({\bf e})=\sigma^2{\rm Id}$ by (\ref{g:m:prep:2}), the result follows.
\end{proof}

As a first check on the efficiency of OLS estimator $\widehat\beta$ in (\ref{mls:form:mul}), let us see how it competes with a general {\em linear} estimator
\[
\overline\beta=C{\bf Y},
\]
where $C$ is a $(p+1)\times n$ matrix which is allowed to depend on $\mathfrak x$ but not on ${\bf Y}$. This leads to a remarkable result confirming that $\widehat\beta$  attains the best performance (as measured by the mse) within a natural class of estimators. 

\begin{theorem}\label{gauss:markov}(Gauss-Markov) Let $\overline\beta$ as above be unbiased with ${\bf e}$ satisfying (\ref{g:m:prep:2}). Then ${\mathbb C}(\overline\beta)\geq {\mathbb C}(\widehat\beta)$. 
\end{theorem}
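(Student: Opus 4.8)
The plan is to compare $\overline\beta$ with $\widehat\beta$ by writing the coefficient matrix $C$ as a perturbation of the OLS coefficient matrix $C_0 := ({\mathfrak x}^\top{\mathfrak x})^{-1}{\mathfrak x}^\top$, so that $\widehat\beta = C_0{\bf Y}$, and setting $D = C - C_0$. First I would exploit unbiasedness: substituting ${\bf Y} = {\mathfrak x}\beta + {\bf e}$ into $\overline\beta = C{\bf Y}$ and using $\mathbb E({\bf e}) = 0$ from (\ref{g:m:prep:2}) gives $\mathbb E(\overline\beta) = C{\mathfrak x}\beta$. Since this must equal $\beta$ for every $\beta \in \mathbb R^{p+1}$, the identity $C{\mathfrak x} = {\rm Id}_{p+1}$ follows. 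Because $C_0{\mathfrak x} = ({\mathfrak x}^\top{\mathfrak x})^{-1}{\mathfrak x}^\top{\mathfrak x} = {\rm Id}_{p+1}$ already holds, this forces the key orthogonality relation $D{\mathfrak x} = 0$.

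Next I would compute the covariance of $\overline\beta$ directly. As in the proof of Proposition \ref{g:m:prep}, conditioning on $\mathfrak X = {\mathfrak x}$ makes ${\mathfrak x}\beta$ deterministic, so ${\rm cov}({\bf Y}) = {\rm cov}({\bf e}) = \sigma^2{\rm Id}_n$ by (\ref{g:m:prep:2}), whence ${\rm cov}(\overline\beta) = \sigma^2 C C^\top$. Expanding $C = C_0 + D$ yields
\[
C C^\top = C_0 C_0^\top + C_0 D^\top + D C_0^\top + D D^\top.
\]
The central observation is that the two cross terms vanish: indeed $C_0 D^\top = ({\mathfrak x}^\top{\mathfrak x})^{-1}(D{\mathfrak x})^\top = 0$ by the relation $D{\mathfrak x} = 0$ established above, and $D C_0^\top$ is its transpose. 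Since $C_0 C_0^\top = ({\mathfrak x}^\top{\mathfrak x})^{-1}$, I obtain
\[
{\rm cov}(\overline\beta) = \sigma^2({\mathfrak x}^\top{\mathfrak x})^{-1} + \sigma^2 D D^\top = {\rm cov}(\widehat\beta) + \sigma^2 D D^\top,
\]
where the last equality uses the expression for ${\rm cov}(\widehat\beta)$ from (\ref{g:m:prep:3}).

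Finally, since $D D^\top$ is positive semidefinite, as $v^\top D D^\top v = \|D^\top v\|^2 \geq 0$ for every $v$, and $\sigma^2 > 0$, the difference ${\rm cov}(\overline\beta) - {\rm cov}(\widehat\beta) = \sigma^2 D D^\top$ is positive semidefinite, which is exactly the asserted matrix inequality ${\rm cov}(\overline\beta) \geq {\rm cov}(\widehat\beta)$. No step presents a genuine obstacle; the only points requiring care are the logical extraction of $C{\mathfrak x} = {\rm Id}_{p+1}$ from unbiasedness, which must hold identically in $\beta$ rather than at a single value, and the correct interpretation of $\geq$ as an inequality between symmetric matrices in the Loewner (positive semidefinite) order.
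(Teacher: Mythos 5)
Your proof is correct and follows essentially the same route as the paper's: both decompose the coefficient matrix as $C=({\mathfrak x}^\top{\mathfrak x})^{-1}{\mathfrak x}^\top+D$, extract $D{\mathfrak x}=0$ from unbiasedness holding for all $\beta$, and conclude via the vanishing of the cross terms that ${\rm cov}(\overline\beta)={\rm cov}(\widehat\beta)+\sigma^2DD^\top\geq{\rm cov}(\widehat\beta)$. The only cosmetic difference is that you compute $\sigma^2CC^\top$ directly, whereas the paper expands the covariance of the sum $\widehat\beta+D{\mathfrak x}\beta+D{\bf e}$; the substance is identical.
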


\begin{proof}
	We write 
	\[
	\overline\beta=\left(({{\mathfrak x}}^\top{{\mathfrak x}})^{-1}{{\mathfrak x}}^\top+ D\right){\bf Y},
	\]
	where $D$ has the same properties as $C$. It follows that
	\[
	\overline\beta=
	(({\mathfrak x}^\top{\mathfrak x})^{-1}{\mathfrak x}^\top+D)({\mathfrak x}\beta+{\bf e})=
	\widehat\beta+D{\mathfrak x}\beta+D{\bf e}
	\]
	with
	$\mathbb E(D{\bf e})=D\mathbb E({\bf e})=0$ (by Proposition 4.9 (3))
	so that
	\[
	\mathbb E(\overline\beta)=\beta+D{\mathfrak x}\beta, 
	\]
	and letting $\beta$ vary we see that 
	a vanishing bias for $\overline\beta$ implies $D{\mathfrak x}=0$. 
	Hence,
	\[
	{\mathbb C}(\overline\beta)={\mathbb C}(\widehat\beta)+{\mathbb C}(D{\bf e})+2{\mathbb C}(\widehat\beta,D{\bf e}).
	\]
	Now note that ${\mathbb C}(D{\bf e})={D}{\mathbb C}({\bf e})D^\top=\sigma^2{D}D^\top$. Moreover,  using (\ref{exp:hat:beta}) and the fact that $\beta$ is non-random, 
	\begin{eqnarray*}
		{\mathbb C}(\widehat\beta, D{\bf e})
		&= &{\mathbb C}(\beta, D{\bf e})+{\mathbb C}(({\mathfrak x}^\top{\mathfrak x})^{-1}{\mathfrak x}^\top{\bf e}, D{\bf e})\\
		& = & 
		{\mathbb C}(({\mathfrak x}^\top{\mathfrak x})^{-1}{\mathfrak x}^\top{\bf e}, D{\bf e})\\
		& = &
		({\mathfrak x}^\top{\mathfrak x})^{-1}{\mathfrak x}^\top{\mathbb C}({\bf e}) D^\top\\
		& = & \sigma^2({\mathfrak x}^\top{\mathfrak x})^{-1}( D{\mathfrak x})^\top\\
		& = & 0.	
	\end{eqnarray*}
	Thus,
	\[
	{\mathbb C}(\overline\beta)={\mathbb C}(\widehat\beta)+\sigma^2{D}{D}^\top,
	\]
	and the result follows because ${ D}{ D}^\top\geq 0$.
\end{proof}

\begin{example}\label{weigh:comb}
	If we take it for granted that ${\bf X}$ has no influence whatsoever on ${\bf Y}$ then we are actually dealing with
	the ``intercept-only'' case $\beta=(\beta_0,0,\cdots,0)$, so we must impose  $\widehat\beta=(\widehat\beta_0,0,\cdots,0)$ and, as expected, 
	(\ref{mls:form:mul}) gives 
	\begin{equation}\label{weigh:comb:2}
		\widehat\beta_0=\overline{\bf Y}=\frac{1}{n}\sum_j{\bf Y}_j,
	\end{equation}
	the sample mean of  ${\bf Y}$; in the simple linear regression case of Example \ref{simp:lin:reg} below, this is immediate from (\ref{simp:lin:reg:2}). If $w=(w_1,\cdots,w_n)\in\mathbb R^n$ we may consider the more general linear combination in the entries of ${\bf Y}$ given by 
	\[
	\widehat\beta_0^w=\sum_jw_j{\bf  Y}_j=\widehat\beta_0+\sum_j\left(w_j-\frac{1}{n}\right){\bf Y}_j,
	\]
	so that
	\begin{eqnarray*}
		\mathbb E(\widehat\beta_0^w)
		& = & 
		\beta_0+\sum_j\left(w_j-\frac{1}{n}\right)\mathbb E({\bf Y}_j)\\
		& = &
		\beta_0+\sum_j\left(w_j-\frac{1}{n}\right)(\mathfrak x\beta)_j\\ 
		& = & 	\beta_0+\beta_0\sum_j\left(w_j-\frac{1}{n}\right),
	\end{eqnarray*}
	and $\widehat\beta^w_0$ is unbiased if and only $w$ is a weight vector, $\sum_jw_j=1$. Thus, Gauss-Markov applies to ensure that $\widehat\beta_0$ attains the best performance among all these {\em weighted} estimators of $\beta_0$. In particular, Gauss-Markov may be regarded as a generalization of Example \ref{weigh:est:mu}. \qed
\end{example}

\begin{example}\label{simp:lin:reg} 
	(Simple linear regression) If $p=1$ in Example \ref{mle:imp:lsm} then $\mathfrak X=({\bf 1},{\bf X})$, where ${\bf X}=({\bf X}_{11},\cdots,{\bf X}_{n1})^\top$, so that
	\[ 
	{\mathfrak X}^\top{\mathfrak X}=
	\left(
	\begin{array}{cc}
		n & n\overline{\bf X}\\
		n\overline{\bf X} & \|{\bf X}\|^2
	\end{array}
	\right),\quad \overline{\bf X}=\frac{1}{n}\sum_j{\bf X}_{j1}.
	\]
	Since ${\bf X}$ is supposed not to be a multiple of ${\bf 1}$ a.s., Cauchy-Schwartz
	implies that
	\[
	\det 	{\mathfrak X}^\top{\mathfrak X} =n\|{\bf X}\|^2-n^2\overline{\bf X}^2>0,
	\]
	so that ${\mathfrak X}^\top{\mathfrak X}$ is invertible and 
	\[
	({\mathfrak X}^\top{\mathfrak X})^{-1}=\frac{1}{n\|{\bf X}\|^2-n^2\overline{\bf X}^2}
	\left(
	\begin{array}{cc}
		\|{\bf X}\|^2  & -n\overline{\bf X}\\
		-n\overline{\bf X} & n
	\end{array}
	\right).
	\]
	A little computation using (\ref{mls:form:mul}) then gives 
	\begin{equation}\label{simp:lin:reg:2}
		\widehat\beta:=
		\left(
		\begin{array}{c}
			\widehat\beta_0\\
			\widehat\beta_1
		\end{array}
		\right)
		=\left(
		\begin{array}{c}
			\overline{\bf Y}-	\widehat\beta_1\overline{\bf X}\\
			S_{\bf X\bf Y}/S_{\bf X\bf X}	
		\end{array}
		\right), 
	\end{equation}
	where 
	\[
	\overline{\bf Y}=	\frac{1}{n}\sum_j{\bf Y}_j
	\]
	is the sample mean of ${\bf Y}$ and 
	\begin{equation}\label{simp:lin:reg:3}
		S_{\bf X\bf Y}=\sum_j({\bf X}_{j1}-\overline{\bf X})({\bf Y}_j-\overline{\bf Y}), \quad
		S_{\bf X\bf X}=\sum_j ({\bf X}_{j1}-\overline{\bf X})^2.
	\end{equation}
	Thus, the second line in (\ref{simp:lin:reg:2}) is used to compute the {\em slope} $\widehat\beta_1$ from sample data whereas the first line determines the {\em intercept} $\widehat\beta_0$. 
	In this case, the fitted value is realized as
	\begin{equation}\label{fit:vec:r}
		\widehat{\bf y}=\widehat\beta_0+\widehat\beta_1{\bf x}.
	\end{equation}
	Also, $\widehat\beta$ is unbiased with 
	\[
	{\mathbb C}(\widehat\beta)=
	\frac{\sigma^2}{n\|{\bf x}\|^2-n^2\overline{\bf x}^2}
	\left(
	\begin{array}{cc}
		\|{\bf x}\|^2  & -n\overline{\bf x}\\
		-n\overline{\bf x} & n
	\end{array}
	\right),
	\] 
	under the assumptions of Proposition \ref{g:m:prep}.
	\qed
\end{example}

We now discuss the construction of confidence intervals for the entries of the unknown parameter $\beta$.  Here we remain in the setting of Example \ref{mle:imp:lsm:n}, so we assume  that, conditionally on $\mathfrak X=\mathfrak x$, $\{{\bf e}_j\}$ is independent with ${\bf e}_j\sim\mathcal N(0,\sigma^2)$ as in (\ref{norm:as:mls}). 
It follows from (\ref{g:m:prep:3}), (\ref{exp:hat:beta})  and Corollary \ref{ortho:norm} that
\begin{equation}\label{pivot:q:new}
\widehat\beta-\beta\sim\mathcal N(\vec{0},\sigma^2\mathfrak s), \quad  \mathfrak s:=({\mathfrak x}^\top\mathfrak x)^{-1},
\end{equation}
so we  may use the pivotal quantity  
\begin{equation}\label{pivot:q}
	\frac{\widehat\beta_j-\beta_j}{{\bf s}_j}\sim\mathcal N(0,1), \quad {{\bf s}}_j:={\sigma}\sqrt{\mathfrak s_{jj}},
\end{equation}
to exhibit confidence intervals for the unknown parameter $\beta_j$ in case $\sigma$ is known. 
Precisely, in the notation of Subsection \ref{conf:int:sub},
\begin{equation}\label{conf:int:lsm:1}
	\beta_j\in \left[\widehat \beta_j\mp z_{1-\delta/2}{\bf s}_j\right]
	\,{\rm with}\,{\rm prob.}=1-\delta.
\end{equation}
Otherwise, we proceed as follows. 
We define the {\em residual}
\begin{equation}\label{def:res:vec}
	\widehat{\bf e}:={\bf Y}-\widehat{{\bf Y}},
\end{equation}
where $\widehat{{\bf y}}={\mathfrak x}\widehat\beta$ is the fitted vector as in (\ref{fit:vec}).
As we shall see, $\|\widehat{\bf e}\|^2/(n-p-1)$ qualifies as an appropriate estimator for the error variance $\sigma^2$.

\begin{proposition}\label{rss:dist}
	$\widehat{\bf e}\sim \mathcal N(\vec{0},\sigma^2{\rm Id}_{n-p-1})$ and $\|\widehat{\bf e}\|^2/\sigma^2\sim \chi^2_{n-p-1}$, $n\geq p+2$. 
\end{proposition}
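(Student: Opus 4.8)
The plan is to reduce everything to two tools already available: the rotational invariance of standard normal vectors (Corollary \ref{unc:ind:n:c}) and the characterization of $\chi^2$ laws through idempotent quadratic forms (Proposition \ref{u:v:gen}). Throughout I work conditionally on $\mathfrak X=\mathfrak x$, as permitted by Remark \ref{rem:r:vs:nr}, so that $\mathfrak x$ and every matrix built from it is deterministic. First I would rewrite the residual in projection form. Combining (\ref{def:res:vec}), (\ref{fit:vec}) and (\ref{mls:form:mul}) gives $\widehat{\bf e}=(\mathrm{Id}_n-H){\bf Y}$, where $H=\mathfrak x(\mathfrak x^\top\mathfrak x)^{-1}\mathfrak x^\top$ is the orthogonal projection of $\mathbb R^n$ onto the column space $V:=\mathrm{Im}\,\mathfrak x$. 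Since $H\mathfrak x=\mathfrak x$, one has $(\mathrm{Id}_n-H)\mathfrak x=0$, so substituting ${\bf Y}=\mathfrak x\beta+{\bf e}$ annihilates the deterministic part and leaves $\widehat{\bf e}=(\mathrm{Id}_n-H){\bf e}$. Here $\mathrm{Id}_n-H$ is symmetric and idempotent, being the orthogonal projection onto $V^\perp$, a subspace of dimension $n-(p+1)=n-p-1$ (full column rank of $\mathfrak x$ is assumed, together with $n\ge p+2$).

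Next I would bring in normality of the error. By the specification (\ref{norm:as:mls}), the vector $Z:=\sigma^{-1}{\bf e}\sim\mathcal N(\vec 0,\mathrm{Id}_n)$ is standard normal. The $\chi^2$ claim is then immediate: since
\[
\frac{\|\widehat{\bf e}\|^2}{\sigma^2}=\langle Z,(\mathrm{Id}_n-H)Z\rangle
\]
with $\mathrm{Id}_n-H$ symmetric, idempotent of rank $n-p-1$, Proposition \ref{u:v:gen} yields $\|\widehat{\bf e}\|^2/\sigma^2\sim\chi^2_{n-p-1}$ directly.

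For the distributional statement about $\widehat{\bf e}$ itself, the delicate point is that $\widehat{\bf e}$ is genuinely an $n$-vector whose covariance $\sigma^2(\mathrm{Id}_n-H)$ is \emph{degenerate} (rank $n-p-1$); the assertion $\widehat{\bf e}\sim\mathcal N(\vec 0,\sigma^2\mathrm{Id}_{n-p-1})$ must be read as a statement about its coordinates in an orthonormal frame of $V^\perp$. To make this precise I would choose an orthonormal basis $v_1,\dots,v_n$ of $\mathbb R^n$ with $v_1,\dots,v_{p+1}$ spanning $V$ and $v_{p+2},\dots,v_n$ spanning $V^\perp$, and let $O$ be the orthogonal matrix whose rows are the $v_i^\top$. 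By rotational invariance (Corollary \ref{unc:ind:n:c}) the components of $\widetilde Z:=OZ$ are independent and $\mathcal N(0,1)$. Writing $(\mathrm{Id}_n-H)Z=\sum_{l=p+2}^n\widetilde Z_l\,v_l$ shows that, in the frame $v_{p+2},\dots,v_n$, the residual $\widehat{\bf e}=\sigma(\mathrm{Id}_n-H)Z$ has exactly the $n-p-1$ coordinates $\sigma\widetilde Z_{p+2},\dots,\sigma\widetilde Z_n$, which are i.i.d.\ $\mathcal N(0,\sigma^2)$, as claimed. Summing their squares recovers $\|\widehat{\bf e}\|^2/\sigma^2=\sum_{l=p+2}^n\widetilde Z_l^2\sim\chi^2_{n-p-1}$ through Corollary \ref{sum:norm:sq}, in agreement with the preceding paragraph.

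The only genuinely subtle step is this bookkeeping of the degenerate Gaussian: one must resist reading $\widehat{\bf e}\sim\mathcal N(\vec 0,\sigma^2\mathrm{Id}_{n-p-1})$ literally as a law on $\mathbb R^n$, and instead interpret it via the adapted orthonormal frame on $V^\perp$, where rotational invariance restores an honest rescaled standard normal law. Everything else is routine once the residual is recognized as $(\mathrm{Id}_n-H){\bf e}$ with $\mathrm{Id}_n-H$ a rank-$(n-p-1)$ orthogonal projection.
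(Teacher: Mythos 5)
Your proof is correct and follows essentially the same route as the paper's: write $\widehat{\bf e}=Q{\bf e}$ with $Q={\rm Id}_n-H$ a symmetric idempotent of rank $n-p-1$, deduce normality of the residual from rotational invariance (Corollary \ref{unc:ind:n:c}), and obtain the chi-squared law from Proposition \ref{u:v:gen} applied to the quadratic form $\langle \sigma^{-1}{\bf e},Q(\sigma^{-1}{\bf e})\rangle$. Your extra paragraph making precise how the degenerate covariance $\sigma^2({\rm Id}_n-H)$ should be read in an orthonormal frame of $V^\perp$ is a welcome clarification of a point the paper's proof leaves implicit, but it does not change the argument.
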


\begin{proof}
	We compute 
	\[
	\widehat{\bf e}=({\rm Id}_n-{\mathfrak x}({\mathfrak x}^\top{\mathfrak x})^{-1}{\mathfrak x}^\top){\bf y}=Q({\mathfrak x}\beta+{\bf e}),
	\]
	where $Q={\rm Id}_{n}-{\mathfrak x}({\mathfrak x}^\top{\mathfrak x})^{-1}{\mathfrak x}^\top$ is an idempotent, symmetric matrix satisfying 
	\begin{equation}\label{rank:Q}
		Q{\mathfrak x}=0,
	\end{equation}
	which means that ${\rm rank}\,Q=n-p-1\geq 1$. Therefore, 
	$\widehat{\bf e}=Q{\bf e}$ is normally distributed as in the statement (either by Proposition 4.9 (3) or by rotational invariance (Corollary \ref{unc:ind:n:c})).
	Moreover, since
	\begin{equation}\label{e:hat:e}
		\frac{\|\widehat{\bf e}\|^2}{\sigma^2}=\left\langle \frac{{\bf e}}{\sigma}, Q\left(\frac{{\bf e}}{\sigma}\right) \right\rangle,
	\end{equation}
	the last assertion follows from Proposition \ref{u:v:gen}.
\end{proof}

\begin{remark}\label{geom:mls}
	(The geometry of the linear model and regression diagnostics)
	The projection matrix\footnote{Recall that this means that $H$ is symmetric and idempotent, hence defining an orthogonal projection onto its range.} $H={\mathfrak x}({\mathfrak x}^\top{\mathfrak x})^{-1}{\mathfrak x}^\top$ appearing in the argument above is usually called the ``hat matrix'', as it projects ${\bf y}$ onto $\widehat{\bf y}=H{\bf y}\in C({\mathfrak x})\equiv \mathbb R^{p+1}$, the {\em design space}, which is the $(p+1)$-subspace of $\mathbb R^n$ spanned by the columns of the design matrix $\mathfrak x$. 
	On the other hand, its complementary projection matrix $Q={\rm Id}_{n}-H$ projects ${\bf y}$ (and also ${\bf e}$, because ${\bf y}-{\bf e}=\mathfrak x\beta\in C(\mathfrak x)$) onto the residual $\widehat{\bf e}\in C({\mathfrak x})^\perp\equiv \mathbb R^{n-p-1}$ lying in the orthogonal complement of  $C(\mathfrak x)$. 
	This nice ``orthogonal'' geometry, which hinges on the  general setting of Example \ref{mle:imp:lsm}, is depicted in Figure \ref{figg}, where $\widetilde{\bf e}=H{\bf e}$.  In particular,  the orthogonal decomposition ${\bf y}=\widehat{\bf y}+\widehat{\bf e}$ clearly implies that  the
	{\em sample correlation} between $\widehat{\bf y}$ and $\widehat{\bf e}$, defined by 
	\begin{equation}\label{samp:corr:def}
		{\rm corr}(\widehat{\bf y},\widehat{\bf e})=\frac{\widehat{\bf e}^\top\widehat{\bf y}}{\|\widehat{\bf e}\|\|\widehat{\bf y}\|},
	\end{equation}
	vanishes,
	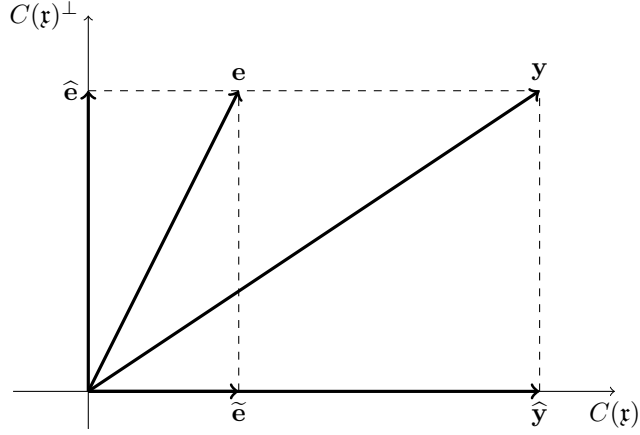
\begin{figure}
		\begin{center}
			\begin{tikzpicture}[scale=2]
				\begin{scope}
					\draw[->](-0.5,0) -- (3.5,0) node[anchor=north]  {$C(\mathfrak x)$};%axis
					\draw[->] (0,-0.25) -- (0,2.5) node[anchor=east]  {$C(\mathfrak x)^\perp$};%axis
					\draw[->][black, very thick] (0,0) -- (3,2) node[anchor=south] {${\bf y}$};%y
					\draw[->][black, very thick] (0,0) --  (1,2) node[anchor=south] {${\bf e}$};%e
					\draw[->][black, very thick] (0,0) -- (3,0) node[anchor=north] {$\widehat{\bf y}$};%\hat{y}
					\draw[->][black, very thick] (0,0) -- (0,2) node[anchor=east] {$\widehat{\bf e}$} ;%\hat{e}
					\draw[->][black, very thick] (0,0) -- (1,0) node[anchor=north] {$\widetilde{\bf e}$};%\tilde{e}
					\draw[dashed] (3,0) -- (3,2);
					\draw[dashed] (0,2) -- (3,2);
					\draw[dashed] (1,0) -- (1,2);
				\end{scope}
			\end{tikzpicture}
		\end{center}
		\caption{The geometry of the linear model}
		\label{figg}
	\end{figure}
which justifies the common practice of using a scatterplot of the residuals against the fitted values in order to identify patterns of goodness of fit (or lack thereof) of a given linear model, as far as linearity and homoscedasticity go \cite[Section 6.1]{faraway2006linear}.
Now, if  we further specialize to the setting of Example \ref{mle:imp:lsm:n}, which assumes ${\bf e}\sim\mathcal N(\vec{0},\sigma^2{\rm Id}_{n})$, then, by the projection property in (\ref{norm:space:4}),
	\begin{equation}\label{proj:res:dist}
		\widehat{\bf e}\sim\mathcal N(\vec{0},\sigma^{2}{\rm Id}_{n-p-1}),
	\end{equation}
	so in particular 
	$\widehat{\bf e}/|\widehat{\bf e}|$ is uniformly distributed in $\mathbb S^{n-p-2}\subset C(\mathfrak x)^\perp$ by Remark \ref{fisher:comp:1}. 
	Since $\widehat{\bf e}$ is accessible from data and adjusted values, graphical methods (say, a Q-Q plot) may be used to confirm the empirical validity of (\ref{proj:res:dist}), which somehow works as an indirect checking of the theoretical assumption on the normality of errors underlying Example \ref{mle:imp:lsm:n}; again, see \cite[Section 6.1]{faraway2006linear}. A further gauging of the goodness of fit of the model may be implemented after properly combining the residual and the fitted vector. The simplest way of doing this, which leads to a sharpening of (\ref{samp:corr:def}), is to look at the joint distribution of $(\widehat{\bf y},\widehat{\bf e})$ under error normality.  
	Since 
	\[
	\left(
	\begin{array}{c}
		\widehat{\bf y}\\
		\widehat{\bf e}
	\end{array}
	\right)
	=
	\left(
	\begin{array}{c}
		H{\bf y}\\
		Q{\bf y}
	\end{array}
	\right)
	=
	\left(
	\begin{array}{cc}
		H & 0\\
		0 & Q
	\end{array}
	\right)
	\left(
	\begin{array}{c}
		{\bf y}\\
		{\bf y}
	\end{array}
	\right),
	\]
	we see that $(\widehat{\bf y},\widehat{\bf e})$ is jointly normally distributed. To find the specific normal distribution we note that 
	\[
	\mathbb E
	\left(\left(
	\begin{array}{c}
		\widehat{\bf y}\\
		\widehat{\bf e}
	\end{array}
	\right)\right)=
	\left(
	\begin{array}{cc}
		H & 0\\
		0 & Q
	\end{array}
	\right)
	\mathbb E\left(
	\begin{array}{c}
		{\bf y}\\
		{\bf y}
	\end{array}
	\right)
	=
	\left(
	\begin{array}{cc}
		H & 0\\
		0 & Q
	\end{array}
	\right)
	\left(
	\begin{array}{c}
		\mathfrak x\beta\\
		\mathfrak x\beta
	\end{array}
	\right)
	=
	\left(
	\begin{array}{c}
		\mathfrak x\beta\\
		0
	\end{array}
	\right)
	\]
	and 
	\[
	{\mathbb C}
	\left(\left(
	\begin{array}{c}
		\widehat{\bf y}\\
		\widehat{\bf e}
	\end{array}
	\right)\right)=
	\left(
	\begin{array}{cc}
		H & 0\\
		0 & Q
	\end{array}
	\right)
	{\mathbb C}\left(
	\begin{array}{c}
		{\bf y}\\
		{\bf y}
	\end{array}
	\right)
	\left(
	\begin{array}{cc}
		H & 0\\
		0 & Q
	\end{array}
	\right)^\top
	=\sigma^2
	\left(
	\begin{array}{cc}
		H & 0\\
		0 & Q
	\end{array}
	\right),
	\]
	so that 
	\[
	{\mathbb C}
	\left(\Lambda\left(
	\begin{array}{c}
		\widehat{\bf y}\\
		\widehat{\bf e}
	\end{array}
	\right)\right)=
	\sigma^2
	\Lambda\left(
	\begin{array}{cc}
		H & 0\\
		0 & Q
	\end{array}
	\right)\Lambda^\top,
	\]
	with $\Lambda$ being any $2n\times 2n$ matrix. By choosing $\Lambda$ orthogonal with the corresponding conjugation performing the appropriate diagonalization  and viewing $(\widehat{\bf y},\widehat{\bf e})$ as an element of $\mathbb R^{p+1}\times \mathbb R^{n-p-1}=\mathbb R^n$, we find that there exists an orthogonal $n\times n$ matrix $\Lambda'$ such that
	\[
	{\mathbb C}
	\left(\Lambda'\left(
	\begin{array}{c}
		\widehat{\bf y}\\
		\widehat{\bf e}
	\end{array}
	\right)\right)=
	\sigma^2{\rm Id}_{n}.
	\]
	By Corollary \ref{ortho:norm}, and viewing  $\mathfrak r\beta$ as an element of $\mathbb R^{p+1}$, 
	\[
	\Lambda' \left(
	\begin{array}{c}
		\widehat{\bf y}\\
		\widehat{\bf e}
	\end{array}
	\right)\sim\mathcal N\left(\Lambda'
	\left(
	\begin{array}{c}
		\mathfrak x\beta\\
		0
	\end{array}
	\right),\sigma^2{\rm Id}_{n}
	\right),
	\]
	and 
	therefore $\{\widehat{\bf y},
	\widehat{\bf e}\}$ is independent
	by Corollary \ref{unc:ind:n:c}. 
	Further uses of the residual in the art of quantifying the goodness of fit of the linear model may be found in Remark \ref{coeff:det} below. \qed
\end{remark}

We now come back to the business of finding confidence intervals for the entries of $\beta$, this time with $\sigma^2$ regarded as unknown.
In this case, the next result justifies the replacement of ${\bf s}_j$ in (\ref{pivot:q}) by 
\begin{equation}\label{pivot:eq:2}
	\widehat{\bf s}_j:=
	\widehat\sigma \sqrt{\mathfrak s_{jj}}, \quad 
	\widehat\sigma:=\frac{\|\widehat{\bf e}\|}{\sqrt{n-p-1}}.
\end{equation}
Note that with this notation, Proposition \ref{rss:dist} says that 
\begin{equation}\label{pivot:eq:3}
	(n-p-1)\frac{\widehat\sigma^2}{\sigma^2}=\frac{\|\widehat{\bf e}\|^2}{\sigma^2}\sim \chi^2_{n-p-1}. 
\end{equation}

\begin{proposition}\label{proc:conf:int:mls}
	$\widehat\sigma^2$ is an unbiased and consistent estimator for $\sigma^2$ (as $n\to+\infty$ and $p$ is held fixed). In particular, $\widehat\sigma$ is consistent for the error standard deviation $\sigma$.  Moreover, $\{\widehat\beta,\widehat\sigma^2\}$ is independent with
	\[
	\frac{\widehat\beta_j-\beta_j}{\widehat{\bf s}_j}\sim \mathfrak t_{n-p-1}. 
	\]
\end{proposition}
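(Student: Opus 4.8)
The plan is to treat the four assertions in turn, drawing throughout on Proposition \ref{rss:dist}, the representation (\ref{exp:hat:beta}) for $\widehat\beta$, and the normality assumption ${\bf e}\sim\mathcal N(\vec 0,\sigma^2{\rm Id}_n)$ of Example \ref{mle:imp:lsm:n}.

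For unbiasedness and consistency of $\widehat\sigma^2$, I would first invoke Proposition \ref{rss:dist}, which gives $\|\widehat{\bf e}\|^2/\sigma^2\sim\chi^2_{n-p-1}$. Since $\widehat\sigma^2=\|\widehat{\bf e}\|^2/(n-p-1)$, Corollary \ref{chi:sq:ms} (recalling $\mathbb E(\chi^2_k)=k$ and ${\rm var}(\chi^2_k)=2k$) immediately yields $\mathbb E(\widehat\sigma^2)=\sigma^2$, so $\widehat\sigma^2$ is unbiased, and ${\rm var}(\widehat\sigma^2)=2\sigma^4/(n-p-1)\to 0$ as $n\to+\infty$. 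Since unbiasedness makes ${\rm mse}(\widehat\sigma^2)={\rm var}(\widehat\sigma^2)$, Proposition \ref{mse:consist} gives $\widehat\sigma^2\stackrel{p}{\to}\sigma^2$, establishing consistency. The consistency of $\widehat\sigma$ for $\sigma$ then follows because the square root is continuous on $(0,+\infty)$ and convergence in probability is preserved under continuous transformations.

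For the independence of $\{\widehat\beta,\widehat\sigma^2\}$, the key observation is that both quantities are functions of the Gaussian error ${\bf e}$. From (\ref{exp:hat:beta}) we have $\widehat\beta-\beta=A{\bf e}$ with $A=(\mathfrak x^\top\mathfrak x)^{-1}\mathfrak x^\top$, while the proof of Proposition \ref{rss:dist} exhibits $\widehat{\bf e}=Q{\bf e}$ with $Q={\rm Id}_n-\mathfrak x(\mathfrak x^\top\mathfrak x)^{-1}\mathfrak x^\top$, and $\widehat\sigma^2=\|\widehat{\bf e}\|^2/(n-p-1)$ is a function of $\widehat{\bf e}$. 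Stacking $A{\bf e}$ and $Q{\bf e}$ produces a jointly normal random vector (a linear image of ${\bf e}$), so by Proposition \ref{unc:ind:n} it suffices to check that the cross-covariance between the two blocks vanishes. I would compute ${\rm cov}(A{\bf e},Q{\bf e})=\sigma^2 AQ^\top=\sigma^2(\mathfrak x^\top\mathfrak x)^{-1}\mathfrak x^\top Q$, and since $\mathfrak x^\top Q=\mathfrak x^\top-\mathfrak x^\top\mathfrak x(\mathfrak x^\top\mathfrak x)^{-1}\mathfrak x^\top=0$, this cross-covariance is zero. Hence $A{\bf e}$ and $Q{\bf e}$ are independent, and therefore so are $\widehat\beta$ and $\widehat\sigma^2$, each being a measurable function of one of these independent blocks.

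Finally, for the sampling distribution, writing $Z=(\widehat\beta_j-\beta_j)/{\bf s}_j$ with ${\bf s}_j=\sigma\sqrt{\mathfrak s_{jj}}$, the pivotal identity (\ref{pivot:q}) gives $Z\sim\mathcal N(0,1)$, while (\ref{pivot:eq:3}) gives $W:=(n-p-1)\widehat\sigma^2/\sigma^2\sim\chi^2_{n-p-1}$. The independence just established ensures $Z\perp W$, so Proposition \ref{norm:chi:stu} applies to yield $Z/\sqrt{W/(n-p-1)}\sim\mathfrak t_{n-p-1}$. A direct simplification shows $Z/\sqrt{W/(n-p-1)}=(\widehat\beta_j-\beta_j)/(\widehat\sigma\sqrt{\mathfrak s_{jj}})=(\widehat\beta_j-\beta_j)/\widehat{\bf s}_j$, which is the claimed statement. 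The only delicate step is the independence argument, but once joint normality of the stacked linear images is recognized, it reduces to the vanishing of $\mathfrak x^\top Q$; everything else is a direct application of the chi-squared and Student results already in hand.
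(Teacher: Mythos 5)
Your treatment of unbiasedness, consistency, and the Student-$\mathfrak t$ pivot coincides with the paper's: Proposition \ref{rss:dist} plus Corollary \ref{chi:sq:ms} for the mean and variance of $\widehat\sigma^2$, Proposition \ref{mse:consist} for consistency, and the quotient $Z/\sqrt{W/(n-p-1)}$ fed into Proposition \ref{norm:chi:stu}. The only place you diverge is the independence of $\{\widehat\beta,\widehat\sigma^2\}$. The paper writes $\mathfrak x^\top\mathfrak x\,(\widehat\beta-\beta)/\sigma=\mathfrak x^\top({\bf e}/\sigma)$ and $\|\widehat{\bf e}\|^2/\sigma^2=\langle {\bf e}/\sigma,Q({\bf e}/\sigma)\rangle$, then invokes Proposition \ref{lin:quad:n} (a linear form $\langle c,Z\rangle$ and a quadratic form $\langle Z,AZ\rangle$ in a standard normal $Z$ are independent iff $Ac=\vec 0$), using $Q\mathfrak x=0$. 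You instead stack the two linear images $A{\bf e}$ and $Q{\bf e}$, observe joint normality, and kill the cross-covariance via $\mathfrak x^\top Q=0$. Both hinge on the same algebraic identity, and your route has the advantage of not relying on Proposition \ref{lin:quad:n}, whose proof the paper omits; it is also the same strategy the paper itself uses for $(\widehat{\bf y},\widehat{\bf e})$ in Remark \ref{geom:mls}.

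One point deserves more care in your version: the covariance matrix of the stacked vector $(A{\bf e},Q{\bf e})$ is block-diagonal but \emph{singular} (since ${\rm rank}\,Q=n-p-1<n$), whereas Proposition \ref{unc:ind:n} as stated presupposes a nondegenerate normal vector with a density and a \emph{diagonal} covariance, and concludes independence of scalar components rather than of blocks. To make the step airtight you should either argue directly with characteristic functions (the factorization $\phi({\bf u},{\bf v})=\phi_1({\bf u})\phi_2({\bf v})$ holds for any jointly normal pair with vanishing cross-covariance, degenerate or not) or reduce to a nondegenerate representation by an orthogonal change of basis, exactly as done in Remark \ref{geom:mls}. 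This is a gap in rigor, not in substance; the conclusion and the rest of the argument stand.
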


\begin{proof}
	From (\ref{pivot:eq:3}) and Corollary \ref{chi:sq:ms} we have $\mathbb E(\widehat\sigma^2)=\sigma^2$, that is, ${\rm bias}(\widehat\sigma^2)=0$. Also, \begin{equation}\label{exp:var:res}
		{\mathbb V}(\widehat\sigma^2)=
		\frac{2\sigma^4}{n-p-1},
	\end{equation}
	so that ${\rm mse}(\widehat\sigma^2)\to 0$ as $n\to+\infty$ and consistency follows from Proposition \ref{mse:consist}.  
	Moreover, since
	\[
	\frac{\widehat\beta_j-\beta_j}{\widehat{\bf s}_j}=\frac{\frac{\widehat\beta_j-\beta_j}{{\bf s}_j}}{\sqrt{\frac{\|\widehat{e}\|^2/\sigma^2}{{n-p-1}}}},
	\]
	the last assertion follows from (\ref{pivot:q}), (\ref{pivot:eq:3}) and Proposition \ref{norm:chi:stu} once one verifies that $\{\widehat\beta,\|\widehat{\bf e}\|^2\}$ is independent. To check this,  note that 
	(\ref{exp:hat:beta}) gives 
	\[
	{\mathfrak x}^\top\mathfrak x
	\left(
	\frac{\widehat\beta-\beta}{\sigma}\right)=\mathfrak x^\top\left(\frac{{\bf e}}{\sigma}
	\right),
	\]
	which together with  (\ref{e:hat:e}),  (\ref{rank:Q}) and 
	Proposition \ref{lin:quad:n} implies that 
	$\{\mathfrak x^\top\mathfrak x\widehat\beta,\|\widehat{\bf e}\|^2\}$ is independent, from which the result follows (because $\mathfrak x^\top\mathfrak x$ is invertible). 
\end{proof}

Thus, again using the notation of Subsection \ref{conf:int:sub},
\begin{equation}\label{conf:int:lsm:2}
	\beta_j\in \left[\widehat \beta_j\mp \mathfrak t_{n-p-1,1-\delta/2}\widehat{\bf s}_j\right]
	\,{\rm with}\,{\rm prob.}\,\approx\,1-\delta,
\end{equation}
a confidence interval estimate for $\beta_j$ in case $\sigma$ is unknown. Notice that if $n-p-1\gg 0$ then we can replace $\mathfrak t_{n-p-1,1-\delta/2}$ by $z_{1-\delta/2}$ with a negligible error; this uses Remark \ref{tk:to:norm}. 

\begin{example}\label{conf:ref:beta} 
	(Confidence region for the whole vector parameter $\beta$, with $\sigma^2$ unknown) Pick $\mathfrak p$ so that $\mathfrak p^\top\mathfrak p=\mathfrak s$ as in (\ref{pivot:q:new}) and set $\mathfrak n=\sigma^{-1}(\mathfrak p^\top)^{-1}(\widehat\beta-\beta)$. It is immediate that $\mathfrak n\sim \mathcal N(\vec{0},{\rm Id}_{p+1})$ so that 
	\[
	\frac{(\widehat\beta-\beta)^\top\mathfrak s(\widehat\beta-\beta)}{\sigma^2}=\mathfrak n^\top\mathfrak n\sim \chi^2_{p+1}
	\]
	and is independent of $\widehat\sigma^2$. Therefore, by Propositions \ref{proc:conf:int:mls} and \ref{chi:to:F},
	\[
	{\frac{(\widehat\beta-\beta)^\top
			{{\mathfrak s}}({\widehat\beta}-\beta)}{(p+1)\widehat\sigma^2}}
	=
	\frac{{\mathfrak n}^\top{\mathfrak n}/(p+1)}{\widehat\sigma^2/\sigma^2}
	\sim
	{\bm{\textsf{F}}_{p+1,n-p-1}},
	\]
	which gives the ``confidence region'' estimate
	\begin{equation}\label{est:w:beta}
		\beta\in \mathcal U_{n,p,\delta}(\widehat\beta;\widehat\sigma^2)\,\,{\rm with}\,\,{\rm prob.}\,\,1-\delta,
	\end{equation}
	where 
	\begin{equation}\label{est:w:beta:2}
		\mathcal U_{n,p,\delta}(\widehat\beta;\widehat\sigma^2)=\left\{
		\beta'\in \mathbb R^{p+1};(\widehat\beta-\beta')^\top{{\mathfrak s}}(\widehat\beta-\beta')\leq (p+1)\widehat\sigma^2{{\bm{\textsf f}}}_{p+1,n-p-1,1-\delta} 
		\right\}.
	\end{equation}
	In other words, the random 
	ellipsoidal region $\mathcal U_{n,p,\delta}(\widehat\beta;\widehat\sigma^2)$, which is fully specified by $n$, $p$, $\delta$ and the sample data $\widehat\beta$ and $\widehat\sigma^2$, covers the true vector parameter $\beta$ with probability $1-\delta$. \qed
\end{example}

\begin{example}\label{simult:band}(Simultaneous confidence band for the mean response)
	According to \cite[page 27]{cook2009applied}, ``the primary goal in a regression analysis is to understand, as far as possible
	with the available data, how the conditional distribution of the response
	varies across sub-populations determined by the possible values of the predictors''. Precisely, and using the notation of Example \ref{mle:imp:lsm:0}, if 
	\[
	{\bm{\textsf R}}^p=\{\bm{\textsf x}'=(1,{x}'_1,\cdots,{x}'_p); { x}'_j\in\mathbb R, j=1,\cdots, p\}
	\] 
	and ${\bm{\textsf x}}\in\bm{\textsf R}^p$ is given, this amounts to checking how much information on 
	the conditioned random variable $\mathscr Y|_{\widetilde {\mathscr X}={\bm{\textsf  x}}}$ may be extracted from the data array $({\bf x},{\bf y})$. 
	We insist that ${\bm{\textsf x}}$ should be viewed as a future observation which has been consolidated from the same population {\em after} the data set has been drawn.
	From this perspective, the method of least squares in Example \ref{mle:imp:lsm}, according to which 
	\begin{equation}\label{resp:var}
		\mathscr Y_{\bm{\textsf x}}:=\mathscr Y|_{\widetilde {\mathscr X}={\bm{\textsf  x}}}=\bm{\textsf  x}^\top\beta+{\bm{\textsf e}}\,\, {\rm with}\,\, \mathbb E({\bm{\textsf e}})=0,
	\end{equation}
	represents a first step toward this goal as it  allows us to make use of the estimate $\widehat\beta$, which has been computed from $({\bf x},{\bf y})$, in order to retrieve information on the 
	{\em mean response} 
	\begin{equation}\label{mean:response}
		{\bm{\textsf x}}^\top\beta=\mathbb E(\mathscr Y_{\bm{\textsf x}}),
	\end{equation}
	a collection of summaries which, as ${\bm{\textsf x}}$ varies, exhausts the realizations of the conditional expectation $\mathbb E(\mathscr Y|{\widetilde{\mathscr X}})$ (by Proposition \ref{int:prob}). 
	Indeed, if we further specialize to the setting of Example \ref{mle:imp:lsm:n}, where 
	\begin{equation}\label{norm:pred:0}
		{\bm{\textsf e}}\sim\mathcal N(0,\sigma^2),
	\end{equation}
	then  ${\bm{\textsf x}}^\top\widehat\beta$ may be used
	 to properly estimate the 
	population parameter in (\ref{mean:response}): from (\ref{pivot:q:new})
	we have
	\begin{equation}\label{norm:pred}
		{\bm{\textsf x}}^\top(\widehat\beta-\beta)\sim \mathcal N({0},\sigma^2{\bm{\textsf x}}^\top\mathfrak s{\bm{\textsf x}}), \quad \mathfrak s=(\mathfrak x^\top\mathfrak x)^{-1}, 
	\end{equation}
	that is, 
	\[
	\frac{{\bm{\textsf x}}^\top(\widehat\beta-\beta)}{\sigma\sqrt{{\bm{\textsf x}}^\top\mathfrak s{\bm{\textsf x}}}}\sim\mathcal N(0,1),
	\]
	so that, by Propositions \ref{proc:conf:int:mls} and \ref{norm:chi:stu},
	\[
	\frac{{\bm{\textsf x}}^\top(\widehat\beta-\beta)}{\widehat\sigma
		\sqrt{{\bm{\textsf x}}^\top\mathfrak s{\bm{\textsf x}}}}
	\sim{{\mathfrak t_{n-p-1}}},
	\]
	and hence,
	\begin{equation}\label{int:conf:mresp}
		{\bm{\textsf x}}^\top\beta\in\left[
		{\bm{\textsf x}}^\top{\widehat\beta}\mp
		{{\mathfrak{t}}}_{n-p-1,1-\delta/2}{\widehat\sigma}
		\sqrt{{\bm{\textsf x}}^\top{{\mathfrak s}}{\bm{\textsf x}}}
		\right]
		\,\,\text{with prob.}\,\,1-\delta.
	\end{equation}
	If we allow for a bit more of spread, 
	a ``simultaneous'' version of this pointwise bound is also available, with the corresponding estimate holding for {\em any} $\bm{\textsf x}\in\bm{\textsf R}^n$,  as follows. 
	Starting with (\ref{est:w:beta}) and (\ref{est:w:beta:2}) and using the notation of Example \ref{conf:ref:beta},
	\begin{eqnarray*}
		1-\delta 
		& = & 
		P\left(\frac{(\widehat\beta-\beta)^\top
			{{\mathfrak s}}(\widehat\beta-\beta)}{(p+1)\widehat\sigma^2}\leq
		{{\bm{\textsf f}}}_{p+1,n-p-1,1-\delta}\right)\\
		& = &
		P\left(\frac{\|{\mathfrak n}\|}{\widehat\sigma/\sigma}\leq\sqrt{(p+1) {\bm{\textsf f}}_{p+1,n-p-1,1-\delta}}\right),
	\end{eqnarray*}
	and recalling that,  by Cauchy-Schwarz, 
	\[
	\sup_{{\bm{\textsf x}}\in\bm{\textsf R}^p}\frac{|(\mathfrak p{\bm{\textsf x}})^\top{\mathfrak n}|}{\|\mathfrak p{\bm{\textsf x}}\|\|\mathfrak n\|}=1,
	\]
	we get
	\begin{eqnarray*}
		1-\delta
		& = & 
		P\left(
		\sup_{{\bm{\textsf x}}\in\bm{\textsf R}^p}
		\frac{|(\mathfrak p{\bm{\textsf x}})^\top\mathfrak n|}{(\widehat\sigma/\sigma)\sqrt{(\mathfrak p{\bm{\textsf x}})^\top\mathfrak p{\bm{\textsf x}}}}
		\leq 
		\sqrt{(p+1) \bm{\textsf f}_{p+1,n-p-1,1-\delta}}
		\right)\\
		& = & 
		P\left(
		\sup_{{\bm{\textsf x}}\in\bm{\textsf R}^p}\frac
		{|{\bm{\textsf x}}^\top(\widehat\beta-\beta)|}{\widehat\sigma\sqrt{{\bm{\textsf x}}^\top\mathfrak s{\bm{\textsf x}}}}
		\leq 
		\sqrt{(p+1) \bm{\textsf f}_{p+1,n-p-1,1-\delta}}
		\right),
	\end{eqnarray*}
	so that 
	\begin{equation}\label{scheffe}
		{\bm{\textsf x}}^\top\beta\in\left[
		{\bm{\textsf x}}^\top\widehat\beta\mp
		\sqrt{(p+1) \bm{\textsf f}_{p+1,n-p-1,1-\delta}}\,\widehat\sigma\sqrt{{\bm{\textsf x}}^\top\mathfrak s{\bm{\textsf x}}}
		\right] \forall {\bm{\textsf x}}\in\bm{\textsf R}^p
		\,\,{\rm with}\,\,{\rm prob.}\,\,1-\delta, 
	\end{equation}
	As fully explained in \cite{liu2010simultaneous}, this Scheff\'e-type simultaneous confidence band plays a fundamental role in the inference theory of OLS models; see also Example \ref{scheffe:band} for a generalization thereof. \qed 
\end{example}

\begin{example}\label{pred:resp}
	(Simultaneous prediction band for the response)
	With the estimates for the mean response provided by (\ref{int:conf:mresp}) and (\ref{scheffe}) at hand, we may now pose ourselves the problem of ``predicting'' where the response itself,	
	\begin{equation}\label{resp:new:eq}
		\mathscr Y_{\bm{\textsf x}}=\bm{\textsf x}^\top\beta+\bm{\textsf e}\sim\mathcal N(\bm{\textsf x}^\top\beta,\sigma^2),
	\end{equation}	
	is likely to fall, where, as in Example \ref{simult:band}, we should think of $\bm{\textsf x}$ as a new observation for the regressor which has taken place after the data $({\bf x},{\bf y})$ has been gathered, hence the ``prediction'' terminology\footnote{This emphasis on response prediction foreshadows a defining tendency in Data Science: the prioritization of predictive accuracy, often at the cost of model interpretability; see Remark~\ref{int:acc} and the surrounding discussion.}. In particular, $\bm{\textsf e}$ is independent of $\widehat\beta$, which has been constructed out of $({\bf x},{\bf y})$, so that $\mathscr Y_{\bm{\textsf x}}$ is independent of $\widehat\beta$ as well. 
	Combining this with 
	(\ref{resp:new:eq}), (\ref{norm:pred}) and Proposition \ref{norm:spce} (3) we see that 
	\[
	{\mathscr Y}_{\bm{\textsf x}}-\bm{\textsf x}^\top\widehat\beta\sim\mathcal N\left(0,\sigma^2\left(1+{\bm{\textsf x}}^\top\mathfrak s{\bm{\textsf x}}\right)\right),
	\]
	which by the standard argument gives the  sought-after pointwise ``prediction interval'' for the response,
	\begin{equation}\label{pred:int:resp}
		\mathscr Y_{\bm{\textsf x}}\in\left[
		{\bm{\textsf x}}^\top\widehat\beta\mp\mathfrak t_{n-p-1,1-\delta/2}\widehat\sigma\sqrt{1+{\bm{\textsf x}}^\top\mathfrak s{\bm{\textsf x}}}
		\right]
		\,\,{\rm with}\,\,{\rm prob.}\,\,1-\delta,
	\end{equation}
	with this new terminology being adopted because
	the random variable $\mathscr Y_{\bm{\textsf x}}$ is {\em not} a pararameter, so this fails to be a confidence interval in the ordinary sense. In any case,  
	upon comparison with (\ref{int:conf:mresp}) we see that when passing from the mean response 
	to the response 
	itself, the point estimate ${\bm{\textsf x}}^\top\widehat\beta$ remains the same but the dispersion gets expanded by a factor that makes it at least as large as $\mathfrak t_{n-p-1,1-\delta/2}\widehat\sigma$, a lower bound  which depends on the already observed data $({\bf x},{\bf y})$ but {\em not} on the future observation $\bm{\textsf x}$ for the regressor. 
	Following \cite[Theorem 1]{carlstein1986simultaneous} and \cite[Theorem 1]{sadooghi1990simultaneous}, we may also contemplate a ``simultaneous'' version of (\ref{pred:int:resp}), which is obtained by means of an easy generalization of Scheff\'e's argument leading to (\ref{scheffe}). Indeed,  
	\[
	b=
	\left(
	\begin{array}{c}
		\widehat\beta-\beta\\
		{\bm{\textsf e}}	
	\end{array}
	\right)
	\sim
	\mathcal N
	\left(\vec{0},
	\left(
	\begin{array}{cc}
		\sigma^2\mathfrak s & 0\\
		0 & \sigma^2
	\end{array}
	\right)
	\right)
	\]		
	and 
	\[
	\overline{\mathfrak s}=\left(
	\begin{array}{cc}
		\mathfrak s & 0\\
		0 & 1
	\end{array}
	\right)
	\]
	are such that
	\[
	\sigma^{-2}b^\top{\overline{\mathfrak s}}b
	=  \sigma^{-2}(\widehat\beta-\beta)^\top{\mathfrak s}(\widehat\beta-\beta)+\sigma^{-2}\|\bm{\textsf e}\|^2
	\sim  \chi^2_{p+2},
	\]
	so that
	\[
	\frac{b^\top{\overline{\mathfrak s}}b}{(p+2)\widehat\sigma^2}\sim{\bm{\textsf F}}_{p+2,n-p-1},
	\]
	and hence
	\begin{eqnarray*}
		1-\delta
		&=&
		P\left(\frac{b^\top{\overline{\mathfrak s}}b}{(p+2)\widehat\sigma^2}\leq
		\bm{\textsf f}_{p+2,n-p-1,1-\delta}
		\right)
		\\
		&=&
		P\left(
		\frac{\|\overline{\mathfrak n}\|}{\widehat\sigma/\sigma}
		\leq \sqrt{(p+2)\bm{\textsf f}_{p+2,n-p-1,1-\delta}}
		\right),
	\end{eqnarray*}
	where $\overline{\mathfrak n}=\sigma^{-1}(\overline{\mathfrak p}^\top)^{-1}b$ with $\overline{\mathfrak p}$ satisfying $\overline{\mathfrak p}^\top\overline{\mathfrak p}=\overline{\mathfrak s}$; cf.\! the corresponding unbarred objects in Example \ref{conf:ref:beta}. 
	Thus, again using Cauchy-Schwarz,
	\begin{eqnarray*}
		1-\delta
		& = & 
		P\left(
		\sup_{{\bm{\textsf z}}\in\bm{\textsf R}^{p+1}}
		\frac{|(\overline{\mathfrak p}{\bm{\textsf z}})^\top\overline{\mathfrak n}|}{(\widehat\sigma/\sigma)\sqrt{(\overline{\mathfrak p}{\bm{\textsf z}})^\top\overline{\mathfrak p}{\bm{\textsf z}}}}
		\leq 
		\sqrt{(p+2) \bm{\textsf f}_{p+2,n-p-1,1-\delta}}
		\right)\\
		& = & 
		P\left(
		\sup_{{\bm{\textsf z}}\in\bm{\textsf R}^{p+1}}\frac
		{|{\bm{\textsf z}}^\top b|}{\widehat\sigma\sqrt{{\bm{\textsf z}}^\top\overline{\mathfrak s}{\bm{\textsf z}}}}
		\leq 
		\sqrt{(p+2) \bm{\textsf f}_{p+2,n-p-1,1-\delta}},
		\right),
	\end{eqnarray*}
	where $\bm{\textsf R}^{p+1}=\bm{\textsf R}^{p}\times\mathbb R$, so if we choose ${\bm{\textsf z}}=({\bm{\textsf x}},-1)$, where ${\bm{\textsf x}}\in \bm{\textsf R}^{p}$ is arbitrary, we get 
	$
	{\bm{\textsf z}}^\top b={\bm{\textsf x}}^\top\widehat\beta-\mathscr Y_{\bm{\textsf x}}
	$
	and ${\bm{\textsf z}}^\top\overline{\mathfrak s}{\bm{\textsf z}}=1+{\bm{\textsf x}}^\top{\mathfrak s}{\bm{\textsf x}}$,
	which finally gives
	\[
	\mathscr Y_{\bm{\textsf x}}
	\in\left[
	{\bm{\textsf x}}^\top\widehat\beta\mp
	\sqrt{(p+2) \bm{\textsf f}_{p+2,n-p-1,1-\delta}}\,\widehat\sigma\sqrt{1+{\bm{\textsf x}}^\top\mathfrak s{\bm{\textsf x}}}
	\right] \forall {\bm{\textsf x}}\in\bm{\textsf R}^p
	\,\,{\rm with}\,\,{\rm prob.}\,\,1-\delta
	\]
	as a simultaneous prediction band for the response. 
	\qed
\end{example}

\begin{remark}\label{coeff:det}(Coefficient of determination and goodness of fit)
	As illustrated in Figure~\ref{figg}, where the residual vector $\widehat{\mathbf e}$ lies orthogonally to $C(\mathfrak x)$, 
	the residual sum of squares
	\[
	SS_{\mathrm{Res}} := \|\widehat{\mathbf e}\|^2 = \sum_j (\mathbf Y_j - \widehat{\mathbf Y}_j)^2
	\]
	measures the amount of variation in $\mathbf Y$ that remains unexplained by the linear regression model 
	(whose fitted values form the vector $\widehat{\mathbf y} \in C(\mathfrak x)$). 
	Since, by Remark \ref{geom:mls} and Proposition \ref{rss:dist}, 
	\[
	({\mathbf Y}-\widehat{\mathbf Y})^\top(\widehat{\mathbf Y}-\overline{\mathbf Y}{\mathbf 1})=\widehat{\mathbf e}^\top{\widehat{\mathbf Y}}-\widehat{\mathbf e}^\top{\overline{\mathbf Y}}{\mathbf 1}=-\widehat{\mathbf Y}\sum_j\widehat{\mathbf e}_j=0, 
	\]
	$SS_{\mathrm{Res}}$ also appears in the {\em orthogonal} decomposition
	\begin{equation}\label{dec:ss}
		S_{\mathbf Y \mathbf Y} = SS_{\mathrm{Reg}} + SS_{\mathrm{Res}},
	\end{equation}
	where
	\[
	SS_{\mathrm{Reg}} = \sum_j (\widehat{\mathbf Y}_j - \overline{\mathbf Y})^2,
	\qquad
	S_{\mathbf Y \mathbf Y} = \sum_j (\mathbf Y_j - \overline{\mathbf Y})^2
	\]
	represent, respectively, the variation explained by the model and the total variation of $\mathbf Y$ about its mean 
	$\overline{\mathbf Y}$ (notation consistent with~\eqref{simp:lin:reg:3}).  
	Since $SS_{\mathrm{Res}} \le S_{\mathbf Y \mathbf Y}$, it is natural to define the 
	\emph{coefficient of determination}
	\begin{equation}\label{def:coeff:det}
		R^2 = \frac{SS_{\mathrm{Reg}}}{S_{\mathbf Y \mathbf Y}}
		= 1 - \frac{SS_{\mathrm{Res}}}{S_{\mathbf Y \mathbf Y}},
	\end{equation}
	as a measure of the model’s \emph{goodness of fit}: the closer $R^2$ is to its maximal value $1$, 
	the better the model accounts for the observed variation.  
		A simple justification of this interpretation may be given in the case of simple linear regression 
	(Example~\ref{simp:lin:reg}). Indeed,
	\begin{align*}
		\widehat{\mathbf e}
		&= \mathbf Y - (\widehat\beta_0 \mathbf 1 + \widehat\beta_1 \mathbf X) \\
		&= \mathbf Y - \big((\overline{\mathbf Y} - \widehat\beta_1 \overline{\mathbf X}) \mathbf 1 
		+ \widehat\beta_1 \mathbf X \big) \\
		&= \mathbf Y - \overline{\mathbf Y} \mathbf 1 
		- \widehat\beta_1 (\mathbf X - \overline{\mathbf X} \mathbf 1) \\
		&= \mathbf Y - \overline{\mathbf Y} \mathbf 1 
		- \frac{S_{\mathbf X \mathbf Y}}{S_{\mathbf X \mathbf X}} 
		(\mathbf X - \overline{\mathbf X} \mathbf 1),
	\end{align*}
	which yields
	\[
	SS_{\mathrm{Res}} 
	= \frac{S_{\mathbf X \mathbf X} S_{\mathbf Y \mathbf Y} - S_{\mathbf X \mathbf Y}^2}
	{S_{\mathbf X \mathbf X}}.
	\]
	Eliminating $SS_{\mathrm{Res}}$ from this expression and~\eqref{def:coeff:det} gives
	\[
	R^2 = \widehat\rho^2,
	\]
	where
	\[
	\widehat\rho = 
	\frac{S_{\mathbf X \mathbf Y}}
	{\sqrt{S_{\mathbf X \mathbf X} S_{\mathbf Y \mathbf Y}}}
	\]
	is the \emph{sample correlation coefficient} of the pair 
	$(\mathbf X, \mathbf Y)$ underlying the model (cf.~\eqref{samp:corr:def} and Example~\ref{mle:imp:lsm}).  
	It should also be mentioned that
	when comparing models with different numbers of regressors, it is often preferable to adjust for 
	the associated degrees of freedom, leading to the \emph{adjusted coefficient of determination}
	\[
	R^2_{\mathrm{adj}}
	= 1 - \frac{SS_{\mathrm{Res}}/(n-p-1)}{S_{\mathbf Y \mathbf Y}/(n-1)}
	= 1 - \frac{n-1}{n-p-1}(1 - R^2),
	\]
	a statistic that penalizes model complexity and provides a fairer basis for comparison.  
	It should be emphasized, however, that relying solely on $R^2$ or $R^2_{\mathrm{adj}}$ as measures of fit 
	can be misleading, since no confidence levels are inherently attached to them (though this can be remedied; see \cite[Section~10.5]{rencher2008linear}). Consequently, they are treated here as mere point statistics which are best used
	in conjunction with confidence intervals 
	for the parameters (as in~\eqref{conf:int:lsm:2}) and with the formal hypothesis-testing framework 
	developed in Section~\ref{sec:hyp:test}. As an illustration of this latter procedure, Example~\ref{ex:f:reg} below presents the classical $\mathsf F$-test 
	which 
	provides a much more rigorous assessment 
	of the overall statistical significance of the fitted model by
	checking whether the observed ratio
	\[
	\frac{SS_{\mathrm{Reg}}/p}{SS_{\mathrm{Res}}/(n-p-1)}
	= \frac{n-p-1}{p} \,
	\frac{SS_{\mathrm{Reg}}/S_{\mathbf Y \mathbf Y}}
	{SS_{\mathrm{Res}}/S_{\mathbf Y \mathbf Y}}
	\]
	exceeds the corresponding critical value.\qed
\end{remark}

\begin{remark}\!\!$\bigstar$\label{reg:to:m:2}
	(Regression to the mean, again)
	Using the notation of Example \ref{simp:lin:reg} and Remark \ref{coeff:det}, we see from (\ref{simp:lin:reg:2}) that the slope of the regression line
	is
	\begin{equation}\label{reg:to:m:3}
		\widehat\beta_1=\frac{\sqrt{S_{{\bf y}{\bf y}}}}{\sqrt{S_{{\bf x}{\bf x}}}}\widehat\rho,
	\end{equation}
	so that
	\[
	\widehat\beta_0=\overline{\bf y}-\widehat\rho\frac{\sqrt{S_{{\bf y}{\bf y}}}}{\sqrt{S_{{\bf x}{\bf x}}}}\overline{\bf x},
	\]
	and from (\ref{fit:vec:r}) we deduce that the fitted value is
	\[
	\widehat{\bf y}=\overline{\bf y}-
	\widehat\rho\frac{\sqrt{S_{{\bf y}{\bf y}}}}{\sqrt{S_{{\bf x}{\bf x}}}}({\bf x}-\overline{\bf x}).
	\]
	Equivalently,
	\begin{equation}\label{reg:to:m:4}
		\frac{\widehat{\bf y}-\overline{\bf y}}{{\sqrt{S_{{\bf y}{\bf y}}}}}=\widehat\rho\frac{{\bf x}-\overline{\bf x}}{{\sqrt{S_{{\bf x}{\bf x}}}}},
	\end{equation}
	and we conclude that, unless the sampling informs us that ${\bf x}$ and ${\bf y}$ are perfectly correlated ($|\widehat\rho|=1$), we should regard the appropriated standardization of  $\widehat{{\bf y}}$ as being strictly smaller (in absolute value) than the standardization of ${\bf x}$, a circumstance which certainly indicates a ``regression to the mean''; compare with Remark \ref{reg:to:m:1}. \qed
\end{remark}

\begin{remark}\label{cr:rao:app:lsm} ($\widehat\beta$ as a best unbiased estimator)
	It follows from (\ref{log:beta}) that the MLE for the parameter $\theta=(\beta,\sigma^2) $ of the regression linear model under error normality is $\widehat\theta=(\widehat\beta,\widehat{\sigma}^{2}_\bullet)$, where $\widehat\beta$ is the usual OLS estimator for $\beta$ in (\ref{mls:form:mul}), and 
	$\widehat{\sigma}^{2}_\bullet=\|\widehat{\bf e}\|^2/n$ with $\widehat{\bf e}={\bf Y}-\widehat{{\bf Y}}$
	being the residual as in (\ref{def:res:vec}), so that (\ref{g:m:prep:3}), (\ref{exp:var:res}) and the independence of $\{\widehat\beta,\widehat\sigma^2_\bullet\}$ lead to
	\[
	{\mathbb C}(\widehat\theta)=
	\left(
	\begin{array}{cc}
		{\mathbb C}(\widehat\beta) & 0\\
		0 & {\mathbb V}(\widehat\sigma^2_\bullet)
	\end{array}
	\right)
	=
	\left(
	\begin{array}{cc}
		\sigma^2 ({{\mathfrak x}}^\top{{\mathfrak x}})^{-1} & 0 \\
		0 & 2(n-p-1)\sigma^4/n^2
	\end{array}
	\right),
	\] 
	where we used that $\widehat\sigma^2_\bullet=(n-p-1)\widehat\sigma^2/n$ with $\widehat\sigma$ as in (\ref{pivot:eq:2}). 
	Also, again from (\ref{log:beta}) we easily compute
	\[
	l_{\beta\beta}=-\frac{\mathfrak x^\perp\mathfrak x}{\sigma^2}, \quad 
	l_{\sigma^2\sigma^2}=\frac{n}{2\sigma^4}-\frac{\|{\bf y}-\mathfrak x\beta\|^2}{\sigma^6}, \quad l_{\beta\sigma^2}=\frac{\mathfrak x^\perp({\bf y}-\mathfrak x\beta)}{\sigma^2},
	\]
so it follows from	$\sigma^{-2}({\bf y}-\mathfrak x\beta)\sim\mathcal N(\vec{0}, {\rm Id}_n)$, (\ref{fischer}) and Corollary \ref{sum:norm:sq}
that	
	\begin{equation}\label{fisher:mls}
		\mathscr F(\theta)=
		\left(
		\begin{array}{cc}
			\mathscr F(\beta) & 0 \\
			0 & \mathscr F(\sigma^2)
		\end{array}
		\right)
		=
		\left(
		\begin{array}{cc}
			\sigma^{-2} {{\mathfrak x}}^\top{{\mathfrak x}} & 0 \\
			0 & n/2\sigma^4
		\end{array}
		\right).
	\end{equation}
	Regarding this analysis, we observe that:
	\begin{itemize}
		\item Since the Cram\'er-Rao lower bound is attained for $\widehat\beta$,  Corollary \ref{cr:rao:th:cor} implies that it is the best {\em unbiased} estimator for $\beta$ (under error normality), which should be compared with Theorem \ref{gauss:markov} (Gauss-Markov), where normality is relaxed to (\ref{g:m:prep:2}) but the competing unbiased estimators are required to be linear.
\item We have 
\[
{\mathbb V}(\widehat\sigma^2_\bullet)=2\frac{n-p-1}{n^2}\sigma^4<\frac{2\sigma^4}{n}=\mathscr F(\sigma^2)^{-1},
\]
 a clear violation of the Cram\'er-Rao lower bound (\ref{cr:rao:th:3}), but of course this poses no contradiction to Theorem \ref{cr:rao:th} because $\widehat\sigma^2_\bullet$ is {\em not} unbiased.
 \item 	The unbiased estimator $\widehat\sigma^2$ for $\sigma^2$ satisfies 
 ${\mathbb V}(\widehat\sigma^2)=2\sigma^4/(n-p-1)>\mathscr F(\sigma^2)^{-1}$, corresponding to a strict inequality in (\ref{cr:rao:th:3}). 
 \item  If we view $\widehat\sigma^2_\bullet\in\mathcal E_g$, where 
 \[
 g(\sigma^2)=\mathbb E(\widehat\sigma^2_\bullet)=\frac{n-p-1}{n}\sigma^2,
 \]	
 then the strict inequality in (\ref{cr:rao:th:3:1}) holds.
		\end{itemize}
Therefore, both $\widehat\sigma^2$ and $\widehat\sigma^2_\bullet$ fail to realize the best estimator for $\sigma^2$ in their natural classes, which suggests the existence of another estimator with a better performance in each case. \qed		
\end{remark}

\begin{example}\label{aic:ols}
	(AIC for the normal linear model)
Starting from (\ref{aic:form:f}) we may now compute the AIC for the linear model discussed in Remark \ref{cr:rao:app:lsm}. Recalling that the ML estimator is  $\widehat\theta=(\widehat\beta,\widehat\sigma^2_{\bullet})$, we obtain 
\[
\begin{aligned}
	{\rm AIC}(\mathfrak X)
	&= -2\left(
	-\frac{n}{2}\ln(2\pi\widehat\sigma^2_{\bullet})
	-\frac{1}{2\widehat\sigma^2_{\bullet}}
	\|{\bf y}-\mathfrak x\widehat\beta\|^2
	\right) + 2(p+2) \\
	&= -2\left(
	-\frac{n}{2}\ln(\widehat\sigma^2_{\bullet})
	-\frac{n}{2}\ln(2\pi)
	-\frac{n}{2}
	\right) + 2(p+2),
\end{aligned}
\]	
and therefore
\[
	{\rm AIC}(\mathfrak X)=n\ln(\widehat\sigma^2_{\bullet})+ 
	2(p+2)+n\ln(2\pi)+n.
\]
Note that the last two terms do not depend on the model (only on the observed data) and  may be dropped for comparison purposes, as it is customary in model selection \cite{konishi2008information,hastie2009elements,burnham2013model}. Regarding this computation, we add two further remarks:
\begin{itemize}
\item In this normal linear model, the maximized log-likelihood term in the AIC represents, up to an additive constant, the Kullback–Leibler divergence $D^{KL}_{\widetilde\theta_{\mathfrak X}}(\widehat\theta)$ between the empirical distribution $\widetilde\theta_{\mathfrak X}$ of the sample and the fitted model $\widehat\theta$. By contrast, the penalty term $2(p+2)$ is independent of the observed sample and arises as an asymptotic correction accounting for the gap between empirical and population Kullback–Leibler risks induced by the estimation of  $p+2$ parameters; see Remark \ref{kl:fisher:aic} for a geometric interpretation of this phenomenon. 
\item Once the linear model, and hence the number of parameters, is fixed, the normal likelihood induces no further notion of goodness of fit beyond the residual sum of squares: up to additive constants, the maximized log-likelihood, and hence the first term of the AIC, depends on the data solely through $\widehat\sigma^2_{\bullet}$. This connects the information-theoretic interpretation of AIC with classical goodness-of-fit diagnostics: in the normal linear model, all likelihood-based criteria reduce, up to additive and penalization terms, to functions of the residual sum of squares, reflecting its role as a sufficient statistic for model adequacy; cf. Remark \ref{coeff:det}.
\end{itemize}
For a non-asymptotic alternative to information-criterion–based model selection, grounded in concentration inequalities as introduced in Section~\ref{conc:ineq:appl}, see~\cite{massart2007concentration}. \qed
	\end{example}

\begin{remark}\label{large:lsm}
	(Asymptotic normality of the OLS estimator)
	The ``small sample'' computations leading to the confidence interval estimates (\ref{conf:int:lsm:1}) and (\ref{conf:int:lsm:2})  rely heavily on the normality of the error and should be compared to the corresponding ``small sample'' estimates for the population mean $\mu$ in (\ref{conf:int:ls}) and (\ref{int:conf:tt}), respectively. Similarly to what occurred there, under the more general assumptions in (\ref{g:m:prep:2}) we must resort to the fundamental limit theorems in Section \ref{fund:lim} in order to establish the asymptotic normality of the LSM estimator $\widehat\beta$ from which ``large sample'' estimates should be retrieved. We use the  assumptions underlying the linear regression model in Example \ref{mle:imp:lsm} and conveniently decompose the Gram matrix as 
	\[
	\mathfrak X^\top\mathfrak X=\sum_{j=1}^n\mathfrak X_j^\top\mathfrak X_j,
	\] 
	with a similar expression holding for $\mathfrak X^\top{\bf e}$.
	Thus, 
	\[
	\sqrt{n}\left(\widehat\beta-\beta\right)=
	\left(\frac{1}{n}\sum_{j=1}^n\mathfrak X_j^\top\mathfrak X_j\right)^{-1}\sqrt{n}\left(\frac{1}{n}\sum_{j=1}^n\mathfrak X_j^\top{\bf e}_j\right).
	\]
	From LLN (Theorem \ref{lln}) we know that 
	\[
	\frac{1}{n}\sum_{j=1}^n\mathfrak X_j^\top\mathfrak X_j\stackrel{p}{\to} {\mathfrak C}:=\mathbb E(\mathfrak X_j^\top\mathfrak X_j),
	\]
	a symmetric and positive definite $(p+1)\times (p+1)$ random matrix. 
	Also, since $\mathbb E(\mathfrak X_j^\top{\bf e})=\vec{0}$ by (\ref{uncorr:e})
	and 
	\begin{eqnarray*}
		{\mathbb C}(\mathfrak X_j^\top{\bf e}_j) 
		& \stackrel{(\ref{total:var})}{=} & \mathbb E({\mathbb C}(\mathfrak X_j^\top{\bf e}_j|\mathfrak X))+{\mathbb C}(\mathbb E(\mathfrak X^\top_j{\bf e}_j|\mathfrak X))\\
		& = & \mathbb E(\mathfrak X^\top_j{\mathbb C}({\bf e}_j|\mathfrak X)\mathfrak X_j)\\
		& \stackrel{(\ref{homo:p})}{=} & \sigma^2\mathfrak C, 
	\end{eqnarray*}
	we may use CLT (Theorem \ref{clt}) to check that 
	\[
	\sqrt{n}\left(\frac{1}{n}\sum_{j=1}^n\mathfrak X_j^\top{\bf e}_j\right)\stackrel{d}{\to}\mathcal N\left(\vec{0},\sigma^2\mathfrak C\right).
	\]
	Combining these calculations with 
	Theorem \ref{slutsky} we conclude that, as $n\to+\infty$ and $p$ is held fixed, 
	\begin{equation}\label{tcl:lr}
		\sqrt{n}\left(\widehat\beta-\beta\right)\stackrel{d}{\to}\mathcal N\left(\vec{0},
		\sigma^2{\mathfrak C}^{-1}\right),
	\end{equation}
	so that $\widehat\beta$ is asymptotically normal (and hence consistent) with asymptotic covariance
	$\sigma^2{\mathfrak C}^{-1}$, which should be reliably estimated in order to obtain the desired confidence regions. We refer to \cite{amemiya1985advanced,hayashi2011econometrics} for full accounts of the estimation theory of the  linear regression model, including the justification of our somewhat sloppy use of LLN and CLT above (recall that the conditional distributions across observations in a linear model are independent but not identically distributed as they depend on the observed covariates). \qed
\end{remark}

\begin{remark}\label{mls:a:n:p}(Asymptotic normality for the linear regression model under error normality)
	Using the results of Remark \ref{cr:rao:app:lsm}, notably the computation of the corresponding Fisher information matrix in (\ref{fisher:mls}), we find that the
	ML estimator $(\widehat\beta,\widehat\sigma^2_\bullet)$ for the parameter $(\beta,\sigma^2)$ in  
	the linear regression model (under error normality) satisfies, as $n\to+\infty$ and $p$ is held fixed,  
	\[
	\sqrt{n}
	\left(
	\left(
	\begin{array}{c}
		\widehat\beta\\
		\widehat\sigma^2_\bullet	
	\end{array}
	\right)-
	\left(
	\begin{array}{c}
		\beta\\
		\sigma^2	
	\end{array}
	\right)
	\right)
	\stackrel{d}{\to}
	\mathcal N
	\left(
	\left(
	\begin{array}{c}
		{0}\\
		0	
	\end{array}
	\right),
	\left(
	\begin{array}{cc}
		\sigma^{2}	\mathfrak s_\bullet & 0 \\ 
		0 & 	2\sigma^4	
	\end{array}
	\right)
	\right),
	\]
	where
	\begin{equation}\label{mle:lim}
		\mathfrak s_\bullet=\left(\lim_{n\to+\infty}\frac{\mathfrak x^\top\mathfrak x}{n}\right)^{-1}.
		\end{equation}
	which establishes its asymptotic normality; here we use the generalization of Theorem \ref{asym:cr} described in Remark \ref{asym:cr:gen}.
	In particular, we have the asymptotic relations 
		\[
	\widehat\beta\approx \mathcal N\left(\beta,\frac{\sigma^2\mathfrak s_\bullet}{n}\right)\,\,{\rm and}\,\,\,
	\widehat\sigma^2_\bullet\approx \mathcal N\left(
	\sigma^2, \frac{2\sigma^4}{n}
	\right).
	\]
%	\begin{equation}\label{tcl:lr:n}
%		\sqrt{n}(\widehat\beta-\beta)\stackrel{d}{\to}\mathcal N(\vec{0},\sigma^2\mathfrak s_\bullet)\,{\rm and}\,\,
%		\sqrt{n}(\widehat\sigma^2_\bullet-\sigma^2)\stackrel{d}{\to}\mathcal N(0,2\sigma^4).
%	\end{equation}
If, as usual, we appeal to consistency, then the first one yields
\begin{equation}\label{tcl:lr:n}
	\beta_j\in\left[\widehat\beta_j\mp z_{1-\delta/2}
	\frac{\widehat{\bf s}_{\bullet j}}{\sqrt{n}}\right]
	\,\,{\rm with}\,\,{\rm prob.}\,\,\approx 1-\delta, \quad \widehat{\bf s}_{\bullet j}=\widehat\sigma_\bullet\sqrt{\mathfrak s_{\bullet jj}},\quad 
	\widehat\sigma_\bullet=\frac{\|\widehat{\bf e}\|}{\sqrt{n}},
	\end{equation}
	which aligns with (\ref{conf:int:lsm:1}) because $\widehat{\bf s}_{\bullet j}/\sqrt{n}\approx {\bf s}_j$ by (\ref{mle:lim}) and Proposition \ref{proc:conf:int:mls},
	while the second one gives
	\[
	\sigma^2\in\left[\widehat\sigma^2_\bullet\mp z_{1-\delta/2}
	\sqrt{\frac{2}{n}}
	{\widehat\sigma_\bullet^2}\right]\,\,{\rm with}\,\,{\rm prob.}\,\,\approx 1-\delta,
	\]
	a large sample confidence interval for $\sigma^2$.
	  \qed
\end{remark}

\begin{example}\!\!$\bigstar$\label{super:learn}(Supervised Statistical Learning as a generalization of regression)
	Let $X\in\mathbb R^p$ be a random vector of covariates and $Y\in\mathcal Y\subset\mathbb R$ a response variable and  
	assume that $(X,Y)$ follows an unknown distribution $P_{(X,Y)}$ on $\mathbb R^p\times\mathcal Y$, and that we observe i.i.d.\ samples $(X_1,Y_1),\dots,(X_n,Y_n)$ drawn from $P_{(X,Y)}$. 
	Given a loss function $\ell:\mathcal A\times\mathcal Y\to\mathbb R$ and a model (or hypothesis) class $\mathcal F$ of measurable predictors $f:\mathbb R^p\to\mathcal A$, where $\mathcal A\subset \mathbb R$ denotes the prediction (or action) space, one seeks to minimize the {\em population risk}
	\[
	R(f)=\mathbb E\left( \ell(f(X),Y)\right),
	\]
	giving rise to the ``within $\mathcal F$'' optimal predictor
	\[
	f^*_{\mathcal F}={\rm argmin}_{f\in\mathcal F}R(f).
	\]
	Since $P_{(X,Y)}$ is unknown, a direct assessment of $R(f^*_{\mathcal F})$ is out of the question. We therefore proceed in the usual way: we replace $R(f)$ by the {\em empirical risk} 
	\[
	\widehat R_n(f)=\frac1n\sum_{i=1}^n \ell(f(X_i),Y_i),
	\]
	and consider empirical risk minimization over $\mathcal F$:
	\begin{equation}\label{emp:risk:0}
	\widehat f_n:={\rm argmin}_{f\in\mathcal F}\widehat R_n(f).
	\end{equation}
	The fundamental question is to determine conditions ensuring that the empirical minimizer $\widehat f_n$ is {\em consistent} in the sense that, as $n\to+\infty$,
	\begin{equation}\label{emp:risk}
	R(\widehat f_n)\to R(f^*_{\mathcal F}),
	\end{equation}
	say in probability.
	This abstract formulation encompasses the two main paradigms of Supervised Learning:
	\begin{itemize}
		\item \textbf{Binary classification.} 
		If $\mathcal Y=\{-1,1\}$ and $\ell(f(x),y)=\mathbf 1_{\{f(x)\neq y\}}$, 
		the risk reduces to the misclassification probability 
		\[
		R(f)=P(f(X)\neq Y),
		\]
		with the optimal predictor being the {\em Bayes classifier} 
		\[
		f^*(x)=\operatorname{sign}(\mathbb E(Y|_{X=x}));
		\]
		see \cite{devroye1996,vapnik1998statistical}.
		
		\item \textbf{Regression.} 
		If $\mathcal Y\subset\mathbb R$ and $\ell(f(x),y)=(y-f(x))^2$, 
		the risk becomes the mean squared error, which, as we have seen in 
		Remark \ref{reg:func:choice},
		is
		 minimized by the regression function 
		\[
		f^*(x)=\mathbb E(Y|_{X=x}).
		\]
	\end{itemize}
	Here, $f^*=f^*_{\mathcal F_{\rm all}}$, where ${\mathcal F_{\rm all}}$ is the set of {\em all} measurable $f:\mathbb R^p\to \mathcal Y$\footnote{Thus, it might well happen that $R(f^*)< R(f^*_{\mathcal F})$ for a given $\mathcal F\subsetneq \mathcal F_{\rm all}$; for more on this ``approximation error'', see Remark \ref{bias:var:sl}.}.
	In order to investigate the ``within $\mathcal F$'' consistency formulated in (\ref{emp:risk}), we consider the decomposition
	\[
	0\leq R(\widehat f_n)-R(f^*_{\mathcal F})
	=
	\bigl(R(\widehat f_n)-\widehat R_n(\widehat f_n)\bigr)
	+
	\bigl(\widehat R_n(\widehat f_n)-\widehat R_n(f^*_{\mathcal F})\bigr)
	+
	\bigl(\widehat R_n(f^*_{\mathcal F})-R(f^*_{\mathcal F})\bigr),
	\]
	and note that the second term is non-positive by~(\ref{emp:risk:0}), while the last term is $o(1)$ with high probability by LLN (Theorem~\ref{lln}) and the fact that
	\[
	\mathbb E\bigl(\ell(f_{\mathcal F}^*(X_i),Y_i)\bigr)=R(f^*_{\mathcal F}),
	\]
	which also shows that $\widehat R_n(f^*_{\mathcal F})$ is an unbiased estimator of the population quantity $R(f^*_{\mathcal F})$, since $f^*_{\mathcal F}$ is non-random\footnote{If $f\in\mathcal F$ is such that $\ell(f(X),Y)\in {\bf{\mathsf{SubG}}}(\sigma)$ as in Definition \ref{sub:g:def:0} then the Hoeffding-type concentration inequality in (\ref{rade:g}) yields the high probability bound
		\begin{equation}\label{vc:bound:0}
		P\left(\left|\widehat R_n(f)-R(f)\right|\leq 
		\sigma\sqrt{\frac{2\ln(2/\delta)}{n}}\right)\geq 1-\delta,
		\end{equation}	
	which clearly applies to the classification setting above (with $f=f^*_{\mathcal F}$), where we can take $\sigma=1$ by Proposition \ref{subg:bounded}.
	}.
	On the other hand, since $\widehat f_n$ is data-dependent, LLN cannot be applied directly to the first term, and we are  thus left with the rather crude bound 
	\[
	R(\widehat f_n)-\widehat R_n(\widehat f_n)
	\le
	\sup_{f\in\mathcal F}\bigl|R(f)-\widehat R_n(f)\bigr|.
	\]
	This analysis makes it clear that
	the central difficulty in establishing (\ref{emp:risk}) stems from the fact that, although $R_n(f)\stackrel{p}{\to} R(f)$ for each fixed $f\in\mathcal F$ by LLN, consistency of empirical risk minimization requires a \emph{uniform} law of large numbers over the model class $\mathcal F$. 
	In the classification setting, this requirement is met by Vapnik--Chervonenkis (VC) theory: if $\mathcal F$ has finite VC dimension $h_{\rm VC}$ then
	\begin{equation}\label{vc:bound}
	P\left(\sup_{f\in\mathcal F}|\widehat R_n(f)-R(f)|
	\lesssim 
	\sqrt{\frac{h_{\rm VC}\ln n+\ln(1/\delta)}{n}}\right)\geq 1-\delta,
	\end{equation}
	thus ensuring consistency; see \cite{vapnik1998statistical, bousquet2003introduction,mohri2018foundations,wainwright2019high}. 
Thus, when passing from the pointwise bound in (\ref{vc:bound:0}) to the uniform bound in (\ref{vc:bound}), one gains an extra term involving $h_{\rm VC}$, which reflects the complexity of the model class $\mathcal F$.
	However, VC theory is intrinsically combinatorial and adapted to indicator-valued losses: in many other instances of Supervised Learning 
	(such as regression, classification with convex surrogate losses, and contemporary high-dimensional models such as neural networks) the model classes are real-valued and often heavily overparameterized, so that combinatorial shattering dimensions no longer capture the geometric structure governing generalization. For example, for linear regression with model function $f_\beta(x)=x^\top\beta$ in dimension $p+1$ as in Example \ref{mle:imp:lsm}, the combinatorial measure of complexity corresponding to $h_{\rm VC}$ is known as {\em pseudo-dimension}, which equals $p+1$; this  yields uniform deviation bounds of order $\sqrt{p/n}$ under suitable boundedness assumptions, thus recovering the already explored classical requirement that the sample size should substantially dominate the number of parameters ($p\ll n$). In particular, this bound becomes uninformative in the high dimensional setting of Subsection \ref{ridge} below, where $p\gg n$.
	Consequently, more refined complexity measures (covering numbers, Rademacher complexities, among others) have been developed to extend uniform convergence beyond the classical VC framework (see \cite{anthony2009neural,bartlett2021deep}).
	More recently, the need to understand the behavior of highly overparameterized models, particularly deep neural networks, has led to the emergence of a broader mathematical perspective combining ideas from approximation theory, high-dimensional probability, optimization, and statistical learning theory; see \cite{berner2023modern,kutyniok2022ai} for surveys on the emerging mathematical foundations of deep learning and artificial intelligence.
	\qed
\end{example}

\begin{remark}\!\!$\bigstar$\label{bias:var:sl}(Bias-variance trade-off in Statistical Learning)
	Using the notation of Example \ref{super:learn}, we may decompose the excess risk over $\mathcal F_{\rm all}$ as
	\[
	R(\widehat f_n)- R(f^*)=
	\left(	R(f^*_{\mathcal F})-R(f^*)\right)
	+	\left(R(\widehat f_n)- R(f^*_{\mathcal F})\right),
	\]
	where the last term, the {\em estimation error}, was briefly analyzed there (at least in the classification case) and reflects the variability due to the random sampling process. 
	On the other hand, the first term
	is the {\em approximation error}, as it quantifies the loss incurred by restricting the search to $\mathcal F$, and vanishes only if $f^*\in\mathcal F$.
	This decomposition provides an integrated (risk-based) formulation of the classical {\em bias-variance trade-off} from regression theory. 
	Indeed, in the general regression setting with squared loss of Example \ref{mle:imp:lsm:0} one has, for any $f\in{\mathcal F_{\rm all}}$,
	\[
	R(f)-R(f^*)= 
	\mathbb E\left((Y-f(X))^2\right)-
		\mathbb E\left((Y-f^*(X))^2\right)
	= \mathbb{E}_{P_X}\left((f(X)-f^*(X))^2\right),
	\]
where the last identity follows from the standard orthogonality argument in the population counterpart of Remark \ref{reg:func:choice}, reflecting the fact that $f^*=f^*_{\mathcal F_{\rm all}}$.
	Consequently,
	\begin{equation}\label{app:term}
	R(f^*_{\mathcal F})-R(f^*)=\mathbb{E}_{P_X}\left((f^*_{\mathcal F}(X)-f^*(X))^2\right)=\|f^*_{\mathcal F}-f^*\|_{L^2(P_X)}^2,
	\end{equation}
	which corresponds to the squared bias induced by restricting the regression model to $\mathcal F$ and may be symbolically represented as $\mathbb{E}_{P_X}(\mathrm{bias}(X)^2)$ in order to emphasize the comparison with the classical pointwise formulation.
	On the other hand,
	applying the same orthogonality argument within
	 $\mathcal F$ we get
	\[
	R(\widehat f_n)-R(f^*_{\mathcal F})=\mathbb{E}_{P_X}\left((\widehat f_n(X)-f^*_{\mathcal F}(X))^2\right),
	\]
	but notice that, differently from (\ref{app:term}), which is purely deterministic, this term is data-dependent, so if we 
	take a further expectation with respect to the sample, we end up with
	\[
	\mathbb{E}\left(R(\widehat f_n)-R(f^*_{\mathcal F})\right)
	=\mathbb{E}_{P_X}\left(\mathbb V(\widehat f_n(X))\right),
	\]
	which corresponds to the variance term. 
	Equivalently, by conditioning these relations on $X=x$, thereby isolating the randomness induced by the training sample, we recover the classical pointwise bias-variance decomposition, whereas the present formulation corresponds to its integrated version with respect to the marginal distribution $P_X$. These computations show that, in this more general framework of Statistical Learning, the two effects are captured respectively by the approximation error (bias)  and the estimation error (variance).
	In particular, enlarging the class $\mathcal F$ typically reduces the approximation error, since a richer class can better approximate the optimal predictor $f^*$, but increases the estimation error, as more complex models are harder to learn from finite samples (overfitting). Conversely, restricting $\mathcal F$ stabilizes estimation at the cost of introducing a larger bias (underfitting). The central problem of Statistical Learning is therefore to balance these two competing sources of error by judiciously choosing $\mathcal F$; see \cite[Subsection 2.2.2]{james2013introduction} and \cite[Subsection 2.4]{von2011statistical} for further discussions, including illustrative diagrams  of this remarkable trade-off.
\end{remark}

\begin{example}\!\!$\bigstar$\label{rem:pca}
	(Principal component analysis and random matrices) Given a random vector $X\in\mathbb R^p$, can we determine a direction 
	$u\in\mathbb S^{p-1}$ that captures the maximum possible variance of the projection $u^\top X$? Since
	\begin{equation}\label{pop:pca}
	{\mathbb V}(u^\top X)=u^\top \mathscr C u,
	\end{equation}
	and $\mathscr C={\mathbb C}(X)$ is symmetric, there exist $p\times p$ matrices $O$ and $D$, 
	with $O$ orthogonal and $D={\rm diag}(\lambda_1,\ldots,\lambda_p)$, such that
	\[
	\mathscr C=ODO^\top.
	\]
	Clearly, we may reorder the columns of $O=(O_1,\ldots,O_p)$ so that
	$\lambda_1\ge\cdots\ge\lambda_p$. Setting $v=O^\top u$, $\|v\|=1$, we obtain
	\[
	u^\top \mathscr C u
	= v^\top D v
	= \sum_{j=1}^p \lambda_j v_j^2
	\le \lambda_1,
	\]
	so that $\lambda_1$ is an upper bound for the variances. Moreover, if $\{{\rm e}_j\}_{j=1}^p$ is the canonical basis of $\mathbb R^p$,
	\[
	\mathscr C O_j
	= ODO^\top O_j
	= OD{\rm e}_j
	= \lambda_j O{\rm e}_j
	= \lambda_j O_j,
	\]
	which means that $\lambda_j$ is the $j^{\rm th}$ eigenvalue of $\mathscr C$, with $O_j$
	as a corresponding eigenvector. In particular,
	$O_j^\top\mathscr C O_j=\lambda_j$, showing that the maximal variance is achieved by projecting $X$ onto $O_1$.
	Unfortunately, this analysis relies on the spectral decomposition of $\mathscr C$, which is not directly observable since the distribution of $X$ is unknown.
	In practice, one therefore replaces $\mathscr C$ by its empirical counterpart. Thus, given i.i.d.\ observations
	$X_1,\ldots,X_n\in\mathbb R^p$ drawn from the distribution of $X$, we set
	\begin{equation}\label{non:centered}
	\overline X_n=\frac{1}{n}\sum_{i=1}^n X_i,
	\qquad
	\widehat{\mathscr C}_n({\bf X})=\frac{1}{n}\sum_{i=1}^n (X_i-\overline X_n)(X_i-\overline X_n)^\top,
	\end{equation}
	so that $\widehat{\mathscr C}_n({\bf X})$ is the sample covariance matrix (with the usual maximum likelihood normalization).
	As already anticipated by the notation, here we proceeded as in Example~\ref{mle:imp:lsm:0} by arranging these observations in an $n\times p$ matrix
	${\bf X}$, the \emph{design matrix}\footnote{Be aware, however, that there is no response vector ${\bf Y}$ here: in modern language, we are passing from Supervised to Unsupervised Learning; see \cite{hastie2009elements}, where this transition is discussed in detail.}, so that
	\begin{equation}\label{eq:def:sigman}
	\overline X_n=\frac{1}{n}{\bf X}^\top{\bf 1},
	\end{equation}
	where ${\bf 1}$ is the $n$-vector whose entries all equal $1$. Also, 
	\begin{eqnarray*}
		\widehat{\mathscr C}_n({\bf X})
		& =&
	\frac{1}{n}\left(
	\sum_iX_iX_i^\top-\left(\sum_iX_i\right)\overline  X_n^\top
	-\overline  X_n\sum_iX_i^\top+\overline  X_n\overline  X_n^\top
	\right)	\\
	& = & 
	\frac{1}{n}\left(
	{\bf X}^\top{\bf X}-\overline  X_n\overline  X_n^\top
	\right),
	\end{eqnarray*}
	and using (\ref{eq:def:sigman}),
	\begin{equation}\label{eq:sigma:pca}
		\widehat{\mathscr C}_n({\bf X})=\frac{1}{n}{\bf X}^\top {\mathscr H}{\bf X},
	\end{equation}
	where
	\[
	{\mathscr H}={\rm Id}_n-\frac{1}{n}{\bf 1}{\bf 1}^\top:\mathbb R^n\to\mathbb R^n
	\]
	defines a projection operator, as it satisfies
	\begin{equation}\label{eq:proj:pca}
		{\mathscr H}^\top={\mathscr H}
		\qquad\text{and}\qquad
		{\mathscr H}^2={\mathscr H}.
	\end{equation}
		If $w\in\mathbb R^n$ then
	\begin{equation}\label{eq:proj:pca:2}
		{\mathscr H}w = 
		w-\frac{1}{n}{\bf 1}{\bf 1}^\top w
		= w-\frac{1}{n}(w^\top{\bf 1}){\bf 1}
		=w-\overline w\,{\bf 1},
	\end{equation}
	where $\overline w=w^\top{\bf 1}/n$ denotes the average of the entries of $w$, which shows that the range of ${\mathscr H}$
	is the orthogonal complement of the line generated by ${\bf 1}$.
	Consequently, the matrix $\mathscr H{\bf X}$ contains the centered observations
	$X_i-\overline X_n$ as its rows. Combining~\eqref{eq:sigma:pca} and~\eqref{eq:proj:pca} we obtain that
	\[
	\widehat{\mathscr C}_n({\bf X})=\frac{1}{n}({\mathscr H}{\bf X})^\top{\mathscr H}{\bf X}
	\]
	is the covariance matrix of the centered sample, therefore confirming that principal component analysis  is performed on the centered data ${\mathscr H}{\bf X}$. Thus, we may assume without loss of generality that 
	$X$ is centered ($\mathbb E(X)=\vec{0}$).
It follows that
	\[
	u^\top \widehat{\mathscr C}_n({\bf X}) u
	=\frac{1}{n}({\mathscr H}{\bf X}u)^\top {\mathscr H}{\bf X}u,
	\]
	and using~\eqref{eq:proj:pca:2} with $w={\bf X}u$,
	\begin{equation}\label{var:emp:obs}
	u^\top \widehat{\mathscr C}_n({\bf X}) u
	=\widehat\sigma^2_{n^{-1}}(u^\top {\bf X^\top}),
	\end{equation}
	which identifies the quadratic form ${\mathscr Q}_n(u)$ associated to $\widehat{\mathscr C}_n({\bf X})$ with 
	the sample variance of the projected observations
	$(u^\top X_1,\ldots,u^\top X_n)$; here we use the notation in (\ref{fam:est}).
	This allows us to interpret the direction maximizing the empirical variance of the projections
	\[
	u\in\mathbb S^{p-1}\longmapsto {\mathscr Q}_n(u)
	\]
	as the one along which the cloud of points in $\mathbb R^p$ defined by the sample
	$(X_1,\ldots,X_n)$ spreads the most. As usual, this direction is given by an eigenvector
	$\widehat u_1$ associated with the largest eigenvalue of $\widehat{\mathscr C}_n({\bf X})$; this vector
	is called the \emph{first principal component direction}, and the corresponding maximal
	empirical variance ${\mathscr Q}_n(\widehat u_1)$ equals that largest eigenvalue $\widehat\lambda_1$.
	More generally, ordering the eigenvalues of $\widehat{\mathscr C}_n({\bf X})$ as
	$\widehat\lambda_1\ge\cdots\ge\widehat\lambda_p$, with orthonormal eigenvectors
	$\widehat u_1,\ldots,\widehat u_p$, yields the usual {\em principal component decomposition},
	with the random variable $\widehat u_j^\top X$ providing the $j^{\rm th}$ {\em principal component score}. Moreover, by (\ref{var:emp:obs}) 
we see that
\[
\widehat\lambda_j=\widehat u_j^\top \widehat{\mathscr C}_n({\bf X}) \widehat u_j
=\widehat\sigma^2_{n^{-1}}(\widehat u_j^\top {\bf X^\top})
\]
equals the sample variance of the observed scores 
	$(\widehat u_j^\top X_1,\ldots,\widehat u_j^\top X_n)$, 
	thus measuring the amount of sample variability explained along that principal direction. This analysis may be complemented as follows. Since  $\widehat{\mathscr C}_n({\bf X})=\widehat O\,\widehat D\,\widehat O^\top$, where $\widehat O=(\widehat u_1,\ldots,\widehat u_p)$ and  $\widehat D={\rm diag}(\widehat\lambda_1,\ldots,\widehat\lambda_p)$, we may use the centered version of (\ref{non:centered}) to find that the sample covariance of the empirical principal coordinates 
	$
	\widehat Y_i=\widehat O^\top X_i\in\mathbb R^p$,
	$ i=1,\ldots,n$,
	is
	\[
	\mathscr C_n(\widehat{\bf Y}):=
	\frac{1}{n}\sum_{i=1}^n \widehat Y_i \widehat Y_i^\top 
	=
	\widehat O^\top
	\left(
	\frac{1}{n}\sum_{i=1}^n X_i X_i^\top
	\right)
	\widehat O 
	= 	\widehat O^\top \widehat{\mathscr C}_n({\bf X}) \widehat O=\widehat D, 
	\]
	so we conclude that the empirical principal component scores are
	uncorrelated and their sample variances are precisely the eigenvalues
	$\widehat\lambda_j$\footnote{In the population setting the corresponding scores $Y_j=u_j^\top X$
	are random variables, and the obvious relation $\mathbb C(Y_j,Y_l)=\delta_{jl}\lambda_l$
	is defined through expectation with respect to the distribution of $X$.
	In the empirical setting, however, we only observe a finite sample, so the identity
	$\mathscr C_n(\widehat{\mathbf Y})=\widehat D$
	simply expresses the orthogonality of the score vectors under the empirical
	inner product $\widehat\sigma^2_{n^{-1}}(\cdot,\cdot)$ defined in (\ref{emp:inner}).
	Thus empirical principal component scores are uncorrelated in this sample sense by construction. This algebraic fact should not be confused with independence, which only possibly holds at the population level
	under additional distributional assumptions (e.g. Gaussianity).}.
This empirical assessment of the population projected variations in~\eqref{pop:pca},
	via the spectral decomposition of $\widehat{\mathscr C}_n({\bf X})$, allows us, for a given $k<p$,
	to project the sample $(X_1,\ldots,X_n)$ onto the subspace spanned by
	$\widehat u_1,\ldots,\widehat u_k$, which maximizes the explained empirical
	variation among all $k$-dimensional subspaces of $\mathbb R^p$\footnote{
		In applications one often takes $k\le 3$ for visualization purposes; 
		see \cite{jolliffe2002pca}.
		Also, in Supervised Learning problems the first $k$ principal component scores may be used as predictors, leading to {\em principal component regression},
		an approach that is often used to alleviate multicollinearity (since, as we have seen in the previous footnote, the principal component scores are orthogonal by construction) and to reduce the dimensionality of the design matrix before fitting a regression model, while still retaining most of the variability present in the original predictors; see \cite{hastie2009elements}.
	}. 
	Since 
		\[
	\frac{1}{n}\|\mathscr H{\bf X}\|_{\rm Frob}^2=\mathrm{tr}(\widehat{\mathscr C}_n({\bf X}))=\sum_{j=1}^p \widehat\lambda_j
	\]
	means that the total mean squared spread of the cloud of points, as measured by the normalized squared Frobenius norm of $\mathscr H{\bf X}$,  equals the total sample variance, and also taking into account that the above mentioned projection
	corresponds to replacing 
$\widehat{\mathscr C}_n({\bf X})=\sum_{j=1}^p\widehat\lambda_j\widehat u_j\widehat u_j^\top$ by its truncated version 
\[\widehat{\mathscr C}_{n,k}({\bf X}):=
{\rm argmin}_{{\rm rank}(A)\leq k}\|\widehat{\mathscr C}_n({\bf X})-A\|^2_{\rm Frob}
=\sum_{j=1}^k\widehat\lambda_j\widehat u_j\widehat u_j^\top,
\]
	we see that 
	the amount of variation retained by such a projection, or equivalently the
	proportion of the total sample
	variability captured by the first $k$ principal components, is given by
		\[
		\frac{\mathrm{tr}(\widehat{\mathscr C}_{n,k}({\bf X}))}{\mathrm{tr}(\widehat{\mathscr C}_n({\bf X}))}=
	\frac{\sum_{j=1}^k \widehat\lambda_j}{\sum_{j=1}^p \widehat\lambda_j}.
	\]
However, applying this simple empirical criterion for determining the proportion of explained variability requires that we
evaluate the efficiency of $\widehat{\mathscr C}_n({\bf X})$ as a spectral
estimator of the population covariance matrix $\mathscr C$.
More precisely, 
the eigenvalues and eigenvectors used in principal component
analysis are obtained from the random matrix
$\widehat{\mathscr C}_n({\bf X})$, so their statistical behavior depends
on the fluctuations of this matrix around its population counterpart
$\mathscr C$. Understanding how these spectral quantities behave as
the sample size grows therefore leads naturally to the study of
random matrices and their asymptotic properties.
Regarding this crucial issue, we mention the following well-known facts:
\begin{itemize}
	\item
	If $p$ is fixed, then LLN (Theorem \ref{lln}) implies that
	$\widehat{\mathscr C}_n({\bf X})$ is a consistent estimator of $\mathscr C$, in the sense that
	\[
	\widehat{\mathscr C}_n({\bf X}) \stackrel{p}{\longrightarrow} \mathscr C
	\qquad\text{as } n\to\infty.
	\]
	Moreover, if $\mathbb E\|X\|^4<\infty$, then CLT (Theorem \ref{clt})  implies that
	\[
	\sqrt{n}\!\left(\widehat{\mathscr C}_n({\bf X})-\mathscr C\right)
	\stackrel{d}{\longrightarrow} \mathcal N(\vec{0},\mathscr V),
	\]
	where the asymptotic covariance matrix $\mathscr V={\mathbb C}(X\otimes X)$ 
	depends on the fourth moments of $X$.
	In case $X$ is Gaussian this reduces to the classical Wishart asymptotics; see,
	for instance, \cite[Chapter~3]{anderson2003introduction} and
	\cite[Chapter~3]{muirhead2009aspects};
	\item
	If we move to the ``high dimensional'' setting and let $p=p_n$ grow with $n$ then, as expected, the situation changes dramatically.
	Indeed, as a nice application of the machinery of concentration inequalities, it is proved in
	\cite[Section~4.7]{vershynin2018high} that
	\[
	\|\widehat{\mathscr C}_n({\bf X})-\mathscr C\|_{\rm op}
	= O\!\left(
	\|\mathscr C\|_{\rm op}
	\left(\sqrt{\frac{p}{n}}+\frac{p}{n}\right)
	\right),
	\]
with high probability, 	
where $X$ is assumed
	to be sub-Gaussian. Thus, consistency in operator norm is retained only
	if $p/n\to 0$. Also, if $p$ is held fixed then we get
	\[
	\sqrt{n}	\|\widehat{\mathscr C}_n({\bf X})-\mathscr C\|_{\rm op}=O(1).
	\]
	which aligns with the asymptotic normality of $\widehat{\mathscr C}_n$ mentioned in the previous item;
	\item
	When $p/n$ does not vanish asymptotically, $\widehat{\mathscr C}_n({\bf X})$ is no longer a reliable
	spectral estimator of $\mathscr C$, and its eigenvalues and eigenvectors may
	exhibit substantial deviations from those of $\mathscr C$.
	In particular, in the regime $p/n \to c \in (0,\infty)$, the empirical spectral
	distribution of $\widehat{\mathscr C}_n({\bf X})$ converges to the Marchenko-Pastur law,
	illustrating the intrinsic high-dimensional bias of the sample covariance
	matrix~\cite{bai2010spectral}.
\end{itemize}
The results mentioned in the items above only scratch the surface of the fascinating field of Random Matrix Theory, the branch of Probability and Mathematical Physics that investigates the distributional and asymptotic behavior of eigenvalues and eigenvectors of matrices with random entries, particularly in high-dimensional regimes, and that now plays a fundamental role in statistics, high-dimensional inference, number theory, wireless communications, and machine learning. For an accessible and mathematically rigorous introduction to this subject, we refer to \cite{tao2012topics}.
\qed 
	\end{example}

\subsection{Regularization in high dimension, sparsity and the LASSO}\label{ridge} 

The presence of the root-squared diagonal terms $\sqrt{\mathfrak s_{jj}}$, 
where $\mathfrak s=({\mathfrak x}^\top\mathfrak x)^{-1}$, in the confidence 
interval estimates above tends to increase the spread when ${\mathfrak x}^\top\mathfrak x$ 
is ill-conditioned, for instance when the ratio between its extremal eigenvalues 
is excessively large. In addition, the analysis above relies crucially on the 
assumption that $p+1\leq n$, which makes it inapplicable in the high-dimensional 
regime where $p\gg n$ and ${\mathfrak x}^\top\mathfrak x$ is no longer invertible.  
A possible way to address this limitation is to adopt the regularized regression estimator
\[
\widehat\beta_\lambda={\rm argmin}_\beta \widehat{\mathscr L}_\lambda(\beta),
\]
where
\[
\widehat{\mathscr L}_\lambda(\beta)=\tfrac{1}{2}\|{\bf y}-{\mathfrak x}\beta\|^2+\lambda\|\beta\|^2, 
\quad \lambda>0.
\]
This leads to the explicit solution
\[
\widehat\beta_\lambda=({\mathfrak x}^\top{\mathfrak x}+2\lambda I)^{-1}{\mathfrak x}^\top{\bf y},
\]
which is well defined even if ${\mathfrak x}^\top{\mathfrak x}$ does not have full column rank. 
This justifies the designation \emph{ridge regularization} for this approach 
\cite{hastie2015statistical,hastie2009elements,wainwright2019high,lederer2022fundamentals}.  
Moreover, under the conditions of Proposition \ref{g:m:prep} one obtains
\[
\mathbb E(\widehat\beta_\lambda)=({\mathfrak x}^\top{\mathfrak x}+2\lambda I)^{-1}
{\mathfrak x}^\top{\mathfrak r}\beta
\]
and
\[
{\mathbb C}(\widehat\beta_\lambda)=\sigma^2({\mathfrak x}^\top{\mathfrak x}+2\lambda I)^{-1}
{\mathfrak x}^\top{\mathfrak x}
({\mathfrak x}^\top{\mathfrak x}+2\lambda I)^{-1}.
\]
Although $\widehat\beta_\lambda$ is not unbiased, these expressions show that there exists 
$\lambda_0>0$ such that ${\rm mse}(\widehat\beta_\lambda)<{\rm mse}(\widehat\beta)$ for 
$0<\lambda<\lambda_0$ \cite{theobald1974generalizations}; see also Remark \ref{comp:mse}, 
where a similar effect is described for the variance estimators $\widehat\sigma^2_c$, $c>0$. 
Since ridge regression and its many variants are widely employed in practice, this confirms that 
a small amount of bias is acceptable when it comes with a significant reduction in variance. 
As explained below, starting with Remark \ref{int:acc}, this principle connects naturally with the geometric viewpoint developed earlier in Remark \ref{geom:mls}; see also Figure \ref{figg}.

\begin{remark}\label{int:acc}(Dichotomy between model interpretability and prediction accuracy in linear models) In the classical regime $p<n$, 
the least squares solution decomposes the response vector ${\bf y}$ into two orthogonal pieces: 
the projection $\widehat{\bf y}=H{\bf y}$ onto the column space $C(\mathfrak x)$, and the residual 
$\widehat{\bf e}=({\rm Id}_n-H){\bf y}$ lying in its orthogonal complement $C(\mathfrak x)^\perp$. 
These two components correspond to two distinct, complementary features of the OLS. The projection 
onto $C(\mathfrak x)$ carries the {\em predictive} content, since it represents the systematic 
variation in the response explained by the regressors. The orthogonal complement, by contrast, 
provides the basis for {\em inference} and {\em interpretability}: it isolates the random fluctuation 
not captured by the model, and this separation underpins our ability to quantify uncertainty, 
construct confidence intervals, and perform tests of significance (as in Section \ref{sec:hyp:test} below). In particular, each estimated 
coefficient inherits a transparent meaning: the expected change in the response for a unit change in 
the corresponding predictor, holding others fixed. Thus, prediction is geometrically tied to the 
column space, while interpretability rests on the existence of its orthogonal counterpart.  
\end{remark}
In the classical setting where $p\ll n$, we have already observed an emphasis on predictive accuracy in Examples~\ref{simult:band} and \ref{pred:resp}, where confidence bands for the (mean) response were constructed. This tendency becomes even more pronounced as the number of predictors increases to the point where $p\geq n$, at which stage the clean dichotomy described in Remark~\ref{int:acc} breaks down. Indeed, the column space of $\mathfrak x$ expands to fill $\mathbb R^n$, so that every response vector lies in it and the residuals vanish identically. 
In this regime, prediction not only persists but may interpolate the training data (the rows of $\mathfrak x$) exactly, while the residual space disappears. In the absence of a nontrivial orthogonal complement, the classical geometric foundation for inference and interpretability collapses, and the usual tools based on unexplained variation cease to apply. 
Regularization then emerges as a new source of geometry. Rather than relying on a residual subspace, ridge regression constrains the parameter vector itself, yielding numerically stable and statistically robust estimates by shrinking the coefficients toward zero, thereby reducing variance at the cost of a controlled bias. In this way, high-dimensional regression—central to data science practice—may be viewed as a migration of geometry: from projections in sample space, where prediction and inference were cleanly separated, to constraints in parameter space, where stability and a different form of interpretability are achieved through shrinkage.

A modern expression of this principle is given in Example \ref{high:sp:lasso}, which discusses 
the prediction properties of the LASSO procedure, introduced in \cite{tibshirani1996regression} and now 
widely used in Data Science \cite{hastie2009elements,james2013introduction,wainwright2019high,lederer2022fundamentals}. 
In contrast to ridge, the LASSO not only shrinks coefficients but also drives many of them exactly to zero, 
effectively selecting a subset of variables. This sparsity reintroduces a strong element of interpretability: 
the model highlights which predictors truly matter, while ignoring the rest. To emphasize the underlying geometric migration, this material is preceded by prediction bounds for the classical low-dimensional regime 
($p \ll n$), presented in Examples \ref{limitation}, \ref{bound:prob}, and \ref{bound:prob:2}, where the error distribution is considered under increasingly relaxed assumptions.

\begin{example}\label{limitation} (High probability bounds for the prediction error under normality)
	In addition to the parameter recovery methods already discussed (based on the construction of confidence intervals for the unknown parameter $\beta$),
we may also look at 
	\begin{equation}\label{rbeta:e:0}
		{\mathfrak x}\widehat\beta-{\mathfrak x}\beta={\mathfrak x}({\mathfrak x}^t{\mathfrak x})^{-1}{\mathfrak x}^\top{\bf e},
	\end{equation}
	where we assume as always that $p\leq n$ and $\mathfrak x$ has full column rank   and hence ${\mathfrak x}^\top{\mathfrak x}$ is invertible\footnote{Here and in the rest of this subsection we will assume,   
without loss of generality, that the intercept vanishes, so that $\beta_0=0$, $\mathfrak x=({\mathfrak x}_1, \cdots, {\mathfrak x}_p)$, where $\mathfrak x_j$ is the $j^{\rm th}$ column of $\mathfrak x$, $j=1,\cdots,p$, etc.}. 
	In the notation of Remark \ref{geom:mls},  
	\begin{equation}\label{rbeta:e}
		{\mathfrak x}\widehat\beta-{\mathfrak x}\beta=H{\bf e}={\bf e}-\widehat{\bf e}\in C(\mathfrak x),
	\end{equation}
	the difference  between the true error and the residual.
	In other words, rather than  paying attention to the projection of ${\bf e}$ onto $C(\mathfrak x)^\perp$ under $Q={\rm Id}_{n}-H$, which defines the residual $\widehat{\bf e}$, we now focus on its  projection onto $C(\mathfrak x)$ under $H$; in Figure \ref{figg}, ${\mathfrak x}\widehat\beta-{\mathfrak x}\beta$ is represented by $\widetilde{\bf e}$, so that 
	\begin{equation}\label{pred:er:def}
		\|\widetilde{\bf e}\|^2=\|{\mathfrak x}\widehat\beta-{\mathfrak x}\beta\|^2
	\end{equation} 
	is usually termed the {\em prediction error}.
%	\footnote{The term ``prediction'' is used here in a somewhat different sense than in Example \ref{pred:resp}.}. 
	Under the normality assumption ${\bf e}\sim \mathcal N(\vec{0},\sigma^2{\rm Id}_{n})$, it follows from (\ref{rbeta:e}) and rotational invariance that  
	\[
	\sigma^{-2}\|{\mathfrak x}\widehat\beta-{\mathfrak x}\beta\|^2\sim \chi^2_{p},	
	\]	
	so 
	if 
	\begin{equation}\label{av:risk:1}
		\widetilde{{\rm mse}}(\mathfrak x\widehat\beta)=\frac{{\rm mse}(\mathfrak x\widehat\beta)}{n}
	\end{equation}
	is the {\em average prediction risk} then 
	\begin{equation}\label{av:risk:2}
		\widetilde{{\rm mse}}(\mathfrak x\widehat\beta)
		=\frac{\sigma^2p}{n},
	\end{equation}
	where we used Corollary \ref{chi:sq:ms}\footnote{ 
		A justification for adopting (\ref{av:risk:2}) as a measure of accuracy for the {prediction error} in (\ref{pred:er:def}) appears in  Remark \ref{empiric} below.
	}.
	Of course,  Markov's inequality (\ref{markov:ineq:2}) allows us to pass from this ``expectation bound'' to the corresponding ``high probability bound'',
	\begin{equation}\label{av:risk:3}
		P\left(\frac{\|\mathfrak r\widehat\beta-\mathfrak r\beta\|^2}{n}\leq \frac{\sigma^2}{\delta}\frac{p}{n}\right)\geq 1-\delta, \quad \delta>0.
	\end{equation}
	As expected, 
	this analysis only provides satisfactory prediction results for the classical OLS  if either $\sigma^2$, which is assumed known, is very small or $p\ll n$.
	\qed
\end{example}

\begin{remark}\label{empiric}
	We have seen in Example \ref{mle:imp:lsm} that statistical reasoning demands that the OLS estimator should be specified by solving the minimization problem
	\begin{equation}\label{emp:risk:deff}
		\widehat\beta={\rm argmin}_\beta \widehat{\mathscr L}(\beta),\quad \widehat{\mathscr L}(\beta)=\frac{1}{n}\|{\bf y}-\mathfrak x\beta\|^2.
	\end{equation}
	At least if $n$ is large, we may argue\footnote{Say, by ``freezing'' $\beta$ and applying the LLN.} that this is the ``empirical'' version of the more fundamental minimization problem
	\[
	\beta_{\rm m}:={\rm argmin}_\beta \mathscr R(\beta)
	\]
	with
	\[
	\mathscr R(\beta)=\mathbb E(\|{\bf Y}-\mathfrak X\beta\|^2)
	\]
	being the associated {\em risk function}; cf.\!\! Example \ref{super:learn}. Since $\mathfrak X\beta_{\rm m}$ geometrically  corresponds to the orthogonal projection of ${\bf Y}$ onto the subspace generated by the columns of $\mathfrak X$, we easily see that ${\bf e}_{\rm m}:={\bf Y}-\mathfrak X\beta_{\rm m}$ satisfies 
	\begin{equation}\label{orth:cond}
		\mathbb E(\mathfrak X^\top{\bf e}_{\rm m})=0, \quad 
	\end{equation}
	so that
	\begin{eqnarray*}
		\mathscr R(\widehat\beta)
		& = & \mathscr R(\beta_{\rm m}+\widehat\beta-\beta_{\rm m})\\
		& = & \mathbb E(\|{\bf Y}-{\mathfrak X}(\beta_{\rm m}+\widehat\beta-\beta_{\rm m})\|^2)\\
		& = & \mathbb E(\|{\bf e}_{\rm m}-{\mathfrak X}(\widehat\beta-\beta_{\rm m})\|^2)\\
		& \stackrel{(\ref{orth:cond})}{=} & \mathbb E(\|{\bf e}_{\rm m}\|^2)+\mathbb E(\|{\mathfrak X}(\widehat\beta-\beta_{\rm m})\|^2),
	\end{eqnarray*}
	which gives
	\[
	\mathbb E(\|{\mathfrak X}(\widehat\beta-\beta_{\rm m})\|^2)=	\mathscr R(\widehat\beta)-\mathscr R(\beta_{\rm m}). 
	\]
	If we replace $\beta_{\rm m}$ by $\beta$ (to comply with the notation of Example \ref{limitation}) and condition on $\mathfrak X=\mathfrak x$ we see that 
	\[
	\widetilde{{\rm mse}}({\mathfrak x}\widehat\beta)=\frac{\mathscr R(\widehat\beta)-\mathscr R(\beta)}{n},
	\]
	which justifies the terminology employed in (\ref{av:risk:1}).\qed
\end{remark}

\begin{example}\label{bound:prob} 
	(High probability bounds for the prediction error without normality)
	The calculations leading to the expectation and high probability bounds  in (\ref{av:risk:2}) and (\ref{av:risk:3}) rely heavily on the usual normality assumption on the error. It turns out that we may still obtain a quite effective high probability bound for the prediction error $\|{\mathfrak x}\widehat\beta-{\mathfrak x}\beta\|^2$ in (\ref{pred:er:def}) by merely assuming that, besides (\ref{homo:p}), the errors $\{{\bf e}_j\}_{j=1}^n$ are assumed to be independent and 
	{\em sub-Gaussian} in the sense that
	\begin{equation}\label{sub:g:pred}
		\mathbb E\left(e^{{\bf e}_ju}\right)\leq e^{\sigma^2u^2/2}, \quad u\in\mathbb R,
	\end{equation}
	so that ${\bf e}_j\in{\mathsf{SubG}}(\sigma)$ as in
	Definition \ref{sub:g:def:0}.
	The key point is that (\ref{rbeta:e:0}) leads to 
	\begin{equation}\label{prediction}
		\|{\mathfrak x}\widehat\beta-{\mathfrak x}\beta\|^2=\|D(D^\top D)^{-1}D^\top U^\top{\bf e}\|^2,
	\end{equation}
	where $UDV^\top$ is a singular value decomposition for $\mathfrak x$ (in particular, $U$ and $V$ are both orthogonal). Using that $D$ is diagonal, it is not hard to check that, under these conditions,
	\[
	D(D^\top D)^{-1}D^\top=
	\left(
	\begin{array}{cc}
		I_{p\times p} & {} \\
		{} & 0_{(n-p)\times (n-p)}
	\end{array}
	\right),
	\]
	which gives
	\begin{equation}\label{exp:sub:g}
		\|{\mathfrak x}\widehat\beta-{\mathfrak x}\beta\|^2=\sum_{j=1}^{p}|(U^\top{\bf e})_j|^2.
	\end{equation}
	Using that
	\[
	(U^\top{\bf e})_j=\sum_k U_{kj}{\bf e}_k, \quad \sum_k U_{kj}^2=1,
	\]
	(\ref{sub:g:pred}) and the independence one easily verifies that
	\[
	\mathbb E\left(e^{(U^\top{\bf e})_ju}\right)\leq e^{\sigma^2u^2/2},
	\]
	that is, 
	each $\sigma^{-1}(U^\top{\bf e})_j\in{\mathsf{SubG}}(1)$,
	and from (\ref{exp:sub:g}) we find that
	$\sigma^{-2}\|{\mathfrak x}\widehat\beta-{\mathfrak x}\beta\|^2\in{\mathsf{SubE}}(\nu,1)$ is sub-exponential as in Definition \ref{sub:exp}; see Remarks \ref{sub:related} and \ref{sum:sub:exp}. Using the concentration inequalities in Proposition \ref{tail:sub:exp} we thus conclude that
	\begin{eqnarray}
		P\left(
		\frac{\|\mathfrak r\widehat\beta-\mathfrak r\beta\|^2}{n}
		\leq  
		t\sigma^2\frac{p}{n}\right)
		& = & 
		P\left({\sigma^{-2}\|{\mathfrak x}\widehat\beta-\mathfrak x\beta\|^2}\nonumber
		\leq 
		p t\right)\\
		& \geq & 
		1-2e^{-t/2},\label{bound:prob:est}
	\end{eqnarray}
	for $t\geq\nu^2$, which morally corresponds to (\ref{av:risk:3}) under the replacement $t\to\delta^{-1}$. \qed
\end{example}

\begin{example}\label{bound:prob:2}
	(High probability bounds for the prediction error without normality, again)
	An estimate similar to (\ref{bound:prob:est}) may be obtained under the  more general assumptions of Example \ref{mle:imp:lsm}), where no further knowledge of the error distribution is available besides (\ref{homo:p}).
	As we shall see, this ignorance will be counterbalanced by a precise control on the spectrum of the modified Gram matrix $\widehat{\bm \Sigma}:={\mathfrak x}^\top{\mathfrak x}/n$, which is known to be positive definite.  
	As in Remark \ref{empiric}, we identify $\beta_{\rm m}$ to $\beta$ and explore the variational characterization of $\widehat\beta$ in (\ref{emp:risk:deff}) to get $\widehat{\mathscr L}(\widehat\beta)\leq \widehat{\mathscr L}(\beta)$, which means that  
	\[
	\frac{\|{\bf y}-\mathfrak x\widehat\beta\|^2}{n}\leq \frac{\|{\bf e}\|^2}{n}.
	\]  
	If we set ${\bf y}=\mathfrak x\beta+{\bf e}$ in the right-hand side, expand the square and cancel out the terms which are quadratic in the errors we get 
	\begin{equation}\label{pred:ef:error}
		\frac{\|\mathfrak r\widehat\beta-\mathfrak r\beta\|^2}{n}\leq 2\frac{({\mathfrak r}^\top{\bf e})^\top(\widehat\beta-\beta)}{n}\leq 2\frac{\|{\mathfrak r}^\top{\bf e}\|}{n}
		\|\widehat\beta-\beta\|,
	\end{equation}
	where Cauchy-Schwarz has been used in the last step.
	If $\lambda_{{\rm min}}(\widehat{\bm \Sigma})\leq \lambda_{{\rm max}}(\widehat{\bm \Sigma})$ stand for the (positive) extremal eigenvalues of $\widehat{\bm \Sigma}$
	then we have
	\begin{equation}\label{st:conv}
		\frac{\|\mathfrak x\widehat\beta-\mathfrak r\beta\|^2}{{n}}
		=
		\langle \widehat{\bm\Sigma}(\widehat\beta-\beta),\widehat\beta-\beta\rangle
		\geq{{\lambda_{\rm min}(\widehat{\bm \Sigma})}}\|\widehat\beta-\beta\|^2, 
	\end{equation}
	which may be viewed as a control on the sample correlation between the columns of $\mathfrak x$ (because $n\widehat{\bm\Sigma}_{jk}=\mathfrak x_j^\top\mathfrak x_k=\|\mathfrak x_j\|\|\mathfrak x_k\|{\rm corr}(\mathfrak x_j,\mathfrak x_k)$; cf (\ref{samp:corr:def})),
	so if we
	combine these estimates we get 
	\[
	\frac{\|\mathfrak r\widehat\beta-\mathfrak r\beta\|^2}{n}\leq 4 
	\frac{\|{\mathfrak r}^\top{\bf e}\|^2}{n^2\lambda_{\rm min}(\widehat{\bm \Sigma})}.
	\]
	On the other hand, again using our standing assumptions (including (\ref{homo:p})) we compute
	\begin{eqnarray*}
		\mathbb E\left(\|{\mathfrak r}^\top{\bf e}\|^2\right)
		& = & 
		\mathbb E\left({\rm tr}\left((\mathfrak r^\top{\bf e})
		(\mathfrak r^\top{\bf e})^\top\right)
		\right)\\
		& = & 
		{\rm tr}\,{\mathbb C}\,(\mathfrak r^\top{\bf e})\\
		& = &
		\sigma^2{\rm tr}(\mathfrak r^\top\mathfrak x) \\
		& \leq & 
		\sigma^2pn\lambda_{\rm max}(\widehat{\bm \Sigma}), 
	\end{eqnarray*}
	which gives the expectation bound
	\begin{equation}\label{variab:est}
		{\widetilde{{\rm mse}}({\mathfrak x}\widehat\beta)}\leq 
		4\sigma^2\lambda(\widehat{\bm \Sigma})\frac{p}{n}, \quad \lambda(\widehat{\bm \Sigma}):=\frac{\lambda_{\rm max}(\widehat{\bm \Sigma})}{\lambda_{\rm min}(\widehat{\bm \Sigma})},
	\end{equation}
	from which we obtain the high probability bound
	\begin{equation}\label{bound:prob:est:2}
		P\left(\frac{\|\mathfrak r\widehat\beta-\mathfrak r\beta\|^2}{n}\leq \frac{4\sigma^2}{\delta}\lambda(\widehat{\bm \Sigma})\frac{p}{n}\right)\geq 1-\delta, \quad \delta>0,
	\end{equation}
	again via Markov.
	\qed 
\end{example}

Although its derivation requires only mild assumptions on the error distribution, 
the high-probability bound in (\ref{bound:prob:est:2}) remains essentially similar 
to (\ref{bound:prob:est}) and (\ref{av:risk:3}). In particular, its explicit 
dependence on the dimensional ratio $p/n$ shows that, without further control of 
the error variance $\sigma^2$ and of the condition number $\lambda(\widehat{\bm \Sigma})$, 
the linear model can be trusted only when $p\ll n$.  
Outside this regime, for instance when $p<n$ but  $p\approx n$ with $n$ large, OLS 
faces at least two well-known deficiencies: {\em high variability} (while 
$\mathfrak x\widehat\beta$ is unbiased, variance estimates such as (\ref{variab:est}) 
fail to provide reliable control), and {\em low interpretability} (the sheer number 
of predictors obscures the identification of variables truly relevant for explaining 
the response).  
A natural remedy is to introduce a penalization term into the classical model, 
as in Example \ref{ridge} on ridge regression; see \cite[Introduction]{lederer2022fundamentals} 
for a useful overview of this approach\footnote{This kind of regularization has become a cornerstone of 
Supervised Learning, where it is crucial to determine on which side of the threshold 
$p\approx n$ a given problem lies 
\cite{donoho2000high,hastie2009elements,belloni2011high,buhlmann2011statistics,
	hastie2015statistical,frigessi2016some,vershynin2018high,wainwright2019high,lederer2022fundamentals}.}.  
As the next example shows, the situation becomes even more delicate in the 
high-dimensional regime $p\gg n$, where in particular the key correlation assumption 
in (\ref{st:conv}) breaks down, since $\mathfrak x^\top\mathfrak x$ is no longer invertible.

\begin{example}\label{high:sp:lasso}
	(High dimensionality, sparsity and the LASSO)
The discussion in the previous paragraph suggests regularizing a suitable multiple of the least squares objective function in order to restore interpretability in case $p\gg n$. When employing the $L^1$ norm of the vector parameter $\beta$, this gives rise to the {\em LASSO estimator}
	\[
	\widehat\beta_L={\rm argmin}_{\beta'}f_L(\beta'),\quad \widetilde{\mathscr L}_L(\beta')=\frac{1}{2n}\|{\bf y}-\mathfrak x\beta'\|^2+\lambda\|\beta'\|_1,
	\] 
	where $\lambda> 0$ is a tuning parameter to be chosen later and 
	\[
	\|\beta'\|_1=\sum_{j=1}^{p}|\beta'_j|. 
	\]
	Since $\widetilde{\mathscr L}_L(\widehat\beta_L)\leq \widetilde{\mathscr L}_L(\beta)$, where $\beta$ is the true  parameter appearing in the model equation ${\bf y}=\mathfrak x\beta+{\bf e}$, we thus get with a help from H\"older inequality,  
	\begin{eqnarray*}
		\frac{1}{n}\|\mathfrak x\widehat\beta_L-\mathfrak x\beta)\|^2
		& \leq & 
		\frac{2}{n}
		(\mathfrak x^\top{\bf e})^\top(\widehat\beta_L-\beta)+2\lambda\left(\|\beta\|_1-\|\widehat\beta_L\|_1\right)\\
		& \leq & 
		\frac{2}{n}
		\|\mathfrak x^\top{\bf e}\|_{\infty}\|\widehat\beta_L-\beta\|_1
		+2\lambda\left(\|\beta\|_1-\|\widehat\beta_L\|_1\right),
	\end{eqnarray*}
	an estimate which should be compared 
	to (\ref{pred:ef:error}), with
	its right-hand side effectively disentangling the  
	contributions coming from the ``effective error'' $2\|\mathfrak x^\top{\bf e}\|_{\infty}/n$ and the penalization. 
	Now, sparsity enters the game precisely to handle this latter term, as it  contemplates the belief, substantiated by an ``omniscient oracle'', 
	that a considerable portion of regressors
	may be dispensed with, so the corresponding parameter entries may be set to vanish. Precisely, there exists $S\subsetneq\{1,\cdots,p\}$ with $s:=\sharp S\ll n$ such that $\beta_j=0$ exactly when $j\notin S$. 
	Thus, if $\beta_S$  is the ``restriction'' of $\beta$ to $S$, so that $\beta=\beta_S+\beta_{S^c}$, and setting $\widehat\delta=\widehat\beta_L-\beta$, we have 
	\begin{eqnarray*}
		\|\beta\|_1-\|\widehat\beta_L\|_1
		& = & \|\beta_S\|_1-\|\beta+\widehat\delta\|_1\\
		& = & \|\beta_S\|_1-\|\beta_S+\widehat\delta_S+\widehat\delta_{S^c}\|_1\\
		& = & \|\beta_S\|_1-\|\beta_S+\widehat\delta_S\|_1
		-\|\widehat\delta_{S^c}\|_1\\
		& \leq & \|\widehat\delta_S\|_1
		-\|\widehat\delta_{S^c}\|_1,
	\end{eqnarray*}
	which gives
	\[
	\frac{1}{n}\|\mathfrak x\widehat\delta\|^2\leq \frac{2}{n}
	\|\mathfrak x^\top{\bf e}\|_{\infty}\|\widehat\delta\|_1+2\lambda\left(\|\widehat\delta_S\|_1
	-\|\widehat\delta_{S^c}\|_1\right),
	\] 
	so if we further assume that  
	the tuning parameter dominates the
	``effective error''  according to  
	\begin{equation}\label{key:lambda}
		\frac{2}{n}
		\|\mathfrak x^\top{\bf e}\|_{\infty}
		\leq\lambda
	\end{equation}
	we end up with 
	\begin{equation}\label{bas:ineq}
		\frac{1}{n}\|\mathfrak x\widehat\delta\|^2\leq \lambda\left(3\|\widehat\delta_S\|_1
		-\|\widehat\delta_{S^c}\|_1\right).
	\end{equation}
	As a direct consequence of this basic inequality we see that
	\[
	\widehat\delta\in \mathcal C(S):=\left\{
	\beta'\in \mathbb R^{p+1}; \|\beta'_{S^c}\|_1\leq 3\|\beta'_{S}\|_1
	\right\},
	\]
	which suggests that the appropriate replacement for (\ref{st:conv}) is to assume, for some $\kappa>0$, that 
	\begin{equation}\label{re:cond}
		\frac{1}{n}\|\mathfrak x\beta'\|^2\geq \kappa\|\beta'\|^2, \quad \beta'\in \mathcal C(S). 
	\end{equation}
	Under this {\em restricted eigenvalue (RE)} condition,
	\begin{eqnarray*}
		\frac{1}{n}\|\mathfrak x\widehat\delta\|^2
		& \stackrel{(\ref{bas:ineq})}{\leq} & 3\lambda\|\widehat\delta_S\|_1 \\
		& \leq & 3\lambda\sqrt{s}\|\widehat\delta\|\\
		& \stackrel{(\ref{re:cond})}{\leq}&
		\frac{3\lambda\sqrt{s}}{\sqrt{\kappa n}} \|\mathfrak x\widehat\delta\|,
	\end{eqnarray*}
	which finally gives the bound
	\begin{equation}\label{lasso:bound}
		\frac{\|\mathfrak x\widehat\beta_L-\mathfrak x\beta\|^2}{n}\leq \frac{9\lambda^2s}{\kappa}. 
	\end{equation}
	In order to estimate in terms of $\lambda$ the probability of the event in (\ref{key:lambda}), to which the validity of (\ref{lasso:bound}) is conditioned,
	let us assume for simplicity that ${\bf e}\sim\mathcal N(\vec{0},\sigma^2{\rm Id}_{n\times n})$. By the projection property in (\ref{norm:space:4}),
	\[
	2\frac{\mathfrak x^\top_k{\bf e}}{n}\sim\mathcal N\left(0,4\frac{\sigma^2}{n}\left\|\frac{\mathfrak x_k}{\sqrt{n}}\right\|^2\right), \quad k=1,\cdots,p,
	\]  
	so if the columns of the design matrix are normalized so that
	\[
	\left\|\frac{\mathfrak x_k}{\sqrt{n}}\right\|\leq C,
	\] 
	the standard Gaussian concentration inequality in (\ref{exp:bound:2}) leads to
	\begin{eqnarray*}
		P\left(
		2\left\|
		\frac{\mathfrak r^\top{\bf e}}{n}
		\right\|_{\infty}\leq\lambda\right)
		& \geq & 
		1-2p
		e^{-\frac{n\lambda^2}{8C^2\sigma^2}}\\
		& = & 
		1-2
		e^{-\frac{n\lambda^2}{8C^2\sigma^2}+\ln p}.
	\end{eqnarray*}
	This gives
	\[
	P\left(2
	\left\|
	\frac{\mathfrak r^\top{\bf e}}{n}
	\right\|_{\infty}\leq\lambda\right)\geq 1-2e^{-\frac{t^2}{2}}
	\]
	if 
	\[
	\lambda^2=8C^2\sigma^2\left(\frac{\ln p}{n}+\frac{t^2}{2n}\right), 
	\]
	so with this choice of $\lambda$, (\ref{lasso:bound}) 
	immediately yields the bound 
	\begin{equation}\label{bound:lasso:fin}
		\frac{\|\mathfrak x\widehat\beta_L-\mathfrak x\beta\|^2}{n}\leq
		\frac{72C^2\sigma^2}{\kappa}\frac{s}{n}\left({\ln p}+\frac{t^2}{2}\right)
	\end{equation}
	with at least the same probability. 
	Upon comparison with (\ref{bound:prob:est:2}) and not taking into account certain structural constants, we have been able to replace the dimensional ratio $p/n$ by
	$s\ln p/n$, which is linear in the ``sparsity index'' $s=\|\beta\|_0$ and scales logarithmically with $p$, added to another term which is driven by the ``oracle rate'' $sn^{-1}=o(1)$. Thus, it suffices to take $n\gg s\ln p>s$ in order to have LASSO's prediction nearly as accurate as if $S={\rm supp}\,\beta$, whose elements classify the relevant regressors, was known a priori. We mention that similar estimates hold true under much weaker assumptions on the error\footnote{For instance, if the error is sub-Gaussian then the corresponding concentration inequalities in Section \ref{conc:ineq:appl} might be useful.} and even for other kinds of penalizations; we refer to \cite[Chapter 6]{buhlmann2011statistics}, \cite[Chapter 11]{hastie2015statistical}, \cite[Chapter 7]{wainwright2019high} and \cite[Chapter 6]{lederer2022fundamentals} 
	for such generalizations and, more importantly, for the heuristics behind the crucial RE condition in (\ref{re:cond}) above. Finally, the practical question remains of fine-tuning the parameter $\lambda$ so as to obtain the right balance between variability and interpretability. In this regard, the feasibility of the most adopted procedure, cross-validation, is theoretically confirmed in \cite{chetverikov2021cross}, where it is shown that, under suitable conditions, its use only adds
	to the right-hand side of (\ref{bound:lasso:fin})
	a multiplicative factor which is $O(\sqrt{\ln pn})$, hence negligible for most realistic purposes. \qed
\end{example}

\begin{remark}\!\!$\bigstar$(From simultaneous response bands to distribution-free conformal prediction)\label{rem:scheffe:cp}
	Example \ref{pred:resp} shows that, within the Gaussian linear model, one may construct simultaneous prediction bands for the response in the sense that
	\[
	P\Big(Y(x)\in C(x)\ \text{for all }x\in S\Big)\ge 1-\delta,
	\]
	which provides a strong, uniform control of the response process \(x\mapsto Y(x)\) by means of the prediction sets $C(x)$, where $x$ varies over (possibly a subdomain \(S\) of) the covariate space. A key feature underlying this construction is the \emph{fixed design assumption}, whereby the covariates are treated as deterministic and all randomness is carried by the noise; see Remark \ref{rem:r:vs:nr}. In particular, the high probability statement above is implicitly \emph{conditional on the design}, and the analysis reduces to controlling the fluctuations of the noise around a fixed regression surface, thus permitting a precise geometric treatment which exploits the linear structure and normality of the model.
	From the standpoint of modern Statistical Learning, however, this viewpoint is fundamentally limited. Indeed, in the general framework of Example \ref{super:learn}, as applied to (not necessarily linear) regression, the data are viewed as i.i.d.\ draws \(\{(X_i,Y_i)\}\) from an unknown distribution $P_{(X,Y)}$, so that the covariates themselves are random. In this setting, one is no longer conditioning on a fixed design, but rather averaging over $P_{(X,Y)}$. As a consequence, uniform guarantees of the above form
	are no longer attainable in finite samples without strong assumptions on the conditional law of \(Y\) given \(X\); see \cite[Lemma 1]{lei2014distribution}, which shows that any procedure achieving exact finite-sample conditional coverage must necessarily produce trivial (essentially unbounded) prediction sets.
	Conformal prediction (CP), introduced in \cite{vovk2005algorithmic} and further developed in the regression setting in \cite{lei2018distribution}, embraces this random-design perspective. Rather than conditioning on the covariates, CP constructs prediction sets satisfying the distribution-free guarantee
	\[
	P\big(Y\in C(X)\big)\ge 1-\delta,
	\]
	where the probability is taken over $P_{(X,Y)}$. In other words, CP replaces conditional control at fixed \(x\) by marginal control averaged over the design. This shift is what makes it possible to obtain nontrivial, finite-sample guarantees without substantive modeling assumptions beyond exchangeability of the sample data (cf.\ Example \ref{de:finetti}).
	Operationally, CP separates prediction from calibration through the following steps:
\begin{itemize}
	\item Choose a predictor \(\widehat f\) to fit the model to the training data;
	\item Define a conformity score \(s(x,y)\), which measures the agreement between the response \(y\) and the prediction \(\widehat f(x)\) on a complementary calibration data set, thereby inheriting the structure of the underlying learning algorithm;
	\item Calibrate a threshold \(\widehat q\) via empirical quantiles of the conformity scores;
	\item Form the prediction sets
	\[
	C(x)=\{y:\, s(x,y)\le \widehat q\}.
	\]
\end{itemize}
	The crucial point here is that the validity of these sets is distribution-free, while their size reflects the accuracy of \(\widehat f\). In particular, this decoupling makes CP naturally compatible with high-dimensional procedures, such as the Lasso in Example \ref{high:sp:lasso}, where classical inference is unavailable: even when \(p\gg n\), one may still obtain valid prediction intervals, whose width adapts to the predictive performance of the underlying estimator.
	Thus, the passage from Example \ref{pred:resp} to CP reflects a fundamental shift: from conditioning on a fixed design and exploiting model geometry to averaging over a random design and ultimately relying on minimal symmetry. From the viewpoint of Statistical Learning, as sketched in Example \ref{super:learn},  CP provides a general mechanism for endowing arbitrary prediction algorithms \(\widehat f\in\mathcal F\) with rigorous, distribution-free uncertainty quantification, thereby extending the classical notion of prediction bands to the high-dimensional, model-agnostic regime; see \cite{angelopoulos2023conformal} for a recent survey on conformal prediction and \cite{angelopoulos2024theoretical} for an account of the subject in textbook form.
	\qed
\end{remark}

\section{The exponential family and generalized linear models}\label{exp:glms}

The linear model, introduced in Example \ref{mle:imp:lsm:n}, has historically served as the canonical tool for regression analysis. At its core, it assumes that a response vector $\mathbf{y}\in\mathbb{R}^n$ can be represented as  
\begin{equation}
	\mathbf{y} = {\bf x}\beta + \mathbf{e}, \qquad \mathbf{e} \sim \mathcal{N}(0,\sigma^2 {\rm Id}_n),
\end{equation}  
where ${\bf x}\in\mathbb{R}^{n\times p}$ is the design matrix of predictors\footnote{For simplicity, here we assume that no intercept is present and that ${\bf X}$ is fixed (Remark \ref{rem:r:vs:nr}).}, $\beta\in\mathbb{R}^p$ is the parameter vector, and $\mathbf{e}$ is a homoscedastic normal error.  
As discussed in Section \ref{mls:sub}, these assumptions lead to tractable maximum likelihood estimation (which coincides with least squares), exact inference based on normal theory leading to a high degree of interpretability through parameter recovery, and elegant prediction properties for the mean response.  
In practice, however, empirical data rarely conforms to the normal-homoscedastic paradigm as outcomes may be binary (such as success or failure in a Bernoulli trial), counts (as in Poisson processes), or strictly positive, highly skewed data (for example, waiting times). In such cases, the linear model becomes conceptually inadequate, since it implicitly assumes additivity on the original scale of the response with a variance that is independent of the mean. These limitations motivate the consideration of a broader framework, where the key step lies in recognizing the role of a much wider family of distributions to which the response is supposed to follow.

\begin{definition}\label{exp:fam}
	Let $Y$ be a random variable whose pdf (or mdf), say $\psi(y;\theta)$, depends on an unknown parameter 
	$\theta \in \mathbb{R}$. 
	Then we say that $\psi$ belongs to the {\em exponential family} if it takes the form
	\[
	\psi(y;\theta) = \exp\left( \frac{\xi(\theta) y - b(\theta)}{\phi} + c(y,\phi) \right),
	\]
	where $\phi>0$ is the {\em dispersion parameter} and $\xi$, $b$ and $c$ are known functions. In this case, $\xi=\xi(\theta)$ is the {\em natural parameter} of $Y$.
\end{definition}

We represent this by $Y\sim {\sf ExpFam}(\xi,\phi)$, leaving implicit the dependence on $b$ and $c$. For simplicity, we assume that $\phi$ is known. To ensure that  $\theta$ is well defined as a function of $\xi$, we assume throughout that $\xi'\neq 0$, with the prime meaning derivative with respect to $\theta$. The next result shows that the expectation and variance of $Y$	may be expressed as rational functions of derivatives of $\xi$ and $b$ up to second order.

\begin{proposition}\label{exp:plus:var}
	Under the conditions above there hold
	\begin{equation}\label{exp:p:var:1}
		\mathbb E(Y)=\frac{b'}{\xi'}, \quad {\mathbb V}(Y)=\frac{\phi(\xi'b''-b'\xi'')}{(\xi')^3}.
	\end{equation}
\end{proposition}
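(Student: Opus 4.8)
The plan is to exploit the two score identities already established in Corollary~\ref{any:fct:2}, applied to the single-observation log-likelihood
\[
l(y;\theta)=\ln\psi(y;\theta)=\frac{\xi(\theta)y-b(\theta)}{\phi}+c(y,\phi).
\]
Since $c$ does not depend on $\theta$, the score is simply
\[
s(Y;\theta)=\frac{\partial l}{\partial\theta}=\frac{\xi'Y-b'}{\phi},
\]
an affine function of $Y$, which is precisely what makes both moments fall out immediately. (An equivalent, more pedestrian route would be to differentiate the normalization $\int\psi(y;\theta)\,dy=1$ once and twice under the integral sign, but invoking the score identities is cleaner given what has already been proved.)

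First I would extract the mean from the identity $\mathbb E(s)=\vec 0$ in (\ref{fischer:0}). Linearity of expectation gives $\phi^{-1}(\xi'\mathbb E(Y)-b')=0$, and since we are assuming $\xi'\neq 0$, this rearranges at once to $\mathbb E(Y)=b'/\xi'$, the first formula in (\ref{exp:p:var:1}).

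For the variance I would play the two expressions for the (scalar) Fisher information in (\ref{fischer}) against each other. On the one hand, because $\mathbb E(s)=0$, one has $\mathscr F={\rm var}(s)=\phi^{-2}(\xi')^2{\rm var}(Y)$. On the other hand, (\ref{fischer}) also gives $\mathscr F=-\mathbb E(l'')$, and differentiating the score once more yields $l''=\phi^{-1}(\xi''Y-b'')$, so that
\[
-\mathbb E(l'')=\frac{b''-\xi''\mathbb E(Y)}{\phi}=\frac{b''-\xi''(b'/\xi')}{\phi}=\frac{\xi'b''-b'\xi''}{\phi\,\xi'},
\]
where I substituted the mean just computed. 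Equating the two expressions for $\mathscr F$ and solving for ${\rm var}(Y)$ produces exactly $\phi(\xi'b''-b'\xi'')/(\xi')^3$, as claimed.

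The computation is entirely routine, so I do not expect a serious obstacle beyond bookkeeping. The one point requiring a word of care is that the primes denote derivatives with respect to $\theta$ (not with respect to the natural parameter $\xi$), so the chain-rule factors $\xi'$ must be tracked consistently throughout; moreover, the standing regularity assumption on $l$ that permits differentiation under the integral sign is what legitimizes the use of the score identities in this setting.
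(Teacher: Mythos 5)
Your proof is correct and follows essentially the same route as the paper: both derive the mean from $\mathbb E(s)=0$ and the variance by equating $\mathbb E(s^2)=\phi^{-2}(\xi')^2\,{\rm var}(Y)$ with $-\mathbb E(l'')$ via (\ref{fischer}); the paper merely expands $\mathbb E((l')^2)$ term by term where you pass directly through ${\rm var}(s)$.
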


\begin{proof}
	The log-likelihood (for a single observation of $Y$) is 
	\[
	l=\frac{\xi y-b}{\phi}+c,
	\] 
	from which we find that 
	\begin{equation}\label{l:der:xi}
		l'=\frac{\xi'y-b'}{\phi}.
	\end{equation}
	Now, with this notation (\ref{fischer:0}) says that $\mathbb E(l')=0$, which immediately yields the expression for $\mathbb E(Y)$. On the other hand, (\ref{fischer}) means that $\mathbb E((l')^2)=-\mathbb E(l''))$, which gives
	\[
	\frac{1}{\phi^2}\left((\xi')^2\mathbb E(y^2)-2\xi'b'\mathbb E(y)+(b')^2\right)=-\frac{1}{\phi}\left(\xi''\mathbb E(Y)-b''\right)=-\frac{1}{\phi}\left(\xi''\frac{b'}{\xi'}-b''\right).
	\]
	Since $\xi'b'\mathbb E(y)=(b')^2$, we may rearrange terms in order to get
	\[
	\frac{1}{\phi^2}(\xi')^2{\mathbb V}(Y)=\frac{1}{\phi^2}(\xi')^2\left(\mathbb E(Y^2)-\mathbb E(Y)^2\right)=\frac{1}{\phi}\frac{\xi'b''-b'\xi''}{\xi'},
	\]
	which completes the proof.
\end{proof}

\begin{definition}\label{def:mean:var}
	If $Y\sim {\sf ExpFam}(\xi,\phi)$ then its {\em mean} and {\em variance} functions are respectively given by
	\[
	\mu=\mathbb E(Y), \quad V(\mu)=\dot\mu,
	\]	
	where the dot means derivative with respect to $\xi$.
\end{definition}

\begin{proposition}\label{var:prop:exp}
	(Mean-variance relationship)
	If $Y\sim {\sf ExpFam}(\xi,\phi)$ then
	\begin{equation}\label{var:prop:exp:2}
		{\mathbb V}(Y)=\phi V(\mu).
	\end{equation}
\end{proposition}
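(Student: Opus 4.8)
The plan is to reduce everything to the two formulas already established in Proposition \ref{exp:plus:var}, namely $\mu=\mathbb E(Y)=b'/\xi'$ and ${\rm var}(Y)=\phi(\xi'b''-b'\xi'')/(\xi')^3$, and then simply verify that the variance formula is exactly $\phi$ times the derivative $\dot\mu=d\mu/d\xi$ appearing in Definition \ref{def:mean:var}. Thus the entire content of the statement is the identity
\[
V(\mu)=\dot\mu=\frac{d\mu}{d\xi}=\frac{\xi'b''-b'\xi''}{(\xi')^3},
\]
which I would establish by a single chain-rule computation.

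The key point, and the only place where care is required, is to keep the two kinds of derivatives straight: the prime denotes $d/d\theta$, whereas the dot denotes $d/d\xi$. Since we assume $\xi'\neq 0$ throughout (so that $\theta$ is a well-defined function of $\xi$), the chain rule gives the conversion $\dfrac{d}{d\xi}=\dfrac{1}{\xi'}\dfrac{d}{d\theta}$. First I would differentiate $\mu=b'/\xi'$ with respect to $\theta$ using the quotient rule, obtaining
\[
\frac{d\mu}{d\theta}=\frac{b''\xi'-b'\xi''}{(\xi')^2},
\]
and then divide by $\xi'$ to pass to the $\xi$-derivative:
\[
\dot\mu=\frac{1}{\xi'}\cdot\frac{b''\xi'-b'\xi''}{(\xi')^2}=\frac{\xi'b''-b'\xi''}{(\xi')^3}.
\]

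Comparing this with the expression for ${\rm var}(Y)$ from Proposition \ref{exp:plus:var} shows immediately that ${\rm var}(Y)=\phi\,\dot\mu=\phi V(\mu)$, which is the desired mean-variance relationship. There is no real obstacle here: the result is a direct algebraic consequence of the two moment formulas together with the definition $V(\mu)=\dot\mu$, and the only thing to watch is the bookkeeping between differentiation in $\theta$ and in $\xi$.
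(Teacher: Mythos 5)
Your proof is correct and is exactly the paper's argument (the paper simply states "Immediate from (\ref{exp:p:var:1}) and the chain rule"); your version spells out the quotient-rule computation of $d\mu/d\theta$ and the conversion $\dot\mu=(1/\xi')\,d\mu/d\theta$ that the paper leaves implicit. No issues.
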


\begin{proof}
	Immediate from (\ref{exp:p:var:1}) and the chain rule.
\end{proof}	

\begin{example}\label{rem:ident:expfam}(Naturality)
	A distribution in the exponential family is called \emph{natural} when $\xi(\theta)=\theta$, so that $\theta$ itself is the natural parameter to be estimated. In this case, (\ref{exp:p:var:1}) reduces to
	\begin{equation}\label{exp:p:var:3}
		\mu=\dot b, \quad {\mathbb V}(Y)=\phi \ddot b.
	\end{equation}
	From this it is straightforward to verify that the specific form of the mean--variance relationship essentially determines the distribution within this subclass; see \cite[Theorem 2.11]{jorgensen1997theory}.
	Now, if $Y_j\sim {\sf ExpFam}(\theta,\phi)$ is a random sample then the corresponding log-likelihood is 
	\[
	l({\bf y};\theta)=\frac{1}{\phi}\left(\theta\sum_j y_j-nb(\theta)\right)+\sum_jc(y_j,\phi), 
	\]
	so the score is
	\begin{equation}\label{like:exp:fm}
	l_\theta({\bf y};\theta)=\frac{n}{\phi}\left(\overline y_n-b'(\theta)\right)	
	\end{equation}	
	and the ML estimator  $\widehat\theta$ is determined by the equation $b'(\widehat\theta)=\overline Y_n$. Since (\ref{var:prop:exp:2}) clearly implies that $b''>0$, it follows that $b'$ is strictly increasing and we get $\widehat\theta=(b')^{-1}(\overline Y_n)$. Thus, the MLE of the natural parameter $\theta$ depends only on the sample mean. Also, by means of (\ref{var:prop:exp:2}), (\ref{exp:p:var:3}) and (\ref{like:exp:fm})  we may rewrite the score in terms of $\mu$ as  
	\begin{equation}\label{like:exp:mu}
		l_\mu({\bf y};\mu)=\frac{d\theta}{d\mu}	l_\theta({\bf y};\theta(\mu))=\frac{n}{\phi V(\mu)}\left(\overline y_n-b'(\theta)(\mu)\right)=\frac{n}{\phi V(\mu)}\left(\overline y_n-\mu\right),
	\end{equation}
	so the ML estimator $\widehat\mu$ of $\mu$ is the sample mean. Turning to asymptotics, from (\ref{like:exp:fm}) and (\ref{fischer}) we find that the Fisher information is 
	\begin{equation}\label{eq:fisher:exp} 
		\mathcal F_{(n)}(\theta)=\frac{nb''(\theta)}{\phi},
		\end{equation}
	so that Theorem \ref{asym:mle} and consistency give the large sample estimate
	\[
	\widehat\theta_n\approx\mathcal N\left(\theta,\frac{\phi}{nb''(\widehat\theta_n)}\right). 
	\] 
	For the mean parameter $\mu$ we may either start with (\ref{like:exp:mu}) and (\ref{fischer:0}), which directly gives the corresponding Fisher information
	\[
	\mathcal F_{(n)}(\mu)=n/\phi V(\mu),
	\]
	or use that $\mu'=V=b''$ together with the delta method, as explained in Remark \ref{decay:fluc}, thus obtaining the large sample estimate 
	\[
	\widehat\mu_n\approx\mathcal N\left(\mu,\frac{\phi V(\widehat\mu_n)}{n}\right).
	\]
	As usual, these asymptotic normality results immediately provide the basis for constructing large-sample confidence intervals for the parameters $\theta$ and $\mu$ (cf. Remark \ref{conf:int:f}).
		\qed
\end{example}

\begin{remark}\label{kl:with:exp}
	(Kullback--Leibler divergence within an exponential family)
	Let $\{\psi(\cdot;\theta);\theta\in\mathbb R\}$ be a one-parameter exponential family as in Definition \ref{exp:fam}.
		Since, for any $\theta_0,\theta\in\mathbb R$,
		\[
	\ln\frac{\psi(y;\theta_0)}{\psi(y;\theta)}
	=
	\frac{
		\big(\xi(\theta_0)-\xi(\theta)\big)y
		-
		\big(b(\theta_0)-b(\theta)\big)
	}{\phi},
	\] 
	the corresponding 
		Kullback--Leibler divergence from Definition \ref{kull:leib:div:d} is
	\begin{eqnarray*}
	D^{KL}_{\theta_0}(\theta)
& 	= & \mathbb E_{\theta_0}\!\left(
	\ln\frac{\psi(Y;\theta_0)}{\psi(Y;\theta)}
	\right)\\
	& = & 
	\frac{1}{\phi}
	\left(
	\big(\xi(\theta_0)-\xi(\theta)\big)
	\mathbb E_{\theta_0}(Y)
	-
	\big(b(\theta_0)-b(\theta)\big)
	\right),
	\end{eqnarray*}
so that Proposition \ref{exp:plus:var} yields 	
	\[
	D^{KL}_{\theta_0}(\theta)
	=
	\frac{1}{\phi}
	\left[
	\big(\xi(\theta_0)-\xi(\theta)\big)
	\frac{b'(\theta_0)}{\xi'(\theta_0)}
	-
	\big(b(\theta_0)-b(\theta)\big)
	\right].
	\]
	In the natural case of Example \ref{rem:ident:expfam}, where $\xi(\theta)=\theta$, this expression simplifies to
	\[
		D^{KL}_{\theta_0}(\theta)
	=
	\frac{1}{\phi}\Big(
	b(\theta)-b(\theta_0)-b'(\theta_0)(\theta-\theta_0)
	\Big).
	\]
	In particular, by convexity of $b$, the Kullback--Leibler divergence is nonnegative and vanishes if and only if $\theta_0=\theta$.
	Moreover, by expanding $b$ around  $\theta=\theta_0$ and using (\ref{eq:fisher:exp}) we find that 
	\begin{equation}\label{kl:with:exp:2}
		D^{KL}_{\theta_0}(\theta)=\frac{1}{2}\mathscr F_{(1)}(\theta_0)\left(\theta-\theta_0\right)^2+o\left(\left(\theta-\theta_0\right)^2\right).
	\end{equation}
	Thus, in the natural parametrization of an exponential family, the Fisher information $\mathscr F_{(1)}(\theta_0)$ determines the local second-order behavior of the Kullback–Leibler divergence centered at $\theta_0$, which is consistent with the general result in Remark \ref{kl:fisher:aic}.
	\qed
\end{remark}

\begin{table}[ht]
	\centering
	\begin{tabular}{|c|c|c|c|c|c|c|c|c|}
		\hhline{~|--------|} 
		\multicolumn{1}{c|}{} 
		& $\theta$ & \textrm{Likelihood $L(y;\theta)$}  & $\xi$ & $\theta=\theta(\xi)$ & $\phi$ & $b$ & $\mu=\mathbb E(Y)$ & $V(\mu)$\\ 
		\hhline{|=|========|} 
		\multicolumn{1}{|c|}{\makecell{Binomial \\ (Example \ref{bern:trial})}}
		& $p$ 
		& $\binom{n}{y}p^y(1-p)^{n-y}$
		& $\ln \tfrac{p}{1-p}$ 
		& $\tfrac{1}{1+e^{-\xi}}$ 
		& $1$
		& $-n\ln(1-p)$ 
		& $p$ 
		& $\mu(1-\mu)$
		\\ \hline
		\multicolumn{1}{|c|}{\makecell{Poisson \\ (Example \ref{poisson:trials})}}
		& $\lambda$ 
		& $\tfrac{e^{-\lambda}\lambda^y}{y!}$
		& $\ln \lambda$ 
		& $e^\xi$ 
		& $1$
		& $\lambda$
		& $\lambda$
		& $\mu$ 
		\\ \hline
		\multicolumn{1}{|c|}{\makecell{Normal with $\sigma$ known\\ (Definition \ref{normdistrv} )}}
		& $\mu$ 
		& $\tfrac{1}{\sqrt{2\pi}\sigma}\,e^{-\lvert y-\mu\rvert^2/2\sigma^2}$
		& ${\mu}$ 
		& $\xi$ 
		& $\sigma^2$
		& $\tfrac{\mu^2}{2}$ 
		& $\mu$ 
		& $1$
		\\ \hline
		\multicolumn{1}{|c|}{\makecell{Gamma with $\lambda$ known\\ (Definition \ref{gamma:dist} )}}
		& $\alpha$ 
		& $\tfrac{\alpha^\lambda}{\Gamma(\lambda)}y^{\lambda-1}e^{-\alpha y}$
		& $-\alpha$ 
		& $-\xi$ 
		& ${1}$
		& $-\lambda\ln\alpha$ 
		& $\frac{\lambda}{\alpha}$ 
		& $\frac{\mu}{\alpha}$
		\\ \hline
	\end{tabular}
	\caption{Examples of  distributions in the exponential family}\label{tab:exp-family}
\end{table}

As shown in Table \ref{tab:exp-family}, most of the distributions considered so far can be expressed as members of the exponential family\footnote{A simple, commonly used example outside the exponential family is Student’s 
	${\mathfrak t}$-distribution in Definition \ref{tstu:def}.}. A distinctive role is played by the normal distribution, which is the only one in the table whose variance is entirely independent of the mean.  
This observation paves the way for a substantial enrichment of the class of regression models, while still preserving the desirable inferential properties of the classical linear model, as will be seen below.

\begin{definition}\label{def:glm}
	A {\em generalized linear model (GLM)} for  {independent} responses $\{Y_i\}_{i=1}^n$ consists of the following ingredients:
	\begin{enumerate}
		\item \textbf{Random component:} each $Y_i$ follows a one-parameter exponential family: $Y_i\sim {\sf ExpFam}({\xi_i},\phi)$, 
		where $\xi_i$ is the canonical parameter and $\phi$ a common dispersion parameter.
		\item \textbf{Systematic component:} a linear predictor
		\begin{equation}\label{syst:comp}
			\eta_i = {\bf x}_i^\top \beta,
		\end{equation}
		linking covariates ${\bf x}_i$ to coefficients $\beta$.
		\item \textbf{Link function:} a monotone differentiable map $g$ connecting the mean $\mu_i:=\mathbb{E}(Y_i)$ to the predictor,
		\[
		g(\mu_i) = \eta_i.
		\]
		When $g(\mu_i)=\xi_i$, the link is called \emph{canonical}.
	\end{enumerate}
\end{definition}

Thus, the GLM extends the linear model by allowing non-normal response distributions and by permitting nonlinear, yet monotone, transformations in the relationship between the mean response and the linear predictor. 
In particular, since the systematic component is linear in $\beta$ and the non-linearity only affects the mean, GLMs remain interpretable in the sense of Remark \ref{int:acc}, while still retaining much of its predictive power.
 From (\ref{var:prop:exp:2}) it also follows that  
\[
\mathrm{Var}(Y_i) = \phi V(\mu_i),
\]  
which shows that {\em heteroscedasticity} (unequal variances across observations) is inherent to a GLM.  

\begin{example}\label{most:com:glms}
	Because of their flexibility, which balances mathematical rigor with empirical applicability, GLMs are widely used in both theory and applications \cite{agresti2015foundations,dobson2018introduction}. Here we restrict ourselves to three of the most prominent examples, corresponding to the first three rows of Table \ref{tab:exp-family}:  
	\begin{itemize}
		\item \emph{Logistic regression} arises when $Y_i\sim \mathsf{Ber}(p_i)$, a Bernoulli distribution, with the \emph{logit link}  
		\begin{equation}\label{logit:link}
		\eta_i={\rm logit}(p_i):=\ln\left(\frac{p_i}{1-p_i}\right).
		\end{equation} 
		Equivalently,  if we solve for $p_i=P(Y_{i}=1|_{X_i=x_i})$,
		\begin{equation}\label{logit:inv}
			P(Y_{i}=1|_{X_i=x_i})={\rm logit}^{-1}(\eta_i)=\frac{1}{1+e^{-{\bf x}_i^\top\beta}}.
		\end{equation}
		\item \emph{Poisson regression} corresponds to $Y_i\sim \mathsf{Pois}(\lambda_i)$, a Poisson distribution, with the \emph{log link}  
		\[
		\eta_i=\ln\mu_i.
		\]  
		\item The classical {\em linear model} (from Example \ref{mle:imp:lsm:n}, with $\sigma^2$ known) assumes $Y_i\sim \mathcal N(\mu_i,\sigma^2)$ with the \emph{identity link} $\eta_i=\mu_i$. 
	\end{itemize}
	Note that the link is canonical in all these cases.\qed
\end{example}

We now turn to the most basic aspects of the estimation framework for GLMs. Since these models can be regarded as natural extensions of the normal linear model, it is reasonable to adopt maximum likelihood as the method for estimating $\beta$  (cf.  Example \ref{mle:imp:lsm:n}). The corresponding log-likelihood for $n$ observations is
\begin{equation}\label{like:glm}
l({\bf y};\beta) = \sum_i l_i({\bf y};\beta), \quad l_i({\bf y};\beta)= \frac{y_i \xi_i - b(\xi_i)}{\phi} + c(y_i,\phi),
\end{equation}
so we should compute 
\[
\frac{\partial l_i}{\partial \beta_j}=
\frac{\partial l_i}{\partial \xi_i}
\frac{\partial \xi_i}{\partial \mu_i}
\frac{\partial \mu_i}{\partial \eta_i}
\frac{\partial \eta_i}{\partial \beta_j}, \quad j=1,\cdots,p, 
\]
with the likelihood equations being obtained by summing up these terms over $i$ and equating the result to zero (Definition \ref{mle:def:post}).
Now, (\ref{l:der:xi}) yields
\[
\frac{\partial l_i}{\partial \xi_i}=\frac{\frac{\partial \xi_i}{\partial \theta_i}y_i-\frac{\partial b}{\partial\theta_i}}{\phi},
\]
which together with (\ref{exp:p:var:1}) gives
\[
\frac{\partial l_i}{\partial \xi_i}=\frac{\frac{\partial\xi_i}{\partial\theta_i}\left(y_i-\mu_i\right)}{\phi}\,\, \textrm{and}\,\,
\frac{\partial \xi_i}{\partial \mu_i}=\frac{\phi}{\frac{\partial\xi_i}{\partial\theta_i}{\mathbb V}(Y_i)}.
\]
Also, (\ref{syst:comp}) implies 
\[
\frac{\partial \eta_i}{\partial \beta_j}=x_{ij},
\]
so if we put all the pieces of our computation together we obtain the following fundamental result.

\begin{proposition}\label{mle:glm}
	The maximum likelihood estimator $\widehat\beta_{GLM}$ of a GLM satisfies the system of equations  
	\begin{equation}\label{mle:glm:2}
		{\bf x}^\top DV^{-1}({\bf y}-{\bm \mu})=0,
	\end{equation}		
	where ${\bm\mu}=(\mu_1,\cdots,\mu_n)^\top$, $D={\rm diag}(\partial\mu_i/\partial\eta_i)$, a diagonal matrix whose entries depend on the specific shape of the link function of the model, and $V={\rm diag}({\mathbb V}(Y_i))$. As a consequence, if the link function is canonical then this reduces to 
	\begin{equation}\label{mle:glm:3}
		{\bf x}^\top({\bf y}-{\bm \mu})=0,
	\end{equation}	
\end{proposition}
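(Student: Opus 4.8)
The plan is to apply the first-order optimality condition of Definition \ref{mle:def:post} to the total log-likelihood $l({\bf y};\beta) = \sum_i l_i({\bf y};\beta)$ and to assemble the chain-rule factors already exhibited in the paragraph preceding the statement. First I would write the stationarity equations $\partial l/\partial\beta_j = \sum_i \partial l_i/\partial\beta_j = 0$ for each $j = 1, \ldots, p$, expanding every summand through the chain of dependencies $\beta_j \to \eta_i \to \mu_i \to \xi_i \to l_i$ so as to recover the four-factor product displayed above.

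Next I would substitute the three factors already computed: $\partial l_i/\partial\xi_i = (\partial\xi_i/\partial\theta_i)(y_i-\mu_i)/\phi$ and $\partial\xi_i/\partial\mu_i = \phi/[(\partial\xi_i/\partial\theta_i)\,{\rm var}(Y_i)]$, both consequences of (\ref{l:der:xi}) and (\ref{exp:p:var:1}), together with $\partial\eta_i/\partial\beta_j = x_{ij}$ from (\ref{syst:comp}). Multiplying these, the common factor $\partial\xi_i/\partial\theta_i$ and the dispersion $\phi$ cancel cleanly, leaving $\sum_i x_{ij}(\partial\mu_i/\partial\eta_i)\,{\rm var}(Y_i)^{-1}(y_i-\mu_i) = 0$. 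Reading $\partial\mu_i/\partial\eta_i$ as the $i$-th diagonal entry of $D$ and ${\rm var}(Y_i)$ as the $i$-th entry of $V$, these $p$ scalar equations are precisely the rows of the matrix identity ${\bf x}^\top D V^{-1}({\bf y}-{\bm \mu}) = 0$, which is the first assertion.

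For the canonical case I would use that $g(\mu_i) = \xi_i$ forces $\eta_i = \xi_i$, so that $\partial\mu_i/\partial\eta_i = \partial\mu_i/\partial\xi_i = V(\mu_i)$ by Definition \ref{def:mean:var}, while ${\rm var}(Y_i) = \phi V(\mu_i)$ by Proposition \ref{var:prop:exp}. Hence $DV^{-1} = \phi^{-1}{\rm Id}_n$ and the system collapses to ${\bf x}^\top({\bf y}-{\bm \mu}) = 0$. The derivation is almost entirely bookkeeping, since the genuine analytic content (the mean and variance formulas together with the score identities of Corollary \ref{any:fct:2}) is already in hand; the only point demanding real care is tracking the factor $\partial\xi_i/\partial\theta_i$ through the two middle links to confirm that it cancels rather than leaving a residual Jacobian, and correctly matching the per-observation scalars to the diagonal matrices $D$ and $V$.
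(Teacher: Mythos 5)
Your proposal is correct and follows essentially the same route as the paper: the chain-rule factorization of the score, cancellation of $\partial\xi_i/\partial\theta_i$ and $\phi$, and the identification $DV^{-1}=\phi^{-1}{\rm Id}_n$ in the canonical case. The only cosmetic difference is that in the canonical step you invoke $\partial\mu_i/\partial\xi_i=V(\mu_i)$ and Proposition \ref{var:prop:exp} directly, whereas the paper routes both quantities through $\partial^2 b/\partial\xi_i^2$; the two computations are interchangeable.
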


\begin{proof}
	The calculation above shows that the score components are
	\begin{equation}\label{score:comp}
	\frac{\partial l_i}{\partial\beta_j}=\frac{y_i-\mu_i}{{\mathbb V}(Y_i)}x_{ij}\frac{\partial\mu_i}{\partial\eta_i},	
		\end{equation}
so the defining condition for $\widehat\beta_{GLM}$, ${\partial l_i}/{\partial\beta_j}=0$, is equivalent to (\ref{mle:glm:2}).		
	As for the last assertion,  from $\eta_i=\xi_i$ we find that 
	\[
	\frac{\partial \mu_i}{\partial\eta_i}=
	\frac{\partial \mu_i}{\partial\xi_i}=\frac{\partial^2b}{\partial\xi_i^2},
	\]
	where we used (\ref{exp:p:var:3}) in the last step. Also, again by (\ref{exp:p:var:3}), 
	\[
	{\mathbb V}(Y_i)=\phi \frac{\partial^2b}{\partial\xi_i^2}.
	\]
Together, these identities imply that  $DV^{-1}=\phi^{-1}{\rm Id}_n$. 
\end{proof}

Although the dependence on $\beta$ (and hence on $\widehat\beta_{GLM}$) is not explicit in either \eqref{mle:glm:2} or \eqref{mle:glm:3}, it is in fact present because $\mu_i=g^{-1}({\bf x}_i^\top{\beta})$. In general this dependence is non-linear, so the likelihood equations must be solved for $\beta$ by means of an iterative method (usually, Newton-Raphson).  
As an illustration, in the logistic model the equations reduce to ${\bf x}^\top({\bf y}-{\bf p})=0$, where ${\bf p}=(p_1,\ldots,p_n)^\top$. Here the non-linearity is entirely due to the inverse logit relation in \eqref{logit:inv}. By contrast, if the GLM specializes to the classical linear model, then ${\bm \mu}={\bf x}\beta$ (linearity) and \eqref{mle:glm:3} simplifies to ${\bf x}^\top({\bf y}-{\bf x}\beta)=0$, which directly yields the usual least squares estimator under the standard assumptions.

With the maximum likelihood framework established, we now briefly examine the asymptotic properties of $\widehat\beta_{GLM}$. 
From (\ref{score:comp}) and 
\eqref{fischer} we find that the Fisher information for the $i^{\text{th}}$ observation is  
\[
\mathcal F^{(i)}_{jk}
= 
\mathbb E\left(\frac{\partial l_i}{\partial\beta_j}\frac{\partial l_i}{\partial\beta_k}\right)
= \frac{x_{ij}x_{ik}}{{\mathbb V}(Y_i)}\left(\frac{\partial\mu_i}{\partial\eta_i}\right)^2,
\]  
so that,
by independence, the Fisher information for the entire sample $Y^{[n]}=(Y_1,\ldots,Y_n)$ is  
\[
\mathcal F=\sum_i\mathcal F^{(i)}={\bf x}^\top W{\bf x}, \qquad W={\rm diag}\left(\frac{(\partial \mu_i/\partial \eta_i)^2}{{\mathbb V}(Y_i)}\right).
\]  
Therefore, applying Theorem \ref{asym:cr} (see also its generalization in Remark \ref{asym:cr:gen}) we deduce that as $n\to\infty$,  
\begin{equation}\label{asym:min:s}
\widehat\beta_{GLM} \approx \mathcal N(\beta,({\bf x}^\top W{\bf x})^{-1}). 
\end{equation} 
As is customary, consistency permits the substitution of $W$ by $\widehat W=W(\widehat\beta_{GLM})$, leading to the practical approximation  
\begin{equation}\label{asym:mim}
\widehat\beta_{GLM} \approx \mathcal N(\beta,({\bf x}^\top \widehat W{\bf x})^{-1}),
\end{equation} 
which forms the basis for constructing large-sample confidence intervals for the components of $\beta$. In the linear model case (with $\sigma^2$ known) we have $\partial\mu_i/\partial\eta_i=1$ and ${\mathbb V}(Y_i)=\sigma^{-2}$, so that $W=\sigma^{-2}{\rm Id}_n$ and (\ref{asym:mim}) essentially reduces to (\ref{tcl:lr:n}).

With the appropriate care, most of the well-established estimation theory for the linear model can thus be carried over to this broader framework of GLMs. In particular, notions such as asymptotic efficiency, hypothesis testing, and likelihood-based inference retain essentially the same mathematical structure, even though the underlying distribution of the response is no longer normal \cite{agresti2015foundations,dobson2018introduction,gill2019generalized}. This transfer of results is precisely what makes GLMs so attractive: they extend the familiar tools of linear regression to a far wider range of data types, while preserving a rigorous probabilistic foundation. As a consequence, GLMs provide a unified language for both theoretical developments and applied work, bridging the gap between classical models and modern data analysis.

\begin{example}\label{aic:glm} (AIC for GLMs)
The AIC for a GLM with a canonical link may be computed explicitly. Indeed, since $\xi_i=\eta_i={\bf x}_i^\top\beta$, it follows from (\ref{aic:form:f}) and (\ref{like:glm}) that
\begin{equation}\label{eq:aic:glm}
\textrm{AIC}=-\frac{2}{\phi}\sum_i\left(
{y_i{\bf x}_i^\top\widehat\beta_{GLM}-b({\bf x}_i^\top\widehat\beta_{GLM})}
\right)+2(p+2),
\end{equation}
where $\widehat\beta_{GLM}$ denotes the corresponding MLE and 	
$p$ is the number of regressors (with the intercept excluded). In particular, this formula applies to the logistic model discussed in Example \ref{most:com:glms}. Since $\theta_i=p_i$, the first row in Table \ref{tab:exp-family} (with $n=1$)  yields 
\[
\theta_i(\xi_i)=\frac{1}{1+e^{-\xi_i}},
\]
and therefore
\[
b(\xi_i)=-\ln(1-\theta_i)=\ln(1+e^{\xi_i}).
\]
We thus conclude that
\[
\textrm{AIC}=2\sum_i\left(
{\ln\left(1+e^{{\bf x}_i^\top\widehat\beta_{GLM}}\right)-y_i{\bf x}_i^\top\widehat\beta_{GLM}}
\right)+2(p+1),
\]
Comparing this expression with (\ref{eq:aic:glm}), we see that the penalty term $p+2$ is replaced by $p+1$,
which is consistent with the fact that, in the logistic model, the variance function is
$V(\mu)=\mu(1-\mu)$.
\qed
	\end{example}

\begin{example}\!\!$\bigstar$\label{ex:irt}(GLMs and Item Response Theory)  
	A particularly fruitful domain where generalized linear models intersect with modern statistical methodology is \emph{Item Response Theory} (IRT), which plays a central role in psychometrics and educational assessment \cite{hambleton1991fundamentals,deayala2013theory,vanderlinden2016handbook}. Conceptually, IRT can be regarded as a GLM with a latent predictor, where the individual ability parameter $\gamma_j$ functions as an unobserved covariate, typically following a centered normal, say $\gamma_j\sim\mathcal N(0,1)$. For instance,  
	in the classical {\em Rasch model}, the probability that an individual $j$ with ability $\gamma_j$ answers item $i$ correctly is  
	\[
	P(Y_{ij}=1|_{\gamma_j, b_i})=\mathrm{logit}^{-1}(\gamma_j-b_i),
	\]  
	where $b_i$ is the item difficulty parameter. This is directly analogous to the inverse logit link (\ref{logit:inv}) in the GLM framework above, with linear predictor $\eta_{ij}=\gamma_j-b_i$.
	More generally, variants of the logistic regression model in (\ref{logit:link})
	underlie both GLMs and IRT, with the key distinction being that in IRT, part of the predictor vector corresponds to latent person parameters rather than observed covariates. In this way, the {\em 2PL model} extends the Rasch model by introducing item discrimination $a_i$,  
	\begin{equation}\label{irt:2pl}
	P(Y_{ij}=1|_{\gamma_j,a_i,b_i})=\mathrm{logit}^{-1}\!\big(a_i(\gamma_j-b_i)\big),
	\end{equation}
	while the {\em 3PL model}, widely used in practice, adds a pseudo-guessing parameter $c_i$.  We should also point out that from an asymptotic perspective, the connection between GLMs and IRT is especially revealing. Since IRT models are essentially Bernoulli GLMs with latent predictors, the same large-sample principles apply: maximum likelihood estimators of item parameters (difficulty $b_i$, discrimination $a_i$, and pseudo-guessing $c_i$) are consistent and asymptotically normal under standard regularity conditions,
	making sure that appropriate versions of Fisher's foundational conception in Theorem \ref{asym:cr} remain operational in this broader context.
	In particular, the Fisher information for the IRT likelihood plays the same role as in the GLM framework, forming the basis for variance formulas and for the construction of confidence intervals and hypothesis tests.  
		We illustrate these ideas by developing the corresponding asymptotic theory for the 2PL model in \eqref{irt:2pl}. 
		To simplify matters, we estimate the respondent’s ability $\gamma_j$ under the  assumption that the item parameters $(a_i,b_i)$ are known
		\footnote{We are making two simplifying assumptions here. First, the item parameters are assumed to have been calibrated prior to analysis, so that their estimation uncertainty is ignored; such pre-calibration is routinely performed in large-scale assessments and adaptive testing systems (e.g., PISA, ENEM, TOEFL). Second, although the latent traits $\gamma_j$ are modeled as random effects ($\gamma_j \sim \mathcal N(0,1)$), we condition on the observed response patterns and treat each $\gamma_j$ as an unknown constant when estimating individual abilities. Both assumptions are relaxed in more general formulations, where item and person parameters are estimated jointly and the latent distribution is integrated into the likelihood \cite{baker2004item,vanderlinden2016handbook}. The connection between such hierarchical treatments in IRT and generalized linear mixed models is discussed in Remark~\ref{glms:to:irt}.}.
		As usual, we assume \emph{local independence}, meaning that the $N$ responses $Y_i=Y_{ij}$ are conditionally independent given $\gamma_j$, i.e.\ $\{Y_i|_{\gamma_j}\}_{i=1}^N$ is independent. 
		Accordingly, and in alignment with \eqref{logl:ber}, the corresponding log-likelihood is 
		\begin{equation}
			l({\bf y};\gamma_j) 
			= \sum_{i} \left( y_{i} \ln P_{i}(\gamma_j) + (1 - y_{i}) \ln Q_i(\gamma_j) \right),
		\end{equation}
		where $P_i(\gamma_j)$ is a shorthand for the expressions  in (\ref{irt:2pl}) and 
		$ Q_i (\gamma_j)= 1 - P_i(\gamma_j)$.
		Hence, the associated score function is 
		\begin{align*}
			s({\bf y};\gamma_j)
			&= \sum_i \frac{\partial}{\partial \gamma_j} 
			\left(y_i \ln P_i + (1-y_i) \ln Q_i\right) \\
			&= \sum_{i} \left\{ y_{i} \left( \frac{1}{P_i} a_i P_i Q_i \right)
			+ (1 - y_{i}) \left( \frac{1}{Q_i} (-a_i P_i Q_i) \right) \right\},
		\end{align*}
		which simplifies to
		\begin{equation}\label{irt:score}
			s({\bf y};\gamma_j) = \sum_{i} a_i (y_{i} - P_i(\gamma_j)).
		\end{equation}
		The maximum likelihood estimator therefore satisfies
		\begin{equation}
			\sum_{i} a_i P_i(\widehat\gamma_j) = \sum_{i} a_i y_{i},
		\end{equation}
		a non-linear equation in $\widehat\gamma_j$ that must be solved numerically.  
		The Fisher information follows from \eqref{irt:score} and \eqref{fisher:mat:def}:
		\begin{align*}
			\mathscr F(\gamma_j)
			&= \mathbb E \!\left(\left( \sum_{i} a_i (Y_{i} - P_i) \right)^2 \right) \\
			&= \sum_{i} \mathbb E\!\left(a_i^2 (Y_{i} - P_i)^2 \right)
			+ \sum_{\substack{i\neq k}}
			\mathbb E\!\left(a_i a_k (Y_{i} - P_i)(Y_{k} - P_k)\right),
		\end{align*}
		with the mixed terms vanishing due to local independence, Proposition~\ref{indexp}, 
		and the fact that $\mathbb E(Y_i)=P_i$ (recall that $Y_i\sim \mathsf{ Ber}(P_i)$).  
		Since ${\mathbb V}(Y_i)=P_i Q_i$, we thus obtain
		\begin{equation}
			\mathscr F(\gamma_j)=\sum_{i} a_i^2 P_i(\gamma_j) Q_i(\gamma_j),
		\end{equation}
		and consequently the large-sample approximation
		\begin{equation}
			\widehat\gamma_j
			\approx \mathcal N\!\left(\gamma_j,\,
			\frac{1}{\sum_{i} a_i^2 P_i(\widehat\gamma_j) Q_i(\widehat\gamma_j)}\right),
		\end{equation}
		which parallels the asymptotic result previously obtained for the GLM estimator $\widehat\beta_{\mathrm{GLM}}$.
Thus, the GLM perspective not only clarifies the statistical structure of IRT but also provides a rigorous foundation for inference, ensuring that the asymptotic theory developed for GLMs can be effectively transplanted into psychometric applications. IRT, therefore, should not be seen as a distinct paradigm but as a specialized application of GLMs with latent predictors, offering a robust statistical framework for modeling educational and psychological measurement.
\qed  
\end{example}

\begin{remark}\!\!$\bigstar$\label{glms:to:irt}
	(GLMMs as the bridge between GLMs and IRT).
 	From a conceptual standpoint, the passage from generalized linear models (GLMs) to Item Response Theory (IRT) naturally goes through an intermediate class, namely, {\em generalized linear mixed models (GLMMs)} \cite{stroup2013,jiangnguyen2021}. 
 	In a GLMM, the linear predictor of a GLM is extended by the inclusion of random effects, allowing part of the variation in the response to be attributed to unobserved random components. Formally, while a GLM is written as
 	\[
 	g\left(\mathbb E(Y_i|_{{\bf X}={\bf x}})\right)={\bf x}_i^\top\beta,\quad i=1,\cdots,n,
 	\]
 	with $Y_i$ following a member of the exponential family, a GLMM generalizes this expression to
 	\[
 			g\left(\mathbb E(Y_i|_{{\bf X}={\bf x},\Gamma=\gamma})\right)={\bf x}_i^\top\beta +{\bf z}_i^\top\gamma,
 	\]
 	where $\gamma\in\mathbb R^q$ 
 	comprises the random effects associated with an individual labeled by $i$, typically supposed to follow a centered normal distribution, and $\bf z$ is the associated $n\times q$ matrix design, which we assume fixed here. The latent ability parameter in IRT fulfills exactly this role: it acts as a  random effect at the individual level, representing an unobserved source of variability across respondents. 
 	Accordingly, IRT models may be regarded as Bernoulli GLMMs in which the random component captures the heterogeneity among individuals that remains unobserved in the classical GLM framework. Thus, starting with linear models, at each step new layers of generality emerge—link functions, random components, latent traits—culminating in the IRT framework, where the random effect becomes not a nuisance term but the very object of substantive interpretation \cite{deboeckwilson2004}.
 \end{remark}

\section{Sufficiency}\label{suff:sub}

In a statistical model, consider moving from the random sample
\[
X=(X_1,\dots,X_n), \quad X_j \sim \psi_\theta,
\]
to an estimator $\widehat\theta$ defined through a statistic $h=h(X)$. A natural question then arises: how much information from the data has actually been retained in this transition? A complete answer would require a precise definition of the amount of information carried by the sample, which lies beyond the scope of these notes. A more modest but still important task is to verify whether the chosen statistic captures \emph{all} the relevant information about the parameter $\theta$, in the sense that no additional knowledge from the sample is required for its estimation. 
Put differently, the aim is to identify situations where the ``extra randomness'' in the sample $X$ that is not reflected in $h(X)$ is unrelated to $\theta$, and thus irrelevant for inference. This idea admits a neat probabilistic formulation in terms of conditional distributions, as introduced in Section \ref{cond:poss}. 

\begin{definition}\label{suff:stat}
	A statistic $h=h(X)$ is said to be \emph{sufficient} if, for any realization ${\bf x}$ of $X$, the conditional probability distribution $\psi_{\theta;X|h(X)=h({\bf x})}$ evaluated at ${\bf x}$ does \emph{not} depend on $\theta$.
\end{definition}

Using (\ref{p:y:x=x}), it follows that sufficiency of $h$ ensures the existence of a function $\xi=\xi({\bf x})$ such that
\begin{equation}\label{dist:xxx}
	\frac{\psi_{\theta;(h(X),X)}(h({\bf x}),{\bf x})}{\psi_{\theta;h(X)}(h({\bf x}))} = \xi({\bf x}).
\end{equation}
A key observation is that the inclusion of events $\{X={\bf x}\}\subset\{h(X)=h({\bf x})\}$ implies
\begin{equation}\label{eq:dist}
	\psi_{\theta;(h(X),X)}(h({\bf x}),{\bf x})=\psi_{\theta;X}({\bf x})=L({\bf x};\theta),
\end{equation}
the likelihood function. Substituting this into (\ref{dist:xxx}) yields a practical characterization of sufficiency: it occurs precisely when the dependence of $L({\bf x};\theta)$ on $\theta$ is confined to a factor that depends on ${\bf x}$ only through the statistic $h$. This captures the essential content of the notion: all the information needed to estimate $\theta$ is already contained in the sufficient statistic, making further reference to the raw data $X$ unnecessary.

\begin{theorem}\label{suff:char} (Fisher-Neyman factorization)
	$h$ is sufficient if and only if the likelihood function factorizes as
	\begin{equation}\label{suff:char:eq}
		L({\bf x};\theta)=\eta(h({\bf x}),\theta)\xi({\bf x}),
	\end{equation}
	for positive functions $\eta$ and $\xi$. 
\end{theorem}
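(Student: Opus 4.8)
The plan is to exploit the two identities recorded just above the statement, namely the ratio formula (\ref{dist:xxx}) coming from (\ref{p:y:x=x}) and the event-inclusion identity (\ref{eq:dist}) that identifies the joint density of $(h(X),X)$ with the likelihood. The forward implication is then almost immediate. First I would assume $h$ sufficient, so that the $\xi$ furnished by Definition \ref{suff:stat} through (\ref{dist:xxx}) is genuinely free of $\theta$. Rearranging (\ref{dist:xxx}) and inserting (\ref{eq:dist}) into the numerator gives
\[
L({\bf x};\theta)=\psi_{\theta;h(X)}(h({\bf x}))\,\xi({\bf x}).
\]
Since the first factor depends on ${\bf x}$ only through $h({\bf x})$ (and on $\theta$), setting $\eta(h({\bf x}),\theta):=\psi_{\theta;h(X)}(h({\bf x}))$ produces exactly the factorization (\ref{suff:char:eq}), with positivity of both factors inherited from that of the densities $\psi_\theta$.

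For the converse I would start from an assumed factorization $L({\bf x};\theta)=\eta(h({\bf x}),\theta)\xi({\bf x})$ and aim to show that the conditional density $\psi_{\theta;X|h(X)=h({\bf x})}({\bf x})$, which by (\ref{p:y:x=x}) and (\ref{eq:dist}) equals $L({\bf x};\theta)/\psi_{\theta;h(X)}(h({\bf x}))$, carries no dependence on $\theta$. The decisive step is to compute the marginal density of the statistic by integrating the joint density over the fiber $\{h=t\}$: because $\eta(h({\bf x}'),\theta)=\eta(t,\theta)$ is constant along this level set, it factors out of the integral, leaving
\[
\psi_{\theta;h(X)}(t)=\eta(t,\theta)\,G(t),\qquad G(t):=\int_{\{h=t\}}\xi({\bf x}')\,d\mu_t({\bf x}'),
\]
where $G$ manifestly does not involve $\theta$. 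Feeding this back into the ratio cancels the $\theta$-dependent factor $\eta(h({\bf x}),\theta)$ between numerator and denominator and leaves the $\theta$-free expression $\xi({\bf x})/G(h({\bf x}))$, which is precisely what Definition \ref{suff:stat} demands.

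The hard part will be making the marginalization over the level sets $\{h=t\}$ rigorous, since for a general, merely piecewise-smooth statistic $h$ this is not a coordinate projection and the fiber measure $d\mu_t$ must be supplied by the coarea formula. In the discrete case the integral degenerates into a finite sum over the fiber and no such machinery is needed, so I would dispatch that case first and then indicate how the continuous case proceeds under Convention \ref{conv:cont}. A cleaner route that sidesteps the explicit fiber integral altogether is to invoke the disintegration theorem (Proposition \ref{exist:trans}): one verifies directly that the candidate kernel whose density is $\xi({\bf x})/G(h({\bf x}))$ recovers the joint law $P_{(h(X),X)}$ when combined with the marginal $P_{h(X)}$ under the operation $\star$, after which the uniqueness clause of Proposition \ref{exist:trans} identifies it with $P_{X|h(X)}$ and renders the $\theta$-independence automatic.
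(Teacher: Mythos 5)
Your proof is correct and follows essentially the same route as the paper's: the forward direction reads the factorization off (\ref{dist:xxx}) and (\ref{eq:dist}) with $\eta(h({\bf x}),\theta)=\psi_{\theta;h(X)}(h({\bf x}))$, and the converse marginalizes the joint density over the fiber $\{{\bf x}':h({\bf x}')=h({\bf x})\}$, pulls the constant factor $\eta(h({\bf x}),\theta)$ out of the integral, and cancels it against the numerator to leave a $\theta$-free conditional density. Your closing remarks on making the fiber integral rigorous (coarea formula or the disintegration theorem of Proposition \ref{exist:trans}) address a point the paper glosses over by writing $d{\bf x}'$ informally on the level set, but the core argument is identical.
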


\begin{proof}
	We have already seen that sufficiency implies (\ref{suff:char:eq}). For the converse 
	we first note that (\ref{eq:dist}) leads to
	\begin{eqnarray*}
		\psi_{\theta;h(X)}(h({\bf x}))
		& = & \int_{\{{\bf x}':h({\bf x}')=h({\bf x})\}}\psi_{\theta;(h(X),X)}(h({\bf x}'),{\bf x}')d{\bf x}'\\
		& = & \int_{\{{\bf x}':h({\bf x}')=h({\bf x})\}}\psi_{\theta;X}({\bf x}')d{\bf x}'
	\end{eqnarray*}
	so we may again use 
	(\ref{p:y:x=x}) to compute:
	\begin{eqnarray*}
		\psi_{\theta;X|h(X)=h({\bf x})}
		& = & 	\frac{\psi_{\theta;(h(X),X)}(h({\bf x}),{\bf x})}{\psi_{\theta;h(X)}(h({\bf x}))}\\
		& = & \frac{\eta(h({\bf x}),\theta)\xi({\bf x})}{\int_{\{{\bf x}':h({\bf x}')=h({\bf x})\}}\eta(h({\bf x}'),\theta)\xi({\bf x}')
			d{\bf x}'}\\
		& = & \frac{\eta(h({\bf x}),\theta)\xi({\bf x})}{\eta(h({\bf x}),\theta)\int_{\{{\bf x}':h({\bf x}')=h({\bf x})\}}\xi({\bf x}')
			d{\bf x}'}.
	\end{eqnarray*}
	Thus, 
	\[
	\psi_{\theta;X|h(X)=h({\bf x})}=\frac{\xi({\bf x})}{\int_{\{{\bf x}':h({\bf x}')=h({\bf x})\}}\xi({\bf x}')
		d{\bf x}'}
	\]
	only depends on ${\bf x}$. 
\end{proof}

\begin{corollary}\label{mle:suff}
	A unique ML estimator is a function of a sufficient statistic. More generally, if a ML estimator exists then an ML estimator may be chosen so as to be a function of a sufficient statistic. 
\end{corollary}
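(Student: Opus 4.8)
The plan is to read the result directly off the Fisher--Neyman factorization of Theorem \ref{suff:char}. Suppose $h$ is sufficient, so that the likelihood factors as $L({\bf x};\theta)=\eta(h({\bf x}),\theta)\xi({\bf x})$ with $\xi>0$ not depending on $\theta$. For each fixed realization ${\bf x}$, maximizing $L({\bf x};\theta)$ over $\theta\in\Theta$ (as in Definition \ref{mle:def:post}) is then equivalent to maximizing $\theta\mapsto\eta(h({\bf x}),\theta)$, since the strictly positive factor $\xi({\bf x})$ is constant in $\theta$ and cannot shift the location of the maximum.

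First I would handle the case of a unique ML estimator. The decisive observation is that the objective $\theta\mapsto\eta(h({\bf x}),\theta)$ depends on the data ${\bf x}$ only through the value $h({\bf x})$. Hence, if two realizations satisfy $h({\bf x}_1)=h({\bf x}_2)$, the corresponding maximization problems are literally the same and, by uniqueness, admit the same maximizer. This shows that $\widehat\theta$ factors through $h$: there exists a function $G$ with $\widehat\theta({\bf x})=G(h({\bf x}))$, which is exactly the claim.

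For the general statement, where the maximizer need not be unique, the same argument shows that the set of maximizers $A({\bf x})={\rm argmax}_\theta\,\eta(h({\bf x}),\theta)$ depends on ${\bf x}$ only through $h({\bf x})$, so that $A({\bf x})=\widetilde A(h({\bf x}))$ for an appropriate set-valued map $\widetilde A$. It then suffices to choose, for each value $t$ in the range of $h$, a single element $G(t)\in\widetilde A(t)$ and set $\widehat\theta({\bf x}):=G(h({\bf x}))$; the resulting ML estimator is by construction a function of the sufficient statistic.

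The computation is essentially immediate. The only step demanding some care is the selection in the non-unique case, where one must make a coherent (and, if the measurability of $\widehat\theta$ matters, measurable) choice $t\mapsto G(t)$; I expect this to be the sole technical obstacle, routinely dispatched by a standard measurable selection argument. In the regular regime assumed throughout these notes, where $l$ is strictly concave in $\theta$ and the maximizer is therefore unique, the issue does not even arise.
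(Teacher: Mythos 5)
Your argument is correct and is essentially the paper's own proof: the paper simply observes that, by the Fisher--Neyman factorization (\ref{suff:char:eq}), maximizing $L({\bf x};\theta)$ over $\theta$ amounts to maximizing $\eta(h({\bf x}),\theta)$, which depends on ${\bf x}$ only through $h({\bf x})$. You have merely spelled out the details the paper leaves implicit (the coincidence of maximizers when $h({\bf x}_1)=h({\bf x}_2)$, and the selection from the argmax set in the non-unique case), which is a reasonable elaboration rather than a different route.
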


\begin{proof}
	Given that the ML estimator $\widehat\theta$ is obtained by maximizing the likelihood function 
	$L({\bf x};\theta)$
	in $\theta$ (for each ${\bf x}$), this is an obvious consequence of   (\ref{suff:char:eq}).
\end{proof}

\begin{example}\label{suff:norm:pop}(Sufficiency in  a normal population) If $X_j\sim\mathcal N(\mu,\sigma^2)$ we know from Example \ref{normal:w} 
	that
	\[
	L({\bf x};\theta)=(2\pi\theta_2)^{-n/2}e^{-\frac{1}{2\theta_2}\sum_{j=1}^n(x_j-\theta_1)^2}, \quad {\bf x}=(x_1,\cdots,x_n),
	\]  
	where $\theta=(\theta_1,\theta_2)=(\mu,\sigma^2)\in \Theta=\mathbb R\times\mathbb R_+$.
	We distinguish three cases:
	\begin{itemize}
		\item ($\theta_2$ is known and $\theta_1$ is the unknown parameter)  Set
		\[
		h_1({\bf x})=\frac{1}{n}\sum_jx_j
		\]
		so that $\sum_j(x_j-h_1(x))=0$ implies 
		\begin{eqnarray*}
			\sum_j(x_j-\theta_1)^2
			& = & \sum_j(x_j-h_1({\bf x})+h_1({\bf x})-\theta_1)^2\\
			& = & \sum_j(x_j-h_1({\bf x}))^2+n(h_1({\bf x})-\theta_1)^2, 
		\end{eqnarray*}
		which leads to the factorization
		\begin{equation}\label{fact:norm:tot}
			L({\bf x};\theta_1)=\underbrace{(2\pi\theta_2)^{-n/2}
				e^{-\frac{\sum_j(x_j-h_1({\bf x}))^2}{2\theta_2}}}_{\xi({\bf x})}
			\underbrace{e^{-\frac{n(h_1({\bf x})-\theta_1)^2}{2\theta_2}}}_{\eta(h_1({\bf x}),\theta_1)}.
		\end{equation}
		This shows that $h_1$ is a sufficient statistic for $\theta_1$ (given $\theta_2$). 
		\item ($\theta_1$ is known and $\theta_2$ is the unknown parameter) Here, 
		\[
		h_2({\bf x})=\sum_j(x_j-\theta_1)^2
		\]
		qualifies as a statistic and
		\[
		L({\bf x};\theta_2)=\underbrace{(2\pi\theta_2)^{-n/2}e^{-\frac{h_2({\bf x})}{2\theta_2}}}_{\eta(h_2({\bf x}),\theta_2)}\times \underbrace{1}_{\xi({\bf x})}
		\]
		shows that $h_2$ is a sufficient statistic for $\theta_2$ (given $\theta_1$). 
		\item ($\theta=(\theta_1,\theta_2)$ is the unknown bi-dimensional parameter). Here we set 
		\[
		\widetilde h_2({\bf x})=\sum_j(x_j-h_1({\bf x}))^2
		\]
		so (\ref{fact:norm:tot}) gives
		\[
		L({\bf x};\theta)=\underbrace{(2\pi\theta_2)^{-n/2}e^{-\frac{\widetilde h_2({\bf x})+n(h_1({\bf x})-\theta_1)^2}{2\theta_2}}}_{\eta((h_1({\bf x}),\widehat h_2({\bf x}),\theta))}\times \underbrace{1}_{\xi({\bf x})},
		\]
		which shows that $H({\bf x})=(h_1({\bf x}),\widetilde h_2({\bf x}))$ is a sufficient statistic for $\theta$. We thus see that the common practice, which has been extensively used in Subsection \ref{conf:int:sub}, of regarding $H$ as a sufficient statistic when sampling from a normal population, is fully justified.  \qed
	\end{itemize}
\end{example}

\begin{example}\label{suff:expon}(Sufficiency in an exponential population)
	If $X_j\sim{\rm Exp}(\lambda)$ then from Example \ref{mle:exp} we get 
	\[
	L({\bf x};\lambda)=\underbrace{\lambda^ne^{-\lambda h({\bf x})}}_{\eta(h({\bf x}),\lambda)}\times \underbrace{1}_{\xi({\bf x})}, 
	\]
	where $h({\bf x})=\sum_jx_j$ is a sufficient statistic for $\lambda$. \qed
\end{example}

\begin{example}\label{suff:bern}(Sufficiency in a Bernoulli or Poisson population)
	If $X_j\sim\mathsf{Ber}(p)$ then Example \ref{mle:discrete} gives
	\[
	L({\bf x};p)=\underbrace{p^{h({\bf x})}(1-p)^{n-h({\bf x})}}_{\eta(h({\bf x}),p)}\times 
	\underbrace{1}_{\xi({\bf x})},
	\]
	which shows that $h({\bf x})=\sum_jx_j$ is a sufficient statistic for estimating $p$. On the other hand,  if $X_j\sim{\mathsf{Pois}}(\rho)$ then, again by Example \ref{mle:discrete}, 
	\[
	L({\bf x};\rho)=\underbrace{\rho^{k({\bf x})}e^{-n\rho}}_{\eta(k({\bf x}),\rho)}\times 
	\underbrace{(\Pi_jx_j!)^{-1}}_{\xi({\bf x})},
	\]
	which confirms that $k({\bf x})=\sum_jx_j$ is a sufficient statistic for estimating $\rho$. \qed
\end{example}

\begin{example}\label{exp:fam:suff} (Sufficiency in the natural exponential family)
If $Y_j\sim {\sf ExpFam}(\theta,\phi)$ as in Example \ref{rem:ident:expfam} then 
	\[
L({\bf y};\theta)=\underbrace{e^{\frac{{\theta h({\bf y})}-nb(\theta)}{\phi}}}_{\eta(h({\bf y}),\theta)}\times 
\underbrace{e^{\sum_jc(y_j,\phi)}}_{\xi({\bf y})},
\]	
which shows that $h({\bf y})=\sum_jy_j$ is a sufficient statistic for estimating $\theta$. \qed	
	\end{example}

As illustrated by the computations in Remark \ref{conf:int:f}, at least in the regime of large samples it follows from (\ref{conf:int:th:up}) that the dependence on sample data of confidence intervals for ML estimators  occurs only through the estimator itself. This general observation clearly  aligns with Corollary \ref{mle:suff} and is definitely confirmed by all the examples examined above, where a simple relationship of the given sufficient statistic with the corresponding ML estimator is manifest.

\section{Hypothesis testing}\label{sec:hyp:test}
Our aim here is to discuss a bit more on the
heuristics behind the choices of the rejection regions appearing in the $\textsf F$-tests implemented in Remark \ref{F:test:q:var} and Example \ref{ow:anova} above.

\subsection{A glimpse at the Neyman-Pearson setup}\label{subsec:ht:setup}
 As usual, we are given a parametric statistic model 
\[
X_1,\cdots,X_n\sim\psi_\theta, \quad \theta\in\Theta\subset\mathbb R^p
\] 
as	in Definition \ref{stat:mod:def}  and Remark \ref{underlying}, so that 
$(\Omega,\mathcal F,\{\mathcal P_\theta\}_{\theta\in\Theta})$ is the underlying family of probability spaces and $P_\theta=\psi_\theta dx$ is the common distribution of the components of the associated random vector $X=(X_1,\cdots,X_n):\Omega\to \mathbb R^n$. 
Given {\em disjoint} subsets  $\Theta_0, \Theta_{\rm a}\subset \Theta$ with $\Theta=\Theta_{0}\cup\Theta_{\rm a}$, {\em hypothesis testing} concerns the prospect of using the available data in an observed value ${\bf x}$ of
$X$ to provide statistical evidence for deciding between the {\em null hypothesis}
\[
H_0: \quad \theta\in\Theta_0
\]
and the {\em alternative hypothesis}
\[
H_{\rm a}: \quad \theta\in\Theta_{\rm a}.
\]
One adheres to the usual asymmetry in regarding $H_0$ as the {\em status quo} and then chooses a statistics $h=h(X):\Omega\to\mathbb R$ and a {\em rejection region} $R\subset\mathbb R$ so that $H_0$ gets {\em rejected} if the realization $h({\bf x})$ of $h(X)$ takes value in $R$. A pair $T=(h,R)$ as above is called a {\em test} for the given statistical model and we denote by $\mathscr T$ the collection of all such tests\footnote{For instance, if $R=[r,+\infty)$ then we say that $r$ is a {\em critical value} for the test.}. As we will see in Remark \ref{size:test:3} below, the eventual implementation of a test $T\in\mathscr T$ necessarily involves the knowledge of the distribution of (a perhaps complicated function of) $h(X)$ under the null hypothesis. 

In order to quantify the possible types of errors in making such a  decision, we consider the {\em power function} $\pi:\Theta\to[0,1]$ of $(h,R)$, 
\[
\pi(\theta)=\mathcal P_\theta(h(X)\in R).
\] 
We then see that restriction to $\Theta_0$, namely, 
\[
\gamma(\theta):=\pi|_{\Theta_0}(\theta), \quad \theta\in\Theta_0,
\]   
quantifies the {\em type I error} of rejecting $H_0$ when it is true, whereas restriction to $\Theta_{\rm a}$,
\[
\delta(\theta):=\pi|_{\Theta_{\rm a}}(\theta), \quad \theta\in\Theta_{\rm a},
\]
is such that 
\[
1-\delta(\theta)=\mathcal P_\theta(h(X)\notin R)
\]
measures the  {\em type II error} of {\em not} rejecting $H_0$ when it is false. Ideally, one would seek for an strategy minimizing {\em both} errors at a time, but simple examples show that this is doomed to fail in general. The standard way to overcome this is to 
search for a test which minimizes type II error under the constraint that type I error remains uniformly bounded from above by a fixed amount given in advance. 

\begin{definition}\label{def:size}
	Given $\alpha\in(0,1)$ we say that a test $T\in\mathscr T$ has {\em confidence level} $\alpha$ if 
	\begin{equation}\label{def:size:2}
		\sup_{\theta\in\Theta_0}\gamma(\theta)= \alpha, 
	\end{equation}
	and we denote by $\mathscr T_\alpha$ the collections of all such tests. 
\end{definition}

\begin{definition}\label{def:best:test}
	A test $T\in\mathscr T_\alpha$ is {\em uniformly most powerful} (UMP) if it satisfies (with self-explanatory notation)
	\[
	\delta(\theta)\geq \delta^*(\theta), \quad \theta\in\Theta_{\rm a},
	\]
	for any $T^*\in\mathscr T_\alpha$.
\end{definition}	

\begin{remark}\label{size:test:3}
	Note that (\ref{def:size:2}), which may be rewritten as
	\begin{equation}\label{size:test:4}
		\sup_{\theta\in\Theta_0}\mathcal P_\theta(h(X)\in R)=\alpha,
	\end{equation}
	allows us to explicitly determine the rejection region $R$ from the confidence level $\alpha$ only in case a (perhaps approximate) knowledge of the distribution of $h(X)$ {\em under} $H_0$ is at hand, a procedure  illustrated in the examples considered below. In other words, the ubiquitous ``Problem of Distribution'' in Parametric Statistics resurfaces in this setting as well, although here the relevant statistics $h(X)$ gets restricted to the parametric region where the null hypothesis holds true. \qed   
\end{remark}

The celebrated Neyman-Pearson lemma \cite[Theorem 8.3.12]{casella2021statistical} exhibits a UMP test in the simple hypotheses case, where both $\Theta_0$ and $\Theta_{\rm a}$ contain a single element. Unfortunately, such a test may not exist even for one of the simplest {\em composite} hypotheses  cases, namely, a ``two-sided'' test of the form $\Theta\subset \mathbb R$ some open interval, $\Theta_0=\{\theta_0\}$ for some $\theta_0\in\Theta$ and $\Theta_1=\Theta\backslash\Theta_0$ 
(as in Remark \ref{F:test:q:var}, for instance); see \cite[Example 8.3.19]{casella2021statistical} and the surrounding discussion for more on this rather delicate point. Of course, we may always restrict further the class of contenders where the ideal test should be sought (consistent, unbiased, etc.) but it seems that none of these strategies produces a test with optimal performance in {\em all} cases. 

\subsection{Testing via likelihood ratios}\label{subsec:lratios}
The state of affairs indicated in the previous paragraph suggests that, instead of {\em systematically} trying to find the best test in a given context, one should proceed {\em heuristically} so  as to single out  a family of tests which are relatively easy to implement, reproduce most known composite tests for samples of any size and have nice asymptotic properties (see Remark \ref{kl:int:lrt} below).

Recall from Subsection \ref{asym:mle:est} that the ML estimators, 
computed in terms of the likelihood function as in Definition \ref{likeli:def}, have many remarkable properties, including asymptotic normality. Moreover,
given the available information contained in the realization ${\bf x}$ of the random sample $X$, the discussion surrounding (\ref{MLE:heur}) justifies  
regarding  $\sup_{\theta\in\Theta_0}L({\bf x};\theta)$ as the best evidence in favor of $H_0$ and $\sup_{\theta\in\Theta_{\rm a}}L({\bf x};\theta)$ as the best evidence in favor of $H_{\rm a}$, which suggests  formulating a hypothesis test based on the {\em likelihood ratio}
\begin{equation}\label{like:r}
	{\bf x}\in\mathbb R^n\mapsto\frac{\sup_{\theta\in\Theta_0}L({\bf x};\theta)}{\sup_{\theta\in\Theta_{\rm a}}L({\bf x};\theta)}\in [0,+\infty].
\end{equation} 
At the risk of (over)simplifying the exposition, but at the same time remaining in a generality that will suffice for the applications we have in mind, we assume from now on that $\Theta_0\subset \Theta$ has a negligible size (as a subset of $\Theta$) and $\Theta_{\rm a}=\Theta\backslash\Theta_0$. Typically, $\Theta\subset \mathbb R^p$ will be an open subset, $\Theta_0$  the portion of an affine $k$-plane lying in $\Theta$ with $k<p$ and $\Theta_{\rm a}$  its complement in $\Theta$.
In this setting it seems reasonable to
replace $\Theta_{\rm a}$ by $\Theta$ in the denominator of (\ref{like:r}), so that under suitable regularity assumptions the likelihood ratio becomes 
\begin{equation}\label{like:r:s}
	{\bf x}\in\mathbb R^n\mapsto
	\Lambda({\bf x}):=
	\frac{L({\bf x};\widehat\theta_0)}{L({\bf x};\widehat\theta)}\in [0,1],
\end{equation}	
where 
\[
\widehat\theta=\sup_{\theta\in\Theta}L({\bf x};\theta)
\]
is the MLE for $\theta$ (as in Definition \ref{mle:def:post}).
and 
\[
\widehat\theta_0=\sup_{\theta\in\Theta_0}L({\bf x};\theta)
\]
is the {\em null} MLE for $\theta$. 
This leads to a remarkable class of statistical tests.

\begin{definition}\label{def:lrt}
	Under the conditions above, the {\em likelihood ratio test} $T=(h,R)\in\mathscr T_\alpha$ is performed by choosing
	\[
	h({\bf x})=-2\ln\Lambda({\bf x})
	\]
	and $R=[r,+\infty)$, where $r>0$ is determined by 
	\begin{equation}\label{rej:int}
		\sup_{\theta\in \Theta_0}\mathcal P_\theta(h(X)\geq r)= \alpha. 
	\end{equation}
\end{definition}

\begin{remark}\label{size:test:5}
	Clearly, (\ref{rej:int}) is a special case of (\ref{size:test:4}), where the rejection region $R$ now takes the form $[r,+\infty)$ for some critical value $r>0$, and we are supposed to solve it for $r$ given the confidence level $\alpha$. But notice that, as already observed in Remark \ref{size:test:3}, this requires knowing the distribution of $h$ under $H_0$. In this regard, if $h(X)$  is found to be $\Theta_0$-{\em ancillary} in the sense that its distribution does {\em not} depend on $\theta\in \Theta_0$ then  
	\begin{equation}\label{rej:int:2}
		\alpha=\mathcal P_\theta(h(X)\geq r)\,\,{\rm for}\,\,{\rm any}\,\,\theta\in\Theta_0 
	\end{equation}
	determines 	$r$ as a function of $\alpha$. 
	\qed
\end{remark}

\begin{remark}\label{kl:int:lrt}
It follows from (\ref{kl:arg:3}) that the likelihood ratio statistics satisfies
\[
h(X)\approx 2n \left(D^{KL}_{\widetilde\theta_X}(\widehat\theta_0)-D^{KL}_{\widetilde\theta_X}(\widehat\theta)\right), 
\]	
so that likelihood ratio tests admit a natural information-theoretic interpretation: they measures how much KL divergence to the empirical distribution $\widetilde\theta_X$ is reduced when passing from the null model to the full model. Again, taking into account that $\widetilde\theta_X$ is consistent (for the true parameter), we see that the likelihood ratio statistics asymptotes the gap, as measured by the Kullback-Leibler divergence, between the best approximations of the truth under the null and alternative.
\qed
	\end{remark}

Although we will restrict ourselves to normal populations, the examples below will suffice to illustrate the remarkable flexibility of this construction. Moreover, in all these examples the likelihood ratio statistics is $\Theta_0$-ancillary in the sense of Remark \ref{size:test:5}, so that (\ref{rej:int:2}) applies in order to solve for $r$ in terms of $\alpha$.

\begin{example}\label{z:test}
	($z$-test for the mean of a normal population with known variance) 
	As in Example \ref{normal:w} we
	assume that $X_j\sim\mathcal N(\mu,\sigma^2)$, where $\theta_2=\sigma^2$ is known. Thus, $\Theta=\{\theta_1=\mu\}=\mathbb R$, $\Theta_0=\{\mu_0\}$ for some $\mu_0\in\mathbb R$, $\Theta_a=\mathbb R\backslash\{\mu_0\}$ and we want to test
	\[
	H_0: \mu=\mu_0\quad{\rm vs}\quad  H_{\rm a}:\mu\neq \mu_0.
	\]
	We recall that $\widehat\theta_1=\overline X_n$ is the MLE for $\mu$. Using (\ref{normal:w:1}) and (\ref{U:sigma:V:c}) we compute 
	\begin{eqnarray*}
		h({\bf x})
		& = & 
		-2\ln\frac{L({\bf x};\mu_0)}{L({\bf x};\widehat\theta_1)}\\
		& = & 
		-2\ln \frac{(2\pi\sigma^2)^{-n/2}e^{-\frac{1}{2\sigma^2}
				\sum_j(x_j-\mu_0)^2}}{(2\pi\sigma^2)^{-n/2}
			e^{-\frac{1}{2\sigma^2}
				\sum_j(x_j-\widehat\theta_1)^2}}\\
		& = & -2\ln e^{-\frac{n}{2\sigma^2}(\widehat\theta_1-\mu_0)^2}\\
		& = & \left(\frac{\overline x_n-\mu_0}{\sigma/\sqrt{n}}\right)^2. 
	\end{eqnarray*}
	Thus, under $H_0$ we see that
	\[
	h(X)=Z(X)^2\sim\chi^2_1,
	\] 		
	where
	\[
	Z(X)=\frac{\overline X_n-\mu_0}{\sigma/\sqrt{n}}\sim\mathcal N(0,1). 
	\]	
	Now, by (\ref{rej:int}) the rejection interval $R=[r,+\infty)$ is determined by
	\[
	\mathcal P_{\theta_0}(Z(X)^2\geq r)=\alpha,
	\]
	so we may take  $r=\chi^2_{1,1-\alpha}$, the $\chi^2$-quantile  as in (\ref{quantile:chi}). Since
	\[
	\alpha=\mathcal P_{\theta_0}(Z(X)^2\geq {r})=\mathcal P_{\theta_0}(-\sqrt{r}\leq Z(X)\leq \sqrt{r}), 
	\]
	we may also use the standard normal quantiles to make sure that if
	\[
	Z({\bf x})\in(-\infty,-z_{1-\alpha/2}]\cup [z_{1-\alpha/2},+\infty) 
	\]
	then $H_0$ gets rejected.
	\qed
\end{example}

\begin{example}\label{t:test}
	($t$-test for the mean of a normal population with unknown variance) 
	As in Example \ref{normal:w} we
	assume that $X_j\sim\mathcal N(\mu,\sigma^2)$, where $(\theta_1,\theta_2)=(\mu,\sigma^2)$ is unknown, so that 
	$\Theta=\mathbb R\times \mathbb R_+$.
	Also, we fix $\mu_0\in\mathbb R$ and set
	$\Theta_0=\{\mu_0\}\times\mathbb R_+$, a half-line contained in $\Theta$. 
	As before,  we want to test
	\[
	H_0: \mu=\mu_0\quad{\rm vs}\quad H_{\rm a}:\mu\neq \mu_0.
	\]
	We recall that $\widehat\theta=(\widehat\theta_1,\widehat\theta_2)$, where 
	$\widehat\theta_1=\overline X_n$ 
	and 
	\[
	\widehat\theta_2=\frac{1}{n}\sum_j(X_j-\overline\theta_1)^2
	\]
	is the MLE for $\theta_2$, 
	so that
	\[
	\sup_{\theta\in\Theta}L({\bf x};\theta)=L({\bf x};\widehat \theta_1,\widehat\theta_2). 
	\]
	On the other hand, one has 
	\[
	\sup_{\theta\in\Theta_{0}}L({\bf x};\theta)=L({\bf x};\mu_0,\widehat\theta_{20}),
	\]
	where the null MLE for $\theta_2$ is
	\[
	\widehat\theta_{20}=\frac{1}{n}\sum_j(x_j-\mu_0)^2. 
	\]
	Thus, the likelihood ratio is
	\begin{eqnarray*}
		\Lambda({\bf x})
		& = &
		\left(\frac{\widehat\theta_{20}}{\widehat\theta_{2})}\right)^{-n/2}
		\frac{e^{-\frac{1}{2\widehat\theta_{20}}\sum_j(x_j-\mu_0)^2}}{e^{-\frac{1}{2\widehat\theta_{2}}\sum_j(x_j-\widehat\theta_1)^2}}\\
		& = & 
		\left(\frac{\widehat\theta_{20}}{\widehat\theta_{2}}\right)^{-n/2}
		\frac{e^{-n/2}}{e^{-n/2}}\\
		& = & 
		\left(\frac{\widehat\theta_{20}}{\widehat\theta_{2}}\right)^{-n/2},
	\end{eqnarray*}
	so that, again using (\ref{U:sigma:V:c}),
	\[
	h({\bf x})=n\ln\left[1+\frac{1}{n-1}\left(\frac{\overline x_n-\mu_0}{s_n(x)/\sqrt{n}}\right)^2\right].
	\]
	Thus, under $H_0$ we see that 
	\begin{equation}\label{h:toexp}
		h(X)=n\ln\left[1+\frac{1}{n-1}T_{n-1}(X)^2\right],
	\end{equation}
	where
	\[
	T_{n-1}(X)=\frac{\overline X_n-\mu_0}{S_n(X)/\sqrt{n}}\sim\mathfrak t_{n-1},
	\]
	or also
	\[
	T_{n-1}(X)^2\sim \textsf F_{1,n-1}
	\]
	by Corollary \ref{f-dis:cons}.
	Quite informally, we may expand (\ref{h:toexp}) as $n\to+\infty$ to obtain
	\begin{eqnarray*}
		h(X)
		& = & \ln\left[1+\frac{1}{n-1}T_{n-1}(X)^2\right]^n\\
		& = & \ln\left[1+\frac{n}{n-1}T_{n-1}(X)^2+\cdots\right]\\
		& = & \frac{n}{n-1}T_{n-1}(X)^2+\cdots\\
		& \stackrel{p}{\to}&
		\chi^2_1,
	\end{eqnarray*}
	where the dots represent lower order terms (which vanish as $n\to+\infty$) and we used Remark \ref{tk:to:norm} in the last step. Thus, for large samples we may take $R=[\chi^2_{1,1-\alpha},+\infty)$ as the ``approximate'' rejection interval. Otherwise, we use 
	that
	\[
	\alpha=\mathcal P_{\theta_0}(T_{n-1}(X)^2\geq 	c_{n,r})=
	\mathcal P_{\theta_0}(-\sqrt{c_{n,r}}\leq T_{n-1}(X))\leq 	\sqrt{c_{n,r}}),
	\]
	where
	\[
	c_{n,r}=(n-1)(e^{r/n}-1),
	\]
	to reject $H_0$ if either
	\[
	T_{n-1}({\bf x})^2\in\left[\textsf f_{1,n-1,1-\alpha},+\infty\right)
	\]
	or equivalently
	\[
	T_{n-1}({\bf x})\in(-\infty,\mathfrak t_{n-1,\alpha/2}]\cup [\mathfrak t_{n-1,1-\alpha/2},+\infty),
	\]		
	which is a more familiar presentation of the test.
	\qed		 
\end{example}

\begin{example}\label{f:test:dif:v}($\textsf F$-test for the equality of variances of independent normal populations) We will use the notation of Example \ref{two:samples} and Remark \ref{F:test:q:var} 
	with the aim of testing 
	\[
	H_0: \sigma_X^2=\sigma_Y^2\quad{\rm vs}\quad H_{\rm a}:\sigma_X^2\neq\sigma_Y^2.
	\]
	Since we assume independence of the samples, Example \ref{normal:w} implies that the corresponding likelihood function is  
	\[
	L({\bf x},{\bf y};\theta)
	=  
	\frac{1}{(2\pi)^{(m+n)/2}\theta_{2X}^{m/2}\theta^{n/2}_{2X}} 
	e^{-\frac{1}{2}\left(\sum_{j=1}^m
		\frac{(x_j-\theta_{1X})^2}{\theta_{2X}}
		+
		\sum_{k=1}^n
		\frac{(y_k-\theta_{1Y})^2}{\theta_{2Y}}\right)},
	\]
	where
	\[
	\theta=(\theta_{1X},\theta_{1Y},\theta_{2X},\theta_{2Y})=(\mu_X,\mu_Y,\sigma^2_X,\sigma^2_Y)\in\mathbb R^2\times\mathbb R_+^2,
	\]
	so the MLE estimators are
	\[
	\widehat\theta_{1X}=\overline X_m,\quad \widehat\theta_{1Y}=\overline Y_m
	\] 	 
	and 
	\[
	\widehat\theta_{2X}=\frac{m-1}{m}S^2_X=
	\frac{1}{m}\sum_j(X_j-\widehat\theta_{1X})^2, \quad 
	\widehat\theta_{2Y}=\frac{n-1}{n}S^2_Y=
	\frac{1}{n}\sum_k(Y_k-\widehat\theta_{1Y})^2.
	\]
	Hence, 
	\begin{eqnarray*}
		\sup_{\theta\in\Theta} L({\bf x},{\bf y};\theta)
		& = & 
		L({\bf x},{\bf y};\widehat\theta_{1X},\widehat\theta_{1Y},\widehat\theta_{2X},
		\widehat\theta_{2Y})\\
		& = & 
		\frac{1}{(2\pi)^{(m+n)/2}\widehat\theta_{2X}^{m/2}\widehat\theta_{2Y}^{n/2}} 
		e^{-\frac{1}{2}\left(\sum_{j=1}^m
			\frac{(x_j-\widehat\theta_{1X})^2}{\widehat\theta_{2X}}
			+
			\sum_{k=1}^n
			\frac{(y_k-\widehat\theta_{1Y})^2}{\widehat\theta_{2Y}}\right)}\\
		& = &
		\frac{e^{-(m+n)/2}}{(2\pi)^{(m+n)/2}\widehat\theta_{2X}^{m/2}\widehat\theta^{n/2}_{2X}}. 
	\end{eqnarray*}	
	On the other hand, restriction to $\Theta_0$ gives
	\[
	L({\bf x},{\bf y};\theta)
	=  
	\frac{1}{(2\pi)^{(m+n)/2}\theta_2^{(m+n)/2}} 
	e^{-\frac{1}{2\theta_2}\left(\sum_j
		(x_j-\theta_{1X})^2
		+\sum_k{(y_k-\theta_{1Y})^2}\right)},
	\]
	where $\theta_2=\sigma_X^2=\sigma_Y^2$ is the common variance, so that maximization over all possible values of $(\theta_{1X},\theta_{1Y},\theta_{2})$ is achieved at the null MLE $(\widehat\theta_{1X},\widehat\theta_{1Y},\widehat\theta_{20})$,
	where
	\[
	\widehat\theta_{20}=\frac{1}{m+n}\left(\sum_j
	(x_j-\widehat\theta_{1X})^2
	+\sum_k{(y_k-\widehat\theta_{1Y})^2}\right).
	\]
	It follows that
	\begin{eqnarray*}
		\sup_{\theta\in\Theta_0} L({\bf x},{\bf y};\theta)
		& = & 
		L({\bf x},{\bf y};\widehat\theta_{1X},\widehat\theta_{1Y},\widehat\theta_{20})\\
		& = & 
		\frac{1}{(2\pi)^{(m+n)/2}\widehat\theta_{20}^{(m+n)/2}} 
		e^{-\frac{1}{2\widehat\theta_{20}}\left(\sum_j
			((x_j-\widehat\theta_{1X})^2 +\sum_k
			+{(y_k-\widehat\theta_{1Y})^2}\right)}\\
		& = &
		\frac{e^{-{(m+n)/2}}}{(2\pi)^{(m+n)/2}\widehat\theta_{20}^{(m+n)/2}}, 
	\end{eqnarray*}
	so the likelihood radio is 
	\[
	\Lambda({\bf x},{\bf y})=\left(\frac{\widehat\theta_{20}}{\widehat\theta_{2X}}\right)^{-m/2}
	\left(\frac{\widehat\theta_{20}}{\widehat\theta_{2Y}}\right)^{-n/2},
	\]
	and from this we easily deduce that
	\[
	h({\bf x},{\bf y})
	= 
	\ln\left[c
	\left(1+a\left(\frac{S_{X}^2}{S_{Y}^2}\right)^{-1}\right)^m
	\left(1+b\frac{S_{X}^2}{S_{Y}^2}\right)^n
	\right],
	\]
	where $a$, $b$ and $c$ are certain constants depending only on $m$ and $n$ with $ab=1$.  
	Thus, as in Remark \ref{F:test:q:var} we see that under  $H_0$,
	\[
	h(X,Y)=
	\ln\left(c
	\left(1+aU(X,Y)^{-1}\right)^m
	\left(1+bU(X,Y)\right)^n
	\right),
	\] 
	where 
	\[
	U(X,Y)=\frac{S^2_X}{S^2_Y}\sim\textsf F_{m-1,m-1}.
	\]
	Now note that
	$	h({\bf x},{\bf y})\geq e^r$ if and only if $u:=U({\bf x},{\bf y})$ satisfies $f(u)\geq e^r/c$, where 
	\[
	f(u):=(1+au^{-1})^m(1+bu)^n, \quad u>0.
	\]
	Also, a little Calculus shows that $f$ is strictly convex with its unique minimal value achieved at $u_0=ma/n=m/nb$ (this analysis uses that $ab=1$ in a crucial way). Hence, there exist a {\em maximal} $0<\underline u<u_0$ and a {\em minimal} $\overline u>u_0$ with the property that 
	for any $r$ such that
	\[
	f\left(\frac{ma}{n}\right)=f\left(\frac{m}{nb}\right)=\left(1+\frac{n}{m}\right)^m
	\left(1+\frac{m}{n}\right)^n<\frac{e^r}{c}
	\]
	there holds 
	$f(u)\geq e^r/a$ whenever either $u\leq\underline u$ or $u\geq \overline u$.  This means that we may reject $H_0$ if $u$ falls outside $(\underline u,\overline u)$. More concretely, given a confidence level $\alpha$ small enough, we may reject $H_0$ if  
	\[
	u\in \left(0,\textsf f_{m-1,n-1,\alpha/2}\right]\bigcup \left[\textsf f_{m-1,n-1,1-\alpha/2},+\infty\right),
	\]
	which is consistent with (\ref{two:sided}).
	\qed
\end{example}

\begin{example}\label{t:text:dif:m}($\textsf F$-test for the equality of means of $p\geq 2$ independent normal populations with a common but unknown variance) We will use the notation from Example \ref{ow:anova}, so we have {\em independent} random samples $X_{jk}\sim\mathcal N(\mu_j,\sigma^2)$ for $j=1,\cdots,p$. Thus, 
	$\Theta=\mathbb R^{p}\times \mathbb R_+$ with $\theta=(\theta_{11},\cdots,\theta_{1p},\theta_2)$, where  $\theta_{1j}=\mu_j$ and $\theta_2=\sigma^2$, the common variance. Also, 
	$\Theta_0=\{\theta\in\Theta;\theta_{11}=\cdots=\theta_{1p}\}$ and we want to test
	\[
	H_0:\mu_1=\cdots=\mu_p\quad{\rm vs}\quad H_{\rm a}: \mu_j\neq \mu_{j'}\,\,{\rm for}\,\,{\rm some}\,\,j\neq j'.
	\] 
	Notice that this is precisely the one way ANOVA test in Example \ref{ow:anova}. If ${\bf x}=({\bf x}_1,\cdots,{\bf x}_p)$ with ${\bf x}_j=({\bf x}_{j1},\cdots,{\bf x}_{jn_j})\in\mathbb R^{n_j}$ then the likelihood function is
	\begin{eqnarray*}
		L({\bf x};\theta)
		& = & 
		\Pi_{j=1}^p\Pi_{k=1}^{n_j}{(2\pi \theta_2)^{-n_j/2}}
		e^{-\frac{1}{2\theta_2}\sum_{k=1}^{n_j}(x_{jk}-\theta_{1j})^2}\\
		& = & 
		{(2\pi \theta_2)^{-n/2}}
		e^{-\frac{1}{2\theta_2}\sum_{j=1}^p\sum_{k=1}^{n_j}({\bf x}_{jk}-\theta_{1k})^2}.
	\end{eqnarray*}
	By passing to the log likelihood function and maximizing over $\Theta$ in the usual way we see that the MLE for $\theta$ is $\widehat\theta=(\widehat\theta_{{11}},\cdots,\widehat\theta_{1p},\widehat\theta_2)$, where
	\[
	\widehat\theta_{{1j}}({\bf x})=\overline{\bf x}_{j\bullet}=\frac{1}{n_j}\sum_{k=1}^{n_j}{\bf x}_{jk} \,\,{\rm and}\,\, 
	\widehat\theta_2({\bf x})=\frac{1}{n}\sum_{j=1}^p\sum_{k=1}^{n_j}({\bf x}_{jk}-\overline{\bf x}_{j\bullet})^2 
	\] 
	are realizations of $\overline X_{j\bullet}$ and $S^2_{\rm Within}(X)/n$, respectively. 
	On the other hand, restricted to $\Theta_0$ we have
	\[
	L({\bf x};\theta)=
	{(2\pi \theta_2)^{-n/2}}
	e^{-\frac{1}{2\theta_2}\sum_{j=1}^p\sum_{k=1}^{n_j}({\bf x}_{jk}-\theta_0)^2},
	\]
	where $\theta_0=\mu_1=\cdots=\mu_p$ is the common mean, so that maximization over $\Theta_0$ gives the corresponding null MLEs
	\[
	\widehat\theta_{00}({\bf x})={\bf x}_{\bullet\bullet}=\frac{1}{n}\sum_{j=1}^pn_j\overline{\bf x}_{j\bullet}=\frac{1}{n}\sum_{j=1}^p\sum_{k=1}^{n_j}{\bf x}_{jk}\,\,{\rm and}\,\, 
	\widehat\theta_{20}({\bf x})=\frac{1}{n}\sum_{j=1}^p\sum_{k=1}^{n_j}({\bf x}_{jk}-\overline{\bf x}_{\bullet\bullet})^2,
	\]
	which
	are realizations of $\overline X_{\bullet\bullet}$ and $S^2_{\rm Total}(X)/n$, respectively. It follows that
	\begin{eqnarray*}
		\Lambda({\bf x})
		& = & 
		\frac
		{{(2\pi \widehat\theta_{20}({\bf x}))^{-n/2}}
			e^{-\frac{1}{2\widehat\theta_{20}({\bf x})}\sum_{j=1}^p\sum_{k=1}^{n_j}({\bf x}_{jk}-{\bf x}_{\bullet\bullet})^2}}
		{{(2\pi \widehat\theta_{2}({\bf x}))^{-n/2}}
			e^{-\frac{1}{2\widehat\theta_{2}({\bf x})}\sum_{j=1}^p\sum_{k=1}^{n_j}({\bf x}_{jk}-{\bf x}_{j\bullet})^2}}\\
		& = &
		\left(\frac{\widehat\theta_{20}({\bf x})}{\widehat\theta_{2}({\bf x})}\right)^{-n/2}
		\frac{e^{-n/2}}{e^{-n/2}},		
	\end{eqnarray*}
	so that using (\ref{ow:anova:2}), 
	\[
	h({\bf x})=n\ln\left(1+\frac{p-1}{n-p}\frac{s^2_{\rm Between}({\bf x})/(p-1)}
	{s^2_{\rm Within}({\bf x})/(n-p)}\right).
	\]
	Hence, as in Example \ref{ow:anova} we see that under $H_0$,
	\[
	h(X)=n\ln \left(1+\frac{p-1}{n-p}V\right),
	\]
	where
	\[
	V=\frac{S^2_{\rm Between}/(p-1)}{S^2_{\rm Within}/(n-p)}\sim \textsf F_{p-1,n-p}.
	\]
	Again, we may expand this as $n\to +\infty$ to find that
	\begin{eqnarray*}
		h(X)
		& = & 
		\ln \left(1+\frac{p-1}{n-p}V\right)^n\\
		& = & 
		\ln\left(1+\frac{n(p-1)}{n-p}V+\cdots\right)\\
		& = & (p-1)V+\cdots\\
		& \stackrel{d}{\to} & \chi^2_{p-1}, 
	\end{eqnarray*}
	where we used Remark \ref{tk:to:norm} in the last step.  Thus, for large samples we may take $R=[\chi^2_{p-1,1-\alpha},+\infty)$ as the rejection interval. Otherwise, we use that
	\[
	\alpha=\sup_{\theta_0\in\Theta_0}\mathcal P_{\theta_0}\left(V({\bf x})\geq\frac{n-p}{p-1}\left(e^{r/n}-1\right) 	\right)
	\]
	to reject $H_0$ if
	\[
	V({\bf x})\in\left[\textsf f_{p-1,n-p,1-\alpha},+\infty\right)
	\]
	as in (\ref{one:sided}).\qed
\end{example}			

\begin{example}\label{ex:f:reg}($\textsf F$-test for statistical significance of the linear regression model)
	We consider here the linear regression model in (\ref{model:mls}), whose likelihood function $L({\bf y};\beta,\sigma^2)$ is given by (\ref{like:lrmod}), in order to 
	test the full ``intercept-only'' hypothesis appearing in Example \ref{weigh:comb}:
	\begin{equation}\label{null:hyp:lm}
		H_0: \beta_1=\cdots=\beta_p=0\quad {\rm vs}\quad H_{\rm a}: \beta_j\neq 0\,\,{\rm for}\,\,{\rm some}\,\,j.		
	\end{equation}
	In other words, the null hypothesis here says that ${\bf X}$ has no influence whatsoever on ${\bf Y}$ so its rejection provides statistical evidence for employing the model as it is posed in Example \ref{mle:imp:lsm:n} (that is, with the full ``slope'' $(\beta_1,\cdots,\beta_n)$ included). We have $\theta=(\beta,\theta_2)$, where $\theta_2=\sigma^2$, so the usual calculation implies that the corresponding MLE is $(\widehat\beta,\widehat\theta_2)$, where 
	\[
	\widehat\theta_2=\frac{1}{n}|{\bf Y}-\mathfrak x\widehat\beta|^2=
	\frac{1}{n}|\mathfrak x\beta+{\bf e}-\mathfrak x\widehat\beta|^2,
	\]
	so that (\ref{rbeta:e}) gives
	\[
	\widehat\theta_2=
	\frac{|\widehat{\bf e}|^2}{n}\\
	= 
	\frac{SS_{\rm Res}}{n},
	\]
	and we verify that
	\[
	\sup_{\theta\in\Theta} L({\bf y};\beta,\theta_2)
	=   L({\bf y};\widehat\beta,\widehat\theta_2)
	=  (2\pi {SS_{\rm Res}}/{n})^{-n/2}e^{-n/2}. 
	\]
	On the other hand, under the null hypothesis, 
	\[
	L({\bf y};\beta_0,\theta_2)=(2\pi\theta_2)^{-n/2}
	e^{-\frac{|{\bf y}-\beta_0{\bf 1}|^2}{2\theta_2}},
	\]
	so that the null MLE estimator for $\theta_2$ is
	\begin{equation}\label{est:t20}
		\widehat\theta_{20}=\frac{1}{n}\|{\bf Y}-\widehat\beta_0{\bf 1}\|^2\stackrel{(\ref{weigh:comb:2})}{=}\frac{1}{n}\|{\bf Y}-\overline{\bf Y}{\bf 1}\|^2=\frac{SS_{\bf Y\bf Y}}{n}, 
	\end{equation}
	which gives 
	\[
	\sup_{\theta\in\Theta_0} L({\bf y};\beta,\theta_2)
	=   L({\bf y};\widehat\beta_0,\widehat\theta_{20})
	=  (2\pi{SS_{\bf Y\bf Y}}/{n})^{-n/2}e^{-n/2}. 
	\]
	It then follows that the likelihood ratio statistics is 
	\begin{equation}\label{stat:test:lm}
		h({\bf y})= \ln\left(\frac{\widehat\theta_{20}}{\widehat\theta_2}\right)^n
		=
		\ln\left(\frac{SS_{\bf Y\bf Y}}{SS_{\rm Res}}\right)^n
		=
		\ln\left(1+\frac{SS_{\rm Reg}}{SS_{\rm Res}}\right)^n,
	\end{equation}
	where we used that, as in (\ref{dec:ss}), 
	\begin{equation}\label{dec:ss:2}
		SS_{\bf Y\bf Y}=SS_{\rm reg}+SS_{\rm Res}. 
	\end{equation}
	We now proceed to the appropriate counting of degrees of freedom as we did in Example \ref{t:text:dif:m}. 
	We see from Propositions \ref{rss:dist} and \ref{proc:conf:int:mls}
	that  $\sigma^{-2}SS_{\rm Res}\sim\chi^2_{n-p-1}$ is independent of $SS_{\rm Reg}$ and hence of $\widehat{\bf Y}=\mathfrak x\widehat\beta$, so $SS_{\rm Res}$ is independent of $SS_{\rm Reg}$ (recall that we are conditioning on $\mathfrak X=\mathfrak x$). On the other hand, 
	under $H_0$ we have ${\bf Y}\sim\mathcal N(\beta_0{\bf 1},\sigma^2I_{n\times n})$ and $\widehat\beta_0=\overline{\bf Y}$, which gives
	$\sigma^{-2}SS_{\bf Y\bf Y}\sim\chi^2_{n-1}$ by Proposition \ref{u:v}. Thus, again  under $H_0$, we get from (\ref{dec:ss:2}) that $\sigma^{-2}SS_{\rm Reg}\sim\chi^2_{p}$ and we conclude that
	\[
	h({\bf Y})=\ln\left(1+\frac{p}{n-p-1}W({\bf Y})\right)^n,
	\]
	where
	\[
	W({\bf Y})=\frac{SS_{\rm Reg}/p}{SS_{\rm Res}/(n-p-1)}\sim\textsf F_{p,n-p-1}.  
	\]
	As in Example \ref{t:text:dif:m}, $W({\bf Y})\stackrel{d}{\to}\chi_p^2$ as $n\to+\infty$, 
	so for large samples we may take $R=[\chi^2_{p,1-\alpha},+\infty)$ as the rejection interval. Otherwise, we must
	reject $H_0$ if
	\begin{equation}\label{much:info}
		W({\bf y})\in\left[\textsf f_{p,n-p-1,1-\alpha},+\infty\right).
	\end{equation}
	We mention that the theoretical procedure leading to the $\textsf F$-test above, based on a likelihood ratio test, is flexible enough to handle a general linear hypothesis test on the parameters, in which the null hypothesis may be expressed as $B\beta=c$, where $B$ is a suitable $q\times (p+1)$ matrix and $c$ is a $q$-vector; see \cite[Subsection 1.5]{amemiya1985advanced} and \cite[Chapter 4]{seber2003linear}. 
	For instance, if $c=\vec{0}$ and $B$ is suitably chosen then we can form the test
	\begin{equation}\label{null:hyp:lm:n}
		H_0^{\rm n}: \beta_{q+1}=\cdots=\beta_p=0\quad {\rm vs}\quad H_{\rm a}^{\rm n}: \beta_j\neq 0\,\,{\rm for}\,\,{\rm some}\,\,j\in\{q+1,\cdots,p\},		
	\end{equation}
	which compares the full model and a new null model in which only the first $q$ independent variables possibly appear as significant predictors; as indicated in Figure \ref{figg2}, the corresponding design spaces satisfy $C^{\rm n}(\mathfrak x)\subset C(\mathfrak x)$ with $\dim C(\mathfrak x)/ C^{\rm n}(\mathfrak x)=p-q$, this being the reason why the models are {\em nested}; see Remark \ref{geom:mls}. 
	\begin{figure}
		\begin{center}
			\begin{tikzpicture}[scale=2]
				\begin{scope}
					\draw[->][black, very thick](0,0,0) -- (2.2,1.8,0) node[midway,above]  {$\bf y$};%y
					%				\draw[->][black, very thick] (0,0,0) -- (2.4,0,-0.3) node[midway,above]  {$\widehat{\bf y}$};%yhat
					\draw[-][black, very thick] (0,0,0) -- (1.992,  0.000, -0.249) node[midway,above]  {$\widehat{\bf y}$};
					\draw[->][black, very thick] (2.088,  0.000, -0.261) -- (2.4,0,-0.3);
					\draw[->][black, very thick] (0,0,0) -- (2.4,0,0.7) node[midway,below]  {$\widehat{\bf y}^{\rm n}$};%yhatq
					%redefining ehat
					\coordinate (O) at (2.4,0,-0.3);
					\coordinate (A) at (2.2,1.8,0);
					\draw[->][black, thick] (O) -- (A) node[midway, right]  {$\widehat{\bf e}$};%ehat
					%%%%redefining ehatq				
					\draw[-][black] (-1,0,2) -- (4,0,2) node[anchor=north]  {$C(\mathfrak x)$};%plane1
					\draw[-][black] (-1,0,2) -- (-1,0,-2)  ;%plane 2
					\draw[-][black] (4,0,2) -- (4,0,-2)  ;%plane 2
					\draw[-][black] (-0.72 , 0.00 ,-0.21) -- (3.60, 0.00, 1.05)  node[above] {$C^{\rm n}(\mathfrak x)$};%plane red
					%%redefining dashed
					\coordinate (B) at (2.4,0,0.7);
					\draw[->][dashed,thick] (O) -- (B)  ;%plane 2
					%	\draw[->][dashed,thick] (2.4,0,-0.3) -- (2.4,0,0.7);
					%%%redefining ehatq	
					\draw[->][black, thick] (B) -- (A) node[midway,left]  {$\widehat{\bf e}^{\rm n}$};
					%%%redefining ehatq	
					\coordinate (OO) at (0,0,0);
					\draw[-][black] (B) -- (OO);
					\pic [draw, black, angle radius=3mm] {right angle = OO--B--A};	
					%%%%redefining
					%%% defining right angle AOB 
					\pic [draw, black, angle radius=3mm] {right angle = A--O--B};
					%%%
					\coordinate (BL) at (2.424, 0.000, 0.707);
					\pic [draw, black, angle radius=3mm] {right angle = O--B--BL};
					%%%%
					\pic [draw, black, angle radius=5mm] {angle = O--B--A};
					%%%%		
				\end{scope}
			\end{tikzpicture}
		\end{center}
		\caption{The geometry of nested models}
		\label{figg2}
	\end{figure}
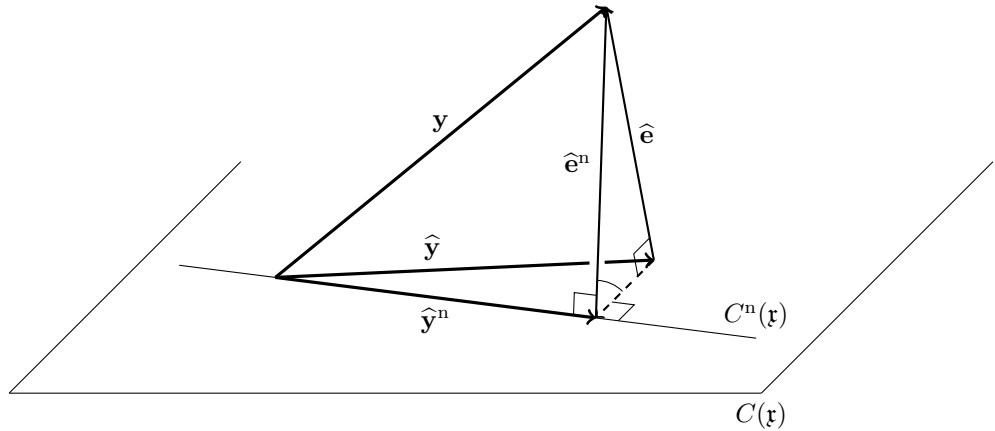
	If we view $SS_{\bf Y\bf Y}$ in (\ref{stat:test:lm}) and (\ref{dec:ss:2}) as the residual sum of squares (i.e.\! the norm squared residual) of the null model in (\ref{null:hyp:lm}) and proceed by analogy, it is not hard to check that the likelihood ratio statistics now is 
	\[
	h^{\rm n}({\bf Y})=
	\ln\left(\frac{SS_{\rm Res}^{\rm n}}{SS_{\rm Res}}\right)^n
	=
	\ln\left(1+\frac{p-q}{n-p-1}W^{\rm n}({\bf Y})\right)^n,
	\] 
	where
	\[
	W^{\rm n}({\bf Y})=\frac{(SS_{\rm Res}^{\rm n}-SS_{\rm Res})/(p-q)}{SS_{\rm Res}/(n-p-1)}  
	\]
	and  $SS_{\rm Res}^{\rm n}$ is the residual sum of squares of the null model in (\ref{null:hyp:lm:n}). 
	Since the usual counting of degrees of freedom shows that $W^{\rm n}({\bf Y})\sim\textsf F_{p-q,n-p-1}$ under $H_0^{\rm n}$, we find that the null hypothesis in (\ref{null:hyp:lm:n}) gets rejected if 
	\[
	W^{\rm n}({\bf y})\in\left[\textsf f_{p-q,n-p-1,1-\alpha},+\infty\right),
	\]
	the obvious extension of (\ref{much:info}). Put in another way, if $SS_{\rm Res}^{\rm n}$ and $SS_{\rm Res}$ are close to each other, which intuitively means that the null model fits  as well as the full model,  then $W^{\rm n}({\bf y})$ is small and hence $H_0^{\rm n}$ should {\em not} be rejected. 
	In any case, the geometry backing not only this latter assertion but also the whole argument above is fully discernible from Figure \ref{figg2}, 
	where $SS_{\rm Res}^{\rm n}=\|\widehat{\bf e}^{\rm n}\|^2$, $SS_{\rm Res}^{\rm n}-SS_{\rm Res}=\|\widehat{\bf e}^{\rm n}-\widehat{\bf e}\|^2$, the squared norm of the dashed vector, and so on.
	\qed 
\end{example}

\begin{remark}\label{p:value}($p$-value)
	As already observed, in all examples above  the likelihood ratio statistics $h(X)$ is $\Theta_0$-{ancillary} in the sense that its distribution does {not} depend on $\theta\in\Theta_0$. In those cases,
	an equivalent way of reporting the result of a likelihood ratio test is to look at the corresponding $p$-{\em value}
	\[
	\mathfrak p=\mathcal P_{\theta}(h(X)\geq h(x)), 
	\quad \theta\in\Theta_0,
	\] 
	where $h(x)$ is the observed value of $h(X)$.
	Thus, $\mathfrak p$ is the probability of finding,  under $H_0$, an observed value at least as extreme as the one actually observed. With this terminology, $H_0$ gets rejected if $\mathfrak p\leq \alpha$, which is just a rephrasing of the rejection condition $h(x)\geq r$. Although this seems to be the preferred way of summarizing the outcome of a test in Applied Statistics, it is argued that the common misinterpretation of regarding $\mathfrak p$ as the probability that $H_0$ is true, thus erroneously accepting the validity of the alternative hypothesis (with high probability) if $\mathfrak p$ is found to be sufficiently small, may be a source of confusion leading to ``$P$-hacking'', ``the replication crisis'', etc.; see \cite{hubbard2003confusion,wasserstein2016asa,ferreira2015does,gibson2021role} for more on this quite controversial issue. \qed
\end{remark}	

In all examples above where we have been able to directly carry out the corresponding computation, the {\em asymptotic} likelihood radio statistics turned out to be $\chi^2_l$-distributed, where $l=\dim\Theta-\dim\Theta_0$. In fact, this is a general phenomenon which substantially simplifies the implementation of the test for large samples.

\begin{theorem}\label{th:wilks}\cite{wilks1938large}
	Under the conditions above, and requiring suitable regularity assumptions on the underlying statistical model as usual, there holds $h(X)\stackrel{d}{\to}\chi^2_l$ as $n\to +\infty$ and under $H_0$. 
\end{theorem}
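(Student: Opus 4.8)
The plan is to reduce the statement to the geometric fact, already recorded in Proposition \ref{u:v:gen}, that a quadratic form $\langle Z, QZ\rangle$ in a standard Gaussian vector $Z\sim\mathcal N(\vec 0,\mathrm{Id}_p)$, with $Q$ symmetric idempotent of rank $l$, follows $\chi^2_l$. Write $h(X)=2\bigl[l^{(n)}(X;\widehat\theta)-l^{(n)}(X;\widehat\theta_0)\bigr]$, where $\widehat\theta$ is the unrestricted MLE over $\Theta$ and $\widehat\theta_0$ the restricted (null) MLE over $\Theta_0$, and let $\theta^*\in\Theta_0$ denote the true parameter under $H_0$. First I would Taylor-expand the log-likelihood to second order about $\widehat\theta$: since $\nabla_\theta l^{(n)}(X;\widehat\theta)=0$ by the defining relation \eqref{mle:cons}, the linear term vanishes and one obtains, for $\theta$ near $\widehat\theta$,
\[
l^{(n)}(X;\theta)-l^{(n)}(X;\widehat\theta)=\tfrac12(\theta-\widehat\theta)^\top\nabla^2_{\theta\theta}l^{(n)}(X;\widehat\theta)(\theta-\widehat\theta)+o\bigl(n\|\theta-\widehat\theta\|^2\bigr).
\]
Invoking the consistency of $\widehat\theta$ together with the law of large numbers and Corollary \ref{any:fct:2} (which identifies $\mathscr F_{(1)}=-\mathbb E(\nabla^2_{\theta\theta}\ln\psi_\theta)$), the normalized Hessian converges in probability, $n^{-1}\nabla^2_{\theta\theta}l^{(n)}(X;\widehat\theta)\to-\mathscr F_{(1)}(\theta^*)$. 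Evaluating the expansion at $\theta=\widehat\theta_0$ then yields $h(X)\approx n(\widehat\theta-\widehat\theta_0)^\top\mathscr F_{(1)}(\theta^*)(\widehat\theta-\widehat\theta_0)$.

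Next I would exploit the affine structure of $\Theta_0$ assumed in the setup. Writing $\mathscr F=\mathscr F_{(1)}(\theta^*)$ and translating so that $\theta^*$ sits at the origin, $\Theta_0$ becomes (locally) a linear subspace $V$ of dimension $k=\dim\Theta_0$, and in the same quadratic approximation the restricted maximizer $\widehat\theta_0$ is precisely the $\mathscr F$-orthogonal projection of $\widehat\theta$ onto $V$; hence $n(\widehat\theta-\widehat\theta_0)^\top\mathscr F(\widehat\theta-\widehat\theta_0)$ is $n$ times the squared $\mathscr F$-distance from $\widehat\theta$ to $V$. To convert this into a Euclidean projection I would whiten by setting $U=\sqrt{n}\,\mathscr F^{1/2}(\widehat\theta-\theta^*)$. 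By the asymptotic normality of the MLE (Theorem \ref{asym:cr}, in its multivariate form, as used in Examples \ref{norm:pop:an} and \ref{binormal:mle}), $\sqrt n(\widehat\theta-\theta^*)\stackrel{d}{\to}\mathcal N(\vec 0,\mathscr F^{-1})$, so Corollary \ref{ortho:norm} gives $U\stackrel{d}{\to}\mathcal N(\vec 0,\mathrm{Id}_p)$. Under $\mathscr F^{1/2}$ the subspace $V$ maps to $\widetilde V=\mathscr F^{1/2}V$ of the same dimension $k$, the $\mathscr F$-orthogonal projection becomes the ordinary orthogonal projection $P_{\widetilde V}$, and the statistic takes the form $h(X)\approx\|(\mathrm{Id}_p-P_{\widetilde V})U\|^2=\langle U,QU\rangle$ with $Q=\mathrm{Id}_p-P_{\widetilde V}$ symmetric idempotent of rank $p-k=l$. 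Passing to the limit and applying Proposition \ref{u:v:gen} to the limiting standard Gaussian then delivers $h(X)\stackrel{d}{\to}\chi^2_l$.

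The hard part will be the analytic bookkeeping hidden behind the two $\approx$ signs, precisely the ``suitable regularity assumptions'' the statement defers. One must (i) secure consistency of the \emph{restricted} estimator $\widehat\theta_0$ under $H_0$, not merely of $\widehat\theta$, so that both expansions are centered at points converging to $\theta^*$; (ii) control the third-order Taylor remainder, showing it is $o_P(1)$ after multiplication by $n$ on the $O_P(n^{-1/2})$ scale of $\widehat\theta-\theta^*$ and $\widehat\theta_0-\theta^*$; and (iii) justify rigorously that the exact restricted maximizer differs from the projection of the quadratic surrogate by $o_P(n^{-1/2})$, so that replacing $\widehat\theta_0$ by that projection does not affect the limit. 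A clean way to organize (iii) is to express both estimators through the common score linearization $\sqrt n(\widehat\theta-\theta^*)=\mathscr F^{-1}\,n^{-1/2}s_n+o_P(1)$ (the one-step expansion already used in the proof of Theorem \ref{asym:cr}, cf.\ \eqref{conv:num}) and its constrained Lagrangian analogue, whereupon $\widehat\theta-\widehat\theta_0$ becomes, to leading order, exactly the projection residual of $\mathscr F^{-1}n^{-1/2}s_n$. Once these three points are in place, the geometric argument above closes the proof.
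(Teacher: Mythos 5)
Your proposal is correct and follows essentially the same route as the paper's own sketch: a second-order expansion of the log-likelihood about the unrestricted MLE, replacement of the normalized Hessian by the Fisher information, asymptotic normality of $\sqrt{n}(\widehat\theta-\theta^*)$, and reduction to a quadratic form $\langle Z,QZ\rangle$ in an asymptotically standard Gaussian vector with $Q$ symmetric idempotent of rank $l$, concluded via Proposition~\ref{u:v:gen}. The only difference is presentational: where the paper encodes the restricted estimator through an abstract matrix $\mathscr G$ of rank $k$ satisfying $\mathscr G\mathscr F\mathscr G=\mathscr G$, you make the underlying $\mathscr F$-orthogonal projection onto the null subspace explicit, which amounts to the same thing and defers the same regularity details that the paper itself leaves unproved.
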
 

\begin{proof}
	We only sketch the argument, which  relies on the (multi-dimensional version) of the proof of Theorem \ref{asym:cr} on the asymptotic normality of ML estimators (and the simplifying assumption that we may choose rectangular coordinates $(\theta_1,\cdots,\theta_p)$ on $\Theta$ so that $\Theta_0$ is singled out by $\theta_{k+1}=\cdots=\theta_p=0$). Now, under $H_0$ the true parameter value, say $\theta$, lies in $\Theta_0$. Moreover, if $n$ is large enough then both 
	\[
	\widehat\theta={\rm argmax}_{\theta\in\Theta}L({\bf x};\theta) \quad {\rm and }\quad
	\widehat\theta_0={\rm argmax}_{\theta\in\Theta_0}L({\bf x};\theta)
	\] 
	are close to $\theta$ and hence close to each other, so we may expand about $\widehat\theta$,
	\begin{eqnarray*}
		h(X)
		& = & 
		-2\left(\ln L({X};\widehat\theta)-\ln L({ X};\widehat\theta_0)\right)\\
		& \approx & 
		\langle \widehat\theta-\widehat\theta_0,(-\nabla^2_{\theta\theta})(\ln L({ X};\widehat\theta)(\widehat\theta-\widehat\theta_0)\rangle,
	\end{eqnarray*} 
	where we have discarded terms of order at least three and used that $\nabla_\theta \ln L({\bf x};\widehat\theta)=0$ by the definition of $\widehat\theta$; compare with (\ref{mle:cons}). 
	From (\ref{conv:den}) we also know that 
	\[
	(-\nabla^2_{\theta\theta})(\ln L({ X};\widehat\theta))\stackrel{p}{\to}\mathscr F(\theta),
	\]
	where  $\mathscr F(\theta)$ is the Fisher information matrix of a single observation. Moreover,  
	\[
	W:=\sqrt{n}\,\nabla_\theta\ln L({X};\theta)\stackrel{d}{\to}\mathcal N(\vec{0},\mathscr F(\theta))
	\]
	by (\ref{conv:num}),
	so that (\ref{asym:stand}) may be rewritten as
	\[
	\sqrt{n}(\widehat\theta-\theta)\stackrel{d}{\to}\mathscr F(\theta)^{-1}W. 
	\]
	Also, by examining the maximization problem restricted to $\Theta_0$ which yields $\widehat\theta_0$, it is not hard to check that 
	\[
	\sqrt{n}(\widehat\theta_0-\theta)\stackrel{d}{\to}\mathscr G(\theta)W, 
	\]
	where $\mathscr G(\theta)$ is a certain symmetric matrix with rank $k=\dim\Theta_0$ and satisfying 
	\[
	\mathscr G(\theta)\mathscr F(\theta)\mathscr G(\theta)=\mathscr G(\theta).
	\]
	Thus, eliminating $\theta$ in the convergences above and using the expansion we get
	\[
	h(X)\approx \langle W,(\mathscr F(\theta)^{-1}-\mathscr G(\theta))W\rangle
	\]
	Now, if $\mathscr F(\theta)=B^2$ then using Corollary \ref{ortho:norm} we have that $Z=B^{-1}W\approx \mathcal N(\vec{0},{\rm Id}_p)$, a standard normal vector, so that
	\begin{eqnarray*}
		h(X)
		& \approx &
		\langle BZ,(\mathscr F(\theta)^{-1}-\mathscr G(\theta))BZ\rangle\\
		& = & 
		\langle Z,B(\mathscr F(\theta)^{-1}-\mathscr G(\theta))BZ\rangle\\
		& = & 	\langle Z,({\rm Id}_p-A\mathscr G(\theta)B)Z\rangle,	
	\end{eqnarray*}
	where $B\mathscr G(\theta)B$ is easily seen to be idempotent with the same rank as $\mathscr G(\theta)$. Hence,  ${\rm Id}_p-B\mathscr G(\theta)B$ is idempotent as well with rank  $l=p-k=\dim\Theta-\dim\Theta_0$ and the result follows from Proposition \ref{u:v:gen}.
\end{proof}

We refer to \cite[Subsection 4.5.1 ]{amemiya1985advanced} and \cite[Theorem 6.5]{shao2008mathematical} for those interested in filling out the omitted details in the argument above. Also,   
it is worthwhile mentioning that Wilks' original proof in  \cite{wilks1938large} is equally elegant as it involves checking that  $\phi_{h(X)}$, the characteristic function of $h(X)$, asymptotically approaches $\phi_{\chi^2_l}$. 
In any case, we stress that it is not required in Theorem \ref{th:wilks} that $h(X)$ is $\Theta_0$-ancillary so in a sense the result guarantees that this property gets restored in the asymptotic regime. 
Finally, we note 
that the material above by no means exhausts the rich literature on hypothesis testing and extensive treatments may be 
found elsewhere 
\cite{amemiya1985advanced,dudewicz1988modern,	welsh1996aspects,lehmann2005testing,casella2008statistical,shao2008mathematical,hayashi2011econometrics,degroot2014probability}.

We now discuss a non-conventional hypothesis testing and its connection with a major result in Differential Geometry, namely, Weyl's formula for the volume of tubes \cite{weyl1939volume, gray2003tubes}.

\begin{example}\!\!$\bigstar$\label{hotelling}
	(Testing for an additional term in the linear model and Weyl's formula for the volume of tubes)
	Let us consider a (possibly non-linear) perturbation of the linear model with a normal error from Example \ref{mle:imp:lsm:n},
	\[
	{\bf Y}_j={\bf \mathfrak X}_j\beta+cf_j({\bf X}_j,\gamma)+{\bf e}, 
	\] 
	a setting first considered in a seminal paper by H. Hotelling \cite{hotelling1939tubes}. Here, $f$ is known  but $c\in\mathbb R$ and $\gamma\in\mathbb R^k$ are unknown parameters and our aim is to test
	\[
	H_0: c=0\quad {\rm vs}\quad H_{\rm a}: c\neq 0,
	\] 
	so that not being able to reject $H_0$ indicates statistical evidence 
	for ignoring $f$ in the model design.
	In the geometric language of Remark \ref{geom:mls}, the null hypothesis says that $\mathbb E({\bf Y}|_{{\mathfrak X}={\mathfrak x}})\in C({\mathfrak x})\equiv \mathbb R^{p+1}$, 
	whereas the alternative adds a multiple of $f_\gamma=f(\cdot,\gamma)$ to this vector. 
	Without loss of generality, we may assume 
	that $f_\gamma\in C({\mathfrak x})^{\perp}\equiv \mathbb R^{n-p-1}$
	and,
	in order to make the model identifiable, that  $f_\gamma$ is not a multiple of $f_{\gamma'}$ if $\gamma\neq\gamma'$, but notice that the model still gets 
	{\em non}-identifiable under $H_0$, so the usual dimensional counting that would allow us to determine the asymptotic distribution of the likelihood ratio statistics $h$ via Theorem \ref{th:wilks} does not apply. In particular, there is no point here in working with $h$ so we turn our attention to the corresponding likelihood ratio $\Lambda=e^{-h/2}$, thus rejecting $H_0$ if this ratio, when observed, is conveniently small.  Now, the full likelihood function is 
	\[
	L({\bf y};\beta,\theta_2,c,\gamma)=(2\pi \theta_2)^{-n/2}e^{-\frac{\|{\bf y}-{\mathfrak x}\beta-cf_\gamma\|^2}{2\theta_2}}, \quad \theta_2=\sigma^2,
	\] 
	which under $H_0$ reduces to the usual likelihood function of the linear model treated in Example \ref{ex:f:reg}:
	\[
	L({\bf y};\beta,\theta_2)=(2\pi \theta_2)^{-n/2}e^{-\frac{\|{\bf y}-{\mathfrak x}\beta\|^2}{2\theta_2}}.
	\] 
	Hence, 
	\[
	\sup_{\beta,\theta_2}	L({\bf y};\beta,\theta_2)=(2\pi\widehat\theta_{20})^{-n/2}e^{-n/2},
	\]
	where 
	\[
	\widehat\theta_{20}=\frac{1}{n}|{\bf Y}-\mathfrak x\widehat\beta|^2=\frac{1}{n}|\widehat{\bf e}|^2
	\]
	and $\widehat{\bf e}$ is the residual. 
	It follows that the likelihood ratio is
	\[
	\Lambda=\left(\frac{\widehat\theta_{20}}{\widehat\theta_2}\right)^{-n/2},
	\] 
	where 
	\[
	\widehat\theta_2=\frac{1}{n}\|{\bf Y}-\mathfrak r\widehat\beta-\widehat cf_{\widehat\gamma}\|^2
	\]
	and 
	\[
	(\widehat\beta,\widehat c,\widehat\gamma)=
	{\rm argmax}_{\beta,c,\gamma}L({\bf y};\beta,c,\gamma)
	=
	{\rm argmin}_{\beta,c,\gamma}
	\|{\bf y}-\mathfrak x\beta-cf_\gamma\|^2.
	\]
	Now, using that $f_\gamma^\top\mathfrak r\beta=0$ (in particular, $f_\gamma^t\mathfrak r\widehat\beta=0$) we compute
	\begin{eqnarray*}
		(\widehat\beta,\widehat c,\widehat\gamma)
		& = & 
		{\rm argmin}_{\beta,c,\gamma}\|{\bf y}-\mathfrak x\beta\|^2-2cf^\top_\gamma{\bf y}+c^2\|f_\gamma\|^2\\
		& = & 
		{\rm argmin}_{c,\gamma}\|{\bf y}-\mathfrak x\widehat\beta\|^2-2cf_\gamma^\top{\bf y}+c^2\|f_\gamma\|^2\\
		& = & 
		{\rm argmin}_{c,\gamma}\|{\bf y}-\mathfrak x\widehat\beta\|^2-2cf_\gamma^\top({\bf y}-\mathfrak x\widehat\beta)+c^2\|f_\gamma\|^2\\
		& = & 
		{\rm argmin}_{c,\gamma}\|{\widehat {\bf e}}-cf_\gamma\|^2,
	\end{eqnarray*}
	and since 
	\[
	\widehat c={\rm argmin}_c\|\widehat{\bf e}-cf_\gamma\|^2=\frac{f_\gamma^\top\widehat{\bf e}}{\|f_\gamma\|^2}, 
	\]
	we see that
	\begin{eqnarray*}
		\Lambda^{2/n}
		& = & 
		\inf_\gamma \frac{\|{\widehat {\bf e}}-\widehat cf_\gamma\|^2}{\|\widehat{\bf e}\|^2}\\
		& = & \inf_\gamma\left(1-\left(
		\frac{f_\gamma^\top{\widehat{\bf e}}}{\|f_\gamma\|\|\widehat{\bf e}\|}
		\right)^2\right)\\
		& = & 1-\sup_\gamma\,(\widetilde f^\top_\gamma {\bf U})^2.
	\end{eqnarray*}
	where, as $\gamma$ varies, $\widetilde f_\gamma=f_\gamma/\|f_\gamma\|$ traces a subset $M_\gamma\subset \mathbb S^{n-p-2}$, the unit sphere in $C({\mathfrak x})^\perp$, and ${\bf U}=\widehat{\bf e}/\|\widehat{\bf e}\|$ is a random vector also taking values in $\mathbb S^{n-p-2}$. Since 
	\[
	\widetilde f^\top_\gamma {\bf u}=
	\cos {\rm dist}(f_\gamma,{\bf u}),
	\]	
	where ${\rm dist}$ is the intrinsic distance in $\mathbb S^{n-p-2}$, 
	we may choose as rejection region the ``tubular neighborhood''
	\[
	B_{\rho}(M_\gamma)=\{\vartheta\in\mathbb S^{n-p-2}; {\rm dist}(\vartheta,M_\gamma)\leq \rho\}
	\]
	of radius $\rho>0$ around $M_\gamma$. Now, under $H_0$ we know from Remark \ref{geom:mls} that ${\bf U}$ is uniformly distributed in $\mathbb S^{n-p-2}$ and we conclude that
	the significance level $\alpha$ of the test satisfies 
	\[
	\alpha= P({\bf U}\in B_{\rho}(M_\gamma))={\rm vol}_{P_{\bf U}}(B_{\rho}(M_\gamma)),
	\] 
	where ${\rm vol}_{P_{\bf U}}$ is the (normalized) intrinsic volume (so that ${\rm vol}_{P_{\bf U}}(\mathbb S^{n-p-2})=1$). Thus, in order to determine the rejection ``tube'' associated to a given confidence level, an explicit formula for the volume of the tube is required, at least for $\rho$ small enough. This turns out to be a rather formidable geometric problem which has been completely solved by H. Weyl \cite{weyl1939volume} in case $M$ is a closed submanifold of a space form (a Riemannian manifold with constant sectional curvature)\footnote{In case $M\subset \mathbb R^l$ has dimension $m$, Weyl's formula says that
		\[
		{\rm vol}(B_{\rho}(M))=c_{m,l}\rho^{l-m}\sum_{q=0}^{[m/2]}\left(d_{m,l,q}\int_M\kappa_{2q}dM\right)\rho^{2q},
		\]
		where $c_{m,l}$ and $d_{m,l,q}$ are positive constants (only depending on the indicated natural parameters) and $\kappa_{2q}$ is a certain (universal) polynomial expression which is homogeneous of degree $q$ in the curvature tensor of $M$. In particular, ${\rm vol}(B_{\rho}(M))$ depends only on $\rho$ and the {\em intrinsic} geometry of $M$ and not on the specific way it is embedded in $\mathbb R^l$. 
		For a masterly account of this remarkable result and its many applications (including a proof, in full generality, of the Chern-Gauss-Bonnet formula in Riemannian Geometry) we refer to \cite{gray2003tubes}.
		Also, for a brief overview of the ubiquitous role the Gauss-Bonnet curvatures $\kappa_{2q}$ play in Riemannian Geometry and related areas, see \cite{labbi2007gauss} and the references therein. 
	}.  
	Unfortunately, this Weyl's formula does not directly apply to this problem (as $M_\gamma$ may be only piecewise smooth or carry a boundary, etc.) so adjustments, mainly based on suitable approximations, are required \cite{naiman1990volumes}. \qed   
\end{example}

\begin{example}\!\!$\bigstar$\label{scheffe:band}(Scheff\'e-type simultaneous band for the mean response in a normal linear model and the volume of tubes, again)
	One is often interested in obtaining 
	simultaneous confidence bands for the mean response ${\bm{\textsf x}}^\top\beta$ in a normal regression model (as in Example \ref{mle:imp:lsm:n}) with ${\bm{\textsf x}}$ varying in some subset set $\mathscr S\subset\bf{\textsf R}^p$,
	say $\mathscr S$ diffeomorphic to an interval or a rectangle and so on; here we retain the notation of Examples \ref{conf:ref:beta} and \ref{simult:band}. 
	Although the Scheff\'e-type band 
	in (\ref{scheffe}) may be applied to this end, it certainly provides a wider 
	band than required for a given confidence level, so a sensible  
	strategy here is to seek for $c>0$ satisfying
	\begin{equation}\label{scheffe:band:c}
		{\bm{\textsf x}}^\top\beta\in\left[
		{\bm{\textsf x}}^\top\widehat\beta\mp
		c\,\widehat\sigma\sqrt{{\bf x}^\top\mathfrak s{\bm{\textsf x}}}
		\right] \forall\, {\bm{\textsf x}}\in\mathscr S
		\,\,{\rm with}\,\,{\rm prob.}\,\,1-\delta.
	\end{equation}
	Now, in terms of 
	\begin{equation}\label{def:mathfrak:m}
		\mathfrak m=\sigma\mathfrak n=(\mathfrak p^\top)^{-1}(\widehat\beta-\beta)\sim\mathcal N(\vec{0},\sigma^2{\rm Id}_{p+1}), 
	\end{equation}
	$\varepsilon({\bm{\textsf x}})=\mathfrak p{\bm{\textsf x}}/|\mathfrak p{\bm{\textsf x}}|\in\mathbb S^{p}\subset\mathbb R^{p+1}$, ${\bm{\textsf x}}\in\mathscr S$,  
	and $\mathfrak u=\mathfrak m/\|\mathfrak m\|$, a uniformly distributed random vector in $\mathbb S^{p}$, we have  
	\[
	\frac{{\bm{\textsf x}}^\top(\widehat\beta-\beta)}{\sqrt{{\bm{\textsf x}}^t\mathfrak s{\bm{\textsf x}}}}
	= 
	\frac{(\mathfrak p^\top)^{-1}{\bm{\textsf x}}\mathfrak m}{\sqrt{(\mathfrak p{\bm{\textsf x}})^\top\mathfrak p{\bm{\textsf x}}}}
	= 
	\left(
	\varepsilon({\bm{\textsf x}})^\top\mathfrak u\right)\|\mathfrak m\|,
	\]
	so if $T=\widehat\sigma/\|\mathfrak m\|$ then (\ref{scheffe:band:c}) expresses the corresponding tail probability as 
	\begin{eqnarray*}
		\delta 
		& = &
		P\left(	
		\sup_{{\bm{\textsf x}}\in\mathscr S}
		\left|\frac{{\bm{\textsf x}}^\top(\widehat\beta-\beta)}{\sqrt{{\bm{\textsf x}}^\top\mathfrak s{\bm{\textsf x}}}}\right|\geq c\widehat\sigma
		\right)\\
		& = & 
		P\left(
		\sup_{{\bm{\textsf x}}\in\mathscr S}
		\left|
		\varepsilon({\bm{\textsf x}})^\top\mathfrak u
		\right|
		\geq cT
		\right).
	\end{eqnarray*}
	Hence, using a notation similar to the one of the previous example, we see that the coverage probability in (\ref{scheffe:band:c}) is 
	\[
	1-\delta={\rm vol}_{P_{\bf u}}\left(
	B_{cT}(M_\varepsilon\cup M_{-\varepsilon})
	\right),
	\] 
	so that being able to compute the (normalized) volume of certain tubes around
	$M_{\varepsilon}\cup M_{-\varepsilon}\subset\mathbb S^{p}$ intervenes in determining the critical value $c$. Since
	$T$ is independent of ${\bf u}$, this may be rewritten as 
	\[
	1-\delta=\int_0^{1/c}{\rm vol}_{P_{\bf u}}\left(
	B_{ct}(M_\varepsilon\cup M_{-\varepsilon}) 
	\right)\psi_T(t)dt,
	\] 
	where $\psi_T$ is the pdf of $T$. 
	It follows from (\ref{pivot:eq:3}), (\ref{def:mathfrak:m}) and the independence of ${\bf u}$ and $\widehat\sigma$ that 
	$(p+1)T^2\sim \textsf F_{n-p-1,p+1}$, so (\ref{regra:01}) applies to give
	\[
	\psi_T(t)=2(p+1)t\psi_{\textsf F_{n-p-1,p+1}}((p+1)t^2), \quad t\geq 0,
	\]
	and the substitution $t=\cos\theta/c$ in the previous integral leads to
	\[
	1-\delta=\int_0^{\pi/2}{\rm vol}_{P_{\bf u}}\left(
	B_{\cos\theta}(M_\varepsilon\cup M_{-\varepsilon}) 
	\right)\psi(\theta)
	d\theta,
	\]
	where 
	\[
	\psi(\theta)=	\frac{2(p+1)\sin \theta\cos\theta}{c^2}\psi_{\textsf F_{n-p-1,p+1}}\left(\frac{(p+1)\cos^2\theta}{c^2}\right).
	\]
	To see how this implies Scheff\'e's original contribution,
	note that if $\mathscr S=\mathcal R^p$ then the volume function within this integral clearly equals $1$ identically, so the substitution 
	$\tau=(p+1)\cos^2\theta/c^2$ gives 
	\[
	1-\delta=\int_0^{(p+1)/c^2}\psi_{\textsf F_{n-p-1,p+1}}(\tau)d\tau,
	\]
	and using Corollary \ref{f-dis:cons} with $\tau'=1/\tau$, 
	\[
	1-\delta=
	\int_0^{c^2/(p+1)}\psi_{\textsf F_{p+1,n-p-1}}(\tau')d\tau'.
	\]
	But this means that
	\[
	\frac{c^2}{p+1}=\textsf f_{p+1,n-p-1,1-\delta},
	\] 
	which recovers (\ref{scheffe}) as promised. In general, when $\mathscr S$ is a proper subset of $\mathcal R^p$, ``approximate'' versions of Weyl's formula are needed in order to establish simultaneous confidence bands based on the computations above \cite{sun1994simultaneous,liu2010simultaneous}. \qed
\end{example}

\section{From Fisher’s parametric paradigm to modern Statistical Learning: a brief overview}\label{br:over}

In light of the preceding developments, we revisit the main mathematical underpinnings of the classical approach to parametric estimation, as formulated in Fisher’s original program in his landmark paper \cite{fisher1922mathematical}, and reinterpret them from the standpoint of modern Statistical Learning Theory and Data Science.

\subsection{The classical legacy}\label{class:leg}
Consider, for instance, a uni-dimensional statistical model with random sample  
\begin{equation}\label{model:spec}
	X_1,\dots,X_n \sim \psi_\theta, \quad \theta \in \mathbb{R},
\end{equation}  
of size $n$. From this sample we form a statistic $h(X_1,\dots,X_n)$ with the aim of producing an estimator $\widehat\theta$ for $\theta$, as in (\ref{estim:def}). Importantly, $h$ should not depend on the unknown parameter $\theta$.  
A first step in evaluating the efficiency of $\widehat\theta$ is to compute (or at least reliably approximate) its mean squared error ${\rm mse}(\widehat\theta)$. As illustrated above in the case $\widehat\theta=\widehat\sigma_c^2$, this requires computing the associated variance, a generally demanding task that becomes substantially simpler if $\psi_\theta$ is assumed to be normal. Although restrictive, this assumption is sometimes heuristically justified by appealing to the Central Limit Theorem (Theorem \ref{clt}) together with the ``hypothesis of elementary errors'' \cite[Chapter 3]{fischer2011history}. If ${\rm mse}(\widehat\theta)$ is sufficiently small, guaranteeing good performance of $\widehat\theta$, one may still need further information about the sampling distribution of the estimator, depending on the inferential goal.  

For instance, as discussed in Subsection \ref{conf:int:sub} for the sample mean, if the aim is to provide ``small-sample'' confidence intervals for $\theta$, then for any $n$ one should determine the explicit distribution $\psi_\theta^{(n)}$ of $h(X_1,\dots,X_n)$ (or of a suitable pivotal quantity). This is often delicate, even under normality, especially when $h$ depends nonlinearly on the sample (as in the case of the sample variance). A landmark contribution here is Student’s determination of the distribution of his pivotal quantity $T_{n-1}$ in (\ref{stu:est}), which yields small-sample confidence intervals for the population mean $\mu$. This, however, was achieved only under the assumption that the original sample is normal. Student’s work profoundly influenced R. Fisher, who not only placed it on solid mathematical foundations through his ``geometric method'' (Remark \ref{fis:comp:new}), but also extended it to obtain the distribution of the correlation coefficient (Example \ref{corr:dist})\footnote{Because of its intuitive geometric appeal, Fisher’s approach is rarely reproduced in modern textbooks, where it is typically replaced by analytical methods involving Jacobian manipulations of the underlying coordinate transformations.}.  

A less formidable task is to seek asymptotic information. A preliminary step is verifying that ${\rm mse}(\widehat\theta_n)\to 0$ as $n\to\infty$, which ensures that $\widehat\theta_n$ is consistent (see Proposition \ref{mse:consist}, itself a version of the LLN in this broader setting). To complement this asymptotic point estimate with a dispersion analysis, one typically looks for a distribution $\psi$ such that a suitable standardization of ${\widehat\theta}_n$ converges in law to $\psi$. Ideally, this yields approximations for the distribution of $\sqrt{n}(\widehat\theta_n-\theta)$ that concentrate sharply around $\theta$, allowing the construction of reliable confidence intervals from the tail probabilities of $\psi$ in the large-sample regime. This approach, already illustrated in Subsection \ref{conf:int:sub} for $\widehat\theta_n=\overline X_n$, is formalized in Theorem \ref{asym:cr}, which establishes the asymptotic normality of a broad class of consistent ML estimators. These estimators are thereby shown to be asymptotically efficient: as $n\to\infty$, their dispersion (measured by standard deviation) achieves the Cramér--Rao lower bound (Theorem \ref{cr:rao:th}); see also Remark \ref{conf:int:f}\footnote{The same dichotomy between small and large sample regimes reappears in Section \ref{sec:hyp:test}, where hypothesis testing is treated.}. Full expositions of these large-sample methods are available in \cite{newey1994large,lehmann1999elements,lehmann2006theory,ferguson2017course,van2000asymptotic}.  

It is remarkable that this modern strategy essentially mirrors the program laid out by R. Fisher in his foundational paper \cite{fisher1922mathematical}, written a century ago. There Fisher declared that ``the object of statistical methods is the reduction of data,'' and identified as the first major challenge the ``Problems of Specification,'' namely, the choice of an appropriate statistical model (as in (\ref{model:spec}))\footnote{According to Fisher, ``these are entirely a matter for the practical statistician.''}. He then turned to the ``Problems of Estimation,'' concerned with selecting a statistic designed to estimate the parameters of the population. To judge the quality of an estimator, he proposed three criteria: the ``Criterion of Consistency'' (the estimator approaches the true parameter in the long run\footnote{Fisher’s notion of consistency differs from the modern one presented here.}, the ``Criterion of Efficiency'' (for large samples, the estimator with smallest dispersion, what we would now phrase as variance achieving the Cramér--Rao lower bound, where Fisher information plays a key role; cf. Remark \ref{decay:fluc}), and the ``Criterion of Sufficiency'' (the statistic should summarize all the relevant information in the sample). Finally, in what he called the ``Problems of Distribution,'' Fisher emphasized the importance of computing the exact sampling distributions of estimators, to fully elucidate their theoretical properties.  
As Fisher himself noted, estimators that satisfy both consistency and efficiency criteria may still differ in finite-sample performance, which justifies sufficiency as a decisive tiebreaker. While the asymptotic aspects of estimation can often be handled analytically, the Problems of Distribution involve small-sample results of great mathematical difficulty\footnote{This helps to explain Fisher’s deep admiration for Student’s work; see Remark \ref{student}. The same difficulty also reappears in hypothesis testing, as discussed in Remark \ref{size:test:3}.}. In this same paper Fisher also introduced the method of maximum likelihood, offering for the first time a systematic procedure for constructing estimators within a given model. This represented a decisive step toward establishing the conceptual framework on which the frequentist approach to Statistical Estimation still rests\footnote{As noted by the leading historian of statistics S. Stigler in \cite{stigler2005fisher}: ``The paper is an astonishing work: It announces and sketches out a new science of statistics, with new definitions, a new conceptual framework and enough hard mathematical analysis to confirm the potential and richness of this new structure.''}.  

\subsection{The modern outlook}\!\!$\bigstar$\label{mod:out}
It is tempting to ask how far this paradigm extends beyond the classical models for which it was originally conceived, given that Fisher’s 
framework  (likelihood as the central inferential object, curvature encoded by the Fisher information, and model comparison through likelihood-based criteria such as AIC) rests fundamentally on the structure of regular finite-dimensional parametric models. In this setting, maximum likelihood estimators are asymptotically normal, the Fisher information is nonsingular, and the effective complexity of a model is well approximated by its parameter dimension. Regarding this latter feature, we recall from Remark~\ref{kl:fisher:aic} that it is reflected in the penalty term of information criteria such as AIC, which arises naturally from a second-order expansion of the Kullback--Leibler risk and provides a principled bias correction for predictive performance.
In the contemporary landscape of highly overparameterized models, particularly deep neural networks, many of the regularity assumptions underlying this framework cease to hold: parameterizations are often non-identifiable, Fisher information may be singular or degenerate, and the number of parameters can vastly exceed the sample size ($p \gg n$). 
In such settings, the effective complexity of the model is governed more accurately by global complexity measures arising in Statistical Learning Theory, which in this modern perspective play the role that the parameter dimension $p$ once played in classical information criteria; see \cite{vapnik1998statistical,bousquet2003introduction,anthony2009neural,von2011statistical,bartlett2017spectrally,golowich2020size,bartlett2021deep} and Example \ref{super:learn}\footnote{
In many such ``singular'' models, interpretability is no longer intrinsic to the parameterization, and the resulting systems are often described as “black boxes” \cite{rudin2019stop}. This has led to the development of post hoc interpretability techniques and explainable artificial intelligence (XAI) methods \cite{guidotti2018survey,molnar2022interpretable}, which attempt to reintroduce explanatory structure at a layer external to the statistical model itself.}. 
Although this shift reflects a broader methodological transition, in which emphasis moves from parameter interpretation and inferential optimality toward predictive accuracy and generalization performance\footnote{See Subsection \ref{ridge}, where this trade-off already appears in the regression context.},
it  may be viewed as a natural extension, rather than a departure, from the program initiated by Fisher, and is perhaps best understood if linked to the distinction drawn between the so-called \emph{data modeling} and \emph{algorithmic modeling} cultures in an influential essay by L. Breiman \cite{breiman2001statistical}. 

To make this connection more precise, let us restrict attention to the Statistical Learning setting of Example \ref{super:learn}.
In the classical parametric framework, one assumes that the joint distribution of $(X,Y)$ belongs to a finite-dimensional family, and statistical analysis proceeds by estimating the associated parameter and studying its properties through likelihood-based methods. Within this paradigm, we have seen that Fisher’s notions of efficiency and sufficiency, together with the geometric role of the Fisher information, provide a coherent setup in which the effective complexity of the model is well approximated by the parameter dimension. 
By contrast, in modern high-dimensional or nonparametric settings where the model may be misspecified, non-identifiable, or too intricate to admit a meaningful low-dimensional parametrization, it is often preferable to make no assumption whatsoever on the underlying distribution $P_{(X,Y)}$ from which the data were drawn, so that the classical correspondence between parameter dimension and complexity breaks down. In such cases, it is more natural to consider a class of predictors $\mathcal F$, from which an estimator is selected by minimizing the empirical risk within $\mathcal F$.
In this formulation, the primary objective shifts from the estimation of a ``true'' parameter to the construction of predictors with good out-of-sample performance, thereby emphasizing generalization rather than inference. In particular, the corresponding population risk may be interpreted, for suitable choices of the loss function, as a Kullback--Leibler-type divergence between the underlying distribution and the predictive model, so 
the central problem boils down to controlling the discrepancy between empirical and population risks, now {\em uniformly} over $\mathcal F$.
A key insight of modern Statistical Learning is that such uniform control is possible even without parametric assumptions, provided the class $\mathcal F$ is suitably restricted. 
More precisely, one may reduce the analysis, typically via symmetrization arguments, to tractable empirical quantities depending only on the sample and on the richness of $\mathcal F$, leading to bounds expressed in terms of combinatorial or metric notions of capacity, such as the VC dimension or related complexity measures; see, for instance, the VC uniform deviation bound discussed in Example~\ref{super:learn}

From this perspective, the transition from data to algorithmic modeling, as advocated by Breiman, may be understood as a natural generalization of Fisher’s original program to settings in which no finite-dimensional parametrization is adequate. In particular, the role played by the parameter dimension in classical statistics is taken over by more flexible notions of capacity, which quantify the effective size of the model and govern the trade-off between approximation and estimation errors, as in Example \ref{bias:var:sl}. In this way, modern empirical process techniques extend Fisher’s framework beyond its original domain of regular parametric models, preserving its central concern with predictive performance, now understood in terms of out-of-sample risk, while adapting its mathematical foundations to the high-dimensional regime.

\section{A glimpse at the Bayesian pathway}\label{bay:way}

For comparison with the frequentist approach developed above, we now turn to the {\em Bayesian approach} to estimation. In this framework, the parameter $\theta$ governing the i.i.d.\ measurements $X_j\sim \psi(\cdot;\theta)$ is itself modeled as a random variable $\vartheta$ taking values in $\Theta$, endowed with a prior distribution $\psi_\vartheta d\theta$. In this way, probability is assigned not only to the observations but also to the parameter, and inference proceeds by updating the distribution of $\vartheta$ in light of the observed data.\footnote{For a conceptual motivation for this move, see de Finetti’s representation theorem discussed in Example \ref{de:finetti}.}. It follows from Theorem \ref{bayes} (Bayes rule) that
\[
\psi_{\vartheta|_{X={\bf x}}}(\theta)=\frac{\psi_{X|_{\vartheta=\theta}}({\bf x})\psi_\vartheta(\theta)}{\psi_X({\bf x})}, \quad \psi_X({\bf x})=\int_{\Theta} \psi_{X|_{\vartheta=\theta}}({\bf x})\psi_\vartheta(\theta)d\theta,
\]  
where $X=(X_1,\cdots,X_n)$ and ${\bf x}=(x_1,\cdots,x_n)$ is a realization of $X$.
In the Bayesian jargon, $\psi_\vartheta(\theta)$ is the {\em prior}, 
reflecting our knowledge of the underlying parameter $\theta$ previous to any measurement (and hence viewed as a {\em hypothesis}) and $\psi_{X|_{\vartheta=\theta}}({\bf x})$ is the {\em likelihood}, which indicates the compatibility of the {\em evidence} $X$ with the given hypothesis. Note that 
\[
\psi_{X|_{\vartheta=\theta}}({\bf x})=L({\bf x};\theta),
\]
the likelihood function in (\ref{like:exp:ind}), hence the terminology; here we are momentarily coming back to the ``frequentist'' setting of Section \ref{br:over} and thus regarding $\theta$ as deterministic (i.e. \negthinspace non-random). The prior and the likelihood combine to yield the {\em posterior} $\psi_{\vartheta|_{X={\bf x}}}(\theta)$ through the proportionality
\begin{equation}\label{prop:bayes}
	\psi_{\vartheta|_{X={\bf x}}}(\theta)\propto L({\bf x};\theta)\psi_\vartheta(\theta),
\end{equation}
which provides an update of the probability distribution of the hypothesis as more observed evidence becomes available. 

\begin{example}\label{pri:post:n}
	For a normal sample $X_j\sim\mathcal N(\mu,\sigma^2)$, with $\sigma$ known, we find that the likelihood is
	\[
	L({\bf x};\mu)=\frac{1}{(2\pi)^{n/2}\sigma^n}e^{-\frac{1}{2\sigma^2}\sum_j(x_j-\mu)^2}. 
	\]
	Now assume that the prior, which expresses our initial degree of belief on the unknown parameter $\mu$, follows the normal $\mathcal N(\mu_{\rm pr},\sigma_{\rm pr}^2)$, so that 
	\[
	\psi_\vartheta(\mu)=\frac{1}{\sqrt{2\pi}\sigma_{\rm pr}}e^{-\frac{(\mu-\mu_{\rm pr})^2}{2\sigma_{\rm pr}^2}}. 
	\]
	A direct computation using (\ref{prop:bayes}) confirms that the posterior also follows a normal, namely,  
	\[
	\psi_{\vartheta|_{X={\bf x}}}\sim \mathcal N(\mu_{\rm pos},\sigma^2_{\rm pos}),
	\]
	where 
	\begin{equation}\label{post:m:norm}
		\mu_{\rm pos}=(1-\lambda)\mu_{\rm pr}+\lambda\frac{\sum_jx_j}{n}, \quad \lambda =\frac{\sigma_{\rm pr}^2}{\sigma_{\rm pr}^2+\sigma^2/n},
	\end{equation}
	and 
	\[
	\sigma_{\rm pos}^2=\frac{\sigma_{\rm pr}^2\sigma^2/n}{\sigma_{\rm pr}^2+\sigma^2/n}. 
	\]
	Hence, the Bayesian recipe confines the posterior mean $\mu_{\rm pos}$ somewhere between  the prior mean $\mu_{\rm pr}$ and the realization $\sum_jx_j/n$ of the sample mean, with a higher degree of belief than before (since $\sigma_{\rm pos}<\min \{\sigma_{\rm pr},\sigma\}$). We thus see that the data-gathering provided by the sample has the net effect of fine-tuning our initial subjective knowledge regarding $\mu$.  \qed
\end{example}

\begin{example}\label{laplace}
	(Laplace's rule of succession)
	What chances are that the sun will rise tomorrow given that it has been so for the last $n$ days? To ponder on this, consider a Bernoulli sample $X_j\sim\mathsf{Ber}(p)$ assigning  probability $p$ to a  successful outcome corresponding to the event $\{1\}$. The question above is a special case (with $s=n$) of the general problem of  computing
	\[
	P(X_{n+1}=1|_{X^{(n)}=s})=P_{X_{n+1}|_{X^{(n)}=s}}(\{1\}), \quad X^{(n)}=X_1+\cdots+X_n,
	\]  
	the probability that success occurs at the $(n+1)^{\rm th}$ outcome  given that it has occurred  $s$ times previously; here we use the notation of (\ref{cond:prob:22}).  From Example \ref{mle:discrete} we know that the likelihood is 
	\[
	L({\bf x};p)=p^{s}(1-p)^{n-s},
	\]
	where  $s=x_1+\cdots+x_n$ is the realization of $X^{(n)}$.
	The simplest choice for the prior distribution of $\mathfrak p$, the random variable associated to the Bayesian parameter $p$, appeals to the ``Principle of Insufficient Reason'': we declare that 
	\[
	\psi_{\mathfrak p}(p)={\bf 1}_{[0,1]}(p),
	\]  
	the {\em uniform distribution} supported on the unit interval $[0,1]$. Using (\ref{prop:bayes}) we see  that the posterior is 
	\begin{eqnarray*}
		\psi_{\mathfrak p|_{X^{(n)}=s}}(p)
		& = & \frac{p^{s}(1-p)^{n-s}{\bf 1}_{[0,1]}(p)}{\int_0^1 p^{s}(1-p)^{s} dp}\\
		& = & \frac{(n+1)!}{s!(n-s)!}p^{s}(1-p)^{n-s}{\bf 1}_{[0,1]}(p), 
	\end{eqnarray*}
	the {Beta distribution} ${\mathsf{Beta}}(s+1,n-s+1)$; cf. Definition \ref{beta:def}. 
	It follows that 
	\begin{eqnarray*}
		P_{X_{n+1}|_{X^{(n)}=s}}(\{1\})
		& = & \mathbb E(\mathfrak p|_{X^{(n)}=s})\\
		& = & 	\int_0^1 p\psi_{\mathfrak p|_{X^{(n)}=s}}(p)dp,
	\end{eqnarray*} 
	and using the previous expression for the posterior we get
	\[
	P(X_{n+1}=1|_{X^{(n)}=s})=\frac{s+1}{n+2}, 
	\]
	which in particular gives $(n+1)/(n+2)$ as the solution for Laplace's sunrise problem\footnote{In the end of \cite[Chapter III]{laplace1998pierre}, Laplace takes $n$ corresponding to five thousand years and finds that ``it is a bet of 1820214 to one that it will rise again tomorrow''. But as Laplace himself recognizes in the sequel, this should be taken with a salt of grain, specially in regard to the choice of prior, as possibilities other than the uniform are certainly available; see Example \ref{bay:est:ex:beta}.}. \qed
\end{example}

\begin{example}\label{norm:like}
	If $\Theta\subset\mathbb R^q$ has a finite volume then the ``Principle of Insufficient Reason'' leads to
	\[
	\psi_\vartheta(\theta)=\frac{1}{{\rm vol}(\Theta)}{\bf 1}_{\Theta}(\theta), \quad \theta\in\Theta,
	\]
	as the choice for the prior, so the corresponding posterior is  
	\[
	\psi_{\vartheta|_{X={\bf x}}}(\theta)=\frac{L({\bf x};\theta){\bf 1}_{\Theta}(\theta)}{\int_{\Theta}L({\bf x};\theta)d\theta},
	\]
	a suitable normalization of the likelihood. This confirms that, viewed as a function of $\theta$, the likelihood in general does not  qualify as a pdf, which is consistent with the fact that the prescription for the MLE estimator in Definition \ref{mle:def:post} is insensitive to replacing $L$ by $cL$, $c>0$ a constant. In other words, any multiple of the likelihood function carries the same information as far as selecting the ML estimator is concerned. \qed
\end{example}

The examples above illustrate the Bayesian credo according to which 
probability is nothing but a measure of our degree of belief on the underlying parameter $\theta$, which thus should be random in nature.
In any case, we may proceed with the corresponding estimation theory as follows. Given  $\widetilde\theta$ define its {\em Bayes risk} as 
\[
\mathscr R(\widetilde\theta)=\mathbb E_{\psi_\vartheta}(\mathscr L(\widetilde\theta,\theta)),
\] 
where $\mathscr L$ is a (previously chosen) {\em loss function} (for instance, the quadratic loss $\mathscr L(\widetilde\theta,\theta)=|\widetilde\theta-\theta|^2$  gives rise to the Bayesian analogue of the mse in (\ref{mse:def}), but be aware of a crucial difference: here we average against the prior $\psi_\vartheta(\theta)d\theta$ in alignment with the Bayesian philosophy according to which $\theta$ is random, whereas there we integrate against $dP_\theta$ since $\theta$ is regarded as deterministic (i.e. \negthinspace non-random); see Remark \ref{underlying}.

\begin{definition}\label{bayes:estim}
	A {\em Bayes estimator} is any $\widehat\theta$ that minimizes the Bayes risk.
\end{definition}

Notice that this only depends on the prior distribution (and the given loss function) and hence involves no observation.  
In any case, given this setup we are now in a position to implement the 
Bayesian updating paradigm relying on the subsequent measurement $X=x$ via (\ref{prop:bayes}). This leads to  the following result, which provides a method for constructing Bayes estimators by solving a minimization problem formulated in terms of the posterior distribution $\psi_{\vartheta|X=x}$.  

\begin{theorem}\label{thm:casella}
	Assume that for almost all ${\bf x}$ there exists $\widehat\theta({\bf x})$ minimizing 
	\[
	\widetilde\theta\mapsto\mathbb E_{\psi_{\vartheta|_{X={\bf x}}}}(\mathscr L(\widetilde\theta({\bf x}),\theta)),
	\]
	where $\widetilde\theta$ runs over the set of estimators with a finite risk. Then $\widehat\theta=\widehat\theta(X)$ is a Bayes estimator. 
\end{theorem}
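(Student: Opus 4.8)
The plan is to reduce the global minimization of the Bayes risk to a family of pointwise minimizations indexed by the observed value ${\bf x}$, which are precisely the problems solved by hypothesis. The bridge between the two levels is the factorization of the joint density of $(\vartheta,X)$ supplied by Bayes' rule (Theorem \ref{bayes}), together with an elementary nonnegativity argument.

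First I would unfold the Bayes risk of an arbitrary competing estimator $\widetilde\theta$ of finite risk as an integral against the joint distribution of $(X,\vartheta)$,
\[
\mathscr R(\widetilde\theta)=\int_\Theta\int_{\mathbb R^n}\mathscr L(\widetilde\theta({\bf x}),\theta)\,\psi_{(X,\vartheta)}({\bf x},\theta)\,d{\bf x}\,d\theta,
\]
where $\psi_{(X,\vartheta)}$ is the joint pdf of the observations and the (random) parameter. Applying Theorem \ref{bayes} to disintegrate $\psi_{(X,\vartheta)}({\bf x},\theta)=\psi_{\vartheta|_{X={\bf x}}}(\theta)\,\psi_X({\bf x})$ and invoking Fubini's theorem — whose hypotheses are guaranteed by the finite-risk assumption (and, if needed, the nonnegativity of $\mathscr L$) — I would exchange the order of integration to obtain
\[
\mathscr R(\widetilde\theta)=\int_{\mathbb R^n}\left(\int_\Theta\mathscr L(\widetilde\theta({\bf x}),\theta)\,\psi_{\vartheta|_{X={\bf x}}}(\theta)\,d\theta\right)\psi_X({\bf x})\,d{\bf x}=\int_{\mathbb R^n}\rho(\widetilde\theta;{\bf x})\,\psi_X({\bf x})\,d{\bf x},
\]
where $\rho(\widetilde\theta;{\bf x}):=\mathbb E_{\psi_{\vartheta|_{X={\bf x}}}}(\mathscr L(\widetilde\theta({\bf x}),\theta))$ is exactly the posterior expected loss whose minimizer $\widehat\theta({\bf x})$ is furnished by hypothesis.

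The decisive step is then a pointwise comparison. By assumption, for almost every ${\bf x}$ one has $\rho(\widehat\theta;{\bf x})\leq\rho(\widetilde\theta;{\bf x})$, and since the weight $\psi_X({\bf x})\,d{\bf x}$ is nonnegative, integrating this inequality yields $\mathscr R(\widehat\theta)\leq\mathscr R(\widetilde\theta)$ for every $\widetilde\theta$ of finite risk. This shows that $\widehat\theta=\widehat\theta(X)$ minimizes the Bayes risk, i.e.\ it is a Bayes estimator in the sense of Definition \ref{bayes:estim}.

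The main obstacle I anticipate is not the inequality itself but the measure-theoretic bookkeeping surrounding it: one must ensure that the pointwise selection ${\bf x}\mapsto\widehat\theta({\bf x})$ is genuinely measurable (so that $\widehat\theta(X)$ qualifies as an estimator), which in full generality calls for a measurable-selection argument, and one must justify the Fubini interchange when $\mathscr L$ is not sign-definite by controlling $\int\int|\mathscr L|\,\psi_{(X,\vartheta)}$ through the finite-risk hypothesis. A secondary subtlety is checking that restricting attention to estimators of finite Bayes risk does not discard the true minimizer; this follows once $\widehat\theta$ is itself seen to have finite risk, an immediate consequence of $\rho(\widehat\theta;{\bf x})\leq\rho(\widetilde\theta;{\bf x})$ applied to any fixed finite-risk reference estimator.
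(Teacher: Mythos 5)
Your proof is correct and follows essentially the same route as the paper: both reduce the global minimization of the Bayes risk to the pointwise minimization of the posterior expected loss and then integrate the resulting inequality against the marginal of $X$. The paper phrases this via conditional expectations (Proposition \ref{int:prob} and the tower property in Proposition \ref{ceprop}\,(2)) where you make the disintegration and Fubini step explicit, but the argument is the same; your closing remarks on measurability of ${\bf x}\mapsto\widehat\theta({\bf x})$ and on justifying the interchange of integrals are sensible refinements that the paper leaves implicit.
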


\begin{proof}
	By assumption we have, for almost all ${\bf x}$ and any $\widetilde\theta$, 
	\[
	\mathbb E_{\psi_{\vartheta|_{X={\bf x}}}}(\mathscr L(\widetilde\theta({\bf x}),\theta))\geq \mathbb E_{\psi_{\vartheta|_{X={\bf x}}}}(\mathscr L(\widehat\theta({\bf x}),\theta)).
	\]
	By Proposition \ref{int:prob}, this may be expressed in terms of conditional expectations as  
	\[
	\mathbb E_{\psi_\vartheta}(\mathscr L(\widetilde\theta(X),\vartheta)|\mathcal F_X)\geq \mathbb E_{\psi_\vartheta}(\mathscr L(\widehat\theta(X),\vartheta)|\mathcal F_X). 
	\]
	By applying $\mathbb E_{\psi_\vartheta}$ to both sides and using Proposition \ref{ceprop} (2) we conclude that $\mathscr R(\widetilde\theta(X))\geq \mathscr R(\widehat\theta(X))$.
\end{proof}

\begin{corollary}\label{bayes:est:l}
	The Bayes estimator associated to the weighted quadratic loss $\mathscr L(\widetilde\theta,\theta):=w(\theta)|\widetilde\theta-g(\theta)|^2$, $w>0$, is given by
	\[
	\widehat\theta({\bf x})=\frac{\mathbb E_{\psi_{\vartheta
				|_{X={\bf x}}}}(wg)}{\mathbb E_{\psi_{\vartheta
				|_{X={\bf x}}}}(w)}.
	\]
\end{corollary}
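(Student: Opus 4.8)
The plan is to invoke Theorem~\ref{thm:casella}, which reduces the problem of finding a Bayes estimator to a pointwise minimization against the posterior distribution. Concretely, for almost every realization ${\bf x}$, I would seek the scalar value $a=\widehat\theta({\bf x})$ that minimizes
\[
a\mapsto \mathbb E_{\psi_{\vartheta|_{X={\bf x}}}}\left(w(\theta)|a-g(\theta)|^2\right),
\]
and then appeal to the theorem to conclude that the resulting $\widehat\theta=\widehat\theta(X)$ is indeed a Bayes estimator. This is the correct framework because the weighted quadratic loss $\mathscr L(\widetilde\theta,\theta)=w(\theta)|\widetilde\theta-g(\theta)|^2$ falls squarely within the hypotheses of Theorem~\ref{thm:casella}, provided the relevant posterior expectations are finite (which is exactly the finite-risk assumption made there).

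The key computation is to expand the objective as a quadratic polynomial in the real variable $a$. Writing $\mathbb E$ as a shorthand for $\mathbb E_{\psi_{\vartheta|_{X={\bf x}}}}$, I would obtain
\[
\mathbb E\left(w|a-g|^2\right)=a^2\,\mathbb E(w)-2a\,\mathbb E(wg)+\mathbb E(wg^2).
\]
Since $w>0$ everywhere, the coefficient $\mathbb E(w)$ is strictly positive, so this is a genuine upward-opening parabola in $a$ admitting a unique global minimum. Differentiating with respect to $a$ and equating to zero yields
\[
2a\,\mathbb E(w)-2\,\mathbb E(wg)=0,
\]
whence
\[
a=\frac{\mathbb E(wg)}{\mathbb E(w)}=\frac{\mathbb E_{\psi_{\vartheta|_{X={\bf x}}}}(wg)}{\mathbb E_{\psi_{\vartheta|_{X={\bf x}}}}(w)},
\]
which is precisely the asserted formula.

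Honestly, there is no serious obstacle here: the argument is a routine ``complete-the-square'' (or first-derivative-test) minimization of a quadratic, the only conceptual content being the recognition that Theorem~\ref{thm:casella} licenses the passage from minimizing the global Bayes risk to minimizing the posterior-averaged loss separately for each ${\bf x}$. The one point I would take minimal care with is to confirm that $\widehat\theta({\bf x})$ so obtained lies in the admissible class of estimators with finite risk and that $\mathbb E(w)<+\infty$ and $\mathbb E(wg)<+\infty$ so the ratio is well defined; both follow from the standing finiteness assumptions. The unbiased-looking special case $w\equiv 1$, $g(\theta)=\theta$ recovers the familiar fact that the posterior mean $\mathbb E_{\psi_{\vartheta|_{X={\bf x}}}}(\vartheta)$ is the Bayes estimator under ordinary quadratic loss, which serves as a useful sanity check on the formula.
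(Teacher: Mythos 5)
Your proposal is correct and follows essentially the same route as the paper: both reduce to Theorem~\ref{thm:casella}, expand the posterior-averaged loss as a quadratic in the scalar estimate, and read off the vertex $\mathbb E(wg)/\mathbb E(w)$. The only cosmetic difference is that the paper notes (via Cauchy--Schwarz) that the discriminant of this quadratic is negative, whereas you locate the minimizer by the first-derivative test; the substance is identical.
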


\begin{proof}
	The Cauchy-Schwartz inequality 
	\[
	E_{\psi_{\vartheta|_{X={\bf x}}}}(w(\theta)g(\theta))^2< \mathbb E_{\psi_{\vartheta|_{X={\bf x}}}}(w(\theta))\mathbb E_{\psi_{\vartheta|_{X={\bf x}}}}(w(\theta)g(\theta)^2)
	\]
	implies that
	\begin{eqnarray*}
		\mathbb E_{\psi_{\vartheta|_{X={\bf x}}}}(\mathscr L(\widetilde\theta({\bf x}),\theta))
		& = & \mathbb E_{\psi_{\vartheta|_{X={\bf x}}}}(w(\theta))\widetilde\theta({\bf x})^2\\
		&  & \quad -2\mathbb E_{\psi_{\vartheta|_{X={\bf x}}}}(w(\theta)g(\theta))\widetilde\theta({\bf x})+\mathbb E_{\psi_{\vartheta|_{X={\bf x}}}}(w(\theta)g(\theta)^2),
	\end{eqnarray*}
	viewed as a quadratic expression in $\widetilde\theta({\bf x})$, has a negative discriminant and hence is minimized at $\widetilde\theta({\bf x})=\widehat\theta({\bf x})$. 
\end{proof}

We refer to  \cite[Chapter 2]{robert2007bayesian} for an extensive discussion on this ``decision-theoretic'' approach to Bayes estimation.

\begin{example}\label{bay:est:ex:beta}
	A much more flexible choice 
	for the prior in the sunrise problem of Example \ref{laplace} is  
	\[
	\psi_{\mathfrak p}(p)=\frac{\Gamma(\alpha+\beta)}{\Gamma(a)\Gamma(b)}p^{\alpha-1}(1-p)^{\beta-1}{\bf 1}_{[0,1]},\quad \alpha,\beta>0,
	\]
	the $\mathsf{Beta}(\alpha,\beta)$ distribution (the case $\alpha=\beta=1$ corresponds to the uniform distribution).  We thus calculate that the posterior is 
	\[
	\psi_{\mathfrak p|_{X^{(n)}=s}}\sim \mathsf{Beta}(\alpha+s,\beta+n-s),
	\]
	so in this case we obtain
	\begin{equation}\label{exp:post}
		P(X_{n+1}=1|_{X^{(n)}=s})=\frac{s+\alpha}{n+\alpha+\beta}. 
	\end{equation}
	This illustrates how sensitive the Bayesian machinery is to the choice of the prior. Moreover, taking into account that the right-hand side of (\ref{exp:post}) equals the expected value of the posterior, if we choose the loss function to be $\mathscr L(\widetilde\theta,\theta)=|\widetilde\theta-\theta|^2$ in Corollary \ref{bayes:est:l} we find that the corresponding Bayes estimator is
	\[
	\widehat p_{(n)}(X)=\frac{X^{(n)}+\alpha}{n+\alpha+\beta}=(1-\gamma)
	\frac{\alpha}{\alpha+\beta}+\gamma\overline X_n, \quad \gamma=\frac{n}{n+\alpha+\beta}, 
	\]
	which interpolates between $\alpha/(\alpha+\beta)$, the expected value of $\psi_{\mathfrak p}$ (the natural estimator prior to any observation) and the sample mean $\overline X_n$ (the ``frequentist'' estimator that completely ignores the Bayesian paradigm incarnated in the prior). 
	In particular, for large samples the prior mean plays a negligible role as $\widehat p_{(n)}(X)$ becomes indistinguishable from the ML estimator $\overline X_n$.  
	Finally, if we apply this same recipe to the normal setting of Example \ref{pri:post:n}, it follows from
	(\ref{post:m:norm})
	that the corresponding Bayes estimator is 			
	\[
	\widehat\mu_{{\rm pos},n}(X)=(1-\lambda)\mu_{\rm pr}+\lambda\overline X_n, \quad \lambda =\frac{\sigma_{\rm pr}^2}{\sigma_{\rm pr}^2+\sigma^2/n}.
	\]
	Again, this interpolates between the prior mean and the sample mean with 
	\[
	\widehat\mu_{{\rm pos},n}(X)\approx_{n\to+\infty} \overline X_n,
	\] 
	so the asymptotic behavior completely disregards the prior mean. 
	\qed
\end{example}

\begin{remark}\label{asym:effi:bayes}(asymptotic efficiency of Bayes estimators) As illustrated in Example \ref{bay:est:ex:beta} above, Corollary \ref{bayes:est:l} shows that the determination of a Bayes estimator for $\theta$ under a quadratic loss boils down to computing the expectation of the posterior, a quite feasible task in some cases. Similarly to the course of action taken in the ``frequentist'' setting, with those estimators at hand we may then examine their asymptotic efficiency. In the cases treated above, this may be easily reduced to CLT. Indeed, in the Bernoulli case we compute that 
	\[
	\sqrt{n}\left(\widehat p_{(n)}(X)-p\right)=\sqrt{n}\left(\overline X_n-p\right)+\frac{\sqrt{n}}{\alpha+\beta+n}\left(\alpha-\left(\alpha+\beta\right)\overline X_n\right),
	\] 
	where $p$ is the true value of the unknown parameter, 
	so that Theorem \ref{slutsky} and CLT apply to conclude that 
	\[
	\sqrt{n}\left(\widehat p_{(n)}(X)-p\right)\stackrel{d}{\to}\mathcal N(0,p(1-p)). 
	\]
	Similarly, in the normal setting, 
	\[
	\sqrt{n}\left(\widehat\mu_{{\rm pos},n}(X)-\mu\right)=\sqrt{n}\lambda\left(\overline X_n-\mu\right)+\sqrt{n}\left(1-\lambda\right)\left(\mu_{\rm pr}-\mu\right),
	\]
	and since $\lambda\to 1$ and $\sqrt{n}(1-\lambda)=O(n^{-1/2})\to 0$
	we see that 
	\[
	\sqrt{n}\left(\widehat\mu_{{\rm pos},n}(X)-\mu\right)\stackrel{d}{\to}\mathcal N(0,\sigma^2).
	\]
	Thus, in each case the limiting distribution of the appropriate standardization of the Bayes estimator is normal with a dispersion independent of the parameters of the prior distribution. This turns out to be a quite general phenomenon. Indeed, results in \cite[Section 6.8]{lehmann2006theory} guarantee, under suitable regularity conditions and in the regime of large samples, that:
	\begin{itemize}
		\item  the posterior distribution becomes asymptotically normal, and  hence insensitive to the chosen prior, with a variance depending on the true value $\theta_0$ of the unknown parameter only through its Fisher information: 
		\begin{equation}\label{alt:1:1}
			\sqrt{n}\left(\psi_{\vartheta|_{X={\bf x}}}-\theta_0\right)\stackrel{d}{\to}\mathcal N(0,\mathscr F(\theta_0)^{-1}). 
		\end{equation}
		\item  as a consequence, the limiting distribution associated to the Bayes estimator $\widehat\theta_n$ (under quadratic loss) is normal as well  with the same asymptotic  variance: 
		\begin{equation}\label{alt:1:2}
			\sqrt{n}\left(\widehat\theta_n(X)-\theta_0\right)\stackrel{d}{\to}\mathcal N(0,\mathscr F(\theta_0)^{-1}). 
		\end{equation}
		In particular,  $\widehat\theta_n$ is asymptotically efficient.
	\end{itemize}
	We note that (\ref{alt:1:2}) follows from (\ref{alt:1:1}) and the fact that 
	\[
	\sqrt{n}\left(\widehat\theta_n(X)-\psi_{\vartheta|_{X={\bf x}}}\right)\stackrel{d}{\to} 0.
	\]
	Although in the long run these results eventually succeed in altogether eliminating the effect of the subjective choice of the prior, they remain a bit extraneous in regard to the Bayesian tenet according to which by its very nature the posterior is conditional on the sample, whose size has been fixed once and for all.  \qed
\end{remark}

\appendix

\section{Brownian motion, It\^o calculus, and some of their applications}\label{brmot}

In this rather long appendix, whose understanding 
requires only familiarity with the material above up to Subsection \ref{normaldist:sub},
we turn our attention to Brownian motion, an important example of a stochastic process, and its most basic properties. Although the main motivation here is to provide a proof of the Gaussian concentration inequality (\ref{gauss:conc:ineq}) with the sharp constant $C=1/2$, which is presented in Section \ref{gauss:conc:sec}, we also include a few other applications of the associated It\^o calculus, a cornerstone in the modern theory of stochastic processes.

\subsection{Brownian motion: its construction and basic regularity properties}\label{subs:def:bm}

Since Brownian motion is the prototypical example of a stochastic process, we start by recalling the definition of this fundamental concept.

\begin{definition}
	\label{stocproc} A (continuous-time) stochastic process on a probability space $(\Omega,\mathcal F,P)$ is a one-parameter family of random variables $X_t:\Omega\to\mathbb R^n$, $t\geq 0$. 
\end{definition}

\begin{remark}\label{stocproc:d}
	We may also consider situations in which the parameter $t$ indexing the process is replaced by a discrete one, say $n\in\mathbb N$. In this case, the family $\{X_n\}_{n\in\mathbb N}$ is called a discrete-time stochastic process.
	\qed
\end{remark}

The map $\omega\in\Omega\mapsto X_t(\omega)\in\mathbb R^n$ allows us to think of $\Omega$ as a subset of $(\mathbb R^n)^{[0,+\infty)}$. Thus, to each $\omega\in\Omega$ the process defines a path in $\mathbb R^n$. 
In general, the regularity of the process is expressed in terms of the regularity of these paths. For instance, we say that the process is continuous if $X_t(\omega)$ is continuous for almost any $\omega\in\Omega$. Here we only deal with processes which are at least continuous. In any case, this pathwise description of stochastic processes motivates the following definition.

\begin{definition}\label{probmeas}
	Given a stochastic process $X_t$, its {\em probability distributions} in $\mathbb R^{nk}$, $k=1,2,\cdots$, are given by 
	\[
	\mu^X_{t_1,\cdots,t_k}(F_1\times\cdots\times F_k)=P(X_{t_1}\in F_1,\cdots, X_ {t_k}\in F_k), 
	\]	
	where $t_i\geq 0$ and $F_i\in \mathcal B^n$, $i=1,\cdots,k$.
\end{definition}   

In other words, $	\mu^X_{t_1,\cdots,t_k}=P_{(X_{t_1},\cdots,X_{t_k})}$ is the joint distribution of $(X_{t_1},\cdots,X_{t_k})$; cf. Definition \ref{distr}. The next result shows that a stochastic process can be reconstructed from its probability distributions if a couple of compatibility conditions are satisfied.  

\begin{theorem}(Kolmogorov's extension)
	\label{extkolm} Assum that for any $t_1,\cdots, t_k\geq 0$ there exists a probability measure $\nu_{t_1,\cdots,t_k}$ in $\mathbb R^{nk}$ such that:
	\begin{itemize}
		\item $(K_1)$ $\nu_{t_{\tau(1)},\cdots,t_{\tau(k)}}(F_1\times\cdots\times F_k)=\nu_{t_1,\cdots,t_k}(F_{\tau^{-1}(1)},\cdots, F_{\tau^{-1}(k)})$, for any permutation $\tau$.
		\item $(K_2)$ $\nu_{t_1,\cdots,t_k}(F_1\times\cdots\times F_k)=\nu_{t_1,\cdots,t_k,t_{k+1},\cdots,t_{k+m}}(F_1\times\cdots\times F_k\times \mathbb R^n\times\cdots\times\mathbb R^n)$, for any $m\geq 1$. 
	\end{itemize}
	Then there exists a probability space $(\Omega,\mathcal F,P)$ and a stochastic process $X_t:\Omega\to\mathbb R^n$ such that 
	\[
	\nu_{t_1,\cdots,t_k}=\mu^X_{t_1,\cdots,t_k}, 
	\]
	for $(t_1,\cdots,t_k)$.
\end{theorem}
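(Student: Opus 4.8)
The plan is to realize the process on the canonical path space and then build $P$ by extending a finitely additive set function from the algebra of cylinder sets via Carath\'eodory's theorem. First I would take $\Omega=(\mathbb R^n)^{[0,+\infty)}$, the space of all $\mathbb R^n$-valued functions on $[0,+\infty)$, and define the process by the coordinate evaluations $X_t(\omega)=\omega(t)$. For any finite set of times $t_1,\dots,t_k$ and any $B\in\mathcal B^{nk}$, the corresponding \emph{cylinder set} is
\[
C=\{\omega\in\Omega : (\omega(t_1),\dots,\omega(t_k))\in B\},
\]
and these sets form an algebra $\mathcal A$. I set $\mathcal F=\sigma(\mathcal A)$ and define a candidate set function by $P_0(C)=\nu_{t_1,\dots,t_k}(B)$.

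The first real task is to verify that $P_0$ is well defined, since a single cylinder admits many representations (one may permute the times or adjoin extra coordinates ranging over all of $\mathbb R^n$). This is precisely what the compatibility conditions $(K_1)$ and $(K_2)$ guarantee: $(K_1)$ reconciles representations differing by a permutation of indices, while $(K_2)$ reconciles representations differing by dummy coordinates. Once well-definedness is secured, finite additivity of $P_0$ on $\mathcal A$ follows routinely by expressing any finite disjoint union of cylinders over a common finite time-set and invoking additivity of the corresponding $\nu$; clearly $P_0(\Omega)=1$.

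The hard part will be upgrading finite additivity to countable additivity, equivalently, continuity at $\emptyset$: if $C_m\downarrow\emptyset$ is a decreasing sequence of cylinders, then $P_0(C_m)\to 0$. I would argue by contraposition, assuming $P_0(C_m)\geq\varepsilon>0$ for all $m$ and producing a point in $\bigcap_m C_m$. The key tool is the inner regularity (Radon property) of the finite-dimensional measures $\nu$ on Euclidean space: each base Borel set can be approximated from inside by a compact set, yielding cylinders $K_m\subseteq C_m$ with compact bases and $P_0(C_m\setminus K_m)$ as small as desired. Replacing $K_m$ by the intersections $K_1\cap\dots\cap K_m$ (still of positive $P_0$-measure, hence nonempty) and extracting, by a diagonal compactness argument over the accumulating time-coordinates, a convergent subsequence of representative finite-dimensional projections, I would produce a limit path lying in every $C_m$, contradicting $\bigcap_m C_m=\emptyset$.

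With $P_0$ shown to be a countably additive premeasure of total mass one on the algebra $\mathcal A$, Carath\'eodory's extension theorem furnishes a (unique) probability measure $P$ on $\mathcal F$ extending $P_0$. Finally, unwinding the definitions, the finite-dimensional distributions of the coordinate process $X_t$ satisfy $\mu^X_{t_1,\dots,t_k}(F_1\times\dots\times F_k)=P_0(C)=\nu_{t_1,\dots,t_k}(F_1\times\dots\times F_k)$ for cylinders with product base, and since product sets generate $\mathcal B^{nk}$ this identifies $\mu^X_{t_1,\dots,t_k}$ with $\nu_{t_1,\dots,t_k}$, completing the proof.
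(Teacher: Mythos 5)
Your proposal is correct and is the standard proof of the Kolmogorov extension theorem: coordinate process on the canonical path space, well-definedness on the cylinder algebra from $(K_1)$--$(K_2)$, countable additivity via inner regularity of the finite-dimensional Borel measures together with a compactness/diagonal argument, and Carath\'eodory extension. The paper itself does not prove the result but defers to the cited reference, whose argument is essentially the one you outline, so there is nothing to add.
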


\begin{proof}
	See \cite[Section 2.4]{tao2011introduction}. 
\end{proof}

Now we can construct Brownian motion in $\mathbb R^n$ following an approach due to Kolmogorov; for other possibilities see \cite{schilling2014brownian}. If $0\leq t_1< \cdots< t_k$ and $y=(y_1,\cdots,y_k)\in\mathbb R^{nk}$ define, if $t_1>0$,
\[
\nu_{t_1,\cdots,t_k}(F_1\times\cdots\times F_k)=\frac{1}{(2\pi)^{nk/2}\sqrt{\det C}}
\int_{F_1\times\cdots\times F_k}e^{-\frac{1}{2}\langle C^{-1}y,y\rangle}dy,
\]
where each $F_i\in\mathcal B^n$ and $C$ is $nk\times nk$-matrix whose $ij$-block is $C_{ij}=t_i\wedge t_jI_n$\footnote{Here, $a\wedge b=\min\{a,b\}$. Also, note that the symmetric matrix $C$ is positive definite.}. If $t_1=0$ we use instead $\delta_{\vec{0}}\otimes \nu_{t_2,\cdots,t_k} $, where $\delta_{\vec{0}}$ is the Dirac measure centered at the origin.
This may be extended to all $(t_1,\cdots,t_k)$ so that $(K_1)$ is satisfied. Moreover, $(K_2)$ is satisfied as well because of Proposition \ref{welldef}. Thus, by means of Theorem \ref{extkolm} we establish the following foundational existence result. 

\begin{theorem}\label{brown:exist}
There exists a probability space $(\Omega,\mathcal F,P)$ and a stochastic process $b_t:\Omega\to\mathbb R^n$ so that
\[ 
P(b_{t_1}\in F_1,\cdots,b_{t_k}\in F_k)=\frac{1}{(2\pi)^{nk/2}\sqrt{\det C}}
\int_{F_1\times\cdots\times F_k}e^{-\frac{1}{2}\langle C^{-1}x,x\rangle}dx.
\]
This is called {\em Brownian motion} ({\em BM}) in $\mathbb R^n$ (starting at $\vec{0}$).
\end{theorem}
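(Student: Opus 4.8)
The plan is to obtain the process as a direct application of Kolmogorov's extension theorem (Theorem \ref{extkolm}). Concretely, I would take the prescribed family $\{\nu_{t_1,\ldots,t_k}\}$ introduced in the construction preceding the statement and verify that (i) each $\nu_{t_1,\ldots,t_k}$ is a genuine probability measure on $\mathbb R^{nk}$, and (ii) the family satisfies the two compatibility conditions $(K_1)$ and $(K_2)$. Once this is done, Theorem \ref{extkolm} furnishes a probability space $(\Omega,\mathcal F,P)$ and a process $b_t$ whose probability distributions $\mu^{b}_{t_1,\ldots,t_k}$ coincide with $\nu_{t_1,\ldots,t_k}$, which is exactly the assertion. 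It is worth stressing at the outset that this existence statement concerns only the finite-dimensional distributions; the pathwise regularity alluded to in the subsection heading (continuity of $t\mapsto b_t$) is a separate matter, requiring a Kolmogorov-type continuity argument, and is not part of what must be proved here.

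For (i), the only genuine issue is the nondegeneracy of the covariance $C$ whose $(i,j)$-block is $(t_i\wedge t_j)I_n$. Assuming first $0<t_1<\cdots<t_k$, I would exhibit the block factorization $C=LDL^\top$, where $L$ is the lower-triangular block matrix with $L_{il}=I_n$ for $l\le i$ and $0$ otherwise, and $D={\rm diag}((t_i-t_{i-1})I_n)$ with $t_0:=0$; a direct block multiplication gives $(LDL^\top)_{ij}=\sum_{l\le i\wedge j}(t_l-t_{l-1})I_n=(t_i\wedge t_j)I_n$, as required. Since each $t_i-t_{i-1}>0$ the matrix $D$ is positive definite and $L$ is invertible, so $C$ is symmetric positive definite, and Proposition \ref{welldef} then guarantees that the associated Gaussian density integrates to $1$, making $\nu_{t_1,\ldots,t_k}$ a probability measure. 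The degenerate situation $t_1=0$ forces a vanishing variance in the first coordinate, which is precisely why the construction replaces the density by $\delta_{\vec{0}}\otimes\nu_{t_2,\ldots,t_k}$, encoding $b_0=\vec{0}$ almost surely.

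For (ii), I would phrase both conditions through characteristic functions, where they become transparent. By Proposition \ref{funchnor}, the characteristic function of $\nu_{t_1,\ldots,t_k}$ is $\phi(u)=\exp\left(-\tfrac{1}{2}\sum_{i,j}(t_i\wedge t_j)\langle u_i,u_j\rangle\right)$ for $u=(u_1,\ldots,u_k)\in\mathbb R^{nk}$. Condition $(K_1)$, invariance under a permutation $\tau$ of the time indices, is then immediate: a permutation merely relabels the symmetric double sum, so $\phi$ is unchanged, and by Corollary \ref{det:norm:dist} the measure is too. For $(K_2)$, I observe that the top-left $nk\times nk$ submatrix of the covariance attached to $(t_1,\ldots,t_{k+m})$ is exactly the covariance attached to $(t_1,\ldots,t_k)$, since the entry indexed by $i,j\le k$ equals $(t_i\wedge t_j)I_n$ irrespective of the additional times. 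Consequently the marginal of $\nu_{t_1,\ldots,t_{k+m}}$ on the first $k$ blocks, obtained by setting $u_{k+1}=\cdots=u_{k+m}=0$ in $\phi$ exactly as in the marginalization computation of Remark \ref{rem:dir:ind:n}, has characteristic function equal to that of $\nu_{t_1,\ldots,t_k}$, hence the two measures agree; evaluating on product sets $F_1\times\cdots\times F_k$ yields precisely $(K_2)$.

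The step I expect to demand the most care is the bookkeeping around the degenerate times: the factorization argument presupposes strictly increasing, strictly positive $t_i$, so one must separately confirm that $(K_1)$ and $(K_2)$ remain valid when a retained time equals $0$ or when some times coincide, ensuring that the Dirac-mass and full-rank cases splice together consistently across the extension to arbitrary tuples $(t_1,\ldots,t_k)$. Once these compatibility checks are assembled, the hypotheses of Theorem \ref{extkolm} are met and the theorem delivers the desired probability space and process $b_t$, completing the proof.
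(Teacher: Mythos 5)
Your proposal is correct and follows essentially the same route as the paper: the theorem is obtained by applying Kolmogorov's extension theorem (Theorem \ref{extkolm}) to the Gaussian family $\nu_{t_1,\ldots,t_k}$, after checking that these are probability measures and satisfy $(K_1)$ and $(K_2)$. You in fact supply more detail than the text does (the $LDL^\top$ factorization establishing positive definiteness of $C$, and the characteristic-function verification of the compatibility conditions, where the paper simply invokes Proposition \ref{welldef}), and you are right that pathwise continuity is a separate issue deferred to Proposition \ref{pass}.
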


The next proposition lists the characterizing properties of $BM$.

\begin{proposition}
	\label{charbm} BM in $\mathbb R^n$ satisfies the following properties:
	\begin{enumerate}
		\item  $b_0=0 $ a.s.;
		\item it has stationary normal increments, i.e. for any $0\leq s< t$, $h\geq -s$, $b_{t+h}-b_{s+h}$ and $b_t-b_s$ are identically distributed with 
		\begin{equation}\label{br:dist}
			b_t-b_s\sim \mathcal N\left(0,(t-s){\rm Id}_n\right);
		\end{equation}
		\item it has independent increments, that is, for  any $0= t_0<t_1\cdots< t_k$, $\{b_{t_1}-b_{t_0},\cdots,b_{t_k}-b_{t_{k-1}}\}$ is an independent family of random vectors; 
		\item $t\mapsto b_t(\omega)$ is continuous for any $\omega$.
	\end{enumerate}
\end{proposition}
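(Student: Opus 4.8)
The plan is to read off all four properties from the single structural fact secured by the construction in Theorem \ref{brown:exist}: the finite-dimensional distributions of $b_t$ are centered Gaussians, with the covariance between $b_s$ and $b_t$ equal to $(s\wedge t){\rm Id}_n$. Property (1) is then immediate, since the construction places the Dirac mass $\delta_{\vec 0}$ on the $t=0$ coordinate, so that $P(b_0=0)=1$.

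For (2) and (3) I would argue entirely at the level of Gaussian algebra. Any increment $b_t-b_s$ is a linear image of the jointly Gaussian vector $(b_s,b_t)$, hence is itself Gaussian by Corollary \ref{ortho:norm}. A direct bilinear computation from ${\rm cov}(b_a,b_b)=(a\wedge b){\rm Id}_n$ gives, for $s<t$, the value ${\rm cov}(b_t-b_s)=(t+s-2s){\rm Id}_n=(t-s){\rm Id}_n$, which is exactly (\ref{br:dist}); stationarity follows at once because $b_{t+h}-b_{s+h}$ has the same (zero) mean and the same covariance $(t-s){\rm Id}_n$, so the two increments share their law by Corollary \ref{det:norm:dist}. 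For (3), the full vector of increments $(b_{t_1}-b_{t_0},\dots,b_{t_k}-b_{t_{k-1}})$ is again a linear image of $(b_{t_1},\dots,b_{t_k})$ and therefore jointly Gaussian, so by Proposition \ref{unc:ind:n} it suffices to verify that distinct increments are uncorrelated. For $i<j$, expanding ${\rm cov}(b_{t_i}-b_{t_{i-1}},b_{t_j}-b_{t_{j-1}})$ into four terms and using the ordering $t_{i-1}<t_i\leq t_{j-1}<t_j$ to evaluate each $a\wedge b$ produces the telescoping cancellation $t_i-t_i-t_{i-1}+t_{i-1}=0$.

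The genuinely delicate point is (4). The obstacle is conceptual: Kolmogorov's extension (Theorem \ref{extkolm}) realizes $b_t$ on the huge product space $(\mathbb R^n)^{[0,+\infty)}$, on which path continuity is not even a measurable event, so continuity cannot simply be deduced from the finite-dimensional laws. My plan is to invoke the Kolmogorov--Chentsov continuity theorem, which guarantees the existence of a continuous \emph{modification} of the process as soon as a moment bound $\mathbb E(|b_t-b_s|^\alpha)\leq C|t-s|^{1+\beta}$ holds for some positive $\alpha,\beta,C$. This bound is furnished for free by (\ref{br:dist}): writing $|b_t-b_s|^2=\sum_{i=1}^n(b_t^i-b_s^i)^2$ with the components independent and each $b_t^i-b_s^i\sim\mathcal N(0,t-s)$, the fourth-moment formula for a normal (Example \ref{mom:normal}, giving $\mathbb E(X^4)=3\sigma^4$) yields $\mathbb E(|b_t-b_s|^4)=n(n+2)(t-s)^2$, so $\alpha=4$ and $\beta=1$ work. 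The hard part is thus not the estimate but the continuity theorem itself; I would state it as an external input, with a reference to \cite{schilling2014brownian}, and apply it to the moment bound just derived, thereby producing the continuous version of $b_t$ asserted in (4).
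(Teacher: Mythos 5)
Your proposal is correct; parts (1) and (4) follow the paper exactly, while your treatment of (2)--(3) is a legitimate variant of the paper's argument. The paper computes the characteristic function of the full increment vector and exhibits the telescoping identity $\langle Cu,u\rangle=\sum_j(t_j-t_{j-1})\|v_j\|^2$ directly in the exponent, so that both the law (\ref{br:dist}) and the independence in (3) drop out of the resulting product form via Fourier inversion and Proposition \ref{inddens}. You instead run the same telescoping cancellation at the level of covariance matrices and then invoke Proposition \ref{unc:ind:n} to pass from uncorrelatedness to independence. These are two packagings of the same computation; yours is slightly shorter because it outsources the Fourier inversion to the already-proved Proposition \ref{unc:ind:n}. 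The one point you should make explicit is that Proposition \ref{unc:ind:n} is stated for scalar components, whereas your increments are $\mathbb R^n$-valued blocks; this is harmless here since each block covariance $(t_j-t_{j-1}){\rm Id}_n$ is itself diagonal, so the full $nk\times nk$ covariance of the increment vector is literally diagonal and the proposition applies componentwise, grouping the components back into blocks afterwards. (You also implicitly use that the joint law of $(b_{t_1},\dots,b_{t_k})$ for $t_1>0$ is the nondegenerate Gaussian $\mathcal N(\vec 0,C)$ supplied by the construction, and that the increment map is an invertible linear map so Corollary \ref{ortho:norm} applies; both are fine.) For (4) your route coincides with the paper's (Theorem \ref{kolchen} plus Gaussian moment bounds, continuity only up to a modification); fixing $\alpha=4$, $\beta=1$ gives $\gamma$-H\"older paths for $\gamma<1/4$, which suffices for the continuity asserted in (4), though the paper's use of all even moments in Proposition \ref{regfor} is what later upgrades this to $(\tfrac12-\epsilon)$-H\"older in Proposition \ref{pass}.
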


\begin{proof}
	From the construction, (1) follows immediately. To approach (2) and (3) we take $u=(u_1,\cdots,u_k)\in\mathbb R^{nk}$, $v_j=u_j+\cdots+u_k$, $j=1,\cdots,k$,  and $b=(b_{t_1},\cdots,b_{t_k})$, so if we use the language of characteristic functions in Definition \ref{funcchar} and the explicit computation of this object for normally distributed random vectors in Proposition \ref{funchnor} we have (recalling that $b_{t_0}=0$)
	\begin{eqnarray*}
		\phi_{(b_{t_1}-b_{t_{0}},\cdots,b_{t_k}-b_{t_{k-1}})}(v_1,\cdots,v_k)
		& = & 
		\mathbb E(e^{{\bf i}\sum_{j=1}^k\langle b_{t_j}-b_{t_{j-1}},v_j\rangle})\\
		& = & \mathbb E(e^{{\bf i}\langle b,u\rangle})\\
		& = & \phi_b(u)\\
		& = & e^{-\frac{1}{2}\langle Cu,u\rangle}.
	\end{eqnarray*}
	But
	\begin{eqnarray*}
		\langle Cu,u\rangle
		& = & 
		\sum_{j=1}^k\sum_{l=1}^k(t_j\wedge t_l)\langle u_j,u_l\rangle \\
		& = &  t_k\|u_k\|^2+\sum_ {j=1}^{k-1}t_j\langle u_j,u_j+2u_{j+1}+\cdots 2u_k\rangle\\
		&  = & t_k\|u_k\|^2+\sum_ {j=1}^{k-1}t_j(\|u_j+\cdots+u_k\|^2-\|u_{j+1}+\cdots+u_k\|^2)\\ 	
		& = & \sum_ {j=1}^kt_j\|u_j+\cdots+u_k\|^2 -\sum_ {j=1}^kt_{j-1}\|u_j+\cdots+u_k\|^2\\
		& = & \sum_ {j=1}^k(t_j-t_{j-1})\|v_j\|^2,	
	\end{eqnarray*}
	so that 
	\begin{equation}\label{stat:bm:f}
		\phi_{(b_{t_1}-b_{t_{0}},\cdots,b_{t_k}-b_{t_{k-1}})}(v_1,\cdots,v_k)
		=  \Pi_{j=1}^k e^{-\frac{1}{2}(t_j-t_ {j-1})\|v_j\|^2}.
	\end{equation}
	Notice that for $0\leq s<t$ this specializes to 
	\[
	\phi_{b_t-b_s}(v)=e^{-\frac{1}{2}\left(t-s\right)\|v\|^2}, \quad v\in\mathbb R^n,
	\]
	so that Corollary \ref{det:norm:dist} applies to ensure that (\ref{br:dist}) holds, which proves (2). As for (3), note that (\ref{stat:bm:f})
	may be rewritten as  
	\[
	\phi_{(b_{t_1}-b_{t_{0}},\cdots,b_{t_k}-b_{t_{k-1}})}(v_1,\cdots,v_k)
	=  \Pi_{j=1}^k\phi_{b_{t_j}-b_{t_{j-1}}}(v_j),
	\]
	so we may proceed as in the last step of the proof of Proposition \ref{unc:ind:n} and use the standard Fourier inversion formula to confirm that the joint distribution of the random vector of increments decomposes as 
	\[
	\psi_{(b_{t_1}-b_{t_{0}},\cdots,b_{t_k}-b_{t_{k-1}})}(x_1,\cdots,x_k)
	= \Pi_{j=1}^k\psi_{b_{t_j}-b_{t_{j-1}}}(x_j),
	\]
	which proves (3) by Proposition \ref{inddens}. 
	The proof of (4) is presented in the next section; see Proposition \ref{pass}.
\end{proof}

\begin{remark}\label{gauss:proc}
	(Brownian motion as a Gaussian process)
	Let \(T\subset\mathbb R_{\geq 0}\) be an index set and let \(\{X_t\}_{t\in T}\) be a stochastic process. We say that \(X_t\) is {\em Gaussian} if, for every \(t_1,\dots,t_n\in T\), the random vector
	\[
	(X_{t_1},\dots,X_{t_n})
	\]
	is normally distributed as in Definition \ref{normdistrv}, with its  mean and covariance functions being given by
	\[
	m(t):=\mathbb E(X_t), \qquad 
	K(s,t):=\mathbb C(X_s,X_t).
	\]
	 From its very construction in Theorem \ref{brown:exist}, Brownian motion is a Gaussian process (if $n=1$ we have $m(t)=0$ and $K(t,s)=t\wedge s$). 
	 Another relevant example in the sequel is the Brownian bridge constructed in Subsection \ref{fk:sec}; see Remark \ref{bb:0:b:sde}.
	It follows from Proposition \ref{det:norm:dist} that a 
	Gaussian process is completely determined (in law) by \(m\) and \(K\): if \(X\) and \(Y\) are Gaussian processes with the same mean and covariance functions, then
	\[
	(X_{t_1},\dots,X_{t_n})
	\,\,\textrm{and}\,\,
	(Y_{t_1},\dots,Y_{t_n})
	\]
	are identically distributed
	for every such finite collection \(t_1,\dots,t_n\in T\). 
	\qed
\end{remark}

\begin{proposition}\label{exp:dif:br}
	\label{furth} If $t\leq s$ then $\mathbb E(\|b_s-b_t\| ^2)=n(s-t)$. 
\end{proposition}

\begin{proof}
	We know that $\mathbb E(b_t)=0$ and 
	\[
	{\mathbb C}(b_s,b_t)=
	\left(
	\begin{array}{cc}
	s I_n & s\wedge t I_n\\
	s\wedge t I_n & t I_n	
		\end{array}
	\right).
	\]
	Also, if 
	$b_t=(b_t^{(1)},\cdots,b_t^{(n)})$ is the coordinate expression of $b_t$,  
	\[
		\mathbb E(\langle b_s,b_t\rangle)
		 =  \sum_i\mathbb E\left(b_s^{(i)}b_t^{(i)}\right)
		 =  \sum_i{\mathbb C}(b_s,b_t)_{ii}.
	\]
	Hence, if $t\leq s$,
	\begin{eqnarray*}
		\mathbb E(\|b_s-b_t\|^2) & = & \mathbb E(\|b_s\|^2)-2\mathbb E(\langle b_s,b_t\rangle) +\mathbb E(\|b_ t\|^2)\\
		& = & ns-2nt+tn,
	\end{eqnarray*} 
	as claimed.
\end{proof}

\begin{remark}\label{br:ind}
	It follows from (\ref{br:dist}) that ${\mathbb C}(b_t)_{ij}=t\delta_{ij}$,
	so that by Proposition \ref{unc:ind:n} we see that the coordinate components $\{b_t^{(i)}\}_{i=1}^n$of $b_t$ form an independent family of BMs in $\mathbb R$. Conversely, we may first construct BM $b_t$ in $\mathbb R$ by using the Kolmogorov's argument above (note that in this case the matrix $C$ has a much simpler structure) and then take $n$ {\em independent} copies of $b_t$, say $\{b_t^{(1)}, \cdots b_t^{(n)}\}$, in order to exhibit BM in $\mathbb R^n$ as $(b_t^{(1)},\cdots,b_t^{(n)})$. \qed
\end{remark}

We now turn to the basic regularity properties of Brownian motion and we
start by 
proving Proposition \ref{charbm}, (4). To simplify matters, we only consider the case $n=1$; cf.  Remark \ref{br:ind}. The proof is based on the following general regularity result for stochastic processes. 	We recall that saying that $X_t'$ is a {\em modification} of $X_t$ means that $P(X_t=X_t')=1$ for any $t$.

\begin{theorem}
	\label{kolchen} (Kolmogorov's continuity) If $X_t:\Omega\to \mathbb R$ is a stochastic process satisfying
	\[
	\mathbb E(|X_s-X_t|^\alpha)\leq C|s-t|^{\beta +1}, \quad s,t\geq 0,
	\]
	then there exists a modification $X'_t$ of $X_t$ whose paths are locally $\gamma$-H\"older continuous, where $0<\gamma<\beta/\alpha$. In particular, $X_t'\in C^0$.
\end{theorem}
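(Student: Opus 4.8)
The plan is to follow Kolmogorov's classical dyadic chaining argument. First I would reduce to a compact time interval: since $[0,+\infty)=\bigcup_{m\geq 0}[m,m+1]$ and a countable intersection of almost-sure events is again almost sure, it suffices to construct the H\"older modification on $[0,1]$ and then patch the pieces together. Fix $\gamma\in(0,\beta/\alpha)$, and let $D_n=\{k2^{-n}:0\leq k\leq 2^n\}$ and $D=\bigcup_n D_n$ denote the dyadic rationals in $[0,1]$. The starting point is Markov's inequality (Proposition \ref{markov:ineq}) applied to the moment hypothesis: for consecutive points $t=(k-1)2^{-n}$ and $s=k2^{-n}$ at level $n$, taking the threshold $\lambda=2^{-\gamma n}$ gives
\[
P\left(|X_{k2^{-n}}-X_{(k-1)2^{-n}}|\geq 2^{-\gamma n}\right)\leq \frac{C\,2^{-n(\beta+1)}}{2^{-\gamma n\alpha}}=C\,2^{-n(\beta+1-\gamma\alpha)}.
\]

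Next I would pass from individual increments to a global control via a union bound followed by Borel--Cantelli. Setting $A_n=\{\max_{1\leq k\leq 2^n}|X_{k2^{-n}}-X_{(k-1)2^{-n}}|\geq 2^{-\gamma n}\}$ and summing the $2^n$ estimates above yields $P(A_n)\leq C\,2^{-n(\beta-\gamma\alpha)}$. Because $\gamma\alpha<\beta$, the exponent $\beta-\gamma\alpha$ is strictly positive, so $\sum_n P(A_n)<+\infty$ and the first Borel--Cantelli lemma produces a full-measure event $\Omega^\ast$ on which, for each $\omega$, there is some $N(\omega)$ with the property that every consecutive dyadic increment at every level $n\geq N(\omega)$ is bounded by $2^{-\gamma n}$.

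The heart of the argument --- and the step I expect to be the main obstacle --- is the combinatorial chaining lemma that converts this level-by-level bound into genuine H\"older continuity on $D$. On $\Omega^\ast$, given dyadics $s,t$ with $2^{-(n+1)}<|s-t|\leq 2^{-n}$ and $n\geq N(\omega)$, one connects $s$ to $t$ by a telescoping chain of dyadic steps using at most two increments at each level $m\geq n+1$; summing the level bounds gives
\[
|X_s-X_t|\leq 2\sum_{m\geq n+1}2^{-\gamma m}=\frac{2\cdot 2^{-\gamma(n+1)}}{1-2^{-\gamma}}\leq K\,|s-t|^{\gamma},
\]
where $K=2/(1-2^{-\gamma})$ depends only on $\gamma$ and the last inequality uses $2^{-(n+1)}<|s-t|$. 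Making the bookkeeping of this chain rigorous --- verifying that every visited point is dyadic, that only finitely many levels contribute, and that the ``at most two per level'' count is correct --- is the delicate part. It shows that $t\mapsto X_t$ is locally $\gamma$-H\"older, hence uniformly continuous, on $D\cap[0,1]$, so it extends uniquely to a continuous function $X_t'$ on $[0,1]$; on the null complement of $\Omega^\ast$ I would simply set $X_t'\equiv 0$.

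Finally I would verify that $X_t'$ is indeed a modification of $X_t$. Fixing $t\in[0,1]$ and choosing dyadics $s_k\to t$, the construction gives $X_{s_k}\to X_t'$ almost surely, hence also in probability (Proposition \ref{modes}); on the other hand the hypothesis forces $\mathbb E(|X_{s_k}-X_t|^\alpha)\to 0$, so Markov's inequality (Proposition \ref{markov:ineq}) applied to $|X_{s_k}-X_t|^\alpha$ yields $X_{s_k}\to X_t$ in probability as well. Uniqueness of limits in probability then forces $X_t'=X_t$ almost surely, which is exactly the modification property. This general statement is what feeds into Proposition \ref{charbm}\,(4) (via Proposition \ref{pass}): the Gaussian moment formula of Example \ref{mom:normal} gives $\mathbb E(|b_s-b_t|^{2m})=c_m|s-t|^m$ for Brownian increments, so taking $\alpha=2m$, $\beta=m-1$ and letting $m\to+\infty$ produces continuous Brownian paths that are $\gamma$-H\"older for every $\gamma<1/2$.
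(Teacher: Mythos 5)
Your proposal is correct: it is the classical dyadic chaining proof of Kolmogorov's continuity criterion, with the Markov/Borel--Cantelli step, the telescoping bound $|X_s-X_t|\leq 2\sum_{m\geq n+1}2^{-\gamma m}$, the extension from the dyadics by uniform continuity, and the convergence-in-probability argument showing $X'_t=X_t$ a.s.\ all in the right places. The paper itself does not prove this theorem but defers to \cite[Section 2.2]{le2013mouvement}, and the argument you outline is precisely the one given there, so there is nothing to compare beyond noting that your reduction to $[0,1]$ and the concluding application to Brownian increments ($\alpha=2m$, $\beta=m-1$, $m\to+\infty$) match how the paper uses the result in Propositions \ref{regfor} and \ref{pass}.
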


\begin{proof}
	See \cite[Section 2.2]{le2013mouvement}.
\end{proof}

The regularity of BM now follows from the following fact.

\begin{proposition}
	\label{regfor} BM in $\mathbb R$ satisfies
	\[
	\mathbb E(|b_s-b_t|^{2k})= \frac{(2k)!}{2^kk!}|s-t|^k,\quad k\geq 1.
	\]
\end{proposition}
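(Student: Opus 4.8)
The plan is to reduce everything to a single, explicit Gaussian moment computation. By Proposition~\ref{charbm}(2), the increment $b_s-b_t$ is normally distributed with mean zero and variance $|s-t|$, so the quantity $\mathbb E(|b_s-b_t|^{2k})$ is nothing but the $2k$-th moment of a centered normal with variance $\sigma^2 = |s-t|$. Thus the entire statement follows once we have a formula for the even moments of a $\mathcal N(0,\sigma^2)$ random variable. Conveniently, this has already been carried out in Example~\ref{mom:normal}, where it is shown (by comparing the series expansion of the mgf $\varphi_X(u)=e^{\frac{1}{2}\sigma^2u^2}$ with the general formula~(\ref{mom:gen:exp})) that for $X\sim\mathcal N(0,\sigma^2)$ the moments are
\[
\alpha_{2k}(X)=\frac{(2k)!}{2^{k}k!}\sigma^{2k}.
\]

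First I would invoke Proposition~\ref{charbm}(2) to write $b_s-b_t\sim\mathcal N(0,|s-t|)$, identifying $\sigma^2=|s-t|$. Then I would apply the even-moment formula of Example~\ref{mom:normal} with this choice of $\sigma^2$, which gives directly
\[
\mathbb E(|b_s-b_t|^{2k})=\alpha_{2k}(b_s-b_t)=\frac{(2k)!}{2^{k}k!}(|s-t|)^{k},
\]
as claimed. This is essentially a one-line deduction once the normal distribution of the increment is in hand.

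For completeness, if one prefers a self-contained derivation rather than citing Example~\ref{mom:normal}, I would compute the integral
\[
\mathbb E(|b_s-b_t|^{2k})=\frac{1}{\sqrt{2\pi\sigma^2}}\int_{-\infty}^{+\infty}x^{2k}e^{-x^2/2\sigma^2}\,dx
\]
via the substitution $x=\sigma\sqrt{2}\,z$, reducing it to $\frac{(2\sigma^2)^k}{\sqrt{\pi}}\int_{-\infty}^{+\infty}z^{2k}e^{-z^2}dz$, and then evaluate the Gaussian integral $\int_{-\infty}^{+\infty}z^{2k}e^{-z^2}dz=\Gamma(k+\tfrac12)=\frac{(2k)!}{4^k k!}\sqrt{\pi}$ using the duplication identity for the Gamma function. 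Collecting the constants recovers the stated coefficient $(2k)!/(2^k k!)$.

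There is no real obstacle here; the only mild point requiring attention is that the result is stated for scalar BM ($n=1$), so I would note at the outset (as the excerpt already does in the reduction preceding the statement) that we restrict to $n=1$, which is justified by Remark~\ref{br:ind}. The genuinely useful payoff of this formula is its role in the subsequent application of Kolmogorov's continuity theorem (Theorem~\ref{kolchen}): taking $\alpha=2k$ gives $\mathbb E(|b_s-b_t|^{2k})\leq C|s-t|^{k}$, so that $\beta=k-1$ and the H\"older exponent range $0<\gamma<\beta/\alpha=(k-1)/(2k)$ can be made arbitrarily close to $1/2$ by letting $k\to+\infty$, thereby establishing the (almost) continuity of Brownian paths asserted in Proposition~\ref{charbm}(4).
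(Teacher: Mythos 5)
Your proof is correct and follows essentially the same route as the paper: the paper's proof also reduces the statement to the normality of the increment $b_s-b_t\sim\mathcal N(0,s-t)$ and then cites the even-moment formula of Example~\ref{mom:normal}. Your optional self-contained Gamma-function computation and the remarks on the application to Kolmogorov's continuity theorem are sound additions but not needed.
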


\begin{proof}
	Since $b_s-b_t\sim\mathcal N(0,s-t)$, this follows from the discussion in Example \ref{mom:normal}.
\end{proof}

\begin{proposition}
	\label{pass}
	Eventually passing to a modification, BM is locally $(\frac{1}{2}-\epsilon)$-H\"older continuous, for any $\epsilon>0$.
\end{proposition}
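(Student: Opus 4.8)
The plan is to apply Kolmogorov's continuity theorem (Theorem~\ref{kolchen}) directly, using the moment bound from Proposition~\ref{regfor} as the required hypothesis. The entire argument is essentially a matter of matching up the exponents $\alpha$ and $\beta$ in the statement of the continuity theorem with the growth rate supplied by the explicit moment computation for Brownian motion.

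First I would recall that Proposition~\ref{regfor} gives, for BM in $\mathbb R$ and any integer $k\geq 1$,
\[
\mathbb E(|b_s-b_t|^{2k})=\frac{(2k)!}{2^k k!}|s-t|^k,
\]
so that with $C_k:=(2k)!/(2^k k!)$ this reads $\mathbb E(|b_s-b_t|^{2k})\leq C_k|s-t|^k$. To invoke Theorem~\ref{kolchen} I set $\alpha=2k$ and require $\beta+1=k$, i.e.\ $\beta=k-1$. The conclusion is then that, after passing to a modification, the paths are locally $\gamma$-H\"older continuous for every $0<\gamma<\beta/\alpha=(k-1)/(2k)$. Since Brownian motion is a single object, the modification obtained for different values of $k$ can be taken to coincide (any two continuous modifications agree a.s.), so I may optimize over $k$.

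Next I would observe that
\[
\frac{k-1}{2k}=\frac{1}{2}-\frac{1}{2k}\longrightarrow \frac{1}{2}\quad\text{as } k\to+\infty.
\]
Thus, given any $\epsilon>0$, choosing $k$ large enough that $1/(2k)<\epsilon$ guarantees $(k-1)/(2k)>\tfrac12-\epsilon$, and Theorem~\ref{kolchen} then yields local H\"older continuity of exponent $\tfrac12-\epsilon$. This establishes the claim for $n=1$. For general $n$ I would appeal to Remark~\ref{br:ind}, which identifies BM in $\mathbb R^n$ with an independent family of coordinate Brownian motions $\{b_t^{(i)}\}_{i=1}^n$; since each component is locally $(\tfrac12-\epsilon)$-H\"older on a full-measure set, so is the vector-valued path on the (finite) intersection of these full-measure sets. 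This also completes the proof of Proposition~\ref{charbm}~(4), the continuity of Brownian paths.

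I do not anticipate a genuine obstacle here, as the result is a direct corollary of two facts already in hand. The only point requiring a modicum of care is the \emph{uniformity of the modification across $k$}: one must be sure that the modification whose paths are H\"older-$\gamma$ does not depend on the particular $k$ used to produce it, so that letting $k\to+\infty$ is legitimate. This is handled by the standard observation that two continuous modifications of the same process are indistinguishable, so a single continuous version serves simultaneously for all exponents $\tfrac12-\epsilon$; alternatively, one notes that the supremum of the attainable H\"older exponents over the countable family $k=1,2,\dots$ is already $\tfrac12$, and a countable intersection of full-measure events remains of full measure.
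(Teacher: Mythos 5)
Your argument is correct and is exactly the paper's proof: apply Theorem~\ref{kolchen} with $\alpha=2k$ and $\beta=k-1$ from the moment bound in Proposition~\ref{regfor}, then let $k\to+\infty$ so that $\beta/\alpha=(k-1)/(2k)\to\tfrac12$. Your added remarks on the uniformity of the modification across $k$ and the reduction to $n=1$ via Remark~\ref{br:ind} are welcome details that the paper leaves implicit.
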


\begin{proof}
	Apply the results above with $\alpha=2k$ and $\beta=k-1$ and send $k\to+\infty$.
\end{proof}

This is in a sense the best regularity we can have. To check this we need a definition.

\begin{definition}
	\label{variat} If $X_t:\Omega\to \mathbb R$ and $p>0$, we define its $p^{\rm th}$ {\em variation} by
	\[
	\langle X\rangle_t^{(p)}(\omega)=\lim_{\Delta t_k\to 0}\sum_{t_k\leq t}|X_{t_{k+1}}(\omega)-X_{t_k}(\omega)|^p,
	\]
	where $\Delta t_k=t_{k+1}-t_k=t/k$ and the limit is taken in probability.
\end{definition}

It turns out that the quadratic variation of BM can be explicitly computed.

\begin{proposition}\label{qv:compt}
	\label{quadbm} BM $b_t$ in $\mathbb R$ satisfies 
	\[
	\langle b\rangle_t^{(2)}=t,
	\]
	with convergence in $L^2$-mean.
\end{proposition}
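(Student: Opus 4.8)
The plan is to fix $t>0$, work with the uniform partition $t_k=kt/n$, $k=0,\dots,n$, of $[0,t]$, and set
\[
S_n=\sum_{k=0}^{n-1}\left(b_{t_{k+1}}-b_{t_k}\right)^2.
\]
I will show directly that $S_n\to t$ in $L^2$-mean, i.e. that $\mathbb E(|S_n-t|^2)\to 0$, which in particular yields convergence in probability by Proposition \ref{modes} and hence matches Definition \ref{variat}. The whole argument reduces to two elementary moment computations, both of which are already available.

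First I would compute the expectation. By the stationary normal increments property (\ref{br:dist}) (with $n=1$), each increment satisfies $b_{t_{k+1}}-b_{t_k}\sim\mathcal N(0,t_{k+1}-t_k)$, so $\mathbb E((b_{t_{k+1}}-b_{t_k})^2)=t_{k+1}-t_k$. Summing the telescoping differences gives $\mathbb E(S_n)=\sum_{k=0}^{n-1}(t_{k+1}-t_k)=t$ \emph{exactly}, for every $n$. Consequently $\mathbb E(|S_n-t|^2)=\mathbb E(|S_n-\mathbb E(S_n)|^2)={\rm var}(S_n)$, so it only remains to control the variance.

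Next I would compute the variance. The crucial structural input is property (3) of Proposition \ref{charbm}: the increments $b_{t_{k+1}}-b_{t_k}$ are independent, hence so are their squares, and all cross terms drop out. By (\ref{uncorr:var}),
\[
{\rm var}(S_n)=\sum_{k=0}^{n-1}{\rm var}\!\left((b_{t_{k+1}}-b_{t_k})^2\right).
\]
For a single increment $Z=b_{t_{k+1}}-b_{t_k}\sim\mathcal N(0,\sigma^2)$ with $\sigma^2=t/n$, Proposition \ref{regfor} (equivalently Example \ref{mom:normal}) with $k=2$ gives $\mathbb E(Z^4)=3\sigma^4$, so ${\rm var}(Z^2)=\mathbb E(Z^4)-(\mathbb E(Z^2))^2=3\sigma^4-\sigma^4=2\sigma^4=2t^2/n^2$. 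Summing the $n$ identical terms yields ${\rm var}(S_n)=2t^2/n$.

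Putting these together, $\mathbb E(|S_n-t|^2)=2t^2/n\to 0$ as $n\to+\infty$, which establishes $L^2$-convergence and therefore $\langle b\rangle_t^{(2)}=t$. There is no serious obstacle here: the only points deserving attention are that independence of increments is precisely what eliminates the covariance cross terms in ${\rm var}(S_n)$, and that the fourth-moment identity $\mathbb E(Z^4)=3\sigma^4$ supplies the sharp coefficient. I would close by remarking that the convergence obtained is in the strong $L^2$ sense, which is stronger than the convergence in probability required by Definition \ref{variat}.
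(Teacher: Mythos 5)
Your proof is correct and follows essentially the same route as the paper's: both arguments reduce $\mathbb E(|S_n-t|^2)$ to a sum of variances of squared increments by invoking independence of increments to kill the cross terms, and both use the Gaussian fourth-moment identity $\mathbb E(Z^4)=3\sigma^4$ to evaluate each diagonal term as $2(t_{k+1}-t_k)^2$, yielding the bound $2t^2/n\to 0$. Your reorganization via ``mean first, then variance'' is a slightly cleaner packaging of the identical computation.
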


\begin{proof}
	Note that 
	\begin{eqnarray*}
		\mathbb E\left(\left(\sum_{t_k\leq t}(b_{t_{k+1}}-b_{t_k})^2-t\right)^2\right) & = & 
		\mathbb E\left(\left(\sum_{t_k\leq t}(b_{t_{k+1}}-b_{t_k})^2-(t_{k+1}-t_k)\right)^2\right)\\
		& = & I + II,
	\end{eqnarray*}
	where 
	\[
	I=\mathbb E\left(\sum_{t_k\leq t}\left((b_{t_{k+1}}-b_{t_k})^2-(t_{k+1}-t_k)\right)^2\right)
	\]
	and 
	\begin{eqnarray*}
	II
	& = & 2\sum_{t_j<t_k\leq t}{\mathbb E\left(\left((b_{t_{j+1}}-b_{t_j})^2-(t_{j+1}-t_j)\right)\left((b_{t_{k+1}}-b_{t_k})^2-(t_{k+1}-t_k)\right)\right)}\\
	& = & 2\sum_{t_j<t_k\leq t}{\mathbb C}\left((b_{t_{j+1}}-b_{t_j})^2,(b_{t_{k+1}}-b_{t_k})^2\right),
	\end{eqnarray*}
where we used  Proposition \ref{furth} with $n=1$ in the last step. 
	But by Proposition \ref{charbm}, (3), 
	\[
	b_{t_{j+1}}-b_{t_j}\perp\!\!\perp b_{t_{k+1}}-b_{t_k}  \Rightarrow (b_{t_{j+1}}-b_{t_j})^2\perp\!\!\perp (b_{t_{k+1}}-b_{t_k})^2,
	\]
	and hence $II=0$ by Corollary \ref{induncorr}.
	On the other hand, 
	\begin{eqnarray*}
		I & = & \sum_{t_k\leq t}\left(\mathbb E\left((b_{t_{k+1}}-b_{t_k})^4\right)-2(t_{k+1}-t_k)\mathbb E((b_{t_{k+1}}-b_{t_k})^2)+({t_{k+1}}-{t_k})^2\right)\\
		& \stackrel{{\rm Prop.} \ref{regfor}}{=} & \sum_{t_k\leq t}\left(3(t_{k+1}-{t_k})^2-2({t_{k+1}}-{t_k})^2+({t_{k+1}}-{t_k})^2\right)\\
		& = & 2\sum_{t_k\leq t} ({t_{k+1}}-{t_k})^2\\
		&\leq & 2\frac{t^2}{k}\to 0,
	\end{eqnarray*}
	as $k\to +\infty$.
\end{proof}

\begin{proposition}
	\label{firvarinf}One has $\langle b\rangle^{(1)}_t=+\infty$ a.s. In other words, the total variation of $b_t$ blows up in any interval. 
\end{proposition}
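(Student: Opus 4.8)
The plan is to exploit the elementary pathwise inequality linking the quadratic and first variation sums, and to combine it with the almost-sure continuity of the paths so as to turn the already established convergence of the quadratic variation (Proposition \ref{quadbm}) into a contradiction whenever the first variation is assumed finite. For a fixed partition of $[0,t]$ with mesh $t/k$, write $S_k^{(2)}=\sum_{t_j\leq t}(b_{t_{j+1}}-b_{t_j})^2$ and $S_k^{(1)}=\sum_{t_j\leq t}|b_{t_{j+1}}-b_{t_j}|$, together with the maximal increment $M_k=\max_j|b_{t_{j+1}}-b_{t_j}|$. The crucial observation, valid for each fixed $\omega$, is that
\[
S_k^{(2)}=\sum_{t_j\leq t}|b_{t_{j+1}}-b_{t_j}|^2\leq M_k\,S_k^{(1)},
\]
since each squared increment is dominated by the largest increment times the corresponding absolute increment.

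First I would upgrade the mode of convergence. Proposition \ref{quadbm} gives $S_k^{(2)}\to t$ in $L^2$-mean, hence in probability, so I may extract a subsequence $(k_m)$ along which $S_{k_m}^{(2)}\to t$ almost surely. Next I would invoke Proposition \ref{pass}, by which (after passing to the continuous modification) the paths $s\mapsto b_s(\omega)$ are continuous on the compact interval $[0,t]$ for almost every $\omega$, and therefore uniformly continuous there; this forces $M_{k_m}(\omega)\to 0$ almost surely as the mesh tends to $0$. Intersecting the two full-measure events, I obtain a set $\Omega_0$ with $P(\Omega_0)=1$ on which simultaneously $S_{k_m}^{(2)}\to t$ and $M_{k_m}\to 0$.

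On $\Omega_0$ the conclusion then follows at once: if $S_{k_m}^{(1)}$ remained bounded along some further subsequence, the displayed inequality would yield $S_{k_m}^{(2)}\leq M_{k_m}S_{k_m}^{(1)}\to 0$, contradicting $S_{k_m}^{(2)}\to t>0$. Hence $S_{k_m}^{(1)}\to+\infty$ on $\Omega_0$, which is precisely the assertion that the first variation $\langle b\rangle_t^{(1)}=+\infty$ almost surely.

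I would expect the main subtlety to be the bookkeeping with the various modes of convergence rather than any delicate estimate: the quadratic variation converges only in $L^2$ (and the first variation in Definition \ref{variat} is itself defined through a limit in probability), so the argument must be routed through an almost-surely convergent subsequence in order to combine, on a single full-measure event, the limit $t$ of the quadratic sums with the pathwise continuity that drives $M_{k_m}$ to zero. Once this coupling is arranged, the divergence is immediate and requires no further computation.
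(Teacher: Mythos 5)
Your proposal is correct and follows essentially the same route as the paper's own proof: the pathwise bound $S_k^{(2)}\leq M_k\,S_k^{(1)}$, an almost-surely convergent subsequence extracted from the $L^2$ (hence in-probability) convergence of the quadratic variation, and uniform continuity of the paths on $[0,t]$ forcing $M_k\to 0$, yielding the contradiction. If anything, your write-up is slightly more careful than the paper's about routing the argument through a single full-measure event where both limits hold simultaneously.
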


\begin{proof}
	For $\omega\in\Omega$ we have 
	\begin{eqnarray*}
		\sum_{t_k\leq t}(b_{t_{k+1}}(\omega)-b_{t_k}(\omega))^2
		& \leq & 
		\sum_{t_k\leq t}(b_{t_{k+1}}(\omega)-b_{t_k}(\omega))
		\sup_{t_k}\sum_{t_k\leq t}|b_{t_{k+1}}(\omega)-b_{t_k}(\omega)|\\
		& \leq & 
		\langle b\rangle_t^{(1)}(\omega) \sup_{t_k}\sum_{t_k\leq t}|b_{t_{k+1}}(\omega)-b_{t_k}(\omega)|.
	\end{eqnarray*}
	From Proposition \ref{qv:compt}, and possibly passing to a subequence along the given partitions of $[0,t]$, we may assume that the left-hand side converges to $t$ a.s.
	But the supremum goes to $0$ as $b_t(\omega)$ is uniformly continuous in $[0,t]$, which yields a contradiction if $\langle b\rangle_t^{(1)}(\omega)$ is finite.
\end{proof}

A similar argument yields the following result.

\begin{proposition}
	\label{nohol}
	The paths of $b_t$ are nowhere $\gamma$-H\"older continuous for $\gamma>1/2$.
\end{proposition}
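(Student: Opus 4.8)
The plan is to argue by contradiction, exploiting the nonvanishing quadratic variation established in Proposition~\ref{quadbm}. First I would note that the quickest route, exactly parallel to the proof of Proposition~\ref{firvarinf}, disposes of the \emph{interval} version of the claim. Suppose that on some interval $[a,b]$ the path $s\mapsto b_s(\omega)$ were $\gamma$-Hölder continuous with $\gamma>1/2$, so that $|b_s(\omega)-b_u(\omega)|\le C|s-u|^\gamma$ for $s,u\in[a,b]$. Partitioning $[a,b]$ uniformly into $N$ pieces of length $(b-a)/N$ gives
\[
\sum_{t_k}\bigl(b_{t_{k+1}}(\omega)-b_{t_k}(\omega)\bigr)^2\le C^2\sum_{t_k}|t_{k+1}-t_k|^{2\gamma}=C^2(b-a)^{2\gamma}N^{1-2\gamma}\xrightarrow{N\to\infty}0,
\]
since $2\gamma>1$. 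On the other hand, the localized form of Proposition~\ref{quadbm} forces the left-hand side to converge (along a subsequence, a.s.) to $b-a>0$, a contradiction. Hence, almost surely, the path is $\gamma$-Hölder on no interval.

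The genuinely \emph{pointwise} statement --- that there is no single time $t_0$ at which the path is $\gamma$-Hölder --- is stronger, since Hölder continuity at $t_0$ controls only the increments measured from $t_0$ and not those between arbitrary nearby points, so the quadratic-variation estimate no longer applies directly. To reach it I would run the classical consecutive-increments argument. Fix $\gamma>1/2$, restrict to $t\in[0,1]$, and suppose that for some $\omega$ there is a $t_0$ with $|b_t(\omega)-b_{t_0}(\omega)|\le C|t-t_0|^\gamma$ for $|t-t_0|\le\delta$. Choosing $n$ large and the integer $k$ with $k/n\le t_0<(k+1)/n$, the Hölder bound at $t_0$ forces each of a fixed number $m$ of consecutive increments $b_{(k+j)/n}-b_{(k+j-1)/n}$, $j=1,\dots,m$, to satisfy $|b_{(k+j)/n}-b_{(k+j-1)/n}|\le 2C(m/n)^\gamma=:C' n^{-\gamma}$ (once $m/n\le\delta$), simply by the triangle inequality through $t_0$.

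The heart of the matter is then a union bound over $k$. Since the increments $b_{(k+j)/n}-b_{(k+j-1)/n}\sim\mathcal N(0,1/n)$ are independent by Proposition~\ref{charbm}(3) and have bounded density, each obeys $P\bigl(|b_{(k+j)/n}-b_{(k+j-1)/n}|\le C'n^{-\gamma}\bigr)\le C'' n^{1/2-\gamma}$, so the event that all $m$ of them are small has probability at most $(C''n^{1/2-\gamma})^m$. Summing over the $O(n)$ admissible blocks $k$ yields a bound of order $n^{\,1+m(1/2-\gamma)}$, which tends to $0$ as soon as $m$ is chosen with $m(\gamma-1/2)>1$ --- always possible once $\gamma>1/2$. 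Passing to a geometric subsequence of $n$ makes these probabilities summable, so Borel--Cantelli shows the bad event occurs only finitely often, and taking a countable union over rational $C$ and $\delta$ shows that almost surely no such $t_0$ exists. The step I expect to be the main obstacle is the \emph{localization}: $t_0$ is itself random, so one must phrase the whole estimate as a union over the deterministic grid blocks $k$ and verify carefully that the Hölder inequality at the unknown $t_0$ really does control the $m$ neighbouring lattice increments uniformly; once that is in place, only the small-ball Gaussian estimate and the exponent bookkeeping remain.
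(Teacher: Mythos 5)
Your first argument is exactly the paper's proof: assume a H\"older bound on an interval, dominate the sum of squared increments by $K^2(b-a)\sup_k|t_{k+1}-t_k|^{2\gamma-1}\to 0$, and contradict the a.s.\ (subsequential) convergence of the quadratic variation to the positive length of the interval. Where you genuinely diverge is in observing, correctly, that this only rules out $\gamma$-H\"older continuity \emph{on an interval}, whereas the literal statement ``nowhere $\gamma$-H\"older continuous'' is a pointwise assertion that the quadratic-variation argument does not reach: H\"older continuity at a single $t_0$ controls increments measured from $t_0$ only, so the telescoping bound on $\sum_k(b_{t_{k+1}}-b_{t_k})^2$ breaks down. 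Your second argument --- transferring the H\"older bound at the unknown $t_0$ to $m$ consecutive lattice increments via the triangle inequality, applying the small-ball estimate $P(|\mathcal N(0,1/n)|\le C'n^{-\gamma})\le C''n^{1/2-\gamma}$ together with independence of increments, taking a union bound over the $O(n)$ grid blocks, and choosing $m$ with $m(\gamma-\tfrac12)>1$ so the total probability $n^{1+m(1/2-\gamma)}$ vanishes --- is the classical Paley--Wiener--Zygmund/Dvoretzky--Erd\H{o}s--Kakutani scheme and is correct as sketched (the exponent bookkeeping, the localization over deterministic blocks, and the final countable union over rational $C,\delta$ are all in order; note that since the bad event would have to occur for \emph{every} large $n$, you only need $\liminf_n P(A_n)=0$, so the Borel--Cantelli step along a geometric subsequence, while fine, is not strictly necessary). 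In short: the paper buys brevity by reusing the quadratic-variation machinery of Proposition~\ref{firvarinf} but proves only the interval version of the claim; your second argument costs more work but actually delivers the pointwise ``nowhere'' statement as literally asserted.
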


\begin{proof}
	Assume $|b_{s'}-b_{t'}|\leq K|s'-t'|^\gamma$, $0\leq t'\leq s'\leq t$. It follows that  
	\[
	\sum_{t_k\leq t}(b_{t_{k+1}}(\omega)-b_{t_k}(\omega))^2\leq K^2t\sup_k|t_{k+1}-t_k|^{2\gamma-1}.
	\]
	As above, we may assume that the left-hand side converges to $t$ a.s. But the supremum goes to $0$ if $\gamma>1/2$, which gives a contradiction.
\end{proof}

\begin{remark}\!\!$\bigstar$\label{wiener:sp}
	(The Wiener space)
	From Proposition~\ref{charbm}~(4), we may identify the sample space $\Omega$
	underlying the construction of Brownian motion in Theorem~\ref{brown:exist}
	with $C_{\vec{0}}$, the space of continuous functions
	$\omega:[0,+\infty)\to\mathbb R^n$ such that $\omega(0)=\vec{0}$, via the rule
	$\omega(t)=b_t(\omega)$.
	It then follows from Proposition~\ref{nohol} that the support of the underlying
	probability measure $P$ contains no path $\omega$ that is too regular, for instance,
	$\gamma$-Hölder continuous with $\gamma>1/2$.
	From this perspective, we refer to $C_{\vec{0}}$, endowed with the induced
	probability measure (still denoted by $P$), as the \emph{Wiener space}
	(starting at $\vec{0}$), and to any $\omega$ in the support of $P$ (the
	\emph{Wiener measure}) as a \emph{Brownian path} (again starting at $\vec{0}$).
		In this sense, the Wiener space provides a canonical example of a Gaussian measure on an infinite-dimensional function space, a viewpoint that plays a central role in stochastic analysis and the theory of Gaussian processes
	\cite{bogachev1998gaussian}.
	However, in alignment with the extension dogma mentioned in Remark \ref{dogma}, the precise nature of $\Omega$ is ultimately irrelevant,
	as only the law of the process $(b_t)_{t\ge 0}$ matters.
\end{remark}

\subsection{Martingales}\label{condmart}

We now isolate another central notion in the theory.

\begin{definition}
	\label{marting} Let $b_t:\Omega\to\mathbb R^n$ be BM in $\mathbb R^n$ (starting at $x$) and let $\mathcal F_t$ be the $\sigma$-algebra generated by $\{b_{t'}\}_{t'\leq t}$. A {\em martingale} (rel. to $b_t$) is a stochastic process $M_t:\Omega\to\mathbb R^n$ such that 
	\begin{itemize}
		\item $M_t$ is $\mathcal F_t$-measurable for any $t>0$;
		\item $\mathbb E(\|M_t\|)<+\infty$;
		\item $\mathbb E(M_s|\mathcal F_t)=M_t$ whenever $t\leq s$.
	\end{itemize}
\end{definition}

\begin{proposition}
	\label{bismart}$b_t$ is a martingale.
\end{proposition}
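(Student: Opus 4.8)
The plan is to verify the three defining properties of a martingale from Definition~\ref{marting} directly for $M_t = b_t$. The first two are essentially immediate, and the substance of the argument lies entirely in the third (the defining conditional-expectation identity), which will reduce to the independence of Brownian increments established in Proposition~\ref{charbm}~(3).

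First I would dispose of the measurability requirement: by the very definition of $\mathcal F_t$ as the $\sigma$-algebra generated by $\{b_{t'}\}_{t'\leq t}$, the variable $b_t$ is trivially $\mathcal F_t$-measurable. Next, for integrability, I would note that $b_t\sim\mathcal N(\vec 0, t\,{\rm Id}_n)$ by (\ref{br:dist}) (taking $s=0$ and using $b_0=\vec 0$), so $\mathbb E(\|b_t\|)\leq \mathbb E(\|b_t\|^2)^{1/2} = (nt)^{1/2}<+\infty$, the second moment being computed via Proposition~\ref{furth}. This settles the first two bullets.

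The heart of the matter is to show that $\mathbb E(b_s\mid\mathcal F_t)=b_t$ for $t\leq s$. Here I would write $b_s = b_t + (b_s-b_t)$ and use linearity of conditional expectation (Proposition~\ref{ceprop}~(1)) to split this into two terms. The first term $\mathbb E(b_t\mid\mathcal F_t)=b_t$ follows from Proposition~\ref{ceprop}~(3), since $b_t$ is $\mathcal F_t$-measurable. For the second term, the key observation is that the increment $b_s-b_t$ is independent of $\mathcal F_t$: indeed, $\mathcal F_t$ is generated by the increments $\{b_{t'}-b_{t''}\}$ with $t'',t'\leq t$, and by Proposition~\ref{charbm}~(3) Brownian motion has independent increments, so $b_s-b_t \perp \mathcal F_t$. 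Hence Proposition~\ref{ceprop}~(4) gives $\mathbb E(b_s-b_t\mid\mathcal F_t)=\mathbb E(b_s-b_t)=\vec 0$, the last equality from (\ref{br:dist}). Adding the two contributions yields $\mathbb E(b_s\mid\mathcal F_t)=b_t$, as required.

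The main (and really only) obstacle is the rigorous justification that $b_s-b_t$ is independent of the \emph{entire} $\sigma$-algebra $\mathcal F_t$, rather than merely independent of finitely many past increments; this requires observing that $\mathcal F_t$ is generated by the collection of increments up to time $t$ together with a standard $\pi$-system argument, so that the independence of the increment from each generating family propagates to independence from $\mathcal F_t$ itself. Everything else is a routine assembly of the conditional-expectation calculus from Proposition~\ref{ceprop} together with the distributional facts already recorded for Brownian motion.
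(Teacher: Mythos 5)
Your proof is correct and follows essentially the same route as the paper: decompose $b_s = b_t + (b_s - b_t)$, apply Proposition~\ref{ceprop}~(3) to the first term and Proposition~\ref{ceprop}~(4) together with the independence of increments (Proposition~\ref{charbm}~(3)) to the second. The only differences are cosmetic: you also verify the measurability and integrability clauses, which the paper omits as obvious, and you flag the $\pi$-system step behind $b_s-b_t\perp\mathcal F_t$, which the paper takes for granted.
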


\begin{proof}
	If $t\leq s$ write 
	\[
	\mathbb E(b_s|\mathcal F_t)=\mathbb E(b_s-b_t|\mathcal F_t)+\mathbb E(b_t|\mathcal F_t).
	\]
	Proposition \ref{charbm}, (3), implies that $b_s-b_t\perp\!\!\perp\mathcal F_t$. Hence, by Proposition \ref{ceprop}, (4), $\mathbb E(b_s-b_t|\mathcal F_t)=\mathbb E(b_s-b_t)=0$. On the other hand, since $b_t$ is (obviously) $\mathcal F_t$-measurable, Proposition \ref{ceprop}, (3), implies that $\mathbb E(b_t|\mathcal F_t)=b_t$.
\end{proof}

For our purposes, a basic property of a martingale is that its expectation is preserved in time. This confirms that martingales are ``pure fluctuation'' processes.

\begin{proposition}\label{martcons}
	If $M_t$ is a martingale then $\mathbb E(M_t)=\mathbb E(M_s)$, for any $s,t$.
\end{proposition}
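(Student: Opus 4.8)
The plan is to reduce the claim directly to the defining property of a martingale together with the tower property of conditional expectation established earlier. Since the assertion $\mathbb E(M_t)=\mathbb E(M_s)$ is symmetric in $s$ and $t$, I would first assume without loss of generality that $t\leq s$, so that the third bullet in Definition \ref{marting} applies and gives the identity $\mathbb E(M_s\mid\mathcal F_t)=M_t$ almost surely.

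Next I would simply take the (unconditional) expectation of both sides of this identity. On the right-hand side this yields $\mathbb E(M_t)$, while on the left-hand side I invoke Proposition \ref{ceprop} (2), namely $\mathbb E(\mathbb E(X\mid\mathcal G))=\mathbb E(X)$ with $X=M_s$ and $\mathcal G=\mathcal F_t$, to obtain $\mathbb E(M_s)$. Combining these two evaluations gives
\[
\mathbb E(M_s)=\mathbb E\bigl(\mathbb E(M_s\mid\mathcal F_t)\bigr)=\mathbb E(M_t),
\]
which is exactly the desired conclusion. The integrability condition $\mathbb E(\|M_t\|)<+\infty$ from Definition \ref{marting} is what makes all these expectations well defined, so it should be noted in passing.

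Honestly, there is no genuine obstacle here: the entire content of the statement is that a martingale has constant mean, and this is an immediate one-line consequence of the tower property. The only point requiring the slightest care is the reduction to the case $t\leq s$, which is handled by the symmetry of the assertion; after that, the proof is a direct application of Proposition \ref{ceprop} (2). I would therefore present it as a short, self-contained argument rather than belaboring any step.
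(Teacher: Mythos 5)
Your proposal is correct and coincides with the paper's own argument: assume $t\leq s$, apply the martingale identity $\mathbb E(M_s\mid\mathcal F_t)=M_t$, and take expectations using Proposition \ref{ceprop} (2). Nothing further is needed.
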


\begin{proof}
	By Proposition \ref{ceprop}, (2), if $t\leq s$ we have 
	\[
	\mathbb E(M_t)=\mathbb E(\mathbb E(M_s|\mathcal F_t))=\mathbb E(M_s),
	\]
as claimed.	
\end{proof}

\subsection{It\^o calculus}\label{itoint}

Consider a partition $0=t_0<t_1\cdots <t_k=t$ and let $b_t$ be BM in $\mathbb R$ with $b_0=0$. By Proposition \ref{charbm}, (3), $b_{t_{j+1}}-b_{t_j}\perp\!\!\perp b_{t_j}$ and hence
\begin{equation}\label{ito:t:1}
	\mathbb E\left(\sum_jb_{t_j}(b_{t_{j+1}}-b_{t_j})\right)=0.
\end{equation}
On the other hand,
\begin{eqnarray*}
	\mathbb E\left(\sum_jb_{t_{j+1}}(b_{t_{j+1}}-b_{t_j})\right)
	& \stackrel{(\ref{ito:t:1})}{=} & \mathbb E\left(\sum_k(b_{t_{j+1}}-b_{t_j})^2\right)\\
	&  = & \sum_j(t_{j+1}-t_j)\\
	& = & t,
\end{eqnarray*}
where we used Proposition \ref{furth} in the second step.
This simple computation, which reflects the already known fact that $db_t$ can not be interpreted as a  classical Lebesgue-Stieltjes integrator since $b_t$ has infinite total variation by Proposition \ref{firvarinf}, illustrates the difficulty of making sense of stochastic integrals like $\int_0^tb_sdb_s$ by standard methods. Put in another way,  each choice of $\{\widehat t_j\}$ such that $t_{j}\leq \widehat t_j\leq t_{j+1}$ yields its own output for the  
``approximate'' stochastic integral 
$
\sum_jb_{\widehat t_{j}}(b_{t_{j+1}}-b_{t_j})
$.
Among the many possibilities available, It\^o integration corresponds to choosing the first option (\ref{ito:t:1}) above. The basics of this kind of stochastic integration may be found in many sources \cite{karatzas2012brownian,le2013mouvement,baudoin2014diffusion} and our presentation below follows \cite{oksendal2013stochastic} closely, a text very much oriented to applications (in particular, to Mathematical Finance) to which we refer for the detailed proofs of most of the results on It\^o calculus described in the sequel.  

Recall that a filtration $\mathcal F_t$ of a $\sigma$-algebra is a nested family of $\sigma$-subalgebras of $\mathcal F$. Here we consider the filtration $\mathcal F_t=\mathcal F_{\{b_{t'}\}_{t'\leq t}} $. 

\begin{definition}\label{adapproc} 
	We say that a process $f:\mathbb R_+\times\Omega\to\mathbb R$ is {\em adapted} if $\omega\mapsto f(t,\omega)$ is $\mathcal F_t$-measurable for any $t$.
\end{definition}

\begin{definition}
	\label{svclass}
	For $0\leq S<T$ we denote by $\mathcal A(S,T)$ the class of all processes $f:\mathbb R_+\times\Omega\to \mathbb R$ such that:
	\begin{enumerate}
		\item $f$ is $\mathcal B\times\mathcal F$-measurable;
		\item $f$ if {\em adapted} to $\mathcal F_t$, the filtration defined by $b_t$;
		\item $\mathbb E(\int_S^Tf(t,\omega)^2dt)<+\infty$.
	\end{enumerate}
\end{definition}

\begin{definition}
	\label{elemproc}We say that $f\in\mathcal A(S,T)$ is {\em elementary} if 
	\[
	f(t,\omega)=\sum_jf_j(\omega){\bf 1}_{[t_j,t_{j+1})}(t).
	\]
\end{definition}

Note that if $f$ is elementary then $f_j$ is $\mathcal F_{t_j}$-measurable.

\begin{definition}
	\label{itoelem}(It\^o integral for elementary processes) If $f\in\mathcal A(S,T)$ is elementary we define
	\[
	\int_S^Tf(t,\omega)db_t(\omega)=\sum_jf_j({\omega})(b_{t_{j+1}}(\omega)-b_{t_j}(\omega)).
	\]
\end{definition}	

Notice that this depends measurably on $\omega$ and hence defines a random variable.

\begin{proposition}
	\label{itoiso}(It\^o isometry for elementary processes) If $f\in\mathcal A(S,T)$ is elementary then
	\[
	\mathbb E\left(\left(\int_S^Tf(t,\omega)db_t(\omega)\right)^2\right)=\mathbb E\left(\int_S^Tf(t,\omega)^2dt \right).
	\]
\end{proposition}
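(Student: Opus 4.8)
The plan is to expand the square of the It\^o integral into a double sum and then split it into diagonal and off-diagonal contributions, showing that the latter vanish while the former reproduce exactly the right-hand side. Writing $\Delta_j b=b_{t_{j+1}}-b_{t_j}$, Definition~\ref{itoelem} gives $\int_S^T f\,db_t=\sum_j f_j\Delta_j b$, so that
\[
\mathbb E\left(\left(\int_S^Tf\,db_t\right)^2\right)=\sum_{j,k}\mathbb E\left(f_jf_k\,\Delta_j b\,\Delta_k b\right).
\]
The whole argument rests on exploiting, term by term, the interplay between the $\mathcal F_{t_j}$-measurability of the coefficients $f_j$ (noted right after Definition~\ref{elemproc}) and the independence of the increments $\Delta_j b$ from the past filtration, as guaranteed by Proposition~\ref{charbm}~(3).

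First I would dispose of the off-diagonal terms. For $j<k$, the three factors $f_j$, $\Delta_j b$, and $f_k$ are all $\mathcal F_{t_k}$-measurable (since $t_{j+1}\le t_k$), whereas $\Delta_k b=b_{t_{k+1}}-b_{t_k}$ is independent of $\mathcal F_{t_k}$. Conditioning on $\mathcal F_{t_k}$ and pulling out the measurable factor, Proposition~\ref{ceprop}~(4) yields
\[
\mathbb E\left(f_jf_k\,\Delta_j b\,\Delta_k b\right)=\mathbb E\left(f_jf_k\,\Delta_j b\,\mathbb E(\Delta_k b\mid\mathcal F_{t_k})\right)=\mathbb E\left(f_jf_k\,\Delta_j b\,\mathbb E(\Delta_k b)\right)=0,
\]
the last equality holding because $\Delta_k b\sim\mathcal N(0,t_{k+1}-t_k)$ has zero mean by~(\ref{br:dist}). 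By symmetry the same holds for $j>k$, so only the diagonal survives.

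For the diagonal terms, I would again condition, this time on $\mathcal F_{t_j}$: here $f_j^2$ is $\mathcal F_{t_j}$-measurable while $(\Delta_j b)^2$ is independent of $\mathcal F_{t_j}$ (independence of $\Delta_j b$ transfers to any function of it), so Proposition~\ref{ceprop}~(4) and Proposition~\ref{furth} (with $n=1$) give
\[
\mathbb E\left(f_j^2(\Delta_j b)^2\right)=\mathbb E\left(f_j^2\,\mathbb E((\Delta_j b)^2)\right)=\mathbb E(f_j^2)\,(t_{j+1}-t_j).
\]
Summing, the left-hand side equals $\sum_j\mathbb E(f_j^2)(t_{j+1}-t_j)$. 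Finally, since the indicators $\mathbf 1_{[t_j,t_{j+1})}$ are disjoint, one has $f(t,\omega)^2=\sum_j f_j(\omega)^2\mathbf 1_{[t_j,t_{j+1})}(t)$, whence $\int_S^T f^2\,dt=\sum_j f_j^2(t_{j+1}-t_j)$ and, taking expectations, the right-hand side equals the same quantity, completing the identity. The only genuinely delicate point, and thus the ``main obstacle,'' is the bookkeeping in the cross terms: one must verify carefully that for $j<k$ all factors except $\Delta_k b$ are indeed $\mathcal F_{t_k}$-measurable, so that the conditioning legitimately isolates the independent, mean-zero increment; everything else is routine once the independent-increment structure is invoked.
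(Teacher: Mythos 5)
Your proof is correct and follows essentially the same route as the paper: expand the square into a double sum, kill the cross terms using the independence of $b_{t_{k+1}}-b_{t_k}$ from everything $\mathcal F_{t_k}$-measurable, and evaluate the diagonal terms via $\mathbb E\bigl((b_{t_{j+1}}-b_{t_j})^2\bigr)=t_{j+1}-t_j$. The only cosmetic difference is that you phrase the key step through conditional expectations (and the factor you pull out of the conditioning is justified by Proposition~\ref{ceprop}~(7), not~(4)), whereas the paper factors the expectations directly from independence via Proposition~\ref{indexp}.
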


\begin{proof}
	Let $f=\sum_jf_j{\bf 1}_{[t_j, t_{j+1})}$. Since $f_j$ is $\mathcal F_{t_j}$-measurable, Proposition \ref{charbm}, (3), implies that $f_j\perp\!\!\perp b_{t_{j+1}}-b_{t_j}$. Hence, $f_j^2\perp\!\!\perp (b_{t_{j+1}}-b_{t_j})^2$ and we have
	\[
	\mathbb E\left(f_j^2(b_{t_{j+1}}-b_{t_j})^2\right)=\mathbb E(f_j^2)\mathbb E((b_{t_{j+1}}-b_{t_j})^2)=\mathbb E(f_j^2)(t_{j+1}-t_j),
	\]
	where we used Proposition \ref{furth} in the last step. On the other hand, if $j<k$ we have $f_jf_k (b_{t_{j+1}}-b_{t_j})\perp\!\!\perp b_{t_{k+1}}-b_{t_k}$ and hence, 
	\[
	\mathbb E\left(f_jf_k (b_{t_{j+1}}-b_{t_j})(b_{t_{k+1}}-b_{t_k})\right)=\mathbb E\left(f_jf_k (b_{t_{j+1}}-b_{t_j})\right)\mathbb E(b_{t_{k+1}}-b_{t_k})=0.
	\]
	It follows that 
	\begin{eqnarray*}
		\mathbb E\left(\left(\int_S^Tf(t,\omega)db_t(\omega)\right)^2\right) & = & \sum_{jk}\mathbb E\left(f_jf_k (b_{t_{j+1}}-b_{t_j})(b_{t_{k+1}}-b_{t_k})\right)\\
		& = & \sum_j\mathbb E(f_j^2)({t_{j+1}}-{t_j})\\
		& = &  \mathbb E\left(\int_S^Tf(t,\omega)^2dt \right),
	\end{eqnarray*}
as claimed.	
\end{proof}

\begin{proposition}
	\label{aprox}(approximation) For any $f\in\mathcal A(S,T)$ there exists $\{f_i\}_{i=1}^{+\infty}\subset\mathcal A(S,T)$, $f_i$ elementary, so that 
	\begin{equation}\label{aproxl}
		\lim_{i\to+\infty}\mathbb E\left(\int_S^T|f-f_i|^2dt\right)=0.
	\end{equation}
\end{proposition}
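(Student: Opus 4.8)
The plan is to follow the classical three-stage density argument, reducing a general $f\in\mathcal A(S,T)$ to elementary processes through two intermediate classes, while remaining vigilant at each step that adaptedness to the filtration $\mathcal F_t$ is never destroyed. Throughout, convergence is measured in the Hilbert space $L^2([S,T]\times\Omega,\,dt\otimes dP)$, whose squared norm is precisely the quantity $\mathbb E\left(\int_S^T|\cdot|^2\,dt\right)$ appearing in (\ref{aproxl}); thus the statement amounts to showing that the elementary processes of Definition \ref{elemproc} are dense in $\mathcal A(S,T)$.

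First I would treat the easiest case: a process $g\in\mathcal A(S,T)$ that is bounded, say $|g|\le M$, and such that $t\mapsto g(t,\omega)$ is continuous for almost every $\omega$. For a partition $S=t_0<\cdots<t_k=T$ of mesh tending to zero, set
\[
g_k(t,\omega)=\sum_j g(t_j,\omega)\,{\bf 1}_{[t_j,t_{j+1})}(t),
\]
which is elementary in the sense of Definition \ref{elemproc} because each $g(t_j,\cdot)$ is $\mathcal F_{t_j}$-measurable; here it is crucial that the \emph{left} endpoint is sampled, exactly the convention underlying It\^o's integral. Pathwise continuity gives $g_k(t,\omega)\to g(t,\omega)$ for every $(t,\omega)$, and since $|g-g_k|^2\le(2M)^2$ is uniformly bounded, two successive applications of dominated convergence (first in $t$, then in $\omega$) yield $\mathbb E\left(\int_S^T|g-g_k|^2\,dt\right)\to 0$.

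Next I would remove the continuity hypothesis, approximating an arbitrary bounded $h\in\mathcal A(S,T)$, $|h|\le M$, by processes of the previous type via a \emph{causal} mollification. Fixing a nonnegative smooth kernel $\rho$ supported in $(-1,0)$ with $\int\rho=1$ and writing $\rho_n(s)=n\rho(ns)$, put
\[
g_n(t,\omega)=\int_S^t \rho_n(s-t)\,h(s,\omega)\,ds.
\]
Each path $t\mapsto g_n(t,\omega)$ is continuous (indeed $C^1$), the bound $|g_n|\le M$ is preserved, and---this is the delicate point---because $\rho_n$ is supported on negative times, $g_n(t,\cdot)$ depends only on $\{h(s,\cdot):s\le t\}$ and is therefore $\mathcal F_t$-measurable, so $g_n\in\mathcal A(S,T)$. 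Standard properties of mollifiers give $g_n(\cdot,\omega)\to h(\cdot,\omega)$ in $L^2([S,T],dt)$ for a.e.\ $\omega$, and dominated convergence (the integrals being bounded by $(2M)^2(T-S)$) upgrades this to convergence in $L^2(dt\otimes dP)$. Finally, for a general $f\in\mathcal A(S,T)$ I would truncate, setting $h_n=(-n)\vee(f\wedge n)$, which is bounded, measurable, and adapted; since $|f-h_n|^2\le|f|^2\in L^1(dt\otimes dP)$ by condition (3) of Definition \ref{svclass}, dominated convergence gives $\mathbb E\left(\int_S^T|f-h_n|^2\,dt\right)\to 0$. Chaining the three approximations by the triangle inequality in $L^2(dt\otimes dP)$ and extracting a diagonal sequence produces the desired elementary $\{f_i\}$.

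The main obstacle is not any individual estimate---each reduces to a routine dominated-convergence argument---but the structural constraint that \emph{every} approximant remain adapted. This is what forces the left-endpoint sampling in the first step and the backward-supported kernel in the second, and it is the one place where the genuinely stochastic (as opposed to purely measure-theoretic) nature of the problem intervenes. A minor technicality to handle in the second step is the behaviour of the convolution window $(t-1/n,t)$ near $t=S$, which is dealt with by extending $h$ by zero for $s<S$; this does not affect the limit.
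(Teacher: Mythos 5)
Your proof is correct and is essentially the argument the paper defers to: the paper's ``proof'' is just the citation to \cite[pp.~27--28]{oksendal2013stochastic}, and what you have written is precisely Øksendal's three-step density scheme (left-endpoint sampling for bounded continuous paths, backward-supported mollification to restore path continuity while preserving adaptedness, then truncation), so you have in effect supplied the omitted details rather than found a new route. The only point worth flagging is the standard technicality that $\mathcal F_t$-measurability of $\int_S^t\rho_n(s-t)h(s,\omega)\,ds$ really requires progressive measurability of $h$ (or passing to a progressively measurable modification), a subtlety the cited reference glosses over in the same way.
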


\begin{proof}
	\cite[pg. 27-28]{oksendal2013stochastic}.
\end{proof}

\begin{definition}
	\label{itointvst}(It\^o integral in $\mathcal A(S,T)$) If $f\in\mathcal A(S,T)$ we define
	\[
	\int_S^Tf(t,\omega)db_t(\omega)\stackrel{L^2}{=}\lim_{i\to +\infty}\int_S^Tf_i(t,\omega)db_t(\omega),
	\]
	for some $\{f_i\}$ as in (\ref{aproxl}).
\end{definition}

Notice that, by Proposition \ref{itoiso}, the limit exists and does not depend on the sequence $\{f_i\}$ chosen to approximate $f$.

We now list the basic properties of It\^o integral. 

\begin{proposition}
	\label{itoprop}The It\^o integral satisfies the following properties:
	\begin{enumerate}
		\item $\int_S^Tfdb_t=\int_S^Ufdb_t+\int_U^Tfdb_t$;
		\item $\int_S^T(af+bg)db_t=a\int_S^Tfdb_t+ b\int_S^Tgdb_t$, $a,b\in\mathbb R$;
		\item $\mathbb E(\int_S^Tfdb_t)=0$;
		\item $\int_S^Tfdb_t$ is $\mathcal F_T$-measurable;
		\item (It\^o isometry) There holds 
		\[\mathbb E\left(\left(\int_S^Tf(t,\omega)db_t(\omega)\right)^2\right)=
		\mathbb E\left(\int_S^Tf(t,\omega)^2dt \right),
		\]
		and more generally, 
		\[
		\mathbb C\left(\left(\int_S^Tf(t,\omega)db_t(\omega)\right),
		\left(\int_S^Tg(t,\omega)db_t(\omega)\right)\right)=
		\mathbb E\left(\int_S^Tf(t,\omega)g(t,\omega)dt \right).
		\]
		\item If 
		\[
		\lim_{n\to +\infty}\mathbb E\left(\int_S^T\left(f_n(t,\omega)-f(t,\omega)\right)^2dt\right)=0
		\]
		then 
		\[
		\int_S^Tf_ndb_t \stackrel{L^2}{\to}\int_S^Tfdb_t.
		\]
		\item If $f=f(t)$ is non-random then
		\[
		\int_S^Tf(t)db_t\sim\mathcal N\left(0,\int_S^Tf(t)^2dt\right).
		\]
		\item 
		If $f=f(t)$ is non-random and  $T<T'$ then 
		\[
		\int_S^T fdb_t \perp\!\!\perp \int_T^{T'} fdb_t .
		\]
		\item Any It\^o integral has a continuous modification.
	\end{enumerate}
\end{proposition} 

\begin{proof}
	The proofs of (1)-(5) follow the same method, namely, we first check the property for elementary processes and then pass the limit, with the same argument also applying to (8). Also, (6) follows immediately from (5). As for (7), we may approximate $f$ by a sequence of elementary (and non-random) integrands $f_i$ for which the result holds (by Definition \ref{itoelem} and 
	Proposition \ref{norm:spce} (3)). It follows from Definition \ref{itointvst}  that 
	\[
	J_i:=\int_S^Tf_i(t)db_t\stackrel{L^2}{\longrightarrow} J:=\int_S^Tf(t)db_t,
	\] 
	with Proposition \ref{norm:spce} (1) implying that 
	\[
	\mathbb E(e^{{\bf i}uJ_i})=\exp\left(-\frac{u^2}{2}\int_S^Tf_i(t)^2db_t\right)\to \exp\left(-\frac{u^2}{2}\int_S^Tf(t)^2db_t\right).
	\]
	Since it may be easily checked that $\mathbb E(e^{{\bf i}uJ_i})\to \mathbb E(e^{{\bf i}uJ})$, we see that 
	\[
	\mathbb E(e^{{\bf i}uJ})=\exp\left(-\frac{u^2}{2}\int_S^Tf(t)^2db_t\right),
	\]
	and
	the result follows from Proposition \ref{norm:spce} (1) and Corollary \ref{det:norm:dist}. Finally, 
	the proof of (9) can be found in \cite[Theorem 3.2.5]{oksendal2013stochastic}. 
\end{proof}

\begin{example}
	\label{itolemesp}{\rm Let 
		\[
		f_n(s,\omega)=\sum_jb_{t_j}(\omega){\bf 1}_{[t_j,t_{j+1})}(s),
		\]
		where $\Delta t_j=t_{j+1}-t_j=t/n$. We have
		\begin{eqnarray*}
			\mathbb E\left(\int_0^t(f_n-b_s)^2ds\right) & = & \mathbb E\left(\sum_j\int_{t_j}^{t_{j+1}}(f_n-b_s)^2ds\right)\\& = & \mathbb E\left(\sum_j\int_{t_j}^{t_{j+1}}(b_{t_j}-b_s)^2ds\right),
	\end{eqnarray*}	
	and hence, by Proposition \ref{exp:dif:br},
	\begin{eqnarray*}	
		\mathbb E\left(\int_0^t(f_n-b_s)^2ds\right)
			& = & \sum_j\int_{t_j}^{t_{j+1}}(s-t_j)ds\\
			& = & \sum_j\frac{1}{2}(t_{j+1}-t_j)^2 \stackrel{n\to +\infty}{\to} 0.
		\end{eqnarray*}
		Thus, by Proposition \ref{itoprop}, (6),
		\[
		\int_0^tb_sdb_s = \lim_{n\to +\infty}\int_0^t f_ndb_s=\lim_{\Delta t_j\to 0}\sum_jb_{t_j}(b_{t_{j+1}}-b_{t_j}).
		\]
		But, since $b_0=0$, 
		\begin{eqnarray*}
			b_t^2 & = & \sum_j(b_{t_{j+1}}^2-b_{t_j}^2)\\& = & 
			\sum_j(b_{t_{j+1}}-b_{t_j})^2+2\sum_jb_{t_j}(b_{t_{j+1}}-b_{t_j}),
		\end{eqnarray*}
		that is, 
		\[
		\sum_jb_{t_j}(b_{t_{j+1}}-b_{t_j})=\frac{1}{2}b_t^2-\frac{1}{2} \sum_j(b_{t_{j+1}}-b_{t_j})^2.
		\]
		By passing the limit and using Proposition \ref{quadbm} to handle the last term in the right-hand side
		we conclude that 
		\[
		\int_0^tb_sdb_s=\frac{1}{2}b_t^2-\frac{t}{2}.
		\]
		At least formally, we can rewrite this as 
		\[
		db_t^2=2b_tdb_t+dt.
		\]
		Setting $f(x)=x^2$ we have 
		\[
		df(b_t)=f'(b_t)db_t+\frac{1}{2}f''(b_t)dt. 
		\]
		This rather special case of the famous It\^o formula illustrates the appearance of an extra term in the chain rule in the stochastic chain rule.
		In fact, if we interpret Proposition \ref{quadbm} as saying that $db_t^2=dt$, we have
		\[
		df(b_t)=f'(b_t)db_t+\frac{1}{2}f''(b_t)db_t^2, 
		\]
		which means that we must expand up to second order in $db_t$ to obtain the correct version of the chain rule.}\qed
\end{example}

We now prove that It\^o integrals are martingales.

\begin{proposition}\label{mart:ito}
	\label{itomart}If $f\in\mathcal A(0,t)$ consider the process 
	\[
	M_t=\int_0^tf(\rho,\omega)db_\rho(\omega).
	\]
	Then $M_t$ is a martingale. 
\end{proposition}

\begin{proof}
	If $t\leq s$ we have 
	\[
	\mathbb E\left(M_s|\mathbb F_t\right)=\mathbb E\left(M_t|\mathbb F_t\right)+\mathbb E\left(\int_t^sf(\rho,\omega)db_\rho(\omega)\right).
	\]
	Since $M_t$ is $\mathcal F_t$-measurable (Proposition \ref{itoprop}, (4)), we have $\mathbb E\left(M_t|\mathbb F_t\right)=M_t$. Moreover, $\int_t^sf(\rho,\omega)db_\rho(\omega)$ is `independent' of $\mathcal F_t$ in the sense that 
	\begin{equation}
		\label{indep}
		\mathbb E\left(\int_t^sf(\rho,\omega)db_\rho(\omega)\right)=0,
	\end{equation} 
	which completes the proof except for the checking of (\ref{indep}), which needs to be carried out only for $f$ of the type $f=\sum_jf_j{\bf 1}_{[t_j,t_{j+1})}$. In this case, 
	\begin{eqnarray*}
		\mathbb E\left(\int_t^sf(\rho,\omega)db_\rho(\omega)\right)& = & \mathbb E\left(\sum_jf_j(b_{t_{j+1}}-b_{t_j})|\mathcal F_t\right)\\
		& \stackrel{\mathcal F_t\subset\mathcal F_{t_j}+{\rm Prop.} \ref{ceprop}, (5)}{=}&
		\sum_j\mathbb E\left(\mathbb E\left(f_j(b_{t_{j+1}}-b_{t_j})|\mathcal F_{t_j}\right)|\mathcal F_t\right)\\
		& \stackrel{{\rm Prop.} \ref{ceprop}, (2)}{=}&   \sum_j\mathbb E\left(f_j\mathbb E\left((b_{t_{j+1}}-b_{t_j})|\mathcal F_{t_j}\right)|\mathcal F_t\right),
	\end{eqnarray*} 
	and this vanishes because $\mathbb E\left((b_{t_{j+1}}-b_{t_j})|\mathcal F_{t_j}\right)=0$.
\end{proof}

We now discuss a multi-dimensional version of It\^o integral which will suffice for our applications. We first recall from Remark \ref{br:ind} that if $b_t=(b_t^{(1)}, \cdots b^{(n)})$ is Brownian motion in $\mathbb R^n$ then $\{b_t^{(i)}\}_{i=1}^n$ is an independent family of BMs on $\mathbb R$ (and conversely). We will use this to define the integral
\begin{equation}\label{multi:ito}
	\int_S^Tv\,db_t=\int_S^T
	\left(
	\begin{array}{ccc}
		v_{11} & \cdots & v_{1n}\\
		\vdots & \ddots & \vdots \\
		v_{m1} & \cdots & v_{mn} 
	\end{array}
	\right)\left(
	\begin{array}{c}
		db_t^{(1)} \\
		\vdots  \\
		db_t^{(n)}   
	\end{array}
	\right)
\end{equation}
as a process in $\mathbb R^n$ for a suitable $v_{ij}=v_{ij}(t,\omega)$. 

\begin{definition}
	\label{extdefito}
	Let $\mathcal A_{\mathcal H}(S,T)$ be the collection of functions $f:[0,+\infty)\times \Omega\to \mathbb R$ such that
	\begin{enumerate}
		\item $f$ is $\mathcal B\times\mathcal F$-measurable;
		\item there exists a filtration $\mathcal H_t\subset \mathcal F$ such that:
		\begin{itemize}
			\item $b_t$ is a martingale with respect to $\mathcal H_t$ ($b_t$ is BM in $\mathbb R$);	
			\item $f(t,\cdot)$ is adapted to $\mathcal H_t$, $t>0$.
		\end{itemize}
		\item $\mathbb E(\int_S^Tf(t,\omega)^2dt)<+\infty$.
	\end{enumerate}
\end{definition}

Since $\mathcal F_t\subset\mathcal H_t$ and $\mathbb E(b_s-b_t|\mathcal H_t)=0$ we can proceed as before and define
\[
\int_S^Tfdb_t,\quad f\in \mathcal A_{\mathcal H}(S,T),
\] 
which is a martingale (see Proposition \ref{itomart}).  
Coming back to $b=(b^{(1)}, \cdots, b^{(n)})\in\mathbb R^n$, let 
$\mathcal F^{(n)}_t$ be the $\sigma$-algebra generated by $b^{(1)}_{s_1},\cdots, b^{(n)}_{s_n}$, where $s_k\leq t$, $k=1,\cdots,n$. Using the componentwise independence mentioned above, we see that $t<s$ implies that $b^{(k)}_s-b^{(k)}_t\perp\!\!\perp \mathcal F_t^{(n)}$, so that by a previous argument each $b_t^{(k)}$ is a martingale with respect to $\mathcal F^{(n)}_t$. This allows us to define integrals like
\[
\int_S^Tf(t,b_t^{(1)},\cdots,b_t^{(n)})db_t^{(k)}, \quad f\in \mathcal A_{\mathcal F^{(n)}}(S,T), \quad k=1,\cdots,n. 
\]  
Thus, if we set $\mathcal A_{\mathcal F^{(n)}}^{m,n}(S,T)$ to be the collection of all 
$\{v_{ij}\}_{i=1,\cdots,m;j=1,\cdots,n}$ such that $v_{ij}\in\mathcal A_{\mathcal F^{(n)}}(S,T)$
then the multi-dimensional It\^o integral in (\ref{multi:ito}) above is well-defined and has the expected properties (in particular, it is a martingale).

The It\^o integral considered above turns out to be the main ingredient in defining an important class of stochastic processes which, as we shall see, are quite amenable to formal manipulations resembling those available from the ordinary calculus. 

\begin{definition}
	Let $b_t$ be BM in $\mathbb R$ (with probability space $(\Omega,\mathcal F,P)$). Then an {\em It\^o process} (or {\em diffusion}) is a stochastic process in $(\Omega,\mathcal F,P)$ of the type
	\begin{equation}
		\label{itoproc1}
		X_t=X_0+\int_0^tu(s,\omega)ds+\int_0^tv(s,\omega)db_s,\quad v\in\mathcal A_{\mathcal H}(0,t),
	\end{equation}
	for some $\mathcal H$ as in Definition \ref{extdefito}. 
\end{definition} 

Formally, we can rewrite (\ref{itoproc1}) as 
\begin{equation}
	\label{itoproc2}dX_t=udt+vdb_t,
\end{equation}
where $u$ is the {\em drift coefficient} and $v$ is the {\em diffusion coefficient}. 
It\^o formula in (\ref{itoform2}) below shows that reasonable functions of It\^o processes are It\^o processes as well. It provides the correct change of variables formula in the setting of Stochastic Calculus.

\begin{proposition}
	\label{itolemma1}
	If $X_t$ is an It\^o process as in (\ref{itoproc2}) and $g=g(t,\omega)\in C^{2,1}([0,+\infty)\times\mathbb R)$ then $Y_t(t,\omega)=g(t,X_t(\omega))$ is an It\^o process as well. More precisely, 
	\begin{equation}
		\label{itoform}
		dY_t(t,X_t)=\frac{\partial g}{\partial t}(t,X_t)dt+\frac{\partial g}{\partial x}(t,X_t)dX_t+\frac{1}{2}\frac{\partial^2g}{\partial x^2}(t,X_t)dX_t^2,
	\end{equation}
	where in handling the quadratic term $dX_t^2$ we should use the multiplication table
	\begin{table}[H]
		\centering
		\begin{tabular}{c|c|c}
			& $dt$ & $db_t$\\
			\hline
			$dt$ & $0$ & $0$\\
			\hline
			$db_t$ & $0$ & $dt$\\
		\end{tabular}
	\end{table}
	\noindent
	As a consequence,  
	\begin{equation}\label{itoform2}
		dY_t=\left(\frac{\partial g}{\partial t}+u\frac{\partial g}{\partial x}+\frac{1}{2}v^2\frac{\partial^2g}{\partial x^2}\right)dt +v\frac{\partial g}{\partial x}db_t.
	\end{equation}
\end{proposition}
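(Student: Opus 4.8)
The plan is to prove It\^o's formula \eqref{itoform2} by reducing it to the two foundational facts already established for Brownian motion: the quadratic variation computation $\langle b\rangle^{(2)}_t=t$ of Proposition \ref{qv:compt}, which encodes the heuristic $db_t^2=dt$, and the vanishing of the first-order cross terms $dt\cdot db_t$ and $dt^2$, which reflects the continuity and finite total variation of the drift part. First I would reduce to the case of a single time increment and work locally by partitioning $[0,t]$ into $0=t_0<t_1<\cdots<t_k=t$ with $\Delta t_j\to 0$. The strategy is to write the telescoping sum
\[
g(t,X_t)-g(0,X_0)=\sum_j\left(g(t_{j+1},X_{t_{j+1}})-g(t_j,X_{t_j})\right)
\]
and Taylor-expand each summand to second order in the pair $(\Delta t_j, \Delta X_j)$, where $\Delta X_j=X_{t_{j+1}}-X_{t_j}$.

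The key steps, in order, are as follows. Taylor's theorem with the $C^{2,1}$ regularity of $g$ gives, for each $j$,
\[
\Delta g_j=\frac{\partial g}{\partial t}\Delta t_j+\frac{\partial g}{\partial x}\Delta X_j+\frac{1}{2}\frac{\partial^2 g}{\partial x^2}(\Delta X_j)^2+R_j,
\]
with the partials evaluated at $(t_j,X_{t_j})$ and $R_j$ collecting the higher-order remainder. Next I would substitute $\Delta X_j\approx u(t_j,\omega)\Delta t_j+v(t_j,\omega)\Delta b_j$, where $\Delta b_j=b_{t_{j+1}}-b_{t_j}$, and expand the quadratic term:
\[
(\Delta X_j)^2=u^2(\Delta t_j)^2+2uv\,\Delta t_j\Delta b_j+v^2(\Delta b_j)^2.
\]
The first two terms vanish in the limit: $\sum_j(\Delta t_j)^2\to 0$ trivially, and $\sum_j\Delta t_j\Delta b_j\to 0$ since the increments $\Delta b_j$ are $O(\sqrt{\Delta t_j})$ in $L^2$ by \eqref{br:dist}, so the cross sum is controlled by $\max_j\sqrt{\Delta t_j}$ times a bounded quadratic-variation factor. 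The crucial term is $\sum_j v^2(\Delta b_j)^2$, which I would show converges in $L^2$ to $\int_0^t v^2\,ds$; this is precisely the content of Proposition \ref{qv:compt}, suitably localized and with the adapted weight $v^2(t_j,\omega)$ inserted. The remaining first-order pieces assemble, in the limit, into the ordinary integral $\int_0^t\left(\frac{\partial g}{\partial t}+u\frac{\partial g}{\partial x}\right)ds$ together with the It\^o integral $\int_0^t v\frac{\partial g}{\partial x}\,db_s$, the latter existing by Definition \ref{itointvst} and Proposition \ref{itoprop}, which recovers \eqref{itoform2}.

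The main obstacle will be making the convergence $\sum_j v^2(t_j,\omega)(\Delta b_j)^2\to\int_0^t v^2\,ds$ rigorous, since the weights $v^2(t_j,\omega)$ are themselves random and correlated with the increments only through the past (adaptedness), not the future. The clean way to handle this is to exploit that $v$ is adapted to $\mathcal F_{t_j}$ while $(\Delta b_j)^2-\Delta t_j$ has conditional mean zero given $\mathcal F_{t_j}$ by \eqref{br:dist} and Proposition \ref{charbm}(3); an $L^2$ estimate then mirrors the martingale-style orthogonality argument used in the proof of Proposition \ref{qv:compt}, where the cross terms drop out and the diagonal terms are bounded using the fourth-moment formula of Proposition \ref{regfor}. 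I would first establish the formula for elementary (piecewise-constant) $v$, where everything reduces to finite sums and the orthogonality is transparent, and then pass to general $v\in\mathcal A_{\mathcal H}(0,t)$ by the approximation in Proposition \ref{aprox} together with It\^o's isometry (Proposition \ref{itoprop}(5)) to control the error. A secondary technical point is the uniform control of the Taylor remainder $\sum_j R_j$, which requires the continuity of the second derivatives of $g$ and the continuity of the paths $X_t$ (Proposition \ref{pass}); since the paths are a.s.\ uniformly continuous on $[0,t]$, the modulus of continuity of $\partial^2 g/\partial x^2$ along the path tends to zero, and $\sum_j R_j$ is then dominated by $o(1)\cdot\sum_j(\Delta X_j)^2$, which stays bounded by the quadratic variation.
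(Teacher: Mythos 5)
Your sketch is correct and is essentially the standard argument: the paper itself does not prove this proposition but defers to \cite[Theorem 4.1.2]{oksendal2013stochastic}, and the proof given there is precisely your outline — telescoping sum, second-order Taylor expansion, reduction to elementary coefficients, the $L^2$ convergence $\sum_j v^2(\Delta b_j)^2\to\int_0^t v^2\,ds$ via the orthogonality of the increments $(\Delta b_j)^2-\Delta t_j$ (as in Proposition \ref{qv:compt}), and passage to general $v$ by Proposition \ref{aprox} and It\^o's isometry. The only technicality worth flagging is that, since $g$ is merely $C^{2,1}$, the Taylor step is cleanest if you split each increment as $g(t_{j+1},X_{t_{j+1}})-g(t_j,X_{t_{j+1}})$ plus $g(t_j,X_{t_{j+1}})-g(t_j,X_{t_j})$ and expand separately in $t$ and in $x$, and that one usually localizes (via stopping times or by first assuming $g$ and its derivatives bounded) to justify the $L^2$ limits.
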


\begin{proof}
	\cite[Theorem 4.1.2]{oksendal2013stochastic}.
\end{proof}

We now discuss the multi-dimensional version of this result. Let $b_t=(b_t^{(1)},\cdots,b_t^{(n)})$ be BM in $\mathbb R^n$, $b_0=0$. We can consider a multi-dimensional It\^o process
\[
dX_t=udt+vdb_t,
\] 
where $X=(X_1,\cdots,X_n)^\top\in{\mathbb R^n}$, $u=(u_1,\cdots,u_n)^\top\in\mathbb R^n$, $u_i\in\mathcal A_{\mathcal F^{(n)}}$, and $v\in\mathcal A^{n,m}_{\mathcal F^{(n)}}$.  
Thus, 
\[
dX_{ti}=u_idt+\sum_{j=1}^mv_{ij}db_t^{(j)},\quad i=1,\cdots,n.
\]
\begin{proposition}
	\label{itomultidim}
	If $X$ is as above and $Y(t,\omega)=g(t,X_t(\omega))$, where $g:[0,+\infty)\times\mathbb R^n\to\mathbb R^p$, then  
	\[
	dY_k=\frac{\partial g_k}{\partial t}dt+\sum_{i=1}^n\frac{\partial g_k}{\partial x_i}dX_i+\frac{1}{2}\sum_{i,j=1}^n\frac{\partial^2g_k}{\partial x_i\partial x_j}dX_idX_j,
	\]
	where 
	in handling the quadratic terms $dX_idX_j$ we should use the multiplication table
	\begin{table}[H]
		\centering
		\begin{tabular}{c|c|c}
			& $dt$ & $db^{(i)}_t$\\
			\hline
			$dt$ & $0$ & $0$\\
			\hline
			$db^{(j)}_t$ & $0$ & $\delta_{ij}dt$\\
		\end{tabular}
	\end{table}
	\noindent
	As a consequence,  
	\begin{equation}\label{fkform2}
		dY_k=\left(\frac{\partial g_k}{\partial t}+\sum_{i=1}^n\frac{\partial g_k}{\partial x_i}u_i+\frac{1}{2}\sum_{i,j=1}^n\underbrace{\sum_{l=1}^mv_{il}v_{jl}}_{=(v^*v)_{ij}}\frac{\partial^2g_k}{\partial x_i\partial x_j}\right)dt+\sum_{i=1}^n\sum_{l=1}^m\frac{\partial g_k}{\partial x_i}v_{il}db^{(l)}.
	\end{equation}
\end{proposition}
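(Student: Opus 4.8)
The output of $g$ is vector–valued, $g:[0,+\infty)\times\mathbb R^n\to\mathbb R^p$, but the asserted formula is stated entry by entry, so the first reduction is trivial: it suffices to prove the scalar case $p=1$ and then apply it to each component $g_k$. I therefore fix a scalar $g\in C^{2,1}$ and plan to follow the blueprint of the one–dimensional Itô formula in Proposition \ref{itolemma1}, the only genuinely new ingredient being the behaviour of the cross products $db_t^{(i)}db_t^{(j)}$ for distinct indices.

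First I would establish the multiplication table as a statement about limits of Riemann–Itô sums along a partition $0=t_0<t_1<\cdots<t_N=t$ with mesh tending to zero. Writing $\Delta b_k^{(\cdot)}=b_{t_{k+1}}^{(\cdot)}-b_{t_k}^{(\cdot)}$, the diagonal entries $db_t^{(l)}db_t^{(l)}=dt$ are exactly the quadratic variation computed in Proposition \ref{quadbm}, while $db_t^{(l)}\,dt=0$ and $dt\,dt=0$ follow from the bounded total variation of $t\mapsto t$. The new entry is the off–diagonal one: for $i\neq j$,
\[
\sum_{t_k\leq t}\Delta b_k^{(i)}\,\Delta b_k^{(j)}\xrightarrow{\ L^2\ }0.
\]
I would prove this by computing the second moment. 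Using the independent–increments property (Proposition \ref{charbm}(3)) together with the independence of the component Brownian motions (Remark \ref{br:ind}) and Proposition \ref{indexp}, the mixed terms in the expansion of the square vanish and the surviving diagonal ones give $\sum_k \mathbb E((\Delta b_k^{(i)})^2)\,\mathbb E((\Delta b_k^{(j)})^2)=\sum_k(\Delta t_k)^2\leq t\max_k\Delta t_k\to 0$, where I have used Proposition \ref{furth} (componentwise) to evaluate each factor.

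With the table in hand, I would expand $g(t_{k+1},X_{t_{k+1}})$ to second order about $(t_k,X_{t_k})$, sum the telescoping left–hand side to $g(t,X_t)-g(0,X_0)$, and pass to the limit term by term. The linear drift terms produce Riemann integrals; the linear $db$ terms produce Itô integrals via the approximation and continuity properties of Proposition \ref{itoprop}; and the quadratic terms, after substituting $\Delta X_{ki}=u_i\Delta t_k+\sum_l v_{il}\Delta b_k^{(l)}$ and invoking the table, collapse to $\tfrac12\sum_{i,j}(v^*v)_{ij}\,\partial^2_{x_ix_j}g\,dt$, all $dt\cdot dt$, $dt\cdot db$ and $i\neq j$ cross–variation contributions being negligible. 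It is convenient to first verify the identity for $g$ a polynomial (where the product rule and the one–dimensional case suffice) and then extend to general $C^{2,1}$ functions by uniform approximation of $g$ and its derivatives on compacta, controlling the passage to the limit with the Itô isometry.

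The hard part will be the simultaneous, rigorous justification of these limits rather than any single computation: one must bound the Taylor remainder uniformly, exploiting the local boundedness of the second derivatives of $g$ along the almost surely continuous—hence locally bounded—paths of $X_t$, and one must show that replacing each $\Delta X_{ki}\Delta X_{kj}$ by its infinitesimal surrogate $(v^*v)_{ij}\Delta t_k$ introduces an error vanishing in probability. This last point is precisely where the $L^2$ cross–variation estimate above is deployed, and where the independence of the component Brownian motions does the decisive work in suppressing the off–diagonal products.
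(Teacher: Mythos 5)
Your outline is correct, and it is essentially the standard argument: the paper itself offers no proof of Proposition \ref{itomultidim} beyond the citation to \cite[Theorem 4.2.1]{oksendal2013stochastic}, and the proof given there (building on the one-dimensional case of Proposition \ref{itolemma1}) proceeds exactly as you describe — discretize, Taylor-expand to second order, justify the multiplication table by quadratic- and cross-variation computations, and pass to the limit first for elementary/polynomial data and then by approximation. Your $L^2$ computation showing $\sum_k \Delta b_k^{(i)}\Delta b_k^{(j)}\to 0$ for $i\neq j$ (mixed terms killed by independence of increments and of components, diagonal terms summing to $\sum_k(\Delta t_k)^2$) is the one genuinely new ingredient over the scalar case, and you have it right.
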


\begin{proof}
	\cite[Theorem 4.2.1]{oksendal2013stochastic}.
\end{proof}

\begin{example}\label{geo:br}
(The geometric Brownian)
We assume that $Y_t(t,\omega)=g(t,b_t(\omega))$ in (\ref{itoform}), so that (\ref{itoform2}) becomes 
\begin{equation}\label{itoform:sp}
	dY_t=\left(\frac{\partial g}{\partial t}+\frac{1}{2}\frac{\partial^2g}{\partial x^2}\right)dt +\frac{\partial g}{\partial x}db_t.
\end{equation}
If $Y_t=e^{\sigma b_t}$, $\sigma\neq 0$, we then get 
\begin{equation}\label{exp:bt}
	dY_t=\frac{\sigma^2}{2}Y_tdt+\sigma Y_tdb_t,
\end{equation}
which shows that, in the stochastic setting, the exponential fails to satisfy the self-reproducing property under differentiation.  We may cancel out the annoying first term in the right-hand side above by considering the {\em geometric Brownian process}
\begin{equation}\label{geo:br:def}
	Z_t=e^{(\mu-\frac{\sigma^2}{2})t+\sigma b_t}, \quad \mu\in\mathbb R, 
\end{equation}
which satisfies 
\begin{equation}\label{geo:br:1}
	dZ_t=\mu Z_tdt+\sigma Z_tdb_t,
\end{equation}
or equivalently, 
\begin{equation}\label{geo:br:2}
	\frac{dZ_t}{Z_t}=\mu dt+\sigma db_t. 
\end{equation}
As we shall see in Subsection \ref{black:scholes},
this kind of process plays a central role in the Black-Scholes strategy in Finance.
\qed
\end{example}

\begin{example}\!\!$\bigstar$\label{hist}({Diffusion processes})
We discuss here one of  the most notable motivation behind It\^o's construction of his integral. Roughly, this accomplishment allowed the proper interpretation of solutions of a large class of stochastic differential equations, which in particular led to a pathwise approach to diffusion processes.
We start by recalling that 
a {\em transition function} is a map $\mathfrak R:[0,+\infty)\times\mathbb R^n\times \mathcal B_n\to[0,1]$ such that:
\begin{itemize}
	\item $x\mapsto \mathfrak R(t,x,B)$ is measurable;
	\item $B\mapsto \mathfrak R(t,x,B)$ is a probability measure on $\mathbb R^n$;
	\item $\mathfrak R(0,x,\cdot)=\delta_x$;
	\item The {\em Chapman-Kolmogorov equation} holds:
	\begin{equation}\label{ckol}
		\mathfrak R(t+s,x,B)=\int_{\mathbb R^n} \mathfrak R(t,x,dy)\mathfrak R(s,y,B).
	\end{equation}
\end{itemize}
An application of Theorem \ref{extkolm} guarantees the existence of a measurable space $(\Omega,\mathcal F)$ and, for each $x\in\mathbb R^n$, a probability measure $\mathbb P^x$ on $(\Omega,\mathcal F)$ and a stochastic process $X_t^x:(\Omega,\mathcal F,\mathbb P^x)\to\mathbb R^n$ such that $\mathfrak R(t,x,B)=\mathbb P^x(X_t^x\in B)$.
Equivalently, $\mathfrak R(t,x,\cdot)=\mathbb P^x_{X_t^x}$.
In particular, $\mathbb P^x_{X_0^x}=\delta_x$. Moreover, the following {\em Markov property} holds:
\begin{equation}\label{mark}
	\mathbb E^x(f(X^x_{t+s})|\mathcal F^X_s)=\mathbb E^x(f(X_t^x)),\quad  P^x\,\,{\rm a.s.},
\end{equation}
for any $x\in\mathbb R^n$ and any $f$ as above.
Intuitively, this means $X_t^x$ is memoryless.
Attached to any $\mathfrak R$ as above is the associated semigroup $t\mapsto \mathfrak B_t$ given by
\[
(\mathfrak P_tf)(x)=\int_{\mathbb R^n} f(y)\mathfrak R(t,x,dy),
\]
for $f$ a bounded, measurable function on $\mathbb R^n$. Notice that $(\mathfrak P_tf)(x)=\mathbb E^xf(X^x_t)$.
Now,
any {\em Markov process} $X_t^x$ as above has an {\em infinitesimal generator} $L$, which is a linear operator defined on the space  of all functions $f$ such that
\[
\lim_{t\to 0}\frac{\mathfrak P_tf-f}{t}
\]  
exists. We then define, for any such $f$,
\begin{equation}\label{semgen}
	(Lf)(x)=\lim_{t\to 0}\frac{(\mathfrak P_tf)(x)-f(x)}{t}.
\end{equation}
Under certain regularity assumptions, the generator is a {\em diffusion operator}, that is, 
\begin{equation}\label{gen:inf:mark}
	(Lf)(x)=\frac{1}{2}\sum_{ij}a_{ij}(x)\frac{\partial^2f}{\partial x_i\partial x_j}+\sum_iu_i(x)\frac{\partial f}{\partial x_i},\quad x\in\mathbb R^n,
\end{equation}
where $a$ is a symmetric, non-negative matrix. The  problem now is how to recover $X_t^x$ (in law) starting from $L$ (or, more precisely, from the coefficients $\{a_{ij}\}$ and $\{u_i\}$ defining it). It turns out that It\^o calculus may be used to solve this problem as follows. Write $a=vv^\top$ and form the stochastic differential equation
\begin{equation}\label{sde:dif}
dX_t=u(X_t)dt+v(X_t)db_t.
\end{equation}
Under mild conditions on the coefficients, it is shown that this equation has a unique solution $X_t^x$ with $X^x_0=x$. This means that 
\[
X_t^x=x+\int_0^tu(X^x_s)ds+\int_0^tv(X^x_s)db_s,
\]
so a notion of stochastic integral is required here in order to properly interpret the last term above. 
It is now immediate to check that $X_t^x$ solves the problem in the sense that (\ref{ckol}), (\ref{mark}) and (\ref{semgen}) are satisfied if we set $\mathfrak R(t,x,B)=P(X_t^x\in B)$, where $P$ is Wiener measure. Here we only check that (\ref{semgen}) holds. From It\^o formula (\ref{fkform2}), for any $f:\mathbb R^n\to\mathbb R$ we have
\[
f(X_t^x)=f(x)+\int^t_0(Lf)(X^x_s)ds+M_t^f,
\] 
where $M_t^f$ is a martingale. By taking expectation we see that
\[
(\mathfrak P_tf)(x)  =  \mathbb E^x(f(X^x_t))=
f(x)+\mathbb E^x\left(\int^t_0(Lf)(X^x_s)ds\right),
\]
and (\ref{semgen}) follows. 
In connection with this remarkable correspondence between a diffusion operator $L$ and a diffusion process $X_t^x$ induced by stochastic differential equations, we add the following comments:
\begin{enumerate}
	\item 
 The resulting process is completely characterized by the fact that for any $f$ the martingale
\[
M_t^{f,x}=f(X_t^x)-f(x)-\int_0^t(Lf)(X_s^x)ds
\]
has quadratic variation given by
\[
\langle M^{f,x}\rangle_t^{(2)}=\int_0^t\left(\sum_{ij}a_{ij}\frac{\partial f}{\partial x^i}\frac{\partial f}{\partial x^j}\right)(X_s^x)ds. 
\]
We then say that $X_t^x$ is the {\em diffusion process} driven by $L$. To see that we are in the right track, let us take $L=\frac{1}{2}\Delta$ and set $\mathsf X^i=M^{x_i,x}$ for simplicity. It follows that  
\[
	\langle \mathsf X^i, \mathsf X^j\rangle_t^{(2)}  := 
	\frac{1}{2}\left(	\langle \mathsf X^i+\mathsf X^j\rangle_t^{(2)}-	\langle \mathsf X^i\rangle_t^{(2)}-	\langle \mathsf X^j\rangle_t^{(2)}\right)
	 =  \delta_{ij}t,
\]
so a celebrated result due to P. L\'evy \cite[Theorem 3.16]{karatzas2012brownian} implies  that $X_t=b_t$. Thus, as expected, BM is the diffusion process driven by the Laplacian $\frac{1}{2}\Delta$. 
\item 
The diffusion process may in fact be defined directly in terms of the coefficients of the operator $L$. Indeed, taking $f(x)=x^i$ in (\ref{gen:inf:mark}), we obtain
\begin{equation}\label{interm}
	N_t^i:=X_t^i-x_i-\int_0^t u_i(X_s)\,ds,
\end{equation}
which is a martingale with quadratic covariation
\[
\langle N^i,N^j\rangle_t^{(2)}=\int_0^t a_{ij}(X_s)\,ds.
\]
In particular, if $L=\sum_i u_i\partial_i$ is a vector field, then $a_{ij}\equiv 0$, so that $\langle N^i,N^j\rangle_t=0$ and hence $N_t^i=0$ for all $i$. In this case, (\ref{interm}) reduces to
\[
X_t^i=x_i+\int_0^t u_i(X_s)\,ds,
\]
showing that $L$ integrates to a deterministic flow. In this way we recover the classical result on the integration of vector fields.
From this perspective, classical calculus may be viewed as providing a mechanism for integrating vector fields, leading to deterministic dynamical systems, while It\^o calculus extends this framework to diffusion operators, giving rise to random dynamical systems.
\end{enumerate}
For modern accounts on the theory of diffusion processes we refer to \cite{baudoin2014diffusion,bakry2014analysis}\qed
\end{example}

\begin{example}\!\!$\bigstar$\label{fp:lang}
	(Fokker-Planck equation and Langevin dynamics)
	If 
\[
X_t^x=X_0+\int_0^tu(X_s)ds+\int_0^tv(X_s)db_s
\]	
 is a  solution of (\ref{sde:dif}) then arguing as in Example 	
	\ref{hist}
we have, for $f:\mathbb R^n\to \mathbb R$, 
\[
 \mathbb E(f(X_t))=
f(X_0)+\mathbb E\left(\int^t_0(Lf)(X_s)ds\right),
\]	
where $L$ is given by (\ref{gen:inf:mark}) with $a=vv^\top$.
If $X_t$ admits a density $\psi_t$ in the sense that $P_{X_t}=\psi_t d{ x}$ then, upon derivation with respect to $t$, we obtain
\[
\int_{\mathbb R^n}f\frac{\partial \psi_t}{\partial t}d{x}=
\int_{\mathbb R^n}\left(
\frac{1}{2}\sum_{ij}a_{ij}\frac{\partial^2f}{\partial x_i\partial x_j}+\sum_iu_i\frac{\partial f}{\partial x_i}
\right)\psi_td{x}.
\]
If we further assume that $f$ is smooth and compactly supported then no boundary term survives after integrating the right-hand side by parts, so we get
\[
\int_{\mathbb R^n}f\frac{\partial \psi_t}{\partial t}d{x}=
\int_{\mathbb R^n}f\left(
\frac{1}{2}\sum_{ij}\frac{\partial^2(a_{ij}\psi_t)}{\partial x_i\partial x_j}-\sum_i\frac{\partial (u_i\psi_t)}{\partial x_i}
\right)d{x},
\]
and since $f$ is arbitrary we end up with the {\em Fokker-Planck equation}
\begin{equation}\label{eq:fp:0}
\frac{\partial \psi_t}{\partial t}=\frac{1}{2}\sum_{ij}\frac{\partial^2(a_{ij}\psi_t)}{\partial x_i\partial x_j}-\sum_i\frac{\partial (u_i\psi_t)}{\partial x_i},
\end{equation}
which governs how the density varies in time. 
Under these conditions, a solution $\psi_t$ of (\ref{eq:fp:0}) should be viewed as a ``macroscopic'' manifestation of a corresponding solution $X_t$ of (\ref{sde:dif}), which evolves at the level of sample paths and therefore is ``microscopic'' in nature. 
As an illustrative example, 
consider the {\em Langevin diffusion}
\begin{equation}\label{lang:diff}
dX_t=-\nabla V(X_t)dt +\sqrt{2}db_t,
\end{equation}
where $V:\mathbb R^n\to\mathbb R$ is a potential function, so the corresponding generator is the {\em drifted Laplacian}
\[
L=\Delta -\nabla V^\top\nabla,
\]
and the density obeys 
\begin{equation}\label{eq:fp:l}
\frac{\partial \psi_t}{\partial t}=\Delta\psi_t+{\rm div}\left(\psi_t\nabla V\right). 
\end{equation}
If we assume that
\[
Z:=\int_{\mathbb R^n}e^{-V(x)}dx<+\infty
\]
then a direct calculation shows that 
\[
\psi_{\rm G}(x):=Z^{-1}e^{-V(x)}
\]
is a stationary (i.e.\ time-independent) solution of (\ref{eq:fp:l}). In other words, if we solve (\ref{lang:diff}) with initial condition $X_0$ such that $P_{X_0}=\psi_{\rm G} dx$ then $P_{X_t}=\psi_{\rm G} dx$ for any $t\geq 0$, so
the Gibbs measure $\psi_{\rm G}\,dx$ is an invariant probability measure for this Langevin dynamics.
Since the set of all solutions of (\ref{lang:diff}) may be viewed as a random dynamical system, 
it is natural to ask for conditions on $V$ under which every such solution converges, in a suitable sense, to this Gibbs equilibrium as $t\to+\infty$. A standard sufficient condition is the uniform convexity assumption
\[
\nabla^2V(x)\ge \rho\, {\rm Id}_n
\qquad\text{for all }x\in\mathbb R^n,
\]
for some constant $\rho>0$, in the sense of quadratic forms. 
In the Bakry–\'Emery framework, as developed in \cite[Chapter 5]{bakry2014analysis}, this condition is interpreted as a curvature-dimension lower bound for the generator of the Langevin diffusion. This curvature bound implies a logarithmic Sobolev inequality for the associated Gibbs measure, which in turn yields exponential convergence to equilibrium.
This mechanism is particularly transparent in the Gaussian case corresponding to the Ornstein–Uhlenbeck process, obtained by taking $V(x)=\|x\|^2/2$, where all steps in the chain above can be verified explicitly; see \cite{gentil_2014} for a detailed exposition starting from this model case.
\qed
\end{example}

With the basics of  It\^o calculus at hand, we present in
the rest of this Appendix some of its most glamorous applications. 

\subsection{The Gaussian concentration inequality (again)}\label{gauss:conc:sec}
We start by providing here an elegant proof of the optimal version of the Gaussian concentration inequality (\ref{gauss:conc:ineq}) which is attributed to B. Maurey in \cite[Chapter 2]{pisier2006probabilistic} and relies on the full power of It\^o calculus developed in the previous section. 

For $0\leq t\leq 1$ we consider the ``reversed'' heat semigroup $P_t=e^{\frac{1}{2}(1-t)\Delta}$, so that for any (smooth and Lipschitz) $F:\mathbb R^k\to\mathbb R$ there holds
\[
\frac{\partial}{\partial t}(P_tF)+\frac{1}{2}\Delta (P_tF)=0.
\] 
We now use It\^o formula (\ref{fkform2}) with $Y(t,\cdot)=(P_tF)(b_t)$, where $b_t$ is a standard BM in $\mathbb R^k$ (recall that $b_t-b_{t'}\sim\mathcal N(\vec{0},(t-t'){\rm Id}_k)$, $t'<t$). Since $u=0$ and $v_{ij}=\delta_{ij}$ it simplifies to
\[
dY_t=\langle (\nabla P_tF)(b_t),db_t\rangle,
\]
and integrating this from $t=0$ to $t=1$,
\begin{eqnarray*}
	F(b_1)
	& = & (e^{\frac{1}{2}\Delta}F)(0)+\int_0^1\langle (\nabla P_tF)(b_t),db_t\rangle\\
	& = & \mathbb E(F(b_1))+\int_0^1\langle (\nabla P_tF)(b_t),db_t\rangle,
\end{eqnarray*}
where we used Propositions \ref{martcons} and \ref{itomart} in the last step. We may now  adapt the Cram\'er-Chernoff method in Section \ref{conc:ineq:appl} 
to this setting: for $\tau>0$ and $w\geq 0$, 
\begin{equation}\label{tail:est:p}
	P(|F(b_1)-\mathbb E(F(b_1))|>\tau)\leq 2 e^{-w\tau}{\mathbb E\left(e^{w\int_0^1\langle (\nabla P_tF)(b_t),db_t\rangle}\right)},
\end{equation}
so it remains to estimate the expectation in the right-hand side.

The key observation at this point is that ${\rm Lip}(P_tF)={\rm Lip}(F)$ and hence $|\nabla P_tF)|\leq {\rm Lip}(F)$ a.s. 
Now let $\pi=\{t_0=0<t_1<\cdots<t_n=1\}$ be a partition of the interval $[0,1]$ with $|\pi|=\max_l|t_l-t_{l-1}|$ its width. As $|\pi|\to 0$ the It\^o integral within the expectation, by its very definition, may be arbitrarily approximated (say, in probability) by  $S_n$, where 
\[
S_j=\sum_{l=1}^j\langle V_l,b_{t_l}-b_{t_{l-1}}\rangle, \quad 1\leq j\leq n,
\]
$V_l=(\nabla P_{t_{l-1}}F)(b_{t_{l-1}})$ is $\mathcal F_{t_{l-1}}$-measurable (where $\{\mathcal F_t\}$ is the filtration associated to $b_t$) and satisfies $|V_l|\leq {\rm Lip}(F)$. 
We have
\[
S_j=S_{j-1}+\langle V_j,b_{t_j}-b_{t_{j-1}}\rangle,
\]
a decomposition into independent factors, and 
since the inner product follows the normal $\mathcal N(0,|V_j|^2(t_j-t_{j-1}))$ by Proposition  (\ref{norm:space:4}) (3), assuming of course that $V_j\neq \vec{0}$, we obtain
\begin{eqnarray*}
	\mathbb E(e^{wS_j})
	& = & 
	\mathbb E\left(e^{wS_{j-1}}\right)
	\mathbb E\left(e^{w\left\langle {V_j},b_{t_j}-b_{t_{j-1}}\right\rangle}\right)\\
	&\stackrel{(\ref{mgf:normal:n})}{=} & 
	\mathbb E\left(e^{wS_{j-1}}\right)e^{\frac{1}{2}w^2|V_j|^2(t_j-t_{j-1})}\\
	& \leq &
	\mathbb E\left(e^{wS_{j-1}}\right)e^{\frac{1}{2}w^2{\rm Lip}(f)^2(t_j-t_{j-1})}.
\end{eqnarray*}	
Note that this obviously remains true if $V_j=\vec{0}$. In any case,
if we iterate this starting with $j=n$ we get
\[
\mathbb E(e^{wS_n})\leq e^{\frac{1}{2}w^2{\rm Lip}(f)^2},
\]
where the right-hand side, remarkably enough, does not depend on $\pi$.
Passing the limit as $|\pi|\to 0$ on the left-hand side we thus obtain
\[
\mathbb E\left(e^{w\int_0^1\langle (\nabla P_tF)(b_t),db_t\rangle}\right)\leq e^{\frac{1}{2}w^2{\rm Lip}(f)^2},
\]
which is the same as saying that 
\begin{equation}\label{sharp:gauss}
	\int_0^1\langle (\nabla P_tF)(b_t),db_t\rangle\in {\bf{\mathsf{SubG}}}({\rm Lip}(f)).
\end{equation}
Leading this to 
(\ref{tail:est:p}) we get
\[
P(|F(b_1)-\mathbb E(F(b_1))|>\tau)\leq 2e^{\frac{1}{2}w^2{{\rm Lip}(F)}^2-w\tau},
\]
so if we  minimize the right-hand side over $w\geq 0$ we find that
\[
P(|F(b_1)-\mathbb E(F(b_1))|>\tau)\leq 2 e^{-\frac{\tau^2}{2{{\rm Lip}(F)}^2}}, \quad \tau>0,
\]
which is equivalent to (\ref{gauss:conc:ineq}) with the optimal constant $C=1/2$ because the normal random vector appearing there has been chosen so that $X\sim \mathcal N(\vec{0},{\rm Id}_k)\sim b_1$.

\subsection{The Feynman-Kac formula and the path integral representation of the heat kernel}\label{fk:sec} 

Let us consider 
\[
g(t,X_t)=e^{-\int_0^tV(X_t)dt}w(T-t,X_t), \quad 0\leq t\leq T,
\]
where $V=V(x)$, $x\in\mathbb R^n$, is a (well-behaved) potential function and  $X_t$ is an It\^o diffusion as in (\ref{itoproc2}): 
\[
dX_t=u(X_t)dt+v(X_t)db_t,
\]
where $b_t$ is BM in $\mathbb R^n$.
We compute that 
\[
\frac{\partial g}{\partial t}=-e^{-\int_0^tV(X_t)dt}\left(
V(X_t)w(T-t,X_t)+\frac{\partial w}{\partial t}(T-t,X_t),
\right),
\] 
\[
\frac{\partial g}{\partial x_i}=e^{-\int_0^tV(X_t)dt}\frac{\partial w}{\partial x_i}(T-t, X_t),
\]
and 
\[
\frac{\partial^2g}{\partial x_i\partial x_j}=e^{-\int_0^tV(X_t)dt}\frac{\partial^2w}{\partial x_i\partial x_j}(T-t,X_t), 
\]
so that It\^o formula in (\ref{fkform2}) applies to give 
\begin{eqnarray*}
	dg & = & 
	e^{-\int_0^tV(X_t)dt}\left(-\frac{\partial w}{\partial t}(T-t,X_t)+\sum_iu_i\frac{\partial w}{\partial x_i}(T-t,X_t)-\right.\\
	& & \quad \left.-V(X_t)w(T-t,X_t)+\frac{1}{2}\mathcal L'w(T-t,X_t)
	\right)dt\\
	& & \quad\quad +e^{-\int_0^tV(X_t)dt}\sum_i\frac{\partial g}{\partial x_i}db_t^{(i)},
\end{eqnarray*}
where 
\[
\mathcal L'w=\sum_{ij}(v^*v)_{ij}\frac{\partial^2w}{\partial x_i\partial x_j}. 
\]
In this way we obtain a remarkable stochastic (or path integral) representation  of solutions of certain heat-type equations.

\begin{proposition} (Feynman-Kac formula I)
	\label{fkform3}
	If $w=w(t,x)$ satisfies the heat-type equation
	\[
	\left\{
	\begin{array}{rcl}
		\frac{\partial w}{\partial t} & = & \frac{1}{2}\mathcal L' w+\langle u,\nabla w\rangle -Vw\\
		w(0,x) & = & f(x)
	\end{array}
	\right.
	\]
	then the following holds:
	\begin{equation}
		\label{fkform4}
		w(t,x_0)=\mathbb E_{x_0}\left(e^{-\int_0^tV(X_t)dt}f(X_t)\right),
	\end{equation} 
where $\mathbb E_{x_0}$ refers to the law $P_{x_0}$ of BM in $\mathbb R^n$ starting at $x_0$.	
\end{proposition}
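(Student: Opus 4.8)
The plan is to recognize that the lengthy computation of $dg$ carried out just above the statement has been engineered precisely so that its drift coefficient collapses once $w$ is assumed to solve the heat-type equation. Concretely, I would set $M_t := g(t,X_t) = e^{-\int_0^t V(X_s)ds}\,w(T-t,X_t)$ for $0\le t\le T$, and inspect the $dt$-part of $dM_t$ displayed in the preceding derivation, namely
\[
-\frac{\partial w}{\partial t} + \langle u,\nabla w\rangle - Vw + \tfrac{1}{2}\mathcal L' w,
\]
all evaluated at $(T-t,X_t)$. Substituting the PDE $\frac{\partial w}{\partial t} = \frac{1}{2}\mathcal L' w + \langle u,\nabla w\rangle - Vw$ into the first term shows that this expression vanishes identically, so that
\[
dM_t = e^{-\int_0^t V(X_s)ds}\sum_{i,l}\frac{\partial w}{\partial x_i}(T-t,X_t)\,v_{il}(X_t)\,db_t^{(l)};
\]
in other words, $M_t$ is a pure It\^o integral with no drift term (the exponential factor contributes nothing to the diffusion part, since the path functional $\int_0^t V(X_s)ds$ has zero diffusion coefficient).

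Next I would invoke the martingale structure of It\^o integrals. By Proposition \ref{itomart}, in its multidimensional form, the process $M_t$, being an It\^o integral, is a martingale with respect to the filtration generated by $b_t$. Proposition \ref{martcons} then guarantees that its expectation is constant in time, so in particular $\mathbb E_{x_0}(M_T)=\mathbb E_{x_0}(M_0)$, where the expectation is taken with respect to the law $P_{x_0}$ of the diffusion started at $x_0$.

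It then remains to evaluate the two endpoints. Since $X_0=x_0$ a.s.\ and $\int_0^0 V = 0$, one has $M_0 = w(T,x_0)$, a deterministic quantity; at the other end, the initial condition $w(0,x)=f(x)$ gives $M_T = e^{-\int_0^T V(X_s)ds}f(X_T)$. Equating expectations yields
\[
w(T,x_0) = \mathbb E_{x_0}\!\left(e^{-\int_0^T V(X_s)ds}f(X_T)\right),
\]
and renaming $T$ as $t$ produces exactly (\ref{fkform4}).

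The main obstacle I anticipate is not the formal chain-rule cancellation, which is transparent, but the verification that the stochastic integral defining $M_t$ is a \emph{genuine} martingale rather than merely a local one. This requires that the integrand $e^{-\int_0^t V(X_s)ds}\,(\partial_{x_i}w)(T-t,X_t)\,v_{il}(X_t)$ belong to the admissible class of Definition \ref{extdefito}, i.e.\ that it be adapted and square-integrable in the sense of its third condition. Under the standing ``well-behaved'' hypotheses on $V$, $u$, $v$, and $f$ (for instance, $V$ bounded below together with controlled growth of $w$ and $\nabla w$) this integrability holds directly; in the general case one would first localize by a sequence of stopping times, apply the martingale identity to each stopped process, and pass to the limit via dominated convergence. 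Making these regularity assumptions explicit, and ensuring the solution $w$ of the terminal-value problem exists and is $C^{2,1}$ so that It\^o's formula (\ref{fkform2}) applies, is where the genuine analytic care is needed.
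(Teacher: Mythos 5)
Your proof is correct and follows essentially the same route as the paper: substitute the PDE into the drift of $dg$ computed just before the statement, conclude that $g(t,X_t)$ is a driftless It\^o integral and hence a martingale by Proposition \ref{itomart}, equate expectations at $t=0$ and $t=T$ via Proposition \ref{martcons}, and use that $T$ is arbitrary. Your additional remark on verifying square-integrability of the integrand (versus only obtaining a local martingale) addresses a point the paper passes over silently, and is a worthwhile refinement rather than a deviation.
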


\begin{proof}
	From the computation above,
	\[
	dg=e^{-\int_0^tV(X_t)dt}\sum_i\frac{\partial g}{\partial x_i}db_t^{(i)},
	\]
	which shows that the process $g(t,X_t)$ is a martingale (with respect to $\mathcal F^{(n)}$); see Proposition \ref{itomart}. Since
	\[
	\mathbb E_{x_0}\left(g(t,X_t)\right)|_{t=0}=\mathbb E_{x_0}\left(w(T,X_0)\right)=w(T,x_0),
	\] 
	and 
	\[
	\mathbb E_{x_0}\left(g(t,X_t)\right)|_{t=T}=\mathbb E_{x_0}\left(e^{-\int_0^tV(X_s)ds}w(0,X_T)\right)=\mathbb E_{x_0}\left(e^{-\int_0^tV(X_s)ds}f(X_T)\right), 
	\]
	the result follows in view of Proposition \ref{martcons} and the fact that $T$ is arbitrary.
\end{proof}

\begin{corollary}
	\label{semidom}
	(Exponential control)
	Under the conditions above, if $|f|\leq M$ and $V\geq c$, $c\in\mathbb R$, then 
	\[
	|u(t,x_0)|\leq Me^{-ct}.
	\]
\end{corollary}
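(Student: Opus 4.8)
The plan is to read off the bound directly from the Feynman--Kac representation (\ref{fkform4}) established in Proposition \ref{fkform3}, exploiting only the monotonicity of the expectation and the two hypotheses. Starting from
\[
w(t,x_0)=\mathbb E_{x_0}\left(e^{-\int_0^tV(X_s)ds}f(X_t)\right),
\]
where I write $w$ for the quantity denoted $u$ in the statement, namely the solution of the heat-type equation in Proposition \ref{fkform3}, the first step is to pass the absolute value inside the expectation. Since the weight $e^{-\int_0^tV(X_s)ds}$ is non-negative, the triangle inequality for integrals gives
\[
|w(t,x_0)|\leq \mathbb E_{x_0}\left(e^{-\int_0^tV(X_s)ds}\,|f(X_t)|\right).
\]

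Next I would invoke the two quantitative hypotheses in turn. The bound $|f|\leq M$ allows me to replace $|f(X_t)|$ by the constant $M$ and factor it out of the expectation, while the lower bound $V\geq c$ yields the \emph{pathwise} estimate $\int_0^t V(X_s)\,ds\geq ct$ along every Brownian trajectory $s\mapsto X_s$, whence $e^{-\int_0^tV(X_s)ds}\leq e^{-ct}$ almost surely. Combining these observations,
\[
|w(t,x_0)|\leq M\,e^{-ct}\,\mathbb E_{x_0}(1)=M\,e^{-ct},
\]
where in the last step I use that $P_{x_0}$ is a genuine probability measure (Theorem \ref{brown:exist}), so that $\mathbb E_{x_0}(1)=1$; this is precisely what keeps the constant in the final bound clean.

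There is no genuine obstacle here, as the estimate is entirely routine. The only point requiring (minor) care is to ensure that the exponential control of the integrand holds pathwise rather than merely in expectation, which is immediate once $V\geq c$ is read as a pointwise inequality evaluated along the diffusion $X_s$. I would also remark that the same argument, applied with $f\geq 0$ and an upper bound $V\leq c'$, produces a matching lower estimate for $w$, and that the hypothesis $V\geq c$ serves exactly to convert the Feynman--Kac weight into a decay factor $e^{-ct}$ (or a growth factor when $c<0$) that is uniform in the starting point $x_0$.
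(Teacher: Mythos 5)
Your argument is correct and is exactly the one the paper intends: the corollary is stated without proof as an immediate consequence of the Feynman--Kac representation (\ref{fkform4}), and your pathwise estimates $|f(X_t)|\leq M$ and $e^{-\int_0^t V(X_s)\,ds}\leq e^{-ct}$, combined with $\mathbb E_{x_0}(1)=1$, fill in precisely that routine step.
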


An important special case of Proposition \ref{fkform4} occurs when $u=0$ and $v_{ij}=\delta_{ij}$, so that 
\[
\mathcal L'=\frac{1}{2}\Delta,
\] 
where $\Delta$ is the Laplacian. We then see that any solution of 
\begin{equation}\label{init:val}
\left\{
\begin{array}{rcl}
	\frac{\partial w}{\partial t} & = & \frac{1}{2}\Delta w -Vw\\
	w(0,x) & = & f(x)
\end{array}
\right.
\end{equation}
satisfies
\begin{equation}
	\label{fkform5}
	w(t,x)=\mathbb E_{x}\left(e^{-\int_0^tV(b_\tau)d\tau}f(b_t)\right).
\end{equation} 
On the other hand, we know from Analysis \cite{pazy2012semigroups} that this can be rewritten as
\begin{equation}\label{exp:heat:w}
w(t,x)=(e^{t\mathcal L}f)(x)=\int_{\mathbb R^n}K_{\mathcal L}(t;x,y)f(y)dy,
\end{equation}
where $e^{t\mathcal L}$ is the heat semigroup generated by $\mathcal L=\frac{1}{2}\Delta-V$ and 
$K_{\mathcal L}$ is the associated {\em heat kernel}, i.e. $K_{\mathcal L}$ satisfies
\begin{equation}\label{heat:fund:sol}
	\left\{
	\begin{array}{rcl}
		\frac{\partial K_{\mathcal L}}{\partial t} & = & \frac{1}{2}\Delta K_{\mathcal L} - V K_{\mathcal L} \\
		K_{\mathcal L}(0;x,y) & = & \delta(x-y)
	\end{array}
	\right.
\end{equation}
This suggests the existence of a 
stochastic representation for  $K_{\mathcal L}$, thus pointing toward a version of the
Feynman-Kac formula working at the more fundamental level of heat kernels.

To find this representation we fix $t>0$ and consider the process $\{B_s\}_{0\leq s<t}$ satisfying 
\begin{equation}\label{bb:sde}
dB_s=db_s-\frac{B_s-y}{t-s}ds, \quad B_0=x,
\end{equation}
or equivalently, 
\begin{equation}\label{bb:sde:i}
B_s=x+b_s-\int_0^s\frac{B_\tau-y}{t-\tau}d\tau.
\end{equation}
We will now show that the law of this process can be expressed in terms of $K_{\frac{1}{2}\Delta}$,  the heat kernel of the Laplacian $\frac{1}{2}\Delta$,  and $P_x$, the law of BM starting at $x$. 

We first note that  the 
discussion above gives 
\begin{equation}\label{exp:heat:l}
(e^{\frac{1}{2}\tau\Delta}f)(x)=\int_{\mathbb R^n}K_{\frac{1}{2}\Delta}(\tau;x,y)f(y)dy=\mathbb E_{x}\left(f(b_\tau)\right), \quad \tau\geq 0.
\end{equation}
In particular, 
\begin{equation}\label{int:1:K}
\int_{\mathbb R^n}K_{\frac{1}{2}\Delta}(t;x,y)dy=1,
\end{equation}
which also follows from Proposition \ref{welldef} because, as is well-known,
\begin{equation}\label{heat:ker:euc}
K_{\frac{1}{2}\Delta}(t;x,y)=(2\pi t)^{-n/2}e^{-|x-y|^2/2t},
\end{equation}
From this we see that 
\begin{equation}\label{eq:der:hk}
\nabla_x\ln K_{\frac{1}{2}\Delta}(t;x,y)=-\frac{x-y}{t},
\end{equation}
and hence
\begin{equation}\label{sdebb}
	dB_s=db_s+\nabla_x\ln K_{\frac{1}{2}\Delta}(t-s;B_s,y)ds, \quad s<t.
\end{equation}
Thus, the Brownian bridge $B_s$ is just the Brownian motion $b_s$ with an added
drift involving the logarithmic derivative of $K_{\frac{1}{2}\Delta}$. We note however that the drift is
singular at $s = t$. Fortunately, careful first order estimates of  $K_{\frac{1}{2}\Delta}$ \cite[Section 5.5]{hsu2002stochastic} allow us to bypass
this difficulty and confirm not only that this is well defined for $s=t$ but also  that $B_s\to y$ as $s\to t$\footnote{See Remark \ref{bb:0:b:sde} for a more pedestrian approach to this key extension problem, which avoids the consideration of heat kernel estimates.}. Thus, we call $\{B_s\}_{0\leq s\leq t}$ the {\em Brownian bridge} connecting $x$ to $y$ with lifetime $t$. 

We should think of $B_s$ as a process on the {\em bridge space} $C_{t;x,y}\subset C_{x}$ of all Brownian paths starting at $x$ and conditioned to hit $y$ at time $t$; cf. Remark \ref{wiener:sp}. To find the Radon-Nikodym derivative of the law of $B_s$ with respect to $P_x$ we first note that 
\[
\frac{\partial}{\partial t}\ln K_{\frac{1}{2}\Delta}=\frac{1}{2}\Delta\ln K_{\frac{1}{2}\Delta}+\frac{1}{2}\|\nabla\ln K_{\frac{1}{2}\Delta}\|^2,
\] 
so if we apply It\^o formula to 
\[
E_s=\ln\frac{K_{\frac{1}{2}\Delta}(t-s;B_s,y)}{K_{\frac{1}{2}\Delta}(t;x,y)}
\]
we find that 
\[
dE_s=\langle F_s,dB_s\rangle-\frac{1}{2}\|F_s\|^2ds,
\]
where 
\begin{equation}\label{def:fs}
F_s:=\nabla_xK_{\frac{1}{2}\Delta}(t-s;B_s,y)\stackrel{(\ref{eq:der:hk})}{=}
-\frac{B_s-y}{t-s}.
\end{equation}
Now define a measure $Q$ in $C_{t:x,y}$ by 
\[
\frac{dQ}{dP_x}|_{\mathcal F_s}=\exp\left(\int_0^s\langle F_\tau,db_\tau\rangle-\frac{1}{2}\int_0^s\|F_\tau\|^2d\tau\right).
\]
By Girsanov's theorem \cite[Theorem 8.6.4]{oksendal2013stochastic}, under $Q$ the process
\[
B_s-\int_0^sF_\tau d\tau\stackrel{(\ref{def:fs})}{=}B_s+\int_0^s\frac{B_\tau-y}{t-\tau}d\tau
\]
is a BM. Thus, comparing with (\ref{bb:sde:i}) we see that $P_{t;x,y}:=Q$ is the law of the Brownian bridge $B_s$ and there holds
\begin{equation}\label{exp:law:brid}
\frac{dP_{t;x,y}}{dP_x}|_{\mathcal F_s}=\frac{K_{\frac{1}{2}\Delta}(t-s;b_s,y)}{K_{\frac{1}{2}\Delta}(t;x,y)}.
\end{equation}
With these preliminaries at hand, we will be able to provide a path integral representation for $K_{\mathcal L}$.

\begin{proposition}\label{fkac2}(Feynman-Kac formula II)
	One has 
	\begin{equation}\label{fkac2:rep:int}
	K_{\mathcal L}(t;x,y)=K_{\frac{1}{2}\Delta}(t;x,y)\mathbb E_{t;x,y}\left(e^{-\int_0^tV(B_\tau)d\tau}\right).
	\end{equation}
	In other words, if we define the {\em conditional Wiener measure} on $C_{t;x,y}$ by
	\begin{equation}\label{cond:wien}
	d\mu_{t;x,y}=K_{\frac{1}{2}\Delta}(t;x,y)P_{t;x,y}
	\end{equation}
	then
	\begin{equation}\label{fkac2:eq}
	K_{\mathcal L}(t;x,y)=\int_{C_{t;x,y}}e^{-\int_0^tV(B_\tau)d\tau} d\mu_{t;x,y}. 
	\end{equation}
\end{proposition}

\begin{proof}
	First we have from (\ref{exp:heat:l}) with $\tau=0$ that
	\[
	e^{-\int_0^tV(b_\tau)d\tau}f(b_t)=\int_{\mathbb R^n}K_{\frac{1}{2}\Delta}(0;b_t,y)e^{-\int_0^tV(b_\tau)d\tau}f(y)dy,
	\]
	and taking expectation we get
	\begin{eqnarray*}
		\mathbb E_x\left(e^{-\int_0^tV(b_\tau)d\tau}f(b_t)\right) 
		& = & \int_{\mathbb R^n}K_{\frac{1}{2}\Delta}(t;x,y)\mathbb E_x\left(\frac{K_{\frac{1}{2}\Delta}(0;b_t,y)}{K_{\frac{1}{2}\Delta}(t;x,y)}e^{-\int_0^tV(b_\tau)d\tau}\right)f(y)
		dy \\
		& \stackrel{(\ref{exp:law:brid})\,{\rm with}\, s=t}{=} & 
		\int_{\mathbb R^n}K_{\frac{1}{2}\Delta}(t;x,y)\mathbb E_{t;x,y}\left(e^{-\int_0^tV(B_\tau)d\tau}\right)f(y)dy.
	\end{eqnarray*}
	On the other hand, we know from (\ref{fkform5}) and (\ref{exp:heat:w}) that 
	\[
	\mathbb E_x\left(e^{-\int_0^tV(b_\tau)d\tau}f(b_t)\right)=\int_{\mathbb R^n}K_{\mathcal L}(t;x,y)f(y)dy. 
	\]
	Since $f$ is arbitrary, the result follows. 
\end{proof}

\begin{corollary}\label{pos:kern}(Positivity improving property of the heat flow)
	If the potential function is uniformly bounded from below, $V\geq c$, then $K_{\mathcal L}$ is strictly positive for all $t>0$. In particular, any solution of (\ref{init:val}) with $f\geq 0$ and $f \not\equiv 0$ remains strictly positive for all $t>0$. 
	\end{corollary}
	
\begin{proof}
Since
\[
0< e^{-\int_0^tV(B_\tau)d\tau}\leq e^{-ct}
\]
for every path for which the integral is finite, we see that the random variable inside the expectation in (\ref{fkac2:rep:int}) is strictly positive and uniformly bounded. Therefore,  its expectation is finite and strictly positive, so that the strict positivity of $K_{\mathcal L}$ follows from (\ref{fkac2:rep:int}) and (\ref{heat:ker:euc}). Finally, the last assertion is a consequence of (\ref{exp:heat:w}).	
	\end{proof}	
	
\begin{remark}\!\!$\bigstar$ \label{prop:ground:positive}(Uniqueness of the ground state) If, under the conditions of Corollary \ref{pos:kern}, we assume further that 
	the bottom of the spectrum of $\mathcal L$
is an eigenvalue, say $E_0$, then the corresponding ground state eigenspace is one-dimensional, and the associated eigenfunction may be chosen strictly positive.
Indeed, 
let $\psi$ be such a ground state, so that
	$
	\mathcal L\psi+E_0\psi=0
	$
	and hence
	\[
	e^{t\mathcal L}\psi=e^{-tE_0}\psi,
	\]
	which means  that $\psi$ is an eigenfunction of $e^{t\mathcal L}$ associated to its maximal eigenvalue $e^{-tE_0}$. In particular, by the spectral theorem, the operator norm of $e^{t\mathcal L}$ is
	\[
	\|e^{t\mathcal L}\|_{\rm op}=e^{-tE_0}. 
	\]
	Now, it follows from Corollary \ref{pos:kern} that 
	\begin{eqnarray*}
		\left|\langle e^{t\mathcal L}\psi,\psi\rangle_{L^2}\right|
		& = &
		\left|
		\int_{\mathbb R^n}\int_{\mathbb R^n}
		K_{\mathcal L}(t;x,y)\psi(y){\psi(x)}
		\,dy\,dx
		\right| \\
		& \leq &
		\int_{\mathbb R^n}\int_{\mathbb R^n}
		K_{\mathcal L}(t;x,y)|\psi(y)|\,|\psi(x)|
		\,dy\,dx,
	\end{eqnarray*}
	and hence, 
	\begin{equation}\label{eq:holds}
			\left|\langle e^{t\mathcal L}\psi,\psi\rangle_{L^2}\right|\leq 
	\langle e^{t\mathcal L}|\psi|,|\psi|\rangle_{L^2}. 		
		\end{equation}
	On the other hand,
	\[
	\langle e^{t\mathcal L}\psi,\psi\rangle_{L^2}
	=
	e^{-tE_0}\|\psi\|_{L^2}^2,
	\]
	while
	\[
	\langle e^{t\mathcal L}|\psi|,|\psi|\rangle_{L^2}
	\leq
	\|e^{t\mathcal L}\|_{\rm op}\,\|\psi\|_{L^2}^2
	=
	e^{-tE_0}\|\psi\|_{L^2}^2,
	\]
	so equality  holds in (\ref{eq:holds}) and therefore $|\psi|$ is also a ground state. It follows that $\psi^+:=(\psi+|\psi|)/2\geq 0$ is a ground state and  since 
		\[
	\psi^+(x)
	=
	e^{tE_0}
	\int_{\mathbb R^n}
	K_{\mathcal L}(t;x,y)\psi^+(y)\,dy,
	\]
	we may again use Corollary \ref{pos:kern} to see that $\psi^+$ is strictly positive, which allows us to conclude that {\em any} ground state vanishes nowhere. From this we easily deduce the uniqueness of $\psi$ (up to a scaling), for 
	if the ground state eigenspace had dimension greater than one, we could choose a nontrivial ground state
	 $L^2$-orthogonal to $\psi$ which would necessarily vanish somewhere, a contradiction. This beautiful argument is a particular instance of the theory of positivity improving semigroups; see \cite[Section XIII.12]{reed1978iv}. 
	 In contrast to the probabilistic approach adopted here, where Corollary \ref{pos:kern} is derived from the Feynman-Kac representation for $K_{\mathcal L}$, the treatment there is functional-analytic and based primarily on the Trotter product formula.
	\qed
\end{remark}

\begin{remark}\label{corfk2}\!\!$\bigstar$ (The Laplacian heat kernel as a transition probability density) The conditioned Wiener measure $d\mu_{t;x,y}$ in (\ref{cond:wien}) arises from a disintegration of the Wiener measure: there holds  
\[
\int_{\mathbb R^n}\left(\int_{C_{t;x,y}}F(\omega) d\mu_{t;x,y}(\omega)\right)dy=\int_{C_x}F(\omega)dP_x(\omega),
\]
where $F$ varies over the set all bounded measurable functions on  $(C_x,P_x)$, the Wiener space starting at $x$; cf.\!\! Remark \ref{wiener:sp}. By taking $F\equiv 1$ we thus see that 
\[
\int_{\mathbb R^n}\left(\int_{C_{t;x,y}} d\mu_{t;x,y}(\omega)\right)dy=1,
\]
which is just a restatement of (\ref{int:1:K}), as it follows either from (\ref{cond:wien}) or from (\ref{fkac2:eq}) with $V\equiv 1$ that 
\[
K_{\frac{1}{2}\Delta}(t;x,y)=\int_{C_{t;x,y}} d\mu_{t;x,y}(\omega),
\]	
the total measure of the Brownian bridge $C_{t;x,y}$ endowed with $d\mu_{t;x,y}$. It then follows that:
\begin{itemize}
	\item for each $t\geq 0$ and $x\in\mathbb R^n$ the function
	\[
	y\mapsto K_{\frac{1}{2}\Delta}(t;x,y)=\mu_{t;x,y}(C_{t;x,y})
	\]
defines a probability density in $\mathbb R^n$;
\item For each $U\in\mathcal B^n$ the quantity
\[
P_{t;x}(U):=\int_U K_{\frac{1}{2}\Delta}(t;x,y)dy
\]	
\end{itemize}
may be interpreted as 
the probability that a Brownian path passes through $U$ when $s=t$ given that it has started at $x$ when $s=0$. By shrinking $U$ to $\{y\}$ we thus conclude that 
$K_{\frac{1}{2}\Delta}(t;x,y)$ may be viewed as 
a {\em transition probability density} in the sense that $K_{\frac{1}{2}\Delta}(t;x,y)dy$ is the probability for a  Brownian path starting at $x$ to be found in the infinitesimal region $dy$ at time $t$.	
\end{remark}

\begin{remark}\!\!$\bigstar$\label{weyl}(Weyl's law and its modern incarnation in Index Theory)
	If $e^{t\mathcal L}$ is of trace class, its trace can be computed by integrating (\ref{fkac2:eq}) along the diagonal $x=y$ of $\mathbb{R}^n\times\mathbb{R}^n$:
	\[
	{\rm Tr}\,e^{t\mathcal L}=\int_{\mathbb R^n}\left(\int_{C_{t;x,x}}e^{-\int_0^tV(X_\tau)\,d\tau}\, d\mu_{t;x,x}\right)dx.
	\]
	Defining a measure $d\mu_t$ on the space $C_t=\cup_{x\in\mathbb R^n}C_{t;x,x}$ of all \emph{Brownian loops} in $\mathbb{R}^n$ with lifetime $t$ by setting $d\mu_t=d\mu_{t;x,x}\,dx$, we obtain
	\[
	{\rm Tr}\,e^{t\mathcal L}=\int_{C_t} e^{-\int_0^t V(X_\tau)\,d\tau}\, d\mu_t.
	\]
	This formula continues to hold when $\mathbb{R}^n$ is replaced by a compact Riemannian manifold $(M,g)$, with $\Delta_g$ denoting the Laplace--Beltrami operator. In this case,
	\[
	{\rm Tr}\,e^{\tfrac{1}{2}t\Delta_g}=\int_{C_t} d\mu_t,
	\]
	where the measure $\mu_t$ is defined locally in coordinate charts and then assembled in the usual way; see \cite{hsu2002stochastic} for a detailed account of Brownian motion on Riemannian manifolds and its fundamental properties.
		As $t\to 0$, a typical Brownian loop in $C_t$ contracts to its base point while remaining within a geodesic ball whose radius also vanishes with $t$ \cite[Lemma 7.7]{hsu2002stochastic}. Consequently, the path integral on the right-hand side becomes localized around $M\subset C_t$. Coupled with the ``principle of not feeling the curvature,'' which asserts that $K_{\tfrac{1}{2}\Delta_g}(t;x,x)\sim (2\pi t)^{-n/2}$ as $t\to 0$, we obtain
	\[
	{\rm Tr}\,e^{\tfrac{1}{2}t\Delta_g}\sim (2\pi t)^{-n/2}{\rm vol}(M,g).
	\]
	Since ${\rm Tr}\,e^{\tfrac{1}{2}t\Delta_g}=\sum_i e^{-\tfrac{1}{2}\lambda_it}$, where $\{\lambda_i\}$ are the positive eigenvalues of $\Delta_g$, this yields Weyl’s celebrated result: the volume of $(M,g)$ can be recovered from the asymptotic behavior of its spectrum.
	A more sophisticated version of this argument, involving the short-time asymptotics of the heat kernel associated with a supersymmetric Dirac operator on spinors, leads to a probabilistic proof of the Atiyah--Singer index theorem \cite{bismut1984atiyah,hsu2002stochastic}. Further developments along these lines, relying on refined Feynman--Kac representations of the heat kernel for certain Hodge Laplacians acting on sections of geometric vector bundles over Riemannian manifolds (possibly noncompact and with boundary), can be found in \cite{de2017feynman,de2017probabilistic,de2020heat} and references therein. \qed
\end{remark}

\begin{remark}\label{bb:0:b:sde}
	(The Brownian bridge as a Gaussian process)
	Starting from (\ref{bb:sde}) we may explicitly compute the law of the Brownian bridge, which will allow us to check that it is a Gaussian process; cf. Remark \ref{gauss:proc}. To simplify matters we take $n=1$, $x=0$, and $t=1$, so (\ref{bb:sde}) becomes 
	\begin{equation}\label{bb:sde:stan}
	dB_s=\frac{y-B_s}{1-s}\,ds+db_s,\qquad 0\le s<1,\qquad B_0=0,
	\end{equation}
	where $\{b_s\}_{s\geq 0}$ is the standard Brownian motion in $\mathbb R$.
Writing this as 
	\[
	dB_s+\frac{1}{1-s}B_s\,ds=\frac{y}{1-s}\,ds+db_s,
	\]
	and multiplying by $(1-s)^{-1}$, we obtain
	\[
	d\!\left(\frac{B_s}{1-s}\right)
	=\frac{y}{(1-s)^2}\,ds+\frac{1}{1-s}\,db_s,
	\]
	which may be easily integrated from $0$ to $s$ with $B_0=0$ to yield
		\[
	B_s=ys+(1-s)\int_0^s\frac{1}{1-\tau}\,db_\tau.
	\]
	By Proposition \ref{itoprop} (7), 
	\[
	\int_0^s\frac{1}{1-\tau}\,db_\tau\sim\mathcal N\left(0,\frac{s}{1-s}\right),
	\]
	so that Proposition \ref{norm:spce}  implies that the Brownian bridge is a Gaussian process with
	\[
	B_s \sim\mathcal N\left(ys,s(1-s)\right), \quad 0\leq s< 1.
	\]
	From this we get 
	\[
	\mathbb E(B_s)=ys,
	\]
	which says that the mean path is the line joining $0$ to $y$, and 
	\begin{equation}\label{bb:var}
		\mathbb{V}(B_s)=s(1-s),
	\end{equation}
	which gives 
	\[
	\mathbb{V}(B_0)=\lim_{s\uparrow 1}\mathbb{V}(B_s)=0,
	\]
	 thus confirming the intuition that $B_s$ is pinned at its endpoints $0$ and $y$.
	We may also determine the covariance function of $B_s$
	by first 
	centering the process,
	\[
	B_s-\mathbb E(B_s)=(1-s)\int_0^s\frac{1}{1-\tau}\,db_\tau,
	\]
	which gives, for $0\leq s<s'<1$,
	\begin{eqnarray*}
	\mathbb C(B_s,B_{s'})
	& = & 
	(1-s)(1-s')\mathbb C\left(
	\int_0^s\frac{1}{1-\tau}\,db_\tau,\int_0^{s}\frac{1}{1-\tau}\,db_\tau,
	\right)\\
	& & \quad +
	(1-s)(1-s')\mathbb C\left(
	\int_0^s\frac{1}{1-\tau}\,db_\tau,\int_s^{s'}\frac{1}{1-\tau}\,db_\tau
	\right).
	\end{eqnarray*}
	Since the last term in the right-hand side vanishes by Proposition \ref{itoprop}
	(8), we may use 
the polarized version of It\^o isometry in Proposition \ref{itoprop}
	(5) to find that 
	\[
	\mathbb{Cb}(B_s,B_{s'})
	=
	(1-s)(1-s')\int_0^s\frac{1}{(1-\tau)^2}\,d\tau
	=
	s(1-s'), 
	\]
	so by symmetry we end up with 
	\begin{equation}\label{bb:cov:0}
		\mathbb{C}(B_s,B_{s'})=s\wedge s'-ss', \quad s,s'\in [0,1).
	\end{equation}
	On the other hand, 
	by Theorem \ref{brown:exist} with $n=1$, $k=2$ and $s,s'\geq 0$, 
		\begin{equation}\label{bb:j:gauss}
	\left(
	\begin{array}{c}
	b_s\\
	b_{s'}	
		\end{array}
	\right)
	\sim
	\mathcal N
	\left(
	\left(
	\begin{array}{c}
		0\\
		0	
	\end{array}
	\right),
		\left(
	\begin{array}{cc}
		s & s\wedge s'\\
		s\wedge s' & s'	
	\end{array}
	\right)
	\right),
	\end{equation}
	so that Corollary \ref{ortho:norm} implies that
	\[
\widetilde B_s:=	b_s-s(b_1-y)=
	\left\langle
		\left(
	\begin{array}{c}
		1\\
		-s	
	\end{array}
	\right),
		\left(
	\begin{array}{c}
		b_s\\
		b_1-y	
	\end{array}
	\right)
	\right\rangle
	\sim
	\mathcal N(ys,s(1-s)), \quad 0\leq s \leq 1,
	\]
	is a Gaussian process having the same law as $B_s$ for each $s<1$.
Moreover, 
	for $0\leq s,s'\leq 1$,
	\begin{eqnarray*}
	\mathbb C(\widetilde B_s,\widetilde B_{s'})
	& = & 
	\mathbb C(b_s-sb_1,b_{s'}-s'b_1)\\
	& = & \mathbb C(b_s,b_{s'})-s'\mathbb C(b_s,b_1)-s\mathbb C(b_1,b_{s'})+ss'\mathbb C(b_1,b_1)\\
	& \stackrel{(\ref{bb:j:gauss})}{=}  & s\wedge s'-s's-ss'+ss',	
		\end{eqnarray*}
and we obtain
	\begin{equation}\label{bb:cov}
	\mathbb{C}(\widetilde B_s,\widetilde B_{s'})=s\wedge s'-ss', \quad s,s'\in [0,1].
	\end{equation}
	Upon comparison with (\ref{bb:cov:0}) and recalling from Remark \ref{gauss:proc} that a Gaussian process is completely determined by its mean and covariance functions, we conclude that the original process $\{B_s\}_{0\leq s<1}$ admits a natural extension to \(s=1\) with \(B_1=\widetilde B_1=y\), a property which is not immediately apparent from (\ref{bb:sde:stan}). 
	These calculations also show that the endpoint \(y\) affects only the mean $sy$, while the covariance structure is universal, coinciding with that of the standard Brownian loop $b_s-sb_1$ (corresponding to \(y=0\)). 
		\qed
\end{remark}

\begin{remark}\!\!$\bigstar$\label{gliv} (Glivenko-Cantelli and Donsker)
	If $X$ is a real-valued random variable with (unknown) distribution, a natural objective in Statistics is to determine its cdf $F$. However, this is in general out of the question, since knowing $F$ is equivalent to knowing the full distribution of $X$.
	Instead, we assume that we observe an i.i.d.\ sample $\{X_j\}_{j\in\mathbb N}$ drawn from the law of $X$, and seek to approximate $F$ from the data.	The canonical (unbiased) estimator of $F(s)$, $s\in\mathbb R$, to use here is the {\em empirical distribution function}, defined by
	\[
	\mathbb F_n(s)=\frac{1}{n}\sum_{j=1}^n{\bf 1}_{\{X_j\leq s\}}, 
	\quad n\in\mathbb N. 
	\]
	For each fixed $s$, $\mathbb F_n(s)$ is a random variable, and the collection $\{\mathbb F_n(s)\}_{n\in\mathbb N}$ may be viewed as a discrete-time stochastic process (as in Remark \ref{stocproc:d}) indexed by $s\in\mathbb R$. Since, 
	for each $s$, ${\bf 1}_{\{X_j\leq s\}}\sim \mathsf{Ber}(F(s))$, CLT (Theorem \ref{clt}) applies:
	\begin{equation}\label{gc:clt}
	\mathbb G_n(s):=\sqrt{n}\big(\mathbb F_n(s)-F(s)\big)\stackrel{d}{\longrightarrow}\mathcal N\big(0,F(s)(1-F(s))\big).
	\end{equation}
	In particular, 
	\begin{equation}\label{gc:lln}
\mathbb F_n(s)\stackrel{p}{\longrightarrow}F(s),
\end{equation}
which also follows diretly from LLN (Theorem \ref{lln}). Taken together, these results show that $\mathbb F_n(s)$ is a consistent and asymptotically normal estimator of $F(s)$.
	Now, a fundamental strengthening of (\ref{gc:lln}) asserts that the convergence is uniform in $s$, namely
	\[
	\|\mathbb F_n-F\|_{\infty}:=\sup_{x\in \mathbb R}|\mathbb F_n(s)-F(s)|\to 0 \quad \text{a.s.},
	\]
	a statement known as the {\em Glivenko--Cantelli theorem}. In order to examine a possible generalization of (\ref{gc:clt}), we first consider the case in which $F(s)=s{\bf 1}_{[0,1]}(s)$, corresponding to the uniform distribution in $[0,1]$, so the asymptotic variance in (\ref{gc:clt}) exactly matches (\ref{bb:var}). 
This is not a mere coincidence, but rather the first indication of a much deeper phenomenon.	
	Indeed, while (\ref{gc:clt}) describes the asymptotic behavior of $\mathbb G_n(s)$ for each fixed $s$, it is natural to ask whether the entire collection $\{\mathbb G_n(s)\}_{s\in[0,1]}$ converges, in a suitable sense, to a limiting stochastic process. The celebrated {\em Donsker theorem}, as applied to this instance, asserts that this is indeed the case: viewing $\mathbb G_n$ as a random element of an appropriate function space, one has
	\[
	\mathbb G_n \;\longrightarrow\; \{B_s\}_{s\in[0,1]},
	\]
	where $\{B_s\}$ is the standard Brownian bridge. 
	In particular, the covariance structure of the limiting process is given by (\ref{bb:cov}),
	which may be recognized as the continuum analogue of the covariance of the indicators $\mathbf{1}_{\{X_j\le s\}}$. Thus, the Brownian bridge emerges as the canonical Gaussian process describing the fluctuations of the empirical distribution function around its deterministic limit.
	In the general case of an arbitrary cdf $F$, the same phenomenon persists, with the limiting process given by $\{B_{F(s)}\}_{s\in\mathbb R}$, a reparametrization of the standard Brownian bridge whose covariance structure
	\[
	\mathbb C(B_{F(s)},B_{F(s')})=F(s\wedge s')-F(s)F(s')
	\]
	reflects the underlying distribution. In this way, Donsker's theorem may be viewed as a functional extension of the central limit theorem, complementing the first-order approximation provided by Glivenko--Cantelli with a precise description of second-order fluctuations. 
	For a full account of these results, 
	including their central role in empirical process theory and in distribution-free procedures for goodness-of-fit, such as the Kolmogorov--Smirnov test, we refer to
	\cite[Chapter 19]{van2000asymptotic}.
	\qed
\end{remark}

\begin{remark}\!\!$\bigstar$\label{emp:nonasymp}
	(Glivenko-Cantelli and  Donsker: the  non-asymptotic viewpoint)
		From a broader perspective, the results in Remark \ref{gliv}, describing the behavior of the empirical distribution function and its fluctuations as $n\to\infty$,
		mark the entry point into the theory of empirical processes, where one studies such fluctuations indexed by classes of functions rather than single points. 	In many applications, however, it is essential to provide non-asymptotic bounds quantifying how close these objects are to their limits for finite values of $n$.
	In the case of Glivenko-Cantelli, a fundamental example is the Dvoretzky--Kiefer--Wolfowitz-Massart inequality \cite{massart1990tight}, which asserts that
	\[
	P\left(\|\mathbb F_n-F\|_\infty\leq \sqrt{\frac{\ln(2/\delta)}{2n}}\right)
	\geq 1-\delta,\qquad 0<\delta<1,
	\]
	thus providing a high probability bound for the rate of convergence which aligns with the machinery of concentration inequalities in Section \ref{conc:ineq:appl}.
	On the other hand, Donsker's theorem may be complemented by strong approximation results, such as the Koml\'os-Major-Tusn\'ady theorem, which show that the empirical process $\mathbb G_n$ can be coupled with a Brownian bridge $\{B_s\}_{s\in[0,1]}$ in such a way that
	\[
	\sup_{s\in[0,1]}|\mathbb G_n(s)-B_s|
	=
	O\!\left(\frac{\log n}{\sqrt n}\right)
	\quad \text{a.s.}
	\]
More generally, when the empirical process is indexed by a class of functions $\mathcal F$ as in \ Remark \ref{rem:symmetrization}, one is led to study quantities of the form
	\[
	\|P_n-P\|_{\mathcal F}:=\sup_{f\in\mathcal F}|(P_n-P)f|,
	\]
	encompassing both Glivenko--Cantelli (consistency) and Donsker type (fluctuation) results.
	In this setting, nonasymptotic bounds are governed by complexity measures such as Rademacher averages, which control the size of the fluctuations uniformly over $\mathcal F$; cf.\ the general estimate in (\ref{non:as:rad}). From this standpoint, Rademacher complexity may be viewed as a finite-sample analogue of the Gaussian fluctuations described by Donsker's theorem.\qed
\end{remark}

\begin{example}\!\!$\bigstar$\label{feyn:int}(The Feynman integral)
The Feynman ``measure'' on the bridge space $C_{t;x,y}$ is given by
\[
d\nu_{t;x,y}=e^{\frac{1}{2}\int_0^t\left|\frac{dB_\tau}{d\tau}\right|^2d\tau}d\mu_{t;x,y},
\]
but notice that this lacks a mathematical meaning for at least one reason: the typical Brownian bridge path  $B_\tau$ is not differentiable! 
In any case, if we ignore this then  (\ref{fkac2:eq}) gives
\[
K_{\mathcal L}(t;x,y)=\int_{C_{t;x,y}}e^{-\int_0^t\left(\frac{1}{2}\left|\frac{dB_\tau}{d\tau}\right|^2+V(B_\tau)\right)d\tau}d\nu_{t;x,y},
\]
so if we pass from the imaginary (heat) time $t$ to the real (physical) time $\mathfrak t$ via the  Wick rotation $\mathfrak t=-{\bf i}\,t$ we get
\begin{equation}\label{hist:sum}
K_{\mathcal S}(\mathfrak t;x,y)=\int_{C_{\mathfrak t;x,y}}e^{{\bf i}\mathcal A\left(B_{\mathfrak t},\frac{dB_{\mathfrak t}}{d\mathfrak t}\right)}d\nu_{\mathfrak t;x,y},
\end{equation}
where 
\[
\mathcal A\left(B_{\mathfrak t},\frac{dB_{\mathfrak t}}{d\mathfrak t}\right)=\int_0^{\mathfrak t}\left(\frac{1}{2}\left|\frac{dB_{\mathfrak t'}}{d\mathfrak t'}\right|^2-V(B_{\mathfrak t'})\right)d\mathfrak t'
\]
is the classical Lagrangian action and
$K_{\mathcal S}$ is the kernel of the unitary group $e^{-{\bf i}\mathfrak t\mathcal S}$ associated to the Schr\"odinger operator 
\[
\mathcal S=-\frac{1}{2}\Delta +V,
\]
in the sense that it satisfies the formal analogue of (\ref{heat:fund:sol}):
\[
	\left\{
	\begin{array}{rcl}
	{\bf i}	\frac{\partial K_{\mathcal L}}{\partial {\mathfrak t}} & = & -\frac{1}{2}\Delta K_{\mathcal S} + V K_{\mathcal S} \\
		K_{\mathcal S}(0;x,y) & = & \delta(x-y)
	\end{array}
	\right.
\]
In this way, the Feynman integral in (\ref{hist:sum}) provides the celebrated ``sum over histories'' formula from quantum mechanics \cite{feynman2010quantum}. 
Here, the complex number $K_{\mathcal S}(\mathfrak t;x,y)$ should be viewed as a transition amplitude so that, according to Born rule, $|K_{\mathcal S}(\mathfrak t;x,y)|^2dy$ is the probability of finding the quantum particle at the infinitesimal region $dy$ at time $\mathfrak t$ given that it has been emitted at $x$ at time $\mathfrak t=0$. 
More generally, if as typically occurs in the quantum realm, at time $\mathfrak t=0$ the particle is not localized at $x$  but instead spreads out in space so that we only observe its initial wave function $\psi_0(\cdot)\in L^2(\mathbb R^n)$,
then
\[
\Psi(\mathfrak t,x)=(e^{-{\bf i}\mathfrak t\mathcal S}\psi_0)(x)=\int_{\mathbb R^n}K_{\mathcal S}(\mathfrak t;x,y)\psi_0(y)dy,\quad \mathfrak t\in\mathbb R,
\]
describes the corresponding dynamics in the sense that it solves the Schr\"odinger equation
\[
{\bf i}\frac{\partial \Psi}{\partial \mathfrak t}=\mathcal S\Psi, \quad \Psi(0,\cdot)=\psi_0.
\]
We note however that the above derivation is far from being mathematical rigorous since the putative Feynman measure $d\nu_{\mathfrak t;x,y}$ 
is not a genuine countably additive measure on bridge space. 
Despite its lack of a fully satisfactory mathematical foundation, this viewpoint has proved extraordinarily influential in theoretical physics: it provides the conceptual and computational backbone of perturbative quantum field theory, where correlation functions are formally expressed as functional integrals. 
From this perspective, the ``sum over histories'' above should be viewed as a particular instance of a functional integral, in which one integrates over trajectories depending on a single time parameter, whereas in quantum field theory this framework is extended to integrals over fields defined on spacetime.
For mathematically rigorous accounts of this approach, see \cite{simon2005functional,glimm2012quantum}.
\qed	
	\end{example}

\subsection{The Black-Scholes strategy in Finance}\label{black:scholes} Here we derive the celebrated Black-Scholes option pricing formula\footnote{As it is well-knwon, this has been worth a Nobel Prize in 1997.}. From the outset, this involves a risky asset $S_t$, a {\em stock},  evolving in time according to a geometric Brownian as in Example \ref{geo:br}:
\begin{equation}\label{stock}
	\frac{dS_t}{S_t}=\mu dt+\sigma db_t.
\end{equation}
Here, $\mu >0$ is the {\em mean rate of return} and $\sigma>0$ is the {\em volatility}. Recall that 
\[
S_t=S_0e^{(\mu-\frac{\sigma^2}{2})t+\sigma b_t}
\]
provides the explicit solution of (\ref{stock}). In particular, 
\[
\ln S_t=\ln S_0+\left(\mu-\frac{\sigma^2}{2}\right)t+\sigma b_t \sim\mathcal N
\left(\underbrace{\ln S_0+\left(\mu-\frac{\sigma^2}{2}\right)t}_{=:m},\underbrace{\sigma^2 t}_{=:\nu^2}\right),
\]
so that, by Example \ref{moment}, 
\begin{equation}\label{xt:ln}
	S_t=e^{\ln S_t}\sim \mathcal L\mathcal N\left(S_0e^{\mu t},S^2_0e^{2\mu t}(e^{\sigma^2t}-1)\right)=\Lambda(m,\nu^2).
\end{equation}

On the other hand, we have an investor's {\em portfolio} $(A_t,B_t)$ whose value is
\begin{equation}\label{porf} 
	V_t=A_tS_t+B_t\gamma_t,
\end{equation}
with the risk-less {\em bond} $\gamma_t$ satisfying $d\gamma_t=r\gamma_tdt$, where $r>0$ is the associated {\em interest rate}. The {\em option pricing problem} addressed by Black-Scholes consists in adjusting the trading strategy $(A_t,B_t)$ to the underlying  asset $S_t$ by (deterministically!) finding a function $u$ such that
\begin{equation}\label{adjust}
	V_t=u(t,S_t), \quad 0\leq t \leq T, 
\end{equation} 
where $T>0$ is the {\em expiration time} for the option. 
The key point here is that $u=u(t,x)$ should satisfy a certain PDE.
In order to find it, we start with (\ref{stock}) and apply It\^o formula to (\ref{adjust}) to check  that 
\begin{equation}\label{dvt:1}
	dV_t=\left(\frac{\partial u}{\partial t}+\mu S_t\frac{\partial u}{\partial x}+\frac{\sigma^2 S_t^2}{2}\frac{\partial^2u}{\partial x^2}\right)dt+\sigma S_t\frac{\partial u}{\partial x}db_t. 
\end{equation}
On the other hand, if we assume that our portfolio is {\em self-financing} in the sense that
\[
dV_t=A_tdS_t+B_td\gamma_t,
\]
we get 
\begin{equation}\label{vt:2d}
	dV_t=\left(\mu A_tS_t+rB_t\gamma_t\right)dt+\sigma A_tS_tdb_t. 
\end{equation}
By comparing the diffusion and drift coefficients in the expressions for $dV_t$ above we get
\begin{equation}\label{comp:1}
	A_t=\frac{\partial u}{\partial x}
\end{equation}
and hence
\begin{equation}\label{comp:2}
	\frac{\partial u}{\partial t}+\frac{\sigma^2 S_t^2}{2}\frac{\partial^2u}{\partial x^2}=rB_t\gamma_t. 
\end{equation}
Now note that from (\ref{porf}), (\ref{adjust}) and (\ref{comp:1}),
\begin{equation}\label{bt:f}
	B_t\gamma_t=V_t-A_tS_t=u-S_t\frac{\partial u}{\partial x},
\end{equation}
so if we replace this in the right-hand side of (\ref{comp:2}) and make $S_t=x$ we conclude that $u$ must satisfy the {\em Black-Scholes equation}
\begin{equation}\label{bscholes:eq}
	\frac{\partial u}{\partial t}+\frac{\sigma^2 x^2}{2}\frac{\partial^2u}{\partial x^2}+rx\frac{\partial u}{\partial x}-ru=0. 	
\end{equation}

Notice that the coefficients of this PDE depend on $\sigma$ and $r$ but not on $\mu$. Also, as written the PDE fails to be of heat type because the coefficients of $\partial u/\partial t$ and $\partial^2u/\partial x^2$ have the same sign. This  suggests that we should try to solve it by imposing
the ``terminal condition''
\begin{equation}\label{init:cond}
	u(T,S_T)=V_T.
\end{equation}
In fact, the choice 
\begin{equation}\label{init:cond:2}
	V_T=\max \{0,S_T-K\}, \quad K>0,
\end{equation}
corresponds to the investor holding at time $t=0$ the option (but not the obligation) of buying the stock by a fixed price $K$ at the expiration time $T$. Hence, in this  {\em European call}, if $S_T>K$ then the owner of the option will obtain the payoff $S_T-K$ whereas if $S_T\leq K$ the owner will not exercise his option,  thus obtaining a null payoff.  

In terms of the cumulative normal distribution
\[
\Phi(x)=\frac{1}{\sqrt{2\pi}}\int_{-\infty}^xe^{-y^2/2}dy, \quad x\in\mathbb R,
\]
the Black-Scholes equation (\ref{bscholes:eq}) with $u(T,S_T)=\max \{0,S_T-K\}$ may be explicitly solved as 
\begin{equation}\label{sol:bs}
	u(t,x)=x\Phi(g(t,x))-K e^{-r(T-t)}\Phi(h(t,x)), 
\end{equation}
where 
\[
g(t,x)=\frac{\ln (x/K)+\left(r+\frac{1}{2}\sigma^2\right)(T-t)}{\sigma\sqrt{T-t}}
\]
and 
\[
h(t,x)=g(t,x)-\sigma\sqrt{T-t}=\frac{\ln (x/K)+\left(r-\frac{1}{2}\sigma^2\right)(T-t)}{\sigma\sqrt{T-t}}.
\]
Notice that, as expected, the solution depends on $\sigma$ and $r$ but not on $\mu$.
We conclude that the {\em Black-Scholes option pricing formula}
\[
V_0=u(0,S_0)=S_0\Phi(g(0,S_0))-K e^{-rT}\Phi(h(0,S_0))
\]
provides the rational price  to hold at the initial time $t=0$ a European call option with price $K$. 
Also, from (\ref{comp:1}) and (\ref{bt:f}) we see that the corresponding self-financing strategy is 
\[
(A_t,B_t)=\left(\frac{\partial u}{\partial x}, \gamma_0^{-1}e^{-rt}\left(u-S_t\frac{\partial u}{\partial x}\right)\right).
\]

The explicit solution (\ref{sol:bs}) to (\ref{bscholes:eq})-(\ref{init:cond:2}) may be obtained in many ways. For instance, a ``deterministic'' approach may be pursued upon successive changes of variables so as to transform (\ref{bscholes:eq}) into the standard heat equation, which can then be explicitly solved by the usual methods \cite[Section 10.3]{kallianpur2012introduction}. Alternatively, we may appeal to the full power of It\^o calculus as follows. Let us set $\theta=(\mu-r)/\sigma$ and consider the process
\[
M_t=e^{-\theta b_t-\frac{1}{2}\theta^2t}, \quad 0\leq t\leq T.
\]  
From It\^o formula we obtain
\[
dM_t=-\theta M_tdb_t,
\]
so $M_t$ is a $b_t$-martingale. Hence, by Proposition \ref{martcons},
\[
\mathbb E^{P}(M_T)=\mathbb E^{P}(M_0)=\mathbb E^{P}(1)=1,
\]
where  $P$ is Wiener measure.
Thus, if we define a new measure $P^\bullet$ on Wiener space by requiring that $dP^\bullet=M_Td P$,
it is immediate that $P^\bullet$ is a probability measure. 
A version of Girsanov's theorem \cite[Theorem 8.6.4]{oksendal2013stochastic} applies here and we conclude that $b^\bullet_t:=b_t+\theta t$ is a standard Brownian with respect to $P^\bullet$ (so that $b_t^\bullet\sim_{P^\bullet}\mathcal N(0,t)$) and, moreover, 
\begin{equation}\label{bt:mod}
	dS_t=rS_tdt+\sigma S_tdb_t^\bullet.
\end{equation}
Thus, we have been able to modify the drift of the dynamics of the stock (from $\mu S_t$ to $rS_t$) at the cost of changing the underlying measure (from $P$ to $P^\bullet$) and the driving Brownian (from $b_t$ to $b_t^\bullet$). The reason for doing this is now obvious:
in terms of the infinitesimal generator of (\ref{bt:mod}), namely,
\[
L=\frac{\sigma^2x^2}{2}\frac{\partial^2}{\partial x^2}+rx\frac{\partial}{\partial x}, 
\]
(\ref{bscholes:eq}) may be rewritten as 
\[
\frac{\partial u}{\partial t}+Lu-ru=0, 
\]
whose solution may be obtained by the method leading to the Feynman-Kac formula discussed in Section \ref{fk:sec} (with $r$ playing the role of a constant potential). Indeed, if we apply It\^o formula to $v(t,S_t):=e^{r(T-t)}u(t,S_t)$, we easily see that 
\[
dv(t,S_t)=\sigma S_te^{r(T-t)}\frac{\partial u}{\partial x}(t,S_t)db^\bullet_t,
\]
which means that $v(t,S_t)$ is a $b_t^\bullet$-martingale. Thus, if we calculate the (identical!) expectations at the endpoints of the interval $[t,T]$ and use (\ref{init:cond:2}) we end up with  
\[
u(t,S_t)=e^{-r(T-t)}\mathbb E^{P^\bullet}(\max\{S_T-K,0\}).
\]

We now observe that, due to (\ref{bt:mod}) and similarly to (\ref{xt:ln}), we now have 
\begin{equation}\label{xt:ln:new}
	S_T\sim_{P^\bullet} \mathcal L\mathcal N\left(S_te^{r (T-t)},S^2_te^{2r (T-t)}(e^{\sigma^2(T-t)}-1)\right),
\end{equation}
or equivalently,
\begin{equation}\label{choice:par}
	\ln S_T\sim_{P^\bullet}\mathcal N
	\left(\underbrace{\ln S_t+\left(\mu-\frac{\sigma^2}{2}\right)(T-t)}_{=:m},\underbrace{\sigma^2 (T-t)}_{=:\nu^2}\right)=\Lambda(m,\nu^2).
\end{equation}
Now,  
\begin{eqnarray*}
	\mathbb E^{P^\bullet}(\max\{S_T-K,0\})
	& = & 
	\int_K^{+\infty}(S_T-K)dF_{S_T}\\
	& = & 
	\int_K^{+\infty}S_TdF_{S_T}-K\int_K^{+\infty}dF_{S_T},
\end{eqnarray*}
where $dF_{S_T}=\psi_{S_T}dx$ is the cdf of $S_T$, and these integrals may be computed in terms of $\Phi$ by means of the recipe in Example \ref{moment}. For instance, 
\begin{eqnarray*}
	\int_K^{+\infty}dF_{S_T} & = & 1- \int^K_{-\infty}dF_{S_T}\\
	& = & 1-F_{S_T}(K)\\
	& = & 1-\Phi\left(\frac{\ln K\overbrace{-\ln S_t-\left(r-\frac{\sigma^2}{2}\right)(T-t)}^{=-m}}{\underbrace{\sigma\sqrt{T-t}}_{=\nu}}\right)\\
	& = &  1-\Phi\left(-\frac{\ln S_t/K+\left(r-\frac{\sigma^2}{2}\right)(T-t)}{\sigma\sqrt{T-t}}\right)\\
	& = & \Phi\left(\frac{\ln S_t/K+\left(r-\frac{\sigma^2}{2}\right)(T-t)}{\sigma\sqrt{T-t}}\right)\\
	& = & \Phi(h(t,S_t)),	
\end{eqnarray*}
where we used (\ref{cum:fct}), our choices for $m$ and $\nu$ as in (\ref{choice:par}) and the fact that $\Phi(x)+\Phi(-x)=1$. 
Similarly, 
\begin{eqnarray*}
	\int_K^{+\infty}S_TdF_{S_T} & = & \mathbb E^{P^\bullet}(S_T)- \int^K_{-\infty}S_TdF_{S_T}\\
	& = & S_te^{r(T-t)}-\int_{-\infty}^Kx\psi_{S_T}(x)dx\\	
	& = & S_te^{r(T-t)}-\frac{1}{\sqrt{2\pi}\sigma\sqrt{T-t}}\int_{-\infty}^K e^{-\frac{1}{2}\left(
		\frac{\ln x-\ln S_t-\left(r-\frac{\sigma^2}{2}\right)(T-t)}{\sigma\sqrt{T-t}}\right)^2}dx,
\end{eqnarray*}
where we used (\ref{xt:ln:new}) and (\ref{dist:ln}). 
After an appropriate change of variables and using again that  $\Phi(x)+\Phi(-x)=1$, we  get
\begin{eqnarray*}
	\int_K^{+\infty}S_TdF_{S_T}
	& = & 
	e^{r(T-t)}S_t\Phi\left(\frac{-\ln K+\ln S_t+\left(r-\frac{\sigma^2}{2}\right)(T-t)+\sigma^2(T-t)}{\sigma\sqrt{T-t}}\right)\\
	& = & e^{r(T-t)}S_t\Phi\left(g(t,X_t)\right).
\end{eqnarray*}
Putting all the pieces of this computation together we find that 
\[
u(t,S_t)=S_t\Phi(g(t,S_t))-K e^{-r(T-t)}\Phi(h(t,S_t)),
\]
which matches (\ref{sol:bs}) if we make $S_t=x$.

\bibliographystyle{alpha}
\bibliography{prob_notes}

\end{document}